\documentclass[11pt]{amsart}
\usepackage[margin=1.4in]{geometry}
\usepackage{amsmath,amsfonts,amssymb,mathdots,mathrsfs,esint,mathtools}
\usepackage{setspace}
\usepackage{verbatim}
\usepackage{enumitem}
\usepackage{url}

\usepackage{thesis}

\begin{document}

\newtheorem{conjecture}{Conjecture}[chapter]
\newtheorem{theorem}{Theorem}[chapter]
\newtheorem{proposition}{Proposition}[chapter]
\newtheorem{corollary}[proposition]{Corollary}
\newtheorem{lemma}[proposition]{Lemma}
\newtheorem{claim}[proposition]{Claim}
\theoremstyle{remark}
\newtheorem{remark}{Remark}[chapter]
\newtheorem{example}{Example}[chapter]
\theoremstyle{definition}
\newtheorem{definition}{Definition}[chapter]
\numberwithin{equation}{chapter}

\pagestyle{empty}
\thispagestyle{empty}



\newcommand{\supercuspidal}{supercuspidal} 
\newcommand{\nonzero}{nonzero} 
\newcommand{\nonsplit}{non-split} 
\newcommand{\nontrivial}{nontrivial} 
\newcommand{\quasisplit}{quasi-split} 
\newcommand{\semidirect}{semi-direct} 
\newcommand{\nonisotropic}{non-isotropic} 
\newcommand{\nonnegative}{non-negative} 
\newcommand{\nonarchimedean}{non-Archimedean} 
\newcommand{\archimedean}{Archimedean} 
\newcommand{\pointwise}{point-wise} 
\newcommand{\rhs}{right-hand side} 
\newcommand{\lhs}{left-hand side} 
\newcommand{\equalun}{\simeq} 

\newcommand{\Integers}{\mathbb{Z}}
\newcommand{\Naturals}{\mathbb{N}}
\newcommand{\R}{\mathbb{R}}
\newcommand{\C}{\mathbb{C}}
\newcommand{\Adele}{\mathbb{A}}
\newcommand{\half}{\frac 1 2}   
\newcommand{\quarter}{\frac 1 4}    
\newcommand{\eighth}{\frac 1 8}     


\newcommand{\setof}[2]{\{#1:#2\}} 
\newcommand{\intrange}[2]{#1,\ldots,#2} 
\newcommand{\intsupto}[1]{[#1]} 
\newcommand{\sequence}[1]{\{#1\}} 
\newcommand{\card}[1]{\left|#1\right|} 

\newcommand{\frestrict}[2]{#1|_{#2}} 
\newcommand{\support}[1]{supp(#1)} 
\newcommand{\psupport}[2]{supp^{#1}(#2)} 


\newcommand{\Span}[2]{{Span}_{#1}#2} 
\newcommand{\Stab}[2]{{Stab}_{#1}(#2)} 
\newcommand{\lmodulo}[2]{#1 \backslash #2} 
\newcommand{\blmodulo}[2]{#1 \Big\backslash #2} 
\newcommand{\orbit}[1]{\mathcal{O}(#1)} 
\newcommand{\splitgrp}[1]{{#1}} 
\newcommand{\quasisplitgrp}[1]{#1^{ns}} 
\newcommand{\rmodulo}[2]{#1 / #2} 
\newcommand{\rconj}[1]{{}^{#1}} 
\newcommand{\algebricroot}{\epsilon}
\newcommand{\norm}[1]{||#1||} 
\newcommand{\supnorm}[1]{||#1||_{\infty}} 


\newcommand{\dualrep}[1]{\widetilde{#1}} 
\newcommand{\intertwining}[2]{M(#2)} 
\newcommand{\intertwiningfull}[2]{M(#1,#2)} 
\newcommand{\nintertwining}[2]{M^*(#2)} 
\newcommand{\nintertwiningfull}[2]{M^*(#1,#2)} 
\newcommand{\nintertwiningrep}[2]{M^*(#1)} 
\newcommand{\cinduced}[3]{Ind_{#1}^{#2}({#3})} 
\newcommand{\induced}[3]{Ind_{#1}^{#2}({#3})} 
\newcommand{\isomorphic}{\cong} 
\newcommand{\Whittaker}[2]{\mathcal{W}(#1,#2)} 
\newcommand{\whittakerfunctional}{\Upsilon} 
\newcommand{\homspace}[3]{Hom_{#1}({#2},{#3})} 
\newcommand{\jacquet}[2]{\mathcal{J}_{#1}({#2})} 
\newcommand{\SObyGL}{SO_{2n}\times GL_n}
\newcommand{\inGLn}[1]{\underline{#1}} 
\newcommand{\bigidx}{n}     
\newcommand{\smallidx}{l}   
\newcommand{\specialvec}{e_{\gamma}}    
\newcommand{\absdet}[1]{|\det#1|} 


\newcommand{\Mat}[1]{M_{#1}} 
\newcommand{\MatF}[2]{M_{#1}(#2)} 
\newcommand{\GL}[1]{GL_{#1}} 
\newcommand{\GLF}[2]{GL_{#1}(#2)} 
\newcommand{\matrow}[2]{{#1}_{#2,\rightarrow}} 
\newcommand{\matcol}[2]{{#1}_{\downarrow,#2}} 
\newcommand{\transpose}[1]{\rconj{t}#1} 
\newcommand{\idblk}[1]{I_{#1}} 


\newcommand{\symgroup}[1]{S_{#1}} 
\newcommand{\rootgroup}[1]{R_{#1}} 
\newcommand{\liftG}[1]{{#1}^{\wedge}} 
\newcommand{\liftH}[1]{\breve{#1}} 

\newcommand{\logvalnoparam}{\vartheta} 
\newcommand{\logval}[1]{\vartheta(#1)} 
\newcommand{\padicintegers}{\mathcal{O}} 

\newcommand{\nequiv}{\not\equiv}
\newcommand{\dotcup}{\coprod}

\newcommand{\textseparator}{********************************************************************} 

\begin{center}
\LARGE{Tel Aviv University}

\LARGE{Raymond and Beverly Sackler Faculty of Exact Sciences}

\LARGE{School of Mathematical Sciences}

\vspace{2.9cm}

\renewcommand{\bfdefault}{b}\fontfamily{\familydefault}
\huge{\textbf{On the Local Theory of Rankin-Selberg Convolutions for
$\mathbf{SO_{2\smallidx}\times\GL{\bigidx}}$}}\renewcommand{\bfdefault}{bx}\fontfamily{\familydefault}

\vspace{2.7cm}

\LARGE{Thesis submitted for the degree

``Doctor of Philosophy"

by

Eyal Kaplan}

\vspace{1.8cm}

\LARGE{Prepared under the supervision of

Professor David Soudry}

\vspace{4.2cm}

\LARGE{Submitted to the Senate of Tel Aviv University}

\LARGE{January 2013}
\end{center}

\newpage
\acknowledgements

First and foremost, I would like to express my deepest gratitude to
my advisor Professor David Soudry, who agreed a few years ago to
guide me through the maze of Automorphic Forms, and who has
literally taught me almost all Algebra I know. I am extremely
fortunate to have worked with a rare person, whose wisdom, knowledge
and patience have inspired me, in my research as well as in my daily
life.

It is somewhat strange that I ended up writing a thesis in
mathematics. Years ago, on a trip to South America, I made up my
mind to enroll in psychology at Tel Aviv University and become a
psychologist. It was my family and friends, especially Dr. Tomer
Shlomi, who suggested I enroll in computer science as well. The fact
that my M.Sc. was in computer science probably proves they were
right.

My interest in mathematics has slowly evolved throughout this time,
as I was taking courses in Group and Field Theory, of Professor
David Soudry, mostly for fun. I was then exposed to the miraculous
teaching of David, making the material interesting and simple at the
same time. Near the completion of my M.Sc., I had doubts about my
place and direction. My mother Ora is to blame for me deciding to
continue my studies in mathematics. This turned out to be the best
place for me to be.

I would like to take this opportunity to thank a remarkable group of
people whom I am happy to call colleagues, Dr. Dani Szpruch, Zemer
Kosloff, Edva Roddity, Adam and Lena Gal, Dr. Eli Leher, and Dr.
Tomer Shlomi. Over the years we shared our thoughts, doubts,
frustration and many cups of coffee. It was Dani, who over a phone
call introduced me to the story of the $p$-adic numbers. A short
while ago, in another phone call, this time overseas, Dani
encouraged me to continue on the path I chose.

I thank my family and friends for their love and support throughout
this journey. In particular, my parents Ora and Doron who have
helped, encouraged and inspired me, and my sister Sigal, who I
admire: whenever I needed to look at something of true quality, I
looked at your works. I thank Shachar Agmon, Nati Agmon, Gil Haim
Amit, Nachum Eibschutz, Shmulik Nachum, Yael and Sophie for the laughs we have
had, helping me keep my spirits up. Last but not least, I thank my
lovely wife Tali, who has to put up with me daily.

\vspace{48 mm}

\hspace{50 mm}\emph{A.J.: ``... focus on the good times."}

\hspace{50 mm}\emph{Tony Soprano: ``Don't be sarcastic."}

\hspace{50 mm}\emph{A.J.: ``Isn't that what you said one time..."}

\hspace{50 mm}\emph{Tony Soprano: ``... Well, it's true, I guess." \cite{Sopranos}}

\newpage
\abstract

The Rankin-Selberg method for studying Langlands' automorphic
$L$-functions is to find integral representations, involving certain Fourier coefficients of cusp forms and
Eisenstein series, for these
functions. 
In this thesis we develop the local theory for generic
representations of special orthogonal groups. We study the local
integrals for $SO_{2l}\times GL_{n}$, where $SO_{2l}$ is the special
even orthogonal group, either split or quasi-split, over a local
\nonarchimedean\ field. These integrals admit a functional equation,
which is used to define a $\gamma$-factor. We show that, as
expected, the $\gamma$-factor is identical with
Shahidi's $\gamma$-factor. The analytic properties of the integrals
are condensed into a notion of a greatest common divisor (g.c.d.).
We establish certain bounds on the g.c.d. and relate it to the
$L$-function defined by Shahidi in several cases, thereby providing
another point of view on the $L$-function, linking it to the poles
of the integrals. In particular, in the tempered case under a
reasonable assumption the g.c.d. is equal to the $L$-function.
Finally, this study includes the computation of the integrals with
unramified data. This work may lead to new applications of the
descent method, as well as aid in analyzing the poles of the global
$L$-function.


\newpage\pagenumbering{roman}\pagestyle{plain}\setcounter{page}{3}
\renewcommand{\contentsname}{\Large \textbf{Contents}}

 \doublespacing \tableofcontents \singlespacing

\newpage\pagenumbering{arabic}
\pagestyle{plain}

\newtheorem{conjecture}{Conjecture}[section]
\newtheorem{theorem}{Theorem}[section]
\newtheorem{proposition}{Proposition}[section]
\newtheorem{corollary}{Corollary}[section]
\newtheorem{lemma}{Lemma}[section]
\newtheorem{claim}{Claim}[section]
\theoremstyle{remark}
\newtheorem{remark}{Remark}[section]
\newtheorem{example}{Example}[section]
\theoremstyle{definition}
\newtheorem{definition}{Definition}[section]
\numberwithin{equation}{section}
\newcommand{\chapter}{\section} 

\end{comment}

\chapter{Introduction}\label{chapter:introduction}
Approximately 35 years ago, Piatetski-Shapiro envisioned a converse
theorem for $\GL{\bigidx}$, i.e., a necessary condition for gluing
together local representations into an automorphic representation.
The analytic properties of $L$-functions for pairs of
representations of $GL_n\times GL_m$, with $m\leq n$, were essential
for his approach. This was the motivation for the development of the
Rankin-Selberg theory for $GL_n\times GL_m$ by Jacquet,
Piatetski-Shapiro and Shalika \cite{JPSS}.

Since then several converse theorems for $\GL{\bigidx}$ were
formulated and proved (see 
\cite{Cog3,Cog2}). The focus of
research has turned to questions of Langlands' (local and global)
functoriality (
\cite{Co}). Functoriality is basically a means for reducing problems
of representations of classical groups $G$ to problems of
representations of $GL_n$. A converse theorem along with a theory of
representations of $G\times GL_m$ can be used to produce
functoriality results (e.g. \cite{CKPS}). The study of
representations of $G\times GL_m$ is conducted using local and
global $L$-functions. There are essentially two known methods for
studying $L$-functions, the Langlands-Shahidi method and
Rankin-Selberg method (see 
\cite{Bu}).

The Rankin-Selberg method for studying Langlands' automorphic
$L$-functions is to find integral representations for these
functions. We briefly account the steps of the method, following
Cogdell \cite{Cog5}. The starting point is a global integral
tailored for the automorphic representations at hand. This integral
admits a factorization into an Euler product of local factors,
called the local Rankin-Selberg integrals. In order to relate the
global integral to the $L$-function, one computes the local
integrals with unramified data and shows that they produce local
$L$-functions. This is roughly sufficient to determine the analytic
properties of the partial $L$-function. In order to study the
global $L$-function, the local integrals at the finite ramified
and \archimedean\ ramified places must be studied. The
local analysis typically involves a functional equation and local
factors, namely $L$, $\gamma$ and $\epsilon$-factors.

Gelbart and Piatetski-Shapiro \cite{GPS} developed the
Rankin-Selberg theory for $G\times GL_m$, where $G$ is a split
classical group of rank $m$. Their method was extended by Ginzburg
\cite{G} to the split group $G=SO_{l}$ with $\lfloor l/2\rfloor\geq
m$ and by Soudry \cite{Soudry} to the split group $G=SO_{2l+1}$ with $l<
m$. These constructions (as well as \cite{JPSS}) were all limited to
generic representations. Ginzburg, Piatetski-Shapiro and Rallis
\cite{GPSR} developed Rankin-Selberg integrals for a pair of
representations of $O_m\times\GL{\bigidx}$, where $O_m$ is the
orthogonal group with respect to an arbitrary non-degenerate
quadratic form over a number field, and the representation of $O_m$
is not necessarily generic.

The importance of the Rankin-Selberg method in current research is that within its
frame, it is possible to locate the poles of the $L$-functions.
These are essential for describing the image of the functorial lift
(global or local) from generic representations of classical groups 
to $\GL{N}$, 
as well as in the descent construction
\cite{GRS3,JSd1,CKPS,Soudry4}.

We study the Rankin-Selberg integrals for generic representations
of $SO_{2\smallidx}\times\GL{\bigidx}$. We construct the global
integrals and show that they are Eulerian. The local unramified
factors at the finite places are computed and equated with the local $L$-functions. Then
we turn to local \nonarchimedean\ analysis and focus primarily on
defining local factors and establishing their fundamental
properties. Our work is based on the extensive study of
$SO_{2\smallidx+1}\times\GL{\bigidx}$ by Soudry
\cite{Soudry,Soudry3,Soudry2}, which in turn is based on the works
of Jacquet, Piatetski-Shapiro and Shalika \cite{JPSS,JS3}. The global Rankin-Selberg integrals and their local counterparts apply only to generic representations and so does our study of local factors. There are currently no
definitions for local factors in the general context of \cite{GPSR}.

Ginzburg, Piatetski-Shapiro and Rallis \cite{GPSR} computed the
unramified factors for their general Rankin-Selberg integrals
mentioned above. Their computation of the integral for
$SO_m\times\GL{\bigidx}$ (non-generic case) can be reduced in a
uniform fashion to our present calculation, thereby providing
another proof of their results. This will not be addressed here.

The local integrals satisfy a certain uniqueness property, which is
used to define a $\gamma$-factor. We prove that this factor is
multiplicative. Our results, combined with the \archimedean\ results
of Soudry \cite{Soudry3}, imply that our
$\gamma$-factor is identical with the corresponding $\gamma$-factor
defined by Shahidi in \cite{Sh3}.

Motivated by the work of Jacquet, Piatetski-Shapiro and Shalika
\cite{JPSS}, we define a greatest common divisor (g.c.d.) for the
local integrals. The integrals span a fractional ideal and its
unique generator, which contains any pole which appears in the
integrals, is called the g.c.d. We describe the properties of the
g.c.d. and establish upper and lower bounds for the poles.
The upper bounds are obtained using a generalization of a technique introduced by
\cite{JPSS}: we attach Laurent series to the integrals
and establish functional equations, in terms of these series, which imply the bounds.
In the tempered case we can relate the g.c.d. to the $L$-function of the
representations defined by Shahidi. Results of this study may lead
to a g.c.d. definition for the $L$-function. Such a definition
provides another point of view on the $L$-function. It is expected
to have many applications, since the poles of the integrals indicate
relations between the representations.

Our technique and results regarding the g.c.d. readily adapt to the integrals studied by
Ginzburg \cite{G} and Soudry \cite{Soudry}, due to the similar
nature and technical closeness of the constructions.

The g.c.d. can be used
at the ramified places in order to define a partial $L$-function as a product over all
finite places. Therefore, this work may have applications to the analysis of the global $L$-function. Such results were obtained by Piatetski-Shapiro and Rallis \cite{PS} and Ikeda
\cite{Ik}.

\section{Main results}\label{section:main results}
Let $F$ be a local \nonarchimedean\ field of characteristic zero.
Let $\pi$ be a smooth admissible finitely generated generic
representation of $SO_{2\smallidx}(F)$ (generic, or more specifically $\chi$-generic - has a unique Whittaker model, with respect to a character $\chi$ of a maximal unipotent subgroup). Here the group
$SO_{2\smallidx}(F)$ is either split or \quasisplit, i.e., non-split
over $F$ but split over a quadratic extension of $F$. Let $\tau$ be
a smooth admissible finitely generated generic representation of $\GLF{\bigidx}{F}$, which has a central character $\omega_{\tau}$.

We omit references to the field from the notation.
For a complex parameter $s\in \C$, let $V(\tau,s)$ be the space of the induced
representation
$\cinduced{Q_{\bigidx}}{SO_{2\bigidx+1}}{\tau\absdet{}^{s-\half}}$,
where $Q_{\bigidx}$ is a maximal parabolic subgroup with a Levi part
isomorphic to $\GL{\bigidx}$. Denote by
$\intertwiningfull{\tau}{s}:V(\tau,s)\rightarrow V(\tau^*,1-s)$ the
standard intertwining operator where $\tau^*(b)=\tau(J_{\bigidx}(\transpose{b^{-1}})J_{\bigidx})$,
$J_{\bigidx}$ is the matrix with $1$ on the anti-diagonal and $0$ elsewhere (if $\tau$ is irreducible, $\tau^*$ is isomorphic to the
representation contragredient to $\tau$). Let
$\nintertwiningfull{\tau}{s}$ be the standard normalized intertwining
operator, that is, $\intertwiningfull{\tau}{s}$ multiplied by the
local coefficient. An element $f_s\in V(\tau,s)$ is called a
standard section if its restriction to a certain fixed maximal
compact subgroup is independent of $s$. Denote by $\xi(\tau,std,s)$
the space of standard sections. Let
$\xi(\tau,hol,s)=\C[q^{-s},q^s]\otimes\xi(\tau,std,s)$ be the space
of holomorphic sections (where $q$ is the cardinality of the residue
field of $F$). The space of rational sections is defined by $\xi(\tau,rat,s)=\C(q^{-s})\otimes\xi(\tau,std,s)$. Also fix an additive \nontrivial\ character $\psi$ of $F$.
Throughout, only generic representations are considered. For any $k\geq1$, a representation $\eta$ of $\GL{k}$
is assumed to have a central character denoted by $\omega_{\eta}$.

Let $W$ be a Whittaker function for $\pi$ and let $f_s\in\xi(\tau,rat,s)$.
We construct a local Rankin-Selberg integral $\Psi(W,f_s,s)$. The exact form of the integral is given in Section~\ref{subsection:the integrals}.
It is absolutely convergent for $\Re(s)>>0$. This integral appears as a local factor in
a global Rankin-Selberg integral. The first step in relating the
global integral to the global Langlands $L$-function, is to prove
that the local integrals with unramified data evaluate to the local
$L$-functions. The following result is proved in
Section~\ref{section:Computation of the local integral with
unramified data}.
\begin{theorem}\label{theorem:unramified computation}
Let $\pi$ and $\tau$ be irreducible unramified representations and
assume that all data are unramified. In particular, $W$ and $f_s$
are the normalized unramified elements. Then for $\Re(s)>>0$,
\begin{align*}
\Psi(W,f_s,s)=\frac{L(\pi\times\tau,s)}{L(\tau,Sym^2,2s)}.
\end{align*}
Here the $L$-functions are defined using the Satake parameters of $\pi$ and $\tau$,
$Sym^2$ denotes the symmetric square representation.
\end{theorem}

One of the basic properties of the integral $\Psi(W,f_s,s)$ is that it has a meromorphic continuation to
a function in $\C(q^{-s})$. We prove this in Section~\ref{subsection:meromorphic continuation}.
Theorem~\ref{theorem:unramified computation} confirms this for the unramified case.

We turn to defining local factors. The key observation is that
the integral $\Psi(W,f_s,s)$ may
be regarded as a bilinear form satisfying certain equivariance
properties, placing it in a space which is roughly
one-dimensional. In detail,
\begin{align*}
\Psi(W,f_s,s)\in\begin{dcases}Bil_{SO_{2\smallidx}}(\pi
,V(\tau,s)_{N_{\bigidx-\smallidx},\psi_{\gamma}^{-1}})&\smallidx\leq\bigidx,\\
Bil_{SO_{2\bigidx+1}}(\pi
_{N^{\smallidx-\bigidx},\psi_{\gamma}^{-1}},V(\tau,s))&\smallidx>\bigidx.
\end{dcases}
\end{align*}
Here $Bil_{SO_{2\smallidx}}(\cdot,\cdot)$ (resp. $Bil_{SO_{2\bigidx+1}}(\cdot,\cdot)$) denotes the space of
$SO_{2\smallidx}$-equivariant (resp. $SO_{2\bigidx+1}$-equivariant) bilinear forms,
$V(\tau,s)_{N_{\bigidx-\smallidx},\psi_{\gamma}^{-1}}$ and
$\pi_{N^{\smallidx-\bigidx},\psi_{\gamma}^{-1}}$ are certain Jacquet modules, see Sections~\ref{subsection:The global integral for l<=n}
and \ref{subsection:The global integral for l>n} (resp.) for the precise definitions of the subgroups and characters.  In Chapter~\ref{chapter:uniqueness} we prove,
\begin{theorem}\label{theorem:uniqueness}
Except for a finite set of values of $q^{-s}$, the complex vector spaces $Bil_{SO_{2\smallidx}}(\pi,V(\tau,s)_{N_{\bigidx-\smallidx},\psi_{\gamma}^{-1}})$
and $Bil_{SO_{2\bigidx+1}}(\pi_{N^{\smallidx-\bigidx},\psi_{\gamma}^{-1}},V(\tau,s))$ are at most one-dimensional.
\end{theorem}
We mention that for irreducible representations, this result follows from \cite{AGRS,GGP,MW}, but here our representations may be reducible. The integral \\$\Psi(W,\nintertwiningfull{\tau}{s}f_s,1-s)$ satisfies similar equivariance properties and an immediate
consequence of the theorem, is that
these integrals are related by a functional equation. Namely, there
exists a proportionality factor $\gamma(\pi\times\tau,\psi,s)$
essentially satisfying
\begin{align*}
\gamma(\pi\times\tau,\psi,s)\Psi(W,f_s,s)=\Psi(W,\nintertwiningfull{\tau}{s}f_s,1-s).
\end{align*}
The actual definition of $\gamma(\pi\times\tau,\psi,s)$ in
Section~\ref{subsection:the gamma factor} (see \eqref{eq:gamma def})
involves a certain normalization. One of our goals is to relate the
$\gamma$-factor $\gamma(\pi\times\tau,\psi,s)$ to the corresponding $\gamma$-factor of Shahidi 
on $SO_{2\smallidx}\times\GL{\bigidx}$ defined in
\cite{Sh3}. Using standard global arguments (see \cite{Sh3}
Section~5 and \cite{Soudry2} Section~0), this problem is reduced to
the following three major milestones. Firstly, one proves that both
factors are identical when all data are unramified. This is evident
from Theorem~\ref{theorem:unramified computation} (see Claim~\ref{claim:gamma factor for unramified representations}).
Secondly, these factors should be identical over \archimedean\ fields whenever $SO_{2\smallidx}$ is split
(this is not needed for the \quasisplit\ case, because we can find a global group which is split at all \archimedean\ places). This follows from Soudry \cite{Soudry3}. Thirdly, we need to
establish full multiplicative properties for
$\gamma(\pi\times\tau,\psi,s)$ over a \nonarchimedean\ field.


In Chapter~\ref{section:gamma_mult} we prove the following theorems,
comprising the multiplicative properties. As mentioned above, throughout this work, representations of $SO_{2\smallidx}$ and $\GL{k}$ are taken to be smooth, admissible, finitely generated and generic, and representations of $\GL{k}$ are assumed to have central characters.
\begin{theorem}\label{theorem:multiplicity second var}
Let $\tau_i$ be a representation of $\GL{\bigidx_i}$ for $i=1,2$. Let $\tau$ be a
quotient of a representation parabolically induced from
$\tau_1\otimes\tau_2$. Then
\begin{align*}
\gamma(\pi\times\tau,\psi,s)=\gamma(\pi\times\tau_1,\psi,s)\gamma(\pi\times\tau_2,\psi,s).
\end{align*}
\end{theorem}
\begin{theorem}\label{theorem:multiplicity first var}
Let $\sigma$ be a representation of $\GL{k}$ and $\pi'$ be a
representation of $SO_{2(\smallidx-k)}$, $0<k\leq\smallidx$. 
Let $\pi$
be a quotient of a representation parabolically induced from
$\sigma\otimes\pi'$. Then
\begin{align*}
\gamma(\pi\times\tau,\psi,s)=\omega_{\sigma}(-1)^{\bigidx}\omega_{\tau}(-1)^k[\omega_{\tau}(2\gamma)^{-1}]\gamma(\sigma\times\tau,\psi,s)\gamma(\pi'\times\tau,\psi,s)\gamma(\sigma^*\times\tau,\psi,s).
\end{align*}
Here 
the factor
$\omega_{\tau}(2\gamma)^{-1}$ appears only when $k=\smallidx$ and
$\gamma$ is a global constant depending only on the groups. When
$k=\smallidx$ we define $\gamma(\pi'\times\tau,\psi,s)=1$. The
factors $\gamma(\sigma\times\tau,\psi,s)$ and
$\gamma(\sigma^*\times\tau,\psi,s)$ are the
$\GL{k}\times\GL{\bigidx}$ $\gamma$-factors of Jacquet,
Piatetski-Shapiro and Shalika \cite{JPSS}. 
\end{theorem}


Assume that $\pi$ and $\tau$ are irreducible and note that Shahidi
defined the $\gamma$-factor only for irreducible representations.
Then we can define a normalized $\gamma$-factor
$\Gamma(\pi\times\tau,\psi,s)$. Roughly, we normalize
$\gamma(\pi\times\tau,\psi,s)$ by the central characters appearing in Theorem~\ref{theorem:multiplicity first var}. In
the case where $\pi$ is a representation of $SO_0=\{1\}$ define
$\Gamma(\pi\times\tau,\psi,s)=1$. When $\pi$ and $\tau$ have trivial central characters
and $G_{\smallidx}$ is split, $\Gamma(\pi\times\tau,\psi,s)=\gamma(\pi\times\tau,\psi,s)$. In
Section~\ref{subsection:the gamma factor} we prove,
\begin{theorem}\label{theorem:main multiplicative properties}
Let $\sigma_1\otimes\ldots\otimes\sigma_m\otimes\pi'$ be an
irreducible representation of
$\GL{k_1}\times\ldots\times\GL{k_m}\times SO_{2(\smallidx-k)}$, with
$k=k_1+\ldots+k_m$ ($0\leq k\leq\smallidx$), and $\pi$ be an
irreducible quotient of a representation parabolically induced from
$\sigma_1\otimes\ldots\otimes\sigma_m\otimes\pi'$. Let
$\tau_1\otimes\ldots\otimes\tau_a$ be an irreducible representation
of $\GL{\bigidx_1}\times\ldots\times\GL{\bigidx_a}$ ($1\leq
a\leq\bigidx$) and $\tau$ be an irreducible quotient of a
representation parabolically induced from
$\tau_1\otimes\ldots\otimes\tau_a$. Then
\begin{align*}
&\Gamma(\pi\times\tau,\psi,s)=\prod_{i=1}^a\Gamma(\pi\times\tau_i,\psi,s),\\
&\Gamma(\pi\times\tau,\psi,s)=\Gamma(\pi'\times\tau,\psi,s)\prod_{i=1}^m\gamma(\sigma_i\times\tau,\psi,s)\gamma(\sigma_i^*\times\tau,\psi,s).
\end{align*}
\end{theorem}
In Chapter~\ref{chapter:archimedean results} we briefly describe how
to adapt the \archimedean\ results of Soudry \cite{Soudry3} to our
integrals. As explained above we conclude,
\begin{corollary}\label{corollary:gamma is identical to local coeff in split case}
For 
irreducible representations $\pi$ of $SO_{2\smallidx}$ and $\tau$ of $\GL{\bigidx}$,
$\Gamma(\pi\times\tau,\psi,s)$ is identical with Shahidi's 
$\gamma$-factor.\qed
\end{corollary}
Our definition of $\Gamma(\pi\times\tau,\psi,s)$ implies that 
$\gamma(\pi\times\tau,\psi,s)$ is identical with Shahidi's 
$\gamma$-factor, up to a \nonzero\ complex constant.

We proceed to define the g.c.d. and $\epsilon$-factor. The
notion of a g.c.d. of Rankin-Selberg integrals was introduced by
Jacquet, Piatetski-Shapiro and Shalika \cite{JPSS}, who defined the
$L$-function of the representations as the g.c.d. of their local Rankin-Selberg convolutions.

Let $\tau$ be an irreducible representation ($\pi$ may be reducible, but
finitely generated). Following the method of Piatetski-Shapiro and
Rallis \cite{PR2,PS} we define the set of ``good sections",
\begin{align*}
\xi(\tau,good,s)=\xi(\tau,hol,s)\cup
\nintertwiningfull{\tau^*}{1-s}\xi(\tau^*,hol,1-s).
\end{align*}
The Rankin-Selberg integrals $\Psi(W,f_s,s)$, where $W$ is a
Whittaker function for $\pi$ and $f_s\in\xi(\tau,good,s)$, span a
fractional ideal $\mathcal{I}_{\pi\times\tau}(s)$ of
$\C[q^{-s},q^s]$ which contains the constant $1$, and its unique generator, in the form
$P(q^{-s})^{-1}$ for $P\in\C[X]$ such that $P(0)=1$, is what we call
the g.c.d. of the integrals, $\gcd(\pi\times\tau,s)$. We prove that the g.c.d. does not depend on $\psi$. The
corresponding functional equation that defines
$\epsilon(\pi\times\tau,\psi,s)$ (an exponential) is
\begin{align*}
\frac{\Psi(W,\nintertwiningfull{\tau}{s}f_s,1-s)}{\gcd(\pi\times\tau^*,1-s)}
=\epsilon(\pi\times\tau,\psi,s)\frac{\Psi(W,f_s,s)}{\gcd(\pi\times\tau,s)}
\end{align*}
(here we omitted a normalization factor, the actual equation is given by \eqref{eq:epsilon
def}). We establish several properties of the g.c.d. including lower and
upper bounds and in certain cases we are able to calculate it
explicitly.

The $L$-function of $\pi\times\tau$ is already defined due to the
works of Shahidi on his method of local coefficients (e.g.
\cite{Sh4,Sh3}). Under a certain mild assumption we can show that
in the tempered case, the g.c.d. is equal to this $L$-function.

Throughout, tempered representations are assumed to be irreducible.
Here is our main result on the g.c.d., proved in
Section~\ref{subsection:tempered}. We recall that our representations are always assumed to be generic.
\newpage
\begin{theorem}\label{theorem:gcd for tempered reps}
Let $\pi$ and $\tau$ be tempered representations. 
Let $L(\pi\times\tau,s)$ be the 
$L$-function attached to $\pi$
and $\tau$ by Shahidi. Then $L(\pi\times\tau,s)^{-1}$ divides
$\gcd(\pi\times\tau,s)^{-1}$ and $\gcd(\pi\times\tau,s)\in
L(\pi\times\tau,s)M_{\tau}(s)\C[q^{-s},q^s]$, where
$M_{\tau}(s)$ (defined in Section~\ref{subsection:the factors M
tau s}) contains the poles of $\nintertwiningfull{\tau}{s}$ and
$\nintertwiningfull{\tau^*}{1-s}$,
$M_{\tau}(s)^{-1}\in\C[q^{-s},q^s]$.
Moreover, if the operators
$L(\tau,Sym^2,2s-1)^{-1}\intertwiningfull{\tau}{s}$ and
$L(\tau^*,Sym^2,1-2s)^{-1}\intertwiningfull{\tau^*}{1-s}$ are
holomorphic, 
\begin{align*}
\gcd(\pi\times\tau,s)=L(\pi\times\tau,s).
\end{align*}
\end{theorem}
According to a result of Casselman and Shahidi \cite{CSh}
(Theorem~5.1), $L(\tau,Sym^2,2s-1)^{-1}\intertwiningfull{\tau}{s}$ is
holomorphic for an irreducible supercuspidal $\tau$, and is expected
to be holomorphic for a tempered $\tau$. Therefore,
Theorem~\ref{theorem:gcd for tempered reps} states that in the
tempered case, under a reasonable assumption on the operators
(holding in the supercuspidal case), the g.c.d. definition gives the
``usual" $L$-function. Without the assumption, the g.c.d. and this $L$-function
agree up to the poles of $M_{\tau}(s)$.

An essentially tempered representation $\tau$ is a representation of the form $\absdet{}^v\tau_0$ where
$v\in\C$ and $\tau_0$ is tempered. For instance, an irreducible supercuspidal $\tau$ is in particular essentially tempered.
In Section~\ref{subsection:tempered} we will derive the following immediate
consequence of Theorem~\ref{theorem:gcd for tempered reps} and the definitions.
\begin{corollary}\label{corollary:tempered theorem holds for essentially tempered}
Theorem~\ref{theorem:gcd for tempered reps} also holds if $\tau$ is essentially tempered.
\end{corollary}

We restate Theorem~\ref{theorem:gcd for tempered reps} for the case where it
holds unconditionally.
\begin{corollary}\label{corollary:gcd for pi tempered tau supercuspidal split case}
Let $\pi$ be tempered and $\tau$ be irreducible supercuspidal. 
Then $\gcd(\pi\times\tau,s)=L(\pi\times\tau,s)$.\qed
\end{corollary}

For the general case 
we have the following lower and upper bounds. In
Chapter~\ref{chapter:the gcd} we show,
\begin{theorem}\label{theorem:lower bound for gcd 2}
Let $\pi$ be irreducible and $\tau$ be tempered such that
$L(\tau,Sym^2,2s-1)^{-1}\intertwiningfull{\tau}{s}$ and
$L(\tau^*,Sym^2,1-2s)^{-1}\intertwiningfull{\tau^*}{1-s}$ are
holomorphic. Write $\pi$ as a standard module - a representation
parabolically induced from a representation $\sigma\otimes\pi'$ of
$\GL{k}\times SO_{2(\smallidx-k)}$ whose exact properties are given in Section~\ref{subsection:Lower bound for pi induced tau
tempered} (in particular, $\pi'$ is
tempered). Then if $k<\smallidx$ or $k=\smallidx>\bigidx$,
\begin{align*}
L(\sigma\times\tau,s)\gcd(\pi'\times\tau,s)L(\sigma^*\times\tau,s)\in\gcd(\pi\times\tau,s)\C[q^{-s},q^s].
\end{align*}
\end{theorem}
Note that for a representation $\pi'$ of $SO_0$ we set
$\gcd(\pi'\times\tau,s)=1$. In Chapter~\ref{chapter:upper boubnds on
the gcd} we prove,
\begin{theorem}\label{theorem:gcd sub multiplicity second var}
Let $\tau$ be an irreducible representation 
parabolically induced from a representation
$\tau_1\otimes\ldots\otimes\tau_k$ of
$\GL{\bigidx_1}\times\ldots\times\GL{\bigidx_k}$. Then
\begin{align*}
\gcd(\pi\times\tau,s)\in
(\prod_{i=1}^k\gcd(\pi\times\tau_i,s))M_{\tau_1\otimes\ldots\otimes\tau_k}(s)\C[q^{-s},q^s].
\end{align*}
\end{theorem}
\begin{theorem}\label{theorem:gcd sub multiplicity first var}
Let $\pi$ be a quotient of a representation parabolically induced
from a representation $\sigma\otimes\pi'$ of $\GL{k}\times
SO_{2(\smallidx-k)}$. Let $\tau$ be an irreducible representation as
in Theorem~\ref{theorem:gcd sub multiplicity second var}, induced
from $\tau_1\otimes\ldots\otimes\tau_a$ ($a\geq1$). Then
\begin{align*}
\gcd(\pi\times\tau,s)\in
L(\sigma\times\tau,s)(\prod_{i=1}^a\gcd(\pi'\times\tau_i,s))L(\sigma^*\times\tau,s)M_{\tau_1\otimes\ldots\otimes\tau_a}(s)\C[q^{-s},q^s].
\end{align*}
Here $L(\sigma\times\tau,s)$ and $L(\sigma^*\times\tau,s)$ are the
$L$-factors of \cite{JPSS}. In the case $k=\smallidx\leq\bigidx$,
this relation holds
under the assumptions that $\sigma$ is irreducible and $\tau$ is of
Langlands' type. 
\end{theorem}
According to Zelevinsky \cite{Z3} any irreducible generic
representation $\tau$ (of $\GL{\bigidx}$) is of the form
$\cinduced{P_{\bigidx_1,\ldots,\bigidx_a}}{\GL{\bigidx}}{\tau_1\otimes\ldots\otimes\tau_a}$
for specific inducing data and a parabolic subgroup $P_{\bigidx_1,\ldots,\bigidx_a}$, i.e., $\tau$ is an induced representation
of Langlands' type (see Section~\ref{subsection:sub mult first var proof body k=l} and \cite{JS}) also called a standard module.
Similarly, any irreducible generic representation $\pi$ is (a
standard module) of the form
$\cinduced{P}{SO_{2\smallidx}}{\sigma\otimes\pi'}$ for
an irreducible generic representation $\sigma\otimes\pi'$ of
$\GL{k}\times SO_{2(\smallidx-k)}$, where $\pi'$ is tempered and $P$ is a suitable parabolic subgroup. This
follows from the standard module conjecture of Casselman and Shahidi
\cite{CSh}, proved by Mui\'{c} \cite{Mu3} for classical groups.

The factor $M_{\tau_1\otimes\ldots\otimes\tau_k}(s)$ (defined in
Section~\ref{subsection:the factors M tau s}) designates an upper
bound for the poles of $\nintertwiningfull{\tau}{s}$ and
$\nintertwiningfull{\tau^*}{1-s}$. We also prove more detailed
versions of Theorems~\ref{theorem:gcd sub multiplicity second var}
and \ref{theorem:gcd sub multiplicity first var} (see resp.,
Corollaries~\ref{corollary:refinement gcd second var bound} and
\ref{corollary:first var k<l<n exact sub mult}).

The upper bounds are obtained by associating certain Laurent series to the integrals.
An integral $\Psi(W,f_s,s)$ is represented by a series $\sum_{m\in\Integers}a_mX^m\in\C[[X,X^{-1}]]$ if for $\Re(s)>>0$,
the series $\sum_{m\in\Integers}a_mq^{-sm}$ is absolutely convergent and equals $\Psi(W,f_s,s)$. We will construct
a series, denoted by $\Sigma(W,f_s,s)$, representing $\Psi(W,f_s,s)$.
Let $P\in\C[X,X^{-1}]$ be the polynomial obtained from $\epsilon(\pi\times\tau,\psi,s)\gcd(\pi\times\tau,s)^{-1}$ by
replacing $q^{-s}$ with $X$. Let $\widetilde{P}\in\C[X,X^{-1}]$ be obtained similarly, from $\gcd(\pi\times\tau^*,1-s)^{-1}$.
The functional equation described above can be interpreted as an equation
in $\C[[X,X^{-1}]]$, roughly
\begin{align*}
\widetilde{P}\Sigma(W,\nintertwiningfull{\tau}{s}f_s,1-s)=P\Sigma(W,f_s,s).
\end{align*}
This is actually an equality of the form $\sum_{m<\widetilde{M}}b_mX^m=\sum_{m>M}a_mX^m$ for some $\widetilde{M},M\in\Integers$ and can only be valid when
both sides are polynomials. This implies in particular that the poles of $\Psi(W,f_s,s)$ appear in $P^{-1}$. Then, for example,
Theorem~\ref{theorem:gcd sub multiplicity second var} may be derived if we can replace $P$ with a polynomial obtained from
$\gcd(\pi\times\tau_1,s)^{-1}\cdot\ldots\cdot\gcd(\pi\times\tau_k,s)^{-1}$.

The idea of using Laurent series to establish upper bounds is due to Jacquet, Piatetski-Shapiro and Shalika \cite{JPSS}.
In Section~\ref{section:integrals and series} we extend their construction and provide a general framework for converting integrals into series.
One result we obtain is a certain version of Fubini's Theorem for series (Lemma~\ref{lemma:double iterated series}). The
definitions and results are stated in a general form and can
be applied to other types of Rankin-Selberg integrals, e.g.
the integrals for $SO_{2\smallidx+1}\times\GL{\bigidx}$ of \cite{G,Soudry}.

We have the following two corollaries, demonstrating that the g.c.d.
captures the correct notion of the $L$-function, except perhaps for
additional poles due to the intertwining operator
($M_{\tau}(s)$).
\begin{corollary}\label{corollary:the correct definition for pi
general} Let $\pi$ be irreducible and write it as a standard module -
a representation
parabolically induced from a representation $\sigma\otimes\pi'$ of
$\GL{k}\times SO_{2(\smallidx-k)}$ (see Section~\ref{subsection:Lower bound for pi induced tau
tempered}). Let $\tau$ be tempered. Assume that 
the operators
$L(\tau,Sym^2,2s-1)^{-1}\intertwiningfull{\tau}{s}$ and
$L(\tau^*,Sym^2,1-2s)^{-1}\intertwiningfull{\tau^*}{1-s}$ are
holomorphic.
\begin{enumerate}
\item
$\gcd(\pi\times\tau,s)\in
L(\pi\times\tau,s)M_{\tau}(s)\C[q^{-s},q^s]$.
\item If $k<\smallidx$ or $k=\smallidx>\bigidx$, $L(\pi\times\tau,s)^{-1}$ divides $\gcd(\pi\times\tau,s)^{-1}$.
\end{enumerate}
\end{corollary}
\begin{proof}[Proof of Corollary~\ref{corollary:the correct definition for pi
general}]
Since $\pi$ is a standard module and $\tau$ is tempered,
\begin{align*}
L(\pi\times\tau,s)=L(\sigma\times\tau,s)L(\pi'\times\tau,s)L(\sigma^*\times\tau,s).
\end{align*}
Under the given assumptions, Theorem~\ref{theorem:gcd for tempered reps} applied to
$\pi'\times\tau$ implies $L(\pi'\times\tau,s)=\gcd(\pi'\times\tau,s)$. Now the first assertion
follows from Theorem~\ref{theorem:gcd sub
multiplicity first var}, which is applicable to the full range of $k,\smallidx$ and $\bigidx$ because $\sigma$ is irreducible and
$\tau$ is tempered. The second assertion follows from Theorem~\ref{theorem:lower bound for gcd 2}.
\end{proof}
\begin{corollary}\label{corollary:upper bound irreducibles}
Let $\pi$ and $\tau$ be irreducible and write
$\tau=\cinduced{P_{\bigidx_1,\ldots,\bigidx_a}}{\GL{\bigidx}}{\tau_1\otimes\ldots\otimes\tau_a}$
as a representation of Langlands' type (in particular, each $\tau_i$ is essentially tempered), $a\geq1$. Assume that for each $i$,
$L(\tau_i,Sym^2,2s-1)^{-1}\intertwiningfull{\tau_i}{s}$ and
$L(\tau_i^*,Sym^2,1-2s)^{-1}\intertwiningfull{\tau_i^*}{1-s}$ are
holomorphic. 
Then
\begin{align*}
\gcd(\pi\times\tau,s)\in
L(\pi\times\tau,s)M_{\tau_1\otimes\ldots\otimes\tau_a}(s)\C[q^{-s},q^s].
\end{align*}
\end{corollary}
\begin{proof}[Proof of Corollary~\ref{corollary:upper bound irreducibles}]
Follows immediately from Theorems~\ref{theorem:gcd sub multiplicity first var} and \ref{theorem:gcd for tempered reps} (and
Corollary~\ref{corollary:tempered theorem holds for essentially tempered}).
\end{proof}

The present work is among the few attempts so far to provide a
g.c.d. definition to the $L$-function, and the first attempt to
carry over the work of Jacquet, Piatetski-Shapiro and Shalika
\cite{JPSS} to the Rankin-Selberg convolutions of
$G\times\GL{\bigidx}$ where $G$ is a classical group. Such a study
was suggested by Gelbart and Piatetski-Shapiro \cite{GPS} (p. 136).
Our technique and results readily adapt to the integrals studied by
Ginzburg \cite{G} and Soudry \cite{Soudry}, due to the similar
nature and technical closeness of the constructions. For example, Soudry already
proved in \cite{Soudry,Soudry3,Soudry2} that
the $\gamma$-factor for $SO_{2\smallidx+1}\times\GL{\bigidx}$
defined by the Rankin-Selberg integrals is equal to Shahidi's
$\gamma$-factor. This can be used to obtain an analogue of Theorem~\ref{theorem:gcd for
tempered reps}. Partial arguments towards a result similar to
Theorem~\ref{theorem:lower bound for gcd 2} can be
found in \cite{Soudry}. Additionally,
manipulations of integrals used here to prove
Theorems~\ref{theorem:gcd sub multiplicity second var} and
\ref{theorem:gcd sub multiplicity first var} can be replaced with
results of Soudry \cite{Soudry,Soudry2} to conclude similar upper
bounds.

The results of Chapter~\ref{chapter:the integrals} on the unfolding of the global integral and proof of
Theorem~\ref{theorem:unramified computation}, for $\smallidx\leq\bigidx$, were published in \cite{E,me2}.
The multiplicativity of the $\gamma$-factor (Chapter~\ref{section:gamma_mult}) appeared in \cite{me3}.
Results of Chapters~\ref{chapter:the gcd} and \ref{chapter:upper boubnds on the gcd} regarding the g.c.d. and its
properties were published in \cite{me4}. 

\section{Thesis outline}\label{section:thesis outline}
This thesis is organized as follows.
Chapter~\ref{section:preliminaries} contains preliminaries,
notation and basic definitions. Chapter~\ref{chapter:the integrals}
is devoted to a brief description of the global setting and to the
calculations of the integrals with unramified data. In Chapter~\ref{chapter:uniqueness} we prove the uniqueness properties. The basic tools
used in the study of the integrals are developed in
Chapter~\ref{chapter:Properties of the integrals}. The local factors
are defined in Chapter~\ref{chapter:local factors}. In
Chapter~\ref{section:gamma_mult} we establish the multiplicative
properties of the $\gamma$-factor. The \archimedean\
results are described in Chapter~\ref{chapter:archimedean results}. Chapter~\ref{chapter:the gcd}
contains several results on the g.c.d., including the analysis of the
tempered case. Finally, the upper bounds on the g.c.d. are proved in
Chapter~\ref{chapter:upper boubnds on the gcd}.


\newtheorem{theorem}{Theorem}[section]
\newtheorem{proposition}{Proposition}[section]
\newtheorem{corollary}{Corollary}[section]
\newtheorem{lemma}{Lemma}[section]
\newtheorem{claim}{Claim}[section]
\theoremstyle{remark}
\newtheorem{remark}{Remark}[section]
\newtheorem{example}{Example}[section]
\theoremstyle{definition}
\newtheorem{definition}{Definition}[section]
\numberwithin{equation}{section}
\newcommand{\chapter}{\section} 

\end{comment}
\chapter{Preliminaries and Notation}\label{section:preliminaries}

\section{The groups}\label{subsection:groups in study} Let $F$ be a field of characteristic zero. For $k\geq1$
put
\begin{align*}
J_k=\left(\begin {array}{ccc}
  0 &  & 1 \\
  & \iddots &  \\
  1 &  & 0
\end {array}\right)\in\GLF{k}{F}.
\end{align*}
Let $\rho\in F^*$. If $\rho\in F^2$, define
$J_{2\smallidx,\rho}=J_{2\smallidx}$ and set $\rho=\beta^2$.
Otherwise denote
\begin{align}\label{matrix:J 2lrho}
J_{2\smallidx,\rho}=\left(\begin {array}{ccc}  &  & J_{\smallidx-1}
\\  & \begin{matrix}
   1 & 0 \\
   0 & -\rho
\end{matrix} &  \\ J_{\smallidx-1} &  &
\end {array}\right)\in\GLF{2\smallidx}{F}.
\end{align}
We use $\rho$ to define the special even orthogonal group
$SO_{2\smallidx}(F)$. Let
\begin{align*}
G_{\smallidx}(F)=SO_{2\smallidx}(F)=\setof{g\in
\GLF{2\smallidx}{F}}{\det{g}=1,\transpose{g}J_{2\smallidx,\rho}g=J_{2\smallidx,\rho}},
\end{align*}
regarded as an algebraic group over $F$. It is split when
$\rho=\beta^2$, then we will assume $|\beta|=1$. Otherwise it is
\quasisplit, i.e., \nonsplit\ over $F$ and split over a quadratic
extension of $F$ and we assume $|\rho|\geq1$. 
Also let $\gamma=\half\rho$.

\begin{remark}
One may consider \eqref{matrix:J 2lrho} also with $\rho=\beta^2$, then
the special orthogonal group is split.
\end{remark}

Except for Section~\ref{section:construction of the global integral}
and Chapter~\ref{chapter:archimedean results}, our field will always
be local \nonarchimedean. For such a field, denote by $\mathcal{O}$
the ring of integers of $F$, $\mathcal{P}=\varpi\mathcal{O}$ is the
maximal ideal and
$|\varpi|^{-1}=q=|\rmodulo{\mathcal{O}}{\mathcal{P}}|$. The
valuation of $F$ is $\vartheta$, i.e., $|a|=q^{-\vartheta(a)}$ for
$a\ne0$. Throughout, references to the field are omitted.

The following subgroups of $G_{\smallidx}$ will be used repeatedly.
Fix the Borel subgroup
$B_{G_{\smallidx}}=T_{G_{\smallidx}}\ltimes U_{G_{\smallidx}}$,
where $U_{G_{\smallidx}}$ is the subgroup of upper triangular
unipotent matrices in $G_{\smallidx}$. In the split case $T_{G_{\smallidx}}$ is composed of diagonal matrices. When $G_{\smallidx}$ is \quasisplit,
\begin{align*}
T_{G_{\smallidx}}=\setof{diag(t_1,\ldots,t_{\smallidx-1},\left(\begin{array}{cc}a&b\rho\\b&a\\\end{array}\right),t_{\smallidx-1}^{-1},\ldots,t_1^{-1})}{a^2-b^2\rho=1}.
\end{align*}
Let $P_k=L_k\ltimes V_k$ be the standard maximal parabolic subgroup
with
\begin{align*}
L_k=\setof{diag(x,y,J_k(\transpose{x^{-1}})J_k)}{x\in\GL{k},y\in
G_{\smallidx-k}}\isomorphic\GL{k}\times G_{\smallidx-k},
\end{align*}
$V_k<U_{G_{\smallidx}}$.
When $G_{\smallidx}$
is split, we have an additional standard maximal parabolic subgroup 
$\rconj{\kappa}P_{\smallidx}={\kappa}^{-1}P_{\smallidx}\kappa$ where
$\kappa=diag(I_{\smallidx-1},J_2,I_{\smallidx-1})$.
In the \quasisplit\ case the subgroups $P_k$ with $k<\smallidx$
exhaust all standard maximal parabolic subgroups.
For any parabolic subgroup $P<G_{\smallidx}$, $\overline{P}$ is the
parabolic subgroup opposite to $P$ containing the Levi part of $P$.
Specifically, if $P=LV$ where $L$ is the Levi part and $V$ is the
unipotent radical of $P$, $\overline{P}=L\overline{V}$ where
$\overline{V}$ is the unipotent subgroup opposed to $V$. Also
$\delta_P$ denotes the modulus character of $P$.

Let $\Delta_{G_{\smallidx}}$ be the set of simple roots of $G_{\smallidx}$, determined by our selection of $B_{G_{\smallidx}}$. The root system of $G_{\smallidx}$ is of type $D_{\smallidx}$ in the split case and $B_{\smallidx-1}$ in the \quasisplit\ case (see e.g. \cite{Spr} p.~261). Explicitly,
\begin{align*}
\Delta_{G_{\smallidx}}=\begin{cases}\{\epsilon_1-\epsilon_2,\ldots,\epsilon_{\smallidx-1}-\epsilon_{\smallidx},\epsilon_{\smallidx-1}+\epsilon_{\smallidx}\}&\text{split $G_{\smallidx}$,}\\
\{\epsilon_1-\epsilon_2,\ldots,\epsilon_{\smallidx-2}-\epsilon_{\smallidx-1},\epsilon_{\smallidx-1}\}&\text{\quasisplit\
$G_{\smallidx}$.}\end{cases}
\end{align*}
Here $\epsilon_i(t)=t_i$ is the $i$-th coordinate function of $t\in
T_{G_{\smallidx}}$, it is defined for $1\leq i\leq \smallidx$ (resp.
$1\leq i\leq \smallidx-1$) if $G_{\smallidx}$ is split (resp.
\quasisplit).


For any $k\leq\smallidx$, $G_{\smallidx-k}$ is naturally regarded as
a subgroup of $G_{\smallidx}$, embedded in $P_k$. Let
$K_{G_{\smallidx}}$ be a special good maximal compact open subgroup
(e.g. in the split case
$K_{G_{\smallidx}}=G_{\smallidx}(\mathcal{O})$). For any $k>0$ we
have the ``small" compact open subgroup, neighborhood of the
identity
$\mathcal{N}_{G_{\smallidx},k}=(I_{2{\smallidx}}+\MatF{2{\smallidx}\times2{\smallidx}}{\mathcal{P}^k})\cap
G_{\smallidx}$.

The special odd orthogonal group is
\begin{align*}
H_{\bigidx}=SO_{2\bigidx+1}=\setof{g\in
\GL{2\bigidx+1}}{\det{g}=1,\transpose{g}J_{2\bigidx+1}g=J_{2\bigidx+1}}.
\end{align*}

We use a notation similar to the above for $H_{\bigidx}$ (e.g.
$B_{H_{\bigidx}}=T_{H_{\bigidx}}\ltimes U_{H_{\bigidx}}$). The torus
$T_{H_{\bigidx}}$ is the subgroup of diagonal matrices in
$H_{\bigidx}$. Let $Q_k=M_k\ltimes U_k$ be the standard parabolic
subgroup with a Levi part
\begin{align*}
M_k=\setof{diag(x,y,J_k(\transpose{x^{-1}})J_k)}{x\in\GL{k},y\in
H_{\bigidx-k}}\isomorphic\GL{k}\times H_{\bigidx-k},
\end{align*}
$U_k<U_{H_{\bigidx}}$. Here $\Delta_{H_{\bigidx}}$ is of type
$B_{\bigidx}$ and we take
$K_{H_{\bigidx}}=H_{\bigidx}(\mathcal{O})$.
For $k\leq\bigidx$, $H_{\bigidx-k}$ is embedded in
$H_{\bigidx}$ through $M_{k}$. 

In the group $\GL{k}$ fix the Borel subgroup $B_{\GL{k}}=A_k\ltimes
Z_k$, $A_k$ is the diagonal subgroup and $Z_k$ is the subgroup of
upper triangular unipotent matrices. For $m>1$ and
$k_1,\ldots,k_m\geq1$ such that $k_1+\ldots+k_m=k$, let
$P_{k_1,\ldots,k_m}=A_{k_1,\ldots,k_m}\ltimes Z_{k_1,\ldots,k_m}$ be
the standard parabolic subgroup of $\GL{k}$ which corresponds to
$(k_1,\ldots,k_m)$. Its Levi part $A_{k_1,\ldots,k_m}$ is isomorphic
to $\GL{k_1}\times\ldots\times\GL{k_m}$. In the context of induced
representations, we define $P_{k_1,\ldots,k_m}$ with $m=1$ to be
$\GL{k}$. For example if we have for some $m\geq1$, a representation
$\sigma_1\otimes\ldots\otimes\sigma_m$ of $A_{k_1,\ldots,k_m}$, then
$\cinduced{P_{k_1,\ldots,k_m}}{\GL{k}}{\sigma_1\otimes\ldots\otimes\sigma_m}$
will simply be $\sigma_1$ if $m=1$. We take
$K_{\GL{k}}=\GLF{k}{\mathcal{O}}$. Also denote by $Y_k$ the
mirabolic subgroup of $\GL{k}$, i.e., the subgroup of $x\in \GL{k}$
with the last row $(0,\ldots,0,1)$. We shall frequently use the Weyl
element
$\omega_{k_1,k_2}=\bigl(\begin{smallmatrix}&I_{k_1}\\I_{k_2}&\end{smallmatrix}\bigr)\in\GL{k_1+k_2}$
($k_1,k_2\geq0$). For $x\in\GL{k}$,
$x^*=J_k(\transpose{x^{-1}})J_k$.
Oftentimes $\GL{k}$ will be regarded as a subgroup of
$G_{\smallidx}$ (resp. $H_{\bigidx}$), embedded in $L_{j}$ (resp.
$M_{j}$) for $j\geq k$ by $x\mapsto diag(x,I_{2(\smallidx-k)},x^*)$
(resp. $x\mapsto diag(x,I_{2(\bigidx-k)+1},x^*)$).

Since the image of $A_{\bigidx}$ in $M_{\bigidx}$ is
$T_{H_{\bigidx}}$, we identify $T_{H_{\bigidx}}$ with $A_{\bigidx}$.
Regarding $G_{\smallidx}$, 
$T_{G_{\smallidx}}$ is identified with 
$G_1\times A_{\smallidx-1}$.

Let $\mathcal{S}(F^k)$ be the space of Schwartz-Bruhat functions on
$F^k$.

In general we denote for a subgroup $Y$, $y\in Y$ and an element
$g$, $\rconj{g}Y=g^{-1}Yg$ and $\rconj{g}y=g^{-1}yg$ (e.g.
$\rconj{\omega_{k_2,k_1}}P_{k_2,k_1}=\overline{P_{k_1,k_2}}$).

\subsection{Embedding $G_{\smallidx}$ in $H_{\bigidx}$, $\smallidx\leq\bigidx$}\label{subsection:G_l in H_n}
The embedding is described by defining bases for the underlying
vector spaces of $H_{\bigidx}$ and $G_{\smallidx}$. Let $F^k$ be the
$k$-dimensional column space. Denote by $(,)$ the symmetric form
defined on the column space $F^{2\bigidx+1}$ by $J_{2\bigidx+1}$
(i.e. $(u,v)=\transpose{u}J_{2\bigidx+1}v$). Let
\begin{align*}
\mathcal{E}_{H_{\bigidx}}=(e_1,\ldots,e_{\bigidx},e_{\bigidx+1},e_{-\bigidx},\ldots,e_{-1})
\end{align*}
be the standard basis of $F^{2\bigidx+1}$, such that the Gram matrix
of $\mathcal{E}_{H_{\bigidx}}$ with respect to $(,)$ is
$J_{2\bigidx+1}$ (e.g.
$e_{-i}=e_{2\bigidx+2-i}$). 
Set $e_{\gamma}=e_{\bigidx}+\gamma e_{-\bigidx}$. The image of $G_{\smallidx}$ in $H_{\bigidx}$ is $SO(V)$ where $V$ is the
orthogonal complement of
\begin{align*}
\Span{F}{\{e_i,e_{-i}:1\leq
i\leq\bigidx-\smallidx\}}+\Span{F}{\{e_{\gamma}\}}.
\end{align*}

Select a basis $\mathcal{E}_{G_{\smallidx}}$ of $V$ such that its
Gram matrix (with respect to $(,)$) is $J_{2\smallidx,\rho}$.
Elements of $H_{\bigidx}$ (resp. $G_{\smallidx}$) will be written
using $\mathcal{E}_{H_{\bigidx}}$ (resp.
$\mathcal{E}_{G_{\smallidx}}$). Let $\mathcal{E}_{G_{\smallidx}}'$
be the basis of $V+\Span{F}{\{e_{\gamma}\}}$ obtained by adding
$e_{\gamma}$ to $\mathcal{E}_{G_{\smallidx}}$ as the
$(\smallidx+1)$-th vector and $M$ be the transition matrix from
$\mathcal{E}_{G_{\smallidx}}'$ to $\mathcal{E}_{H_{\smallidx}}$.

If $g=\bigl(\begin{smallmatrix}A&B\\C&D\end{smallmatrix}\bigr)\in G_{\smallidx}$ ($A,D\in\Mat{\smallidx\times\smallidx}$), we can
regard $g$ as a linear transform on $V+\Span{F}{\{e_{\gamma}\}}$ by extending it trivially on $e_{\gamma}$.
Then the coordinates of $g$ relative to $\mathcal{E}_{G_{\smallidx}}'$, denoted by $[g]_{\mathcal{E}_{G_{\smallidx}}'}$, satisfy
\begin{align*}
[g]_{\mathcal{E}_{G_{\smallidx}}'}=\left(\begin{array}{ccc}A&0&B\\0&1&0\\C&0&D\end{array}\right).
\end{align*}

To write $g\in G_{\smallidx}$ as an element of $H_{\bigidx}$, define
$ge_{\gamma}=e_{\gamma}$ and compute
\begin{align*}
[g]_{\mathcal{E}_{H_{\bigidx}}}=diag(I_{\bigidx-\smallidx},M^{-1}[g]_{\mathcal{E}_{G_{\smallidx}}'}M,I_{\bigidx-\smallidx}).
\end{align*}
We take
\begin{align*}
M=\begin{dcases}diag(I_{\smallidx-1},\left(\begin {array}{ccc}
\quarter&\frac 1 {2\beta} &-\frac 1 {2\beta^2}\\\half&0&\frac 1
{\beta^2}\\-\half\beta^2&\beta&1\\\end{array}\right),I_{\smallidx-1})&\text{split
$G_{\smallidx}$,}\\diag(I_{\smallidx-1},\left(\begin {array}{ccc}
0&1&0\\\half&0&\half\gamma^{-1}\\\half&0&-\half\gamma^{-1}\\\end{array}\right),I_{\smallidx-1})&\text{\quasisplit\
$G_{\smallidx}$.}\end{dcases}
\end{align*}
\begin{remark}\label{remark:why choose gamma}
The vector $e_{\gamma}$ is defined using $\half\rho$ (instead of
$\rho$), so that when $\rho\in F^2$ (hence $G_{\smallidx}$ is
split), the Witt index of the orthogonal complement of
$\Span{F}{\{e_{\gamma}\}}$ would be $\bigidx$. This is necessary for
embedding split $G_{\smallidx}$ in $H_{\smallidx}$.
\end{remark}
\begin{remark}\label{remark:why choose e gamma}
Assume $\smallidx<\bigidx$. Let
$N_{\bigidx-\smallidx}=Z_{\bigidx-\smallidx}\ltimes
U_{\bigidx-\smallidx}$ (here
$Z_{\bigidx-\smallidx}<M_{\bigidx-\smallidx}$). Fix some additive \nontrivial\
character $\psi$ of the field and extend it to a character of
$Z_{k}$ (for any $k\geq1$) by
$\psi(z)=\psi(\sum_{i=1}^{k-1}z_{i,i+1})$. Define a character
$\psi_{\gamma}$ of $N_{\bigidx-\smallidx}$ by
$\psi_{\gamma}(zy)=\psi(z)\psi(\transpose{e_{\bigidx-\smallidx}}ye_{\gamma})$
where $z\in Z_{\bigidx-\smallidx}$ and $y\in U_{\bigidx-\smallidx}$.
The group $G_{\smallidx}$ is embedded in $H_{\bigidx}$ so that it
normalizes $N_{\bigidx-\smallidx}$ and stabilizes $\psi_{\gamma}$.
The pair $(N_{\bigidx-\smallidx},\psi_{\gamma})$ will be used to
construct the integral for $\smallidx<\bigidx$, see
Section~\ref{subsection:The global integral for l<=n}.
\end{remark}

One property of this embedding is that although
$A_{\smallidx-1}<T_{H_{\bigidx}}$, $T_{G_{\smallidx}}$ is not a
subgroup of $T_{H_{\bigidx}}$. 
We write explicitly the form of an element from $G_1$ under the
embedding in $H_1$. Of course this form carries over to
$H_{\bigidx}$ by the embedding $H_1<H_{\bigidx}$. In the split case,
\begin{align}\label{eq:embedding of SO_2 split}
x=\left(\begin{array}{cc}b&0\\0&b^{-1}\\\end{array}\right)\mapsto\left(\begin{array}{ccc}
\half+\quarter (b+ b^{-1}) &
\frac 1 {2\beta}(b-b^{-1}) & \frac 2 {\beta^2}(\half-\quarter(b+b^{-1}))\\
\quarter\beta (b-b^{-1}) & \half (b + b^{-1}) & -\frac 1 {2\beta} (b-b^{-1})\\
\half\beta^2(\half-\quarter(b+b^{-1}))&-\quarter\beta (b-b^{-1}) &
\half+\quarter (b + b^{-1})
\\\end{array}\right).
\end{align}

Observe that if $|b|=1$ and $|2|=1$, the image lies in $K_{H_1}$
(recall that by assumption $|\beta|=1$). It will be useful to denote
$\lfloor x\rfloor=b$ if $|b|\leq1$ and $\lfloor x\rfloor=b^{-1}$
otherwise. Also let $[x]=\max(|b|,|b|^{-1})$.

We see from \eqref{eq:embedding of SO_2 split} that
$T_{G_{\smallidx}}$ is not a subgroup of $T_{H_{\bigidx}}$. However,
the following lemma shows how to write a torus element of
$G_{\smallidx}$ approximately as a torus element of $H_{\bigidx}$.

\begin{lemma}\label{lemma:torus elements in double cosets}
For any $k_0>0$ there are $h_1,h_2\in H_1$ and $k\geq k_0$ such that
for all 
$x=\bigl(\begin{smallmatrix} b&\\&b^{-1}\end{smallmatrix}\bigr)$
with $[x]>q^{k}$, in $H_1$,
\begin{align*}
x\in\begin{dcases} B_{H_1}h_1\mathcal{N}_{H_1,k_0} & \lfloor x\rfloor=b,\\
B_{H_1}h_2\mathcal{N}_{H_1,k_0} & \lfloor
x\rfloor=b^{-1}.\end{dcases}
\end{align*}
Specifically, we can write $x\in m_xu_xh_i\mathcal{N}_{H_1,k_0}$
with
\begin{align*}
&m_x=\left(\begin{array}{ccc}\lfloor x\rfloor\\&1\\&&\lfloor
x\rfloor^{-1}\\\end{array}\right),\quad
u_x=\left(\begin{array}{ccc}1&c\lfloor x\rfloor^{-1}&-\half
c^2\lfloor x\rfloor^{-2}\\&1&-c \lfloor
x\rfloor^{-1}\\&&1\\\end{array}\right),\quad
\end{align*}
\begin{align*}
c=\begin{dcases} -2\beta^{-1} & \lfloor x\rfloor=b,\\
2\beta^{-1} & \lfloor x\rfloor=b^{-1}.\end{dcases}
\end{align*}
\end{lemma}
\begin{proof}[Proof of Lemma~\ref{lemma:torus elements in double cosets}] 
Let $\epsilon=diag(\beta^{-1},1,\beta)$, let $k'\geq k_0$ be such
that
$\rconj{\epsilon^{-1}}\mathcal{N}_{H_1,k'}<\mathcal{N}_{H_1,k_0}$
and take $k''\geq k'$ such that $q^{-2k''}\leq|2|q^{-k'}$.

In $H_1$, $x$ is given by \eqref{eq:embedding of SO_2 split}. Denote
\begin{align*}
x'=\rconj{\epsilon}x=\left(\begin{array}{ccc}\half+\quarter(b+
b^{-1}) &
\half(b-b^{-1}) & 1-\half(b+b^{-1})\\
\quarter(b-b^{-1}) & \half(b + b^{-1}) & -\half(b-b^{-1})\\
\quarter-\frac18(b+b^{-1})&-\quarter(b-b^{-1})& \half+\quarter(b +
b^{-1})
\\\end{array}\right).
\end{align*}
We will exhibit $m'\in B_{H_1}$ and $h'\in H_1$ such that $m'x'\in
h'\mathcal{N}_{H_1,k'}$, whence $m=\rconj{\epsilon^{-1}}m'$ and
$h=\rconj{\epsilon^{-1}}h'$ satisfy $mx\in h\mathcal{N}_{H_1,k_0}$.
Let
\begin{align*}
&m'=\left(\begin{array}{ccc}t&tv&-\half
tv^2\\&1&-v\\&&t^{-1}\\\end{array}\right),\qquad
u(\xi)=\left(\begin{array}{ccc}
&&1\\
&-1&-\xi\\
1&-\xi&-\half\xi^2\\
\end{array}\right)\quad(\xi\in F).
\end{align*}

We select $k\geq k'$ for which $q^{-k}<|4|q^{-k'}$. Assume
$\lfloor x\rfloor=b$ and $|b|<q^{-k}$. Take
$v=2+4b(1+2\varpi^{k''})$, $t^{-1}=-8b(1+\varpi^{k''})$ and
$h'=u(2)$. 
By matrix multiplication we see that
\begin{align*}
m'x'\in h'\mathcal{N}_{H_1,k'}=\left(\begin{array}{ccc}
\mathcal{P}^{k'}&\mathcal{P}^{k'}&1+\mathcal{P}^{k'}\\
\mathcal{P}^{k'}&-1+\mathcal{P}^{k'}&-2+\mathcal{P}^{k'}\\
1+\mathcal{P}^{k'}&-2+\mathcal{P}^{k'}&-2+\mathcal{P}^{k'}\\
\end{array}\right).
\end{align*}
Note that if one assumes $|2|=1$, to verify this computation it is
enough to check the last two rows of $m'x'$, then use the fact that
$m'x'\in H_1$.

Returning to $x$ we have obtained $x\in
m^{-1}h\mathcal{N}_{H_1,k_0}$ with
\begin{align*}
&m^{-1}=\left(\begin{array}{ccc}-8b(1+\varpi^{k''})&-\frac{2(1+2b+4b\varpi^{k''})}{\beta}&*\\&1&-\frac{1+2b+4b\varpi^{k''}}{4\beta
b(1+\varpi^{k''})}\\&&-\frac1{8b(1+\varpi^{k''})}\\\end{array}\right),\\
&h=\left(\begin{array}{ccc}
&&\beta^{-2}\\
&-1&-2\beta^{-1}\\
\beta^2&-2\beta&-2\\
\end{array}\right).
\end{align*}
Then $m^{-1}=m_xu_xz$ with
\begin{align*}
z=\left(\begin{array}{ccc}-8(1+\varpi^{k''})&-\frac{4(1+2\varpi^{k''})}{\beta}&*\\&1&-\frac{1+3\varpi^{k''}+2\varpi^{2k''}}{2\beta
(1+\varpi^{k''})^2}\\&&-\frac1{8(1+\varpi^{k''})}\\\end{array}\right).
\end{align*}
Let $h_1=zh$. Then $h_1$ depends only on $k'',k',k_0$. Also $x\in
m_xu_xh_1\mathcal{N}_{H_1,k_0}$.

The case $\lfloor x\rfloor=b^{-1}$ follows similarly, with
$v=-2-4b^{-1}(1+2\varpi^{k''})$, $t^{-1}=-8b^{-1}(1+\varpi^{k''})$
and $h'=u(-2)$.
\end{proof} 
\begin{remark}\label{remark:torus elements in double cosets h_1 and
k_0 small} Put $d=diag(8,1,8^{-1})$. Following the construction of
$h_i$ in the proof we see that $zd^{-1},h\in K_{H_{1}}$. Hence if
$k_1$ satisfies $|\varpi|^{k_1}=|8|^2$ and $k_0>k_1$, we get
$\rconj{d^{-1}}\mathcal{N}_{H_1,k_0}<\mathcal{N}_{H_1,k_0-k_1}$ and
$\rconj{h_i^{-1}}\mathcal{N}_{H_1,k_0}<\mathcal{N}_{H_1,k_0-k_1}$.
It follows that for any $c>0$ there is $k_0>0$ such that
$\rconj{h_i^{-1}}\mathcal{N}_{H_1,k_0}<\mathcal{N}_{H_1,c}$ for
$i=1,2$.
\end{remark}

In the \quasisplit\ case the image is given by
\begin{align*}
\left(\begin{array}{cc}a&b\rho\\b&a\\\end{array}\right)\mapsto\left(\begin{array}{ccc}\half(1+a)&b&\half\gamma^{-1}(1-a)\\\gamma b&a&-b\\
\half\gamma(1-a)&-\gamma b&\half(1+a)\end{array}\right).
\end{align*}
It will usually be sufficient to know that the image of $G_1$ in
$H_1$ is a compact subgroup which is covered, for any $k>0$, by a
finite union $\bigcup x_i\mathcal{N}_{H_1,k}$ ($x_i\in H_1$). In the
case where data are unramified, for instance if
$F$ is an unramified extension of $\mathbb{Q}_p$ 
and $|\gamma|=|2|=1$, $1=|a^2-b^2\rho|=\max(|a|^2,|b|^2)$ hence
$|a|,|b|\leq1$ and the image of $G_1$ in
$H_1$ is contained in $K_{H_1}$.

\subsection{Embedding $H_{\bigidx}$ in $G_{\smallidx}$, $\smallidx>\bigidx$}\label{subsection:H_n in G_l}
Here $(,)$ is defined using $J_{2\smallidx,\rho}$. Let
$\mathcal{E}_{G_{\smallidx}}=(e_1,\ldots,e_{\smallidx},e_{-\smallidx},\ldots,e_{-1})$
be the standard basis of $F^{2\smallidx}$ such that the Gram matrix
of $\mathcal{E}_{G_{\smallidx}}$ with respect to $(,)$ is
$J_{2\smallidx,\rho}$. Let
\begin{align*}
e_{\gamma}=\begin{cases}
\quarter e_{\smallidx}-\gamma e_{-\smallidx} & \text{$G_{\smallidx}$ is split,} \\
\half e_{-\smallidx} & \text{$G_{\smallidx}$ is \quasisplit.}
\end{cases}
\end{align*}
The group $H_{\bigidx}$ is embedded in $G_{\smallidx}$ as $SO(V)$, where $V$ is the
orthogonal complement of
\begin{align*}
\Span{F}{\{e_i,e_{-i}:1\leq
i\leq\smallidx-\bigidx-1\}}+\Span{F}{\{e_{\gamma}\}}.
\end{align*}
If $\mathcal{E}_{H_{\bigidx}}$ is a basis of $V$ with a Gram matrix
$J_{2\bigidx+1}$ (with respect to $(,)$),
$\mathcal{E}_{H_{\bigidx}}'$ is obtained from
$\mathcal{E}_{H_{\bigidx}}$ by adding $e_{\gamma}$ as the
$(\bigidx+1)$-th vector. Extend $h\in H_{\bigidx}$ by defining
$he_{\gamma}=e_{\gamma}$, then
\begin{align*}
[h]_{\mathcal{E}_{G_{\smallidx}}}=diag(I_{\smallidx-\bigidx-1},M^{-1}[h]_{\mathcal{E}_{H_{\bigidx}}'}M,I_{\smallidx-\bigidx-1}).
\end{align*} Here
\begin{align*}
M=\begin{dcases}diag(I_{\bigidx},\left(\begin {array}{cc}
2&-\frac1{\beta^2}
\\\beta&\frac1{2\beta}\\\end{array}\right),I_{\bigidx})&\text{split $G_{\smallidx}$,}\\
diag(I_{\bigidx},\left(\begin {array}{cc} 0&2
\\1&0\\\end{array}\right),I_{\bigidx})&\text{\quasisplit\ $G_{\smallidx}$.}\end{dcases}
\end{align*}
%
\begin{remark}\label{remark:why choose e gamma l > n}
Assume $\smallidx>\bigidx+1$. Let
$N^{\smallidx-\bigidx}=Z_{\smallidx-\bigidx-1}\ltimes
V_{\smallidx-\bigidx-1}$, where
$Z_{\smallidx-\bigidx-1}<L_{\smallidx-\bigidx-1}$. Define a
character $\psi_{\gamma}$ of $N^{\smallidx-\bigidx}$ by
$\psi_{\gamma}(zy)=\psi(z)\psi(\transpose{e_{\smallidx-\bigidx-1}}ye_{\gamma})$
($z\in Z_{\smallidx-\bigidx-1}$, $y\in V_{\smallidx-\bigidx-1}$ and
$\psi$ as in Remark~\ref{remark:why choose e gamma}). Then
$H_{\bigidx}$ is embedded in $G_{\smallidx}$ in the normalizer of
$N^{\smallidx-\bigidx}$ and stabilizer of $\psi_{\gamma}$. In
Section~\ref{subsection:The global integral for l>n} we use
$N^{\smallidx-\bigidx}$ and $\psi_{\gamma}$ to construct the
integral.
\end{remark}

\section{Additional notation and symbols}\label{subsection:additional notation and symbols}
For a function $f$ defined on some set $X$, taking values in some
ring (usually in $\C$), $\support{f}$ denotes the support of $f$,
i.e., $\support{f}=\setof{x\in X}{f(x)\ne0}$. If $X$ is a
topological space, $f$ is said to be locally constant if for each
$x$ there is a neighborhood $N_x$ of $x$ such that $f(n_x)=f(x)$ for
all $n_x\in N_x$. Assume that $G$ is a topological group acting on
$X$. The function $f$ is smooth (with respect to the action of $G$
on $X$) if there is a neighborhood $O$ of the identity in $G$, such
that for all $x\in X$ and $o\in O$, $f(o\cdot x)=f(x)$. In the cases
we consider, the notions of being locally constant and smoothness
often coincide (because of the Iwasawa decomposition).

For a measure space $X$ and a measurable subset $Y\subset X$, we denote by $vol(Y)$ the measure of $Y$.

Block diagonal matrices will usually be denoted by $diag(\cdots)$.

Consider the polynomial ring $\C[q^{-s},q^s]$ and its field of
fractions $\C(q^{-s})$. We denote by $\equalun$ an equality of
polynomials or rational functions which holds up to invertible
factors of $\C[q^{-s},q^s]$. For example, $q^{-s}\equalun1$ and for
any $P\in\C[q^{-s},q^s]$ there is some $P_1\in\C[q^{-s}]$ such that
$P\equalun P_1$.

A disjoint union of sets is denoted by $\dotcup$. The sign $\subset$
refers to a weak containment of sets (containment or equality),
$\subsetneq$ denotes proper containment. The symbol $\emptyset$
designates the empty set.

\section{Representations}\label{subsection:notation for reps}
Representations will always be smooth, admissible, finitely
generated, and generic, i.e. admit unique Whittaker models. If a
representation $\pi$ has a Whittaker model with respect to a
character $\chi$, the model is denoted by $\Whittaker{\pi}{\chi}$.
If $\pi$ has a central character, it is denoted by $\omega_{\pi}$.
For any $k\geq1$, a representation of $\GL{k}$ is always assumed to have a central character.

Tempered representations are assumed to be irreducible by
definition. An essentially tempered (resp. essentially square-integrable) representation $\tau$ is a representation of the form $\absdet{}^v\tau_0$ where $v\in\C$ and $\tau_0$ is tempered (resp. square-integrable).
Supercuspidal representations are neither assumed to be
unitary nor irreducible. A unitary irreducible supercuspidal
representation is tempered. Characters are not assumed to be
unitary.

We fix a \nontrivial\ unitary additive character $\psi$ of $F$ and
construct a (unitary) character of $Z_k$ by
$z\mapsto\psi(\sum_{i=1}^{k-1}z_{i,i+1})$ ($\psi$ is non-degenerate
for $k>1$). Then $\psi$ defines a character of any subgroup of $Z_k$
by restriction.

For $s\in\C$ and $b\in\GL{\bigidx}$ denote
$\alpha^s(b)=\absdet{b}^{s-\half}$.

Let $\tau$ be a representation of $\GL{\bigidx}$ on a space $U$. For
a character $\mu$ of $\GL{\bigidx}$, the representation $\tau\mu$ of
$\GL{\bigidx}$ on $U$ is defined by $(\tau\mu)(b)=\mu(b)\tau(b)$.
Denote by $\tau^*$ the representation (of $\GL{\bigidx}$) defined on
$U$ by
$\tau^{*}(b)=\tau(b^*)$ ($b^*=J_{\bigidx}(\transpose{b^{-1}})J_{\bigidx}$).
When $\tau$ is irreducible,
$\Whittaker{\dualrep{\tau}}{\psi}=\Whittaker{\tau^*}{\psi}$ where
$\dualrep{\tau}$ is the representation contragredient to $\tau$. In
general,
\begin{align*}(\cinduced{P_{\bigidx_1,\ldots,\bigidx_k}}{\GL{\bigidx}}{\tau_1\otimes\ldots\otimes\tau_k})^*=
\cinduced{P_{\bigidx_k,\ldots,\bigidx_1}}{\GL{\bigidx}}{\tau_k^*\otimes\ldots\otimes\tau_1^*}.
\end{align*}

For a function $f$ defined on some group and $x$ an element of the
group, $\lambda(x)f$ (resp. $x\cdot f$) denotes  the
left-translation (resp. right-translation) of $f$ by $x$.

\section{Sections}\label{subsection:sections}
We recall the definition of a holomorphic section of a parabolically
induced representation, parameterized by an unramified character of
the Levi part. For a thorough treatment of this subject refer to
Waldspurger \cite{W} (Section~IV) and Mui\'{c} \cite{Mu}.

Let $\tau$ be a representation of $\GL{\bigidx}$ on a space $U$. Set
$K=K_{H_{\bigidx}}=H_{\bigidx}(\mathcal{O})$. Consider the induced
representation
$\cinduced{Q_{\bigidx}\cap K}{K}{\tau}$ whose space is denoted by
$V(\tau)=V_{Q_{\bigidx}\cap K}^{K}(\tau)$. This is the space of
functions $f:K\rightarrow U$ such that $f$ is smooth on
the right, i.e., there is some compact open subgroup $N<K$ such that
$f(ky)=f(k)$ for all $k\in K$ and $y\in Y$, and $f(qk)=\tau(q)f(k)$
for $q\in Q_{\bigidx}\cap K$.

For any fixed $s\in\C$ we have
the representation 
$\cinduced{Q_{\bigidx}}{H_{\bigidx}}{\tau\alpha^s}$ (normalized
induction) on the space
$V(\tau,s)=V_{Q_{\bigidx}}^{H_{\bigidx}}(\tau,s)$. Specifically,
$V(\tau,s)$ consists of the functions $f_s:H_{\bigidx}\rightarrow U$
such that $f_s$ is smooth on the right and
$f_s(qh)=\delta_{Q_{\bigidx}}^{\half}(q)\absdet{a}^{s-\half}\tau(a)f_s(h)$
for $q=diag(a,1,a^*)u\in Q_{\bigidx}$ ($a\in\GL{\bigidx}$, $u\in
U_{\bigidx}$).

Any $f\in V(\tau)$ can be extended to an element of $V(\tau,s)$,
according to the Iwasawa decomposition. This produces an isomorphism
between $V(\tau)$ and $V(\tau,s)$ as $K$-representation spaces. The
image of $f\in V(\tau)$ in $V(\tau,s)$ is denoted by $f_s$.

Consider the following family of functions. For $k\in K$, $N<K$ a
compact open subgroup and $v\in U$ which is invariant by
$(\rconj{k^{-1}}N)\cap Q_{\bigidx}$, define $ch_{kN,v}\in V(\tau)$
by
\begin{align}\label{func:ch kN}
ch_{kN,v}(k')=
\begin{dcases}
\tau(a)v& k'=akn, a\in Q_{\bigidx}\cap K,n\in N, \\
0& \text{otherwise}.
\end{dcases}
\end{align}
These functions span $V(\tau)$ (see 
\cite{BZ1}, 2.24). Then $ch_{kN,v}$ extends to $ch_{kN,v,s}\in
V(\tau,s)$. For $h\in H_{\bigidx}$, write $h=qk'\in Q_{\bigidx}K$
with $q=au$, $a\in\GL{\bigidx}\isomorphic M_{\bigidx}$, $u\in
U_{\bigidx}$. Then
$ch_{kN,v,s}(h)=\delta_{Q_{\bigidx}}^{\half}(q)\absdet{a}^{s-\half}\tau(a)ch_{kN,v}(k')$.

We will usually consider $s$ as a parameter. A function
$f(s,h):\C\times H_{\bigidx}\rightarrow U$ such that for all $s$,
the mapping $h\mapsto f(s,h)$ belongs to $V(\tau,s)$, is called a
section.

A section $f$ is called standard if for any fixed $k\in K$ the
function $s\mapsto f(s,k)$ is independent of $s$ (i.e., it is a
constant function). Let
$\xi_{Q_{\bigidx}}^{H_{\bigidx}}(\tau,std,s)$ be the space of
standard sections. The elements of this space are precisely the
functions $(s,h)\mapsto f_s(h)$ where $f\in V(\tau)$. The subgroup
$K$ acts on this space by $k\cdot f(s,h)=f(s,hk)$. For
$f\in\xi_{Q_{\bigidx}}^{H_{\bigidx}}(\tau,std,s)$ there is a compact
open subgroup $N<K$ such that $f(s,hy)=f(s,h)$ for all $s\in\C$,
$h\in H_{\bigidx}$ and $y\in N$. Furthermore, there is a subset
$B\subset H_{\bigidx}$ such that for all $s$, the support of the
function $h\mapsto f(s,h)$ equals $B$. The subset $B$ is invariant
for multiplication on the right by $N$ and on the left by
$Q_{\bigidx}$.

We usually pick a section $f$ and study the function $h\mapsto
f(s,h)$ as $s$ varies. Therefore we introduce the following
convention. We a priori write $f_s$ instead of $f$ and think of $s$
as a parameter. So for any $f_s\in
\xi_{Q_{\bigidx}}^{H_{\bigidx}}(\tau,std,s)$ there is a compact open
subgroup $N<K$ independent of $s$, such that $f_s$ is
right-invariant by $N$, and the support of $f_s$ in $H_{\bigidx}$ is
independent of $s$. In order to obtain a concrete function on
$H_{\bigidx}$, we must fix $s$. Then we say that for a fixed $s$,
$f_s\in V(\tau,s)$.

The space of holomorphic sections is
$\xi_{Q_{\bigidx}}^{H_{\bigidx}}(\tau,hol,s)=\C[q^{-s},q^{s}]\otimes_{\C}\xi_{Q_{\bigidx}}^{H_{\bigidx}}(\tau,std,s)$.
We abbreviate
$\xi(\tau,\cdot,s)=\xi_{Q_{\bigidx}}^{H_{\bigidx}}(\tau,\cdot,s)$. A
holomorphic section takes the form
$f_s=\sum_{i=1}^mP_i(q^{-s},q^{s})f_s^{(i)}$ where
$P_i\in\C[q^{-s},q^{s}]$, $f_s^{(i)}\in\xi(\tau,std,s)$. The
following claim shows that $\xi(\tau,hol,s)$ is an
$H_{\bigidx}$-space, where $H_{\bigidx}$ acts by right-translations
on the second component of the tensors.
\begin{claim}\label{claim:translation of standard section is nearly standard}
For any $f_s\in\xi(\tau,hol,s)$ and $h\in H_{\bigidx}$, $h\cdot f_s\in\xi(\tau,hol,s)$.
\end{claim}
\begin{proof}[Proof of Claim~\ref{claim:translation of standard section is nearly standard}] 
It is enough to prove the claim for $f_s\in \xi(\tau,std,s)$. Take
$N<K$ as above. Given $h\in H_{\bigidx}$, $h\cdot f_s$ is
right-invariant on $N_h=(\rconj{h^{-1}}N)\cap K$. Let
$k_1,\ldots,k_m\in K$ be distinct representatives of the double
coset space $\rmodulo{\lmodulo{Q_{\bigidx}}{H_{\bigidx}}}{N_h}$. For
any $k_i$, write $k_ih=q_ik_i'\in Q_{\bigidx}K$, $q_i=a_iu_i$ with $a_i\in\GL{\bigidx}$, $u_i\in U_{\bigidx}$. Then
\begin{align*}
h\cdot
f_s(k_i)=f_s(k_ih)=\delta_{Q_{\bigidx}}^{\half}(q_i)\absdet{a_i}^{s-\half}\tau(a_i)
f_s(k_i').
\end{align*}
Set $P_i=\delta_{Q_{\bigidx}}^{\half}(q_i)\absdet{a_i}^{s-\half}\in\C[q^{-s},q^s]$
and $v_i=\tau(a_i)f_s(k_i')\in U$.
Note that $v_i$ is
invariant by $(\rconj{k_i^{-1}}N_h)\cap Q_{\bigidx}$ and
furthermore, it is independent of $s$ because $k_i'\in K$ and $f_s$ is standard. Hence $ch_{k_iN_h,v_i}\in
V(\tau)$ and $h\cdot f_s=\sum_{i=1}^mP_i\cdot
ch_{k_iN_h,v_i,s}\in\xi(\tau,hol,s)$.
\end{proof} 
For an element $f_s\in\xi(\tau,hol,s)$ there is a compact open
subgroup $N<K$ independent of $s$ such that $f_s$ is right-invariant
by $N$. Whenever we take a subgroup by which $f_s$ is
right-invariant, we implicitly mean such a subgroup. The support of
$f_s$ in $H_{\bigidx}$ may depend on $s$.

Let $f_s=\sum_{i=1}^mP_if_s^{(i)}\in\xi(\tau,hol,s)$ with $0\ne
P_i\in\C[q^{-s},q^{s}]$, $f_s^{(i)}\in\xi(\tau,std,s)$. Pick a
subgroup $N$ such that $f_s^{(i)}$ is right-invariant by $N$ for all
$i$. Then if $k_1,\ldots,k_b\in K$ are distinct representatives for
$\rmodulo{\lmodulo{Q_{\bigidx}}{H_{\bigidx}}}{N}$,
$f_s=\sum_{i,j}P_ich_{k_jN,v_{i,j},s}$. Let $U^{k_j}\subset U$ be
the subspace of vectors fixed by $(\rconj{k_j^{-1}}N)\cap Q_{\bigidx}$.
Since the function $\phi:U^{k_j}\rightarrow V(\tau)$ given by
$\phi(v)=ch_{k_jN,v}$ is linear (see \eqref{func:ch kN}), we may rewrite $f_s$ so that for
each $j$, the \nonzero\ vectors in $\{v_{1,j},\ldots,v_{m,j}\}$ are
linearly independent. Consequently $f_s$ can be written as
\begin{align}\label{eq:holomorphic section as combination of disjoint standard sections}
f_s=\sum_{i=1}^{m'}P_i'\cdot ch_{k_i'N,v_i',s},
\end{align}
with $0\ne P_i'\in\C[q^{-s},q^{s}]$, $k_i'\in\{k_1,\ldots,k_b\}$
(the double cosets $Q_{\bigidx}k_i'N$ are not necessarily disjoint)
and the data $(m',k_i',N,v_i')$ do not depend on $s$. Moreover, if
$\{i_1,\ldots,i_c\}$ is a set of indices satisfying
$k_{i_1}'=\ldots=k_{i_c}'$, then $v_{i_1}',\ldots,v_{i_c}'$ are linearly
independent. 

We will mostly be dealing with either holomorphic sections or the
images of such, under specific intertwining operators. This leads us
to consider the space 
$\xi_{Q_{\bigidx}}^{H_{\bigidx}}(\tau,rat,s)=\C(q^{-s})\otimes_{\C}\xi(\tau,std,s)$
of rational sections. It is also an $H_{\bigidx}$-space.

In a slightly more general context, let $G$ be one of the groups
defined in Section~\ref{subsection:groups in study} and $P<G$ be a
parabolic subgroup with a Levi part
$L\isomorphic\GL{k_1}\times\ldots\times\GL{k_m}\times G'$, where
$G'$ is either the trivial group $\{1\}$ or a group of the same type
as $G$. Assume that $\tau$ is a representation of $L$ on a space
$U$. An $m$-tuple $\underline{s}=(s_1,\ldots,s_m)\in\C^m$ defines an
unramified character $\alpha^{\underline{s}}$ of $L$ by
$(g_1,\ldots,g_m,g')\mapsto\alpha^{s_1}(g_1)\cdot\ldots\cdot\alpha^{s_m}(g_m)$.
The space $V_{P\cap K_G}^{K_G}(\tau)$ of the representation
$\cinduced{P\cap K_G}{K_G}{\tau}$ is isomorphic to the space
$V_{P}^{G}(\tau,\underline{s})$ of
$\cinduced{P}{G}{\tau\alpha^{\underline{s}}}$ (as $K_G$-spaces). A
function $f(\underline{s},g):\C^m\times G\rightarrow U$ such that
for all $\underline{s}$, the mapping $g\mapsto f(\underline{s},g)$
belongs to $V_{P}^{G}(\tau,\underline{s})$ is called a section. The
space of standard sections $\xi_{P}^{G}(\tau,std,\underline{s})$
contains precisely the sections $f$ such that for all $k\in K_G$,
the function $\underline{s}\mapsto f(\underline{s},k)$ is constant.
The holomorphic sections are the elements of
$\xi_{P}^{G}(\tau,hol,\underline{s})=\C[q^{\mp s_1},\ldots,q^{\mp
s_m}]\otimes_{\C}\xi_{P}^{G}(\tau,std,\underline{s})$ and rational
sections are defined by
$\xi_{P}^{G}(\tau,rat,\underline{s})=\C(q^{-s_1},\ldots,q^{-s_m})\otimes_{\C}\xi_{P}^{G}(\tau,std,\underline{s})$.
Note that $K_G$ may be chosen so that
$\frestrict{\alpha^{\underline{s}}}{K_G\cap L}\equiv1$.

\section{Properties of Whittaker functions}\label{section:whittaker props}
Let $G$ be one of the groups defined in
Section~\ref{subsection:groups in study}, denote by $T_G$ the
maximal torus of $G$ and let $\pi$ be a representation of $G$
realized in a Whittaker model $\Whittaker{\pi}{\chi}$. We state a
few properties of the Whittaker functions of
$\Whittaker{\pi}{\chi}$, that will be used repeatedly.
\begin{enumerate}[leftmargin=*]
\item A Whittaker function vanishes away from zero: let $W\in\Whittaker{\pi}{\chi}$. There is a constant $c>0$, depending on $W$, such that for all $t\in T_G$, $W(t)=0$ unless $|\alpha(t)|<c$ for all $\alpha\in\Delta_G$. This is a result of Casselman and Shalika \cite{CS2}
(Proposition~6.1). Actually, the constant $c$ in the proof depends
only on the subgroup $\mathcal{N}_{G,k}<G$ which fixes $W$ and on
$\chi$ ($\mathcal{N}_{G,k}$ was defined in Section~\ref{subsection:groups in study}, $k$ is a constant depending on $W$). This implies the following. Let $u$ be a complex parameter.
Assume that for all $u$, $W_u\in\Whittaker{\pi\alpha^u}{\chi}$ and
furthermore, there is a constant $k>0$ such that for all $u$, $W_u$
is right-invariant by $\mathcal{N}_{G,k}$. Then there is some $c$
satisfying for all $u$, $W_u(t)=0$ unless $|\alpha(t)|<c$ for all
$\alpha\in\Delta_G$.

\item The asymptotic expansion of Whittaker functions: the restriction of a Whittaker function to a torus can be written in a simple, convenient form.
First we recall the notion of a finite function. A finite function
on $A_k$ is a locally constant complex-valued function $f$ such that
$\Span{\C}{\setof{a\cdot f}{a\in A_k}}$ is a finite-dimensional
vector space. Any finite function is equal to a finite sum of
products $\prod_{i=1}^{k}\eta_i$ where each $\eta_i$ is a finite
function on $F^*$. Moreover, any finite function $f$ can be
expressed as
$f(a)=\sum_{\xi\in C}\xi(a)P_{\xi}(a)$,
where $C$ is a finite set of characters of $F^*$ and $P_{\xi}$ is a
polynomial in the valuation vector of $a$. I.e., $P_{\xi}(a)$ is a
finite sum of products $\prod_{i=1}^k\vartheta(a_i)^{m_i}$
($a=diag(a_1,\ldots,a_k)$) with $0\leq m_i\in\Integers$. 
See Waldspurger \cite{W} (Section~I.2).

Assume that $G$ is neither $\GL{1}$ nor $G_1$ (in these cases the
form of Whittaker functions is trivial). Write
$\Delta_{G}=\{\alpha_1,\ldots,\alpha_{m}\}$. Let $T_0$ denote the
maximal split torus of $G$, i.e., if $G=\GL{\bigidx}, H_{\bigidx}$
or split $G_{\smallidx}$, $T_0=T_G$. If $G=G_{\smallidx}$ is
\quasisplit, $T_0=A_{\smallidx-1}<T_{G_{\smallidx}}$. There is a
finite set $\mathcal{A}_{\pi}$ of finite functions on $F^*$, such
that for any $W\in\Whittaker{\pi}{\chi}$ there exist functions
$\phi_{\eta_1,\ldots,\eta_m}\in\mathcal{S}(F^{m})$ defined for any
$m$ elements $\eta_1,\ldots,\eta_m\in\mathcal{A}_{\pi}$, such that
for all $t\in T_0$,
\begin{align*}
W(t)=\sum_{\eta_1,\ldots,\eta_m\in\mathcal{A}_{\pi}}\phi_{\eta_1,\ldots,\eta_{m}}(\alpha_1(t),\ldots,\alpha_{m}(t))\prod_{i=1}^{m}\eta_i(\alpha_i(t)).
\end{align*}
In the case $G=\GL{\bigidx}$ this is a result of Jacquet,
Piatetski-Shapiro and Shalika \cite{JPSS2} (Section~2.2). The case
of $G_{\smallidx}$ can be deduced from their general results, as
done by Soudry \cite{Soudry} (Section~2) for $SO_{2\smallidx+1}$.
Note that in the \quasisplit\ case we will usually reduce integrals
over $T_{G_{\smallidx}}$ to integrals over $A_{\smallidx-1}$, so we
only need to evaluate $W$ on $T_0$.
\end{enumerate}

\section{A realization of $\cinduced{Q_{\bigidx_1,\bigidx_2}}{H_{\bigidx}}{(\tau_1\otimes\tau_2)\alpha^{(s_1,s_2)}}$}\label{subsection:realization of induced tau_1 and tau_2}
Let $\tau_i$ be a representation of $\GL{\bigidx_i}$, $i=1,2$, and
set $\bigidx=\bigidx_1+\bigidx_2$. Let
$Q_{\bigidx_1,\bigidx_2}<H_{\bigidx}$ be the standard parabolic
subgroup whose Levi part is
\begin{align*}
\setof{diag(b_1,b_2,1,b_2^*,b_1^*)}{b_i\in\GL{\bigidx_i}}\isomorphic
\GL{\bigidx_1}\times\GL{\bigidx_2}.
\end{align*}
Consider the induced representation
\begin{align*}
\Pi_{s_1,s_2}=\cinduced{Q_{\bigidx_1,\bigidx_2}}{H_{\bigidx}}{(\tau_1\otimes\tau_2)\alpha^{(s_1,s_2)}},
\end{align*}
where $s_1$ and $s_2$ are complex parameters (i.e.,
$\underline{s}=(s_1,s_2)$). As explained in
Section~\ref{subsection:sections} we have spaces of standard,
holomorphic and rational sections for $\Pi_{s_1,s_2}$. Consider also
\begin{align*}
\Pi_{s_1,s_2}'=\cinduced{Q_{\bigidx_1}}{H_{\bigidx}}{(\tau_1\otimes
\cinduced{Q_{\bigidx_2}}{H_{\bigidx_2}}{\tau_2\alpha^{s_2}})\alpha^{s_1}}
\end{align*}
(for $b_1\in\GL{\bigidx_1}$ and $h_2\in H_{\bigidx_2}$,
$\alpha^{s_1}(diag(b_1,h_2,b_1^*))=\alpha^{s_1}(b_1)$). These
representations are isomorphic via
\begin{align}\label{define iso}
(h,b_1,h_2,b_2)\mapsto f(h_2h,b_1,b_2),\qquad (h,b_1,b_2)\mapsto
\varphi(h,b_1,I_{2\bigidx_2+1},b_2).
\end{align}
Here $f$ belongs to the space of $\Pi_{s_1,s_2}$, $\varphi$ belongs
to the space of $\Pi_{s_1,s_2}'$, $h\in H_{\bigidx}$, $h_2\in
H_{\bigidx_2}$ and $b_i\in\GL{\bigidx_i}$. Furthermore \eqref{define
iso} defines isomorphisms between
\begin{align*}
\Pi=\cinduced{Q_{\bigidx_1,\bigidx_2}\cap
K_{H_{\bigidx}}}{K_{H_{\bigidx}}}{\tau_1\otimes\tau_2}
\end{align*}
and
\begin{align*}
\Pi'=\cinduced{Q_{\bigidx_1}\cap
K_{H_{\bigidx}}}{K_{H_{\bigidx}}}{\tau_1\otimes
\cinduced{Q_{\bigidx_2}\cap
K_{H_{\bigidx_2}}}{K_{H_{\bigidx_2}}}{\tau_2}}.
\end{align*}
Then for $\varphi$ in the space of $\Pi'$, we can first regard it as
a function in the space of $\Pi$, extend it to a function in the
space of $\Pi_{s_1,s_2}$ using the Iwasawa decomposition and finally
consider it as a function in the space of $\Pi_{s_1,s_2}'$.
Therefore we may realize the space
$\xi(\tau_1\otimes\tau_2,std,(s_1,s_2))=\xi_{Q_{\bigidx_1,\bigidx_2}}^{H_{\bigidx}}(\tau_1\otimes\tau_2,std,(s_1,s_2))$
by extending functions of the space of $\Pi'$ to functions of the
space of $\Pi_{s_1,s_2}'$. Then we can also realize
$\xi(\tau_1\otimes\tau_2,hol,(s_1,s_2))$ as the space of elements
$\sum_{i=1}^mP_i\varphi_{s_1,s_2}^{(i)}$ where $P_i\in\C[q^{\mp
s_1},q^{\mp s_2}]$, $\varphi_{s_1,s_2}^{(i)}\in
\xi(\tau_1\otimes\tau_2,std,(s_1,s_2))$ and similarly realize
$\xi(\tau_1\otimes\tau_2,rat,(s_1,s_2))$.

\section{The intertwining operators}\label{subsection:the intertwining operator}

\subsection{Intertwining operator for $\cinduced{Q_{\bigidx}}{H_{\bigidx}}{\tau\alpha^s}$}\label{subsection:the intertwining operator for tau}
Let $\tau$ be a representation of $\GL{\bigidx}$ realized in $\Whittaker{\tau}{\psi}$. 
Let
$\intertwiningfull{\tau}{s}:V_{Q_{\bigidx}}^{H_{\bigidx}}(\tau,s)\rightarrow
V_{Q_{\bigidx}}^{H_{\bigidx}}(\tau^*,1-s)$ be the standard
intertwining operator. It is given (formally) by the integral
\begin{align*}
\intertwiningfull{\tau}{s}f_s(h,b)=\int_{U_{\bigidx}}f_s(w_{\bigidx}uh,d_{\bigidx}b^*)du \qquad(h\in H_{\bigidx},b\in\GL{\bigidx}).
\end{align*}
Here
\begin{align*}
w_{\bigidx}=\left(\begin{array}{ccc}&&I_{\bigidx}\\&(-1)^{\bigidx}&\\I_{\bigidx}\\\end{array}\right),\qquad
d_{\bigidx}=diag(-1,1,\ldots,(-1)^{\bigidx})\in\GL{\bigidx}.
\end{align*}
Note that $\tau^*$ is realized in $\Whittaker{\tau^*}{\psi}$. This
integral converges absolutely for $\Re(s)>>0$ and has a meromorphic
continuation to a function in $\C(q^{-s})$. If
$f_s\in\xi(\tau,rat,s)$,
$\intertwiningfull{\tau}{s}f_s\in\xi(\tau^*,rat,1-s)$. Moreover
there is a polynomial $0\ne P\in\C[q^{-s}]$ such that
$P\intertwiningfull{\tau}{s}$ is a holomorphic operator, i.e.,
$P\intertwiningfull{\tau}{s}f_s\in\xi(\tau^*,hol,1-s)$ for any
$f_s\in\xi(\tau,hol,s)$. Since $\tau$ is generic, there is a
proportionality factor $\gamma(\tau,Sym^2,\psi,2s-1)$ such that for
all $f_s\in\xi(\tau,rat,s)$,
\begin{align*}
&\int_{U_{\bigidx}}f_s(d_{\bigidx}w_{\bigidx}u,1)\psi^{-1}(u_{\bigidx,\bigidx+1})du\\\notag&=\gamma(\tau,Sym^2,\psi,2s-1)\int_{U_{\bigidx}}\intertwiningfull{\tau}{s}
f_s(d_{\bigidx}w_{\bigidx}u,1)\psi^{-1}(u_{\bigidx,\bigidx+1})du.
\end{align*}
This is an equality of meromorphic continuations in $\C(q^{-s})$.
Note that the \lhs\ 
defines a Whittaker functional on $V(\tau,s)$ with respect to the
character $\psi(u)=\psi(\sum_{i=1}^{\bigidx}u_{i,i+1})$ of
$U_{H_{\bigidx}}$. This is Shahidi's functional equation (\cite{Sh4}
Theorem~3.1), the factor $\gamma(\tau,Sym^2,\psi,2s-1)$ is Shahidi's
local coefficient defined in \textit{loc. cit.} and $Sym^2$ is the
symmetric square representation.

Denote by $\nintertwiningfull{\tau}{s}$ the standard normalized
intertwining operator,
\begin{align*}
\nintertwiningfull{\tau}{s}=\gamma(\tau,Sym^2,\psi,2s-1)\intertwiningfull{\tau}{s}.
\end{align*}
Then Shahidi's functional equation takes the form
\begin{align}\label{eq:Shahidi local coefficient def}
\int_{U_{\bigidx}}f_s(d_{\bigidx}w_{\bigidx}u,1)\psi^{-1}(u_{\bigidx,\bigidx+1})du=\int_{U_{\bigidx}}\nintertwiningfull{\tau}{s}
f_s(d_{\bigidx}w_{\bigidx}u,1)\psi^{-1}(u_{\bigidx,\bigidx+1})du.
\end{align}

The properties stated above are usually formulated for an
irreducible $\tau$ (e.g. \cite{Sh4,Mu}). However, as shown by
Waldspurger \cite{W} (Section~IV), our assumptions on $\tau$ -
namely that it is smooth, admissible and finitely generated (see
Section~\ref{subsection:notation for reps}) are sufficient to deduce
the absolute convergence in a right half-plane and the meromorphic
properties. The functional equation also holds regardless of the
irreducibility condition, because $\tau$ is generic. The additional
assumption that $\tau$ has a central character $\omega_{\tau}$ is
needed in order to obtain simple multiplicative formulas, see for
example Section~\ref{subsection:the multiplicativity of the
intertwining operator for tau induced}.

Let $P\in\C[X]$ be a polynomial of minimal degree, with $P(0)=1$,
such that $P(q^{-s})\nintertwiningfull{\tau}{s}$ is a holomorphic
operator. We set $\ell_{\tau}(s)=P(q^{-s})^{-1}$. Then for any
$f_s\in\xi(\tau,hol,s)$ there exists
$f_{1-s}^*\in\xi(\tau^*,hol,1-s)$ such that
$\nintertwiningfull{\tau}{s}f_s=\ell_{\tau}(s)f_{1-s}^*$. Note that
$\ell_{\tau^*}(1-s)^{-1}\in\C[q^s]$.

From here until the end of this section assume that $\tau$ is
irreducible. Since $\tau$ is also generic, according to Shahidi
\cite{Sh4} Section~3,
\begin{align}\label{eq:multiplication of normalized intertwiners}
\nintertwiningfull{\tau}{s}\nintertwiningfull{\tau^*}{1-s}=1.
\end{align}
To explain this, note that the irreducibility of $\tau$ implies that
$\nintertwiningfull{\tau}{s}\nintertwiningfull{\tau^*}{1-s}$ is a
\nonzero\ scalar $c(s)$ multiplied by the identity mapping, for
almost all $s$ (\cite{Sh4} Section~2). Then if
$\whittakerfunctional$ denotes the Jacquet integral on the \lhs\ of
\eqref{eq:Shahidi local coefficient def} and $f_{1-s}'\in
V(\tau^*,1-s)$, applying \eqref{eq:Shahidi local coefficient def}
twice yields
\begin{align*}
\whittakerfunctional(f_{1-s}')=\whittakerfunctional(\nintertwiningfull{\tau^*}{1-s}f_{1-s}')=
\whittakerfunctional(\nintertwiningfull{\tau}{s}\nintertwiningfull{\tau^*}{1-s}f_{1-s}')=c(s)\whittakerfunctional(f_{1-s}').
\end{align*}
Hence for almost all $s$, $c(s)=1$, implying
\eqref{eq:multiplication of normalized intertwiners}.

We collect a few results regarding the poles of the intertwining
operator. The $L$-group of the Levi part of $Q_{\bigidx}$ is
$\GLF{\bigidx}{\C}$. The adjoint action of $\GLF{\bigidx}{\C}$ on
the Lie algebra of the $L$-group of $U_{\bigidx}$ is $Sym^2$, which
is irreducible (see e.g. \cite{Sh5} p. 5). Therefore, in this case
the local coefficient $\gamma(\tau,Sym^2,\psi,2s-1)$ and Shahidi's
$\gamma$-factor are equal, up to a unit in $\C[q^{-s},q^s]$
(\cite{Sh3} Theorem~3.5).
According to Shahidi \cite{Sh3} (Section~7), 
\begin{align}\label{eq:gamma up to units}
\gamma(\tau,Sym^2,\psi,2s-1)\equalun\frac{L(\tau^*,Sym^2,2-2s)}{L(\tau,Sym^2,2s-1)}
\end{align}
($\equalun$ means up to invertible factors in $\C[q^{-s},q^s]$).

We have the following result of Shahidi (\textit{loc. cit.}
Proposition~7.2a),
\begin{theorem}\label{theorem:tempered L function holomorphic for half plane}
For a tempered $\tau$, $L(\tau,Sym^2,s)$ is holomorphic for
$\Re(s)>0$.
\end{theorem}
This 
is a part of a more general conjecture of Shahidi (\textit{loc. cit.} Conjecture~7.1), proved by Casselman and Shahidi \cite{CSh} (Section~4) in the broad context of  groups of classical type (e.g. classical groups).

The following result of Casselman and Shahidi \cite{CSh}
(Theorem~5.1) will be used to bound the poles of the intertwining
operator.
\begin{theorem}\label{theorem:poles of normalized intertwining for tau cuspidal}
For a supercuspidal (irreducible) $\tau$,
$L(\tau,Sym^2,2s-1)^{-1}\intertwiningfull{\tau}{s}$ is holomorphic.
\end{theorem}
\begin{remark}
The result is stated in \textit{loc. cit.} for standard modules
which satisfy injectivity at a certain level, in particular it is
valid for standard modules induced from generic irreducible
supercuspidal representations (\textit{loc. cit.} Theorem~3.4).
\end{remark}
For an essentially tempered $\tau$, according to
Theorem~\ref{theorem:tempered L function holomorphic for half
plane}, the quotient on the \rhs\ of \eqref{eq:gamma up to units} is
reduced. Therefore if
$L(\tau,Sym^2,2s-1)^{-1}\intertwiningfull{\tau}{s}$ is holomorphic
(e.g. $\tau$ is irreducible supercuspidal),
$\ell_{\tau}(s)=L(\tau^*,Sym^2,2-2s)$. 

\subsection{Intertwining operator for $\cinduced{P_{\bigidx_1,\bigidx_2}}{\GL{\bigidx}}{\tau_1\alpha^{\half\bigidx+s}\otimes\tau_2^*\alpha^{\half\bigidx+1-s}}$}\label{subsection:the intertwining operator for tau_1 and tau_2}
Let $\tau_i$ be a representation of $\GL{\bigidx_i}$ 
realized in $\Whittaker{\tau_i}{\psi}$, $i=1,2$.
We have the standard intertwining operator
\begin{align*}
\intertwiningfull{\tau_1\otimes\tau_2^*}{(s,1-s)}:&V_{P_{\bigidx_1,\bigidx_2}}^{\GL{\bigidx}}
(\tau_1\absdet{}^{\frac{\bigidx}2}\otimes\tau_2^*\absdet{}^{\frac{\bigidx}2},(s,1-s))\\&\rightarrow
V_{P_{\bigidx_2,\bigidx_1}}^{\GL{\bigidx}}(\tau_2^*\absdet{}^{\frac{\bigidx}2}\otimes\tau_1\absdet{}^{\frac{\bigidx}2},(1-s,s)),
\end{align*}
defined via the meromorphic continuation of
\begin{align*}
(b,b_2,b_1)\mapsto
\int_{Z_{\bigidx_2,\bigidx_1}}\theta_s(\omega_{\bigidx_1,\bigidx_2}zb,b_1,b_2)dz\qquad(b\in\GL{\bigidx},b_i\in\GL{\bigidx_i}).
\end{align*}
The standard normalized intertwining operator
$\nintertwiningfull{\tau_1\otimes\tau_2^*}{(s,1-s)}$ is defined
according to Shahidi's functional equation (\cite{Sh4} Theorem~3.1),
\begin{align}\label{eq:shahidi func eq}
&\int_{Z_{\bigidx_2,\bigidx_1}}\theta_s(\omega_{\bigidx_1,\bigidx_2}z,I_{\bigidx_1},I_{\bigidx_2})\psi^{-1}(z)dz\\\notag
&=\int_{Z_{\bigidx_1,\bigidx_2}}\nintertwiningfull{\tau_1\otimes\tau_2^*}{(s,1-s)}\theta_s(\omega_{\bigidx_2,\bigidx_1}m,I_{\bigidx_2},I_{\bigidx_1})\psi^{-1}(m)dm,
\end{align}
with $\theta_s\in
\xi_{P_{\bigidx_1,\bigidx_2}}^{\GL{\bigidx}}(\tau_1\absdet{}^{\frac{\bigidx}2}\otimes\tau_2^*\absdet{}^{\frac{\bigidx}2},rat,(s,1-s))$.
As in Section~\ref{subsection:the intertwining operator for tau},
this is an equality in $\C(q^{-s})$. Define
$\ell_{\tau_1\otimes\tau_2^*}(s)^{-1}\in\C[q^{-s}]$ similarly to
$\ell_{\tau}(s)$, i.e., such that
$\ell_{\tau_1\otimes\tau_2^*}(s)^{-1}\nintertwiningfull{\tau_1\otimes\tau_2^*}{(s,1-s)}$
is holomorphic.

Assume that $\tau_1$ and $\tau_2$ are irreducible. Then according to
Shahidi \cite{Sh3} (Theorem~3.5), there is some
$e(q^{-s},q^{s})\in\C[q^{-s},q^s]^*$ satisfying
\begin{align*}
\nintertwiningfull{\tau_1\otimes\tau_2^*}{(s,1-s)}=e(q^{-s},q^{s})
\frac{L(\tau_1^*\times\tau_2^*,2-2s)}{L(\tau_1\times\tau_2,2s-1)}
\intertwiningfull{\tau_1\otimes\tau_2^*}{(s,1-s)}.
\end{align*}
The result of Casselman and Shahidi \cite{CSh} (Theorem~5.1) now
reads,
\begin{theorem}\label{theorem:poles of normalized intertwining for tau_1 tau_2 cuspidal}
For irreducible supercuspidal $\tau_1$ and $\tau_2$,
$L(\tau_1\times\tau_2,2s-1)^{-1}\intertwiningfull{\tau_1\otimes\tau_2^*}{(s,1-s)}$
is holomorphic.
\end{theorem}
In the case of the theorem,
$\ell_{\tau_1\otimes\tau_2^*}(s)=L(\tau_1^*\times\tau_2^*,2-2s)$.
\subsection{The multiplicativity of $\nintertwiningfull{\tau}{s}$ for an induced $\tau$}\label{subsection:the multiplicativity of the intertwining operator for tau induced}
Assume
$\tau=\cinduced{P_{\bigidx_1,\bigidx_2}}{\GL{\bigidx}}{\tau_1\otimes\tau_2}$,
where each representation $\tau_i$ 
is realized in $\Whittaker{\tau_i}{\psi}$ and $\tau$ is realized in
the space of the induced representation.

The elements of $V_{Q_{\bigidx}}^{H_{\bigidx}}(\tau,s)$ are locally
constant functions on $H_{\bigidx}$ taking values in the space of
$\tau$, we regard them as locally constant functions
$f_s:H_{\bigidx}\times\GL{\bigidx}\times\GL{\bigidx_1}\times\GL{\bigidx_2}\rightarrow\C$
with the following properties: let $h\in H_{\bigidx}$,
$b\in\GL{\bigidx}$, $b_i\in\GL{\bigidx_i}$.
\begin{enumerate}
\item For $q\in Q_{\bigidx}$ with $q=au$, $a\in\GL{\bigidx}\isomorphic
M_{\bigidx}$ and $u\in U_{\bigidx}$,
\begin{align*}
f_s(qh,b,b_1,b_2)=\delta_{Q_{\bigidx}}^{\half}(q)\absdet{a}^{s-\half}f_s(h,ba,b_1,b_2).
\end{align*}
\item For $a\in P_{\bigidx_1,\bigidx_2}$ with $a=diag(a_1,a_2)z$, $a_i\in\GL{\bigidx_i}$ and $z\in
Z_{\bigidx_1,\bigidx_2}$,
\begin{align*}
f_s(h,ab,b_1,b_2)=\delta_{P_{\bigidx_1,\bigidx_2}}^{\half}(a)f_s(h,b,b_1a_1,b_2a_2).
\end{align*}
\item The mapping $b_1\mapsto f_s(h,b,b_1,b_2)$ belongs to
$\Whittaker{\tau_1}{\psi}$ and $b_2\mapsto f_s(h,b,b_1,b_2)$ belongs
to $\Whittaker{\tau_2}{\psi}$.
\end{enumerate}

Let
$\intertwiningfull{\tau}{s}:V_{Q_{\bigidx}}^{H_{\bigidx}}(\tau,s)\rightarrow
V_{Q_{\bigidx}}^{H_{\bigidx}}(\tau^*,1-s)$ be the standard
intertwining operator, defined by the meromorphic continuation of
the integral
\begin{align*}
\intertwiningfull{\tau}{s}f_s(h,b,b_2,b_1)=\int_{U_{\bigidx}}f_s(w_{\bigidx}u,b^*,d_{\bigidx_1}b_1^*,d_{\bigidx_2}b_2^*)du.
\end{align*}
The standard normalized intertwining operator is defined by the
functional equation
\begin{align*}
&\int_{U_{\bigidx}}\int_{Z_{\bigidx_2,\bigidx_1}}f_s(w_{\bigidx}u,\omega_{\bigidx_1,\bigidx_2}zd_{\bigidx},I_{\bigidx_1},I_{\bigidx_2})\psi^{-1}(z)\psi^{-1}(u)dzdu\\\notag
&=\int_{U_{\bigidx}}\int_{Z_{\bigidx_1,\bigidx_2}}\nintertwiningfull{\tau}{s}f_s(w_{\bigidx}u,\omega_{\bigidx_2,\bigidx_1}zd_{\bigidx},I_{\bigidx_2},I_{\bigidx_1})\psi^{-1}(z)\psi^{-1}(u)dzdu.
\end{align*}
This is an equality in $\C(q^{-s})$. According to the
multiplicativity of the intertwining operators and local
coefficients (\cite{Sh4} Theorem~2.1.1 and Proposition~3.2.1),
\begin{align}\label{eq:multiplicative property of intertwiners}
\nintertwiningfull{\tau}{s}=\nintertwiningfull{\tau_1}{s}\nintertwiningfull{\tau_1\otimes\tau_2^*}{(s,1-s)}\nintertwiningfull{\tau_2}{s}.
\end{align}
We explain how to interpret \eqref{eq:multiplicative property of
intertwiners}.

Let $s_1,s_2$ be complex parameters. Consider the representation
\begin{align*}
\Pi_{s_1,s_2}'=\cinduced{Q_{\bigidx_1}}{H_{\bigidx}}{(\tau_1\otimes
\cinduced{Q_{\bigidx_2}}{H_{\bigidx_2}}{\tau_2\alpha^{s_2}})\alpha^{s_1}}
\end{align*}
defined in Section~\ref{subsection:realization of induced tau_1 and
tau_2}. Denote its space by $V'(\tau_1\otimes\tau_2,(s_1,s_2))$. Let
$\varphi_{s_1,s_2}\in V'(\tau_1\otimes\tau_2,(s_1,s_2))$. Any fixed
$h_1\in H_{\bigidx_1}$ and $b_1\in\GL{\bigidx_1}$ define a mapping
$\varphi_{s_1,s_2}(h_1,b_1,\cdot,\cdot)\in
V_{Q_{\bigidx_2}}^{H_{\bigidx_2}}(\tau_2,s_2)$ by
\begin{align*}
(h_2,b_2)\mapsto\varphi_{s_1,s_2}(h,b_1,h_2,b_2)\qquad (h_2\in
H_{\bigidx_2},b_2\in\GL{\bigidx_2}).
\end{align*}
In addition, any $h\in H_{\bigidx}$ defines a mapping
$h\cdot\varphi_{s_1,s_2}(\cdot,\cdot,I_{2\bigidx_2+1},\cdot)\in
V_{P_{\bigidx_1,\bigidx_2}}^{\GL{\bigidx}}(\tau_1\absdet{}^{\frac{\bigidx}2}\otimes\tau_2\absdet{}^{\frac{\bigidx}2},(s_1,s_2))$
by
\begin{align*}
(b,b_1,b_2)\mapsto
\varphi_{s_1,s_2}(bh,b_1,I_{2\bigidx_2+1},b_2)\qquad
(b\in\GL{\bigidx},b_i\in\GL{\bigidx_i}).
\end{align*}

Let $s\in\C$. The representations
$V_{Q_{\bigidx}}^{H_{\bigidx}}(\tau,s)$ and $\Pi_{s,s}'$ are
isomorphic according to the mappings
\begin{align*}
(h,b_1,h_2,b_2)\mapsto f_s(h_2h,I_{\bigidx},b_1,b_2),\quad
(h,b,b_1,b_2)\mapsto\absdet{b}^{-\half\bigidx-s+\half}\varphi_s(bh,b_1,I_{2\bigidx_2+1},b_2).
\end{align*}
Here $f_s\in V_{Q_{\bigidx}}^{H_{\bigidx}}(\tau,s)$ and
$\varphi_s\in V'(\tau_1\otimes\tau_2,(s,s))$. Denote the image of
$f_s$ in $V'(\tau_1\otimes\tau_2,(s,s))$ by $\imath(f_s)$. 

Assume that $s$ is chosen away from the poles of the intertwining
operators on the \rhs\ of \eqref{eq:multiplicative property of
intertwiners}. Let $\varphi_s\in V'(\tau_1\otimes\tau_2,(s,s))$. The
mapping
\begin{align*}
\intertwiningfull{\tau_2}{s}\varphi_s\in
V'(\tau_1\otimes\tau_2^*,(s,1-s))
\end{align*}
is obtained by applying $\intertwiningfull{\tau_2}{s}$ to
$\varphi_s(h,b_1,\cdot,\cdot)$. Formally, 
\begin{align*}
\intertwiningfull{\tau_2}{s}\varphi_s(h,b_1,h_2,b_2)=\int_{U_{\bigidx_2}}\varphi_s(h,b_1,w_{\bigidx_2}uh_2,d_{\bigidx_2}b_2^*)du.
\end{align*}
Then $\varphi_{s,1-s}\in V'(\tau_1\otimes\tau_2^*,(s,1-s))$ is given
by
\begin{align*}
\varphi_{s,1-s}=\nintertwiningfull{\tau_2}{s}\varphi_s=\gamma(\tau_2,Sym^2,\psi,2s-1)\intertwiningfull{\tau_2}{s}\varphi_s.
\end{align*}

The function
\begin{align*}
\varphi_{1-s,s}'=\nintertwiningfull{\tau_1\otimes\tau_2^*}{(s,1-s)}\varphi_{s,1-s}\in
V'(\tau_2^*\otimes\tau_1,(1-s,s))
\end{align*}
is defined by applying
$\nintertwiningfull{\tau_1\otimes\tau_2^*}{(s,1-s)}$ to the function
$h\cdot\varphi_{s,1-s}(\cdot,\cdot,I_{2\bigidx_2+1},\cdot)$.
Formally,
\begin{align*}
\varphi_{1-s,s}'(h,b_2,h_1,b_1)&=(\nintertwiningfull{\tau_1\otimes\tau_2^*}{(s,1-s)}(h_1h)\cdot\varphi_{s,1-s})(I_{\bigidx},b_2,b_1)\\\notag
&=\gamma(\tau_1\times\tau_2,\psi,2s-1)'\int_{Z_{\bigidx_2,\bigidx_1}}\varphi_{s,1-s}(\omega_{\bigidx_1,\bigidx_2}zh_1h,b_1,I_{2\bigidx_2+1},b_2)dz.
\end{align*}
Here $\gamma(\tau_1\times\tau_2,\psi,2s-1)'$ denotes the local
coefficient. Then
\begin{align*}
\varphi_{1-s}^*=\nintertwiningfull{\tau_1}{s}\varphi_{1-s,s}'\in
V'(\tau_2^*\otimes\tau_1^*,(1-s,1-s))
\end{align*}
is obtained similarly to $\varphi_{s,1-s}$. Collecting the
applications of the intertwining operators,
\begin{align*}
\varphi_{1-s}^*=
\nintertwiningfull{\tau_1}{s}\nintertwiningfull{\tau_1\otimes\tau_2^*}{(s,1-s)}\nintertwiningfull{\tau_2}{s}\varphi_s.
\end{align*}

The interpretation of \eqref{eq:multiplicative property of
intertwiners} is
\begin{align*}
\imath(\nintertwiningfull{\tau}{s}f_s)=\nintertwiningfull{\tau_1}{s}\nintertwiningfull{\tau_1\otimes\tau_2^*}{(s,1-s)}\nintertwiningfull{\tau_2}{s}\imath(f_s),
\end{align*}
or if we put $\varphi_s=\imath(f_s)$,
\begin{align*}
\imath(\nintertwiningfull{\tau}{s}f_s)=\varphi_{1-s}^*.
\end{align*}
As mentioned above, this follows from the results of Shahidi
\cite{Sh4}. However, we include a formal proof of this equality,
showing how the definitions above are combined together.
\begin{claim}\label{claim:verification of multiplicativity intertwiners}
$\imath(\nintertwiningfull{\tau}{s}f_s)=\varphi_{1-s}^*$.
\end{claim}
\begin{proof}[Proof of Claim~\ref{claim:verification of multiplicativity intertwiners}]
In order to prove this equality, we start by showing
\begin{align}\label{eq:non normalized multiplicative property of intertwiners}
\imath(\intertwiningfull{\tau}{s}f_s)=\intertwiningfull{\tau_1}{s}\intertwiningfull{\tau_1\otimes\tau_2^*}{(s,1-s)}\intertwiningfull{\tau_2}{s}\varphi_s.
\end{align}
Both sides are functions in $V'(\tau_2^*\otimes\tau_1^*,(1-s,1-s))$
and it is enough to show that they are equal on $h\in H_{\bigidx}$.
This equality is first interpreted as an equality of integrals for
$\Re(s)>>0$, then it follows for all $s$ by meromorphic
continuation. For $\Re(s)>>0$ we have,
\begin{align*}
&\imath(\intertwiningfull{\tau}{s}f_s)(h,I_{\bigidx_2},I_{2\bigidx_1+1},I_{\bigidx_1})
=\intertwiningfull{\tau}{s}f_s(h,I_{\bigidx},I_{\bigidx_2},I_{\bigidx_1})\\\notag
&=\int_{U_{\bigidx}}f_s(w_{\bigidx}uh,I_{\bigidx},d_{\bigidx_1},d_{\bigidx_2})du
=\int_{U_{\bigidx}}\varphi_s(w_{\bigidx}uh,d_{\bigidx_1},I_{2\bigidx_2+1},d_{\bigidx_2})du.
\end{align*}
Write
$w_{\bigidx}=w_{\bigidx_2}\omega_{\bigidx_1,\bigidx_2}w_{\bigidx_1}$,
where $w_{\bigidx_i}\in H_{\bigidx_i}<M_{\bigidx_{3-i}}$ and
$\omega_{\bigidx_1,\bigidx_2}\in M_{\bigidx}$. Then
$w_{\bigidx}u=w_{\bigidx_2}u_2\omega_{\bigidx_1,\bigidx_2}zw_{\bigidx_1}u_1$
where $u_i\in U_{\bigidx_i}<H_{\bigidx_i}$ and $z\in
Z_{\bigidx_2,\bigidx_1}<M_{\bigidx}$. This decomposition also
implies a decomposition of the measure $du=du_2dzdu_1$.
The last integral equals
\begin{align*}
&\int_{U_{\bigidx_1}}\int_{Z_{\bigidx_2,\bigidx_1}}\int_{U_{\bigidx_2}}
\varphi_s(w_{\bigidx_2}u_2\omega_{\bigidx_1,\bigidx_2}zw_{\bigidx_1}u_1h,d_{\bigidx_1},I_{2\bigidx_2+1},d_{\bigidx_2})du_2dzdu_1\\\notag
&=\int_{U_{\bigidx_1}}\int_{Z_{\bigidx_2,\bigidx_1}}
\intertwiningfull{\tau_2}{s}\varphi_s(\omega_{\bigidx_1,\bigidx_2}zw_{\bigidx_1}u_1h,d_{\bigidx_1},I_{2\bigidx_2+1},I_{\bigidx_2})dzdu_1\\\notag
&=\int_{U_{\bigidx_1}}\intertwiningfull{\tau_1\otimes\tau_2^*}{(s,1-s)}\intertwiningfull{\tau_2}{s}\varphi_s(w_{\bigidx_1}u_1h,I_{\bigidx_2},I_{2\bigidx_1+1},d_{\bigidx_1})du_1\\\notag
&=\intertwiningfull{\tau_1}{s}\intertwiningfull{\tau_1\otimes\tau_2^*}{(s,1-s)}\intertwiningfull{\tau_2}{s}\varphi_s(h,I_{\bigidx_2},I_{2\bigidx_1+1},I_{\bigidx_1}).
\end{align*}
This proves \eqref{eq:non normalized multiplicative property of
intertwiners}.

In order to complete the proof we show that $\varphi_{1-s}^*$
satisfies the same functional equation as
$\nintertwiningfull{\tau}{s}f_s$. I.e.,
\begin{align}\label{int:func equation intertwining operators start}
&\int_{U_{\bigidx}}\int_{Z_{\bigidx_2,\bigidx_1}}\varphi_s(\omega_{\bigidx_1,\bigidx_2}zd_{\bigidx}w_{\bigidx}u,I_{\bigidx_1},I_{2\bigidx_2+1},I_{\bigidx_2})\psi^{-1}(z)\psi^{-1}(u)dzdu\\\notag
&=\int_{U_{\bigidx}}\int_{Z_{\bigidx_1,\bigidx_2}}\varphi_{1-s}^*(\omega_{\bigidx_2,\bigidx_1}zd_{\bigidx}w_{\bigidx}u,I_{\bigidx_2},I_{2\bigidx_1+1},I_{\bigidx_1})\psi^{-1}(z)\psi^{-1}(u)dzdu.
\end{align}
Start with the \rhs. Here our arguments are formal, ignoring
convergence issues. Shift $\omega_{\bigidx_2,\bigidx_1}z$ to the
right of $d_{\bigidx}w_{\bigidx}u$. Put
$d'=diag((-1)^{\bigidx_1}d_{\bigidx_2},I_{\bigidx_1})$. Note that $\psi^{-1}(u)=\psi^{-1}(u_{\bigidx,\bigidx+1})$. We get 
\begin{align*}
&\int_{Z_{\bigidx_2,\bigidx_1}}\int_{U_{\bigidx}}\varphi_{1-s}^*(d'w_{\bigidx}u\omega_{\bigidx_1,\bigidx_2}z,I_{\bigidx_2},I_{2\bigidx_1+1},d_{\bigidx_1})\psi^{-1}(z)\psi^{-1}(u_{\bigidx_1,\bigidx+1})dudz.
\end{align*}
Write
$w_{\bigidx}=w_{\bigidx_1}\omega_{\bigidx_2,\bigidx_1}w_{\bigidx_2}$
and decompose
$U_{\bigidx}=\rconj{(\omega_{\bigidx_2,\bigidx_1}w_{\bigidx_2})}U_{\bigidx_1}\ltimes
V_{\bigidx_1,\bigidx_2}$, where
\begin{align*}
V_{\bigidx_1,\bigidx_2}=\{\left(\begin{array}{ccccc}I_{\bigidx_1}&&0&v_1&0\\&I_{\bigidx_2}&v_2&v_3&v_1'\\&&1&v_2'&0\\&&&I_{\bigidx_2}\\&&&&I_{\bigidx_1}\end{array}\right)\}.
\end{align*}
The integral becomes
\begin{align}\label{int:multiplic intertwining normalized before func 1}
&\int_{Z_{\bigidx_2,\bigidx_1}}\int_{V_{\bigidx_1,\bigidx_2}}\int_{U_{\bigidx_1}}\varphi_{1-s}^*
(d'\omega_{\bigidx_2,\bigidx_1}w_{\bigidx_2}v\omega_{\bigidx_1,\bigidx_2}z,I_{\bigidx_2},I_{2\bigidx_1+1},d_{\bigidx_1}w_{\bigidx_1}u_1)\\\notag&\psi^{-1}(z)\psi^{-1}((-1)^{\bigidx_2}(u_1)_{\bigidx_1,\bigidx_1+1})du_1dvdz.
\end{align}
The $du_1$-integration resembles the \rhs\ of \eqref{eq:Shahidi
local coefficient def}, except for the character
$\psi^{-1}((-1)^{\bigidx_2}(u_1)_{\bigidx_1,\bigidx_1+1})$ instead
of $\psi^{-1}((u_1)_{\bigidx_1,\bigidx_1+1})$. However,
equality~\eqref{eq:Shahidi local coefficient def} also holds with
this change, to see this replace $f_s$ with $y\cdot f_s$ in
\eqref{eq:Shahidi local coefficient def} where\\
$y=diag((-1)^{\bigidx_2}I_{\bigidx_1},1,(-1)^{\bigidx_2}I_{\bigidx_1})\in
H_{\bigidx_1}$ ($\bigidx$ of \eqref{eq:Shahidi local coefficient
def} is now $\bigidx_1$) and use the fact that $\tau_1$ has a
central character.

Thus we see that the last integral is equal, as a meromorphic
continuation, to
\begin{align*}
&\int_{Z_{\bigidx_2,\bigidx_1}}\int_{V_{\bigidx_1,\bigidx_2}}\int_{U_{\bigidx_1}}\varphi_{1-s,s}'
(d'\omega_{\bigidx_2,\bigidx_1}w_{\bigidx_2}v\omega_{\bigidx_1,\bigidx_2}z,I_{\bigidx_2},I_{2\bigidx_1+1},d_{\bigidx_1}w_{\bigidx_1}u_1)\\\notag&\psi^{-1}(z)\psi^{-1}((-1)^{\bigidx_2}(u_1)_{\bigidx_1,\bigidx_1+1})du_1dvdz.
\end{align*}
Reversing the steps leading to \eqref{int:multiplic intertwining
normalized before func 1} we obtain
\begin{align*}
&\int_{U_{\bigidx}}\int_{Z_{\bigidx_1,\bigidx_2}}\varphi_{1-s,s}'(\omega_{\bigidx_2,\bigidx_1}zd_{\bigidx}w_{\bigidx}u,I_{\bigidx_2},I_{2\bigidx_1+1},I_{\bigidx_1})\psi^{-1}(z)\psi^{-1}(u)dzdu.
\end{align*}
Using the definition of $\varphi_{1-s,s}'$ yields
\begin{align*}
&\int_{U_{\bigidx}}\int_{Z_{\bigidx_1,\bigidx_2}}(\nintertwiningfull{\tau_1\otimes\tau_2^*}{(s,1-s)}(d_{\bigidx}w_{\bigidx}u)\cdot
\varphi_{s,1-s})(\omega_{\bigidx_2,\bigidx_1}z,I_{\bigidx_2},I_{\bigidx_1})\psi^{-1}(z)\psi^{-1}(u)dzdu.
\end{align*}
The $dz$-integration comprises the \rhs\ of \eqref{eq:shahidi func
eq}. Applying this equation, we get
\begin{align*}
&\int_{U_{\bigidx}}\int_{Z_{\bigidx_2,\bigidx_1}}\varphi_{s,1-s}(\omega_{\bigidx_1,\bigidx_2}zd_{\bigidx}w_{\bigidx}u,I_{\bigidx_1},I_{2\bigidx_2+1},I_{\bigidx_2})\psi^{-1}(z)\psi^{-1}(u)dzdu.
\end{align*}
Repeat the steps leading from the \rhs\ of \eqref{int:func equation
intertwining operators start} to \eqref{int:multiplic intertwining
normalized before func 1} on the last integral with $\bigidx_1$ and
$\bigidx_2$ exchanged. Then apply \eqref{eq:Shahidi local
coefficient def} (with
$\psi^{-1}((-1)^{\bigidx_1}(u_2)_{\bigidx_2,\bigidx_2+1})$ instead
of $\psi^{-1}((u_2)_{\bigidx_2,\bigidx_2+1})$, $u_2\in
U_{\bigidx_2}$) and obtain
\begin{align*}
&\int_{U_{\bigidx}}\int_{Z_{\bigidx_2,\bigidx_1}}\varphi_s(\omega_{\bigidx_1,\bigidx_2}zd_{\bigidx}w_{\bigidx}u,I_{\bigidx_1},I_{2\bigidx_2+1},I_{\bigidx_2})\psi^{-1}(z)\psi^{-1}(u)dzdu.
\end{align*}
This is the \lhs\ of \eqref{int:func equation intertwining operators
start}.
\end{proof} 

We usually use the elements of $V'(\tau_1\otimes\tau_2,(s,s))$
instead of $V_{Q_{\bigidx}}^{H_{\bigidx}}(\tau,s)$. Let
$\varphi_s=\iota(f_s)$ where $f_s\in
V_{Q_{\bigidx}}^{H_{\bigidx}}(\tau,s)$. Then we denote by
$\nintertwiningfull{\tau}{s}\varphi_s$ the function obtained by
repeatedly applying the intertwining operators on the \rhs\ of
\eqref{eq:multiplicative property of intertwiners}. In other words,
$\nintertwiningfull{\tau}{s}\varphi_s=\varphi_{1-s}^*$. The last
claim shows
$\imath(\nintertwiningfull{\tau}{s}\imath^{-1}(\varphi_s))=\varphi_{1-s}^*$,
so writing $\nintertwiningfull{\tau}{s}\varphi_s$ is compliant with
the ``hidden" identifications. 

Let $s\in\C$. A function $\varphi_s\in
V'(\tau_1\otimes\tau_2,(s,s))$ defines an element
$\widehat{f}_{\varphi_s}\in
V_{Q_{\bigidx}}^{H_{\bigidx}}(\Whittaker{\tau}{\psi},s)$ via the
formula
\begin{align*}
&\widehat{f}_{\varphi_s}(h,b)=\absdet{b}^{-\half\bigidx-s+\half}\int_{Z_{\bigidx_2,\bigidx_1}}\varphi_s(\omega_{\bigidx_1,\bigidx_2}zbh,I_{\bigidx_1},I_{2\bigidx_2+1},I_{\bigidx_2})
\psi^{-1}(z)dz.
\end{align*}
Here $\omega_{\bigidx_1,\bigidx_2}$ is regarded as an element of
$M_{\bigidx}<Q_{\bigidx}$ and the Jacquet integral is defined as a
principal value.

The function $\widehat{f}_{\varphi_{1-s}^*}\in
V_{Q_{\bigidx}}^{H_{\bigidx}}(\Whittaker{\tau^*}{\psi},1-s)$ is
defined by
\begin{align*}
&\widehat{f}_{\varphi_{1-s}^*}(h,b)=\absdet{b}^{-\half\bigidx+s-\half}\int_{Z_{\bigidx_1,\bigidx_2}}\varphi_{1-s}^*(\omega_{\bigidx_2,\bigidx_1}zbh,I_{\bigidx_2},I_{2\bigidx_1+1},I_{\bigidx_1})
\psi^{-1}(z)dz.
\end{align*}
Because the application of the intertwining operator ``commutes"
with the Jacquet integral, 
\begin{claim}\label{claim:intertwining operator and Jacquet integral
commute} Let $\varphi_s\in V'(\tau_1\otimes\tau_2,(s,s))$. Then
$\nintertwiningfull{\tau}{s}\widehat{f}_{\varphi_s}=\widehat{f}_{\nintertwiningfull{\tau}{s}\varphi_s}$.
Here on the \lhs,
$\nintertwiningfull{\tau}{s}:V_{Q_{\bigidx}}^{H_{\bigidx}}(\Whittaker{\tau}{\psi},s)\rightarrow
V_{Q_{\bigidx}}^{H_{\bigidx}}(\Whittaker{\tau^*}{\psi},1-s)$ is
defined as explained in Section~\ref{subsection:the intertwining
operator for tau}. On the \rhs,
$\nintertwiningfull{\tau}{s}\varphi_s=\varphi_{1-s}^*$.
\end{claim}
\begin{proof}[Proof of Claim~\ref{claim:intertwining operator and Jacquet
integral commute}] Denote
\begin{align*}
\intertwiningfull{\tau}{s}\varphi_s=\imath(\intertwiningfull{\tau}{s}\imath^{-1}(\varphi_s))=
\intertwiningfull{\tau_1}{s}\intertwiningfull{\tau_1\otimes\tau_2^*}{(s,1-s)}\intertwiningfull{\tau_2}{s}\varphi_s
\end{align*}
(the second equality is \eqref{eq:non normalized multiplicative
property of intertwiners}). First we show
\begin{align}\label{intertwining non normalized commutes with an
extra factor} \intertwiningfull{\tau}{s}\widehat{f}_{\varphi_s}=
\omega_{\tau_1}(-1)^{\bigidx_2}\widehat{f}_{\intertwiningfull{\tau}{s}\varphi_s}.
\end{align}
For any $h\in H_{\bigidx}$,
\begin{align*}
\intertwiningfull{\tau}{s}\widehat{f}_{\varphi_s}(h,1)=
\int_{U_{\bigidx}}\int_{Z_{\bigidx_2,\bigidx_1}}\varphi_s(\omega_{\bigidx_1,\bigidx_2}zd_{\bigidx}w_{\bigidx}uh,I_{\bigidx_1},I_{2\bigidx_2+1},I_{\bigidx_2})\psi^{-1}(z)dzdu.
\end{align*}
Here we regard the $dzdu$-integration as a double integral (recall
that the $dz$-integration was defined by principal value). We
proceed formally with the argument and provide the justifications
below. Write
$d_{\bigidx}=diag(d_{\bigidx_2},(-1)^{\bigidx_2}d_{\bigidx_1})$. The
last integral becomes
\begin{align*}
\omega_{\tau_1}(-1)^{\bigidx_2}\int_{Z_{\bigidx_1,\bigidx_2}}\int_{U_{\bigidx}}\varphi_s(w_{\bigidx}u\omega_{\bigidx_2,\bigidx_1}zh,d_{\bigidx_1},I_{2\bigidx_2+1},d_{\bigidx_2})\psi^{-1}(z)dudz.
\end{align*}
As we have seen in the proof of Claim~\ref{claim:verification of
multiplicativity intertwiners}, this is equal to
\begin{align*}
&\omega_{\tau_1}(-1)^{\bigidx_2}\int_{Z_{\bigidx_1,\bigidx_2}}\intertwiningfull{\tau}{s}\varphi_s(\omega_{\bigidx_2,\bigidx_1}zh,I_{\bigidx_2},I_{2\bigidx_1+1},I_{\bigidx_1})\psi^{-1}(z)dz\\\notag
&=\omega_{\tau_1}(-1)^{\bigidx_2}\widehat{f}_{\intertwiningfull{\tau}{s}\varphi_s}(h,1).
\end{align*}

We sketch the justifications to the formal manipulations. As in
\cite{Soudry2} (Section~2), replace $\tau_1$ with
$\tau_1\absdet{}^{\zeta}$ and $\tau_2$ with
$\tau_2\absdet{}^{-\zeta}$, where $\zeta$ is a complex parameter. Fix
$h\in H_{\bigidx}$. There are $Q_1,Q_2\in\C(q^{-\zeta},q^{-s})$ such
that
\begin{align*}
Q_1(q^{-\zeta},q^{-s})=\intertwiningfull{\tau}{s}\widehat{f}_{\varphi_s}(h,1),\qquad
Q_2(q^{-\zeta},q^{-s})=\omega_{\tau_1}(-1)^{\bigidx_2}\widehat{f}_{\intertwiningfull{\tau}{s}\varphi_s}(h,1)
\end{align*}
for all $\zeta$ and $s$ with $\Re(s)>>0$. Here the $dz$-integration
is defined by principal value. For $\Re(\zeta)>>0$ and $\Re(s)>>0$,
\begin{align*}
&\int_{U_{\bigidx}}\int_{Z_{\bigidx_2,\bigidx_1}}|\varphi_s|(\omega_{\bigidx_1,\bigidx_2}zd_{\bigidx}w_{\bigidx}uh,I_{\bigidx_1},I_{2\bigidx_2+1},I_{\bigidx_2})dzdu<\infty.
\end{align*}
Therefore the manipulations above hold for $\Re(\zeta),\Re(s)>>0$
and hence $Q_1=Q_2$. This yields \eqref{intertwining non normalized
commutes with an extra factor} when we substitute $0$ for $\zeta$.

Let $\whittakerfunctional$ denote the Whittaker functional on
$V(\Whittaker{\tau}{\psi},s)$ given by the \lhs\ of
\eqref{eq:Shahidi local coefficient def}. Then the functional
$\varphi_s\mapsto\whittakerfunctional(\widehat{f}_{\varphi_s})$ is
the Whittaker functional on $V'(\tau_1\otimes\tau_2,(s,s))$ given by
the \lhs\ of \eqref{int:func equation intertwining operators start}.
Therefore
\begin{align*}
\nintertwiningfull{\tau}{s}\varphi_s=\frac{\whittakerfunctional(\widehat{f}_{\varphi_s})}{\whittakerfunctional(\widehat{f}_{\intertwiningfull{\tau}{s}\varphi_s})}
\intertwiningfull{\tau}{s}\varphi_s.
\end{align*}
Altogether,
\begin{align*}
\nintertwiningfull{\tau}{s}\widehat{f}_{\varphi_s} =
\frac{\whittakerfunctional(\widehat{f}_{\varphi_s})}{\whittakerfunctional(\intertwiningfull{\tau}{s}\widehat{f}_{\varphi_s})}
\intertwiningfull{\tau}{s}\widehat{f}_{\varphi_s}=
\frac{\whittakerfunctional(\widehat{f}_{\varphi_s})}{\whittakerfunctional(\widehat{f}_{\intertwiningfull{\tau}{s}\varphi_s})}
\widehat{f}_{\intertwiningfull{\tau}{s}\varphi_s}=\widehat{f}_{\nintertwiningfull{\tau}{s}\varphi_s},
\end{align*}
where we applied \eqref{intertwining non normalized commutes with an
extra factor} twice and used the fact that the mapping
$\varphi_s\mapsto \widehat{f}_{\varphi_s}$ is linear.
\end{proof} 

\subsection{The factors $M_{\tau}(s)$ and $M_{\tau_1\otimes\ldots\otimes\tau_k}(s)$}\label{subsection:the factors M tau s}
We define the factors $M_{\tau}(s)$ and
$M_{\tau_1\otimes\ldots\otimes\tau_k}(s)$ appearing in our theorems
(see Section~\ref{section:main results}). Let $\tau_i$ be an
irreducible representation of $\GL{\bigidx_i}$ for $1\leq i\leq k$.
Then
\begin{align*}
&M_{\tau_1\otimes\ldots\otimes\tau_k}(s)=\prod_{i=1}^k\ell_{\tau_i}(s)\ell_{\tau_i^*}(1-s)\prod_{1\leq
i<j\leq
k}\ell_{\tau_i\otimes\tau_j^*}(s)\ell_{\tau_j^*\otimes\tau_i}(1-s).
\end{align*}
In particular for $k=1$,
$M_{\tau_1}(s)=\ell_{\tau_1}(s)\ell_{\tau_1^*}(1-s)$ and we can
write
\begin{align*}
&M_{\tau_1\otimes\ldots\otimes\tau_k}(s)=\prod_{i=1}^kM_{\tau_i}(s)\prod_{1\leq
i<j\leq
k}\ell_{\tau_i\otimes\tau_j^*}(s)\ell_{\tau_j^*\otimes\tau_i}(1-s).
\end{align*}
These factors are inverses of polynomials in $\C[q^{-s},q^s]$. Note
that by definition,
$M_{\tau_1\otimes\ldots\otimes\tau_k}(s)=M_{\tau_k^*\otimes\ldots\otimes\tau_1^*}(1-s)$.
Theorems~\ref{theorem:poles of normalized intertwining for tau
cuspidal} and \ref{theorem:poles of normalized intertwining for
tau_1 tau_2 cuspidal} enable us to calculate or bound
$M_{\tau_1\otimes\ldots\otimes\tau_k}(s)$. For example if
$\tau_1,\tau_2$ are irreducible supercuspidal,
$\ell_{\tau_1\otimes\tau_2^*}(s)=L(\tau_1^*\times\tau_2^*,2-2s)$ and
\begin{align*}
M_{\tau_1\otimes\tau_2}(s)=
(\prod_{i=1}^2L(\tau_i^*,Sym^2,2-2s)L(\tau_i,Sym^2,2s))L(\tau_1^*\times\tau_2^*,2-2s)L(\tau_2\times\tau_1,2s).
\end{align*}

In the course of proving Theorem~\ref{theorem:gcd sub multiplicity
second var} we will encounter poles of
$\nintertwiningfull{\tau_i}{s}$ and
$\nintertwiningfull{\tau_i^*}{1-s}$. The crucial property of
$M_{\tau_i}(s)$ is that both
$M_{\tau_i}(s)^{-1}\nintertwiningfull{\tau_i}{s}$ and
$M_{\tau_i}(s)^{-1}\nintertwiningfull{\tau_i^*}{1-s}$ are
holomorphic. In order to derive the theorem we will also need an
upper bound for $\ell_{\tau}(s)$, described next.
\begin{claim}\label{claim:upper bound poles intertwiners l tau}
Assume that
$\tau=\cinduced{P_{\bigidx_1,\bigidx_2}}{\GL{\bigidx}}{\tau_1\otimes\tau_2}$
is irreducible and set\\
$L(s)=\ell_{\tau_1}(s)\ell_{\tau_2}(s)\ell_{\tau_1\otimes\tau_2^*}(s)$.
Then $\ell_{\tau}(s)\in L(s)\C[q^{-s},q^{s}]$.
\end{claim}
\begin{proof}[Proof of Claim~\ref{claim:upper bound poles intertwiners l
tau}] 
Let
$f_s\in\xi_{Q_{\bigidx}}^{H_{\bigidx}}(\Whittaker{\tau}{\psi},hol,s)$.
In Claim~\ref{claim:f_s is the image of varphi_s when zeta is fixed}
(Section~\ref{subsection:fixed zeta}) we will prove that there is
$\varphi_s\in\xi_{Q_{\bigidx_1,\bigidx_2}}^{H_{\bigidx}}(\tau_1\otimes\tau_2,hol,(s,s))$
such that $f_s=\widehat{f}_{\varphi_s}$, where
$\widehat{f}_{\varphi_s}$ is defined by a Jacquet integral as
explained Section~\ref{subsection:the multiplicativity of the
intertwining operator for tau induced}. Then by
Claim~\ref{claim:intertwining operator and Jacquet integral
commute},
\begin{align*}
L(s)^{-1}\nintertwiningfull{\tau}{s}f_s=L(s)^{-1}\nintertwiningfull{\tau}{s}\widehat{f}_{\varphi_s}=\widehat{f}_{L(s)^{-1}\nintertwiningfull{\tau}{s}\varphi_s}.
\end{align*}
By \eqref{eq:multiplicative property of intertwiners},
$L(s)^{-1}\nintertwiningfull{\tau}{s}\varphi_s\in\xi_{Q_{\bigidx_2,\bigidx_1}}^{H_{\bigidx}}(\tau_2^*\otimes\tau_1^*,hol,(1-s,1-s))$.
Claim~\ref{claim:f_s is the image of varphi_s when zeta is fixed}
also proves that for
$\varphi_s'\in\xi_{Q_{\bigidx_1,\bigidx_2}}^{H_{\bigidx}}(\tau_1\otimes\tau_2,hol,(s,s))$,
$\widehat{f}_{\varphi_s'}\in\xi_{Q_{\bigidx}}^{H_{\bigidx}}(\Whittaker{\tau}{\psi},hol,s)$.
Hence
$\widehat{f}_{L(s)^{-1}\nintertwiningfull{\tau}{s}\varphi_s}\in\xi_{Q_{\bigidx}}^{H_{\bigidx}}(\Whittaker{\tau^*}{\psi},hol,1-s)$
and thus $\ell_{\tau}(s)^{-1}$ divides $L(s)^{-1}$.
\end{proof} 
More generally, assume that
$\tau=\cinduced{P_{\bigidx_1,\ldots,\bigidx_k}}{\GL{\bigidx}}{\tau_1\otimes\ldots\otimes\tau_k}$
is irreducible. Then Claim~\ref{claim:upper bound poles intertwiners
l tau} implies 
\begin{align}\label{eq:multiplicative property of poles of intertwiners}
\ell_{\tau}(s)\in\prod_{i=1}^k\ell_{\tau_i}(s)\prod_{1\leq i<j\leq
k}\ell_{\tau_i\otimes\tau_j^*}(s)\C[q^{-s},q^s].
\end{align}
Hence $\ell_{\tau}(s)\in
M_{\tau_1\otimes\ldots\otimes\tau_k}(s)\C[q^{-s},q^s]$ and also
\begin{align*}
M_{\tau}(s)=\ell_{\tau}(s)\ell_{\tau^*}(1-s)\in
M_{\tau_1\otimes\ldots\otimes\tau_k}(s)\C[q^{-s},q^s].
\end{align*}
This means
that $M_{\tau_1\otimes\ldots\otimes\tau_k}(s)$ is an upper bound for
the poles of $M_{\tau}(s)$.

\section{Functional equation for $\GL{k}\times\GL{r}$}\label{subsection:func eq gln glm}
For $k\leq r$, let $\xi$ be a representation of $\GL{k}$ and $\phi$
be a representation of $\GL{r}$ (both representations are generic). We briefly recall the functional
equation of Jacquet, Piatetski-Shapiro and Shalika \cite{JPSS} for
$\GL{k}\times\GL{r}$ and $\xi\times\phi$.

According to Theorem~2.7 of \cite{JPSS} if $k<r$, for all $0\leq
j\leq r-k-1$, $W_{\xi}\in\Whittaker{\xi}{\psi^{-1}}$ and
$W_{\phi}\in\Whittaker{\phi}{\psi}$,
\begin{align}\label{eq:func eq glm gln}
&L(\phi\times\xi,s)^{-1}\epsilon(\phi\times\xi,\psi,s)\omega_{\xi}(-1)^{r-1}\Psi(W_{\xi},W_{\phi},j,s)\\\notag
&=L(\phi^*\times\xi^*,1-s)^{-1}\Psi(\widetilde{W_{\xi}},\left(\begin{array}{cc}I_k\\&J_{r-k}\\\end{array}\right) \cdot\widetilde{W_{\phi}},r-k-1-j,1-s).
\end{align}
Here
\begin{align*}
\Psi(W_{\xi},W_{\phi},j,s)&=\int_{\lmodulo{Z_{k}}{\GL{k}}}\int_{\Mat{j\times
k}} W_{\xi}(a)
W_{\phi}(\left(\begin{array}{ccc}a&0&0\\m&I_{j}&0\\0&0&I_{r-k-j}\end{array}\right))
\absdet{a}^{s-\frac{r-k}2}dmda
\end{align*}
and $\widetilde{W_{\xi}}(g)=W_{\xi}(J_k\transpose{g^{-1}})$. When
$k=r$, for all $\Phi\in\mathcal{S}(F^{r})$,
\begin{align}\label{eq:func eq glm gln n=m}
&L(\phi\times\xi,s)^{-1}\epsilon(\phi\times\xi,\psi,s)\omega_{\xi}(-1)^{r-1}\Psi(W_{\xi},W_{\phi},\Phi,s)\\\notag
&=L(\phi^*\times\xi^*,1-s)^{-1}\Psi(\widetilde{W_{\xi}},\widetilde{W_{\phi}},\widehat{\Phi},1-s),
\end{align}
where
\begin{align*}
\Psi(W_{\xi},W_{\phi},\Phi,s)=\int_{\lmodulo{Z_{r}}{\GL{r}}}W_{\xi}(a)W_{\phi}(a)\Phi(\eta_r
a)\absdet{a}^sda,\qquad \eta_r=(0,\ldots,0,1)
\end{align*}
and $\widehat{\Phi}$ denotes a Fourier transform of $\Phi$.

In the functional equations of \cite{JPSS}, instead of
$\phi^*$ appears $\phi^{\iota}$ (and $\xi^{\iota}$) where
$\phi^{\iota}(g)=\phi(\transpose{g^{-1}})$, but
$\Whittaker{\phi^{\iota}}{\psi}=\Whittaker{\phi^*}{\psi}$ (if
$W\in\Whittaker{\phi^{\iota}}{\psi}$, $J_{\bigidx}\cdot
W\in\Whittaker{\phi^*}{\psi}$). 

The integrals $\Psi(W_{\xi},W_{\phi},j,s)$ and $\Psi(W_{\xi},W_{\phi},\Phi,s)$ are absolutely convergent for $\Re(s)>>0$. That is, the integrals converge when we replace $W_{\xi}$, $W_{\phi}$ and $\Phi$ with $|W_{\xi}|$, $|W_{\phi}|$ and $|\Phi|$. Moreover, there is a constant $s_0>0$ which depends only on the representations $\sigma$ and $\tau$, such that the integrals are absolutely convergent for all $\Re(s)>s_0$.

Equality \eqref{eq:func eq glm gln} can be rewritten in the
following form:
\begin{align}\label{eq:modified func eq glm gln}
&\gamma(\phi\times\xi,\psi,s)\omega_{\xi}(-1)^{r-1}
\int_{\lmodulo{Z_{k}}{\GL{k}}}\int_{\Mat{j\times k}} W_{\xi}(a)
W_{\phi}(\left(\begin{array}{ccc}a&0&0\\m&I_{j}&0\\0&0&I_{r-k-j}\end{array}\right))
\absdet{a}^{s-\frac{r-k}2}dmda\\\notag
&=\int_{\lmodulo{Z_{k}}{\GL{k}}}\int_{\Mat{k\times r-k-j-1}}
W_{\xi}(a)
W_{\phi}(\left(\begin{array}{ccc}0&I_{j+1}&0\\0&0&I_{r-k-j-1}\\a&0&m\\\end{array}\right))
\absdet{a}^{s-\frac{r-k}2+j}dmda.
\end{align}
This is obtained using
$\widetilde{W_{\phi}}(g)=W_{\phi}(J_r\transpose{g^{-1}})$ (see \cite{Soudry} p. 70).

The family of integrals $\Psi(W_{\xi},W_{\phi},j,s)$ with
$W_{\xi},W_{\phi}$ varying, when $k<r$, or
$\Psi(W_{\xi},W_{\phi},\Phi,s)$ with $W_{\xi},W_{\phi},\Phi$ varying
if $k=r$, span a fractional ideal of $\C[q^{-s},q^s]$. The
$L$-factor $L(\phi\times\xi,s)$ was defined in \cite{JPSS} as the
unique generator of this ideal, in the form $P(q^{-s})^{-1}$ for
$P\in\C[X]$ satisfying $P(0)=1$. In other words,
$L(\phi\times\xi,s)$ was defined as a g.c.d. Note that in the case
$k<r$, $L(\phi\times\xi,s)$ is independent of $j$.

The $L$-factor was proved to be independent of the additive
(\nontrivial) character $\psi$ of the field. Moreover, if $a\in
A_{k}$ and $b\in A_{r-k}$, one can replace the character $\psi$ of
$Z_k$ with $\rconj{a}\psi$, where $\rconj{a}\psi(z)=\psi(aza^{-1})$,
and the character $\psi$ of $Z_r$ with $\rconj{diag(a,b)}\psi$. The
$L$-factor is invariant under such a change.

We will use the following property of the $L$-factor.
\begin{theorem}[\cite{JPSS} Proposition~8.1(i)]\label{theorem:tau cuspidal n large L factor of glm gln 1}
Let $\phi$ be an irreducible supercuspidal representation. If $r>k$ and $\xi$ is arbitrary,
$L(\phi\times\xi,s)=1$.
\end{theorem}
The $\gamma$-factor $\gamma(\phi\times\xi,\psi,s)$ was defined in \cite{JPSS} by
\begin{align}\label{eq:JPSS relation gamma and friends}
\gamma(\phi\times\xi,\psi,s)=\epsilon(\phi\times\xi,\psi,s)\frac{L(\phi^*\times\xi^*,1-s)}
{L(\phi\times\xi,s)}.
\end{align}
The $\epsilon$-factor $\epsilon(\phi\times\xi,\psi,s)$ belongs to $\C[q^{-s},q^s]^*$.

\section{Integration formula for $\lmodulo{H_{\bigidx}}{G_{\bigidx+1}}$}\label{subsection:integration formula}
We will need an integration formula for an integral on $\lmodulo{H_{\bigidx}}{G_{\bigidx+1}}$.
\begin{lemma}\label{lemma:integration formula for quotient space H_n G_n+1}
Let $f$ be a continuous complex-valued function on $\lmodulo{H_{\bigidx}}{G_{\bigidx+1}}$ such that
$\int_{\lmodulo{H_{\bigidx}}{G_{\bigidx+1}}}|f|(g)dg<\infty$. Then if $G_{\smallidx}$ is split,
\begin{align*}
\int_{\lmodulo{H_{\bigidx}}{G_{\bigidx+1}}}f(g)dg=\int_{\GL{1}}\int_{\overline{Z_{\bigidx,1}}}\int_{\Xi_{\bigidx}}f(vzx)dvdzdx.
\end{align*}
Here $\overline{Z_{\bigidx,1}}<L_{\bigidx+1}$, $x\in\GL{1}$ takes
the form $diag(I_{\bigidx},x,x^{-1},I_{\bigidx})$ and
\begin{align*}
\Xi_{\bigidx}=\setof{\left(\begin{array}{cccc}I_{\bigidx}&0&v&0\\&1&0&v'\\&&1&0\\&&&I_{\bigidx}\end{array}\right)}{v\in
F^{\bigidx}}<V_{\bigidx}.
\end{align*}
In the \quasisplit\ case,
\begin{align*}
\int_{\lmodulo{H_{\bigidx}}{G_{\bigidx+1}}}f(g)dg=\int_{\GL{1}}\int_{\overline{Z_{\bigidx-1,1}}}\int_{V_{\bigidx}\cap
G_2}\int_{\Xi_{\bigidx}}f(yv'v''zx)dv'dv''dzdx,
\end{align*}
where $\overline{Z_{\bigidx-1,1}}<L_{\bigidx}$,
$x=diag(I_{\bigidx-1},x,I_2,x^{-1},I_{\bigidx-1})$, the group $G_2$
in the intersection $V_{\bigidx}\cap G_2$ is considered as a
subgroup of $G_{\bigidx+1}$ (embedded in $L_{\bigidx-1}$),
\begin{align*}
y=\left(\begin{array}{cccccc}I_{\bigidx-1}\\&1\\&0&1\\&2\rho^{-2}&0&1\\&2\rho^{-3}&0&2\rho^{-1}&1\\&&&&&I_{\bigidx-1}\end{array}\right)\in
G_{\bigidx+1}
\end{align*} and $\Xi_{\bigidx}$ is a set of representatives from
$G_{\bigidx+1}$,
\begin{align*}
\Xi_{\bigidx}=\setof{\left(\begin{array}{cccccc}I_{\bigidx-1}&0&0&v_2&0&A(v_2)\\&1&&&&0\\&&1&&&0\\&&&1&&v_2'\\&&&&1&0\\&&&&&I_{\bigidx-1}\end{array}\right)}{v_2\in
F^{\bigidx-1}},
\end{align*}
whose exact definition is given in the proof.
\end{lemma}
\begin{proof}[Proof of Lemma~\ref{lemma:integration formula for quotient space H_n G_n+1}] 
Assume that $G_{\smallidx}$ is split. Then the double coset space
$\rmodulo{\lmodulo{H_{\bigidx}}{G_{\bigidx+1}}}{P_{\bigidx+1}}$ has
a single element (\cite{RGS} Proposition~4.4 case (3)) and we take
the identity $e\in G_{\bigidx+1}$ as its representative. Denote
$H_{\bigidx}^e=H_{\bigidx}\cap P_{\bigidx+1}$. We have,
\begin{align*}
H_{\bigidx}^e=\setof{\left(\begin{array}{cccc}x&m&\frac1{2\beta^2}m&u\\&1&0&\frac1{2\beta^2}m'\\&&1&m'\\&&&x^*\end{array}\right)}{x\in\GL{\bigidx}}.
\end{align*}
There is a right-invariant Haar measure $d_rp$ on
$\lmodulo{H_{\bigidx}^e}{P_{\bigidx+1}}$. Hence
\begin{align*}
\int_{\lmodulo{H_{\bigidx}}{G_{\bigidx+1}}}f(g)dg=\int_{\lmodulo{H_{\bigidx}^e}{P_{\bigidx+1}}}f(p)d_rp.
\end{align*}
Let $\GL{\bigidx+1}$ be embedded in $P_{\bigidx+1}$ by $c\mapsto
diag(c,c^*)$ ($c\in\GL{\bigidx+1}$) and let $\GL{1}$ be embedded in
$\GL{\bigidx+1}$ by $t\mapsto diag(I_{\bigidx},t)$ ($t\in F^*$).
Since the complement of $P_{\bigidx,1}\overline{Z_{\bigidx,1}}$ in
$\GL{\bigidx+1}$ is a set of zero measure, the complement of
$H_{\bigidx}^e\GL{1}\overline{Z_{\bigidx,1}}V_{\bigidx+1}=P_{\bigidx,1}\overline{Z_{\bigidx,1}}V_{\bigidx+1}$
in $P_{\bigidx+1}=\GL{\bigidx+1}V_{\bigidx+1}$ is a set of zero
measure (and
$H_{\bigidx}^e\GL{1}\overline{Z_{\bigidx,1}}V_{\bigidx+1}$ is open
and dense in $P_{\bigidx+1}$). It follows that
\begin{align*}
\int_{\lmodulo{H_{\bigidx}^e}{P_{\bigidx+1}}}f(p)d_rp=\int_{\lmodulo{H_{\bigidx}^e}{H_{\bigidx}^e}
\GL{1}\overline{Z_{\bigidx,1}}V_{\bigidx+1}}f(p)d_rp.
\end{align*}
Let
$R_{\bigidx}=\GL{1}\overline{Z_{\bigidx,1}}V_{\bigidx+1}\cap
H_{\bigidx}^e$,
\begin{align*}
&R_{\bigidx}=\setof{\left(\begin{array}{cccc}I_{\bigidx}&0&0&u\\&1&0&0\\&&1&0\\&&&I_{\bigidx}\end{array}\right)}{\transpose{u}J_{\bigidx}+J_{\bigidx}u=0}<V_{\bigidx+1}.
\end{align*}
Since $\GL{1}\overline{Z_{\bigidx,1}}V_{\bigidx+1}$ is a subgroup, we obtain
\begin{align*}
\int_{\lmodulo{H_{\bigidx}^e}{H_{\bigidx}^e\GL{1}\overline{Z_{\bigidx,1}}V_{\bigidx+1}}}f(p)d_rp&=
\int_{\lmodulo{R_{\bigidx}}{\GL{1}\overline{Z_{\bigidx,1}}V_{\bigidx+1}}}f(p)d_rp\\
&=\int_{\GL{1}}\int_{\overline{Z_{\bigidx,1}}}\int_{\lmodulo{R_{\bigidx}}{V_{\bigidx+1}}}f(vzx)dvdzdx.
\end{align*}
Then as groups $\lmodulo{R_{\bigidx}}{V_{\bigidx+1}}\isomorphic
\Xi_{\bigidx}$, whence we can replace the $dv$-integration with an
integration over $\Xi_{\bigidx}$ leading to the requested formula.

Now consider the \quasisplit\ case. In general, if $G$ is one of the
groups defined in Section~\ref{subsection:groups in study}, $H$ and
$P$ are closed subgroups of $G$ and $g\in G$, we have isomorphisms
of algebraic varieties
$\lmodulo{H}{HgP}\isomorphic\lmodulo{\rconj{g}H}{\rconj{g}HP}\isomorphic\lmodulo{(\rconj{g}H\cap
P)}{P}$. Then $dim(HgP)=dim(H)+dim(P)-dim (\rconj{g}H\cap P)$, where
$dim(\cdots)$ refers to the dimension as an algebraic variety over
the local field (see \cite{Spr} Corollary~5.5.6). Let
$G=G_{\bigidx+1}$ be \quasisplit, $H=H_{\bigidx}$ and
$P=P_{\bigidx}$. The double coset space
$\rmodulo{\lmodulo{H_{\bigidx}}{G_{\bigidx+1}}}{P_{\bigidx}}$ has
two elements (\cite{RGS} Proposition~4.4 case (2b)), we take
representatives $\{e,y\}$ where $e$ is the identity and $y$ was
given in the statement of the lemma.
Then $G_{\bigidx+1}=H_{\bigidx}P_{\bigidx}\dotcup
H_{\bigidx}yP_{\bigidx}$. 
A calculation shows that
\begin{align*}
H_{\bigidx}\cap P_{\bigidx}=
\setof{\left(\begin{array}{cccc}x&u&0&v\\&1&0&u'\\&&1&0\\&&&x^*\end{array}\right)}{x\in\GL{\bigidx}}\isomorphic
Q_{\bigidx}.
\end{align*}
Hence $dim (H_{\bigidx}\cap
P_{\bigidx})=\frac32\bigidx^2+\half\bigidx$. Also
\begin{align*}
&\rconj{y}H_{\bigidx}\cap P_{\bigidx}\\\notag&=
\setof{\left(\begin{array}{cccccc}x&u_1&u_2&-2\rho^{-1}
u_3&u_3&u_4\\0&1&\half b\rho^2&\half(a-1)\rho^2&\quarter(1-a)\rho^3&u_3'\\0&0&a&b\rho&-\half b\rho^2&u_2'\\0&0&b&a&\half(1-a)\rho&2\rho^{-2}u_3'\\0&0&0&0&1&u_1'
\\0&0&0&0&0&x^*\end{array}\right)}{x\in\GL{\bigidx-1},a^2-b^2\rho=1}.
\end{align*}
Then $dim (\rconj{y}H_{\bigidx}\cap
P_{\bigidx})=\frac32\bigidx^2-\half\bigidx$. It follows that for all
$\bigidx\geq1$, the complement of $H_{\bigidx}yP_{\bigidx}$ in
$G_{\bigidx+1}$ is of zero measure (and $H_{\bigidx}yP_{\bigidx}$ is
open and dense in $G_{\bigidx+1}$). Put
$H_{\bigidx}^y=\rconj{y}H_{\bigidx}\cap P_{\bigidx}$. There is a
right-invariant Haar measure $d_rp$ on
$\lmodulo{H_{\bigidx}^y}{P_{\bigidx}}$. Hence we have the formula
\begin{align*}
\int_{\lmodulo{H_{\bigidx}}{G_{\bigidx+1}}}f(g)dg
=\int_{\lmodulo{H_{\bigidx}^y}{P_{\bigidx}}}f(yp)d_rp.
\end{align*}
Denote
\begin{align*}
&G_1'=\setof{\left(\begin{array}{cccccc}I_{\bigidx-1}&\\&1&\half b\rho^2&\half(a-1)\rho^2&\quarter(1-a)\rho^3&\\&&a&b\rho&-\half b\rho^2&\\&&b&a&\half(1-a)\rho&\\
&&&&1&\\&&&&&I_{\bigidx-1}\end{array}\right)}{a^2-b^2\rho=1}\isomorphic G_1,\\
&R_{\bigidx}=\{\left(\begin{array}{cccccc}I_{\bigidx-1}&0&u_2&-2\rho^{-1}
u_3&u_3&u_4\\&1&&&&u_3'\\&&1&&&u_2'\\&&&1&&2\rho^{-2}u_3'\\&&&&1&0
\\&&&&&I_{\bigidx-1}\end{array}\right)\}<V_{\bigidx}.
\end{align*}
Let $\GL{\bigidx}$ be embedded in $P_{\bigidx}$ by $c\mapsto diag(c,I_2,c^*)$ ($c\in\GL{\bigidx}$). Then $H_{\bigidx}^y=(Y_{\bigidx}\ltimes R_{\bigidx})\rtimes G_1'$ where $Y_{\bigidx}<\GL{\bigidx}$ is the mirabolic subgroup. Since $P_{\bigidx}=(\GL{\bigidx}\ltimes V_{\bigidx})\rtimes G_1'$ and there is a right-invariant measure on $\lmodulo{(Y_{\bigidx}R_{\bigidx})}{(\GL{\bigidx}V_{\bigidx})}$,
\begin{align*}
\int_{\lmodulo{H_{\bigidx}^y}{P_{\bigidx}}}f(yp)d_rp=
\int_{\lmodulo{Y_{\bigidx}R_{\bigidx}}{\GL{\bigidx}V_{\bigidx}}}f(yp)d_rp.
\end{align*}
Let $\GL{1}$ be embedded in $\GL{\bigidx}$ by $t\mapsto
diag(I_{\bigidx-1},t)$. 
Since the complement of
$Y_{\bigidx}\GL{1}\overline{Z_{\bigidx-1,1}}=P_{\bigidx-1,1}\overline{Z_{\bigidx-1,1}}$
in $\GL{\bigidx}$ is of zero measure,
\begin{align*}
\int_{\lmodulo{Y_{\bigidx}R_{\bigidx}}{\GL{\bigidx}V_{\bigidx}}}f(yp)d_rp=
\int_{\lmodulo{Y_{\bigidx}R_{\bigidx}}{Y_{\bigidx}\GL{1}\overline{Z_{\bigidx-1,1}}V_{\bigidx}}}f(yp)d_rp.
\end{align*}
Note that $R_{\bigidx}<V_{\bigidx}<\GL{1}\overline{Z_{\bigidx-1,1}}V_{\bigidx}$ hence $Y_{\bigidx}\GL{1}\overline{Z_{\bigidx-1,1}}V_{\bigidx}=Y_{\bigidx}R_{\bigidx}\GL{1}\overline{Z_{\bigidx-1,1}}V_{\bigidx}$.  Also $Y_{\bigidx}R_{\bigidx}\cap \GL{1}\overline{Z_{\bigidx-1,1}}V_{\bigidx}=R_{\bigidx}$. Thus we obtain
\begin{align*}
\int_{\lmodulo{Y_{\bigidx}R_{\bigidx}}{Y_{\bigidx}\GL{1}\overline{Z_{\bigidx-1,1}}V_{\bigidx}}}f(yp)d_rp
=\int_{\lmodulo{R_{\bigidx}}{\GL{1}\overline{Z_{\bigidx-1,1}}V_{\bigidx}}}f(yp)d_rp.
\end{align*}
Let
\begin{align*}
V_{\bigidx}^{\star}=\{\left(\begin{array}{cccccc}I_{\bigidx-1}&0&v_1&v_2&v_3&v_4\\&1&&&&v_3'\\&&1&&&v_1'\\&&&1&&v_2'\\&&&&1&0
\\&&&&&I_{\bigidx-1}\end{array}\right)\}<V_{\bigidx}.
\end{align*}
Then $V_{\bigidx}=V_{\bigidx}^{\star}\rtimes (V_{\bigidx}\cap G_2)$ where
\begin{align*}
V_{\bigidx}\cap G_2=\{\left(\begin{array}{cccccc}I_{\bigidx-1}&&&&&\\&1&z_1&z_2&*&\\&&1&0&z_1'\\&&&1&z_2'\\&&&&1&
\\&&&&&I_{\bigidx-1}\end{array}\right)\}.
\end{align*}
Since also $R_{\bigidx}<V_{\bigidx}^{\star}$,
\begin{align*}
\int_{\lmodulo{R_{\bigidx}}{\GL{1}\overline{Z_{\bigidx-1,1}}V_{\bigidx}}}f(yp)d_rp=
\int_{\GL{1}}\int_{\overline{Z_{\bigidx-1,1}}}\int_{V_{\bigidx}\cap G_2}\int_{\lmodulo{R_{\bigidx}}{V_{\bigidx}^{\star}}}f(yv'v''zx)dv'dv''dzdx.
\end{align*}
Finally, $R_{\bigidx}$ is a normal subgroup of
$V_{\bigidx}^{\star}$,
$\lmodulo{R_{\bigidx}}{V_{\bigidx}^{\star}}\isomorphic
F^{\bigidx-1}$ (as groups) and we can choose a set of
representatives
\begin{align*}
\Xi_{\bigidx}=\setof{\theta(v_2)=\left(\begin{array}{cccccc}I_{\bigidx-1}&0&0&v_2&0&A(v_2)\\&1&&&&0\\&&1&&&0\\&&&1&&v_2'\\&&&&1&0\\&&&&&I_{\bigidx-1}\end{array}\right)}{v_2\in
F^{\bigidx-1}}\subset G_{\bigidx+1}.
\end{align*}
Here $A(v_2)=\frac1{2\rho}v_2(\transpose{v_2})J_{\bigidx-1}$ and
$v_2'=\frac1{\rho}(\transpose{v_2})J_{\bigidx-1}$. We define a group
structure on $\Xi_{\bigidx}$ by
$\theta(u_2)+\theta(v_2)=\theta(u_2+v_2)$. Then the mapping
$v_2\mapsto \theta(v_2)$ is a group isomorphism
$F^{\bigidx-1}\isomorphic \Xi_{\bigidx}$ and the quotient topology
on $\Xi_{\bigidx}$ is homeomorphic to the topology of
$F^{\bigidx-1}$. Hence we have a right-invariant Haar measure $dv'$
on $\Xi_{\bigidx}$ (defined using the measure on $F^{\bigidx-1}$),
and a continuous compactly supported function on
$\lmodulo{R_{\bigidx}}{V_{\bigidx}^{\star}}$ can be regarded as a
similar function on $F^{\bigidx-1}$. Moreover,
$R_{\bigidx}\theta(u_2)\cdot
R_{\bigidx}\theta(v_2)=R_{\bigidx}\theta(u_2+v_2)$. Therefore we can
write the $dv'$-integration on
$\lmodulo{R_{\bigidx}}{V_{\bigidx}^{\star}}$ using the measure on
$\Xi_{\bigidx}$ (for details, see Remark~\ref{remark:writing the
integration using representatives in general} below).
\end{proof} 

\begin{remark}\label{remark:writing the integration using representatives in general}
In general, if $G$ is a topological group and $H$ is a closed normal
subgroup, let $X\subset G$ be a set of representatives for
$\lmodulo{H}{G}$. For any function $f$ on $\lmodulo{H}{G}$, define a
function $\overline{f}$ on $X$ by $\overline{f}(x)=f(Hg)$ where $x\in Hg$. Assume
that there is a group structure on $X$ and we have a group
isomorphism $\theta:R\rightarrow X$ where $R$ is a topological group
with a right-invariant Haar measure $dr$. We define a similar
measure on $X$ by
\begin{align*}
\int_{X}\varphi(x)dx=\int_{R}\varphi(\theta(r))dr,
\end{align*}
where $\varphi$ is any (suitable) function on $X$. Assuming that the
quotient topology on $X$ is homeomorphic to the topology of $R$ via
$\theta$, a continuous compactly supported function $f$ on
$\lmodulo{H}{G}$ is mapped to a similar function on $R$ by $f\mapsto
\overline{f}\theta$. Further assume $H\theta(r_1)\cdot
H\theta(r_2)=H\theta(r_1+r_2)$ for any $r_1,r_2\in R$, where on the
\lhs\ the multiplication is in $\lmodulo{H}{G}$. Let $f$ be a
continuous compactly supported function on $\lmodulo{H}{G}$. Then
for any $g\in G$,
\begin{align*}
\int_{X}\overline{g\cdot f}(x)dx=\int_{R}f(H\theta(r)g)dr.
\end{align*}
We have $H\theta(r)g=H\theta(r)\cdot Hg$ and since $X$ is a set of
representatives, there is $x_g=\theta(r_g)\in X$ for some $r_g\in R$
such that $Hg=Hx_g$. Then $H\theta(r)\cdot Hg=H\theta(r)\cdot
H\theta(r_g)=H\theta(r+r_g)$. Hence the integral becomes
\begin{align*}
\int_{R}\overline{f}(\theta(r+r_g))dr=\int_{R}\overline{f}(\theta(r))dr=\int_X\overline{f}(x)dx.
\end{align*}
It follows that the linear functional $f\mapsto
\int_{X}\overline{f}(x)dx$ defines a right-invariant Haar measure on
$\lmodulo{H}{G}$.
\end{remark}

\newtheorem{theorem}{Theorem}[section]
\newtheorem{proposition}{Proposition}[section]
\newtheorem{corollary}{Corollary}[section]
\newtheorem{lemma}{Lemma}[section]
\newtheorem{claim}{Claim}[section]
\theoremstyle{remark}
\newtheorem{remark}{Remark}[section]
\newtheorem{example}{Example}[section]
\theoremstyle{definition}
\newtheorem{definition}{Definition}[section]
\numberwithin{equation}{section}
\newcommand{\chapter}{\section} 

\end{comment}

\chapter{The integrals}\label{chapter:the integrals}
We introduce the global Rankin-Selberg integral and relate it to
Langlands' (partial) $L$-function. The global integral appeared in \cite{GPS,G,Soudry4,RGS}. We bring the details of the decomposition into an Euler product of local integrals, for completeness and also to obtain the precise forms of the local integrals that we will work with. At almost all places, where all data are unramified, we prove that the local integrals are equal to quotients of local $L$-functions. The
local integrals, at arbitrary finite places, are the object of study of this thesis.

\section{Construction of the global
integral}\label{section:construction of the global integral}
In this section our notation is global. Let $F$ be a number field
with an Adele ring $\Adele$. Let $\rho\in F^*$ and define
$J_{2\smallidx,\rho}$ as in Section~\ref{subsection:groups in
study}. The group $G_{\smallidx}(\Adele)$ is the restricted direct
product of the groups $G_{\smallidx}(F_{\nu})$ where $\nu$ varies
over all places of $F$. Similarly, $H_{\bigidx}(\Adele)$ is the
restricted direct product of the groups $H_{\bigidx}(F_{\nu})$. If $\rho\in
F^2$, $G_{\smallidx}(F_{\nu})$ is defined using
$J_{2\smallidx,\rho_{\nu}}=J_{2\smallidx}$ and is split for all $\nu$. Otherwise
$G_{\smallidx}(F_{\nu})$ is defined using $J_{\nu}=diag(I_{\smallidx-1},
\bigl(\begin{smallmatrix}0&1\\-\rho_{\nu}&0\end{smallmatrix}\bigr),I_{\smallidx-1})\cdot
J_{2\smallidx}$ and it is \quasisplit\ at roughly half of the places and split at the
others (see e.g. \cite{Ser} p.~75). At the places where it is split,
$J_{\nu}$ represents the same bilinear form
as $J_{2\smallidx}$. 

The character $\psi$ is now
taken to be a \nontrivial\ unitary additive character of
$\lmodulo{F}{\Adele}$. Let $\psi_{\gamma}$ be the (generic)
character of $U_{G_{ \smallidx}}(\Adele)$ defined by
\begin{align}\label{character:psi gamma of U G}
\psi_{\gamma}(u)=
\begin{cases}\psi(\sum_{i=1}^{\smallidx-2}u_{i,i+1}+\quarter
u_{\smallidx-1,\smallidx}-\gamma u_{\smallidx-1,\smallidx+1})&\text{$G_{\smallidx}(F)$ is split},\\
\psi(\sum_{i=1}^{\smallidx-2}u_{i,i+1}+\half
u_{\smallidx-1,\smallidx+1})&\text{$G_{\smallidx}(F)$ is
\quasisplit}.\end{cases}
\end{align}

Let $\pi$ and $\tau$ be a pair of irreducible automorphic
cuspidal representations, $\pi$ of $G_{\smallidx}(\Adele)$ and $\tau$ of $\GLF{\bigidx}{\Adele}$. Assume that
$\pi$ is globally generic with respect to $\psi_{\gamma}^{-1}$. That is, $\pi$ has a global \nonzero\ Whittaker functional with respect to $\psi_{\gamma}^{-1}$, given by a Fourier coefficient.
Note that $\tau$ is necessarily globally generic (\cite{Sh} Section~5). 

The character $\psi_{\gamma}$ is uniquely determined according to the final form of the integral.
I.e., a different choice of $\psi_{\gamma}$ results in a slightly different integral.

To a smooth holomorphic section $f_s$ from the space
$\cinduced{Q_{\bigidx}(\Adele)}{H_{\bigidx}(\Adele)}{\tau\alpha^s}$, 
attach an Eisenstein series
\begin{align*}
E_{f_s}(h)=\sum_{y\in\lmodulo{Q_{\bigidx}(F)}{H_{\bigidx}(F)}}f_s(yh,1)\qquad(h\in
H_{\bigidx}(\Adele)).
\end{align*}
Note that $\tau$ is realized in its space of cusp forms on
$\GLF{\bigidx}{\Adele}$, so $f_s$ (for a fixed $s$) is a function in
two variables. The sum is absolutely convergent for $\Re(s)>>0$ and
has a meromorphic continuation to the whole plane (see e.g.
\cite{MW2} Chapter~4).

Let $\varphi$ be a cusp form belonging to the space of $\pi$. Fix
$s$ and let $f_s$ be as above. 
The form of the integral depends on the relative sizes of $\smallidx$ and $\bigidx$.
The construction follows the arguments of \cite{GPS,G,Soudry}.

\subsection{The case $\smallidx\leq\bigidx$}\label{subsection:The global integral for l<=n}
We integrate $\varphi$ against a Fourier coefficient of $E_{f_s}$. Consider the
subgroup $N_{\bigidx-\smallidx}=Z_{\bigidx-\smallidx}\ltimes
U_{\bigidx-\smallidx}$, where $Z_{\bigidx-\smallidx}<M_{\bigidx-\smallidx}$. In coordinates,
\begin{align*}
N_{\bigidx-\smallidx}=\setof{\left(\begin {array}{ccc}
  z & y_1 & y_2 \\
    & \idblk{2\smallidx+1} & y_1' \\
    &   & z^*
\end{array}\right)\in H_{\bigidx}}{z\in Z_{\bigidx-\smallidx}} \qquad(y_1'=-J_{2\smallidx+1}\transpose{y_1}J_{\bigidx-\smallidx}z^*).
\end{align*}
Define a character $\psi_{\gamma}$ of $N_{\bigidx-\smallidx}(\Adele)$ by
$\psi_{\gamma}(zy)=\psi(z)\psi(\transpose{e_{\bigidx-\smallidx}}ye_{\gamma})$ where $z\in Z_{\bigidx-\smallidx}(\Adele)$ and $y\in
U_{\bigidx-\smallidx}(\Adele)$
(with $e_{\bigidx-\smallidx},e_{\gamma}$ given in
Section~\ref{subsection:G_l in H_n}). Specifically,
\begin{align*}
\psi_{\gamma}(v)=\psi(\sum_{i=1}^{\bigidx-\smallidx-1}v_{i,i+1}+v_{\bigidx-\smallidx,\bigidx}+\gamma
v_{\bigidx-\smallidx,\bigidx+2})\qquad(v\in N_{\bigidx-\smallidx}(\Adele)).
\end{align*}
Note that $\psi_{\gamma}$ defines a character of
$\lmodulo{N_{\bigidx-\smallidx}(F)}{N_{\bigidx-\smallidx}(\Adele)}$ and if $\smallidx=\bigidx$, $N_{\bigidx-\smallidx}=\{1\}$ and $\psi_{\gamma}\equiv1$. According
to the embedding $G_{\smallidx}<H_{\bigidx}$ described in
Section~\ref{subsection:G_l in H_n}, $G_{\smallidx}$ normalizes
$N_{\bigidx-\smallidx}$ and stabilizes $\psi_{\gamma}$ (see also Remark~\ref{remark:why choose e gamma}). The Fourier
coefficient of $E_{f_s}$ with respect to the unipotent subgroup
$N_{\bigidx-\smallidx}(\Adele)$ and $\psi_{\gamma}$ is the function on $H_{\bigidx}(\Adele)$ given by
\begin{align*}
E_{f_s}^{\psi_{\gamma}}(h)=\int_{\lmodulo{N_{\bigidx-\smallidx}(F)}{N_{\bigidx-\smallidx}(\Adele)}}E_{f_s}(vh)\psi_{\gamma}(v)dv. 
\end{align*}
The global integral is
\begin{align}\label{integral:global l <= n}
I(\varphi,f_s,s)=\int\limits_{\lmodulo{G_{\smallidx}(F)}{G_{\smallidx}(\Adele)}}\varphi(g)E_{f_s}^{\psi_{\gamma}}(g)dg.
\end{align}
It is absolutely convergent in the whole plane except at the 
poles of the Eisenstein series. This follows from the
rapid decay of cusp forms and moderate growth of the Eisenstein
series. The integral $I(\varphi,f_s,s)$ defines a meromorphic
function of $s$. 

We may regard $I(\varphi,f_s,s)$
for any fixed $s$ which is not a pole of the Eisenstein series, as a bilinear form on
$\pi\times\cinduced{Q_{\bigidx}(\Adele)}{H_{\bigidx}(\Adele)}{\tau\alpha^s}$.
The definition implies
\begin{align}\label{eq:global integral in homspace l <= n}
I(g\cdot \varphi,(gv)\cdot f_s,s)=\psi_{\gamma}^{-1}(v)I(\varphi,
f_s,s)\qquad(\forall v\in N_{\bigidx-\smallidx}(\Adele), g\in
G_{\smallidx}(\Adele)).
\end{align}

For decomposable data, we will show that $I(\varphi,f_s,s)$
decomposes as an Euler product
\begin{align}\label{eq:the euler product}
I(\varphi,f_s,s)=\prod_{\nu}\Psi_{\nu}(W_{\varphi_{\nu}},f_{s,\nu},s).
\end{align}
The product is taken over all places $\nu$ of $F$, the local integrals $\Psi_{\nu}$
are given below 
and the local data $(W_{\varphi_{\nu}},f_{s,\nu})$ correspond to
$\varphi$ and $f_s$. This equality is in the sense of meromorphic
continuation. Proposition~\ref{propo:basic global identity l <= n}
proves \eqref{eq:the euler product} by showing that for
$\Re(s)>>0$, $I(\varphi,f_s,s)$ equals as an integral to an Eulerian
integral, which decomposes as the product on the \rhs. Then by
meromorphic continuation \eqref{eq:the euler product} holds for all
$s$.

The local integral $\Psi_{\nu}(W_{\varphi_{\nu}},f_{s,\nu},s)$ is
absolutely convergent in some right half-plane. As a bilinear form, it satisfies the local version of 
\eqref{eq:global integral in homspace l <= n}. The local space of bilinear forms satisfying \eqref{eq:global integral in homspace l <= n} 
enjoys a multiplicity-one property, which enables us to define the local factors. The details are given in Chapter~\ref{chapter:local factors}.
\begin{proposition}\label{propo:basic global identity l <= n}
Let $I(\varphi,f_s,s)$ be given by \eqref{integral:global l <= n}. For
$\Re(s)>>0$,
\begin{align*}
I(\varphi,f_s,s)=\int_{\lmodulo{U_{G_{\smallidx}}(\Adele)}{G_{\smallidx}(\Adele)}}
W_{\varphi}(g) \int_{R_{\smallidx,\bigidx}(\Adele)}
W_{f_s}^{\psi}(w_{\smallidx,\bigidx}rg,1)\psi_{\gamma}(r)drdg.
\end{align*}
Here $W_{\varphi}$ is a Fourier-Whittaker coefficient of $\varphi$,
\begin{align*}
W_{\varphi}(g)=\int_{\lmodulo{U_{G_{\smallidx}}(F)}{U_{G_{\smallidx}}(\Adele)}}\varphi(ug)\psi_{\gamma}(u)du,
\end{align*}
the function $u\mapsto\psi_{\gamma}(u)$ is the character defined by \eqref{character:psi gamma of U G}, 
\begin{align*}
&w_{\smallidx,\bigidx}=\left(\begin {array}{ccccc}
   &  \gamma I_{\smallidx} &  &   &  \\
    &   &  &   &   I_{\bigidx-\smallidx} \\
   &   & (-1)^{\bigidx-\smallidx} &   &  \\
 I_{\bigidx-\smallidx}  &   &  &  &  \\
  &   &  &  \gamma ^{-1} I_{\smallidx} &
\end{array}\right)\in H_{\bigidx},\\
&R_{\smallidx,\bigidx}=\{\left(\begin {array}{ccccc}
  I_{\bigidx-\smallidx} & x  & y & 0  & z \\
   & I_{\smallidx}  & 0 & 0  &  0 \\
   &   & 1 &  0 & y' \\
   &   &  &  I_{\smallidx} & x' \\
   &   &  &   & I_{\bigidx-\smallidx}
\end{array}\right)\}<H_{\bigidx},
\end{align*}
$\psi_{\gamma}(r)$ is the restriction of the character $\psi_{\gamma}$ of $N_{\bigidx-\smallidx}(\Adele)$ to
$R_{\smallidx,\bigidx}(\Adele)$
and for all $h\in H_{\bigidx}(\Adele)$,
\begin{align*}
W_{f_s}^{\psi}(h,1)=\int_{\lmodulo{Z_{\bigidx}(F)}{Z_{\bigidx}(\Adele)}}f_s(h,z)\psi^{-1}(z)dz.
\end{align*}
\end{proposition}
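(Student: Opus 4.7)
The plan is the standard Rankin--Selberg unfolding, following the template of \cite{GPS,G,Soudry}. For $\Re(s)\gg 0$, substitute the defining sum of $E_{f_s}$ into the Fourier coefficient and the result into the definition of $I(\varphi,f_s,s)$, and interchange summation with integration (justified by absolute convergence for large $\Re(s)$, together with the rapid decay of $\varphi$) to obtain
\begin{align*}
I(\varphi,f_s,s)=\int\limits_{\lmodulo{G_{\smallidx}(F)}{G_{\smallidx}(\Adele)}}\varphi(g)\int\limits_{\lmodulo{N_{\bigidx-\smallidx}(F)}{N_{\bigidx-\smallidx}(\Adele)}}\sum_{y\in\lmodulo{Q_{\bigidx}(F)}{H_{\bigidx}(F)}}f_s(yvg,1)\psi_{\gamma}(v)\,dv\,dg.
\end{align*}

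The first main step is a Bruhat-type analysis of the $F$-rational orbits of $G_{\smallidx}\cdot N_{\bigidx-\smallidx}$ on $\lmodulo{Q_{\bigidx}}{H_{\bigidx}}$. I would enumerate the finitely many double coset representatives and show that the only one producing a nonzero contribution is $w_{\smallidx,\bigidx}$. The vanishing of each other orbit comes from one of two sources: either the restriction of $\psi_{\gamma}$ to a unipotent subgroup contained in the relevant stabilizer is nontrivial, killing the inner integral; or, after changing order of integration, one encounters the integral of a cusp form in $\tau$ over the unipotent radical of a proper parabolic of $\GL{\bigidx}$, which vanishes by cuspidality. Computing the stabilizer of $w_{\smallidx,\bigidx}$ in $G_{\smallidx}\cdot N_{\bigidx-\smallidx}$ and collapsing the surviving $F$-quotients then converts $I(\varphi,f_s,s)$ into an integral over an adelic quotient in which no global summations remain.

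The second step is to introduce the Whittaker models. Since $\pi$ is globally generic with respect to $\psi_{\gamma}^{-1}$, one develops $\varphi$ along $U_{G_{\smallidx}}$, replacing it by $W_{\varphi}$ and the $G_{\smallidx}(F)$-quotient by $\lmodulo{U_{G_{\smallidx}}(\Adele)}{G_{\smallidx}(\Adele)}$. Since $\tau$ is globally generic (by Shalika), $f_s(\cdot,1)$ admits a Whittaker expansion along $Z_{\bigidx}$, introducing $W_{f_s}^{\psi}$ and an additional integration over $\lmodulo{Z_{\bigidx}(F)}{Z_{\bigidx}(\Adele)}$. A careful bookkeeping of the remaining unipotent integrations --- merging the $Z_{\bigidx}$-integration from the Whittaker expansion of $\tau$, the surviving portion of the $N_{\bigidx-\smallidx}$-integration after the stabilizer has been factored out, and the contribution from conjugation by $w_{\smallidx,\bigidx}^{-1}$ --- should identify the combined domain with $R_{\smallidx,\bigidx}(\Adele)$ and the character with the restriction of $\psi_{\gamma}$, giving the claimed identity.

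The main obstacle will be the orbit analysis together with the stabilizer computation: pinning down $w_{\smallidx,\bigidx}$ as the unique contributing representative and verifying, orbit by orbit, that either cuspidality of $\tau$ or the twist by $\psi_{\gamma}$ eliminates every other double coset. A secondary but delicate task is the final reshuffling of unipotent subgroups to recognize $R_{\smallidx,\bigidx}$ precisely, and to check that the conjugation $r\mapsto w_{\smallidx,\bigidx}^{-1}rw_{\smallidx,\bigidx}$ sends the character $\psi$ used to define $W_{f_s}^{\psi}$ to the restriction of $\psi_{\gamma}$ appearing on the right-hand side.
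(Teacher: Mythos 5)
Your overall template --- unfold $E_{f_s}$ along $\lmodulo{Q_{\bigidx}}{H_{\bigidx}}$, classify the $N_{\bigidx-\smallidx}G_{\smallidx}$-orbits, show that a single representative survives, then introduce the Whittaker models of $\pi$ and $\tau$ --- is indeed what the paper does. However, two of the mechanisms you describe are not the ones at work, and at least one of them would lead you astray if you tried to carry it out.

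First, the cuspidality used to annihilate the surviving ``bad'' orbits is that of $\varphi$, not of $\tau$. The orbit analysis produces representatives $w_r\eta$ with $r=0,\ldots,\bigidx-\smallidx$ and $\eta$ running over a short list. For $r>0$ one checks, as you anticipate, that $\psi_{\gamma}$ is nontrivial on a unipotent subgroup contained in the stabilizer, and the inner integral vanishes (this is Claim~\ref{claim:unfolding of E_f to short sum}). But the remaining nontrivial orbits at $r=0$ (which occur only when $G_{\smallidx}(F)$ is split, with $\eta=diag(I_{\bigidx-\smallidx},\xi_i,I_{\bigidx-\smallidx})$, $i=1,2$) are killed because the stabilizer $Q_{\smallidx}'^{\eta}$ in $G_{\smallidx}$ is a proper parabolic subgroup, so integrating $\varphi$ against the resulting inner integral factors through a constant term of $\varphi$ and vanishes by cuspidality of $\varphi$. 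At no point does one integrate $\tau$-data over the unipotent radical of a proper parabolic of $\GL{\bigidx}$; cuspidality of $\tau$ plays no role in the unfolding.

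Second, the way $W_{\varphi}$ and $W_{f_s}^{\psi}$ enter is not via a Whittaker expansion of $\varphi$ and of $f_s(\cdot,1)$ glued together. The surviving stabilizer $Q_{\smallidx}'^{\eta_0}$ is deliberately not a parabolic subgroup of $G_{\smallidx}$ (see Remark~\ref{remark:reason for nonstandard embedding}), so you cannot directly develop $\varphi$ along $U_{G_{\smallidx}}$. Instead one averages $\varphi$ over a subgroup $R_{\smallidx}'$, obtains a cuspidal function on the mirabolic $Y_{\smallidx}(\Adele)$, and applies the Fourier expansion for cusp forms on the mirabolic (a la Shalika/Piatetski-Shapiro) to write it as a sum of Whittaker coefficients over $\lmodulo{Z_{\smallidx-1}}{\GL{\smallidx-1}}$; this sum is then collapsed against the $dg$-integration. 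The $\lmodulo{Z_{\bigidx}(F)}{Z_{\bigidx}(\Adele)}$-integral defining $W_{f_s}^{\psi}$ is not an ``additional'' integration introduced through a Whittaker expansion of $f_s$: it arises because the $Z_{\smallidx}$-piece left over from the mirabolic expansion and the $C_0^{\eta_0}$-piece left over from $N_{\bigidx-\smallidx}$ combine (after conjugation by $\omega_{\smallidx,\bigidx-\smallidx}$ and $diag(\gamma I_{\smallidx},I_{\bigidx-\smallidx})$) into exactly the integral over $\lmodulo{Z_{\bigidx}(F)}{Z_{\bigidx}(\Adele)}$ that defines $W_{f_s}^{\psi}$, while what remains of $N_{\bigidx-\smallidx}$ becomes $R_{\smallidx,\bigidx}$. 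These corrections matter: the mirabolic trick is precisely what the nonstandard embedding $G_{\smallidx}<H_{\bigidx}$ forces, and misidentifying whose cuspidality kills the bad orbits would leave those terms unaccounted for.
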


Before we turn to the proof of the proposition, let us show how it
is used to establish \eqref{eq:the euler product}. For $\varphi$ corresponding to a pure tensor,
$W_{\varphi}=\prod_{\nu}W_{\nu}$ where for all $\nu$,
$W_{\nu}\in\Whittaker{\pi_{\nu}}{\psi_{\nu}^{-1}}$ and for almost all $\nu$, $W_{\nu}$ equals the
normalized unramified Whittaker function.
Similarly for a decomposable vector $f_s$,
$W_{f_s}^{\psi}(h,1)=\prod_{\nu}f_{s,\nu}(h_{\nu},1)$ where
$f_{s,\nu}\in\cinduced{Q_{\bigidx}(F_{\nu})}{H_{\bigidx}(F_{\nu})}{\Whittaker{\tau_{\nu}}{\psi_{\nu}}
\absdet{}_{\nu}^{s-\half}}$ and for almost all $\nu$ this is the
unramified function, normalized so that $f_{s,\nu}(1,1)=1$. Note that $f_{s,\nu}$ is realized as a function
on $H_{\bigidx}(F_{\nu})\times\GLF{\bigidx}{F_{\nu}}$ and 
the function $m_{\nu}\mapsto
\delta_{Q_{\bigidx}(F_{\nu})}^{-1/2}(m_{\nu})\absdet{m_{\nu}}_{\nu}^{-s+\half}f_{s,\nu}(m_{\nu}h_{\nu},1)$ on $\GLF{\bigidx}{F_{\nu}}$
belongs to  $\Whittaker{\tau_{\nu}}{\psi_{\nu}}$.
By Proposition~\ref{propo:basic global
identity l <= n} with a cusp form $\varphi$ corresponding to a pure tensor and a decomposable $f_s$,
\begin{align*}
I(\varphi,f_s,s)=&\prod_{\nu}\Psi_{\nu}(W_{\nu},f_{s,\nu},s)\\
=&\prod_{\nu}\int_{\lmodulo{U_{G_{\smallidx}}(F_{\nu})}{G_{\smallidx}(F_{\nu})}}W_{\nu}(g)\int_{R_{\smallidx,\bigidx}(F_{\nu})}
f_{s,\nu}
((w_{\smallidx,\bigidx})_{\nu}rg,1)\psi_{\gamma,\nu}(r)drdg.
\end{align*}

\begin{proof}[Proof of Proposition~\ref{propo:basic global identity l <= n}] 
The first task is to unfold the Eisenstein series $E_{f_s}$ for
$\Re(s)>>0$ and show that the $\psi_{\gamma}$-coefficient is a sum
of either one (in the \quasisplit\ case) or three (in the split
case) summands. Throughout, we omit the reference to the field $F$ in
summations. Observe that
\begin{align*}
E_{f_s}(h)=\sum_{y\in
\lmodulo{Q_{\bigidx}}{H_{\bigidx}}}f_s(yh,1)=\sum_{w\in\rmodulo{\lmodulo{Q_{\bigidx}}{H_{\bigidx}}}{Q_{\bigidx-\smallidx}}}
\quad\sum_{q\in\lmodulo{Q_{\bigidx}^w}{Q_{\bigidx-\smallidx}}}f_s(wqh,1),
\end{align*}
where $Q_{\bigidx}^w=\rconj{w}Q_{\bigidx}\cap
Q_{\bigidx-\smallidx}$. The following is a set of representatives
for
$\rmodulo{\lmodulo{Q_{\bigidx}}{H_{\bigidx}}}{Q_{\bigidx-\smallidx}}$:
\begin{align*}
\setof{w_r=\left(\begin {array}{ccccccc}
  I_r &  &  &  & & & \\
   & &  &  &  & I_{\bigidx-\smallidx-r} & \\
   &  &  I_{\smallidx} &  &  & & \\
   &  &  & (-1)^{\bigidx-\smallidx-r} &  &  &\\
   &  &   &  & I_{\smallidx} & & \\
   & I_{\bigidx-\smallidx-r} &  &  & & & \\
    & & &  &  &  & I_r
\end{array}\right)}{r=\intrange{0}{\bigidx-\smallidx}}
\end{align*}
(see \cite{RGS} Chapter~4). We may change the order of the $dv$-integration and summation over $w$ in $E_{f_s}^{\psi_{\gamma}}(h)$, since $N_{\bigidx-\smallidx}(F)<Q_{\bigidx-\smallidx}(F)$ and
because of the nice convergence properties of the Eisenstein series. This yields
\begin{align*}
E_{f_s}^{\psi_{\gamma}}(h)=\sum_{r=0}^{\bigidx-\smallidx}
\int_{\lmodulo{N_{\bigidx-\smallidx}(F)}{N_{\bigidx-\smallidx}(\Adele)}}
\sum_{q\in\lmodulo{Q_{\bigidx}^{w_r}}{Q_{\bigidx-\smallidx}}}
f_s(w_rqvh,1)\psi_{\gamma}(v)dv.
\end{align*}
We will show that except for $r=0$, all other summands on the \rhs\
vanish. Considering the right-action of
$N_{\bigidx-\smallidx}G_{\smallidx}$ on
$\lmodulo{Q_{\bigidx}^{w_r}}{Q_{\bigidx-\smallidx}}$,
\begin{align}\label{eq:E_f psi second unfolding}
E_{f_s}^{\psi_{\gamma}}(h)=&\sum_{r=0}^{\bigidx-\smallidx}
\int_{\lmodulo{N_{\bigidx-\smallidx}(F)}{N_{\bigidx-\smallidx}(\Adele)}}
\quad\sum_{\eta\in\rmodulo{\lmodulo{Q_{\bigidx}^{w_r}}{Q_{\bigidx-\smallidx}}}{N_{\bigidx-\smallidx}G_{\smallidx}}}\\\nonumber&\times
\sum_{b\in\lmodulo{(Q_{\bigidx}^{w_r})^{\eta}}{N_{\bigidx-\smallidx}G_{\smallidx}}}f_s(w_r\eta
bvh,1)\psi_{\gamma}(v)dv.
\end{align}
Here $(Q_{\bigidx}^{w_r})^{\eta}=\rconj{\eta}(Q_{\bigidx}^{w_r})\cap
N_{\bigidx-\smallidx}G_{\smallidx}$. Writing the coordinates of
$Q_{\bigidx}^{w_r}$ explicitly,
\begin{align}\label{eq:Qn wr}
Q_{\bigidx}^{w_r}=\{\left(\begin {array}{cc|ccc|cc}
  a & x & y_1 & y_2 & y_3 & z_1 & z_2 \\
    & b & 0   & 0   & y_4 & 0   & z_1' \\ \hline
    &   &    &    &    & y_4'& y_3' \\
    &   &     & Q_{\smallidx}'  &   & 0   & y_2' \\
    &   &     &     &  & 0   & y_1' \\\hline
    &   &     &     &     & b^* & x' \\
    &   &     &     &     &     & a^*
\end{array}\right)\},
\end{align}
where
$a\in\GL{r},b\in\GL{\bigidx-\smallidx-r},y_1\in\Mat{r\times\smallidx},y_2\in\Mat{r\times1}$
and $Q_{\smallidx}'$ is the maximal parabolic subgroup of
$H_{\smallidx}<Q_{\bigidx-\smallidx}$ whose Levi part is isomorphic
to $\GL{\smallidx}$. Denote by $C_r$ the
subgroup of $Q_{\bigidx}^{w_r}$ consisting of the elements with $I_{2\smallidx+1}$ in the
middle block (i.e., $I_{2\smallidx+1}$ instead of $Q_{\smallidx}'$). According to \eqref{eq:Qn wr},
$Q_{\bigidx}^{w_r}=C_r\rtimes Q_{\smallidx}'$. Moreover,
\begin{align*}
\rconj{\eta}(Q_{\bigidx}^{w_r})\cap
N_{\bigidx-\smallidx}G_{\smallidx}=(\rconj{\eta}C_r\cap
N_{\bigidx-\smallidx})(\rconj{\eta}Q_{\smallidx}'\cap
G_{\smallidx})=C_r^{\eta}\rtimes Q_{\smallidx}'^{\eta}.
\end{align*}
Using this \eqref{eq:E_f psi second unfolding} takes the form
\begin{align}\label{eq:E_r unfolding}
E_{f_s}^{\psi_{\gamma}}(h)=\sum_{r=0}^{\bigidx-\smallidx}
\sum_{\eta\in\rmodulo{\lmodulo{Q_{\bigidx}^{w_r}}{Q_{\bigidx-\smallidx}}}{N_{\bigidx-\smallidx}G_{\smallidx}}}
\quad\sum_{g\in\lmodulo{Q_{\smallidx}'^{\eta}}{G_{\smallidx}}}
\quad\int_{\lmodulo{C_r^{\eta}(F)}{N_{\bigidx-\smallidx}(\Adele)}}
f_s(w_r\eta vgh,1)\psi_{\gamma}(v)dv.
\end{align}

For $k\geq0$, let $S_k$ be the group of permutations of
$\{\intrange{1}{k}\}$ ($S_0=\{1\}$). Denote the natural association
of $S_k$ with the permutation matrices in $\GL{k}$ by $\sigma\mapsto
b(\sigma)$ ($b(\sigma)_{i,j}=\delta_{i,\sigma(j)}$). Note that
we have
\begin{align*}
\rmodulo{\lmodulo{Q_{\bigidx}^{w_r}}{Q_{\bigidx-\smallidx}}}{N_{\bigidx-\smallidx}G_{\smallidx}}\isomorphic
\lmodulo{(S_r\times
S_{\bigidx-\smallidx-r})}{S_{\bigidx-\smallidx}}\times\rmodulo{\lmodulo{Q_{\smallidx}'}{H_{\smallidx}}}{G_{\smallidx}}.
\end{align*}
Here $S_r$ (resp. $S_{\bigidx-\smallidx-r}$) is embedded in
$S_{\bigidx-\smallidx}$ as the subgroup of permutations of
$\{\intrange{1}{r}\}$ (resp. of
$\{\intrange{r+1}{\bigidx-\smallidx}\}$). The space
$\rmodulo{\lmodulo{Q_{\smallidx}'}{H_{\smallidx}}}{G_{\smallidx}}$
is trivial containing a single element which we represent using the identity
element, when $G_{\smallidx}(F)$ is \quasisplit, otherwise it
contains two more elements (\cite{GPS} Section~3, \cite{RGS}
Proposition~4.4). Representatives for these elements
may be taken to be 
\begin{align*}
\xi_1=\left(\begin{array}{ccccc}I_{\smallidx-1}\\&0&0&-\gamma^{-1}\\&0&-1&-2\beta^{-1}\\&-\gamma&\beta&1\\&&&&I_{\smallidx-1}\\\end{array}\right),
\xi_2=\left(\begin{array}{ccccc}I_{\smallidx-1}\\&0&0&-\gamma^{-1}\\&0&-1&2\beta^{-1}\\&-\gamma&-\beta&1\\&&&&I_{\smallidx-1}\\\end{array}\right).
\end{align*}
Note that $\xi_1$ and $\xi_2$ are related to the matrix $M$ defined in Section~\ref{subsection:G_l in H_n}, specifically
$diag(I_{\smallidx},-\beta,I_{\smallidx})M\in Q_{\smallidx}'\xi_1$ and
\begin{align*}
diag(I_{\smallidx-1},\left(\begin{array}{ccc}&&1\\&\beta\\1\end{array}\right),I_{\smallidx-1})M\in Q_{\smallidx}'\xi_2.
\end{align*}
A set of representatives for
$\rmodulo{\lmodulo{Q_{\bigidx}^{w_r}}{Q_{\bigidx-\smallidx}}}{N_{\bigidx-\smallidx}G_{\smallidx}}$
is given by
\begin{align}\label{set:representatives for second filtration}
\mathcal{A}(r)=\setof{diag(b(\sigma),\xi,b(\sigma)^*)}{\sigma\in\lmodulo{(S_r\times
S_{\bigidx-\smallidx-r})}{S_{\bigidx-\smallidx}},\xi\in\rmodulo{\lmodulo{Q_{\smallidx}'}{H_{\smallidx}}}{G_{\smallidx}}}.
\end{align}
We claim the following:
\begin{claim}\label{claim:unfolding of E_f to short sum}
For all $r>0$, $\eta\in \mathcal{A}(r)$ and $h\in
H_{\bigidx}(\Adele)$,
\begin{align}\label{eq:claim unfolding of E_f to short sum}
\int_{\lmodulo{C_r^{\eta}(F)}{N_{\bigidx-\smallidx}(\Adele)}}
f_s(w_r\eta vh,1)\psi_{\gamma}(v)dv=0.
\end{align}
\end{claim}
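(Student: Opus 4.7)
The plan is to show the inner integral in \eqref{eq:claim unfolding of E_f to short sum} vanishes for each $r>0$ and $\eta\in\mathcal{A}(r)$ by the standard device of integrating a nontrivial character over a subgroup on which the section is invariant. Concretely, I would produce, for each such $(r,\eta)$, a one-parameter unipotent subgroup $V=V_{r,\eta}\le N_{\bigidx-\smallidx}$ with three properties: (i) $V$ is a root subgroup of $N_{\bigidx-\smallidx}$ with $V\cap C_r^{\eta}=\{1\}$, so that $\lmodulo{V(F)}{V(\Adele)}$ embeds as an inner integration inside $\lmodulo{C_r^{\eta}(F)}{N_{\bigidx-\smallidx}(\Adele)}$; (ii) the conjugate $w_r\eta V(\Adele)(w_r\eta)^{-1}$ lies in the unipotent radical of $Q_{\bigidx}$, so that $v\mapsto f_s(w_r\eta vh,1)$ is constant on $V(\Adele)$ by left-invariance of sections in $\cinduced{Q_{\bigidx}(\Adele)}{H_{\bigidx}(\Adele)}{\tau\alpha^s}$; and (iii) $\psi_{\gamma}|_{V(\Adele)}$ is a nontrivial character of the compact quotient $\lmodulo{V(F)}{V(\Adele)}$. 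Given (i)--(iii), Fubini reduces the inner integral to
\begin{align*}
f_s(w_r\eta h,1)\int_{\lmodulo{V(F)}{V(\Adele)}}\psi_{\gamma}(v)\,dv=0,
\end{align*}
which vanishes by orthogonality of characters on compact groups.

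To construct $V$, I would use the explicit form $\eta=diag(b(\sigma),\xi,b(\sigma)^*)$ with $\sigma$ representing a coset in $\lmodulo{(S_r\times S_{\bigidx-\smallidx-r})}{S_{\bigidx-\smallidx}}$ and $\xi\in\{1\}$ (\quasisplit\ case) or $\xi\in\{1,\xi_1,\xi_2\}$ (split case). The character $\psi_{\gamma}$ is supported on the superdiagonal entries $v_{i,i+1}$ of the $Z_{\bigidx-\smallidx}$-part for $1\le i\le \bigidx-\smallidx-1$ and on the two $y_1$-entries $v_{\bigidx-\smallidx,\bigidx}$ and $v_{\bigidx-\smallidx,\bigidx+2}$. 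The key observation is that for $r>0$, the Weyl element $w_r$ swaps the $(\bigidx-\smallidx-r)$-block sitting in row/column block 2 with its mirror in row/column block 6, so that certain coordinates of $Z_{\bigidx-\smallidx}$ and $U_{\bigidx-\smallidx}$ leave the Levi of $Q_{\bigidx}$ and enter its unipotent radical. For $0<r<\bigidx-\smallidx$, the natural first candidate is the superdiagonal root at position $(r,r+1)$ of $z$, which bridges the diagonal blocks $z_{11}$ and $z_{22}$; for the boundary case $r=\bigidx-\smallidx$, one instead tries one of the $y_1$-roots $(\bigidx-\smallidx,\bigidx)$ or $(\bigidx-\smallidx,\bigidx+2)$, since $Z_{\bigidx-\smallidx}$ has no nontrivial off-diagonal coordinate in this case.

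The main obstacle is the case analysis in $\sigma$ and $\xi$. Each representative twists the coordinates of $N_{\bigidx-\smallidx}$ differently under conjugation, and the explicit form of $C_r^{\eta}=\rconj{\eta}C_r\cap N_{\bigidx-\smallidx}$ changes accordingly. One must verify, case by case, that the chosen root satisfies (i)--(iii) simultaneously; in particular the split case with $\xi\in\{\xi_1,\xi_2\}$ requires a direct matrix computation, since these elements mix the three middle rows/columns nontrivially and may shift the chosen root in the $y_1$-direction, forcing a different choice of $V$. Once the appropriate $V_{r,\eta}$ is exhibited in each case, the vanishing follows from (ii) and (iii).
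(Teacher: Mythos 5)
Your overall strategy — exhibit a one-parameter unipotent subgroup on which $f_s$ is constant but $\psi_{\gamma}$ is nontrivial, then kill the integral by orthogonality of characters on a compact quotient — is exactly the paper's strategy, and your conditions (ii) and (iii) are the right ones. However, your condition (i) is backwards, and this is a genuine gap in the argument as you have written it.

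You require $V\cap C_r^{\eta}=\{1\}$ and assert that this lets $\lmodulo{V(F)}{V(\Adele)}$ appear as an inner integration. But the integral \eqref{eq:claim unfolding of E_f to short sum} is over $\lmodulo{C_r^{\eta}(F)}{N_{\bigidx-\smallidx}(\Adele)}$, and the only $F$-rational invariance available to the integrand is left-invariance under $C_r^{\eta}(F)$. To isolate an inner integration over the \emph{compact} quotient $\lmodulo{V(F)}{V(\Adele)}$ you need the integrand to be left-invariant under $V(F)$, which forces $V(F)\subset C_r^{\eta}(F)$, i.e.\ $V\subset C_r^{\eta}$. If instead $V\cap C_r^{\eta}=\{1\}$, the $V$-part of the domain is the noncompact $V(\Adele)$, and $\int_{V(\Adele)}\psi_{\gamma}(v)\,dv$ for a nontrivial unitary character is not a convergent (or vanishing) integral, so the orthogonality argument does not apply. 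The paper's auxiliary subgroup is in fact $X=C_r^{\eta}\cap\rconj{w_r\eta}U_{\bigidx}$, a subgroup \emph{of} $C_r^{\eta}$; the decomposition of the domain then places $\lmodulo{X(F)}{X(\Adele)}$ legitimately on the inside, the $f_s$-factor is constant across it by your (ii), and one finds a one-parameter $O\subset X$ with $\psi_{\gamma}|_{O(\Adele)}\nequiv 1$. You should replace (i) with ``$V\subset C_r^{\eta}$'' (together with (ii), this puts $V\subset X$), after which the Fubini/orthogonality step goes through.

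Two smaller remarks. First, the superdiagonal position you propose, $(r,r+1)$, is only the right root when $\sigma$ is trivial on the relevant block; in general, after conjugation by $b(\sigma)$, the correct choice is the entry $(j,j+1)$ for some $j$ with $\sigma(j)\leq r$ and $\sigma(j+1)>r$, and such a $j$ may fail to exist (precisely when $\sigma$ is the shuffle $\omega_{r,\bigidx-\smallidx-r}$). In that boundary case — which includes $r=\bigidx-\smallidx$ — the paper does, as you anticipate, pass to a subgroup lying in the $y_1$-direction, conjugated by $\xi$; so your instinct about the case analysis is correct, but the primary candidate must be indexed by $j$ rather than $r$. Second, the inner integral you write out has $f_s(w_r\eta h,1)$ with $v$ dropped; with the corrected (i), the $f_s$-factor is constant in the $V$-variable for each fixed outer variable, but it still depends on the outer variables, so the factored expression should read $f_s(w_r\eta c v h,1)\psi_{\gamma}(cv)\int_{\lmodulo{V(F)}{V(\Adele)}}\psi_{\gamma}(x)\,dx$, which vanishes for every fixed $(c,v)$.
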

The proof of this claim will be given below. It follows that only
$r=0$ contributes to $E_{f_s}^{\psi}(h)$,
\begin{align}\label{eq:E_r unfolding only r=0 counts}
E_{f_s}^{\psi_{\gamma}}(h)=\sum_{\eta\in\mathcal{A}(0)}
\quad\sum_{g\in\lmodulo{Q_{\smallidx}'^{\eta}}{G_{\smallidx}}}
\quad\int_{\lmodulo{C_0^{\eta}(F)}{N_{\bigidx-\smallidx}(\Adele)}}
f_s(w_0\eta vgh,1)\psi_{\gamma}(v)dv.
\end{align}
We return to
integral~\eqref{integral:global l <= n}. Set $\eta_0=I_{2\bigidx+1}$. In the \quasisplit\ case,
$\mathcal{A}(0)=\{\eta_0\}$. The next task is to show
that in the split case only $\eta_0$ contributes to
the global integral. Indeed, since
\begin{align*}
h\mapsto\sum_{g\in\lmodulo{Q_{\smallidx}'^{\eta}}{G_{\smallidx}}}
\quad\int_{\lmodulo{C_0^{\eta}(F)}{N_{\bigidx-\smallidx}(\Adele)}}
f_s(w_0\eta vgh,1)\psi_{\gamma}(v)dv
\end{align*}
is $G_{\smallidx}(F)$-invariant on the left, after plugging \eqref{eq:E_r unfolding only r=0 counts}
into \eqref{integral:global l <= n} we can change the order
of the $dg$-integration and summation over $\eta$ in \eqref{integral:global l <= n} and
obtain three integrals of the form
\begin{align}\label{integral:cusp againt E r,eta}
\int_{\lmodulo{Q_{\smallidx}'^{\eta}(F)}{G_{\smallidx}(\Adele)}}\varphi(g)
\int_{\lmodulo{C_0^{\eta}(F)}{N_{\bigidx-\smallidx}(\Adele)}}
f_s(w_0\eta vg,1)\psi_{\gamma}(v)dvdg.
\end{align}
When $\eta=diag(I_{\bigidx-\smallidx},\xi_i,I_{\bigidx-\smallidx})$,
$i=1,2$, $Q_{\smallidx}'^{\eta}$ is a parabolic subgroup of
$G_{\smallidx}$. To see this, consider for example $i=1$. In the definition of
$Q_{\smallidx}'^{\eta}$ we regarded $G_{\smallidx}$ as a subgroup of $H_{\smallidx}$ -
the subgroup $\setof{\rconj{M}[g]_{\mathcal{E}_{G_{\smallidx}}'}}{g\in G_{\smallidx}}$ (see Section~\ref{subsection:G_l in H_n}). Put
$G_{\smallidx}'=\setof{[g]_{\mathcal{E}_{G_{\smallidx}}'}}{g\in G_{\smallidx}}$.
The image of $G_{\smallidx}$ in $H_{\smallidx}$ is $\rconj{M}G_{\smallidx}'$. Also set
$t_{\beta}=diag(I_{\smallidx},-\beta,I_{\smallidx})$. Then
$t_{\beta}M=q\xi_1$ for some $q\in Q_{\smallidx}'$ and since $\rconj{q}Q_{\smallidx}'=
\rconj{t_{\beta}}Q_{\smallidx}'=Q_{\smallidx}'$,
\begin{align*}
Q_{\smallidx}'^{\eta}=\rconj{\xi_1}Q_{\smallidx}'\cap\rconj{M}G_{\smallidx}'
=\rconj{q\xi_1}Q_{\smallidx}'\cap\rconj{M}G_{\smallidx}'
=\rconj{M}(Q_{\smallidx}'\cap G_{\smallidx}'),
\end{align*}
where we regarded $\eta$ as an element of $H_{\smallidx}$. Because $Q_{\smallidx}'\cap G_{\smallidx}'$ is a parabolic subgroup of
$G_{\smallidx}'$, $Q_{\smallidx}'^{\eta}$ is a parabolic subgroup of
$G_{\smallidx}$. A similar calculation holds for $i=2$ (if $t_{\beta}$ is the matrix written above satisfying $t_{\beta}M\in Q_{\smallidx}'\xi_2$,
$\rconj{t_{\beta}}Q_{\smallidx}'\cap G_{\smallidx}'$ is a parabolic subgroup of $G_{\smallidx}'$).
As in \cite{GPS} (Section~1.2), 
using the fact that $\varphi$ is a cusp form it is shown that
when $\eta\ne\eta_0$, integral~\eqref{integral:cusp againt E r,eta} vanishes.
\begin{remark}\label{remark:reason for nonstandard embedding}
The embedding $G_{\smallidx}<H_{\bigidx}$ has the property that in the
split case $Q_{\smallidx}'^{\eta_0}$ is not a parabolic subgroup
of $G_{\smallidx}$. Otherwise a unique representative
$\eta_i=diag(I_{\bigidx-\smallidx},\xi_i,I_{\bigidx-\smallidx})$
would contribute to the global integral and $\rconj{\eta_i^{-1}}
T_{G_{\smallidx}}$ would not be a subgroup of $T_{H_{\bigidx}}$.
Hence the complication arising from the embedding, namely that $T_{G_{\smallidx}}$ is not a subgroup of $T_{H_{\bigidx}}$,
is unavoidable (this issue is also dealt with in \cite{GPS}).
\end{remark}

Therefore in both split and \quasisplit\ cases
integral~\eqref{integral:global l <= n} equals
\begin{align}\label{integral:after unfolding E_f completely}
\int_{\lmodulo{Q_{\smallidx}'^{\eta_0}(F)}{G_{\smallidx}(\Adele)}}\varphi(g)
\int_{\lmodulo{C_0^{\eta_0}(F)}{N_{\bigidx-\smallidx}(\Adele)}}
f_s(w_0vg,1)\psi_{\gamma}(v)dvdg.
\end{align}
Here
\begin{align*}
&Q_{\smallidx}'^{\eta_0}=\setof{q'(a,b,c,d)=\left(\begin
{array}{ccccc}
  a & b  & c & b'  & d \\
   & 1  & 0 & 0  &  b'' \\
   &   & 1 &  0 & c' \\
   &   &  &  1 & b''' \\
   &   &  &   & a^*
\end{array}\right)\in H_{\smallidx}}{a\in\GL{\smallidx-1}},\\
&C_0^{\eta_0}=\setof{\left(\begin {array}{ccccc}
  z & 0  & 0 & x  & 0 \\
   & I_{\smallidx}  & 0 & 0  &  x' \\
   &   & 1 &  0 & 0 \\
   &   &  &  I_{\smallidx} & 0 \\
   &   &  &   & z^*
\end{array}\right)\in H_{\bigidx}}{z\in Z_{\bigidx-\smallidx}}.
\end{align*}
Let $R_{\smallidx}'$ be the subgroup of $G_{\smallidx}$ whose image
in $H_{\smallidx}$ is $\{q'(I_{\smallidx-1},0,c,d)\}$. In
coordinates,
\begin{align*}
&R_{\smallidx}'=\begin{dcases}\{
\left(\begin{array}{cccc}I_{\smallidx-1}&4\gamma u_1&u_1&u_2\\
&1&0&u_1'\\
&&1&4\gamma u_1'\\
&&&I_{\smallidx-1}\end{array}\right)\}&\text{split $G_{\smallidx}(F),$}\\
\{\left(\begin{array}{cccc}I_{\smallidx-1}&u_1&0&u_2\\
&1&0&u_1'\\
&&1&0\\
&&&I_{\smallidx-1}\end{array}\right)\}&\text{\quasisplit\
$G_{\smallidx}(F)$.}
\end{dcases}
\end{align*}
Recall that $Y_{\smallidx}$ is the mirabolic subgroup of
$\GL{\smallidx}$. In the split case we regard $Y_{\smallidx}$ as a subgroup of $L_{\smallidx}$ and we have
$U_{G_{\smallidx}}=Z_{\smallidx}\ltimes R_{\smallidx}'$, where
$Z_{\smallidx}<Y_{\smallidx}$.
In the \quasisplit\ case let
\begin{align*}
P_{\smallidx}'=\setof{\left(\begin{array}{cccc}a&u_1&v_1&u_2\\
&1&0&u_1'\\
&&1&v_1'\\
&&&a^*\end{array}\right)}{a\in\GL{\smallidx-1}}<P_{\smallidx}.
\end{align*}
The group $Y_{\smallidx}$ is regarded as the quotient group
$\lmodulo{R_{\smallidx}'}{P_{\smallidx}'}$ and the subgroup $Z_{\smallidx}$ of $Y_{\smallidx}$ is
$\lmodulo{R_{\smallidx}'}{U_{G_{\smallidx}}}$. Then an element $z\in Z_{\smallidx}(\Adele)$ will actually be
a representative in $U_{G_{\smallidx}}(\Adele)$.

For a fixed $g\in
G_{\smallidx}(\Adele)$, define
\begin{align*}
\varphi_g^0(x)=\int_{\lmodulo{R_{\smallidx}'(F)}{R_{\smallidx}'(\Adele)}}\varphi(r'xg)dr'\qquad(x\in
Y_{\smallidx}(\Adele)).
\end{align*}
Then $\varphi_g^0$ is a cuspidal function on $Y_{\smallidx}(\Adele)$
and according to the Fourier transform for $\varphi_g^0(1)$ (\cite{Sh} Section~5, see
\cite{GPS} Section~1),
\begin{align*}
\varphi_g^0(1)=\sum_{\delta\in\lmodulo{Z_{\smallidx-1}}{\GL{\smallidx-1}}}\int_{\lmodulo{R_{\smallidx}'(F)}{R_{\smallidx}'(\Adele)}}
\int_{\lmodulo{Z_{\smallidx}(F)}{Z_{\smallidx}(\Adele)}}\varphi(r'z\delta
g)\psi^{\star}(z)dzdr',
\end{align*}
where $\GL{\smallidx-1}<Y_{\smallidx}$ and $\psi^{\star}$ is the
character of $\lmodulo{Z_{\smallidx}(F)}{Z_{\smallidx}(\Adele)}$
defined by
\begin{align*}
\psi^{\star}(z)=\begin{cases}
\psi(\sum_{i=1}^{\smallidx-2}z_{i,i+1}+\quarter z_{\smallidx-1,\smallidx})&\text{$G_{\smallidx}(F)$ is split,}\\
\psi(\sum_{i=1}^{\smallidx-2}z_{i,i+1}+\half
z_{\smallidx-1,\smallidx})&\text{$G_{\smallidx}(F)$ is
\quasisplit.}\end{cases}
\end{align*}
Note that in the \quasisplit\ case, $\psi^{\star}$ is well-defined
on $Z_{\smallidx}(\Adele)$ as a quotient group.
Write $u=r'z\in
U_{G_{\smallidx}}(\Adele)$ and change variables in $r$ and $z$ so that
\begin{align*}
u=\begin{dcases}
\left(\begin{array}{cccc}z_1&z_2&u_1&u_2\\
&1&0&u_1'\\
&&1&z_2'\\
&&&z_1^*\end{array}\right)&\text{split $G_{\smallidx}(F),$}\\
\left(\begin{array}{cccc}z_1&u_1&z_2&u_2\\
&1&0&u_1'\\
&&1&z_2'\\
&&&z_1^*\end{array}\right)&\text{\quasisplit\ $G_{\smallidx}(F)$.}
\end{dcases}
\end{align*}
We obtain
\begin{align*}
\varphi_g^0(1)=\sum_{\delta\in\lmodulo{Z_{\smallidx-1}}{\GL{\smallidx-1}}}\int_{\lmodulo{U_{G_{\smallidx}}(F)}{U_{G_{\smallidx}}(\Adele)}}
\varphi(u\delta
g)\psi_{\gamma}(u)du=\sum_{\delta\in\lmodulo{Z_{\smallidx-1}}{\GL{\smallidx-1}}}W_{\varphi}(\delta
g),
\end{align*}
where
$W_{\varphi}(g)=\int_{\lmodulo{U_{G_{\smallidx}}(F)}{U_{G_{\smallidx}}(\Adele)}}\varphi(ug)\psi_{\gamma}(u)du$. 

Next note that the function 
\begin{align*}
g\mapsto\int_{\lmodulo{C_0^{\eta_0}(F)}{N_{\bigidx-\smallidx}(\Adele)}}
f_s(w_0vg,1)\psi_{\gamma}(v)dv
\end{align*}
is invariant on the left for $r'\in R_{\smallidx}'(\Adele)$ and
$\delta\in\GL{\smallidx-1}(F)<Y_{\smallidx}(F)$. This follows
because $G_{\smallidx}$ stabilizes $\psi_{\gamma}$ and normalizes $N_{\bigidx-\smallidx}$, $r'$ commutes with any element of
$C_0^{\eta_0}$, $\rconj{w_0^{-1}}r'\in U_{\bigidx}$, $\GL{\smallidx-1}$ normalizes
$C_0^{\eta_0}$ and for $a\in\GLF{\smallidx-1}{F}$, $\rconj{w_0^{-1}}diag(I_{\bigidx-\smallidx},a,I_2,a^*,I_{\bigidx-\smallidx})\in M_{\bigidx}(F)$ (if $G_{\smallidx}(F)$ is \quasisplit, $\GL{\smallidx-1}$ is
regarded as a quotient group). Also
$\lmodulo{Z_{\smallidx-1}}{\GL{\smallidx-1}}\isomorphic\lmodulo{U_{G_{\smallidx}}}{Q_{\smallidx}'^{\eta_0}}$. Combining the formula for $\varphi_g^0(1)$ with these observations,
integral~\eqref{integral:after unfolding E_f completely} becomes
\begin{align*}
&\int_{\lmodulo{R_{\smallidx}'(\Adele)Q_{\smallidx}'^{\eta_0}(F)}{G_{\smallidx}(\Adele)}}\varphi_g^0(1)
\int_{\lmodulo{C_0^{\eta_0}(F)}{N_{\bigidx-\smallidx}(\Adele)}}
f_s(w_0vg,1)\psi_{\gamma}(v)dvdg\\\notag
&=\int_{\lmodulo{R_{\smallidx}'(\Adele)Q_{\smallidx}'^{\eta_0}(F)}{G_{\smallidx}(\Adele)}}
\sum_{\delta\in\lmodulo{Z_{\smallidx-1}}{\GL{\smallidx-1}}}W_{\varphi}(\delta
g)\int_{\lmodulo{C_0^{\eta_0}(F)}{N_{\bigidx-\smallidx}(\Adele)}}
f_s(w_0v\delta g,1)\psi_{\gamma}(v)dvdg\\\notag
&=\int_{\lmodulo{R_{\smallidx}'(\Adele)U_{G_{\smallidx}}(F)}{G_{\smallidx}(\Adele)}}
W_{\varphi}(g)
\int_{\lmodulo{C_0^{\eta_0}(F)}{N_{\bigidx-\smallidx}(\Adele)}}
f_s(w_0vg,1)\psi_{\gamma}(v)dvdg.
\end{align*}
For $z\in Z_{\smallidx}(\Adele)$,
$W_{\varphi}(zg)=\psi_{\gamma}^{-1}(z)W_{\varphi}(g)$. Now the integral becomes 
\begin{align*}
&\int_{\lmodulo{U_{G_{\smallidx}}(\Adele)}{G_{\smallidx}(\Adele)}}
W_{\varphi}(g)
\int_{\lmodulo{Z_{\smallidx}(F)}{Z_{\smallidx}(\Adele)}}
\int_{\lmodulo{C_0^{\eta_0}(F)}{N_{\bigidx-\smallidx}(\Adele)}}
f_s(w_0vzg,1)\psi_{\gamma}(v)\psi_{\gamma}^{-1}(z)dvdzdg\\\notag&=
\int_{\lmodulo{U_{G_{\smallidx}}(\Adele)}{G_{\smallidx}(\Adele)}}
W_{\varphi}(g)
\int_{\lmodulo{Z_{\smallidx}(F)}{Z_{\smallidx}(\Adele)}}
\int_{\lmodulo{C_0^{\eta_0}(\Adele)}{N_{\bigidx-\smallidx}(\Adele)}}
\int_{\lmodulo{C_0^{\eta_0}(F)}{C_0^{\eta_0}(\Adele)}}\\\notag
&\quad f_s(w_0yvzg,1)\psi_{\gamma}(yv)\psi_{\gamma}^{-1}(z)dydvdzdg.
\end{align*}
For fixed $v$ and $g$, consider the function on $Z_{\smallidx}(\Adele)$ given by
\begin{align*}
z\mapsto\int_{\lmodulo{C_0^{\eta_0}(F)}{C_0^{\eta_0}(\Adele)}}
f_s(w_0yzvg,1)\psi_{\gamma}(yv)\psi_{\gamma}^{-1}(z)dy.
\end{align*}
As above since the elements of $R_{\smallidx}'$ and $C_0^{\eta_0}$ commute (etc.), this is indeed well-defined on
$Z_{\smallidx}(\Adele)$ also in the \quasisplit\ case. In either case, this function
is left-invariant on $Z_{\smallidx}(F)$, because $z$ normalizes
$C_0^{\eta_0}(\Adele)$, stabilizes $\psi_{\gamma}(y)$ and
$f_s(w_0z,1)=f_s(w_0,1)$. Then the mapping on $N_{\bigidx-\smallidx}(\Adele)$ defined by
\begin{align*}
v\mapsto\int_{\lmodulo{Z_{\smallidx}(F)}{Z_{\smallidx}(\Adele)}}
\int_{\lmodulo{C_0^{\eta_0}(F)}{C_0^{\eta_0}(\Adele)}}
f_s(w_0yzvg,1)\psi_{\gamma}(yv)\psi_{\gamma}^{-1}(z)dydz
\end{align*}
is invariant on the left by $C_0^{\eta_0}(\Adele)$. In addition, $z$ normalizes $N_{\bigidx-\smallidx}(\Adele)$ and stabilizes $\psi_{\gamma}(v)$. It follows that
we can change $vz\mapsto zv$ in the integral and obtain
\begin{align*}
&\int_{\lmodulo{U_{G_{\smallidx}}(\Adele)}{G_{\smallidx}(\Adele)}}
W_{\varphi}(g)
\int_{\lmodulo{C_0^{\eta_0}(\Adele)}{N_{\bigidx-\smallidx}(\Adele)}}
\int_{\lmodulo{Z_{\smallidx}(F)}{Z_{\smallidx}(\Adele)}}
\int_{\lmodulo{C_0^{\eta_0}(F)}{C_0^{\eta_0}(\Adele)}}\\\notag
&\quad
f_s(\rconj{w_0^{-1}}(yz)w_0vg,1)\psi_{\gamma}(y)\psi_{\gamma}^{-1}(z)\psi_{\gamma}(v)dydzdvdg.
\end{align*}
If $G_{\smallidx}(F)$ is \quasisplit, it is now convenient to replace $Z_{\smallidx}$ with the following set of representatives:
\begin{align*}
\setof{\left(\begin{array}{cccc}z_1&0&z_2&A(z_1,z_2)\\
&1&0&0\\
&&1&z_2'\\
&&&z_1^*\end{array}\right)\in U_{G_{\smallidx}}}{z_1\in Z_{\smallidx-1}},
\end{align*}
where $A(z_1,z_2)$ is a function of $z_1$ and $z_2$.

The double integral
$\int_{\lmodulo{Z_{\smallidx}(F)}{Z_{\smallidx}(\Adele)}}
\int_{\lmodulo{C_0^{\eta_0}(F)}{C_0^{\eta_0}(\Adele)}}$ can be
written as
$\int_{\lmodulo{\widetilde{Z_{\bigidx}}(F)}{\widetilde{Z_{\bigidx}}(\Adele)}}$,
where $\widetilde{Z_{\bigidx}}<Q_{\bigidx}$ is isomorphic to
$Z_{\bigidx}$ which is considered as a subgroup of $M_{\bigidx}$.
To
pass to $Z_{\bigidx}$ we conjugate by
$\omega_{\smallidx,\bigidx-\smallidx}\in\GL{\smallidx}$ (defined in
Section~\ref{subsection:groups in study}). In addition use a
conjugation by $diag(\gamma I_{\smallidx},I_{\bigidx-\smallidx})$
to get the character $\psi$ of $Z_{\bigidx}(\Adele)$ (the standard
character, see Section~\ref{subsection:notation for reps}). Also
$R_{\smallidx,\bigidx}=\lmodulo{C_0^{\eta_0}}{N_{\bigidx-\smallidx}}$.
The final form of the integral is
\begin{equation*}\int_{\lmodulo{U_{G_{\smallidx}}(\Adele)}{G_{\smallidx}(\Adele)}}W_{\varphi}(g)
\int_{R_{\smallidx,\bigidx}(\Adele)}
W_{f_s}^{\psi}(w_{\smallidx,\bigidx}rg,1)\psi_{\gamma}(r)drdg. \qedhere
\end{equation*}
\end{proof} 

\begin{proof}[Proof of Claim~\ref{claim:unfolding of E_f to short sum}] 
Put $X=C_r^{\eta}\cap\rconj{w_r\eta}U_{\bigidx}$. Since $w_r\eta X\eta^{-1}w_r^{-1}<U_{\bigidx}$,
for any $c\in
C_r^{\eta}(\Adele)$ and $h\in H_{\bigidx}(\Adele)$,
\begin{align*}
\int_{\lmodulo{X(F)}{X(\Adele)}}f_s(w_r\eta
xch,1)\psi_{\gamma}(xc)dx=f_s(w_r\eta
ch,1)\psi_{\gamma}(c)\int_{\lmodulo{X(F)}{X(\Adele)}}\psi_{\gamma}(x)dx.
\end{align*}
Hence the mapping $c\mapsto \int_{\lmodulo{X(F)}{X(\Adele)}}f_s(w_r\eta
xch,1)\psi_{\gamma}(xc)dx$ is left-invariant on $C_r^{\eta}(F)$. Then \eqref{eq:claim unfolding
of E_f to short sum} equals
\begin{align*}
\int_{\lmodulo{C_r^{\eta}(\Adele)}{N_{\bigidx-\smallidx}(\Adele)}}
\int_{\lmodulo{(\lmodulo{X(F)}{X(\Adele)})}{(\lmodulo{C_r^{\eta}(F)}{C_r^{\eta}(\Adele)})}}
\int_{\lmodulo{X(F)}{X(\Adele)}}f_s(w_r\eta xcvh,1)\psi_{\gamma}(xcv)dxdcdv.
\end{align*}
Now it is enough to show $\frestrict{\psi_{\gamma}}{X(\Adele)}\nequiv 1$, because then the $dx$-integration vanishes.
The proof of the claim proceeds by finding $O<X$ such that $\frestrict{\psi_{\gamma}}{O(\Adele)}\nequiv 1$. The subgroup $O$
depends on $r$ and $\eta=diag(b(\sigma),\xi,b(\sigma)^*)$.

If there is some $1\leq j< \bigidx-\smallidx$ such that
$\sigma(j)\leq r$ and $\sigma(j+1)>r$, we let $O$ be the image in
$N_{\bigidx-\smallidx}$ of the subgroup of $Z_{\bigidx-\smallidx}$
consisting of matrices with $1$ on the diagonal, an arbitrary
element 
in the $(j,j+1)$-th coordinate and
zero elsewhere. 
Since for $z\in
Z_{\bigidx-\smallidx}$,
$(\rconj{b(\sigma)^{-1}}z)_{\sigma(j),\sigma(j+1)}=z_{j,j+1}$ one
sees that $w_r\eta O\eta^{-1}w_r^{-1}<U_{\bigidx}$ and $\eta
O\eta^{-1}<C_r$. Also $O<N_{\bigidx-\smallidx}$, whence $O<C_r^{\eta}=\rconj{\eta}C_r\cap
N_{\bigidx-\smallidx}$ and $O<X$. It is clear that $\psi_{\gamma}|_{O(\Adele)}\nequiv 1$.

If no such $j$ exists, we can assume
$\sigma(\bigidx-\smallidx-r+j)\leq r$ for all $1\leq j\leq r$. Hence
$\sigma(j)>r$ for $1\leq j\leq\bigidx-\smallidx-r$. By the
definition of $\mathcal{A}(r)$, we can assume
$\sigma^{-1}(1)<\ldots<\sigma^{-1}(r)$ (if $r>1$) and
$\sigma^{-1}(r+1)<\ldots<\sigma^{-1}(\bigidx-\smallidx)$ (if
$r<\bigidx-\smallidx$). Combining these observations, we may assume
\begin{align*}
\sigma^{-1}(j)=\begin{cases}\bigidx-\smallidx-r+j&1\leq j\leq r,\\
j-r&r<j\leq\bigidx-\smallidx.\end{cases}
\end{align*}
Hence
$b(\sigma)=\omega_{r,\bigidx-\smallidx-r}=\bigl(\begin{smallmatrix}&I_r\\I_{\bigidx-\smallidx-r}&\end{smallmatrix}\bigr)$.
In this case let $O'(\Adele)$ be the following subgroup of $N_{\bigidx-\smallidx}(\Adele)$.
If $\xi=I_{2\smallidx+1}$,
\begin{align*}
O'(\Adele)=\setof{diag(I_{\bigidx-\smallidx-1},\left(\begin{array}{ccccccc}1&0&0&a&a&0&-\half
a^2\\&I_{\smallidx-1}&&&&&0\\&&1&&&&-a\\&&&1&&&-a\\&&&&1&&0\\&&&&&I_{\smallidx-1}&0\\&&&&&&1\end{array}\right),I_{\bigidx-\smallidx-1})}{a\in\Adele
}.
\end{align*}
Otherwise
\begin{align*}
O'(\Adele)=\setof{diag(I_{\bigidx-\smallidx-1},\left(\begin{array}{ccccc}1&0&a&0&-\half
a^2\\&I_{\smallidx}&&&0\\&&1&&-a\\&&&I_{\smallidx}&0\\&&&&1\end{array}\right),I_{\bigidx-\smallidx-1})}{a\in\Adele
}.
\end{align*}
Define $O(\Adele)=\xi^{-1}O'(\Adele)\xi$ where $\xi$ is regarded as an element of $H_{\bigidx}$.
Then $O(\Adele)<N_{\bigidx-\smallidx}(\Adele)$ and $\eta
O(\Adele)\eta^{-1}$ is the subgroup of elements $u\in U_{\bigidx}(\Adele)$ with $1$
on the diagonal,
\begin{align*}
(u_{r,\bigidx+1},u_{r,\bigidx+2})=\begin{cases}
(a,a) &\xi=I_{2\smallidx+1},\\
(-a,-2a\beta^{-1}) &\xi=\xi_1,\\
(-a,2a\beta^{-1}) &\xi=\xi_2,\end{cases}
\end{align*}
where $a\in\Adele$, and $0$ in all other coordinates except
$u_{\bigidx+1,2\bigidx-r+2},u_{\bigidx,2\bigidx-r+2}$ and $u_{r,2\bigidx-r+2}$
(which are determined by $u_{r,\bigidx+1}$ and $u_{r,\bigidx+2}$).
\end{proof} 

\subsection{The case $\smallidx>\bigidx$}\label{subsection:The global integral for l>n}
We integrate a Fourier coefficient of $\varphi$ against $E_{f_s}$.
Let $N^{\smallidx-\bigidx}=Z_{\smallidx-\bigidx-1}\ltimes V_{\smallidx-\bigidx-1}$,
\begin{align*}
N^{\smallidx-\bigidx}=\setof{\left(\begin {array}{ccc}
  z & y_1 & y_2 \\
    & \idblk{2\bigidx+2} & y_1' \\
    &   & z^*
\end{array}\right)\in G_{\smallidx}}{z\in Z_{\smallidx-\bigidx-1}}.
\end{align*}
Define a character $\psi_{\gamma}$ of
$N^{\smallidx-\bigidx}(\Adele)$ by
$\psi_{\gamma}(zy)=\psi(z)\psi(\transpose{e_{\smallidx-\bigidx-1}}ye_{\gamma})$,
for $z\in Z_{\smallidx-\bigidx-1}(\Adele)$ and $y\in
V_{\smallidx-\bigidx-1}(\Adele)$ ($e_{\smallidx-\bigidx-1}$ and
$e_{\gamma}$ were given in Section~\ref{subsection:H_n in G_l}).
I.e., for $v\in N^{\smallidx-\bigidx}(\Adele)$,
\begin{align*}
\psi_{\gamma}(v)=
\begin{cases}\psi(\sum_{i=1}^{\smallidx-\bigidx-2}v_{i,i+1}+\quarter
v_{\smallidx-\bigidx-1,\smallidx}-\gamma v_{\smallidx-\bigidx-1,\smallidx+1})&\text{$G_{\smallidx}(F)$ is split},\\
\psi(\sum_{i=1}^{\smallidx-\bigidx-2}v_{i,i+1}+\half
v_{\smallidx-\bigidx-1,\smallidx+1})&\text{$G_{\smallidx}(F)$ is
\quasisplit}.\end{cases}
\end{align*}
Then $\psi_{\gamma}$ can be considered as a character of
$\lmodulo{N^{\smallidx-\bigidx}(F)}{N^{\smallidx-\bigidx}(\Adele)}$.
This character is trivial when $\smallidx=\bigidx+1$. Note that the
embedding $H_{\bigidx}<G_{\smallidx}$ (see
Section~\ref{subsection:H_n in G_l}) is such that $H_{\bigidx}$
normalizes $N^{\smallidx-\bigidx}$ and stabilizes $\psi_{\gamma}$. 
The $\psi_{\gamma}$-coefficient of $\varphi$ with respect to
$N^{\smallidx-\bigidx}(\Adele)$ is
\begin{align*}
\varphi^{\psi_{\gamma}}(g)=\int_{\lmodulo{N^{\smallidx-\bigidx}(F)}{N^{\smallidx-\bigidx}(\Adele)}}\varphi(vg)\psi_{\gamma}(v)dv\qquad(g\in G_{\smallidx}(\Adele)).
\end{align*}
The global integral is
\begin{align}\label{integral:global l > n}
I(\varphi,f_s,s)=\int_{\lmodulo{H_{\bigidx}(F)}{H_{\bigidx}(\Adele)}}\varphi^{\psi_{\gamma}}(h)E_{f_s}(h)dh.
\end{align}
For all $s$ (except at the poles of the Eisenstein series) it satisfies
\begin{align}\label{eq:global integral in homspace l > n}
I((hv)\cdot \varphi,h\cdot f_s,s)=\psi_{\gamma}^{-1}(v)I(\varphi,
f_s,s)\qquad(\forall v\in N^{\smallidx-\bigidx}(\Adele), h\in
H_{\bigidx}(\Adele)).
\end{align}
As in the previous section we prove that for decomposable data,
$I(\varphi,f_s,s)$ is representable as an Euler product.
\begin{proposition}\label{propo:basic global identity l > n}
Assume that $I(\varphi,f_s,s)$ is given by \eqref{integral:global l >
n}. For $\Re(s)>>0$,
\begin{align*}
I(\varphi,f_s,s)=\int_{\lmodulo{U_{H_{\bigidx}}(\Adele)}{H_{\bigidx}(\Adele)}}
(\int_{R^{\smallidx,\bigidx}(\Adele)}W_{\varphi}(rw^{\smallidx,\bigidx}h)
dr)W_{f_s}^{\psi}(h,1)dh,
\end{align*}
where $W_{\varphi}$ and $W_{f_s}^{\psi}$ are prescribed by
Proposition~\ref{propo:basic global identity l <= n},
\begin{align*}
&w^{\smallidx,\bigidx}=\left(\begin {array}{ccccc}
   &  I_{\bigidx} &  &   &  \\
  I_{\smallidx-\bigidx-1}  &   &  &   &    \\
   &   & I_2 &   &  \\
   &   &  &  & I_{\smallidx-\bigidx-1} \\
  &   &  & I_{\bigidx}  &
\end{array}\right)\in G_{\smallidx},\\
&R^{\smallidx,\bigidx}=\{\left(\begin {array}{ccccc}
  I_{\bigidx} &   &  &   &  \\
  x & I_{\smallidx-\bigidx-1}  &  &   &   \\
   &   & I_2 &   &  \\
   &   &  &  I_{\smallidx-\bigidx-1} &  \\
   &   &  &  x' & I_{\bigidx}
\end{array}\right)\}<G_{\smallidx}.
\end{align*}
\end{proposition}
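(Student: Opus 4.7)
The plan is to mirror the proof of Proposition~\ref{propo:basic global identity l <= n}, with the roles of the Eisenstein series and the cusp form interchanged. Since $E_{f_s}$ is integrated against $\varphi^{\psi_{\gamma}}$ directly---not against a further Fourier coefficient of it---its unfolding is essentially one step; the bulk of the analytic work shifts to the Fourier expansion of the cusp form underlying $f_s$ and to the Whittaker expansion of $\varphi$.

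First I would unfold the Eisenstein series. For $\Re(s)\gg 0$, absolute convergence together with the left $H_{\bigidx}(F)$-invariance of $\varphi^{\psi_{\gamma}}$ (which holds because $H_{\bigidx}$ normalizes $N^{\smallidx-\bigidx}$ and stabilizes $\psi_{\gamma}$) collapses the $\lmodulo{Q_{\bigidx}(F)}{H_{\bigidx}(F)}$-sum inside $E_{f_s}$ with the outer $\lmodulo{H_{\bigidx}(F)}{H_{\bigidx}(\Adele)}$-integral, producing
$$I(\varphi,f_s,s)=\int_{\lmodulo{Q_{\bigidx}(F)}{H_{\bigidx}(\Adele)}}\varphi^{\psi_{\gamma}}(h)f_s(h,1)\,dh.$$
Writing $Q_{\bigidx}=M_{\bigidx}\ltimes U_{\bigidx}$, I would then apply the Shalika/mirabolic Fourier expansion (cf.\ \cite{Sh}~Section~5 and \cite{GPS}~Section~1) to the cusp form $m\mapsto \delta_{Q_{\bigidx}}^{-1/2}(m)\absdet{m}^{-s+\half}f_s(mh,1)$ on $\GLF{\bigidx}{\Adele}$. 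This replaces $f_s(h,1)$ with the Whittaker coefficient $W_{f_s}^{\psi}(h,1)$, at the cost of introducing a summation over a mirabolic quotient $\lmodulo{Z_{\bigidx-1}(F)}{\GLF{\bigidx-1}{F}}$ acting on $h$.

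Symmetrically, I would Fourier-expand $\varphi^{\psi_{\gamma}}$ further along the root directions of $U_{G_{\smallidx}}$ that are not yet captured by $N^{\smallidx-\bigidx}$ and do not lie above the already-integrated $U_{\bigidx}$. Using the cuspidality of $\varphi$ and successive Fourier expansions against $\psi_{\gamma}$ along mirabolic chains---as in the analogous final step of the previous proof---these extra directions unfold so that $\varphi^{\psi_{\gamma}}$ is replaced by its full Whittaker coefficient $W_{\varphi}$ and the residual unipotent variable is turned, after conjugation by $w^{\smallidx,\bigidx}$, into the group $R^{\smallidx,\bigidx}$. Collapsing the mirabolic-quotient sums on both sides Iwasawa-wise, the outer domain becomes $\lmodulo{U_{H_{\bigidx}}(\Adele)}{H_{\bigidx}(\Adele)}$ and the integrand matches the right-hand side of the proposition.

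The main technical obstacle I anticipate is the bookkeeping at this last step: one must verify that, after combining the mirabolic expansion of $\tau$, the $U_{\bigidx}$-integration inherited from $\lmodulo{Q_{\bigidx}(F)}{H_{\bigidx}(\Adele)}$, and the Whittaker expansion of $\varphi^{\psi_{\gamma}}$, the leftover unipotent integration on the $\varphi$-side conjugates under $w^{\smallidx,\bigidx}$ precisely onto $R^{\smallidx,\bigidx}(\Adele)$ with no residual character (the factor $\psi_{\gamma}$ being wholly absorbed into $W_{\varphi}$). This is the direct analog of the $C_0^{\eta_0}/R_{\smallidx,\bigidx}$ identification carried out at the end of the proof of Proposition~\ref{propo:basic global identity l <= n}.
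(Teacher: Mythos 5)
Your opening step---collapsing the Eisenstein sum against the outer $\lmodulo{H_{\bigidx}(F)}{H_{\bigidx}(\Adele)}$-integral to arrive at $\int_{\lmodulo{Q_{\bigidx}(F)}{H_{\bigidx}(\Adele)}}\varphi^{\psi_{\gamma}}(h)f_s(h,1)\,dh$---matches the paper. From there, however, your route departs, and the departure has an unaddressed gap.

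You propose to Fourier-expand the $GL_{\bigidx}$ cusp form underlying $f_s$, producing $W_{f_s}^{\psi}$ at the cost of a sum over $\lmodulo{Z_{\bigidx-1}(F)}{\GLF{\bigidx-1}{F}}$. The paper never expands $f_s$ at all. It keeps $f_s(h,1)$ intact, takes the $U_{\bigidx}$-constant term of $\varphi^{\psi_{\gamma}}$ so that the domain becomes $\lmodulo{M_{\bigidx}(F)U_{\bigidx}(\Adele)}{H_{\bigidx}(\Adele)}$, and then does all the Fourier work on the $\varphi$-side alone: first the conjugation by $w^{\smallidx,\bigidx}$ passing from $V'=N^{\smallidx-\bigidx}U_{\bigidx}$ to $V^{(0)}$, then an inductive chain of $\smallidx-\bigidx-1$ mirabolic Fourier expansions $V^{(0)}\to V^{(1)}\to\cdots\to V^{(\smallidx-\bigidx-1)}$ (which is what introduces the integrations $R^{(1)},\ldots,R^{(\smallidx-\bigidx-1)}$ assembling into $R^{\smallidx,\bigidx}$), and finally a single cuspidal expansion against $Y_{\bigidx+1}$ that replaces the $V^{(\smallidx-\bigidx-1)}$-coefficient of $\varphi$ with $W_{\varphi}$ summed over $\delta\in\lmodulo{Z_{\bigidx}(F)}{\GLF{\bigidx}{F}}$. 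That sum is exactly $\lmodulo{Z_{\bigidx}(F)}{M_{\bigidx}(F)}$, so it collapses cleanly against the $M_{\bigidx}(F)$-quotient, yielding the domain $\lmodulo{Z_{\bigidx}(F)U_{\bigidx}(\Adele)}{H_{\bigidx}(\Adele)}$. Then $W_{f_s}^{\psi}$ appears purely from equivariance: one splits off $\lmodulo{Z_{\bigidx}(F)}{Z_{\bigidx}(\Adele)}$, observes that $z\mapsto\int_{R^{\smallidx,\bigidx}}W_{\varphi}(rw^{\smallidx,\bigidx}zh)dr$ transforms by $\psi^{-1}(z)$, and recognizes $\int_{\lmodulo{Z_{\bigidx}(F)}{Z_{\bigidx}(\Adele)}}f_s(zh,1)\psi^{-1}(z)dz$ as the definition of $W_{f_s}^{\psi}(h,1)$.

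The concrete gap in your proposal is the quotient mismatch. The Whittaker expansion of the $GL_{\bigidx}$ cusp form gives a sum indexed by $\lmodulo{Z_{\bigidx-1}(F)}{\GLF{\bigidx-1}{F}}\cong\lmodulo{Z_{\bigidx}(F)}{Y_{\bigidx}(F)}$, which is a proper subset of $\lmodulo{Z_{\bigidx}(F)}{\GLF{\bigidx}{F}}\cong\lmodulo{Z_{\bigidx}(F)}{M_{\bigidx}(F)}$. Collapsing that smaller sum against the $M_{\bigidx}(F)$-quotient does not land you on $\lmodulo{Z_{\bigidx}(F)U_{\bigidx}(\Adele)}{H_{\bigidx}(\Adele)}$; you would be left with a residual discrete quotient involving the complement of $Y_{\bigidx}$ in $\GL{\bigidx}$. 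You anticipate this by also expanding $\varphi^{\psi_{\gamma}}$ and ``collapsing the mirabolic-quotient sums on both sides,'' but showing that the two expansions interlock to produce precisely the missing piece is exactly the hard part, and it is not simply the analog of the final step in the $\smallidx\leq\bigidx$ proof (there a single expansion is collapsed against a discrete quotient already present; no interlocking of two expansions occurs). You also do not address the inductive construction of $V^{(0)}\to\cdots\to V^{(\smallidx-\bigidx-1)}$, which is the main technical content of the paper's argument and what actually produces $R^{\smallidx,\bigidx}$.
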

\begin{proof}[Proof of Proposition~\ref{propo:basic global identity l > n}] 
Since $I(\varphi,f_s,s)$ is absolutely convergent and $\varphi^{\psi_{\gamma}}$ is invariant on the left for $H_{\bigidx}(F)$, for $\Re(s)>>0$,
\begin{align}\label{int:global l > n eulerian proof first step}
I(\varphi,f_s,s)&=\int_{\lmodulo{Q_{\bigidx}(F)}{H_{\bigidx}(\Adele)}}\varphi^{\psi_{\gamma}}(h)f_s(h,1)dh\\\notag
&=\int_{\lmodulo{M_{\bigidx}(F)U_{\bigidx}(\Adele)}{H_{\bigidx}(\Adele)}}(\int_{\lmodulo{U_{\bigidx}(F)}{U_{\bigidx}(\Adele)}}\varphi^{\psi_{\gamma}}(uh)du)f_s(h,1)dh.
\end{align}
Put $V'=N^{\smallidx-\bigidx}U_{\bigidx}$. Because $H_{\bigidx}$ stabilizes $\psi_{\gamma}$, we can
extend $\psi_{\gamma}$ to $V'$ trivially on $U_{\bigidx}$. For any $h\in H_{\bigidx}(\Adele)$, since
$\varphi$ is left-invariant by $G_{\smallidx}(F)$,
\begin{align}\label{int:global l > n creating V''}
\int_{\lmodulo{U_{\bigidx}(F)}{U_{\bigidx}(\Adele)}}\varphi^{\psi_{\gamma}}(uh)du
=\int_{\lmodulo{V'(F)}{V'(\Adele)}}\varphi(\rconj{(w^{\smallidx,\bigidx})^{-1}}v'w^{\smallidx,\bigidx}h)\psi_{\gamma}(v')dv'.
\end{align}
Set $V^{(0)}=\rconj{(w^{\smallidx,\bigidx})^{-1}}V'$. In coordinates
\begin{align*}
V^{(0)}=\setof{\left(\begin{array}{cccccc}I_{\bigidx}&0&\alpha_1u_1&\alpha_2u_1&e_4'&u_2\\e_1&z&e_2&e_3&e_5&e_4\\
&&1&0&*&*\\&&&1&*&*\\&&&&z^*&0\\&&&&e_1'&I_{\bigidx}\end{array}\right)}{z\in
Z_{\smallidx-\bigidx-1}},
\end{align*}
where
\begin{align*}(\alpha_1,\alpha_2)=\begin{cases}
(\beta,\frac1{2\beta})&\text{split $G_{\smallidx}$(F),}\\
(1,0)&\text{\quasisplit\ $G_{\smallidx}(F)$.}\end{cases}
\end{align*}
The \rhs\ of \eqref{int:global l > n
creating V''} equals
\begin{align}\label{int:global l > n creating V'' and letting V^0}
\int_{\lmodulo{V^{(0)}(F)}{V^{(0)}(\Adele)}}\varphi(v'w^{\smallidx,\bigidx}h)\psi_{\gamma}(v')dv',
\end{align}
where $\psi_{\gamma}$ is the character of $\lmodulo{V^{(0)}(F)}{V^{(0)}(\Adele)}$ defined by
\begin{align*}
\psi_{\gamma}(v')=
\begin{cases}\psi(z)\psi(\quarter(e_2)_{\smallidx-\bigidx-1}-\gamma (e_3)_{\smallidx-\bigidx-1})&\text{split $G_{\smallidx}(F)$,}\\
\psi(z)\psi(\half(e_3)_{\smallidx-\bigidx-1})&\text{\quasisplit\ $G_{\smallidx}(F)$.}\end{cases}
\end{align*}

Assume $\smallidx-\bigidx-1>0$. Decompose
$V^{(0)}=V^{(0)'}\rtimes R^{(1)}$ where $V^{(0)'}$ is the subgroup
of elements of $V^{(0)}$ for which the last row of $e_1$ is zero and
$R^{(1)}$ is the subgroup of matrices $r_1\in L_{\smallidx-1}$ that are images of
\begin{align*}
\left(\begin{array}{ccc}I_{\bigidx}\\&I_{\smallidx-\bigidx-2}\\r_1&&1\end{array}\right)\in
\GL{\smallidx-1}.
\end{align*}
Then \eqref{int:global l > n creating V'' and letting V^0} becomes
\begin{align}\label{int:global l > n creating V'' before 1 fourier}
\int_{\lmodulo{R^{(1)}(F)}{R^{(1)}(\Adele)}}
\int_{\lmodulo{V^{(0)'}(F)}{V^{(0)'}(\Adele)}}\varphi(v'r_1w^{\smallidx,\bigidx}h)\psi_{\gamma}(v')dv'dr_1.
\end{align}

For $x\in\Adele^{\bigidx}$ define $\jmath(x)\in G_{\smallidx}(\Adele)$
as follows. If $G_{\smallidx}(F)$ is split,
\begin{align*}
\jmath(x)=\left(\begin{array}{cccccc}I_{\bigidx}&&&x&&\\&I_{\smallidx-\bigidx-1}\\&&1&&&x'\\&&&1
\\&&&&I_{\smallidx-\bigidx-1}\\&&&&&I_{\bigidx}\end{array}\right),
\end{align*}
$x'=-\transpose{x}J_{\bigidx}$. In the \quasisplit\ case
\begin{align*}
\jmath(x)=\left(\begin{array}{cccccc}I_{\bigidx}&&&x&&A(x)\\&I_{\smallidx-\bigidx-1}\\&&1&&&\\&&&1
&&x'\\&&&&I_{\smallidx-\bigidx-1}\\&&&&&I_{\bigidx}\end{array}\right),
\end{align*}
where $A(x)=\frac1{2\rho}x(\transpose{x})J_{\bigidx}$ and $x'=\frac1{\rho}(\transpose{x})J_{\bigidx}$.

For any $h\in H_{\bigidx}(\Adele)$ and $r_1\in R^{(1)}(\Adele)$
define $\Phi_{r_1,h}(x):\Adele^{\bigidx}\rightarrow\C$ by
\begin{align*}
\Phi_{r_1,h}(x)=\int_{\lmodulo{V^{(0)'}(F)}{V^{(0)'}(\Adele)}}\varphi(v'\jmath(x)r_1w^{\smallidx,\bigidx}h)\psi_{\gamma}(v')dv'.
\end{align*}
The mapping $(r_1,h)\mapsto\Phi_{r_1,h}(0)$ is well-defined on
$\lmodulo{R^{(1)}(F)}{R^{(1)}(\Adele)}\times H_{\bigidx}(\Adele)$ and \eqref{int:global l > n creating V'' before 1 fourier}
equals
\begin{align*}
\int_{\lmodulo{R^{(1)}(F)}{R^{(1)}(\Adele)}}\Phi_{r_1,h}(0)dr_1
\end{align*}
(because $\jmath(0)=I_{2\smallidx}$).

We see that $\jmath(x)$ normalizes $V^{(0)'}$ and stabilizes
$\psi_{\gamma}$. Note that $\jmath(x)$ also normalizes $V^{(0)}$ but
does not stabilize $\psi_{\gamma}$ on $V^{(0)}$ since the last row
of $e_1$ interferes. Also $\varphi$ is left-invariant by
$G_{\smallidx}(F)$. Hence $\Phi_{r_1,h}$ defines a function on
$\lmodulo{F^{\bigidx}}{\Adele^{\bigidx}}$. According to the Fourier
inversion formula,
\begin{align*}
\Phi_{r_1,h}(0)&=\sum_{\lambda\in
\MatF{1\times\bigidx}{F}}\int_{\lmodulo{F^{\bigidx}}{\Adele^{\bigidx}}}\Phi_{r_1,h}(x)\psi^{\star\star}(\lambda
x)dx\\\notag &=\sum_{\lambda}
\int_{\lmodulo{F^{\bigidx}}{\Adele^{\bigidx}}}\int_{\lmodulo{V^{(0)'}(F)}{V^{(0)'}(\Adele)}}\varphi(v'\jmath(x)r_1w^{\smallidx,\bigidx}h)\psi_{\gamma}(v')\psi^{\star\star}(\lambda
x)dv'dx.
\end{align*}
Here $\psi^{\star\star}$ is the character of $\lmodulo{F}{\Adele}$ given
by
\begin{align*}\psi^{\star\star}(a)=\begin{cases}
\psi(-\gamma a)&\text{$G_{\smallidx}(F)$ is split,}\\
\psi(\half a)&\text{$G_{\smallidx}(F)$ is \quasisplit.}\end{cases}
\end{align*}

Let $\lambda\in R^{(1)}(F)$. Then $\lambda$ normalizes
$V^{(0)'}(\Adele)$, $\psi_{\gamma}(\rconj{\lambda}v')=\psi_{\gamma}(v')$ and
$\rconj{\lambda^{-1}}\jmath(x)=v_{\lambda,x}\jmath(x)$ with
$v_{\lambda,x}\in V^{(0)'}(\Adele)$ such that
$\psi_{\gamma}(v_{\lambda,x})=\psi^{\star\star}(\lambda x)$ (where on the \rhs\ we regard $\lambda$ as an element of $\MatF{1\times\bigidx}{F}$). Hence for
any $\lambda\in R^{(1)}(F)$,
\begin{align*}
&\int_{\lmodulo{V^{(0)'}(F)}{V^{(0)'}(\Adele)}}\varphi(v'\jmath(x)r_1w^{\smallidx,\bigidx}h)\psi_{\gamma}(v')\psi^{\star\star}(\lambda
x)dv'\\\notag
&=\int_{\lmodulo{V^{(0)'}(F)}{V^{(0)'}(\Adele)}}\varphi(\lambda
v'\jmath(x)r_1w^{\smallidx,\bigidx}h)\psi_{\gamma}(v')\psi^{\star\star}(\lambda
x)dv'\\\notag
&=\int_{\lmodulo{V^{(0)'}(F)}{V^{(0)'}(\Adele)}}\varphi(
v'\jmath(x)\lambda r_1w^{\smallidx,\bigidx}h)\psi_{\gamma}(v')dv'.
\end{align*}
Using this, integral~\eqref{int:global l > n creating V'' before 1
fourier} becomes
\begin{align*}
&\int_{\lmodulo{R^{(1)}(F)}{R^{(1)}(\Adele)}}\sum_{\lambda}
\int_{\lmodulo{F^{\bigidx}}{\Adele^{\bigidx}}}\int_{\lmodulo{V^{(0)'}(F)}{V^{(0)'}(\Adele)}}\varphi(v'\jmath(x)r_1w^{\smallidx,\bigidx}h)\psi_{\gamma}(v')\psi^{\star\star}(\lambda
x)dv'dxdr_1\\\notag &=
\int_{\lmodulo{R^{(1)}(F)}{R^{(1)}(\Adele)}}\sum_{\lambda}
\int_{\lmodulo{F^{\bigidx}}{\Adele^{\bigidx}}}\int_{\lmodulo{V^{(0)'}(F)}{V^{(0)'}(\Adele)}}\varphi(v'\jmath(x)\lambda
r_1w^{\smallidx,\bigidx}h)\psi_{\gamma}(v')dv'dxdr_1.
\end{align*}
The $dr_1$-integration and the summation over
$\lambda$ can be collapsed, we get
\begin{align*}
&\int_{R^{(1)}(\Adele)}
\int_{\lmodulo{F^{\bigidx}}{\Adele^{\bigidx}}}\int_{\lmodulo{V^{(0)'}(F)}{V^{(0)'}(\Adele)}}\varphi(v'\jmath(x)r_1w^{\smallidx,\bigidx}h)\psi_{\gamma}(v')dv'dxdr_1.
\end{align*}
Let $V^{(1)}$ be the subgroup of elements $v'\jmath(x)$. In coordinates,
\begin{align*}
V^{(1)}=\setof{\left(\begin{array}{cccccc}I_{\bigidx}&0&\alpha_1u_1&x&e_4'&u_2\\e_1&z&e_2&e_3&e_5&e_4\\
&&1&0&*&*\\&&&1&*&*\\&&&&z^*&0\\&&&&e_1'&I_{\bigidx}\end{array}\right)}{z\in
Z_{\smallidx-\bigidx-1}},
\end{align*}
where the last row of $e_1$ equals zero.
The integral becomes
\begin{align}\label{int:global l > n after one step}
&\int_{R^{(1)}(\Adele)}
\int_{\lmodulo{V^{(1)}(F)}{V^{(1)}(\Adele)}}\varphi(v'r_1w^{\smallidx,\bigidx}h)\psi_{\gamma}(v')dv'dr_1.
\end{align}

Assume $\smallidx-\bigidx-1>1$ (then the coordinates of $e_1$ are not all zero).
Decompose $V^{(1)}=V^{(1)'}\ltimes R^{(2)}$, $V^{(1)'}$ is the
subgroup of matrices whose last two rows of $e_1$ are zero and
$R^{(2)}$ consists of elements $r_2\in L_{\smallidx-2}$ that are images of
\begin{align*}
\left(\begin{array}{ccc}I_{\bigidx}\\&I_{\smallidx-\bigidx-3}\\r_2&&1\end{array}\right)\in
\GL{\smallidx-2}.
\end{align*}
For $x\in\Adele^{\bigidx}$, $\jmath(x)$ is redefined as the image in
$L_{\smallidx-1}$ of
\begin{align*}
\left(\begin{array}{cccc}I_{\bigidx}&&&x\\&I_{\smallidx-\bigidx-3}\\&&1\\&&&1\end{array}\right)\in\GL{\smallidx-1}.
\end{align*}
Define for $h\in H_{\bigidx}(\Adele)$, $r_1\in R^{(1)}(\Adele)$ and $r_2\in R^{(2)}(\Adele)$, $\Phi_{r_2,r_1,h}:\Adele^{\bigidx}\rightarrow\C$ by
\begin{align*}
\Phi_{r_2,r_1,h}(x)=\int_{\lmodulo{V^{(1)'}(F)}{V^{(1)'}(\Adele)}}\varphi(v'\jmath(x)r_2r_1w^{\smallidx,\bigidx}h)\psi_{\gamma}(v')dv'.
\end{align*}
Then $(r_2,r_1,h)\mapsto\Phi_{r_2,r_1,h}(0)$ is well-defined on $\lmodulo{R^{(2)}(F)}{R^{(2)}(\Adele)}\times R^{(1)}(\Adele)\times H_{\bigidx}(\Adele)$.
Here \eqref{int:global l > n after one step} is
\begin{align*}
&\int_{R^{(1)}(\Adele)}
\int_{\lmodulo{R^{(2)}(F)}{R^{(2)}(\Adele)}}\Phi_{r_2,r_1,h}(0)dr_2dr_1.
\end{align*}
Observe that $\jmath(x)$ normalizes $V^{(1)'}$ and stabilizes
$\psi_{\gamma}$ (since we zeroed out interfering rows of $e_1$).
Note that $\jmath(x)$ also normalizes $V^{(1)}$ but not $V^{(0)'}$. As
above $\Phi_{r_2,r_1,h}$ factors through the quotient map
$\Adele^{\bigidx}\rightarrow\lmodulo{F^{\bigidx}}{\Adele^{\bigidx}}$
and the Fourier inversion formula with respect to $\psi$ (instead of
$\psi^{\star\star}$) yields
\begin{align*}
\Phi_{r_2,r_1,h}(0)=
\sum_{\lambda}
\int_{\lmodulo{F^{\bigidx}}{\Adele^{\bigidx}}}
\int_{\lmodulo{V^{(1)'}(F)}{V^{(1)'}(\Adele)}}\varphi(v'\jmath(x)r_2r_1w^{\smallidx,\bigidx}h)\psi_{\gamma}(v')\psi(\lambda
x)dv'dx.
\end{align*}
Again using $\lambda\in R^{(2)}(F)$ to collapse the
$dr_2$-integration into an integration over $R^{(2)}(\Adele)$ and
letting $V^{(2)}$ be the subgroup of elements $v'\jmath(x)$ we reach
\begin{align*}
\int_{R^{(1)}(\Adele)}\int_{R^{(2)}(\Adele)}
\int_{\lmodulo{V^{(2)}(F)}{V^{(2)}(\Adele)}}\varphi(v'r_2r_1w^{\smallidx,\bigidx}h)\psi_{\gamma}(v')dv'dr_2dr_1.
\end{align*}
Proceeding likewise for $k=\intrange{1}{\smallidx-\bigidx-1}$, 
in the $k$-th step $V^{(k-1)'}<V^{(k-1)}$ is the subgroup of elements with the last $k$ rows of $e_1$ zeroed out, the subgroup $R^{(k)}<L_{\smallidx-k}$ is composed of elements
\begin{align*}
\left(\begin{array}{ccc}I_{\bigidx}\\&I_{\smallidx-\bigidx-1-k}\\r_1&&1\end{array}\right)\in
\GL{\smallidx-k}
\end{align*}
and for $k>1$, $\jmath(x)\in L_{\smallidx-k+1}$ is the image of
\begin{align*}
\left(\begin{array}{cccc}I_{\bigidx}&&&x\\&I_{\smallidx-\bigidx-1-k}\\&&1\\&&&1\end{array}\right)\in\GL{\smallidx-k+1}.
\end{align*}
After $k=\smallidx-\bigidx-1$ steps (still assuming $\smallidx-\bigidx-1>0$) integral~\eqref{int:global l > n creating V'' and letting V^0} becomes
\begin{align}\label{int:global l > n after l-n-1 steps}
\int_{R^{\smallidx,\bigidx}(\Adele)}
\int_{\lmodulo{V^{(k)}(F)}{V^{(k)}(\Adele)}}\varphi(v'rw^{\smallidx,\bigidx}h)\psi_{\gamma}(v')dv'dr,
\end{align}
where
\begin{align*}
V^{(k)}=
\setof{\left(\begin{array}{cccccc}I_{\bigidx}&x&u_1&t&e_4'&u_2\\&z&e_2&e_3&e_5&e_4\\
&&1&0&*&*\\&&&1&*&*\\&&&&z^*&x'\\&&&&&I_{\bigidx}\end{array}\right)}{x=(0|x_0),x_0\in\Mat{\bigidx\times(\smallidx-\bigidx-2)}, z\in
Z_{\smallidx-\bigidx-1}}.
\end{align*}
(The first column of $x$ is zero.) If $\smallidx-\bigidx-1=0$, integral~\eqref{int:global l > n creating V'' and letting V^0} trivially equals \eqref{int:global l > n after l-n-1 steps} with $k=0$ and
$V^{(0)}$ defined earlier ($R^{\bigidx+1,\bigidx}=\{1\}$),
\begin{align*}
V^{(0)}=\{\left(\begin{array}{cccccc}I_{\bigidx}&\alpha_1u_1&\alpha_2u_1&u_2\\
&1&0&*\\&&1&*\\&&&I_{\bigidx}\end{array}\right)\}.
\end{align*}
We continue with the arguments for any $\smallidx>\bigidx$ and set $k=\smallidx-\bigidx-1$.

Note that the constants $\alpha_1,\alpha_2\in F$ can be ignored when defining
the measure $dv'$ with respect to the correpsonding coordinates (see e.g. $V^{(1)}$) since
for any $\alpha\in F^*$, $|\alpha|_{\Adele}=1$.

Let $Y_{\bigidx+1}$ be regarded in the following manner. If $k>0$, $Y_{\bigidx+1}$ is embedded through the embedding of $\GL{\bigidx+1}$ in $L_{\bigidx+1}$. Also if $k=0$ and $G_{\smallidx}(F)$ is split,
$Y_{\bigidx+1}<L_{\bigidx+1}$. Otherwise, $Y_{\bigidx+1}$ is regarded as the quotient group
$\lmodulo{R_{\smallidx}'}{P_{\smallidx}'}$ defined in the proof of
Proposition~\ref{propo:basic global identity l <= n} ($V^{(0)}=R_{\smallidx}'$). Then observe that for
any $g\in G_{\smallidx}(\Adele)$, the function on
$Y_{\bigidx+1}(\Adele)$ defined by
\begin{align*}
x\mapsto\int_{\lmodulo{V^{(k)}(F)}{V^{(k)}(\Adele)}}\varphi(v'xg)\psi_{\gamma}(v')dv'
\end{align*}
is cuspidal. Hence as in
Proposition~\ref{propo:basic global identity l <= n} according to
the Fourier transform at the identity,
\begin{align*}
\int_{\lmodulo{V^{(k)}(F)}{V^{(k)}(\Adele)}}\varphi(v'g)\psi_{\gamma}(v')dv'=
\sum_{\delta\in\lmodulo{Z_{\bigidx}(F)}{\GLF{\bigidx}{F}}}W_{\varphi}(\delta g).
\end{align*}
Here $\delta=diag(\delta,I_{2(\smallidx-\bigidx)},\delta^*)\in L_{\smallidx-1}$ (in case $k=0$ and
\quasisplit\ $G_{\smallidx}(F)$, $\delta$ is a representative modulo $R_{\smallidx}'(F)$ of this form)
and the Fourier transform is applied with respect
to the character $\psi^{\star}$ of $\lmodulo{Z_{\bigidx+1}(F)}{Z_{\bigidx+1}(\Adele)}$
defined by
\begin{align*}
\psi^{\star}(z)=\begin{cases}
\psi(\sum_{i=1}^{\bigidx}z_{i,i+1})&k>0,\\
\psi(\sum_{i=1}^{\bigidx-1}z_{i,i+1}+\quarter z_{\bigidx,\bigidx+1})&\text{$k=0$ and $G_{\smallidx}(F)$ is split,}\\
\psi(\sum_{i=1}^{\bigidx-1}z_{i,i+1}+\half
z_{\bigidx,\bigidx+1})&\text{$k=0$ and $G_{\smallidx}(F)$ is
\quasisplit.}\end{cases}
\end{align*}
Put this into \eqref{int:global l > n after l-n-1 steps} and use the
fact that $\delta$ normalizes $R^{\smallidx,\bigidx}$ without changing
the measure (since the coordinates of $\delta$ belong to $F$). It follows that
\eqref{int:global l > n after l-n-1 steps} equals
\begin{align*}
\int_{R^{\smallidx,\bigidx}(\Adele)} \sum_{\delta\in\lmodulo{Z_{\bigidx}(F)}{\GLF{\bigidx}{F}}}
W_{\varphi}(rw^{\smallidx,\bigidx}(\rconj{w^{\smallidx,\bigidx}}\delta)h)dr.
\end{align*}

Returning to the global integral, plugging this into
\eqref{int:global l > n eulerian proof first step} yields
\begin{align*}
\int_{\lmodulo{Z_{\bigidx}(F)U_{\bigidx}(\Adele)}{H_{\bigidx}(\Adele)}}
\int_{R^{\smallidx,\bigidx}(\Adele)}W_{\varphi}(rw^{\smallidx,\bigidx}h)f_s(h,1)drdh.
\end{align*}
Here we used the fact that $\rconj{w^{\smallidx,\bigidx}}\delta$ lies in the image of
$M_{\bigidx}(F)$ under the embedding $H_{\bigidx}<G_{\smallidx}$ and the mapping $h\mapsto f_s(h,1)$ is left-invariant on $M_{\bigidx}(F)$. Finally we check that
for $z\in Z_{\bigidx}(\Adele)<M_{\bigidx}(\Adele)$,
\begin{align*}
\int_{R^{\smallidx,\bigidx}(\Adele)}W_{\varphi}(rw^{\smallidx,\bigidx}zh)dr=
\psi^{-1}(z)\int_{R^{\smallidx,\bigidx}(\Adele)}W_{\varphi}(rw^{\smallidx,\bigidx}h)dr.
\end{align*}
Since $U_{H_{\bigidx}}=Z_{\bigidx}U_{\bigidx}$, the requested form
of the integral is obtained,
\begin{equation*}
\int_{\lmodulo{U_{H_{\bigidx}}(\Adele)}{H_{\bigidx}(\Adele)}}
(\int_{R^{\smallidx,\bigidx}(\Adele)}W_{\varphi}(rw^{\smallidx,\bigidx}h)dr)W_{f_s}^{\psi}(h,1)dh.\qedhere
\end{equation*}
\end{proof} 

\section{Computation of $\Psi(W,f_s,s)$ with unramified data}\label{section:Computation of the local integral with unramified data}
In Section~\ref{section:construction of the global integral} we
showed how to write the global integral for decomposable data,
\begin{align*}
I(\varphi,f_s,s)=\prod_{\nu}\Psi_{\nu}(W_{\varphi_{\nu}},f_{s,\nu},s).
\end{align*}
Let $S$ be a finite set of places of $F$, containing all the \archimedean\ places, such that for $\nu\notin S$, all
data are unramified. The so-called unramified computation (at the finite places) is the
computation of
$\Psi_{\nu}(W_{\varphi_{\nu}},f_{s,\nu},s)$ for
$\nu\notin S$. As explained in the introduction (see Chapter~\ref{chapter:introduction}), $I(\varphi,f_s,s)$
is related to Langlands' partial $L$-function
$L^S(\pi\times\tau,s)=\prod_{\nu\notin
S}L(\pi_{\nu}\times\tau_{\nu},s)$ by showing that for $\nu\notin S$,
$L(\tau_{\nu},Sym^2,2s)\Psi_{\nu}(W_{\varphi_{\nu}},f_{s,\nu},s)$ is equal to Langlands' local $L$-function
$L(\pi_{\nu}\times\tau_{\nu},s)$ defined with respect to the Satake
parameters of $\pi_{\nu}$ and $\tau_{\nu}$. It follows that
\begin{align*}
(\prod_{\nu\notin S}L(\tau_{\nu},Sym^2,2s))I(\varphi,f_s,s)=L^S(\pi\times\tau,s)\prod_{\nu\in S}\Psi_{\nu}(W_{\varphi_{\nu}},f_{s,\nu},s).
\end{align*}

Fix a place $\nu\notin S$. We pass to a local notation and denote
$W=W_{\varphi_{\nu}}$, $f_s=f_{s,\nu}$, etc. Our field $F$ is again
local \nonarchimedean. Then $W\in\Whittaker{\pi}{\psi_{\gamma}^{-1}}$ and
$f_s\in V(\tau,s)$ are the normalized unramified elements. Specifically, $W$ is invariant on the
right by $K_{G_{\smallidx}}$, $W(1)=1$, $f_s$ is right-invariant by $K_{H_{\bigidx}}$ and $f_s(1,1)=1$
(see Section~\ref{subsection:preliminaries unramified} for the other ``unramified" parameters).
In this section we prove Theorem~\ref{theorem:unramified computation}. Namely, we prove
that for $\Re(s)>>0$,
\begin{align*}
\Psi(W,f_s,s)=\frac{L(\pi\times\tau,s)}{L(\tau,Sym^2,2s)}.
\end{align*}
This immediately implies that the function $s\mapsto \Psi(W,f_s,s)$ has a meromorphic continuation to a function in $\C(q^{-s})$ and
the equality between meromorphic continuations holds for all $s$.

In the split case we assume that $G_{\smallidx}$ is defined using $J_{2\smallidx}$. As noted in
Section~\ref{section:construction of the global integral}, when $\pi$ is a local component of a global representation, the
group $G_{\smallidx}$ might be defined using another matrix $J$ representing
the same bilinear form. Then there is some $\alpha\in\GL{2\smallidx}$ such that
$\transpose{\alpha}J_{2\smallidx}\alpha=J$ and if $\pi$ is a representation of $G_{\smallidx}$ defined using $J$,
$\pi^{\alpha}(g)=\pi(\rconj{\alpha}g)$ is a representation of $G_{\smallidx}$ defined using $J_{2\smallidx}$.
Our results for $\pi^{\alpha}$ imply the theorem for $\pi$.

The computation of
$\Psi(W,f_s,s)$ involves several techniques and is quite lengthy. In
the case $\smallidx=\bigidx+1$ it involves invariant theory
and the case $\smallidx>\bigidx+1$ is reduced to that
case. When $\smallidx<\bigidx$, the unipotent integration over
$R_{\smallidx,\bigidx}$ is difficult to handle directly. In this
case (and when $\smallidx=\bigidx$) we use the existence of the $\gamma$-factor and proof of its
multiplicativity in the first variable (see
Theorem~\ref{theorem:multiplicity first var}).

\subsection{Preliminaries for the unramified case}\label{subsection:preliminaries unramified}
Assume that all data are unramified. This includes the field (i.e., $F$ is an unramified extension of some $\mathbb{Q}_p$), the representations, the character $\psi$ ($\frestrict{\psi}{\mathcal{O}}\equiv1$, $\frestrict{\psi}{\mathcal{P}^{-1}}\nequiv1$) and all constants used in the
construction of the integral (e.g. $\rho$, $\gamma$), where a constant is unramified if it belongs to
$\mathcal{O}^*$. 
Throughout this section let $G=G(F)$ be one of the groups
defined in Section~\ref{subsection:groups in study}, i.e. $G$ is
either $G_{\smallidx},H_{\bigidx}$ or $\GL{k}$. As usual in the unramified
case, $K_{G}=G(\mathcal{O})$. Recall that we fixed a Borel subgroup $B_G=T_G\ltimes U_G$, where
$T_G$ is the torus and $U_G$ is the unipotent radical. The measure $dg$ of $G$ is normalized so that $vol(K_{G})=vol(T_{G}\cap K_{G})=1$.
\subsubsection{Torus elements and partitions}\label{torus elements
and partitions} We define the following subset of $T_G$:
\begin{align*}
T_G^-=\setof{t\in T_G}{\forall\alpha\in\Delta_{G},\alpha(a)\leq1}.
\end{align*}
For example $T_{\GL{\bigidx}}=A_{\bigidx}$, $A_{\bigidx}^-$ is the set of elements $diag(a_1,\ldots,a_{\bigidx})\in
A_{\bigidx}$ such that $|a_1|\leq\ldots\leq|a_{\bigidx}|$. If $\Delta_{G}=\emptyset$ (e.g., $G=\GL{1},G_1$),
$T_{G}^-=T_G$ by definition and in fact $T_{G}^-=G$.

Let $k$ be the rank of $G$ and assume $k>0$ (that is, $G$ is not the \quasisplit\ group $G_1$). We have a group homomorphism
$T_G\rightarrow\Integers^k$ defined by
$\delta(t)=(\vartheta(t_1),\ldots,\vartheta(t_k))\in\Integers^k$, where
$t_i$ is the $(i,i)$-th coordinate of $t$ and $\vartheta$ is the valuation of $F$.
The mapping $t\mapsto\delta(t)$ defines an isomorphism between
$\lmodulo{K_G\cap T_G}{T_G}$ and $\Integers^k$. A $k$-tuple
$\delta=(\delta_1,\ldots,\delta_k)\in\Integers^{k}$ which satisfies
$\delta_1\geq\ldots\geq\delta_k\geq0$ is called a $k$-parts
partition. It is a partition of $|\delta|=\sum_{i=1}^{k}\delta_i$.
Denote by $P_k$ the set of partitions of $k$ parts. Observe that if
$t\in T_G^-$ satisfies $|t_k|\leq1$, $\delta(t)\in P_k$. This fact
will often be used when writing the Iwasawa decomposition.

Any $\delta\in\Integers^{m}$ can be viewed as an element of
$\Integers^{m'}$ for $m'\geq m$, by padding it with zeros on the
right, i.e. $\delta\mapsto(\delta,0_{m'-m})\in\Integers^{m'}$. When clear from
the context we continue to denote it by $\delta$. This induces an
inclusion $P^m\subset P^{m'}$. The notation $P^{m'}\setminus P^m$ signifies set difference, i.e., $\delta\in P^{m'}\setminus P^m$ satisfies $\delta_k>0$ for some $k>m$.
\subsubsection{The local $L$-function for unramified
data}\label{subsection:local L functions def}
Let $\pi$ be an irreducible unramified (generic) representation of $G$. Then $\pi=\cinduced{B_{G}}{G}{\mu}$
for an unramified character $\mu$ of $T_G$ (see e.g. \cite{CSh}). Assume $\mu=\mu_1\otimes\ldots\otimes\mu_k$ where $k$ is the rank of $G$.

The Satake parameter $t_{\pi}$ of $\pi$ is regarded as a
representative of its conjugacy class in the $L$-group $\rconj{L}G$. It is given by
\begin{align*}
t_{\pi}=
\begin{cases}
diag(\mu_1(\varpi),\ldots,\mu_{\bigidx}(\varpi))\in\GLF{\bigidx}{\C}&G=\GL{\bigidx},\\
diag(\mu_1(\varpi),\ldots,\mu_{\smallidx}(\varpi),\mu_{\smallidx}(\varpi)^{-1},\ldots,\mu_1(\varpi)^{-1})\in
SO_{2\smallidx}(\C)&G=\text{split }G_{\smallidx},\\
diag(\mu_1(\varpi),\ldots,\mu_{\bigidx}(\varpi),\mu_{\bigidx}(\varpi)^{-1},\ldots,\mu_1(\varpi)^{-1})\in
Sp_{2\bigidx}(\C)&G=H_{\bigidx}.\end{cases}
\end{align*}
When $G_{\smallidx}$ is \nonsplit\ over the field $F$ and split
over a quadratic extension $K=F(\sqrt{\rho})$, define (with an
abuse of notation)
\begin{align*}
t_{\pi}=diag(\mu_1(\varpi),\ldots,\mu_{\smallidx-1}(\varpi),-1,1,\mu_{\smallidx-1}(\varpi)^{-1},\ldots,\mu_1(\varpi)^{-1})\in\GLF{2\smallidx}{\C}.
\end{align*}
\begin{remark}
In fact, by the definition of the Satake parameter in the
\quasisplit\ case it is an element of $SO_{2\smallidx}(\C)\rtimes
Gal(K/F)$, so it is of the form
\begin{align*}
diag(\mu_1(\varpi),\ldots,\mu_{\smallidx-1}(\varpi),1,1,\mu_{\smallidx-1}(\varpi)^{-1},\ldots,\mu_1(\varpi)^{-1})\cdot\epsilon\in
SO_{2\smallidx}(\C)\rtimes Gal(K/F),
\end{align*}
with the Galois conjugation $\epsilon\in Gal(K/F)$. The previous
definition will be more practical for us.
\end{remark}
In the \quasisplit\ case also denote
\begin{align*}
t'_{\pi}=diag(\mu_1(\varpi),\ldots,\mu_{\smallidx-1}(\varpi),\mu_{\smallidx-1}(\varpi)^{-1},\ldots,\mu_1(\varpi)^{-1})\in
Sp_{2(\smallidx-1)}(\C).
\end{align*}

Let $\tau$ be an irreducible unramified representation of $\GL{\bigidx}$. The
(local) $L$-function of $\pi\times\tau$ with a complex parameter $s$ is given by
\begin{align*}
L(\pi\times\tau,s)={\det(1-(t_{\pi}\otimes t_{\tau})q^{-s})}^{-1}.
\end{align*}

In addition $L(\tau,\varsigma,s)={\det(1-\varsigma(t_{\tau})q^{-s})}^{-1}$,
for a finite dimensional representation $\varsigma$ of
$\GLF{\bigidx}{\C}$. In particular we shall use
$\varsigma=Sym^2$ - the
symmetric square representation and $\wedge^2$ - the
antisymmetric square representation.

\subsubsection{The Casselman-Shalika
formula}\label{subsection:casselman shalika formula}
Let $\pi$ be an irreducible unramified representation of $G$ and let $\chi$ be an unramified character of $U_G$. The normalized unramified Whittaker function
$W\in\Whittaker{\pi}{\chi}$ is a function which is
right-invariant by $K_G$ and satisfies $W(1)=1$. It follows that $W$
is determined by its values on $T_G$ and it is simple to show that
$W(t)=0$ if $t\in T_G$ does not belong to $T_G^-$. An explicit formula for $W$
describes $W(t)$ for $t\in T_G^-$, in terms of the structure of the
group - the root system and Weyl subgroup, the Satake parameter
$t_{\pi}$ and $t$.

Shintani \cite{S} proved such a formula for a representation $\pi$
of $\GL{\bigidx}$. In the general case of a connected reductive group, an explicit formula was
proved by Kato \cite{K} and by Casselman and Shalika \cite{CS2}. Let
$\chi_{\delta}^{G}$ be the character of the irreducible
representation of $\rconj{L}G$ of highest weight $\delta$. For $t\in
T_G^-$ denote $\delta=\delta(t)$. Then assuming that $G$ is split,
\begin{align*}
W(t)=\delta_{B_{G}}^{\half}(t)\chi_{\delta}^{G}(t_{\pi}).
\end{align*}
In the case of $\GL{\bigidx}$ this is the original formulation of
Shintani \cite{S}. In the other cases this formula is the
interpretation of the result of Casselman and Shalika \cite{CS2}, using the
structure of the Weyl group (see e.g. \cite{Spr} p.~141) and
the Weyl character formulas (\cite{FH,OO}). 

When $G=G_{\smallidx}$ is \quasisplit,
\begin{align*}
W(t)=\delta_{B_{G_{\smallidx}}}^{\half}(t)\chi_{\delta}^{Sp_{2(\smallidx-1)}}(t'_{\pi}),
\end{align*}
where $\chi_{\delta}^{Sp_{2(\smallidx-1)}}$ is the character of the
irreducible representation of $Sp_{2(\smallidx-1)}(\C)$ of highest
weight $\delta$ (if $\smallidx=1$, $W(t)\equiv1$). To keep the notation uniform, set
$\chi_{\delta}^{G_{\smallidx}}(t_{\pi})=\chi_{\delta}^{Sp_{2(\smallidx-1)}}(t'_{\pi})$
for \quasisplit\ $G_{\smallidx}$.

\begin{remark}
Tamir \cite{T} provided an interpretation of the Casselman-Shalika formula
for a connected reductive \quasisplit\ group $G$. His formula involves a
character of a representation of $\rconj{L}G$ and the 
Satake parameter.
\end{remark}

\subsubsection{Branching rules}\label{subsection:branching rules} Let
$\tau$ be an irreducible unramified representation of $\GL{\bigidx}$ for $\bigidx>1$. We need
the following two formulas for
$\chi_{\delta}^{\GL{\bigidx}}(t_{\tau})$. First, for $\delta\in
P_{\bigidx}\setminus P_{\bigidx-1}$, i.e., for $\delta\in
P_{\bigidx}$ such that $\delta_{\bigidx}>0$,
\begin{align}\label{eq:chi GL for delta with non zero tail}
\chi_{\delta}^{\GL{\bigidx}}(t_{\tau})=(\prod_{i=1}^{\bigidx}t_{\tau}(i)^{\delta_{\bigidx}})\chi_{(\delta_1-\delta_{\bigidx},\ldots,\delta_{\bigidx-1}-\delta_{\bigidx},0)}^{\GL{\bigidx}}(t_{\tau}).
\end{align}
Here $t_{\tau}(i)$ denotes the $i$-th coordinate of the diagonal matrix $t_{\tau}$.
Second, the branching rule from $GL_{\bigidx}(\C)$ to
$GL_{\bigidx-1}(\C)\times GL_1(\C)$,
\begin{lemma}[\cite{GW} p.~350]\label{lemma:branching rule}
Let $\delta\in P_{\bigidx-1}$. Then
\begin{align*}
\chi_{\delta}^{\GL{\bigidx}}(t_{\tau})=\sum_{k=0}^{|\delta|}\sum_{\tilde{\delta}\in C(k,\delta)}\chi_{\tilde{\delta}}^{\GL{\bigidx-1}}(t_{\tau'})t_{\tau}(\bigidx)^k.
\end{align*}
Here $C(k,\delta)\subset P_{\bigidx-1}$ is a finite set (given explicitly in \cite{GW}) depending on $k$ and $\delta$, $C(0,\delta)=\{\delta\}$
and $t_{\tau'}=diag(t_{\tau}(1),\ldots,t_{\tau}(\bigidx-1))$.
\end{lemma}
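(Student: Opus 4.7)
The plan is to invoke the classical Gelfand-Tsetlin branching rule from $\GLF{\bigidx}{\C}$ to its standard Levi subgroup $\GLF{\bigidx-1}{\C}\times\GLF{1}{\C}$, which is the content of the cited page in \cite{GW}. First, I would extend $\delta\in P_{\bigidx-1}$ to a partition of $\bigidx$ parts by appending a zero, and consider the irreducible polynomial representation $V_{\delta}$ of $\GLF{\bigidx}{\C}$ of highest weight $\delta$. The branching rule states that upon restriction to the Levi, $V_{\delta}$ decomposes multiplicity-freely as
\[
V_{\delta}\big|_{\GLF{\bigidx-1}{\C}\times\GLF{1}{\C}}=\bigoplus_{\tilde\delta}V_{\tilde\delta}\boxtimes(\det)^{|\delta|-|\tilde\delta|},
\]
where $\tilde\delta$ runs over partitions of length at most $\bigidx-1$ that interlace with $\delta$, i.e.\ $\delta_i\geq\tilde\delta_i\geq\delta_{i+1}$ for all $i$ (with the convention $\delta_{\bigidx}=0$).

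Next I would take characters of both sides and evaluate at $t_{\tau}=diag(t_{\tau}(1),\ldots,t_{\tau}(\bigidx))$. Since the character of $(\det)^{|\delta|-|\tilde\delta|}$ at $t_{\tau}(\bigidx)$ equals $t_{\tau}(\bigidx)^{|\delta|-|\tilde\delta|}$ and the character of $V_{\tilde\delta}$ evaluated at $t_{\tau'}=diag(t_{\tau}(1),\ldots,t_{\tau}(\bigidx-1))$ is $\chi_{\tilde\delta}^{\GL{\bigidx-1}}(t_{\tau'})$, grouping the interlacing $\tilde\delta$ by the common value $k=|\delta|-|\tilde\delta|$ and defining $C(k,\delta)$ to be the finite set of interlacing $\tilde\delta$ with $|\tilde\delta|=|\delta|-k$ produces the stated formula. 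The identity $C(0,\delta)=\{\delta\}$ is immediate: the interlacing condition $\delta_i\geq\tilde\delta_i\geq\delta_{i+1}$ combined with $|\tilde\delta|=|\delta|$ forces $\tilde\delta_i=\delta_i$ for each $i$, since any strict inequality would make $|\tilde\delta|$ strictly smaller than $|\delta|$.

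The only nontrivial input is the branching rule itself, which is classical: it is proved via Gelfand-Tsetlin patterns, or equivalently by iterating the Pieri rule for tensoring with the standard representation. I would simply cite \cite{GW} for this step rather than reproduce its proof, since the whole purpose of this lemma is to repackage a well-known representation-theoretic fact in the form most convenient for the later unramified computation.
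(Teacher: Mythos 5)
Your proposal is correct and matches the paper's approach: the paper simply cites the Gelfand--Tsetlin branching rule from \cite{GW} p.~350 without further argument, and your unpacking of it (extend $\delta$ by a trailing zero, restrict to the Levi $\GLF{\bigidx-1}{\C}\times\GLF{1}{\C}$, use degree/weight considerations to identify the $\GL_1$-exponent as $|\delta|-|\tilde\delta|$, then group interlacing $\tilde\delta$ by that exponent) is precisely the intended content, including the observation that $C(0,\delta)=\{\delta\}$ because interlacing forces $\tilde\delta_i\leq\delta_i$.
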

Since
$(\delta_1-\delta_{\bigidx},\ldots,\delta_{\bigidx-1}-\delta_{\bigidx},0)\in
P_{\bigidx-1}$, when we combine these results we also see that
$t_{\tau}(\bigidx)$ appears in
$\chi_{\delta}^{\GL{\bigidx}}(t_{\tau})$ with only \nonnegative\
powers.

\subsection{The computation for $\smallidx>\bigidx$}\label{subsection:unramified l>n}
Here we prove Theorem~\ref{theorem:unramified computation} for the case $\smallidx>\bigidx$.
Let $W\in\Whittaker{\pi}{\psi_{\gamma}^{-1}}$ and
$f_s\in V(\tau,s)$ be the normalized unramified functions. We have the following claim, 
which follows using the Iwasawa decomposition of $H_{\bigidx}$.
\begin{claim}\label{claim:iwasawa l>=n+1}
Let $\smallidx>\bigidx$. For $\Re(s)>>0$, 
\begin{align*}
\Psi(W,f_s,s)=\sum_{\delta\in
P_{\bigidx}}\chi^{G_{\smallidx}}_{\delta}(t_{\pi})\chi^{\GL{\bigidx}}_{\delta}(t_{\tau})q^{-s|\delta|}.
\end{align*}
Here the measure $dr$ on $R^{\smallidx,\bigidx}$ is normalized by $vol(R^{\smallidx,\bigidx}(\mathcal{O}))=1$.
\end{claim}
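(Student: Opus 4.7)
The strategy is the standard unramified computation: use the Iwasawa decomposition on $H_{\bigidx}$ to reduce the outer integral to a sum over the torus, truncate the inner $R^{\smallidx,\bigidx}$-integral by exploiting the support properties of the unramified Whittaker function $W$, and then apply the Casselman--Shalika formula to both Whittaker factors. First write $h = utk$ with $u\in U_{H_{\bigidx}}$, $t\in T_{H_{\bigidx}}$, $k\in K_{H_{\bigidx}}$; the Haar measure on $U_{H_{\bigidx}}\backslash H_{\bigidx}$ becomes $\delta_{B_{H_{\bigidx}}}^{-1}(t)\,dt\,dk$. Since $f_s$ is right-$K_{H_{\bigidx}}$-invariant and $K_{H_{\bigidx}}\subset K_{G_{\smallidx}}$ under the embedding, both the outer factor and the inner integrand are right-$K_{H_{\bigidx}}$-invariant, so the $k$-integration is trivial. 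Writing $t=\mathrm{diag}(a,a^*)$ with $a\in A_{\bigidx}$, the Casselman--Shalika formula for $\tau$ gives $W_{f_s}^{\psi}(t,1)=\delta_{Q_{\bigidx}}^{1/2}(t)\absdet{a}^{s-\half}\delta_{B_{\GL{\bigidx}}}^{1/2}(a)\chi_{\delta(a)}^{\GL{\bigidx}}(t_{\tau})$, nonzero only for $a\in A_{\bigidx}^-$ with partition index $\delta(a)\in P_{\bigidx}$.

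For the inner integral, observe that $w^{\smallidx,\bigidx}$ is a signed permutation matrix, hence lies in $K_{G_{\smallidx}}$. Writing $w^{\smallidx,\bigidx}t = t'\,w^{\smallidx,\bigidx}$ with $t'=w^{\smallidx,\bigidx}t(w^{\smallidx,\bigidx})^{-1}\in T_{G_{\smallidx}}$, and using right-$K_{G_{\smallidx}}$-invariance of $W$, the inner integral becomes $\int_{R^{\smallidx,\bigidx}} W(rt')\,dr$. A change of variables $r\mapsto t'r(t')^{-1}$ on $R^{\smallidx,\bigidx}$ then produces a Jacobian factor $\delta_X(t')$ coming from the adjoint torus action and reduces the integrand to $W(t'r)$. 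The key analytic point is that for $t'$ in the dominant chamber, $r\mapsto W(t'r)$ is supported on $R^{\smallidx,\bigidx}(\mathcal{O})$: if $r\notin R^{\smallidx,\bigidx}(\mathcal{O})$, applying Iwasawa in $G_{\smallidx}$ to $t'r$ pushes the torus part out of $T_{G_{\smallidx}}^-$, so $W$ vanishes there. With $\mathrm{vol}(R^{\smallidx,\bigidx}(\mathcal{O}))=1$ the inner integral collapses to $\delta_X(t')W(t')$, and the Casselman--Shalika formula gives $W(t') = \delta_{B_{G_{\smallidx}}}^{1/2}(t')\chi_{\delta(t')}^{G_{\smallidx}}(t_{\pi})$.

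The embedding $H_{\bigidx}<G_{\smallidx}$ and the Weyl element $w^{\smallidx,\bigidx}$ are designed so that $\delta(t')=\delta(a)\in P_{\bigidx}$, so the two Casselman--Shalika characters collect under a single partition index $\delta$. The product of modular characters $\delta_{B_{H_{\bigidx}}}^{-1}(t)\,\delta_{Q_{\bigidx}}^{1/2}(t)\,\delta_{B_{\GL{\bigidx}}}^{1/2}(a)\,\delta_X(t')\,\delta_{B_{G_{\smallidx}}}^{1/2}(t')$ combines with $\absdet{a}^{-\half}$ to cancel entirely, leaving the factor $\absdet{a}^s=q^{-s|\delta|}$. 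Summing over $a\in A_{\bigidx}^-/(A_{\bigidx}\cap K_{\GL{\bigidx}})\cong P_{\bigidx}$ produces the claimed identity. The main obstacle is the support truncation of the inner integral --- a root-theoretic compatibility between the dominance of $t'$ and the positivity of the roots defining $R^{\smallidx,\bigidx}$ --- together with the bookkeeping of modular characters, which is expected to cancel cleanly because the embedding and $w^{\smallidx,\bigidx}$ were constructed precisely for this purpose.
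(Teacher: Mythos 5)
Your proposal follows the same route as the paper: Iwasawa on $H_{\bigidx}$, use $w^{\smallidx,\bigidx}\in K_{G_{\smallidx}}$ plus a change of variables in $r$ (with Jacobian $\absdet{a}^{\bigidx+1-\smallidx}$) to bring the torus element to the left, invoke the support truncation of the inner $R^{\smallidx,\bigidx}$-integral to $R^{\smallidx,\bigidx}(\mathcal{O})$, apply Casselman--Shalika to both Whittaker factors, and cancel the modular characters using $\delta_{B_{H_{\bigidx}}}=\delta_{Q_{\bigidx}}\delta_{B_{\GL{\bigidx}}}$ and $\delta_{B_{G_{\smallidx}}}=\absdet{}^{2\smallidx-2\bigidx-1}\delta_{Q_{\bigidx}}\delta_{B_{\GL{\bigidx}}}$. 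The one step you gloss over — that $r\mapsto W(t'r)$ vanishes off $R^{\smallidx,\bigidx}(\mathcal{O})$ — is exactly the step the paper does not reprove either, citing Ginzburg (and Soudry) for it, and the paper's version of this vanishing holds for all $a\in A_{\bigidx}$, not just dominant $t'$, though your weaker formulation suffices for the conclusion.
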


According to this claim, the theorem is equivalent to the following
identity of power series in $q^{-s}$,
\begin{align}\label{eq:identity to prove for l=n+1}
L(\tau,Sym^2,2s)\sum_{\delta\in
P_{\bigidx}}\chi^{G_{\smallidx}}_{\delta}(t_{\pi})\chi^{\GL{\bigidx}}_{\delta}(t_{\tau})q^{-s|\delta|}=
L(\pi\times\tau,s).
\end{align}
We prove that if \eqref{eq:identity to prove for l=n+1} holds for
some $(\smallidx,\bigidx)$ with $\smallidx>\bigidx>1$, it also holds
for $(\smallidx,\bigidx-1)$. Indeed, let $\tau'$ be an irreducible unramified (generic, as always)
representation of $\GL{\bigidx-1}$. Construct a representation
$\tau=\cinduced{P_{\bigidx-1,1}}{\GL{\bigidx}}{\tau'\otimes\tau_{\bigidx}}$,
where $\tau_{\bigidx}$ is an unramified character of $\GL{1}$. Then $\tau$ is unramified (and generic, since $\tau'$ is generic) and we choose
$\tau_{\bigidx}$ so that $\tau$ would also be irreducible. Note that $t_{\tau}(\bigidx)=t_{\tau_{\bigidx}}(1)=\tau_{\bigidx}(\varpi)$. Consider
\eqref{eq:identity to prove for l=n+1} for $(\smallidx,\bigidx)$.
The summation over $\delta\in P_{\bigidx}$ can be written as two
sums: the first over $\delta\in P_{\bigidx-1}$, the second over
$\delta\in P_{\bigidx}\setminus P_{\bigidx-1}$. As explained in
Section~\ref{subsection:branching rules}, $t_{\tau}(\bigidx)$
appears in the summation with only \nonnegative\ powers.

According to the definitions,
\begin{align*}
&L(\tau,Sym^2,2s)=\prod_{1\leq
i,j\leq\bigidx}(1-t_{\tau}(i)t_{\tau}(j)q^{-2s})^{-1},\\\notag
&L(\pi\times\tau,s)=L(\pi\times\tau',s)L(\pi\times\tau_{\bigidx},s)=L(\pi\times\tau',s)\prod_{1\leq
i\leq2\smallidx}(1-t_{\pi}(i)t_{\tau}(\bigidx)q^{-s})^{-1}.
\end{align*}
It follows that $t_{\tau}(\bigidx)$ appears in \eqref{eq:identity to
prove for l=n+1} with only \nonnegative\ powers. Since
$\tau_{\bigidx}$ is any unramified character such that $\tau$ is
irreducible, $t_{\tau}(\bigidx)=q^{-z}$ where $z$ is an arbitrary
complex number so that $q^{-z}$ is outside a finite set of values. Therefore we may
regard \eqref{eq:identity to prove for l=n+1} as an identity of
power series in $t_{\tau}(\bigidx)$ and formally substitute $0$ for
$t_{\tau}(\bigidx)$. Equality~\eqref{eq:chi GL for delta with non
zero tail} and Lemma~\ref{lemma:branching rule} imply
\begin{align}\label{branching conclusion GLn}
\frestrict{\chi^{\GL{\bigidx}}_{\delta}(t_{\tau})}{t_{\tau}(\bigidx)=0}=
\begin{cases}\chi^{\GL{\bigidx-1}}_{\delta}(t_{\tau'})&\delta\in
P_{\bigidx-1},\\
0&\delta\in P_{\bigidx}\setminus P_{\bigidx-1}.\end{cases}
\end{align}
Thus we get
\begin{align*}
L(\tau',Sym^2,2s)\sum_{\delta\in
P_{\bigidx-1}}\chi^{G_{\smallidx}}_{\delta}(t_{\pi})\chi^{\GL{\bigidx-1}}_{\delta}(t_{\tau'})q^{-s|\delta|}=L(\pi\times\tau',s),
\end{align*}
which is \eqref{eq:identity to prove for l=n+1} for
$(\smallidx,\bigidx-1)$.

Now the next claim completes the proof of the theorem for all
$\smallidx>\bigidx$.
\begin{claim}\label{claim:unramified identity l=n+1}
Identity~\eqref{eq:identity to prove for l=n+1} holds for all $\bigidx\geq1$ and $\smallidx=\bigidx+1$.
\end{claim}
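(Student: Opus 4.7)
The plan is to reduce \eqref{eq:identity to prove for l=n+1} in the base case $\smallidx=\bigidx+1$ to a classical Cauchy identity between characters of the Langlands dual of $G_\smallidx$ and of $\GL{\bigidx}$; this identity is itself invariant-theoretic, matching the text's remark that the case $\smallidx=\bigidx+1$ ``involves invariant theory''. By Claim~\ref{claim:iwasawa l>=n+1} the left-hand side of \eqref{eq:identity to prove for l=n+1} equals $L(\tau,Sym^2,2s)\sum_{\delta\in P_{\bigidx}}\chi^{G_{\smallidx}}_{\delta}(t_{\pi})\chi^{\GL{\bigidx}}_{\delta}(t_{\tau})q^{-s|\delta|}$, so it suffices to verify the equality as an identity of formal power series in $u=q^{-s}$ and the Satake parameters. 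Writing $y_j=t_\tau(j)$, I would expand $L(\tau,Sym^2,2s)^{-1}=\prod_{i\leq j}(1-y_iy_ju^2)$ and $L(\pi\times\tau,s)=\prod_{i,j}(1-t_\pi(i)y_ju)^{-1}$ via the explicit description of $t_\pi$ from Section~\ref{subsection:local L functions def}, and then split into cases.

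In the quasi-split case $t_\pi$ contains the spurious pair $\{-1,1\}$ in addition to the pairs $\{x_i,x_i^{-1}\}_{i=1}^{\bigidx}$ of $t'_\pi\in Sp_{2\bigidx}(\C)$, contributing $\prod_j(1-y_j^2u^2)^{-1}$ to $L(\pi\times\tau,s)$; this factor cancels the diagonal terms $i=j$ of $L(\tau,Sym^2,2s)^{-1}$, and the surviving identity
\begin{align*}
\sum_{\delta\in P_\bigidx}\chi^{Sp_{2\bigidx}}_{\delta}(t'_\pi)\chi^{\GL{\bigidx}}_\delta(t_\tau)u^{|\delta|}=\prod_{i<j}(1-y_iy_ju^2)\prod_{i,j}\frac{1}{(1-x_iy_ju)(1-x_i^{-1}y_ju)}
\end{align*}
is the symplectic Cauchy identity for partitions of length $\leq\bigidx$, a classical consequence of the first fundamental theorem for $Sp_{2\bigidx}$-invariants (or equivalently of Howe duality for the dual pair $(Sp_{2\bigidx},O_{2\bigidx})$). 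In the split case $t_\pi\in SO_{2\smallidx}(\C)$ has all $\smallidx=\bigidx+1$ pairs $\{x_i,x_i^{-1}\}$ without any spurious entry and no cancellation occurs, leaving
\begin{align*}
\sum_{\delta\in P_\bigidx}\chi^{SO_{2\smallidx}}_\delta(t_\pi)\chi^{\GL{\bigidx}}_\delta(t_\tau)u^{|\delta|}=\prod_{i\leq j}(1-y_iy_ju^2)\prod_{i,j}\frac{1}{(1-x_iy_ju)(1-x_i^{-1}y_ju)};
\end{align*}
since every $\delta\in P_\bigidx$ has length $\leq\bigidx=\smallidx-1$, the character $\chi^{SO_{2\smallidx}}_\delta$ coincides with the $O_{2\smallidx}$-character and this is the orthogonal Cauchy identity, again an invariant-theoretic statement.

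The main obstacle will be the careful bookkeeping: verifying that $L(\tau,Sym^2,2s)^{-1}=\prod_{i\leq j}(1-y_iy_ju^2)$ contributes the correct ``$i\leq j$'' pattern in the orthogonal case and drops to ``$i<j$'' after cancellation in the symplectic case, and checking that summation over $P_\bigidx$ is precisely the length restriction under which the two Cauchy identities apply. Once the identities are quoted or verified directly via the Weyl character formula, comparing coefficients of $u^{|\delta|}$ for each $\delta$ closes the argument, and the absolute convergence for $\Re(s)\gg 0$ provided by Claim~\ref{claim:iwasawa l>=n+1} promotes the formal identity to an equality of meromorphic functions of $s$.
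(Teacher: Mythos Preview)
Your approach is correct and essentially identical to the paper's: in both the split and quasisplit cases the identity is reduced, after the same cancellation of the $\{-1,1\}$ contribution against the diagonal of $L(\tau,Sym^2,2s)^{-1}$ in the quasisplit case, to the symplectic (resp.\ orthogonal) Cauchy identity, which the paper obtains by citing \cite{GPS} (App.) and Ton-That \cite{TT1,TT2} (the latter via \cite{G}). The only cosmetic difference is that the paper treats the quasisplit case $\bigidx=1$ by a direct computation, since the cited result of Ton-That formally requires rank $\geq2$, whereas your formulation via the Cauchy identity already covers $\bigidx=1$.
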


\begin{proof}[Proof of Claim~\ref{claim:iwasawa l>=n+1}]
The integral is
\begin{align*}
\Psi(W,f_s,s)=\int_{\lmodulo{U_{H_{\bigidx}}}{H_{\bigidx}}}
\int_{R^{\smallidx,\bigidx}} W(rw^{\smallidx,\bigidx}h)f_s(h,1)drdh.
\end{align*}
It is absolutely convergent for $\Re(s)>>0$, i.e., the integral with $|W|$ and $|f_s|$ replacing $W$ and $f_s$ is convergent. Using the Iwasawa decomposition of $H_{\bigidx}$ and according to
our normalization of the measures, the integral equals
\begin{align*}
\int_{A_{\bigidx}}\int_{R^{\smallidx,\bigidx}}
W(rw^{\smallidx,\bigidx}a)f_s(a,1)\delta_{B_{H_{\bigidx}}}^{-1}(a)drda.
\end{align*}
Since $(w^{\smallidx,\bigidx})^{-1}\in K_{H_{\bigidx}}$, $W$ is
right-invariant by $K_{H_{\bigidx}}$ and
$\rconj{(w^{\smallidx,\bigidx})^{-1}}a$ normalizes
$R^{\smallidx,\bigidx}$ changing $dr\mapsto
\absdet{a}^{\bigidx+1-\smallidx}dr$, this equals
\begin{align*}
\int_{A_{\bigidx}}\int_{R^{\smallidx,\bigidx}}
W(\rconj{(w^{\smallidx,\bigidx})^{-1}}ar)f_s(a,1)\absdet{a}^{\bigidx+1-\smallidx}\delta_{B_{H_{\bigidx}}}^{-1}(a)drda.
\end{align*}
Exactly as in
Ginzburg \cite{G} (p.~176, see also 
\cite{Soudry} p.~98) the
$K_{G_{\smallidx}}$-invariance of $W$ implies that for any $a$,
$W(\rconj{(w^{\smallidx,\bigidx})^{-1}}ar)=0$ unless $r\in
R^{\smallidx,\bigidx}(\mathcal{O})<K_{G_{\smallidx}}$. Also note that the function $W_{\tau}(a)=f_s(1,a)$ is the normalized unramified
Whittaker function of $\Whittaker{\tau}{\psi}$ and
$\delta_{B_{H_{\bigidx}}}=\delta_{Q_{\bigidx}}\delta_{B_{\GL{\bigidx}}}$.
Then we get
\begin{align*}
\int_{A_{\bigidx}}
W(\rconj{(w^{\smallidx,\bigidx})^{-1}}a)W_{\tau}(a)\absdet{a}^{\bigidx+\half-\smallidx+s}\delta_{Q_{\bigidx}}^{-\half}(a)\delta_{B_{\GL{\bigidx}}}^{-1}(a)da.
\end{align*}
Here we used the normalization $vol(R^{\smallidx,\bigidx}(\mathcal{O}))=1$.
Since $\smallidx>\bigidx$,
$W(\rconj{(w^{\smallidx,\bigidx})^{-1}}a)$ vanishes unless $a\in
A_{\bigidx}^-$ and $|a_{\bigidx}|\leq1$. Hence $\delta(a)\in
P_{\bigidx}$ and we can write the integral as a sum over $\delta\in
P_{\bigidx}$. Using the formulas for $W,W_{\tau}$ and the fact that
on $A_{\smallidx-1}$,
$\delta_{B_{G_{\smallidx}}}=\absdet{}^{2\smallidx-2\bigidx-1}\delta_{Q_{\bigidx}}\delta_{B_{\GL{\bigidx}}}$,
we get
\begin{equation*}
\sum_{\delta\in
P_{\bigidx}}\chi^{G_{\smallidx}}_{\delta}(t_{\pi})\chi^{\GL{\bigidx}}_{\delta}(t_{\tau})q^{-s|\delta|}.\qedhere
\end{equation*}
\end{proof} 

\begin{proof}[Proof of Claim~\ref{claim:unramified identity l=n+1}] 
First assume that $G_{\bigidx+1}$ is \quasisplit. In the case
$\bigidx=1$ one proves \eqref{eq:identity to prove for l=n+1}
directly - the summation is over the integers $\delta\geq0$,
$\tau$ is a character so
$\chi^{\GL{1}}_{\delta}(t_{\tau})=t_{\tau}(1)^{\delta}$ and
$\chi^{G_{2}}_{\delta}(t_{\pi})=\chi^{Sp_{2}}_{\delta}(t'_{\pi})$ is the $\delta$-th complete
symmetric polynomial in $t'_{\pi}(1),t'_{\pi}(2)$ (see \cite{FH} p.
$406$).

Let $\bigidx>1$. According to \cite{GPS} (in the App.),
\begin{align*}
L(\tau,{\wedge}^2,2s)\sum_{\delta\in
P_{\bigidx}}\chi^{Sp_{2\bigidx}}_{\delta}(t'_{\pi})\chi^{\GL{\bigidx}}_{\delta}(t_{\tau})q^{-s|\delta|}=
\det(1-(t'_{\pi}\otimes t_{\tau})q^{-s})^{-1}.
\end{align*}
Note that the argument in \textit{loc. cit.} is based on the results of
Ton-That (\cite{TT1}) which apply when $\bigidx\geq2$ (i.e., the
Symplectic group is of rank at least $2$). We immediately deduce
\eqref{eq:identity to prove for l=n+1} ($\chi_{\delta}^{G_{\bigidx+1}}(t_{\pi})=\chi_{\delta}^{Sp_{2\bigidx}}(t'_{\pi})$
for \quasisplit\ $G_{\bigidx+1}$, see Section~\ref{subsection:casselman shalika formula}).

The case of split $G_{\bigidx+1}$ follows from Ton-That
\cite{TT2}, see Ginzburg \cite{G}.
\end{proof} 
\subsection{The computation for $\smallidx\leq\bigidx$}\label{subsection:unramified l<=n}
We prove Theorem~\ref{theorem:unramified computation} when $\smallidx\leq\bigidx$.
In the case $\smallidx=1$ it follows from Bump, Friedberg and Furusawa
\cite{BFF}. Specifically, the integral with unramified data equals the value of
their unramified Bessel function at the identity, multiplied by $\prod_{1\leq i<j\leq \bigidx}(1-t_{\tau}(i)t_{\tau}(j)^{-1}q^{-1})^{-1}$ (they used a different normalization).

We now assume $\smallidx>1$. Then we may further assume $\pi=\cinduced{\overline{P_{\smallidx-1}}}{G_{\smallidx}}{\sigma\otimes\pi'}$,
where $\sigma$ is an irreducible unramified representation of $\GL{\smallidx-1}$
realized in $\Whittaker{\sigma}{\psi^{-1}}$ and $\pi'$ is an
unramified character of $G_1$. Let $W\in\Whittaker{\pi}{\psi_{\gamma}^{-1}}$ and
$f_s\in V(\tau,s)$ be the normalized unramified vectors.

Take an element $\varphi$ from the space of $\pi$ such that $\varphi$ is unramified and normalized by
$\varphi(I_{\smallidx},I_{\smallidx-1},I_2)=1$. The Whittaker function $W_{\varphi}\in\Whittaker{\pi}{\psi_{\gamma}^{-1}}$ corresponding to $\varphi$ is given by
\begin{align*}
W_{\varphi}(g)=\int_{V_{\smallidx-1}}\varphi(vg,I_{\smallidx-1},I_2)\psi_{\gamma}(v)dv,
\end{align*}
where the integral is defined as a principal value and $\psi_{\gamma}$ denotes the character of $U_{G_{\smallidx}}$.
Then $W_{\varphi}$ is unramified and the results of Casselman and Shalika \cite{CS2} imply in particular that
$W_{\varphi}(1)\ne0$ whence $W_{\varphi}(1)^{-1}W_{\varphi}=W$.
Let $\zeta\in\C$ and extend $\varphi$ to
$\varphi_{\zeta}\in V_{\overline{P_{\smallidx-1}}}^{G_{\smallidx}}(\sigma\otimes\pi',-\zeta+\half)$
using the Iwasawa decomposition (see Section~\ref{subsection:sections}). Then $\varphi_{\zeta}$ is unramified and
$\varphi_{\zeta}(1,1,1)=1$. We define
$W_{\varphi_{\zeta}}\in\Whittaker{\cinduced{\overline{P_{\smallidx-1}}}{G_{\smallidx}}{(\sigma\otimes\pi')\alpha^{-\zeta+\half}}}{\psi_{\gamma}^{-1}}$
by the integral above with $\varphi_{\zeta}$ replacing $\varphi$. The definition implies $W_{\varphi_{0}}=W_{\varphi}$.
According to the
Casselman-Shalika formula \cite{CS2},
\begin{align}\label{eq:cs result for W phi zeta}
W_{\varphi_{\zeta}}(1)=L(\pi'\times\sigma^*,\zeta+1)^{-1}L(\sigma^*,\wedge^2,2\zeta+1)^{-1}.
\end{align}
In particular $W_{\varphi_{\zeta}}(1)\ne0$ for $\Re(\zeta)>>0$ and for $\zeta=0$ (because $W_{\varphi}(1)\ne0$). 
Then $W_{\varphi_{\zeta}}'=W_{\varphi_{\zeta}}(1)^{-1}W_{\varphi_{\zeta}}$
is the normalized unramified Whittaker function, defined whenever $\zeta$ is not a zero of the \rhs\ of \eqref{eq:cs result for W phi zeta}. We will compute
$\Psi(W_{\varphi_{\zeta}}',f_s,s)$ for $\Re(\zeta)>>0$, then derive
the result by taking $\zeta=0$.

In Section~\ref{subsection:1st var l<n} we show the following
equality (see \eqref{eq:1st var l<n before eq for pi prime} with $k=\smallidx-1$),
\begin{align}\label{eq:unramified 1st var l<n after func eq sigma}
&\omega_{\sigma}(-1)^{\bigidx-1}\gamma(\sigma\times\tau,\psi,s-\zeta)\Psi(W_{\varphi_{\zeta}},f_s,s)\\\notag
&=\int_{\lmodulo{\overline{V_{\smallidx-1}}Z_{\smallidx-1}G_1}{G_{\smallidx}}}
\int_{\Mat{\smallidx-1\times \bigidx-\smallidx}}\int_{G_1}
\int_{R_{1,\bigidx}}\\\notag&\quad\varphi_{\zeta}(g,1,g')
f_s(w_{1,\bigidx}r'(\rconj{b_{\bigidx,\smallidx-1}}g')w(\rconj{w_{\smallidx,\bigidx}}m)g,1)\psi_{\gamma}(r')dr'dg'dmdg.
\end{align}
Here
\begin{align*}
&b_{c,{\smallidx-1}}=diag(I_c,(-1)^{\smallidx-1},I_c)\qquad (c\geq1),\\\notag
&w=\left(\begin{array}{ccccc}0&0&0&\gamma^{-1}I_{\smallidx-1}&0\\
I_{\bigidx-\smallidx}&0&0&0&0\\
0&0&b_{1,{\smallidx-1}}&0&0\\
0&0&0&0&I_{\bigidx-\smallidx}\\
0&\gamma I_{\smallidx-1}&0&0&0\end{array}\right),\\\notag
&\rconj{w_{\smallidx,\bigidx}}m=\left(\begin{array}{ccccc}
I_{\bigidx-\smallidx}&&&\gamma^{-1}m'\\
&I_{\smallidx-1}&&&\gamma^{-1}m\\
&&I_3\\
&&&I_{\smallidx-1}\\
&&&&I_{\bigidx-\smallidx}\\
\end{array}\right).
\end{align*}
The measure $dm$ is normalized by $vol(\MatF{\smallidx-1\times \bigidx-\smallidx}{\mathcal{O}})=1$.
Equality~\eqref{eq:unramified 1st var l<n after func eq sigma} is in
the sense of meromorphic continuation. Specifically, for any fixed $\zeta$ with $\Re(\zeta)>>0$ both sides have
meromorphic continuations to rational functions in $\C(q^{-s})$ and
these continuations are equal.
The integral $\Psi(W_{\varphi_{\zeta}},f_s,s)$ (resp. the \rhs\ of \eqref{eq:unramified 1st var l<n after func eq sigma}) is
absolutely convergent if it is convergent when we replace
$W_{\varphi_{\zeta}},f_s$ (resp. $\varphi_{\zeta},f_s$) with $|W_{\varphi_{\zeta}}|,|f_s|$ (resp. $|\varphi_{\zeta}|,|f_s|$) and drop the characters.
For brevity, set $\xi_{\tau,s}=\cinduced{Q_{\bigidx}}{H_{\bigidx}}{\tau\alpha^s}$.
\begin{claim}\label{claim:iwasawa l<=n}
The \rhs\ of \eqref{eq:unramified 1st var l<n after func eq sigma}
equals, in its domain of absolute convergence,
\begin{align*}
\frac{L(\pi'\times\tau,s)}{L(\tau,Sym^2,2s)}\sum_{\delta\in
P_{\smallidx-1}}\chi_{\delta}^{\GL{\smallidx-1}}(t_{\sigma^*})\Lambda_{\delta}(t_{\xi_{\tau,s}},t_{\pi'})q^{-(\zeta+\half)|\delta|}.
\end{align*}
Here $\Lambda_{\delta}(t_{\xi_{\tau,s}},t_{\pi'})$ is a rational function
in the Satake parameters of $\xi_{\tau,s}$ and $\pi'$. Its exact
form will not be needed.
\end{claim}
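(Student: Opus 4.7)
The plan is to peel off the outer $dg$-integration onto the torus $A_{\smallidx-1}$ of the Levi of $\overline{P_{\smallidx-1}}$ via the Iwasawa decomposition, recognize the $\sigma$-dependence through the Shintani--Casselman--Shalika formula, and then identify the remaining integrand with an $\smallidx=1$ local integral whose unramified value has already been computed.

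First I would apply the Iwasawa decomposition $G_{\smallidx}=\overline{V_{\smallidx-1}}A_{\smallidx-1}G_1K_{G_{\smallidx}}$ to $g$. Since $\varphi_{\zeta}$ is the normalized unramified section and $f_s$ is right $K_{H_{\bigidx}}$-invariant, and since the matrices $w$, $w_{1,\bigidx}$, $b_{\bigidx,\smallidx-1}$ and $(w_{\smallidx,\bigidx})^{-1}$ all have entries in $\padicintegers^*$, the $K_{G_{\smallidx}}$-part can be absorbed (after conjugating it past the inner variables, changing the measures of $m,g',r'$ only by units). The $\overline{V_{\smallidx-1}}$ and $G_1$ factors collapse under the left quotient, leaving an integration over $\lmodulo{Z_{\smallidx-1}}{A_{\smallidx-1}}$. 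Identifying this quotient with $\Integers^{\smallidx-1}$ via $t\mapsto\delta(t)$, the support of the normalized unramified Whittaker function of $\sigma$ restricts the resulting sum to $\delta\in P_{\smallidx-1}$.

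Second I would extract the $\sigma$-contribution. For $t\in A_{\smallidx-1}^{-}$, the value $\varphi_{\zeta}(t,1,1)$ equals $\delta_{\overline{P_{\smallidx-1}}}^{\half}(t)\absdet{t}^{-\zeta+\half}$ times the normalized unramified Whittaker value $W_{\sigma}(t)$; because $\pi$ is parabolically induced from $\overline{P_{\smallidx-1}}$ (the \emph{opposite} parabolic) and because the Weyl element $w$ sends the diagonal of $\GL{\smallidx-1}$ to the opposite block of $H_{\bigidx}$ through the twist by $b_{1,\smallidx-1}$, the Shintani formula yields $\chi_{\delta}^{\GL{\smallidx-1}}(t_{\sigma^{*}})$, with the star reflecting this passage to the opposite parabolic. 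Together with the twist and modular characters above, this produces the factor $q^{-(\zeta+\half)|\delta|}$.

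Third I would evaluate the inner triple integral. Conjugating $t\in A_{\smallidx-1}$ past $\rconj{w_{\smallidx,\bigidx}}m$ amounts to a torus change of variable in $m$ whose Jacobian is a character of $t$; conjugating $t$ past $\rconj{b_{\bigidx,\smallidx-1}}g'$ and through $w$ leaves the $G_1$-factor essentially unaltered and rescales $r'\in R_{1,\bigidx}$ by a unit. After these conjugations the $g'$- and $r'$-integrations factor off and reproduce exactly the unramified local integral in the case $(\smallidx,\bigidx)\rightarrow(1,\bigidx)$ with inducing character $\pi'$; by the case $\smallidx=1$ (handled above via Bump--Friedberg--Furusawa) this piece equals $L(\pi'\times\tau,s)/L(\tau,Sym^2,2s)$. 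The residual $m$-integral, weighted by the $\delta$-dependent character from the conjugation, is a product of geometric series in the Satake parameters of the induced representation $\xi_{\tau,s}$ and of $\pi'$; its sum is the rational function $\Lambda_{\delta}(t_{\xi_{\tau,s}},t_{\pi'})$ appearing in the claim.

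The main obstacle will be the careful bookkeeping of how conjugation by $t\in A_{\smallidx-1}$ acts simultaneously on $w$, on $\rconj{w_{\smallidx,\bigidx}}m$, on $r'\in R_{1,\bigidx}$ and on $g'\in G_1$, so that the $m$-integral genuinely separates from the $(\smallidx,\bigidx)=(1,\bigidx)$ local integral. A secondary but technical point is to verify that the residual $m$-integral converges absolutely in the stated domain and assembles into a rational function of the Satake parameters, and that the resulting sum over $\delta\in P_{\smallidx-1}$ is the one asserted.
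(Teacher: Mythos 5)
Your general strategy---Iwasawa decomposition of $G_{\smallidx}$, extracting the $\sigma$-contribution via Shintani--Casselman--Shalika, recognizing the inner integral as a Bessel-type integral from Bump--Friedberg--Furusawa---is in the right direction, but you have misidentified where the factor $\Lambda_{\delta}(t_{\xi_{\tau,s}},t_{\pi'})$ comes from, and in doing so you have skipped a step that cannot be skipped.

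You claim that after conjugating $a\in A_{\smallidx-1}$ past the inner variables, ``the $g'$- and $r'$-integrations factor off and reproduce exactly the unramified local integral in the case $(\smallidx,\bigidx)\rightarrow(1,\bigidx)$'' and hence ``equal $L(\pi'\times\tau,s)/L(\tau,Sym^2,2s)$,'' with the residual $m$-integral then summing to $\Lambda_{\delta}$. This is not what happens. The inner $dr'dg'$-integral is the BFF Bessel function $h\mapsto\Psi(\pi',h\cdot f_s,s)$ evaluated at $h=a^{*}w(\rconj{w_{\smallidx,\bigidx}}m)$, not at the identity, so it is \emph{not} the normalized unramified $\smallidx=1$ integral. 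The torus translate $a^{*}\cdot f_s$ is no longer the unramified section, and the Bessel function at $a^{*}$ is given by BFF's Casselman--Shalika-type formula
$\Psi(\pi',a^{*}\cdot f_s,s)=\frac{L(\pi'\times\tau,s)}{L(\tau,Sym^2,2s)}\,\delta_{B_{H_{\bigidx}}}^{\half}(a^{*})\,\Lambda_{\delta(a^{*})}(t_{\xi_{\tau,s}},t_{\pi'})$. That is where $\Lambda_{\delta}$ actually comes from; it encodes the torus-dependence of the Bessel function, not any leftover $m$-integral.

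Correspondingly, the $m$-integral does \emph{not} produce a geometric series in the Satake parameters. What you need instead is a vanishing argument, in the style of Ginzburg: using the right $K_{H_{\bigidx}}$-invariance of $\Psi(\pi',\cdot\cdot f_s,s)$ and its $\psi$-equivariance under $Z_{\bigidx-1}<M_{\bigidx-1}$ (which follows from the Hom-space property \eqref{eq:global integral in homspace l <= n}), one shows that $m\mapsto\Psi(\pi',a^{*}m_1\cdot f_s,s)$ vanishes unless $m\in\MatF{\smallidx-1\times\bigidx-\smallidx}{\mathcal{O}}$, where $m_1=\rconj{w^{-1}}(\rconj{w_{\smallidx,\bigidx}}m)$. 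Since $\mathrm{vol}(\MatF{\smallidx-1\times\bigidx-\smallidx}{\mathcal{O}})=1$ and $f_s$ is $K_{H_{\bigidx}}$-fixed, the $dm$-integration is thereby discarded. Without this vanishing argument, and with the $m$-integral instead taken to produce $\Lambda_{\delta}$, the proposal does not close: the resulting expression would carry an extra, unaccounted-for torus weight, and $\Lambda_{\delta}$ would not match the quantity appearing in Claim~\ref{claim:BFF unramified computation}, which is needed in the next step of the proof.
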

The unramified computation of Furusawa (App. of \cite{BFF}) will be used to prove,
\begin{claim}\label{claim:BFF unramified computation}
For each $1\leq k<\bigidx$ and irreducible unramified representation $\eta$ of
$\GL{k}$,
\begin{align*}
\sum_{\delta\in
P_{k}}\chi_{\delta}^{\GL{k}}(t_{\eta})\Lambda_{\delta}(t_{\xi_{\tau,s}},t_{\pi'})q^{-(\zeta+\half)|\delta|}
=\frac{L(\xi_{\tau,s}\times\eta,\zeta+\half)}{L(\pi'\times\eta,\zeta+1)L(\eta,\wedge^2,2\zeta+1)}.
\end{align*}
\end{claim}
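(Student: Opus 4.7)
The plan is to view Claim~\ref{claim:BFF unramified computation} as the identification of the as-yet unknown coefficients $\Lambda_\delta$ appearing in Claim~\ref{claim:iwasawa l<=n}. The key observation is that, by varying the inducing datum $\sigma$ in $\pi = \cinduced{\overline{P_{\smallidx-1}}}{G_{\smallidx}}{\sigma \otimes \pi'}$, I can extract the generating series on the LHS of Claim~\ref{claim:BFF unramified computation} from the RHS of \eqref{eq:unramified 1st var l<n after func eq sigma}, and then compute the LHS of that identity directly by reducing the resulting $\Psi$-integral to the $\smallidx = 1$ unramified formula of Bump, Friedberg, and Furusawa.

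Given $k$ with $1 \leq k < \bigidx$ and an unramified irreducible $\eta$ of $\GL{k}$, I would set $\smallidx = k+1$ and take $\sigma = \eta^*$ (any such $\eta$ arises this way as $\sigma$ varies). Claim~\ref{claim:iwasawa l<=n} combined with \eqref{eq:unramified 1st var l<n after func eq sigma} then yields
\[
\sum_{\delta\in P_k}\chi_\delta^{\GL{k}}(t_\eta)\Lambda_\delta(t_{\xi_{\tau,s}},t_{\pi'})\, q^{-(\zeta+\half)|\delta|}
= \frac{L(\tau,Sym^2,2s)}{L(\pi'\times\tau,s)}\,\omega_\sigma(-1)^{\bigidx-1}\,\gamma(\sigma\times\tau,\psi,s-\zeta)\,\Psi(W_{\varphi_\zeta},f_s,s),
\]
so the task reduces to evaluating $\Psi(W_{\varphi_\zeta},f_s,s)$ for $\pi = \cinduced{\overline{P_k}}{G_{k+1}}{\eta^* \otimes \pi'}$. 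I would do this by writing $\varphi_\zeta$ and $W_{\varphi_\zeta}$ via the Iwasawa decomposition of $G_{k+1}$ along $\overline{P_k}$, so that the integral splits into an outer integration over $\GL{k}$-variables (evaluated via the Casselman--Shalika formula for $\tau$ and the Whittaker model of $\eta^*$, producing a sum of $\chi_\delta^{\GL{k}}(t_\eta)$ against coefficients in $t_{\xi_{\tau,s}}$ and $t_{\pi'}$) and an inner integration over $G_1 \times R_{1,\bigidx}$ that is exactly the $\smallidx=1$ Rankin--Selberg integral for $\pi' \times \tau$ treated by Furusawa in the appendix of \cite{BFF}.

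Matching the two expressions for the generating series and collecting factors, the denominator $L(\pi'\times\eta,\zeta+1)L(\eta,\wedge^2,2\zeta+1)$ in the claim arises from the normalization $W = W_{\varphi_\zeta}(1)^{-1}W_{\varphi_\zeta}$ via \eqref{eq:cs result for W phi zeta}; the numerator $L(\xi_{\tau,s}\times\eta,\zeta+\half)$ emerges from the Furusawa evaluation of the inner integral combined with the absorption of $\gamma(\sigma\times\tau,\psi,s-\zeta)$ through the unramified local functional equation for $\GL{k}\times\tau$; and the prefactor $L(\pi'\times\tau,s)/L(\tau,Sym^2,2s)$ cancels against corresponding factors produced by the Iwasawa unfolding. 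The principal obstacle will be organizing this decomposition cleanly enough to isolate the inner $G_1 \times R_{1,\bigidx}$-integral in the precise form Furusawa handled, since the nontrivial embedding $G_1 \hookrightarrow G_{k+1}$ and the long Weyl element $w_{k+1,\bigidx}$ make the bookkeeping delicate; a secondary subtlety is the exact gamma-factor manipulation needed to convert $\gamma(\sigma\times\tau,\psi,s-\zeta)$ and the $L$-factors from BFF into the ratio stated in the claim.
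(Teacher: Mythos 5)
Your proposal has a genuine circularity at its core. You propose to evaluate $\Psi(W_{\varphi_\zeta},f_s,s)$ for $\smallidx=k+1$ by Iwasawa unfolding, with the inner piece being ``the $\smallidx=1$ integral treated by Furusawa.'' But that inner piece is precisely the (unramified) Bessel function of \cite{BFF}, whose expansion in terms of the $\Lambda_\delta$ is the \emph{definition} of $\Lambda_\delta$ used in Claim~\ref{claim:iwasawa l<=n}, not an independent evaluation of it. Carrying out the Iwasawa decomposition and the outer $\GL{k}$-integration therefore reproduces exactly the generating series $\sum_{\delta\in P_k}\chi_\delta^{\GL{k}}(t_\eta)\Lambda_\delta\,q^{-(\zeta+\half)|\delta|}$ on both sides of your bookkeeping; you end up asserting the identity you wish to prove. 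You never inject any closed-form information about the $\Lambda_\delta$, and the factors $L(\xi_{\tau,s}\times\eta,\zeta+\half)$ and $L(\pi'\times\eta,\zeta+1)L(\eta,\wedge^2,2\zeta+1)$ cannot emerge from normalization and $\gamma$-factor bookkeeping alone.

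The missing input is Furusawa's actual unramified \emph{evaluation} of his $SO_{2\bigidx+1}\times\GL{\bigidx}$ integral, which is identity~\eqref{eq:local identity integral as sum BFF}: a closed-form expression for the generating series $\sum_{\delta}\chi_\delta^{\GL{\bigidx}}(t_{\eta'})\Lambda_\delta\,q^{-(\zeta+\half)|\delta|}$ when the $\GL{}$-rank equals $\bigidx$, including the correction term over $P_\bigidx\setminus P_{\bigidx-1}$ weighted by $Q(q)$. This is an external combinatorial fact about the Bessel coefficients, not something recoverable from the Iwasawa decomposition of $G_{k+1}$. Once \eqref{eq:local identity integral as sum BFF} is in hand, the paper passes from rank $\bigidx$ to rank $k<\bigidx$ not by changing $\smallidx$ but by a specialization argument: choose $\eta'$ parabolically induced from $\eta$ and unramified characters, observe via \eqref{eq:chi GL for delta with non zero tail} and Lemma~\ref{lemma:branching rule} that $t_{\eta'}(\bigidx)$ appears with nonnegative powers, and formally set $t_{\eta'}(\bigidx)=\cdots=t_{\eta'}(k+1)=0$; this kills the $P_\bigidx\setminus P_{\bigidx-1}$ term, collapses $\chi_\delta^{\GL{\bigidx}}(t_{\eta'})$ to $\chi_\delta^{\GL{k}}(t_\eta)$ for $\delta\in P_k$ (by \eqref{branching conclusion GLn}), and reduces the $L$-factors on the right to those of the claim. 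Your proposal contains neither the invocation of \eqref{eq:local identity integral as sum BFF} nor the branching/specialization step, and without them there is no proof.
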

We take the domain $D\subset\C\times\C$ of absolute convergence of $\Psi(W_{\varphi_{\zeta}}',f_s,s)$ of the form
$D=\setof{\zeta,s\in\C}{0<<\Re(\zeta)<A,\Re(s)>s_0}$, where $A$ and $s_0$ are constants depending only on
$\sigma,\pi'$ and $\tau$. 

Since $1<\smallidx\leq\bigidx$, $1\leq\smallidx-1<\bigidx$ and we may apply
Claim~\ref{claim:BFF unramified computation} with $k=\smallidx-1$
and $\eta=\sigma^*$. Applying Claims~\ref{claim:iwasawa l<=n},
\ref{claim:BFF unramified computation} 
and using \eqref{eq:cs result for W phi
zeta} we see that in $D$,
\begin{align*}
&\Psi(W_{\varphi_{\zeta}}',f_s,s)
=\omega_{\sigma}(-1)^{\bigidx-1}\gamma(\sigma\times\tau,\psi,s-\zeta)^{-1}
\frac{L(\pi'\times\tau,s)}{L(\tau,Sym^2,2s)}L(\xi_{\tau,s}\times\sigma^*,\zeta+\half).
\end{align*}
We have $\omega_{\sigma}(-1)^{\bigidx-1}=1$,
\begin{align*}
&L(\xi_{\tau,s}\times\sigma^*,\zeta+\half)=L(\tau\times\sigma^*,\zeta+\half+s-\half)L(\tau^*\times\sigma^*,\zeta+\half-(s-\half)),\\\notag
&\gamma(\sigma\times\tau,\psi^{-1},s-\zeta)=L(\sigma^*\times\tau^*,1-s+\zeta)L(\sigma\times\tau,s-\zeta)^{-1}
\end{align*}
and it follows that in $D$,
\begin{align*}
&\Psi(W_{\varphi_{\zeta}}',f_s,s)
=\frac{L(\sigma\times\tau,s-\zeta)L(\pi'\times\tau,s)L(\sigma^*\times\tau,s+\zeta)}{L(\tau,Sym^2,2s)}.
\end{align*}

Given $A$, there is a constant $s_1$ depending only on $\sigma,\pi',\tau$ and $A$ such that
the integral $\Psi(W_{\varphi_{\zeta}}',f_s,s)$ has a meromorphic continuation
$Q\in\C(q^{-\zeta},q^{-s})$ satisfying $\Psi(W_{\varphi_{\zeta}}',f_s,s)=Q(q^{-\zeta},q^{-s})$ in $D'=\setof{\zeta,s\in\C}{0\leq\Re(\zeta)<A,\Re(s)>s_1}$.
This can be proved using Bernstein's continuation principle (in \cite{Banks}),
see Section~\ref{subsection:meromorphic continuation} and \cite{BFF} (Section~5), or by a more direct approach (see Claim~\ref{claim:gamma is rational in s and zeta}).

Therefore in $D\cap D'$,
\begin{align*}
&Q(q^{-\zeta},q^{-s})
=\frac{L(\sigma\times\tau,s-\zeta)L(\pi'\times\tau,s)L(\sigma^*\times\tau,s+\zeta)}{L(\tau,Sym^2,2s)}.
\end{align*}
Both sides of this equality belong to $\C(q^{-\zeta},q^{-s})$ whence it holds for all $\zeta,s\in\C$.
Since the \rhs\ is defined for $\zeta=0$, so is $Q(q^{-\zeta},q^{-s})$. Moreover, for $\Re(s)>s_1$ 
we have $Q(q^{-0},q^{-s})=\Psi(W,f_s,s)$, because $W_{\varphi_0}'=W$. Thus for $\Re(s)>s_1$,
\begin{align*}
&\Psi(W,f_s,s)
=\frac{L(\sigma\times\tau,s)L(\pi'\times\tau,s)L(\sigma^*\times\tau,s)}{L(\tau,Sym^2,2s)}
=\frac{L(\pi\times\tau,s)}{L(\tau,Sym^2,2s)}.
\end{align*}
\begin{proof}[Proof of Claim~\ref{claim:iwasawa l<=n}] 
Since for $g\in G_{\smallidx}$ and $g'\in G_1$, $\varphi_{\zeta}(g,1,g')=\varphi_{\zeta}(g,1,1)\pi'(g')$, the
inner $dr'dg'$-integral on the \rhs\ of \eqref{eq:unramified 1st var
l<n after func eq sigma} constitutes a local integral for
$G_1\times\GL{\bigidx}$. Note that the conjugation of $g'$ by $b_{\bigidx,\smallidx-1}$ changes the embedding of $G_1$ in $H_{\bigidx}$, but $G_1$ still normalizes $N_{\bigidx-1}$ and stabilizes $\psi_{\gamma}$ (see Section~\ref{subsection:Twisting the embedding}). Then the \rhs\ of \eqref{eq:unramified 1st var l<n after
func eq sigma} equals
\begin{align*}
\int_{\lmodulo{\overline{V_{\smallidx-1}}Z_{\smallidx-1}G_1}{G_{\smallidx}}}
\int_{\Mat{\smallidx-1\times \bigidx-\smallidx}}
\varphi_{\zeta}(g,1,1)\Psi(\pi',w(\rconj{w_{\smallidx,\bigidx}}m)g\cdot
f_s,s)dmdg.
\end{align*}
Here $\pi'$ stands for the Whittaker function ($\pi'$ is a character). Apply the the Iwasawa
decomposition of $G_{\smallidx}$ with respect to the Borel subgroup
$\widehat{B}$ whose unipotent radical equals $\overline{V_{\smallidx-1}}\rtimes
Z_{\smallidx-1}$. We obtain
\begin{align*}
\int_{A_{\smallidx-1}}\int_{\Mat{\smallidx-1\times
\bigidx-\smallidx}}
\varphi_{\zeta}(a,1,1)\Psi(\pi',w(\rconj{w_{\smallidx,\bigidx}}m)a\cdot
f_s,s)\delta_{\widehat{B}}^{-1}(a)dmda.
\end{align*}
The function $W_{\sigma}(a)=\varphi_{\zeta}(1,a,1)$ is the normalized unramified
Whittaker function of $\Whittaker{\sigma}{\psi^{-1}}$.
Note that
$\varphi_{\zeta}(a,1,1)=\absdet{a}^{-\half\smallidx-\zeta}W_{\sigma}(a)$,
$a$ normalizes the subgroup of elements of the form $\rconj{w_{\smallidx,\bigidx}}m$ changing
$dm\mapsto\absdet{a}^{\bigidx-\smallidx}dm$ and
$\rconj{w^{-1}}a=a^*$. The integral equals
\begin{align*}
\int_{A_{\smallidx-1}}\int_{\Mat{\smallidx-1\times
\bigidx-\smallidx}}
W_{\sigma}(a)\Psi(\pi',a^*w(\rconj{w_{\smallidx,\bigidx}}m)\cdot
f_s,s)\absdet{a}^{\bigidx-\frac32\smallidx-\zeta}\delta_{\widehat{B}}^{-1}(a)dmda.
\end{align*}
Next we show that the $dm$-integration can be discarded.
Put $m_1=\rconj{w^{-1}}(\rconj{w_{\smallidx,\bigidx}}m)$. Then
$a^*m_1$ is the image in $M_{\bigidx}$ of
\begin{align*}
\left(\begin{array}{ccc}a^*\\m'&I_{\bigidx-\smallidx}\\&&1\end{array}\right)\in\GL{\bigidx}.
\end{align*}
Since $f_s$ is right-invariant by $K_{H_{\bigidx}}$, for all $k\in
K_{H_\bigidx}$ and $h\in H_{\bigidx}$,
\begin{align}\label{eq:invariancy by K}
&\Psi(\pi',hk\cdot f_s,s)=\Psi(\pi',h\cdot f_s,s).
\end{align}
In particular
\begin{align*}
\Psi(\pi',a^*w(\rconj{w_{\smallidx,\bigidx}}m)\cdot
f_s,s)=\Psi(\pi',a^*m_1\cdot f_s,s).
\end{align*}
Next observe that for all $z\in
Z_{\bigidx-1}<M_{\bigidx-1}$,
\begin{align}\label{eq:translation by Z}
&\Psi(\pi',z\cdot
f_s,s)=\psi(z)\Psi(\pi',f_s,s).
\end{align}
This follows from \eqref{eq:global
integral in homspace l <= n} or by a direct verification.
Now \eqref{eq:invariancy by K} and \eqref{eq:translation by Z} imply, as shown by Ginzburg \cite{G} (p.~176, see 
\cite{Soudry} p.~98), that for any
$a\in A_{\smallidx-1}$ the function $m\mapsto
\Psi(\pi',a^*m_1\cdot f_s,s)$
vanishes unless $m\in\MatF{\smallidx-1\times
\bigidx-\smallidx}{\mathcal{O}}$. Since $f_s$ is $K_{H_{\bigidx}}$-invariant and $vol(\MatF{\smallidx-1\times \bigidx-\smallidx}{\mathcal{O}})=1$, the integral becomes
\begin{align*}
\int_{A_{\smallidx-1}}W_{\sigma}(a)\Psi(\pi',a^*\cdot
f_s,s)\absdet{a}^{\bigidx-\frac32\smallidx-\zeta}\delta_{\widehat{B}}^{-1}(a)da.
\end{align*}
According to the definition of $\sigma^*$,
$W_{\sigma}(a)=W_{\sigma^*}(a^*)$, where $W_{\sigma^*}$ is the
normalized unramified element of $\Whittaker{\sigma^*}{\psi}$.
Using this we have
\begin{align*}
\int_{A_{\smallidx-1}}W_{\sigma^*}(a^*)\Psi(\pi',a^*\cdot
f_s,s)\absdet{a}^{\bigidx-\frac32\smallidx-\zeta}\delta_{\widehat{B}}^{-1}(a)da.
\end{align*}
The function on $H_{\bigidx}$ defined by $h\mapsto\Psi(\pi',h\cdot
f_s,s)$ is the (unramified) Bessel function of Bump, Friedberg and Furusawa
\cite{BFF}. According to their results, $\Psi(\pi',a^*\cdot
f_s,s)=0$ unless $|a^*_i|\leq1$ for all $1\leq
i\leq\smallidx-1$. In this case they proved a Casselman-Shalika type formula for
the integral, in the form
\begin{align*}
\Psi(\pi',a^*\cdot
f_s,s)=\frac{L(\pi'\times\tau,s)}{L(\tau,Sym^2,2s)}\delta_{B_{H_{\bigidx}}}^{\half}(a^*)\Lambda_{\delta(a^*)}(t_{\xi_{\tau,s}},t_{\pi'}).
\end{align*}
Also $W_{\sigma^*}$ vanishes unless $a^*\in A_{\smallidx-1}^-$.
Therefore $\delta^*=\delta(a^*)\in P_{\smallidx-1}$ ($|a^*_{\smallidx-1}|\leq1$). Also use the following calculations:
\begin{align*}
&\delta_{\widehat{B}}(a)=\delta_{B_{\GL{\smallidx-1}}}(a)\absdet{a}^{-\smallidx}=q^{\sum_{i=1}^{\smallidx-1}\vartheta(a_i)(-\smallidx+2i)}\absdet{a}^{-\smallidx},\\\notag
&\delta_{B_{\GL{j}}}^{\half}(a^*)=q^{\sum_{i=1}^{\smallidx-1}\vartheta(a_i)(\half j+\half-\smallidx+i)}\qquad\forall j\geq\smallidx-1.
\end{align*}
The integral becomes
\begin{align*}
\frac{L(\pi'\times\tau,s)}{L(\tau,Sym^2,2s)} \sum_{\delta^*\in
P_{\smallidx-1}}\chi_{\delta^*}^{\GL{\smallidx-1}}(t_{\sigma^*})\Lambda_{\delta^*}(t_{\xi_{\tau,s}},t_{\pi'})q^{-(\zeta+\half)|\delta^*|}.
\end{align*}
The claim follows when we replace $\delta^*$ with $\delta$.
\end{proof}  

\begin{proof}[Proof of Claim~\ref{claim:BFF unramified computation}] 
In the appendix of \cite{BFF}, Furusawa presented a new
Rankin-Selberg integral for $SO_{2\bigidx+1}\times\GL{\bigidx}$. Let
$\eta'$ be an irreducible unramified representation of $\GL{\bigidx}$. Then the
unramified computation of Furusawa yields the following identity,
\begin{align}\label{eq:local identity integral as sum BFF}
\sum_{\delta\in
P_{\bigidx-1}}\chi_{\delta}^{\GL{\bigidx}}(t_{\eta'})\Lambda_{\delta}(t_{\xi_{\tau,s}},t_{\pi'})q^{-(\zeta+\half)|\delta|}
&+ Q(q)\sum_{\delta\in P_{\bigidx}\setminus
P_{\bigidx-1}}\chi_{\delta}^{\GL{\bigidx}}(t_{\eta'})\Lambda_{\delta}(t_{\xi_{\tau,s}},t_{\pi'})q^{-(\zeta+\half)|\delta|}\\\notag
=\frac{L(\xi_{\tau,s}\times\eta',\zeta+\half)}{L(\pi'\times\eta',\zeta+1)L(\eta',\wedge^2,2\zeta+1)}.
\end{align}
Here $Q(q)=1-q^{-1}$ if $G_1$ is split ($\pi'$ is an unramified character of $G_1$)
and $Q(q)=1+q^{-1}$ otherwise.

For an irreducible unramified representation $\eta$ of $\GL{k}$, construct a
representation
$\eta'=\cinduced{P_{k,1,\ldots,1}}{\GL{\bigidx}}{\eta,\eta'_{k+1},\ldots,\eta'_{\bigidx}}$,
for unramified characters $\eta'_{k+1},\ldots,\eta'_{\bigidx}$ such
that $\eta'$ is irreducible and unramified. Then $t_{\eta'}(\bigidx)=t_{\eta'_{\bigidx}}(1)$. As in Section~\ref{subsection:unramified
l>n} 
we can put $t_{\eta'}(\bigidx)=0$ in \eqref{eq:local identity integral as sum BFF} and use
\eqref{branching conclusion GLn} to obtain the result for $k=\bigidx-1$. Setting $t_{\eta'}(\bigidx)=\ldots=t_{\eta'}(k+1)=0$, the result follows for all $1\leq k<\bigidx$.
\end{proof} 

\newtheorem{theorem}{Theorem}[section]
\newtheorem{proposition}{Proposition}[section]
\newtheorem{corollary}{Corollary}[section]
\newtheorem{lemma}{Lemma}[section]
\newtheorem{claim}{Claim}[section]
\theoremstyle{remark}
\newtheorem{remark}{Remark}[section]
\newtheorem{example}{Example}[section]
\theoremstyle{definition}
\newtheorem{definition}{Definition}[section]
\numberwithin{equation}{section}
\newcommand{\chapter}{\section} 

\end{comment}

\newcommand{\compinduced}[3]{ind_{#1}^{#2}({#3})} 

\chapter{Uniqueness properties}\label{chapter:uniqueness}
As we have seen in Chapter~\ref{chapter:the integrals}, the global
integrals can be regarded as bilinear forms satisfying certain
equivariance properties: \eqref{eq:global integral in homspace l <=
n} when $\smallidx\leq\bigidx$ and \eqref{eq:global integral in
homspace l
> n} if $\smallidx>\bigidx$. In Sections~\ref{subsection:meromorphic
continuation} and \ref{subsection:the gamma factor} we will explain
how to view the local integrals as bilinear forms satisfying similar
properties, first in a domain of absolute convergence, then by
meromorphic continuation in the whole plane except at the poles.

Here we prove Theorem~\ref{theorem:uniqueness}: the local versions
of the spaces of bilinear forms are at most one-dimensional, for
almost all values of $q^{-s}$. We split the proof into two
propositions - for $\smallidx\leq\bigidx$ and $\smallidx>\bigidx$.
Their proofs follow the line of arguments of Soudry \cite{Soudry}
(Section~8).

Note that when the representations are irreducible,
Theorem~\ref{theorem:uniqueness} follows from the results of Aizenbud et al. \cite{AGRS} (see 
\cite{GGP} p. 57) and of M{\oe}glin and Waldspurger \cite{MW}. In two
particular cases $\smallidx=\bigidx,\bigidx+1$ it was already proved by Gelbart and
Piatetski-Shapiro \cite{GPS} (again, for irreducible representations).

Our notation in this chapter is local (\nonarchimedean).
Let $\pi$ be a representation of $G_{\smallidx}$ and $\tau$ be a representation of $\GL{\bigidx}$.
We stress that here the only assumptions on the representations
are the ones stated in Section~\ref{subsection:notation for reps} - that they are smooth, admissible, finitely
generated and generic, and $\tau$ has a central character. In particular, we do not assume that the representations are realized in Whittaker models.

As described in the introduction (see Section~\ref{section:main
results}), the main application of the uniqueness properties is the
existence of a functional equation - equality~\eqref{eq:gamma def}.
Gelbart and Piatetski-Shapiro \cite{GPS} and Soudry \cite{Soudry}
used similar uniqueness results in order to derive meromorphic
continuation. Although we can also follow this approach here (see
Section~\ref{subsection:meromorphic continuation}), our main proof
of meromorphic continuation will not depend on
Theorem~\ref{theorem:uniqueness}.

\section{The case $\smallidx\leq\bigidx$
}\label{section:uniqneness l<=n}
The space of bilinear forms on $\pi\times\cinduced{Q_{\bigidx}}{H_{\bigidx}}{\tau\alpha^s}$ satisfying (the local version of) \eqref{eq:global integral in homspace l <= n} is equivalent to the space
$Bil_{G_{\smallidx}}(\pi,(\cinduced{Q_{\bigidx}}{H_{\bigidx}}{\tau\alpha^s})_{N_{\bigidx-\smallidx},\psi_{\gamma}^{-1}})$, where $Bil_{G_{\smallidx}}(\cdot,\cdot)$ denotes a space of $G_{\smallidx}$-equivariant bilinear forms and $(\cdots)_{N_{\bigidx-\smallidx},\psi_{\gamma}^{-1}}$
denotes the Jacquet module with respect to $N_{\bigidx-\smallidx}$ and $\psi_{\gamma}^{-1}$ (see \cite{BZ2} 1.8 (b)
for the definition).
\begin{proposition}\label{proposition:one dimensionality l <= n}
Except for a finite set of values of $q^{-s}$, the complex
vector-space
$Bil_{G_{\smallidx}}(\pi,(\cinduced{Q_{\bigidx}}{H_{\bigidx}}{\tau\alpha^s})_{N_{\bigidx-\smallidx},\psi_{\gamma}^{-1}})$
is at most one-dimensional.
\end{proposition}
\begin{proof}[Proof of Proposition~\ref{proposition:one dimensionality l <= n}] 
We have the following isomorphisms,
\begin{align*}
Bil_{G_{\smallidx}}(\pi,(\cinduced{Q_{\bigidx}}{H_{\bigidx}}{\tau\alpha^s})_{N_{\bigidx-\smallidx},\psi_{\gamma}^{-1}})
&\isomorphic\homspace{G_{\smallidx}}{(\cinduced{Q_{\bigidx}}{H_{\bigidx}}{\tau\alpha^s})_{N_{\bigidx-\smallidx},\psi_{\gamma}^{-1}}}{\dualrep{\pi}}\\\notag
&\isomorphic\homspace{H_{\bigidx}}{\cinduced{Q_{\bigidx}}{H_{\bigidx}}{\tau\alpha^s}}
{\cinduced{N_{\bigidx-\smallidx}G_{\smallidx}}{H_{\bigidx}}{\dualrep{\pi}\otimes\psi_{\gamma}^{-1}}}\\\notag
&\isomorphic
Bil_{H_{\bigidx}}(\compinduced{N_{\bigidx-\smallidx}G_{\smallidx}}{H_{\bigidx}}{\pi\otimes\psi_{\gamma}},\cinduced{Q_{\bigidx}}{H_{\bigidx}}{\tau\alpha^s}).
\end{align*}
Here $\compinduced{}{}{\cdots}$ refers to the compact normalized
induction, $\dualrep{\pi}\otimes\psi_{\gamma}^{-1}$ is the
representation of $N_{\bigidx-\smallidx}G_{\smallidx}$ on the space
of $\dualrep{\pi}$ defined by
$vg\mapsto\psi_{\gamma}^{-1}(v)\dualrep{\pi}(g)$ ($v\in
N_{\bigidx-\smallidx}$ and $g\in G_{\smallidx}$, recall that
$G_{\smallidx}$ normalizes $N_{\bigidx-\smallidx}$ and stabilizes
$\psi_{\gamma}$). The second and third isomorphisms follow from
\cite{BZ2} (1.9 (b), (c) and (d)).

For $w\in\rmodulo{\lmodulo{Q_{\bigidx}}{H_{\bigidx}}}{N_{\bigidx-\smallidx}G_{\smallidx}}$,
denote
\begin{align*}
Hom(w)=\homspace{(N_{\bigidx-\smallidx}G_{\smallidx})^{w^{-1}}}{\rconj{w}(\pi\otimes\psi_{\gamma})\otimes{\tau\alpha^s}}
{\delta
},
\end{align*}
where $(N_{\bigidx-\smallidx}G_{\smallidx})^{w^{-1}}=
\rconj{w^{-1}}(N_{\bigidx-\smallidx}G_{\smallidx})\cap Q_{\bigidx}$;
for a representation $\theta$ of
$N_{\bigidx-\smallidx}G_{\smallidx}$, $\rconj{w}\theta$ is the
representation of $(N_{\bigidx-\smallidx}G_{\smallidx})^{w^{-1}}$ on
the space of $\theta$ given by
$\rconj{w}\theta(x)=\theta(\rconj{w}x)$;
$\delta(x)=\delta_{\mathcal{C}(w)}(x,\rconj{w}x)\cdot\delta_{Q_{\bigidx}}^{-\half}(x)$
is a representation of
$(N_{\bigidx-\smallidx}G_{\smallidx})^{w^{-1}}$ on $\C$, where
$\mathcal{C}(w)=\setof{(x,\rconj{w}x)}{x\in
(N_{\bigidx-\smallidx}G_{\smallidx})^{w^{-1}}}<Q_{\bigidx}\times
N_{\bigidx-\smallidx}G_{\smallidx}$ and $\delta_{\mathcal{C}(w)}$ 
is the modulus character of $\mathcal{C}(w)$.


According to the Bruhat theory (see e.g. \cite{Silb} Theorems~1.9.4 and 1.9.5),
\begin{align*}
Bil_{H_{\bigidx}}(\compinduced{N_{\bigidx-\smallidx}G_{\smallidx}}{H_{\bigidx}}{\pi\otimes\psi_{\gamma}},\cinduced{Q_{\bigidx}}{H_{\bigidx}}{\tau\alpha^s})\subset \bigoplus_{w\in\rmodulo{\lmodulo{Q_{\bigidx}}{H_{\bigidx}}}{N_{\bigidx-\smallidx}G_{\smallidx}}}Hom(w),
\end{align*}
in the sense of semi-simplification.
In the proof of Proposition~\ref{propo:basic global identity l <= n}
we defined a set of representatives $\{w_r\}$ and for each $r$ we
described a set $\mathcal{A}(r)$ (see \eqref{set:representatives for
second filtration}). We take a set of representatives for
$\rmodulo{\lmodulo{Q_{\bigidx}}{H_{\bigidx}}}{N_{\bigidx-\smallidx}G_{\smallidx}}$
in the form $w=w_r\eta$ where $0\leq r\leq\bigidx-\smallidx$ and
$\eta\in\mathcal{A}(r)$. Set $\eta_0=I_{2\smallidx+1}$. Analogously
to Claim~\ref{claim:unfolding of E_f to short sum}, we will prove
that for $r>0$, $H(w)=0$, and for $r=0$, if $\eta\ne\eta_0$ then
except for finitely many values of $q^{-s}$, $H(w)=0$.

Indeed, if $r>0$, the proof of the claim implies
$\frestrict{\psi_{\gamma}}{\rconj{w}U_{\bigidx}\cap N_{\bigidx-\smallidx}}\nequiv1$. Let $y\in \rconj{w}U_{\bigidx}\cap N_{\bigidx-\smallidx}$ be such that $\psi_{\gamma}(y)\ne1$. Write $y=\rconj{w}u\in N_{\bigidx-\smallidx}$ with $u\in U_{\bigidx}$. Then
$x=\rconj{w^{-1}}y\in\rconj{w^{-1}}N_{\bigidx-\smallidx}\cap U_{\bigidx}<
(N_{\bigidx-\smallidx}G_{\smallidx})^{w^{-1}}\cap U_{\bigidx}$. Now for any $f\in H(w)$, $v_1$ in the space of $\pi$ and $v_2$ in the space of $\tau$,
\begin{align*}
f(v_1\otimes v_2)=
f((\rconj{w}(\pi\otimes\psi_{\gamma})\otimes\tau\alpha^s)(x)v_1\otimes v_2)=
f(\psi_{\gamma}(y)v_1\otimes v_2)=\psi_{\gamma}(y)f(v_1\otimes v_2).
\end{align*}
Hence $f\equiv0$. This shows $H(w)=0$ for $r>0$.

Assume $r=0$, $\eta\ne\eta_0$ (in which case $G_{\smallidx}$ is
necessarily split). Let $y=\rconj{\eta}q\in G_{\smallidx}$ be with
$q\in Q_{\smallidx}'$ ($Q_{\smallidx}'$ is given by \eqref{eq:Qn
wr}). Then $x=\rconj{w^{-1}}y\in
(N_{\bigidx-\smallidx}G_{\smallidx})^{w^{-1}}$ (because
$\rconj{w_0}Q_{\smallidx}'=Q_{\smallidx}'<Q_{\bigidx}$). In the above
notation,
\begin{align*}
\delta(x)
f(v_1\otimes v_2)=
f((\rconj{w}(\pi\otimes\psi_{\gamma})\otimes\tau\alpha^s)(x)v_1\otimes
v_2)= f(\pi(y)v_1\otimes \tau\alpha^s(x)v_2).
\end{align*}
Therefore if we put
$Q_{\smallidx}'^{\eta}=\rconj{\eta}Q_{\smallidx}'\cap
G_{\smallidx}$, 
\begin{align*}
f\in
Bil_{Q_{\smallidx}'^{\eta}}(
\pi,\rconj{w^{-1}}(\tau\alpha^{s
}\delta^{-1}))
&\isomorphic\homspace{Q_{\smallidx}'^{\eta}}{
\pi} {\rconj{w^{-1}}(\dualrep{\tau}\alpha^{1-s}
\delta)}.
\end{align*}
Since $\eta\ne\eta_0$, $Q_{\smallidx}'^{\eta}$ is a parabolic
subgroup of $G_{\smallidx}$ (see after \eqref{integral:cusp againt E
r,eta}). Then by Frobenius reciprocity (see e.g. \cite{BZ1} 2.28) we
get
\begin{align*}
\homspace{Q_{\smallidx}'^{\eta}}{
\pi}
{\rconj{w^{-1}}(\dualrep{\tau}\alpha^{1-s}
\delta)}&\isomorphic
\homspace{G_{\smallidx}}{
\pi}
{\induced{Q_{\smallidx}'^{\eta}}{G_{\smallidx}}{\delta_{Q_{\smallidx}'^{\eta}}^{-\half}\cdot\rconj{w^{-1}}(
\dualrep{\tau}\alpha^{1-s}
\delta)}}.
\end{align*}
(Recall that $Ind(\cdots)$ denotes normalized induction.) We claim
that this space vanishes, for all but a finite set of $q^{-s}$. In
fact, for any irreducible sub-quotients $\pi_0$ and $\tau_0$ of
$\pi$ and $\dualrep{\tau}$ (resp.), the proof of Lemma~10.1.2 of
\cite{GPS} immediately implies that outside finitely many values of
$q^{-s}$,
\begin{align}\label{space:bil space l<=n 0}
\homspace{G_{\smallidx}}{
\pi_0}
{\induced{Q_{\smallidx}'^{\eta}}{G_{\smallidx}}{\delta_{Q_{\smallidx}'^{\eta}}^{-\half}\cdot\rconj{w^{-1}}(\tau_0\alpha^{1-s}
\delta)}}=0.
\end{align}
Note that the lemma is applicable to any pair of irreducible
admissible (smooth) representations $\pi_0$ and $\tau_0$, not
necessarily generic. Its proof uses the fact that such
representations can be embedded in representations parabolically
induced from supercuspidal representations, then appeals to
\cite{BZ2} (Section~2). The fact that $Q_{\smallidx}'^{\eta}$ is a
parabolic subgroup is used, in order to replace
$\induced{Q_{\smallidx}'^{\eta}}{G_{\smallidx}}{\delta_{Q_{\smallidx}'^{\eta}}^{-\half}\cdot\rconj{w^{-1}}(\tau_0\alpha^{1-s}
\delta)}$ with a representation of $G_{\smallidx}$ parabolically
induced from a supercuspidal representation. Because $\pi$ and
$\dualrep{\tau}$ have finite Jordan-H\"{o}lder series and any
\nonzero\ element of
\begin{align*}
\homspace{G_{\smallidx}}{
\pi}
{\induced{Q_{\smallidx}'^{\eta}}{G_{\smallidx}}{\delta_{Q_{\smallidx}'^{\eta}}^{-\half}\cdot\rconj{w^{-1}}(\dualrep{\tau}\alpha^{1-s}
\delta)}}
\end{align*}
defines a \nonzero\ element of one of the spaces \eqref{space:bil
space l<=n 0} (see a detailed explanation of a similar argument below - in the paragraph before the definition
of \eqref{space:bil space l<=n 1 specific}, where we take $i,j\geq0$ with $i+j$ minimal, etc.), we
get that except a finite set of $q^{-s}$, $H(w_0\eta)=0$ for
$\eta\ne\eta_0$.

It is left to show that for $w=w_0\eta_0=w_0$, $Hom(w_0)$ is at most
one-dimensional, for almost all values of $q^{-s}$. Recall that
$\rconj{\eta}(Q_{\bigidx}^{w_r})\cap
N_{\bigidx-\smallidx}G_{\smallidx}=C_r^{\eta}\rtimes
Q_{\smallidx}'^{\eta}$, where
$Q_{\bigidx}^{w_r}=\rconj{w_r}Q_{\bigidx}\cap Q_{\bigidx-\smallidx}$
(see after \eqref{eq:Qn wr}). Then $\rconj{w_0}Q_{\bigidx}\cap
N_{\bigidx-\smallidx}G_{\smallidx}=\rconj{\eta_0}(Q_{\bigidx}^{w_0})\cap
N_{\bigidx-\smallidx}G_{\smallidx}=C_0^{\eta_0}\rtimes
Q_{\smallidx}'^{\eta_0}$. The subgroups $C_0^{\eta_0}$ and
$Q_{\smallidx}'^{\eta_0}$ were calculated explicitly after
\eqref{integral:after unfolding E_f completely}. We see that $w_0$
normalizes $Q_{\smallidx}'^{\eta_0}$ and then
$(N_{\bigidx-\smallidx}G_{\smallidx})^{w_0^{-1}}=\rconj{w_0^{-1}}(C_0^{\eta_0})\rtimes
Q_{\smallidx}'^{\eta_0}$, where $\rconj{w_0^{-1}}(C_0^{\eta_0})$ is
the image in $M_{\bigidx}$ of the subgroup
$U=\setof{\bigl(\begin{smallmatrix}z&0\\x&I_{\smallidx}\end{smallmatrix}\bigr)}{z\in
Z_{\bigidx-\smallidx}}<\GL{\bigidx}$ (if $\smallidx=\bigidx$,
$U=\{1\}$). Define a character $\psi'$ of $U$ by
$\psi'(\bigl(\begin{smallmatrix}z&0\\x&I_{\smallidx}\end{smallmatrix}\bigr))=\psi(z)\psi(\gamma
x_{\smallidx,1})$.

Let $R_{\smallidx}'<G_{\smallidx}$ be the unipotent subgroup defined
after \eqref{integral:after unfolding E_f completely}. Regard
$Y_{\smallidx}$  as explained in the proof of
Proposition~\ref{propo:basic global identity l <= n}: if
$G_{\smallidx}$ is split, $Y_{\smallidx}$ is a subgroup of
$L_{\smallidx}$ which normalizes $R_{\smallidx}'$. Hence
$\pi_{R_{\smallidx}',1}$ is a $Y_{\smallidx}$-module. If
$G_{\smallidx}$ is \quasisplit,
$Y_{\smallidx}=\lmodulo{R_{\smallidx}'}{P_{\smallidx}'}$ (see after
\eqref{integral:after unfolding E_f completely}) and because the
action of $R_{\smallidx}'$ on $\pi_{R_{\smallidx}',1}$ is trivial,
we again get that $\pi_{R_{\smallidx}',1}$ is a
$Y_{\smallidx}$-module.

Consider the embedding of $\GL{\smallidx}$ in $\GL{\bigidx}$ given
by $b\mapsto diag(I_{\bigidx-\smallidx},b)$. Denote by
$Y_{\smallidx}'$ the mirabolic subgroup of $\GL{\smallidx}$ (we
distinguish it from $Y_{\smallidx}$ defined above). The image of
$Y_{\smallidx}'$ under this embedding normalizes $U$ and stabilizes
$\psi'$, whence $(\tau\alpha^s)_{U,\psi'}$ is a
$Y_{\smallidx}'$-module. 
We recall that the action of $Y_{\smallidx}'$ on the Jacquet module
$(\tau\alpha^s)_{U,\psi'}$ is normalized by $\delta_U^{-\half}$ (see
\cite{BZ2} 1.8 (b)).

Let $y=\bigl(\begin{smallmatrix}a&b\\&1\end{smallmatrix}\bigr)\in
Y_{\smallidx}$. If $G_{\smallidx}$ is split, $y$ is an element of
$L_{\smallidx}$. In the \quasisplit\ case, we take a representative
in $P_{\smallidx}'$ to the coset of $y$. As an element of
$G_{\smallidx}$,
\begin{align*}
y=\begin{dcases}\left(\begin{array}{cccc}a&b&0&0\\&1&0&0\\&&1&b'\\&&&a^*\end{array}\right)&
\text{$G_{\smallidx}$ is split,}\\
\left(\begin{array}{cccc}a&0&b&*\\&1&0&0\\&&1&b'\\&&&a^*\end{array}\right)&\text{$G_{\smallidx}$
is \quasisplit.}
\end{dcases}
\end{align*}
Set
\begin{align*}
\mu=\begin{cases}\quarter &\text{split $G_{\smallidx}$,}\\
\half &\text{\quasisplit\ $G_{\smallidx}$.}\end{cases}
\end{align*}
The image of $y$ in $H_{\bigidx}$ ($G_{\smallidx}<H_{\bigidx}$) is
$diag(y',1,(y')^*)u$ where
$y'=diag(I_{\bigidx-\smallidx},\bigl(\begin{smallmatrix}a&\mu
b\\&1\end{smallmatrix}\bigr))\in Y_{\smallidx}'$ and $u\in
U_{\bigidx}$. In addition, this image lies in
$Q_{\smallidx}'^{\eta_0}$. Thus $(\tau\alpha^s)_{U,\psi'}$ is a
$Y_{\smallidx}$-module, the action of $y$ is defined through the
action of $y'$ ($\tau\alpha^s$ is trivial on $U_{\bigidx}$ by the
definition of $\cinduced{Q_{\bigidx}}{H_{\bigidx}}{\tau\alpha^s}$).
In the \quasisplit\ case, the choice of representative does not
matter, because $R_{\smallidx}'$ normalizes $U$ and stabilizes 
$\psi'$, and $\tau\alpha^s$ is trivial on $R_{\smallidx}'$ (the image of $R_{\smallidx}'$ in $H_{\bigidx}$ belongs to $U_{\bigidx}$).

Define
$\delta'=\rconj{w_0^{-1}}(\delta^{-1})\delta_{U}^{\half}$. We claim
\begin{align*}
H(w_0)\subset
Bil_{Y_{\smallidx}}(\pi_{R_{\smallidx}',1},(\tau\alpha^s\delta')_{U,\psi'}).
\end{align*}
The result of the proposition will follow once we establish a
similar assertion regarding
$Bil_{Y_{\smallidx}}(\pi_{R_{\smallidx}',1},(\tau\alpha^s\delta')_{U,\psi'})$.

Let $f\in Hom(w_0)$ and let $v_1$ and $v_2$ be as above. Since
$\rconj{w_0^{-1}}R_{\smallidx}'< \rconj{w_0^{-1}}G_{\smallidx}\cap
U_{\bigidx}<(N_{\bigidx-\smallidx}G_{\smallidx})^{w_0^{-1}}$, for
$r\in R_{\smallidx}'$,
\begin{align}\label{eq:uniqueness l <= n Rl' invariancy}
f(v_1\otimes v_2)=
f((\rconj{w_0}(\pi\otimes\psi_{\gamma})\otimes\tau\alpha^s)(\rconj{w_0^{-1}}r)v_1\otimes
v_2)=f(\pi(r)v_1\otimes v_2).
\end{align}
Hence $f$ is a bilinear form on $\pi_{R_{\smallidx}',1}\times\tau$. %
In addition if $x\in M_{\bigidx}$ is the image of $u\in U$,
\begin{align*}
f(v_1\otimes v_2)=
f((\rconj{w_0}(\pi\otimes\psi_{\gamma})\otimes\tau\alpha^s)(x)v_1\otimes
v_2)=(\psi')^{-1}(u)f(v_1\otimes (\tau\alpha^s)(u)v_2).
\end{align*}
Thus
\begin{align*}
f(v_1\otimes(\tau\alpha^s)(u)v_2)=\psi'(u)f(v_1\otimes v_2)
\end{align*}
and it follows that $f$ is a bilinear form on
$\pi_{R_{\smallidx}',1}\times(\tau\alpha^s)_{U,\psi'}$. Finally for
$y\in Y_{\smallidx}$,
\begin{align*}
\delta(\rconj{w_0^{-1}}y)f(v_1\otimes v_2)&=
f((\rconj{w_0}(\pi\otimes\psi_{\gamma})\otimes\tau\alpha^s)(\rconj{w_0^{-1}}y)v_1\otimes
v_2)\\\notag&=f(\pi(y)v_1\otimes\tau\alpha^s(y)v_2).
\end{align*}
In the \quasisplit\ case this is well-defined because of
\eqref{eq:uniqueness l <= n Rl' invariancy}. Also note that
$\tau\alpha^s(\rconj{w_0^{-1}}y)=\tau\alpha^s(y)$. We conclude $f\in
Bil_{Y_{\smallidx}}(\pi_{R_{\smallidx}',1},(\tau\alpha^s\delta')_{U,\psi'})$
($\delta'$ is trivial on $U$).

If $\smallidx=1$, $\pi$ is a character (because it is generic) and
$U=\rconj{\omega_{1,\bigidx-1}}Z_{\bigidx}$, whence the fact that
$\tau$ is generic implies that $(\tau\alpha^s\delta')_{U,\psi'}$ is
a one-dimensional space (for all $s$). Thus in this case
$Bil_{Y_{\smallidx}}(\pi_{R_{\smallidx}',1},(\tau\alpha^s\delta')_{U,\psi'})$
is at most one-dimensional and the proposition follows. Henceforth
we assume $\smallidx>1$.

In general if $0\leq m<\smallidx$, let
$B_m=\setof{\bigl(\begin{smallmatrix}c&x\\0&z\end{smallmatrix}\bigr)}{c\in\GL{m},z\in
Z_{\smallidx-m}}<Y_{\smallidx}$. For a representation $\sigma$ of
$\GL{m}$ and $\psi$ regarded as a character of $Z_{\smallidx-m}$,
$\sigma\otimes\psi$ naturally extends to a representation of $B_m$.
Then we have the induced representation
$\compinduced{B_{m}}{Y_{\smallidx}}{\sigma\otimes\psi}$.

Consider the space
\begin{align}\label{space:bil space l<=n 1}
Bil_{Y_{\smallidx}}(\compinduced{B_{m}}{Y_{\smallidx}}{\sigma\otimes\psi},\compinduced{B_{m'}}{Y_{\smallidx}}{\sigma'\alpha^{s+c'}\otimes\psi}).
\end{align}
Here $0\leq m,m'<\smallidx$, $\sigma$ and $\sigma'$ are irreducible
representations and $c'\in\C$. According to \cite{BZ2} (3.7) (or
\cite{JPSS} 1.4), it vanishes unless $m=m'$ and
$\sigma\isomorphic\dualrep{(\sigma')}\alpha^{1-s-c'}$, and when
$m=m'=0$, it is one-dimensional. If $m=m'>0$ this means that outside
a finite set of $q^{-s}$, the space vanishes.

The subgroup $R_{\smallidx}'$ is actually the image of
$U_{\smallidx-1}<H_{\smallidx-1}$ under the embedding
$H_{\smallidx-1}<G_{\smallidx}$. By Proposition~8.2 of \cite{GPS},
$\pi_{R_{\smallidx}',1}$ has a finite Jordan-H\"{o}lder series (as a $Y_{\smallidx}$-module), in
which the irreducible representation
$\compinduced{Z_{\smallidx}}{Y_{\smallidx}}{\psi}$ appears as a
sub-representation and does not appear as a sub-quotient of
$\lmodulo{\compinduced{Z_{\smallidx}}{Y_{\smallidx}}{\psi}}{\pi_{R_{\smallidx}',1}}$. 
Note that this proposition was stated for an irreducible admissible
representation of $SO_{2\smallidx+1}$, but the arguments are general
and apply to any smooth admissible finitely generated generic
representation of $G_{\smallidx}$ (split or \quasisplit). A similar
result holds for $(\tau\alpha^s\delta')_{U,\psi'}$ (\cite{BZ1} 5.22 and 5.15)
- $(\tau\alpha^s\delta')_{U,\psi'}$ has a finite Jordan-H\"{o}lder series,
$\compinduced{Z_{\smallidx}}{Y_{\smallidx}}{\psi}\subset(\tau\alpha^s\delta')_{U,\psi'}$
and $\compinduced{Z_{\smallidx}}{Y_{\smallidx}}{\psi}$ does not
appear as a sub-quotient of
$\lmodulo{\compinduced{Z_{\smallidx}}{Y_{\smallidx}}{\psi}}{(\tau\alpha^s\delta')_{U,\psi'}}$.

Now we argue as in \cite{JPSS} (2.10, after (6)). Let
\begin{align*}
0=\mathcal{V}_0\subset\mathcal{V}_1\subset\ldots\subset\mathcal{V}_k=\pi_{R_{\smallidx}',1},\qquad
0=\mathcal{W}_0\subset \mathcal{W}_1\subset\ldots\subset
\mathcal{W}_d=(\tau\alpha^s\delta')_{U,\psi'}
\end{align*}
be the corresponding Jordan-H\"{o}lder series of
$Y_{\smallidx}$-modules, where
$\mathcal{V}_1\isomorphic\mathcal{W}_1\isomorphic
\compinduced{Z_{\smallidx}}{Y_{\smallidx}}{\psi}$. According to the
structure theory of $Y_{\smallidx}$-modules (\cite{BZ2} Section~3),
for $i,j>1$,
\begin{align*}
\lmodulo{\mathcal{V}_{i-1}}{\mathcal{V}_{i}}\isomorphic
\compinduced{B_{m_i}}{Y_{\smallidx}}{\sigma_i\otimes\psi},\qquad
\lmodulo{\mathcal{W}_{j-1}}{\mathcal{W}_{j}}\isomorphic\compinduced{B_{m_j'}}{Y_{\smallidx}}{\sigma_j'\alpha^{s+c_j'}\otimes\psi},
\end{align*}
where $0<m_i,m_j'<\smallidx$, $\sigma_i$ (resp. $\sigma_j'$) is an
irreducible representation of $\GL{m_i}$ (resp. $\GL{m_j'}$) and
each $c_j'$ is a complex constant (independent of $s$). In fact,
$\sigma_i$ is the $(\smallidx-m_i)$-th Bernstein-Zelevinsky
derivative of $\pi_{R_{\smallidx}',1}$ (\cite{BZ2} 3.5).


Let $0\ne f\in
Bil_{Y_{\smallidx}}(\pi_{R_{\smallidx}',1},(\tau\alpha^s\delta')_{U,\psi'})$.
Take $i,j\geq0$ such that $\frestrict{f}{\mathcal{V}_{i}\times
\mathcal{W}_{j}}\nequiv0$ and $i+j$ is minimal. Then by definition
$i,j>0$,  $\frestrict{f}{\mathcal{V}_{i-1}\times
\mathcal{W}_{j}}\equiv0$ and $\frestrict{f}{\mathcal{V}_{i}\times
\mathcal{W}_{j-1}}\equiv0$. Hence $f$ is a \nontrivial\ element of
$Bil_{Y_{\smallidx}}(\lmodulo{\mathcal{V}_{i-1}}{\mathcal{V}_{i}},\lmodulo{\mathcal{W}_{j-1}}{\mathcal{W}_{j}})$,
which is of the form \eqref{space:bil space l<=n 1}. Now the arguments
above imply that except for a finite set of $q^{-s}$, $i=j=1$ and
$f\in Bil_{Y_{\smallidx}}(\mathcal{V}_1,\mathcal{W}_1)\isomorphic
Bil_{Y_{\smallidx}}(\compinduced{Z_{\smallidx}}{Y_{\smallidx}}{\psi},\compinduced{Z_{\smallidx}}{Y_{\smallidx}}{\psi})$,
which is one-dimensional.

Now assume that $s$ is such that $q^{-s}$ does not lie in the finite
set of $q^{-s}$, where any of the spaces
\begin{align}\label{space:bil space l<=n 1 specific}
Bil_{Y_{\smallidx}}(\compinduced{B_{m_i}}{Y_{\smallidx}}{\sigma_i\otimes\psi},\compinduced{B_{m_j'}}{Y_{\smallidx}}{\sigma_j'\alpha^{s+c_j'}\otimes\psi}),\quad
\text{$i>1$ or $j>1$},
\end{align}
might be \nontrivial. If
$Bil_{Y_{\smallidx}}(\pi_{R_{\smallidx}',1},(\tau\alpha^s\delta')_{U,\psi'})\ne0$,
fix $0\ne\varphi\in
Bil_{Y_{\smallidx}}(\pi_{R_{\smallidx}',1},(\tau\alpha^s\delta')_{U,\psi'})$.
Then
$\frestrict{\varphi}{Bil_{Y_{\smallidx}}(\mathcal{V}_1,\mathcal{W}_1)}\nequiv0$
and for any $f\in
Bil_{Y_{\smallidx}}(\pi_{R_{\smallidx}',1},(\tau\alpha^s\delta')_{U,\psi'})$
there is some $c_f\in\C$ satisfying
\begin{align*}
\frestrict{(f-c_f\cdot\varphi)}{Bil_{Y_{\smallidx}}(\mathcal{V}_1,\mathcal{W}_1)}\equiv0.
\end{align*}
Since the spaces \eqref{space:bil space l<=n 1 specific} vanish,
$f\equiv c_f\cdot\varphi$. We conclude that except for a finite set
of values of $q^{-s}$,
$Bil_{Y_{\smallidx}}(\pi_{R_{\smallidx}',1},(\tau\alpha^s\delta')_{U,\psi'})$
is at most one-dimensional, then the same holds for
$Bil_{G_{\smallidx}}(\pi,(\cinduced{Q_{\bigidx}}{H_{\bigidx}}{\tau\alpha^s})_{N_{\bigidx-\smallidx},\psi_{\gamma}^{-1}})$.
\end{proof} 

\section{The case $\smallidx>\bigidx$
}\label{section:uniqneness l>n}
Let $Bil_{H_{\bigidx}}(\pi_{N^{\smallidx-\bigidx},\psi_{\gamma}^{-1}},\cinduced{Q_{\bigidx}}{H_{\bigidx}}{\tau\alpha^s})$ denote the space of $H_{\bigidx}$-equivariant bilinear forms, where $(\cdots)_{N^{\smallidx-\bigidx},\psi_{\gamma}^{-1}}$ is a Jacquet module.
This space is equivalent to the space of bilinear forms on $\pi\times\cinduced{Q_{\bigidx}}{H_{\bigidx}}{\tau\alpha^s}$ satisfying \eqref{eq:global integral in
homspace l
> n}.
\begin{proposition}\label{proposition:one dimensionality l > n}
Except for a finite set of values of $q^{-s}$, the complex
vector-space
$Bil_{H_{\bigidx}}(\pi_{N^{\smallidx-\bigidx},\psi_{\gamma}^{-1}},\cinduced{Q_{\bigidx}}{H_{\bigidx}}{\tau\alpha^s})$
is at most one-dimensional.
\end{proposition}
\begin{proof}[Proof of Proposition~\ref{proposition:one dimensionality l > n}] 
Using \cite{BZ2} (1.9 (d), (e), (b) and (c)),
\begin{align*}
Bil_{H_{\bigidx}}(\pi_{N^{\smallidx-\bigidx},\psi_{\gamma}^{-1}},\cinduced{Q_{\bigidx}}{H_{\bigidx}}{\tau\alpha^s})
&\isomorphic\homspace{H_{\bigidx}}
{\pi_{N^{\smallidx-\bigidx},\psi_{\gamma}^{-1}}}{\cinduced{Q_{\bigidx}}{H_{\bigidx}}{\dualrep{\tau\alpha^s}}}\\\notag
&\isomorphic\homspace{\GL{\bigidx}}
{(\pi_{N^{\smallidx-\bigidx},\psi_{\gamma}^{-1}})_{U_{\bigidx},1}}{\dualrep{\tau\alpha^s}}\\\notag
&\isomorphic
Bil_{\GL{\bigidx}}(\pi_{V',\psi_{\gamma}^{-1}},\tau\alpha^s).
\end{align*}
Here $V'=N^{\smallidx-\bigidx}\rtimes U_{\bigidx}$ and $\psi_{\gamma}^{-1}$ is extended to $V'$ trivially on $U_{\bigidx}$. Observe that 
$\pi_{V',\psi_{\gamma}^{-1}}\isomorphic
(\pi_{R_{\smallidx}',1})_{\lmodulo{R_{\smallidx}'}{V'},\psi_{\gamma}^{-1}}$
as $\GL{\bigidx}$-modules, where $\GL{\bigidx}$ is embedded in
$G_{\smallidx}$ via $b\mapsto
diag(I_{\smallidx-\bigidx-1},b,I_2,b^*,I_{\smallidx-\bigidx-1})$
(this is the image of $M_{\bigidx}<H_{\bigidx}$ in $G_{\smallidx}$)
and note that $\GL{\bigidx}$ normalizes $V'$ and $R_{\smallidx}'$
and stabilizes $\psi_{\gamma}^{-1}$. The group
$\lmodulo{R_{\smallidx}'}{V'}$ is isomorphic to
\begin{align*}
\setof{\left(\begin{array}{ccc}z&u_1&u_2\\&I_{\bigidx}&0\\&&1\end{array}\right)}{z\in
Z_{\smallidx-\bigidx-1}}<\GL{\smallidx}.
\end{align*}
According to \cite{BZ2} (1.9 (b), (d) and (f)),
\begin{align*}
Bil_{\GL{\bigidx}}(\pi_{V',\psi_{\gamma}^{-1}},\tau\alpha^s)
&\isomorphic
\homspace{\GL{\bigidx}}{\pi_{V',\psi_{\gamma}^{-1}}}{\dualrep{\tau\alpha^s}}\\\notag
&\isomorphic
Bil_{Y_{\smallidx}}(\pi_{R_{\smallidx}',1},\compinduced{\GL{\bigidx}\ltimes(\lmodulo{R_{\smallidx}'}{V'})}{Y_{\smallidx}}{\tau\alpha^s\otimes\psi_{\gamma}}\delta_{Y_{\smallidx}}^{-1})\\\notag
&\isomorphic
Bil_{Y_{\smallidx}}(\pi_{R_{\smallidx}',1},\compinduced{\GL{\bigidx}\ltimes(\lmodulo{R_{\smallidx}'}{V'})}{Y_{\smallidx}}{\tau\alpha^{s-1}\otimes\psi_{\gamma}}).
\end{align*}
Here the group $Y_{\smallidx}$ is defined as explained after
\eqref{integral:after unfolding E_f completely} and
$\compinduced{}{}{\cdots}$ denotes the compact normalized induction.
The representation $\pi_{R_{\smallidx}',1}$ is a
$Y_{\smallidx}$-module, see
Section~\ref{section:uniqneness l<=n}. 

As in the proof of Proposition~\ref{proposition:one dimensionality l
<= n} and with a similar notation, we consider the Jordan-H\"{o}lder
series of $\pi_{R_{\smallidx}',1}$ and prove that the space
\begin{align}\label{space:bil space l>n 1}
Bil_{Y_{\smallidx}}(\compinduced{B_{m}}{Y_{\smallidx}}{\sigma\otimes\psi},
\compinduced{\GL{\bigidx}\ltimes(\lmodulo{R_{\smallidx}'}{V'})}{Y_{\smallidx}}{\tau\alpha^{s-1}\otimes\psi_{\gamma}}),
\end{align}
where $\sigma$ is an irreducible representation of $\GL{m}$ and
$0\leq m<\smallidx$, vanishes for all but a finite set of values of
$q^{-s}$, when $m>0$, and for $m=0$ it is at most one-dimensional
(for all $s$). This implies the result.

Let
\begin{align*}
X_{\bigidx}=\setof{\left(\begin{array}{ccc}a&u_1&u_2\\&b&u_3\\&&1\end{array}\right)}{a\in\GL{\smallidx-\bigidx-1},b\in\GL{\bigidx}}<\GL{\smallidx}.
\end{align*}
Then $\GL{\bigidx}\ltimes(\lmodulo{R_{\smallidx}'}{V'})$ can be
regarded as a subgroup of $X_{\bigidx}$. Also denote
\begin{align*}
Y_{\smallidx-\bigidx}=\setof{\left(\begin{array}{ccc}a&0&u_2\\&I_{\bigidx}&0\\&&1\end{array}\right)}{a\in\GL{\smallidx-\bigidx-1}}<X_{\bigidx}.
\end{align*}
We have the following isomorphism,
\begin{align*}
\compinduced{\GL{\bigidx}\ltimes(\lmodulo{R_{\smallidx}'}{V'})}{Y_{\smallidx}}{\tau\alpha^{s-1}\otimes\psi_{\gamma}}\isomorphic
\compinduced{X_{\bigidx}}{Y_{\smallidx}}{\compinduced{Z_{\smallidx-\bigidx}}{Y_{\smallidx-\bigidx}}
{\psi_{\gamma}}\otimes\tau\alpha^{s-1}}.
\end{align*}
For $w\in\rmodulo{\lmodulo{B_m}{Y_{\smallidx}}}{X_{\bigidx}}$ let
\begin{align*}
Hom(w)=\homspace{X_{\bigidx}^{w^{-1}}}{\rconj{w}(\compinduced{Z_{\smallidx-\bigidx}}{Y_{\smallidx-\bigidx}}
{\psi_{\gamma}}\otimes\tau\alpha^{s-1})\otimes{(\sigma\otimes\psi)}}{\delta},
\end{align*}
where $X_{\bigidx}^{w^{-1}}=\rconj{w^{-1}}X_{\bigidx}\cap B_m$ and
$\delta(x)=\delta_{\mathcal{C}(w)}(x,\rconj{w}x)\cdot\delta_{X_{\bigidx}}^{-\half}(\rconj{w}x)\cdot\delta_{B_{m}}^{-\half}(x)$, $\mathcal{C}(w)=\setof{(x,\rconj{w}x)}{x\in X_{\bigidx}^{w^{-1}}}<B_m\times X_{\bigidx}$.
According to the Bruhat theory (\cite{Silb} Theorems~1.9.4 and 1.9.5) the space \eqref{space:bil space l>n
1} is embedded in
$\bigoplus_{w\in\rmodulo{\lmodulo{B_m}{Y_{\smallidx}}}{X_{\bigidx}}}Hom(w)$ (in the sense of semi-simplification).
The first step is to show that only two elements $w$ may contribute
to this sum.

Using the Bruhat decomposition
$\GL{\smallidx-1}=Z_{\smallidx-1}W_{\GL{\smallidx-1}}P$, where
$W_{\GL{\smallidx-1}}$ is the Weyl group of $\GL{\smallidx-1}$ and
$P<\GL{\smallidx-1}$ is any parabolic subgroup, we can assume that
$w$ is of the form $w=diag(w^{(1)},1)$ with $w^{(1)}\in
W_{\GL{\smallidx-1}}$. 
Let $f\in H(w)$, $v_1$ be an element in the space of
$\compinduced{Z_{\smallidx-\bigidx}}{Y_{\smallidx-\bigidx}}
{\psi_{\gamma}}$, $v_2$ be in the space of $\tau$ and take $v_3$ in
the space of $\sigma$.

If $m=\smallidx-1$ or $\bigidx=\smallidx-1$, we can assume
$w^{(1)}=I_{\smallidx-1}$. Assume $m,\bigidx<\smallidx-1$. Let
$e_i\in F^{\smallidx-1}$ be the standard basis element (i.e., $e_i$
is the column with $1$ in the $i$-th row and $0$ elsewhere). Assume
that the last row of $w^{(1)}$ is $\transpose{e_i}$, with
$i>\smallidx-\bigidx-1$. Let
\begin{align*}
y=\left(\begin{array}{ccc}I_{\smallidx-\bigidx-1}&0&y_1\\&I_{\bigidx}&0\\&&1\end{array}\right)\in
X_{\bigidx}.
\end{align*}
Then $x=\rconj{w^{-1}}y\in X_{\bigidx}^{w^{-1}}$ and since
$(\sigma\otimes\psi)(x)\equiv1$,
\begin{align*}
f(v_1\otimes v_2\otimes v_3)&=
f((\rconj{w}(\compinduced{Z_{\smallidx-\bigidx}}{Y_{\smallidx-\bigidx}}
{\psi_{\gamma}}\otimes\tau\alpha^{s-1})\otimes(\sigma\otimes\psi))(x)v_1\otimes
v_2\otimes v_3)\\\notag&=
f(\compinduced{Z_{\smallidx-\bigidx}}{Y_{\smallidx-\bigidx}}
{\psi_{\gamma}}(y)v_1\otimes v_2\otimes v_3).
\end{align*}
Hence the functional $v_1\mapsto f(v_1\otimes v_2\otimes v_3)$ on
the space of
$\compinduced{Z_{\smallidx-\bigidx}}{Y_{\smallidx-\bigidx}}
{\psi_{\gamma}}$ factors through the Jacquet module
$(\compinduced{Z_{\smallidx-\bigidx}}{Y_{\smallidx-\bigidx}}
{\psi_{\gamma}})_{Z_{\smallidx-\bigidx-1,1},\psi^{(1)}}$, where
$\psi^{(1)}\equiv1$ is the trivial character. By \cite{BZ2} (3.2
(d)), this is the zero module whence $f\equiv0$ and $H(w)=0$.
Therefore we can assume $i\leq\smallidx-\bigidx-1$ and multiplying
$w$ on the right by an element of $X_{\bigidx}$, we may further
assume that the last row of $w^{(1)}$ is
$\transpose{e_{\smallidx-\bigidx-1}}$. Then $w$ takes the form
\begin{align*}
w=\left(\begin{array}{ccc}w^{(2)}&\\&1\\&&1\end{array}\right)
\left(\begin{array}{cccc}I_{\smallidx-\bigidx-2}\\&&I_{\bigidx}\\&1\\&&&1\end{array}\right),\qquad
w^{(2)}\in W_{\GL{\smallidx-2}}.
\end{align*}

If $m=\smallidx-2$, we can assume $w^{(2)}=I_{\smallidx-2}$ whence
$w=diag(I_{\smallidx-\bigidx-2},\omega_{\bigidx,1},1)$. If
$\bigidx=\smallidx-2$, we may assume $w=diag(\omega_{\bigidx,1},1)$.
Assume $m,\bigidx<\smallidx-2$ and that the last row of $w^{(2)}$ is
$\transpose{e_i}$ with $e_i\in F^{\smallidx-2}$ and
$i>\smallidx-\bigidx-2$. Then for any
\begin{align*}
y=\left(\begin{array}{cccc}I_{\smallidx-\bigidx-2}&y_1&0&y_2\\&1&0&y_3\\&&I_{\bigidx}&0\\&&&1\end{array}\right)\in
X_{\bigidx},
\end{align*}
$x=\rconj{w^{-1}}y\in X_{\bigidx}^{w^{-1}}$. Since
$(\sigma\otimes\psi)(x)=\psi(x_{\smallidx-1,\smallidx})=\psi(y_3)$,
\begin{align*}
f(v_1\otimes v_2\otimes v_3)&=
f((\rconj{w}(\compinduced{Z_{\smallidx-\bigidx}}{Y_{\smallidx-\bigidx}}
{\psi_{\gamma}}\otimes\tau\alpha^{s-1})\otimes(\sigma\otimes\psi))(x)v_1\otimes
v_2\otimes v_3)\\\notag&=
\psi(y_3)f(\compinduced{Z_{\smallidx-\bigidx}}{Y_{\smallidx-\bigidx}}
{\psi_{\gamma}}(y)v_1\otimes v_2\otimes v_3).
\end{align*}
Thus $v_1\mapsto f(v_1\otimes v_2\otimes v_3)$ factors through the
Jacquet module
$(\compinduced{Z_{\smallidx-\bigidx}}{Y_{\smallidx-\bigidx}}
{\psi_{\gamma}})_{Z_{\smallidx-\bigidx-2,1,1},\psi^{(2)}}$ where
$\psi^{(2)}(z)=\psi^{-1}(z_{\smallidx-\bigidx-1,\smallidx-\bigidx})$.
Since $\psi^{(2)}$ does not depend on the
$(\smallidx-\bigidx-2,\smallidx-\bigidx-1)$-th coordinate of $z$, it
is a degenerate character of $Z_{\smallidx-\bigidx-2,1,1}$, hence by
\cite{BZ2} (3.2 (e) and (d)) this is the zero module and $f\equiv0$.
Therefore we can assume $i=\smallidx-\bigidx-2$ and that $w$ takes
the form
\begin{align*}
w=\left(\begin{array}{ccc}w^{(3)}&\\&I_2\\&&1\end{array}\right)
\left(\begin{array}{cccc}I_{\smallidx-\bigidx-3}\\&&I_{\bigidx}\\&I_2\\&&&1\end{array}\right),\qquad
w^{(3)}\in W_{\GL{\smallidx-3}}.
\end{align*}

If $m=\smallidx-3$, we may take $w^{(3)}=I_{\smallidx-3}$ and
$w=diag(I_{\smallidx-\bigidx-3},\omega_{\bigidx,2},1)$. If
$\bigidx=\smallidx-3$, we can assume $w=diag(\omega_{\bigidx,2},1)$.

Proceeding as above we conclude that $H(w)$ vanishes, unless
\begin{align*}
w=\begin{cases}diag(I_{m-\bigidx},\omega_{\bigidx,\smallidx-m-1},1)&m>\bigidx,\\
diag(\omega_{\bigidx,\smallidx-\bigidx-1},1)&m\leq\bigidx.\end{cases}
\end{align*}
(This also holds when $w^{(1)}=I_{\smallidx-1}$, because
$\omega_{\bigidx,0}=I_{\bigidx}$.) The second step is to show that
in fact, only $diag(\omega_{\bigidx,\smallidx-\bigidx-1},1)$ may
contribute and in particular it follows that \eqref{space:bil space
l>n 1} vanishes unless $m\leq\bigidx$. Assume
$w=diag(I_{m-\bigidx},\omega_{\bigidx,\smallidx-m-1},1)$. Take
\begin{align*}
y=\left(\begin{array}{cccc}I_{m-\bigidx}&y_1&0&y_2\\&z&0&y_3\\&&I_{\bigidx}&0\\&&&1\end{array}\right)\in
X_{\bigidx}\quad(z\in Z_{\smallidx-m-1}),\qquad x=\rconj{w^{-1}}y\in
X_{\bigidx}^{w^{-1}}.
\end{align*}
Then
$\rconj{w}(\compinduced{Z_{\smallidx-\bigidx}}{Y_{\smallidx-\bigidx}}
{\psi_{\gamma}}\otimes\tau\alpha^{s-1})(x)=\compinduced{Z_{\smallidx-\bigidx}}{Y_{\smallidx-\bigidx}}
{\psi_{\gamma}}(y)$ and
$(\sigma\otimes\psi)(x)=\psi(\bigl(\begin{smallmatrix}z&y_3\\&1\end{smallmatrix}\bigr))$.
It follows that the mapping $v_1\mapsto f(v_1\otimes v_2\otimes
v_3)$ factors through
\begin{align*}
(\compinduced{Z_{\smallidx-\bigidx}}{Y_{\smallidx-\bigidx}}{\psi_{\gamma}})_{Z_{m-\bigidx,1,\ldots,1},\psi^{(\smallidx-m)}},
\end{align*}
where 
$\psi^{(\smallidx-m)}(z)=\psi^{-1}(\sum_{i=1}^{\smallidx-m-1}z_{m-\bigidx+i,m-\bigidx+i+1})$
(for $m=\smallidx-1$, $\psi^{(\smallidx-m)}\equiv1$). This space
vanishes according to \cite{BZ2} (3.2 (d) and (e),
$\psi^{(\smallidx-m)}$ is a degenerate character of
$Z_{m-\bigidx,1,\ldots,1}$ because it does not depend on the
$(m-\bigidx,m-\bigidx+1)$-th coordinate of $z$). Hence $H(w)=0$.

Now assume $w=diag(\omega_{\bigidx,\smallidx-\bigidx-1},1)$. The
third step is to show that when $m>0$, except for a finite set of
$q^{-s}$, $H(w)=0$, and for $m=0$, $H(w)$ is at most one-dimensional. We see
that
\begin{align*}
X_{\bigidx}^{w^{-1}}=\setof{\left(\begin{array}{cccc}c&u_1&0&u_2\\&z_1&0&u_3\\&&z_2&u_4\\&&&1\end{array}\right)}
{c\in\GL{m},z_1\in Z_{\bigidx-m},z_2\in Z_{\smallidx-\bigidx-1}}.
\end{align*}
For
\begin{align*}
y=\left(\begin{array}{ccc}z_2&0&u_4\\&I_{\bigidx}&0\\&&1\end{array}\right)\in
X_{\bigidx}\quad(z_2\in Z_{\smallidx-\bigidx-1}),\qquad
x=\rconj{w^{-1}}y\in X_{\bigidx}^{w^{-1}},
\end{align*}
we have
$\rconj{w}(\compinduced{Z_{\smallidx-\bigidx}}{Y_{\smallidx-\bigidx}}
{\psi_{\gamma}}\otimes\tau\alpha^{s-1})(x)=\compinduced{Z_{\smallidx-\bigidx}}{Y_{\smallidx-\bigidx}}
{\psi_{\gamma}}(y)$ and
$(\sigma\otimes\psi)(x)=\psi(\bigl(\begin{smallmatrix}z_2&u_4\\&1\end{smallmatrix}\bigr))$.
It follows that the mapping $v_1\mapsto f(v_1\otimes v_2\otimes
v_3)$ factors through
$(\compinduced{Z_{\smallidx-\bigidx}}{Y_{\smallidx-\bigidx}}{\psi_{\gamma}})_{Z_{\smallidx-\bigidx},\psi^{-1}}$,
which is one-dimensional according to \cite{BZ2} (3.2 (e)).

Let $\varphi$ be a \nonzero\ functional on
$(\compinduced{Z_{\smallidx-\bigidx}}{Y_{\smallidx-\bigidx}}{\psi_{\gamma}})_{Z_{\smallidx-\bigidx},\psi^{-1}}$.
Let $f\in H(w)$. There is a complex constant $c_f(v_2\otimes v_3)$
satisfying $f(v_1\otimes v_2\otimes v_3)=c_f(v_2\otimes
v_3)\varphi(v_1)$ for all $v_1$. The function $v_2\otimes v_3\mapsto
c_f(v_2\otimes v_3)$ is bilinear, hence extends to a functional on
the tensor product of the spaces of $\tau$ and $\sigma$. Set
\begin{align*}
y=\left(\begin{array}{cccc}I_{\smallidx-\bigidx-1}&0&0&0\\&c&u_1&0\\&&z_1&0\\&&&1\end{array}\right)\in
X_{\bigidx}\quad(c\in\GL{m},z_1\in Z_{\bigidx-m}).
\end{align*}
Then for any $v_1$,
\begin{align*}
\delta(\rconj{w^{-1}}y)f(v_1\otimes v_2\otimes v_3)&=
f(v_1\otimes\tau\alpha^{s-1}\left(\begin{array}{cc}c&u_1\\&z_1\end{array}\right)v_2\otimes
\psi(z_1)\sigma(c)v_3)
\end{align*}
and therefore
\begin{align*}
\delta(\rconj{w^{-1}}y)c_f(v_2\otimes v_3)&=
c_f(\tau\alpha^{s-1}\left(\begin{array}{cc}c&u_1\\&z_1\end{array}\right)v_2\otimes
\psi(z_1)\sigma(c)v_3).
\end{align*}
In particular taking $c=I_m$, we get that $c_f$ is a bilinear form
on $(\tau\alpha^{s-1})_{Z_{m,1,\ldots,1},\psi^{-1}}\times\sigma$.
Here $Z_{m,1,\ldots,1}<Z_{\bigidx}$ ($m+1+\ldots+1=\bigidx$) and
$\psi^{-1}(\bigl(\begin{smallmatrix}I_m&u_1\\&z_1\end{smallmatrix}\bigr))=\psi^{-1}(z_1)$
($z_1\in Z_{\bigidx-m}$). Put
$\delta'=\rconj{w^{-1}}(\delta^{-1})\delta_{Z_{m,1,\ldots,1}}^{\half}$.
Then
\begin{align*}
c_f\in
Bil_{\GL{m}}((\tau\alpha^{s-1}\delta')_{Z_{m,1,\ldots,1},\psi^{-1}},\sigma).
\end{align*}

Observe that 
\begin{align*}
Bil_{\GL{m}}((\tau\alpha^{s-1}\delta')_{Z_{m,1,\ldots,1},\psi^{-1}},\sigma)&\isomorphic
\homspace{\GL{m}}{(\tau\alpha^{s-1}\delta')_{Z_{m,1,\ldots,1},\psi^{-1}}}{\dualrep{\sigma}}\\\notag
&\isomorphic\homspace{\GL{\bigidx}}{\tau\alpha^{s-1}\delta'}{\induced{B_m}{\GL{\bigidx}}{\dualrep{\sigma}\otimes\psi^{-1}}}.
\end{align*}
When $m>0$, 
\begin{align*}
\homspace{\GL{\bigidx}}{\tau\alpha^{s-1}\delta'}{\induced{B_m}{\GL{\bigidx}}{\dualrep{\sigma}\otimes\psi^{-1}}}
&\isomorphic
\homspace{\GL{\bigidx}}{\compinduced{B_m}{\GL{\bigidx}}{\sigma\otimes\psi}}{\dualrep{\tau}\alpha^{2-s}(\delta')^{-1}}
\end{align*}
and according to \cite{GPS} (p.~107), outside of a finite set of
$q^{-s}$, this space vanishes. In fact \textit{loc. cit.} applies to
irreducible representations, in order to use the result here we
consider
$Bil_{\GL{\bigidx}}(\compinduced{B_m}{\GL{\bigidx}}{\sigma\otimes\psi},\tau\alpha^{s-1}\delta')$
and a finite Jordan-H\"{o}lder series of $\tau$ (as a
$GL_{\bigidx}$-module), see Section~\ref{section:uniqneness l<=n}
(before \eqref{space:bil space l<=n 0} and before \eqref{space:bil
space l<=n 1 specific}). Hence for all but a finite set of $q^{-s}$,
$c_f\equiv0$ for all $f\in H(w)$, forcing $H(w)=0$.

If $m=0$,
$\homspace{\GL{\bigidx}}{\tau\alpha^{s-1}\delta'}{\induced{B_m}{\GL{\bigidx}}{\dualrep{\sigma}\otimes\psi^{-1}}}$
is one-dimensional because $\tau$ is generic. Assuming $H(w)\ne0$,
take $0\ne f\in H(w)$, then $c_f\nequiv0$ and for any $f'\in H(w)$,
$c_{f'}=c'\cdot c_f$ for some $c'\in\C$. Then
\begin{align*}
f'(v_1\otimes v_2\otimes v_3)=c_{f'}(v_2\otimes
v_3)\varphi(v_1)=c'f(v_1\otimes v_2\otimes v_3).
\end{align*}
Hence in this case $H(w)$ is one-dimensional. This completes the proof of the assertions regarding
\eqref{space:bil space l>n 1}.

Now as in the proof of Proposition~\ref{proposition:one
dimensionality l <= n}, we obtain that except for a finite set of
values of $q^{-s}$,
$Bil_{Y_{\smallidx}}(\pi_{R_{\smallidx}',1},\compinduced{\GL{\bigidx}\ltimes(\lmodulo{R_{\smallidx}'}{V'})}{Y_{\smallidx}}{\tau\alpha^{s-1}\otimes\psi_{\gamma}})$
is at most one-dimensional and the same holds for
$Bil_{H_{\bigidx}}(\pi_{N^{\smallidx-\bigidx},\psi_{\gamma}^{-1}},\cinduced{Q_{\bigidx}}{H_{\bigidx}}{\tau\alpha^s})$.
\end{proof} 


\newtheorem{theorem}{Theorem}[section]
\newtheorem{proposition}{Proposition}[section]
\newtheorem{corollary}{Corollary}[section]
\newtheorem{lemma}{Lemma}[section]
\newtheorem{claim}{Claim}[section]
\theoremstyle{remark}
\newtheorem{remark}{Remark}[section]
\newtheorem{example}{Example}[section]
\theoremstyle{definition}
\newtheorem{definition}{Definition}[section]
\numberwithin{equation}{section}
\newcommand{\chapter}{\section} 

\end{comment}

\chapter{Basic properties of the integrals}\label{chapter:Properties of the integrals}
In 
Chapter~\ref{chapter:the integrals} we constructed the global
integrals and decomposed them into Euler products
of local integrals. The local integrals over finite places are the focus of this study.
Henceforth the terminology and notation are local (and \nonarchimedean, except for Chapter~\ref{chapter:archimedean results}).

In this chapter we establish a few key properties of the integrals,
to be used in subsequent chapters.
The main result is Proposition~\ref{proposition:iwasawa decomposition of the integral}, which
shows how to convert $\Psi(W,f_s,s)$ to a sum of integrals, each
over a torus, while removing any unipotent integrations (such integrations usually complicate calculations).
This proposition has the following applications:
holomorphicity of the integrals for supercuspidal representations (Corollary~\ref{corollary:integral is holomorphic for supercuspidal data}),
meromorphic continuation of the integrals (Section~\ref{subsection:meromorphic continuation}), and
sharp convergence results needed to prove Theorem~\ref{theorem:gcd for tempered reps} (Chapter~\ref{chapter:the gcd}). Furthermore, it forms the basis for
Chapter~\ref{chapter:upper boubnds on the gcd}, suggesting that the
integral can be associated with a Laurent series. 

Further results include the non-triviality of the integrals and a description of
Whittaker functionals for induced representations $\tau$ and $\pi$. These functionals will be used for integral manipulations in
Chapters~\ref{section:gamma_mult}-\ref{chapter:upper boubnds on the gcd}.

\section{Definition of the integrals}\label{subsection:the integrals}
We redefine, for clarity and quick reference, the integrals for
$G_{\smallidx}\times \GL{\bigidx}$ and a pair of representations
$\pi\times\tau$. Let $\pi$ be a smooth, admissible, finitely generated and generic representation of $G_{\smallidx}$
whose underlying space is $\Whittaker{\pi}{\psi_{\gamma}^{-1}}$,
where $\psi_{\gamma}$ is the generic character of
$U_{G_{\smallidx}}$ given by
\begin{align*}
\psi_{\gamma}(u)=
\begin{cases}\psi(\sum_{i=1}^{\smallidx-2}u_{i,i+1}+\quarter
u_{\smallidx-1,\smallidx}-\gamma u_{\smallidx-1,\smallidx+1})&\text{$G_{\smallidx}$ is split},\\
\psi(\sum_{i=1}^{\smallidx-2}u_{i,i+1}+\half u_{\smallidx-1,\smallidx+1})&\text{$G_{\smallidx}$ is \quasisplit}.\end{cases}
\end{align*}

Let $\tau$ be a smooth, admissible, finitely generated and generic representation of $\GL{\bigidx}$. We do not require $\pi$ or
$\tau$ to be irreducible, but $\tau$ needs to have a central character 
(see Section~\ref{subsection:the intertwining operator for tau}).
For $s\in\C$ form the representation
$\cinduced{Q_{\bigidx}}{H_{\bigidx}}{\tau\alpha^s}$ on the space $V(\tau,s)=V_{Q_{\bigidx}}^{H_{\bigidx}}(\tau,s)$
(see Section~\ref{subsection:sections}). An element
$f_s\in V(\tau,s)$ is regarded as a function on
$H_{\bigidx}\times\GL{\bigidx}$, where for any $h\in H_{\bigidx}$
the function $b\mapsto f_s(h,b)$ lies in $\Whittaker{\tau}{\psi}$.

The form of the integral depends on the 
size of $\smallidx$ relative to $\bigidx$.
\begin{definition}\label{definition:shape of local integral}
Fix $s\in\C$ and let $W\in\Whittaker{\pi}{\psi_{\gamma}^{-1}}$ and
$f_s\in V(\tau,s)$.
\begin{enumerate}
\item\label{definition:shap l <= n}
For $\smallidx\leq\bigidx$ the integral is
\begin{align*}
\Psi(W,f_s,s)=\int_{\lmodulo{U_{G_{\smallidx}}}{G_{\smallidx}}} W(g)
\int_{R_{\smallidx,\bigidx}}f_s(w_{\smallidx,\bigidx}rg,1)\psi_{\gamma}(r)drdg,
\end{align*}
where
\begin{align*}
&w_{\smallidx,\bigidx}=\left(\begin {array}{ccccc}
   &  \gamma I_{\smallidx} &  &   &  \\
    &   &  &   &   I_{\bigidx-\smallidx} \\
   &   & (-1)^{\bigidx-\smallidx} &   &  \\
 I_{\bigidx-\smallidx}  &   &  &  &  \\
  &   &  &  \gamma ^{-1} I_{\smallidx} &
\end{array}\right)\in H_{\bigidx},\\
&R_{\smallidx,\bigidx}=\{\left(\begin {array}{ccccc}
  I_{\bigidx-\smallidx} & x  & y & 0  & z \\
   & I_{\smallidx}  & 0 & 0  &  0 \\
   &   & 1 &  0 & y' \\
   &   &  &  I_{\smallidx} & x' \\
   &   &  &   & I_{\bigidx-\smallidx}
\end{array}\right)\}<H_{\bigidx}
\end{align*}
and $\psi_{\gamma}(r)=\psi(r_{\bigidx-\smallidx,\bigidx})$ is the
restriction of the character $\psi_{\gamma}$ of
$N_{\bigidx-\smallidx}$ to $R_{\smallidx,\bigidx}$ (see
Section~\ref{subsection:The global integral for l<=n}).
\item\label{definition:shap l > n}
For $\smallidx>\bigidx$,
\begin{align*}
\Psi(W,f_s,s)=\int_{\lmodulo{U_{H_{\bigidx}}}{H_{\bigidx}}}
(\int_{R^{\smallidx,\bigidx}}W(rw^{\smallidx,\bigidx}h)dr)f_s(h,1)dh.
\end{align*}
Here
\begin{align*}
&w^{\smallidx,\bigidx}=\left(\begin {array}{ccccc}
   &  I_{\bigidx} &  &   &  \\
  I_{\smallidx-\bigidx-1}  &   &  &   &    \\
   &   & I_2 &   &  \\
   &   &  &  & I_{\smallidx-\bigidx-1} \\
  &   &  & I_{\bigidx}  &
\end{array}\right)\in G_{\smallidx},\\
&R^{\smallidx,\bigidx}=\{\left(\begin {array}{ccccc}
  I_{\bigidx} &   &  &   &  \\
  x & I_{\smallidx-\bigidx-1}  &  &   &   \\
   &   & I_2 &   &  \\
   &   &  &  I_{\smallidx-\bigidx-1} &  \\
   &   &  &  x' & I_{\bigidx}
\end{array}\right)\}<G_{\smallidx}.
\end{align*}
\end{enumerate}
\end{definition}

For $f_s\in\xi(\tau,hol,s)=\xi_{Q_{\bigidx}}^{H_{\bigidx}}(\tau,hol,s)$ these integrals are absolutely
convergent for $\Re(s)>>0$, i.e.,
\begin{align*}
&\int_{\lmodulo{U_{G_{\smallidx}}}{G_{\smallidx}}} |W|(g)
\int_{R_{\smallidx,\bigidx}}|f_s|(w_{\smallidx,\bigidx}rg,1)drdg<\infty,\\
&\int_{\lmodulo{U_{H_{\bigidx}}}{H_{\bigidx}}}
(\int_{R^{\smallidx,\bigidx}}|W|(rw^{\smallidx,\bigidx}h)dr)|f_s|(h,1)dh<\infty.
\end{align*}
Moreover, there is some constant $s_0>0$ which depends only on the
representations $\pi$ and $\tau$, such that for all $\Re(s)>s_0$ the
integrals are absolutely convergent. The proofs of these facts
are similar to the proofs of convergence of Soudry
\cite{Soudry} (Section~4) and we provide a sketch below. The integrals have a meromorphic
continuation to
functions in $\C(q^{-s})$ - refer to 
Section~\ref{subsection:meromorphic continuation}. 
Additionally, the
integrals are \nontrivial, e.g. there exist data such that for all
$s$, $\Psi(W,f_s,s)\equiv1$ (see
Proposition~\ref{proposition:integral
can be made constant}). 

\begin{proposition}\label{proposition:convergence}
There is a constant $s_0>0$ depending only on the representations,
such that $\Psi(W,f_s,s)$ is absolutely convergent for all $s$ with $\Re(s)>s_0$,
$W\in\Whittaker{\pi}{\psi_{\gamma}^{-1}}$ and $f_s\in V(\tau,s)$.
\end{proposition}
\begin{proof}[Proof of Proposition~\ref{proposition:convergence}] 
We may already take $s\in\R$. First assume $\smallidx\leq\bigidx$. According to the Iwasawa decomposition of $G_{\smallidx}$,
and since $W$ and $f_s$ are smooth, we need to bound

\begin{align*}
\int_{A_{\smallidx-1}}\int_{G_1}|W|(ax)\int_{R_{\smallidx,\bigidx}}|f_s|(w_{\smallidx,\bigidx}rax,1)\delta_{B_{G_{\smallidx}}}^{-1}(a)drdxda.
\end{align*}

In the \quasisplit\ case, or if $[x]$ is bounded ($[x]$ was defined in Section~\ref{subsection:G_l in H_n}), the $dx$-integration
may be ignored. Thus we begin with the case $[x]>q^k$ where $k$ is chosen
according to Lemma~\ref{lemma:torus elements in double cosets}, applied with $k_0$ such that
$f_s$ is right-invariant by $\mathcal{N}_{H_{\bigidx},k_0}$. We can write the integral as a sum
of two integrals, the first over
$G_1^{0,k}=\setof{x=diag(b,b^{-1})\in G_1}{|b|^{-1}=[x]>q^k}$, the second over the set of $x$ with
$|b|=[x]>q^k$. Both are treated similarly, so we consider only the first.

Decompose $x=m_xu_xh_1n_x$ as in Lemma~\ref{lemma:torus elements in double cosets},
with $m_x=diag(b,1,b^{-1})$ and $n_x\in\mathcal{N}_{H_{\bigidx},k_0}$. As an element of $H_{\bigidx}$,
$a=diag(I_{\bigidx-\smallidx},a,I_3,a^*,I_{\bigidx-\smallidx})$. Then \\ $am_x=diag(I_{\bigidx-\smallidx},a,b,1,b^{-1},a^*,I_{\bigidx-\smallidx})$ normalizes $R_{\smallidx,\bigidx}$. We get
\begin{align*}
&\int_{A_{\smallidx-1}}\int_{G_1^{0,k}}|W|(ax)
(\absdet{a}\cdot[x]^{-1})^{\smallidx-\half\bigidx+s-\half}\\\notag&\int_{R_{\smallidx,\bigidx}}|f_s|(w_{\smallidx,\bigidx}rh_1,diag(a,\lfloor x \rfloor,I_{\bigidx-\smallidx}))drdxda.
\end{align*}
Since $\rconj{w_{\smallidx,\bigidx}^{-1}}R_{\smallidx,\bigidx}<\overline{U_{\bigidx}}$, it is enough to establish the convergence of
\begin{align*}
&\int_{A_{\smallidx-1}}\int_{G_1^{0,k}}|W|(ax)
(\absdet{a}\cdot[x]^{-1})^{\smallidx-\half\bigidx+s-\half}\int_{\overline{U_{\bigidx}}}|f_s|(u,diag(a,\lfloor x \rfloor,I_{\bigidx-\smallidx}))dudxda
\end{align*}
for any $f_s$. In fact, it is enough to consider $f_s=ch_{kN,W',s}$ where $k\in K_{H_{\bigidx}}$, $N<K$ is compact open and $W'\in\Whittaker{\tau}{\psi}$ (see Section~\ref{subsection:sections}). Note that if $G_{\smallidx}$ is \quasisplit\ or $[x]$ is bounded,
we reach a similar integral without the $dx$-integration.
In order to bound the $du$-integration, consider the Iwasawa decomposition $u=tvk$ for $u\in\overline{U_{\bigidx}}$, where
$t\in A_{\bigidx}\isomorphic T_{H_{\bigidx}}$, $v\in U_{H_{\bigidx}}$ and $k\in K_{H_{\bigidx}}$.
An analogue of Lemma~4.5 of \cite{Soudry} implies that $\absdet{t}$ and the simple roots of $\GL{\bigidx}$ evaluated at
$t$ are bounded from above and below.
Then the triple integral is bounded using the Whittaker expansion formula of Section~\ref{section:whittaker props}
for $W$ and $W'$.

In the case $\smallidx>\bigidx$ the argument follows exactly as in Section~4.2 of \cite{Soudry}: start with the Iwasawa decomposition for $H_{\bigidx}$,
then note that the $dr$-integration can be ignored, because if $a\in\GL{\bigidx}<L_{\bigidx}$, the mapping $r\mapsto W(ar)$
($r\in R^{\smallidx,\bigidx}$) has a compact support. Again one uses the asymptotic expansion of Whittaker functions
for $W$ and $W'$.
\end{proof} 

We note that a detailed proof of the absolute convergence can be obtained by collecting
the results we will present in this chapter. See Remark~\ref{remark:tate integrals for proving convergence}.

\section{Special $f_s$ and $W$}\label{subsection:special f and W}
The following two results describe a section and a Whittaker
function that will be used repeatedly to study the integrals (see e.g. Proposition~\ref{proposition:integral can be made constant}). 
\begin{lemma}\label{lemma:f with small support}
Assume $\smallidx\leq\bigidx$. There is a global constant $k_0\geq0$
depending on the embedding of $G_{\smallidx}$ in $H_{\bigidx}$ such
that the following holds. Let $W_{\tau}\in\Whittaker{\tau}{\psi}$
and $k>k_0$ be such that $W_{\tau}$ is right-invariant by
$\mathcal{N}_{\GL{\bigidx},k-k_0}$ and $\psi(\mathcal{P}^k)\equiv1$.
Then there exists $f_s\in\xi(\tau,std,s)$ such that for any
$b\in\GL{\bigidx}$ and
\begin{align*}
v=\left(\begin{array}{ccc}I_{\smallidx-1}\\y&x\\z&y'&I_{\smallidx-1}\\\end{array}\right)\in
(\overline{V_{\smallidx-1}}\rtimes G_1)< G_{\smallidx}\qquad
(y\in\Mat{2\times\smallidx-1},x\in G_1),
\end{align*}
\begin{align*}
&\int_{R_{\smallidx,\bigidx}}f_s(w_{\smallidx,\bigidx}rv,b)\psi_{\gamma}(r)dr=\begin{cases}
|\gamma|^{\smallidx(\half\bigidx+s-\half)}W_{\tau}(bt_{\gamma})&
v\in O_k,\\0&\text{otherwise}.
\end{cases}
\end{align*}
 Here
\begin{align*}
t_{\gamma}=\begin{cases}
diag(\gamma I_{\smallidx},I_{\bigidx-\smallidx})&|\gamma|\ne1,\\
I_{\bigidx}&\text{otherwise}\end{cases}
\end{align*}
and $O_k\subset \mathcal{N}_{G_{\smallidx},k-k_0}$ is a measurable
subset of positive measure ($vol(O_k)>0$) given in the proof.
\end{lemma}
\begin{proof}[Proof of Lemma~\ref{lemma:f with small support}] 
Put $w=t_{\gamma}^{-1}w_{\smallidx,\bigidx}$. The integral becomes
\begin{align*}
|\gamma|^{\smallidx(\half\bigidx+s-\half)}\int_{R_{\smallidx,\bigidx}}f_s(wrv,bt_{\gamma})\psi_{\gamma}(r)dr.
\end{align*}

Let $N=\mathcal{N}_{H_{\bigidx},k}$. Note that $w$ normalizes $N$.
Take $f_s=ch_{wN,cW_{\tau},s}$, where
$c>0$ is a volume constant depending on $k$ (to be chosen later).
Then $\support{f_s}= Q_{\bigidx}wN$ and for all
$a\in\GL{\bigidx}\isomorphic M_{\bigidx}$, $u\in U_{\bigidx}$ and
$y\in N$,
\begin{align*}
f_s(auwy,1)=c\delta_{Q_{\bigidx}}^{\half}(a)\absdet{a}^{s-\half}W_{\tau}(a).
\end{align*}
Now $f_s(wrv,bt_{\gamma})$ vanishes unless
$\rconj{w^{-1}}(rv)\in Q_{\bigidx}N$. If we denote
$[x]_{\mathcal{E}_{H_1}}=h=(h_{i,j})$ (see Section~\ref{subsection:G_l in H_n}), $\rconj{w}(rv)$ is of the
form
\begin{align}\label{shape:matrix rv conjugated}
\left(\begin {array}{ccccccc}
 & \text{\huge *}  &  &  &  & \text{\huge *} &  \\
 *  & * & * & *  & I_{\bigidx-\smallidx} & * & 0   \\
* & h_{3,1}  & * & (-1)^{\bigidx-\smallidx}h_{3,2} & 0 & h_{3,3} &0\\
 * & * & * & * & 0  & * & I_{\smallidx-1} \\
\end{array}\right).
\end{align}

The point is that in order to have $\rconj{w^{-1}}(rv)\in
Q_{\bigidx}N$, it is necessary that $h\in N$. Then the coordinates
of $x$ in $v$ may be ignored. After this it follows that the rest of
the non-constant coordinates of $v$ and $r$ are small.

Looking at \eqref{shape:matrix rv conjugated}, by our choice of $N$
one sees that if it belongs to $Q_{\bigidx}N$ then $h_{3,3}\ne0$ and
\begin{align*}
|h_{3,1}h_{3,3}^{-1}|,|h_{3,2}h_{3,3}^{-1}|\leq q^{-k}.
\end{align*}
To continue from here we analyze the specific coordinates of $h$.
Recall that we may assume $|\beta|=1$ and in the \quasisplit\ case
$|\rho|\geq1$ (but it is possible that $|2|<1$). We provide the
details for the split case first.

For
$x=\bigl(\begin{smallmatrix}\alpha&\\&\alpha^{-1}\end{smallmatrix}\bigr)$,
\begin{align*}
h=\left(\begin{array}{ccc} \half+\quarter (\alpha+ \alpha^{-1}) &
\frac 1 {2\beta}(\alpha-\alpha^{-1}) & \frac 2 {\beta^2}(\half-\quarter(\alpha+\alpha^{-1}))\\
\quarter\beta (\alpha-\alpha^{-1}) & \half (\alpha + \alpha^{-1}) & -\frac 1 {2\beta} (\alpha-\alpha^{-1})\\
\half\beta^2(\half-\quarter(\alpha+\alpha^{-1}))&-\quarter\beta
(\alpha-\alpha^{-1}) & \half+\quarter (\alpha + \alpha^{-1})
\\\end{array}\right)
\end{align*}
(see \eqref{eq:embedding of SO_2 split}).
Set $\xi=\quarter(\alpha+\alpha^{-1})$. Since
$|h_{3,1}h_{3,3}^{-1}|\leq q^{-k}$ we derive
$|\half-\xi|<|\half+\xi|$, whence
\begin{align*}
|h_{3,3}|=|\half+\xi|=|\half-\xi+\half+\xi|=1.
\end{align*}
Therefore $|h_{3,1}|,|h_{3,2}|\leq q^{-k}$ and looking at the other
coordinates we see that for $i\ne j$, $|h_{i,j}|\leq q^{-k}$. Also
from $|h_{3,1}|\leq q^{-k}$ we obtain
$\alpha+\alpha^{-1}\in2+8\mathcal{P}^k$ whence
$h_{i,i}\in1+\mathcal{P}^k$. This proves
$h\in\mathcal{N}_{H_1,k}$.

We show that $x\in\mathcal{N}_{G_1,k}$. First observe that $|\alpha|=1$. Otherwise
$[x]=\max(|\alpha|,|\alpha|^{-1})>1$ and 
we reach a contradiction to $|h_{3,2}h_{3,3}^{-1}|\leq q^{-k}$,
because $|h_{3,2}h_{3,3}^{-1}|=|h_{3,2}|=|\quarter|[x]>1$. Since
$|h_{3,2}|\leq q^{-k}$, $\alpha-\alpha^{-1}\in4\mathcal{P}^k$ and
because $|\alpha|=1$ we find that $(\alpha-1)(\alpha+1)=\alpha^2-1\in4\mathcal{P}^k$. If
$\alpha+1\in4\mathcal{P}$, using $\alpha^{-1}\in\alpha+4\mathcal{P}^k$ we get $|h_{3,1}|=|\gamma|$ -
contradiction (since $q^{-k}\geq|h_{3,1}|=|\gamma|\geq1$ and $k>0$).
Hence $|\alpha+1|>|4|q^{-1}$ and then $(\alpha-1)(\alpha+1)\in4\mathcal{P}^k$ implies
$|\alpha-1|\leq q^{-k}$, i.e. $\alpha\in1+\mathcal{P}^k$, therefore $x\in\mathcal{N}_{G_1,k}$.

Now we explain the \quasisplit\ case. Let
$x=\bigl(\begin{smallmatrix}a&b\rho\\b&a\end{smallmatrix}\bigr)$
(with
$a^2-b^2\rho=1$). Here $h_{3,1}=\half\gamma(1-a)$, $h_{3,2}=-\gamma
b$ and $h_{3,1}h_{3,3}^{-1}=\gamma\frac{1-a}{1+a}$ (see
Section~\ref{subsection:G_l in H_n}). If $|1-a|=|1+a|$,
$|\gamma|=|h_{3,1}h_{3,3}^{-1}|\leq q^{-k}$ -
contradiction ($k>0$). Hence $|a|=1$. 
Now $|h_{3,1}h_{3,3}^{-1}|\leq
q^{-k}$ implies
\begin{align*}
|1-a|\leq|\gamma|^{-1}|1+a|q^{-k}\leq|\gamma|^{-1}q^{-k}
\end{align*}
 ($|1+a|\leq1$ since $|a|=1$). Therefore
 $a\in1+\gamma^{-1}\mathcal{P}^k$ and $h_{3,3}=\half(1+a)\in1+\rho^{-1}\mathcal{P}^k$
 ($\gamma^{-1}=2\rho^{-1}$). In particular $|h_{3,3}|=1$ (since $|\rho|\geq1$), then $|h_{3,1}|\leq
 q^{-k}$ and
from $|h_{3,2}h_{3,3}^{-1}|\leq q^{-k}$ we get
$b\in\gamma^{-1}\mathcal{P}^k$. Again, inspecting the coordinates of
$h$ we find that for $i\ne j$, $|h_{i,j}|\leq q^{-k}$. On the diagonal we
have elements in $1+\rho^{-1}\mathcal{P}^k$ and
$1+\gamma^{-1}\mathcal{P}^k$. Because $|\rho|\geq1$, both subgroups
are contained in $1+\mathcal{P}^k$. Hence $h\in\mathcal{N}_{H_1,k}$ and we have also shown
$x\in\mathcal{N}_{G_1,k}$.

Since whether $G_{\smallidx}$ is split or not,
$h\in\mathcal{N}_{H_1,k}<N$, it is evident that
\begin{align*}
\rconj{w^{-1}}(rv)\in Q_{\bigidx}N\iff \rconj{w^{-1}}(rv_0)\in
Q_{\bigidx}N,\qquad
v_0=\left(\begin{array}{cccc}I_{\smallidx-1}\\y_1&1\\y_2&0&1\\z&*&*&I_{\smallidx-1}\end{array}\right).
\end{align*}
Write
\begin{align*}
r=\left(\begin{array}{ccccc}I_{\bigidx-\smallidx}&r_1&r_2&0&r_3\\&I_{\smallidx}&&&0\\&&1&&r_2'\\&&&I_{\smallidx}&r_1'\\&&&&I_{\bigidx-\smallidx}\end{array}\right),\qquad
[v_0]_{\mathcal{E}_{H_{\smallidx}}}=\left(\begin{array}{ccccc}I_{\smallidx-1}\\u_1&1\\u_2&&1\\u_3&&&1\\z&u_3'&u_2'&u_1'&I_{\smallidx-1}\end{array}\right).
\end{align*}
Also denote $r_1=(r_{1,1},r_{1,2})$ where
$r_{1,1}\in\Mat{\bigidx-\smallidx\times\smallidx-1}$. Then
$\rconj{w^{-1}}(rv_0)=mu$ for $m\in M_{\bigidx}$ and
$u\in\overline{U_{\bigidx}}$. Specifically, $m$ is the image of
\begin{align*}
\left(\begin{array}{ccc}I_{\smallidx-1}\\u_1&1\\&&I_{\bigidx-\smallidx}\end{array}\right)\in\GL{\bigidx}
\end{align*}
and
\begin{align*}
u=\left(\begin{array}{ccccccc}
I_{\smallidx-1}\\&1\\&&I_{\bigidx-\smallidx}\\\epsilon
u_2&0&\epsilon
r_2'&1\\r_{1,1}+r_{1,2}u_1+r_2u_2&r_{1,2}&r_3&\epsilon
r_2&I_{\bigidx-\smallidx}\\u_3&0&r_{1,2}'&0&0&1\\z-u_1'u_3&u_3'&r_{1,1}'-u_1'r_{1,2}'&
\epsilon u_2'&0&0&I_{\smallidx-1}\end{array}\right).
\end{align*}
Then $mu\in Q_{\bigidx}N$ if and only if $u\in N$. It follows that
the coordinates of the lower-left $\bigidx\times(\bigidx+1)$-block
of $u$ lie in $\mathcal{P}^k$. In particular
$u_2,u_3\in\mathcal{P}^k$, with a minor abuse of notation (e.g.
$u_2\in\mathcal{P}^k$ instead of
$u_2\in\MatF{1\times\smallidx-1}{\mathcal{P}^k}$). In the split case
\begin{align*}
u_1=y_1-\frac1{2\beta^2}y_2,\quad u_2=\beta
y_1+\frac1{2\beta}y_2,\quad u_3=-\gamma y_1+\quarter y_2.
\end{align*}
Hence $y_1\in\half\mathcal{P}^k$, $y_2\in\mathcal{P}^k$ and
$u_1\in\half\mathcal{P}^k$. Let $k_0\geq0$ be the valuation of $2$.
Then $u_1'u_3\in\half\mathcal{P}^{2k}\subset\mathcal{P}^k$ (because
$k>k_0$) whence $z\in\mathcal{P}^k$. 
It also follows that $r_{1,1}\in\mathcal{P}^k$ and
$r\in N$. Additionally 
$W_{\tau}(bt_{\gamma}m)=W_{\tau}(bt_{\gamma})$, because
$m\in\mathcal{N}_{\GL{\bigidx},k-k_0}$.

For the \quasisplit\ case the argument is similar. For instance
$u_1=y_2$, $u_2=y_1$ and $u_3=-\gamma y_2$ hence
$y_1\in\mathcal{P}^k$,
$u_1,y_2\in\gamma^{-1}\mathcal{P}^k\subset\mathcal{P}^k$.

Thus we conclude $f_s(wrv,bt_{\gamma})=0$ unless
$h\in\mathcal{N}_{H_1,k}$, $u_2,u_3,z\in\mathcal{P}^k$ and
$r\in N$. 
Conversely, if
$h\in\mathcal{N}_{H_1,k}$, $u_2,u_3,z\in\mathcal{P}^k$ and
$r\in N$, 
$f_s(wrv,bt_{\gamma})=W_{\tau}(bt_{\gamma})$. The subset
$O_k\subset\mathcal{N}_{G_{\smallidx},k-k_0}$ is defined as the
pre-image in $G_{\smallidx}$ of the set
\begin{align*}
\setof{\left(\begin{array}{c|ccc|c}I_{\smallidx-1}&&&&\\\hline u_1&&&&\\u_2&&h&&\\u_3&&&&\\\hline z&u_3'&u_2'&u_1'&I_{\smallidx-1}\end{array}\right)}{h\in\mathcal{N}_{H_1,k},u_2,u_3\in\MatF{1\times\smallidx-1}{\mathcal{P}^k}
,z\in\MatF{\smallidx-1\times\smallidx-1}{\mathcal{P}^k}}.
\end{align*}

This already shows that the integration over $v$ reduces to some
positive constant. Regarding the integration over $r$, the character
$\psi_{\gamma}$ is trivial since
$\psi_{\gamma}(r)=\psi((r_{1,2})_{\bigidx-\smallidx})$ (and
$\psi(\mathcal{P}^k)\equiv1$). Then the $drdv$-integration reduces
to a positive (\nonzero) constant $c^{-1}$.
\end{proof} 

The following lemma is a straightforward adaptation of a similar
claim of Soudry \cite{Soudry} (inside the proof of Proposition~6.1).
\begin{lemma}\label{lemma:W with small support}
Let $0\leq j<\smallidx$ and
$W_0\in\Whittaker{\pi}{\psi_{\gamma}^{-1}}$. For any $k$ large
enough (depending on $W_0$) there exists
$W\in\Whittaker{\pi}{\psi_{\gamma}^{-1}}$ such that for any
\begin{align*}
v=\left(\begin{array}{ccccc}a\\u&b\\&&I_2\\
&&&b^*\\&&&u'&a^*\end{array}\right) \qquad(a\in\GL{j},b\in
\overline{B_{\GL{\smallidx-j-1}}}),
\end{align*}
\begin{align*}
W(v)=
\begin{dcases}
W_0(\left(\begin{array}{ccc}a\\&I_{2(\smallidx-j)}\\&&a^*\end{array}\right))&
\left(\begin{array}{ccccc}I_j\\u&b\\&&I_2\\&&&b^*\\&&&u'&I_j\\\end{array}\right)
\in \mathcal{N}_{G_{\smallidx},k},\\0 &\text{otherwise}.
\end{dcases}
\end{align*}
In addition we can take $W$ as above which vanishes unless the last
row of $a$ lies in $\eta_j+\MatF{1\times j}{\mathcal{P}^{k}}$
($\eta_j=(0,\ldots,0,1)$).
\end{lemma}
The essence of the proof is the following. For a compact subgroup
$C<U_{G_{\smallidx}}$, define
$W\in\Whittaker{\pi}{\psi_{\gamma}^{-1}}$ by
$W(g)=\int_CW_0(gc)\psi_{\gamma}(c)dc$. Then if $x\in G_{\smallidx}$
satisfies $\rconj{x^{-1}}C<U_{G_{\smallidx}}$,
$W(x)=W_0(x)\int_C\psi_{\gamma}^{-1}(\rconj{x^{-1}}c)\psi_{\gamma}(c)dc$.
The proof follows by applying this argument inductively, starting with
$C$ such that the $dc$-integration vanishes unless the
last row of $(u|b)$ belongs to
$\eta_{\smallidx-1}+\MatF{1\times\smallidx-1}{\mathcal{P}^{k}}$. The
proof is omitted.

\section{The inner integration over
$R_{\smallidx,\bigidx}$}\label{subsection:the integral over
R_{\smallidx,\bigidx}} In the case $\smallidx<\bigidx$, the integral
$\Psi(W,f_s,s)$ contains an inner integration over the unipotent
subgroup
$R_{\smallidx,\bigidx}$. 
The properties of this integration resemble those of the Whittaker
functional, proved by Casselman and Shalika \cite{CS2} (see also
\cite{Sh2,Sh4}) and we follow their line of arguments.

Define a functional on $V(\tau,s)$ by
\begin{align*}
f_s\mapsto\Omega(f_s)=\int_{R_{\smallidx,\bigidx}}f_s(w_{\smallidx,\bigidx}r,1)\psi_{\gamma}(r)dr.
\end{align*}
This integral is absolutely convergent if it is convergent when we replace $f_s$ with $|f_s|$ and drop the character $\psi_{\gamma}$.
There exists a constant $s_1>0$ depending only on $\tau$ such that
if $\Re(s)>s_1$, this integral is absolutely convergent for all $f_s$ (see \cite{Soudry} Section~4.5). We confine ourselves to $s$
with $\Re(s)$ in this right half-plane. However, we will show in the
next few paragraphs that as a function of $s$, $\Omega(f_s)$ has an
analytic continuation to a polynomial in $\C[q^{-s},q^s]$, by which
it is defined for all $s$.

According to the (global) construction of the integral in
Section~\ref{subsection:The global integral for l<=n} or by a
direct verification,
\begin{align}\label{eq:omega in homspace}
\Omega(f_s)\in\homspace{N_{\bigidx-\smallidx}}{V(\tau,s)}{\psi_{\gamma}^{-1}}.
\end{align}

The following calculation will be used repeatedly. For $b\in\GL{k}$,
$k<\smallidx$, where $\GL{k}<L_{k}$ 
and $L_k$ is embedded in $H_{\bigidx}$ through $G_{\smallidx}$,
\begin{align}\label{eq:omega right translation by GL l-1}
\Omega(b\cdot
f_s)=\absdet{b}^{\smallidx-\bigidx+s-\half}\delta_{Q_{\bigidx}}^{\half}(b)\Omega(\tau(b)f_s)
=\absdet{b}^{\smallidx-\half\bigidx+s-\half}\Omega(\tau(b)f_s).
\end{align}
This follows because $L_{\smallidx-1}$ 
normalizes $R_{\smallidx,\bigidx}$, $G_{\smallidx}$ fixes $\psi_{\gamma}$ and
$\rconj{w_{\smallidx,\bigidx}^{-1}}L_{\smallidx-1}<M_{\bigidx}$.

Define for a compact open subgroup $N<N_{\bigidx-\smallidx}$ and
$f_s\in V(\tau,s)$ the function $f_s^{N,\psi_{\gamma}}\in V(\tau,s)$
by
\begin{align*}
f_s^{N,\psi_{\gamma}}=vol(N)^{-1}\int_{N}\psi_{\gamma}(n)n\cdot f_sdn.
\end{align*}
The function $f_s^{N,\psi_{\gamma}}$ is just a linear combination of
translations of $f_s$. The following claim shows that $\Omega$
(defined for $\Re(s)>s_1$) is invariant for such a twist of $f_s$.
\begin{claim}\label{claim:omega invariant for compact averaging}
For any $f_s\in V(\tau,s)$, compact open subgroup
$N<N_{\bigidx-\smallidx}$ and $g\in G_{\smallidx}$, $\Omega(g\cdot
f_s^{N,\psi_{\gamma}})=\Omega(g\cdot f_s)$.
\end{claim}
\begin{proof}[Proof of Claim~\ref{claim:omega invariant for compact averaging}] 
\begin{align*}
\Omega(g\cdot f_s^{N,\psi_{\gamma}})&=\int_{R_{\smallidx,\bigidx}}vol(N)^{-1}
\int_{N}f_s(w_{\smallidx,\bigidx}rgn,1)\psi_{\gamma}(n)\psi_{\gamma}(r)dndr\\
&=vol(N)^{-1}\int_{N}\int_{R_{\smallidx,\bigidx}}
f_s(w_{\smallidx,\bigidx}r(\rconj{g^{-1}}n)g,1)\psi_{\gamma}(n)\psi_{\gamma}(r)drdn\\
&=vol(N)^{-1}\int_{N}\int_{R_{\smallidx,\bigidx}}
f_s(w_{\smallidx,\bigidx}rg,1)\psi_{\gamma}^{-1}(\rconj{g^{-1}}n)\psi_{\gamma}(n)\psi_{\gamma}(r)drdn\\
&=vol(N)^{-1}\int_{N}\int_{R_{\smallidx,\bigidx}}
f_s(w_{\smallidx,\bigidx}rg,1)\psi_{\gamma}^{-1}(n)\psi_{\gamma}(n)\psi_{\gamma}(r)drdn\\
&=\Omega(g\cdot f_s).
\end{align*}
The order of integration may be changed because the integral is
absolutely  convergent. 
The third equality follows
from \eqref{eq:omega in homspace} and the forth from the fact that
$G_{\smallidx}$ stabilizes $\psi_{\gamma}$.
\end{proof}
Note that $N_{\bigidx-\smallidx}$ is exhausted by its compact open
subgroups, meaning that any compact subset $C\subset
N_{\bigidx-\smallidx}$ is contained  in some compact open subgroup
$N<N_{\bigidx-\smallidx}$.

Fix $s\in\C$ arbitrarily (i.e., not necessarily with $\Re(s)$ large).
We use the filtration of $V(\tau,s)$ according to the Geometrical
Lemma (\cite{BZ2}, 2.12). We follow the exposition of Mui\'{c}
\cite{Mu} (Section~3, see also \cite{BZ2,Cs}). Consider the
decomposition $H_{\bigidx}=\dotcup_{w\in\mathcal{A}}C(w)$ ($\dotcup$
- a disjoint union), where $\mathcal{A}$ is a set of representatives
for
$\rmodulo{\lmodulo{Q_{\bigidx}}{H_{\bigidx}}}{Q_{\bigidx-\smallidx}}$
and $C(w)=Q_{\bigidx}wQ_{\bigidx-\smallidx}$. The set $\mathcal{A}$
was given explicitly in the proof of Proposition~\ref{propo:basic
global identity l <= n},
\begin{align*}
\mathcal{A}=\setof{w_r=\left(\begin {array}{ccccccc}
  I_r &  &  &  & & & \\
   & &  &  &  & I_{\bigidx-\smallidx-r} & \\
   &  &  I_{\smallidx} &  &  & & \\
   &  &  & (-1)^{\bigidx-\smallidx-r} &  &  &\\
   &  &   &  & I_{\smallidx} & & \\
   & I_{\bigidx-\smallidx-r} &  &  & & & \\
    & & &  &  &  & I_r\end{array}\right)}{r=\intrange{0}{\bigidx-\smallidx}}.
\end{align*}
We also calculated $Q_{\bigidx}^{w_{r}}=\rconj{w_{r}}Q_{\bigidx}\cap
Q_{\bigidx-\smallidx}$ (see \eqref{eq:Qn wr}),
\begin{align*}
Q_{\bigidx}^{w_r}=\{\left(\begin {array}{cc|ccc|cc}
  a & x & y_1 & y_2 & y_3 & z_1 & z_2 \\
    & b & 0   & 0   & y_4 & 0   & z_1' \\ \hline
    &   &    &    &    & y_4'& y_3' \\
    &   &     & Q_{\smallidx}'  &   & 0   & y_2' \\
    &   &     &     &  & 0   & y_1' \\\hline
    &   &     &     &     & b^* & x' \\
    &   &     &     &     &     & a^*
\end{array}\right)\},
\end{align*}
where
$a\in\GL{r},b\in\GL{\bigidx-\smallidx-r},y_1\in\Mat{r\times\smallidx},y_2\in\Mat{r\times1}$.
For an algebraic variety $X$ defined over the field $F$, denote by $\dim(X)$ its dimension. We
have
\begin{align*}
\dim (Q_{\bigidx}^{w_r})=\half r^2+r\smallidx+\half
r+\bigidx^2-\bigidx\smallidx+\dim(Q_{\smallidx}').
\end{align*}
Since
$\dim{C(w_r)}=\dim(Q_{\bigidx})+\dim(Q_{\bigidx-\smallidx})-\dim(Q_{\bigidx}^{w_r})$ (see e.g. the proof of Lemma~\ref{lemma:integration formula for quotient space H_n G_n+1}),
we get $\dim{C(w_r)}>\dim{C(w_{r+1})}$ for all $0\leq
r<\bigidx-\smallidx$. Thus according to the special ordering defined
on the Bruhat cells, $w_0>\ldots>w_{\bigidx-\smallidx}$.

Let $C^{\geq w_r}=\dotcup_{w\geq w_r}C(w)$. In the following we
consider the elements of $V(\tau,s)$ as functions of one variable,
i.e., functions defined on $H_{\bigidx}$ taking values in $U$ - the
space of $\tau$. 
The space $V(\tau,s)$ as a representation of
$Q_{\bigidx-\smallidx}$ is filtered by the subspaces
\begin{align*}
F_{w_r}(s)=\setof{f_s\in V(\tau,s)}{\support{f_s}\subset C^{\geq
w_r}}
\end{align*}
($\support{f_s}$ denotes the support of $f_s$). For example, $F_{w_{\bigidx-\smallidx}}(s)=V(\tau,s)$.

Fix $r$ and consider also the decomposition
$Q_{\bigidx-\smallidx}=\dotcup_{\eta\in\mathcal{A}(r)}Q_n^{w_r}\eta
G_{\smallidx}N_{\bigidx-\smallidx}$, where $\mathcal{A}(r)$ is a
(finite) set of representatives for
$\rmodulo{\lmodulo{Q_{\bigidx}^{w_r}}{Q_{\bigidx-\smallidx}}}{G_{\smallidx}N_{\bigidx-\smallidx}}$.
The set $\mathcal{A}(r)$ was described in the proof of
Proposition~\ref{propo:basic global identity l <= n} (see
\eqref{set:representatives for second filtration}),
\begin{align*}
\mathcal{A}(r)=\setof{diag(b(\sigma),\xi,b(\sigma)^*)}{\sigma\in\lmodulo{(S_r\times
S_{\bigidx-\smallidx-r})}{S_{\bigidx-\smallidx}},\xi\in\rmodulo{\lmodulo{Q_{\smallidx}'}{H_{\smallidx}}}{G_{\smallidx}}}.
\end{align*}
We order the elements of $\mathcal{A}(r)$ according to the special
ordering
defined on the Bruhat cells. 

Order the set of pairs $\setof{(w_r,\eta)}{0\leq
r\leq\bigidx-\smallidx,\eta\in\mathcal{A}(r)}$ lexicographically,
i.e. $(w',\eta')>(w,\eta)$ if $w'>w$ or both $w'=w$, $\eta'>\eta$.
Let $C(w_r,\eta)=Q_{\bigidx}w_r\eta
G_{\smallidx}N_{\bigidx-\smallidx}$ and
$C^{\geq(w_r,\eta)}=\dotcup_{(w',\eta')\geq(w_r,\eta)}C(w',\eta')$.
The space $F_{w_r}(s)$ as a representation of
$G_{\smallidx}N_{\bigidx-\smallidx}$ is filtered by the subspaces
\begin{align*}
F_{w_r,\eta}(s)=\setof{f_s\in V(\tau,s)}{\support{f_s}\subset
C^{\geq(w_r,\eta)}},
\end{align*}
where $\eta$ varies over $\mathcal{A}(r)$.

The following claim demonstrates how to shrink the support of $f_s\in F_{w_r}(s)$
using a twist by a subgroup $N$.
\begin{claim}\label{claim:jacquet module vanishes for r > 0}
For any $r>0$, $\eta\in\mathcal{A}(r)$ and $f_s\in F_{w_r,\eta}(s)$
there exists a compact open subgroup $N<N_{\bigidx-\smallidx}$ such that
$f_s^{N,\psi_{\gamma}}\in F_{w',\eta'}(s)$ where
$(w',\eta')>(w_r,\eta)$. Furthermore, $N$ depends only on the
support of $f_s$ (and on $\psi_{\gamma}$).
\end{claim}
\begin{proof}[Proof of Claim~\ref{claim:jacquet module vanishes for r > 0}] 
Fix $r>0$ and $\eta$. It is enough to find $N$ such that for all
$x\in G_{\smallidx}N_{\bigidx-\smallidx}$,
\begin{align*}
f_s^{N,\psi_{\gamma}}(w_r\eta x)=vol(N)^{-1}\int_Nf_s(w_r\eta
xn)\psi_{\gamma}(n)dn=0.
\end{align*}
Regard the function
$f_s'=\lambda(\eta^{-1}w_r^{-1})(\frestrict{f_s}{C(w_r)})$ as a
function on $G_{\smallidx}N_{\bigidx-\smallidx}$. There is a compact
set $C\subset G_{\smallidx}N_{\bigidx-\smallidx}$ such that
$\support{f_s'}=(\rconj{\eta}(Q_{\bigidx}^{w_r})\cap
G_{\smallidx}N_{\bigidx-\smallidx})C$. Denote by
$C_{G_{\smallidx}}$, $C_{N_{\bigidx-\smallidx}}$ the projections of
$C$ on $G_{\smallidx}$, $N_{\bigidx-\smallidx}$ (resp.). These are
compact sets.

It is possible to show that there exists a compact subgroup
$O<\rconj{w_r\eta}U_{\bigidx}\cap N_{\bigidx-\smallidx}$ (which
depends on $r$ and $\eta$) on which
$\frestrict{\psi_{\gamma}}{O}\nequiv1$.

The subgroup $O$ was determined in the proof of
Claim~\ref{claim:unfolding of E_f to short sum}, in the global
setting. We adapt the definition to the local setting. Let
$\eta=diag(b(\sigma),\xi,b(\sigma)^*)$. If there is some $1\leq j<
\bigidx-\smallidx$ such that $\sigma(j)\leq r$ and $\sigma(j+1)>r$,
let $O$ be the image in $N_{\bigidx-\smallidx}$ of the subgroup
of $Z_{\bigidx-\smallidx}$ consisting of matrices with $1$ on the
diagonal, an arbitrary element of $\mathcal{P}^{-k}$ in the
$(j,j+1)$-th coordinate and zero elsewhere. Here $k\geq0$ is chosen
such that $\psi_{\gamma}$ is \nontrivial\ on $O$. If no such $j$
exists, we can assume
$b(\sigma)=\omega_{r,\bigidx-\smallidx-r}$ and we let
$O=\xi^{-1}O'\xi$ with $O'<N_{\bigidx-\smallidx}$ defined as follows. If $\xi=I_{2\smallidx+1}$,
\begin{align*}
O'=\setof{diag(I_{\bigidx-\smallidx-1},\left(\begin{array}{ccccccc}1&0&0&a&a&0&-\half
a^2\\&I_{\smallidx-1}&&&&&0\\&&1&&&&-a\\&&&1&&&-a\\&&&&1&&0\\&&&&&I_{\smallidx-1}&0\\&&&&&&1\end{array}\right),I_{\bigidx-\smallidx-1})}{a\in\mathcal{P}^{-k}
}.
\end{align*}
Otherwise
\begin{align*}
O'=\setof{diag(I_{\bigidx-\smallidx-1},\left(\begin{array}{ccccc}1&0&a&0&-\half
a^2\\&I_{\smallidx}&&&0\\&&1&&-a\\&&&I_{\smallidx}&0\\&&&&1\end{array}\right),I_{\bigidx-\smallidx-1})}{a\in\mathcal{P}^{-k}}.
\end{align*}

Since $N_{\bigidx-\smallidx}$ is exhausted by its compact open
subgroups, we can take a compact open subgroup
$N<N_{\bigidx-\smallidx}$ such that
$C_{N_{\bigidx-\smallidx}}\subset N$. Furthermore, using the fact that
$G_{\smallidx}$ normalizes $N_{\bigidx-\smallidx}$, we can take $N$
such that for all $g\in C_{G_{\smallidx}}$,
$O\subset\rconj{g^{-1}}N$. In more detail, let $N=N_{\bigidx-\smallidx}(\mathcal{P}^{-k})$ where $k>>0$. The set
$C_{G_{\smallidx}}$ is contained in a finite union
$\bigcup g_i\mathcal{N}_{G_{\smallidx},m}$
($g_i\in C_{G_{\smallidx}}$, $m>0$). Because $\mathcal{N}_{G_{\smallidx},m}$ normalizes $N$, it is enough to take a large
$k$ such that $O<\rconj{g_i^{-1}}N$ for each $g_i$.

We see that $N$ depends only on the
support of $f_s$ in $C(w_r,\eta)$ (and on $\psi_{\gamma}$).

Assume that for some $x\in G_{\smallidx}N_{\bigidx-\smallidx}$,
$f_s^{N,\psi_{\gamma}}(w_r\eta x)\ne0$. Then $x\in
(\rconj{\eta}(Q_{\bigidx}^{w_r})\cap
G_{\smallidx}N_{\bigidx-\smallidx})C_{G_{\smallidx}}N$ and hence it
is enough to prove that for all $g\in C_{G_{\smallidx}}$,
$f_s^{N,\psi_{\gamma}}(w_r\eta g)=0$. Since $G_{\smallidx}$
stabilizes $\psi_{\gamma}$, up to a volume constant (depending on
$g$) $f_s^{N,\psi_{\gamma}}(w_r\eta g)$ equals
\begin{align*}
\int_{\rconj{g^{-1}}N}f_s(w_r\eta ng)\psi_{\gamma}(n)dn.
\end{align*}
This vanishes because $O<\rconj{g^{-1}}N$ and we can factor the
integral through $O$ and obtain an inner integration of the
\nontrivial\ character $\frestrict{\psi_{\gamma}}{O}$ which
vanishes.
\end{proof} 

Again we treat $s$ as a parameter and study the behavior of the
functional on holomorphic sections. For $0\leq r\leq\bigidx-\smallidx$ let $F_{w_r}$ be the set of
$f_s\in\xi(\tau,hol,s)$ such that for all $s\in\C$, $f_s\in
F_{w_r}(s)$. Since for each $s$, $F_{w_r}(s)$ is a
$Q_{\bigidx-\smallidx}$-space, so is $F_{w_r}$. Also note that
$F_{w_r}$ is a $\C[q^{-s},q^s]$-module, i.e. if $f_s\in F_{w_r}$,
$P\cdot f_s\in F_{w_r}$ for $P\in\C[q^{-s},q^s]$. Similarly for
$\eta\in\mathcal{A}(r)$ we define $F_{w_r,\eta}$ as the set of
$f_s\in\xi(\tau,hol,s)$ such that for all $s\in\C$, $f_s\in
F_{w_r,\eta}(s)$. Then $F_{w_r,\eta}$ is a
$G_{\smallidx}N_{\bigidx-\smallidx}$-space and a
$\C[q^{-s},q^s]$-module. 

The above claim has the following corollary, showing how to
use twists by some $N$ in order to put holomorphic sections into
$F_{w_0}$. The key point is that $N$ will be independent of $s$.
\begin{corollary}\label{corollary:support of standard section made small}
For any $f_s\in\xi(\tau,hol,s)$ there exists a compact open subgroup
$N<N_{\bigidx-\smallidx}$, independent of $s$, such that
$f_s^{N,\psi_{\gamma}}\in F_{w_0}$.
\end{corollary}
\begin{proof}[Proof of Corollary~\ref{corollary:support of standard section made small}] 
We start with $f_s\in\xi(\tau,std,s)$. The support of standard
sections is independent of $s$, hence $f_s\in F_{w_r,\eta}$ for some
$r$ and $\eta\in\mathcal{A}(r)$. We show that there exists a subgroup $N$ independent of $s$ with
$f_s^{N,\psi_{\gamma}}\in F_{w_0}$, using induction on $r$ and $\eta$. For $r=0$ this is trivial. Let
$r>0$.

For every $s$, $f_s\in F_{w_r,\eta}(s)$. By Claim~\ref{claim:jacquet
module vanishes for r > 0} there is a subgroup $N'$, depending only
on the support of $f_s$ which is independent of $s$, such that
$f_s^{N',\psi_{\gamma}}\in F_{w',\eta'}(s)$ with
$(w',\eta')>(w_r,\eta)$. Thus $f_s^{N',\psi_{\gamma}}\in
F_{w',\eta'}$. The next step is to apply the induction hypothesis, but note that
$f_s^{N',\psi_{\gamma}}\in\xi(\tau,hol,s)$ might not be standard.

Write $f_s^{N',\psi_{\gamma}}$ as in \eqref{eq:holomorphic section
as combination of disjoint standard sections}, i.e.
$f_s^{N',\psi_{\gamma}}=\sum_{i=1}^mP_i\cdot f_s^{(i)}$ with
$0\ne P_i\in\C[q^{-s},q^s]$, $f_s^{(i)}=ch_{k_iN,v_i,s}\in\xi(\tau,std,s)$ and such
that if $k_{i_1}=\ldots=k_{i_c}$, $v_{i_1},\ldots,v_{i_c}$ are linearly
independent. Suppose that for some $i$, $f_s^{(i)}\notin
F_{w',\eta'}$. Since the support of $f_s^{(i)}$ is independent of
$s$, $f_s^{(i)}\notin F_{w',\eta'}(s)$ for all $s$. In fact, there is
some $x\in k_iN$ which does not belong to $C^{\geq (w',\eta')}$
($x\in\support{f_s^{(i)}}$). Let $\{i_1,\ldots,i_c\}$ be a maximal
set of indices such that $k_{i_1}=\ldots=k_{i_c}=k_i$. Choose $s_0$
such that $P_{i_j}(q^{-s_0},q^{s_0})=\alpha_j\ne0$ for some $1\leq
j\leq c$. Now on the one hand since $x\notin C^{\geq (w',\eta')}$,
$f_{s_0}^{N',\psi_{\gamma}}(x)=0$. On the other hand
$f_{s_0}^{N',\psi_{\gamma}}(x)=\alpha_1v_{i_1}+\ldots+\alpha_cv_{i_c}$
contradicting the fact that $v_{i_1},\ldots,v_{i_c}$ are linearly
independent. Hence for each $i$, $f_s^{(i)}\in F_{w',\eta'}$. Now
by the induction hypothesis $(f_s^{(i)})^{N_i,\psi_{\gamma}}\in F_{w_0}$,
for a subgroup $N_i$ independent of $s$.

In general
if $O_1<O_2<N_{\bigidx-\smallidx}$ are compact open,
$(f_s^{O_1,\psi_{\gamma}})^{O_2,\psi_{\gamma}}=f_s^{O_2,\psi_{\gamma}}$.
Then  if we take $N$ containing $N'$ and all of the subgroups $N_i$,
\begin{align*}
f_s^{N,\psi_{\gamma}}=
(f_s^{N',\psi_{\gamma}})^{N,\psi_{\gamma}}=(\sum_{i=1}^mP_i\cdot f_s^{(i)})^{N,\psi_{\gamma}}
=\sum_{i=1}^mP_i\cdot ((f_s^{(i)})^{N_i,\psi_{\gamma}})^{N,\psi_{\gamma}}\in F_{w_0}.
\end{align*}
This establishes the claim for standard sections.

Now if $f_s\in\xi(\tau,hol,s)$, write as above
$f_s=\sum_{i=1}^mP_i\cdot f_s^{(i)}$. For each $i$ we have $N_i$
independent of $s$ with $(f_s^{(i)})^{N_i,\psi_{\gamma}}\in F_{w_0}$,
whence for $N$ containing all of the subgroups $N_i$,
$f_s^{N,\psi_{\gamma}}\in F_{w_0}$.
\end{proof} 
The next claim implies
that for $f_s\in F_{w_0}(s)$, the integral $\Omega(f_s)$ is
absolutely convergent. Roughly, this is because
$R_{\smallidx,\bigidx}\cap
Q_{\bigidx}^{w_{0}}=\{1\}$. This claim will be used below to prove
Proposition~\ref{proposition:dr integral for torus of G_l}.
\begin{claim}\label{claim:omega f reduces to sum when support is compact}
Let $B\subset Q_{\bigidx-\smallidx}$ be a compact subset. Then
$R_{\smallidx,\bigidx}\cap Q_{\bigidx}^{w_0}B$ is contained in a compact
subset of $R_{\smallidx,\bigidx}$.
\end{claim}
\begin{proof}[Proof of Claim~\ref{claim:omega f reduces to sum when
support is compact}] 
Looking at the coordinates of $Q_{\bigidx}^{w_0}$ (see \eqref{eq:Qn
wr} or above), we may assume that any $b\in B$ is of the form
\begin{align*}
\left(\begin{array}{ccccc}I_{\bigidx-\smallidx}&y_1&y_2&0&y_3\\&I_{\smallidx}&&&0\\&&1&&y_2'\\&&&I_{\smallidx}&y_1'\\&&&&I_{\bigidx-\smallidx}\end{array}\right)
\cdot\left(\begin{array}{ccc}I_{\bigidx-\smallidx}\\&k\\&&I_{\bigidx-\smallidx}\end{array}\right)\qquad(k\in
K_{H_{\smallidx}}),
\end{align*}
where the coordinates of each $y_i$ are bounded from above.
Specifically, write $b=diag(a,qk,a^*)u\in M_{\bigidx-\smallidx}U_{\bigidx-\smallidx}$
for $a\in\GL{\bigidx-\smallidx}$, $q\in Q_{\smallidx}'$ ($Q_{\smallidx}'$ denotes the subgroup
$Q_{\smallidx}$ of $H_{\smallidx}$), $k\in K_{H_{\smallidx}}$ and $u\in U_{\bigidx-\smallidx}$. Since the
projection of $B$ on $U_{\bigidx-\smallidx}$ is compact, we can
assume that the non-constant coordinates of $u$ are bounded. Multiplying $b$ on the left by
$diag(a^{-1},q^{-1},(a^*)^{-1})\in
Q_{\bigidx}^{w_0}$ and by an element from $Q_{\bigidx}^{w_0}\cap U_{\bigidx-\smallidx}$ we see that it has the specified form.

Let
\begin{align*}
r=\left(\begin{array}{ccccc}I_{\bigidx-\smallidx}&r_1&r_2&0&r_3\\&I_{\smallidx}&&&0\\&&1&&r_2'\\&&&I_{\smallidx}&r_1'\\&&&&I_{\bigidx-\smallidx}\end{array}\right)\in
R_{\smallidx,\bigidx}\cap Q_{\bigidx}^{w_{0}}B.
\end{align*}
We can assume that there is an element $c\in
Q_{\bigidx}^{w_{0}}$ of the form
\begin{align*}
c=\left(\begin {array}{c|ccc|c}
  I_{\bigidx-\smallidx} & 0 & 0 & v & 0 \\ \hline
    &   &       &   & v' \\
    &   &     h  &   & 0 \\
    &   &       &   & 0 \\\hline
    &   &     &     &   I_{\bigidx-\smallidx}\end{array}\right)
    \qquad(v\in\Mat{\bigidx-\smallidx\times\smallidx},h\in
H_{\smallidx}),
\end{align*}
such that $cr=b\in B$. Writing $b$ as above we obtain $h=k\in
K_{H_{\smallidx}}$ and
\begin{align*}
\left(\begin{array}{ccccc}I_{\bigidx-\smallidx}&A_1&A_2&A_3&r_3+vr_1'\\&I_{\smallidx}&&&*\\&&1&&*\\&&&I_{\smallidx}&*\\&&&&I_{\bigidx-\smallidx}\end{array}\right)
=\left(\begin{array}{ccccc}I_{\bigidx-\smallidx}&y_1&y_2&0&y_3\\&I_{\smallidx}&&&0\\&&1&&y_2'\\&&&I_{\smallidx}&y_1'\\&&&&I_{\bigidx-\smallidx}\end{array}\right),
\end{align*}
with $(A_1|A_2|A_3)=(r_1|r_2|v)\cdot k^{-1}$. Now it follows that
the coordinates of $r_1,r_2$ and $v$ belong to a compact subset,
whence $r_3$ belongs to a compact subset of
$\MatF{\bigidx-\smallidx\times \bigidx-\smallidx}{F}$. This shows
that $R_{\smallidx,\bigidx}\cap
Q_{\bigidx}^{w_{0}}B$ is contained in a compact
subset of $R_{\smallidx,\bigidx}$.
\end{proof} 
Looking at the set $\mathcal{A}$ one sees that $w_0\in Q_{\bigidx}w_{\smallidx,\bigidx}$,
so in fact $C(w_0)=C(w_{\smallidx,\bigidx})$ and we may replace $w_0$
with $w_{\smallidx,\bigidx}$. Let $f_s\in\xi(\tau,std,s)$. The next
proposition shows that $\Omega(f_s)$, initially defined for
$\Re(s)>s_1$ to ensure the absolute convergence of the integral,
equals an element in $\C[q^{-s},q^s]$. Therefore $\Omega(f_s)$ has
an analytic continuation by which it can be defined for all $s$.
These results extend to $f_s\in\xi(\tau,hol,s)$. The proposition is
also the main tool in writing the Iwasawa decomposition in
Section~\ref{subsection:Iwasawa decomposition for Psi(W,f,s)}.

\begin{proposition}\label{proposition:dr integral for torus of G_l}
Let $f_s\in\xi(\tau,std,s)$. There exist
$P_i\in\C[q^{-s},q^s]$ and $W_i\in\Whittaker{\tau}{\psi}$,
$i=\intrange{1}{m}$, such
that for all 
$a\in A_{\smallidx-1}<T_{G_{\smallidx}}$ and $s$,
\begin{align}\label{eq:formula dr integral for torus of G_l no G_1}
\Omega(a\cdot
f_s)=\absdet{a}^{\smallidx-\half\bigidx+s-\half}\sum_{i=1}^mP_iW_i(
diag(a,I_{\bigidx-\smallidx+1})
).
\end{align}
In addition, for the split case there exists a constant $k>0$ such
that the following holds. For $t=ax\in
T_{G_{\smallidx}}$ with $x\in
G_1<T_{G_{\smallidx}}$ write $x=diag(b,b^{-1})$.
Then there exist $P_i\in\C[q^{-s},q^s]$ and
$W_i\in\Whittaker{\tau}{\psi}$, which depend on whether $\lfloor
x\rfloor=b$ or $\lfloor x\rfloor=b^{-1}$, such that for all $t$ satisfying $[x]>q^k$ and $s$,
\begin{align}\label{eq:formula dr integral for torus of G_l yes G_1}
\Omega(t\cdot
f_s)=(\absdet{a}[x]^{-1})^{\smallidx-\half\bigidx+s-\half}\sum_{i=1}^mP_iW_i(
diag(a,\lfloor x\rfloor,I_{\bigidx-\smallidx})
).
\end{align}
In both \eqref{eq:formula dr integral for torus of G_l no G_1} and
\eqref{eq:formula dr integral for torus of G_l yes G_1} the \lhs\ is
defined by the integral for $\Re(s)>s_1$ and the equality for all
$s$ is in the sense of analytic continuation.
\end{proposition}
\begin{proof}[Proof of Proposition~\ref{proposition:dr integral for torus of G_l}] 
Select $N$ for $f_s$ by
Corollary~\ref{corollary:support of standard section made small}.
Using Claim~\ref{claim:omega invariant for compact averaging} and
\eqref{eq:omega right translation by GL l-1}, for $\Re(s)>s_1$,
\begin{align*}
\Omega(a\cdot f_s)=\Omega(a\cdot f_s^{N,\psi_{\gamma}})
=\absdet{a}^{\smallidx-\half\bigidx+s-\half}
\int_{R_{\smallidx,\bigidx}}f_s^{N,\psi_{\gamma}}(w_{\smallidx,\bigidx}r,diag(a,I_{\bigidx-\smallidx+1}))\psi_{\gamma}(r)dr.
\end{align*}
Since $f_s^{N,\psi_{\gamma}}\in\xi(\tau,hol,s)\cap F_{w_0}$, we can write as in \eqref{eq:holomorphic section as combination of disjoint standard sections}
\begin{align}\label{eq:dr integral torus writing the compactly supported section}
f_s^{N,\psi_{\gamma}}=\sum_{i=1}^mP_if_s^{(i)}\qquad (
P_i\in\C[q^{-s},q^s],f_s^{(i)}=ch_{k_iN,v_i,s}\in\xi(\tau,std,s)).
\end{align}
As in the proof of Corollary~\ref{corollary:support of standard
section made small} we deduce that for each $i$, $f_s^{(i)}\in
F_{w_0}$. Hence each $\lambda(w_{\smallidx,\bigidx}^{-1})f_s^{(i)}$,
as a function on $Q_{\bigidx-\smallidx}$, is compactly supported
modulo $Q_{\bigidx}^{w_{\smallidx,\bigidx}}$ (similar to
$f_s^{N,\psi_{\gamma}}$) and this support is independent of $s$ (in
contrast with $f_s^{N,\psi_{\gamma}}$). I.e., there is a compact set
$B_i\subset Q_{\bigidx-\smallidx}$ such that for all $s$, the
support of $\lambda(w_{\smallidx,\bigidx}^{-1})f_s^{(i)}$ equals
$Q_{\bigidx}^{w_{\smallidx,\bigidx}}B_i$. According to
Claim~\ref{claim:omega f reduces to sum when support is compact},
$R_{\smallidx,\bigidx}\cap Q_{\bigidx}^{w_{\smallidx,\bigidx}}B_i$
is contained in a compact subset of $R_{\smallidx,\bigidx}$ ($Q_{\bigidx}^{w_{\smallidx,\bigidx}}=Q_{\bigidx}^{w_{0}}$). Let $N_i'<R_{\smallidx,\bigidx}$ be a compact open subgroup such
that $\lambda(w_{\smallidx,\bigidx}^{-1})f_s^{(i)}$ is
right-invariant by $N_i'$, $\frestrict{\psi_{\gamma}}{N_i'}\equiv1$
and $R_{\smallidx,\bigidx}\cap
Q_{\bigidx}^{w_{\smallidx,\bigidx}}B_i\subset\dotcup_{j=1}^{m_i}
r_{i,j}N_i'$ for some elements $r_{i,j}\in R_{\smallidx,\bigidx}$
and $m_i\geq1$. It follows that there exist constants $c_{i,j}\in\C$
such that for all $s$ and $a$,
\begin{align*}
\int_{R_{\smallidx,\bigidx}}f_s^{(i)}(w_{\smallidx,\bigidx}r,diag(a,I_{\bigidx-\smallidx+1}))\psi_{\gamma}(r)dr
=\sum_{j=1}^{m_i}c_{i,j}f_s^{(i)}(w_{\smallidx,\bigidx}r_{i,j},diag(a,I_{\bigidx-\smallidx+1})).
\end{align*}
In particular the integral $\Omega(a\cdot f_s^{N,\psi_{\gamma}})$ is
absolutely convergent for all $s$.
Next, there are $P_{i,j}\in\C[q^{-s},q^s]$ and
$W_{i,j}\in\Whittaker{\tau}{\psi}$ such that
\begin{align*}
c_{i,j}f_s^{(i)}(w_{\smallidx,\bigidx}r_{i,j},diag(a,I_{\bigidx-\smallidx+1}))=P_{i,j}W_{i,j}(diag(a,I_{\bigidx-\smallidx+1}))
\end{align*}
for all $s$ and $a$. Therefore we conclude
\begin{align*}
\int_{R_{\smallidx,\bigidx}}f_s^{N,\psi_{\gamma}}(w_{\smallidx,\bigidx}r,diag(a,I_{\bigidx-\smallidx+1}))\psi_{\gamma}(r)dr
=\sum_{i=1}^mP_i\sum_{j=1}^{m_i}P_{i,j}W_{i,j}(diag(a,I_{\bigidx-\smallidx+1})).
\end{align*}
This establishes \eqref{eq:formula dr integral for torus of G_l no G_1}.

Let $k_0>0$ be such that $f_s$ is right-invariant by
$\mathcal{N}_{H_{\bigidx},k_0}$ and let $k\geq k_0$ be as in
Lemma~\ref{lemma:torus elements in double cosets}. For $[x]>q^k$
write $x=m_xu_xhn_x$ as specified by the lemma, i.e.,
\begin{align*}
&m_x=\left(\begin{array}{ccc}\lfloor x\rfloor\\&1\\&&\lfloor
x\rfloor^{-1}\\\end{array}\right),\quad
u_x=\left(\begin{array}{ccc}1&c\lfloor x\rfloor^{-1}&-\half
c^2\lfloor x\rfloor^{-2}\\&1&-c \lfloor
x\rfloor^{-1}\\&&1\\\end{array}\right),\quad c^2=2\gamma^{-1}.
\end{align*}

The choice of $h$ depends on whether $|b|<|b|^{-1}$ or vice versa,
but is otherwise independent of $x$. We now assume
$|b|<|b|^{-1}$ and fix $h$. Since
$\psi_{\gamma}(r)=\psi(r_{\bigidx-\smallidx,\bigidx})$ and
$\rconj{w_{\smallidx,\bigidx}^{-1}}m_x\in M_{\bigidx}$,
\begin{align*}
\Omega(t\cdot f_s)
=&(\absdet{a}\cdot[x]^{-1})^{\smallidx-\half\bigidx+s-\half}
\\\notag&
\int_{R_{\smallidx,\bigidx}}f_s^{N,\psi_{\gamma}}(w_{\smallidx,\bigidx}ru_xh,
diag(a,\lfloor x\rfloor,I_{\bigidx-\smallidx})
)\psi(\lfloor
x\rfloor^{-1}r_{\bigidx-\smallidx,\bigidx})dr.
\end{align*}

Conjugating $r$ by $u_x$ and noting that
$\rconj{w_{\smallidx,\bigidx}^{-1}}u_x\in U_{\bigidx}$, the $dr$-integration equals
\begin{align*}
\int_{R_{\smallidx,\bigidx}}f_s^{N,\psi_{\gamma}}(w_{\smallidx,\bigidx}rh,\left(\begin{array}{ccc}a\\&
\lfloor x\rfloor\\&&I_{\bigidx-\smallidx}\\\end{array}\right)
\left(\begin{array}{ccc}I_{\smallidx-1}\\&
1&z_{r,x}\\&&I_{\bigidx-\smallidx}\\\end{array}\right))\psi(\lfloor
x\rfloor^{-1} r_{\bigidx-\smallidx,\bigidx})dr,
\end{align*}
where
\begin{align*}
(z_{r,x})_1=\gamma(c\lfloor
x\rfloor^{-1}r_{\bigidx-\smallidx,\bigidx+1}-\half c^2\lfloor
x\rfloor^{-2}r_{\bigidx-\smallidx,\bigidx}).
\end{align*}
This becomes
\begin{align*}
\int_{R_{\smallidx,\bigidx}}f_s^{N,\psi_{\gamma}}(w_{\smallidx,\bigidx}rh,
diag(a,\lfloor x\rfloor,I_{\bigidx-\smallidx})
)\psi^{\star}(r)dr.
\end{align*}
Here $\psi^{\star}$ is the character of $R_{\smallidx,\bigidx}$
defined by
\begin{align*}
\psi^{\star}(r)=\psi(\lfloor x\rfloor^{-1}(1-\gamma\half c^2)
r_{\bigidx-\smallidx,\bigidx}+ \gamma
cr_{\bigidx-\smallidx,\bigidx+1})=\psi(\gamma cr_{\bigidx-\smallidx,\bigidx+1}).
\end{align*}
By our fixing of $h$, $\psi^{\star}(r)$  no longer depends on $x$
(since $c$ is fixed). We can continue as above, write
$f_s^{N,\psi_{\gamma}}$ as in \eqref{eq:dr integral torus writing
the compactly supported section} and for each $i$,
$h\cdot(\lambda(w_{\smallidx,\bigidx}^{-1})f_s^{(i)})$ is still
compactly supported modulo
$Q_{\bigidx}^{w_{\smallidx,\bigidx}}$
(because $h\in Q_{\bigidx-\smallidx}$). Therefore for each
$f_s^{(i)}\in\xi(\tau,std,s)$,
\begin{align*}
\int_{R_{\smallidx,\bigidx}}f_s^{(i)}(w_{\smallidx,\bigidx}rh,
diag(a,\lfloor x\rfloor,I_{\bigidx-\smallidx})
)\psi^{\star}(r)dr
=\sum_{j=1}^{m_i}P_{i,j}'W_{i,j}'(
diag(a,\lfloor x\rfloor,I_{\bigidx-\smallidx})
).
\end{align*}
Here $P_{i,j}'\in\C[q^{-s},q^s]$, $W_{i,j}'\in\Whittaker{\tau}{\psi}$.
Now equality~\eqref{eq:formula dr integral for torus of G_l yes G_1} follows.

Note that the proof for $|b|>|b|^{-1}$ is identical but the actual polynomials $P_{i,j}'$ and Whittaker functions $W_{i,j}'$ may vary,
because they depend on $h$ and $\psi^{\star}$.
\end{proof} 

\begin{remark}\label{remark:l=n still writing dr integral for torus of G_l}
Observe that if $\smallidx=\bigidx$, in which case the $dr$-integration is
trivial, we can simply put $\Omega(h\cdot
f_s)=f_s(w_{\smallidx,\bigidx}h,1)$ ($h\in H_{\bigidx})$ and
Proposition~\ref{proposition:dr integral for torus of G_l} remains
valid.
\end{remark}

We rephrase the result of Claim~\ref{claim:jacquet module vanishes
for r > 0} in terms of the Jacquet module $V(\tau,s)_{N_{\bigidx-\smallidx},\psi_{\gamma}^{-1}}$.
The following description, until the end of this section, will not be used in the sequel.
Let $V(\tau,s)_{w_r}$ denote the space of locally constant functions
$f_s:C(w_r)\rightarrow U$ compactly supported modulo $Q_{\bigidx}$ and
such that
\begin{align*}
f_s(aux)=\delta_{Q_{\bigidx}}^{\half}(a)\absdet{a}^{s-\half}\tau(a)f_s(x)\qquad(a\in
\GL{\bigidx}\isomorphic M_{\bigidx}, u\in U_{\bigidx}, x\in C(w_r)).
\end{align*}
By definition $F_{w_0}(s)=V(\tau,s)_{w_0}$. For $r>0$, restriction
$f_s\mapsto\frestrict{f_s}{C(w_r)}$ induces an isomorphism
$\lmodulo{F_{w_{r-1}}(s)}{F_{w_{r}}(s)}\isomorphic V(\tau,s)_{w_r}$ as
representations of $Q_{\bigidx-\smallidx}$. Also
$V(\tau,s)_{w_r}\isomorphic ind_{Q_{\bigidx}^{w_r}}^{Q_{\bigidx-\smallidx}}(\rconj{w_r^{-1}}(\tau\alpha^s))$ (normalized compact induction, $\rconj{w_r^{-1}}(\tau\alpha^s)(x)=\tau\alpha^s(\rconj{w_r^{-1}}x)$),
where $f_s\in V(\tau,s)_{w_r}$ is mapped to $\lambda(w_r^{-1})f_s$.
Up to semi-simplification, $V(\tau,s)$ is isomorphic to
$\bigoplus_{r=0}^{\bigidx-\smallidx} V(\tau,s)_{w_r}$ as
$Q_{\bigidx-\smallidx}$-spaces. Claim~\ref{claim:jacquet module
vanishes for r > 0} implies
\begin{align*}
(V(\tau,s)_{w_r})_{N_{\bigidx-\smallidx},\psi_{\gamma}^{-1}}=0,\qquad\forall r>0.
\end{align*}
Consequently,
\begin{align}\label{eq:jacquet rephrasing}
V(\tau,s)_{N_{\bigidx-\smallidx},\psi_{\gamma}^{-1}}=(V(\tau,s)_{w_0})_{N_{\bigidx-\smallidx},\psi_{\gamma}^{-1}}.
\end{align}

Finally we mention that Claims~\ref{claim:jacquet module vanishes for
r > 0} and \ref{claim:omega f reduces to sum when support is compact} 
also imply that $\Omega(f_s)$, for any fixed $s$, has a sense
as a principal value. Indeed let
$\{R_{\smallidx,\bigidx}^v\}_{v=0}^{\infty}$ be an increasing
sequence of compact open subgroups, which exhausts
$R_{\smallidx,\bigidx}$. The limit
\begin{align*}
\Omega_{p.v.}(f_s)=\lim_{v\rightarrow\infty}\int_{R_{\smallidx,\bigidx}^v}f_s(w_{\smallidx,\bigidx}r,1)\psi_{\gamma}(r)dr
\end{align*}
exists and is finite. To explain this, first note that according to \eqref{eq:jacquet
rephrasing}, $f_s\in V(\tau,s)$ can be written as
$f_s=f_s^{(1)}+f_s^{(2)}$ with $f_s^{(1)}\in V(\tau,s)_{w_0}$ and
$f_s^{(2)}\in V(\tau,s)(N_{\bigidx-\smallidx},\psi_{\gamma}^{-1})$, where $V(\tau,s)(N_{\bigidx-\smallidx},\psi_{\gamma}^{-1})$ denotes
the subspace of $V(\tau,s)$ generated by finite sums $\sum_{i=1}^m\tau(n_i)v_i-\psi_{\gamma}^{-1}(n_i)v_i$, $n_i\in N_{\bigidx-\smallidx}$,
$v_i\in V(\tau,s)$. For $f_s^{(2)}$, the integral over $R_{\smallidx,\bigidx}^v$ vanishes when $v>>0$. For $f_s^{(1)}$,
the integral becomes a finite sum once $v$ is large enough, because $f_s^{(1)}$ is compactly supported modulo
$Q_{\bigidx}^{w_{\smallidx,\bigidx}}$ and by virtue of Claim~\ref{claim:omega f reduces to sum when support is compact}. Now \eqref{eq:omega in
homspace}, \eqref{eq:omega right translation by GL l-1} and
Claim~\ref{claim:omega invariant for compact averaging} hold for
$\Omega_{p.v.}(f_s)$. Then Proposition~\ref{proposition:dr integral
for torus of G_l} is valid for all $s$ with the $dr$-integration
given by principal value. The analytic continuations of $\Omega(f_s)$ and $\Omega_{p.v.}(f_s)$ for $f_s\in\xi(\tau,hol,s)$
are equal, since for $\Re(s)>>0$, by Lebesgue's Dominated Convergence Theorem, $\Omega(f_s)=\Omega_{p.v.}(f_s)$.
This description resembles that of the Whittaker functional in \cite{CS2} and the arguments of this paragraph
appeared in \cite{Sh2} (Section~3).

\section{Iwasawa decomposition for
$\Psi(W,f_s,s)$}\label{subsection:Iwasawa decomposition for
Psi(W,f,s)} Here we show how to write $\Psi(W,f_s,s)$ as a finite
sum of integrals over a torus. This is done for
$f_s\in\xi(\tau,std,s)$ and the definition of holomorphic sections implies a similar form for
$f_s\in\xi(\tau,hol,s)$. 

Write an element $x\in G_1$ as
$x=diag(b,b^{-1})$
and define for $k>0$,
\begin{align*}
&G_1^{0,k}=\setof{x\in G_1}{[x]>q^k,\lfloor x\rfloor=b},\qquad
&G_1^{\infty,k}=\setof{x\in G_1}{[x]>q^k,\lfloor x\rfloor=b^{-1}}.
\end{align*}
In addition for a set $\Lambda$ let $ch_{\Lambda}$ be the characteristic function of $\Lambda$. Denote by $D$ the domain of absolute convergence of the integrals $\Psi(W,f_s,s)$ with $f_s\in\xi(\tau,hol,s)$. This is a right half-plane depending only on the representations.
\begin{proposition}\label{proposition:iwasawa decomposition of the integral}
For each integral $\Psi(W,f_s,s)$, $f_s\in\xi(\tau,std,s)$ there exist integrals $I_s^{(1)},\ldots, I_s^{(m)}$ such that for all $s\in D$, $\Psi(W,f_s,s)=\sum_{i=1}^mI_s^{(i)}$.  If $\smallidx\leq\bigidx$, each $I_s^{(i)}$ is of the form
\begin{align}\label{int:iwasawa decomposition of the integral l<=n split}
\begin{dcases*}
\begin{array}{ll}\displaystyle
P\int_{A_{\smallidx-1}}\int_{G_1}ch_{\Lambda}(x)W^{\diamond}(ax)W'(
diag(a,\lfloor x\rfloor,I_{\bigidx-\smallidx}))\\\displaystyle
\qquad(\absdet{a}\cdot[x]^{-1})^{\smallidx-\half\bigidx+s-\half}\delta_{B_{G_{\smallidx}}}^{-1}(a)dxda
\end{array}&\text{split $G_{\smallidx}$,}\\\\
P\int_{A_{\smallidx-1}}W^{\diamond}(a)W'(diag(a,I_{\bigidx-\smallidx+1}))
\absdet{a}^{\smallidx-\half\bigidx+s-\half}\delta_{B_{G_{\smallidx}}}^{-1}(a)da
&\text{\quasisplit\ $G_{\smallidx}$.}
\end{dcases*}
\end{align}
Here $P\in\C[q^{-s},q^s]$, $W^{\diamond}\in\Whittaker{\pi}{\psi_{\gamma}^{-1}}$, $W'\in\Whittaker{\tau}{\psi}$. In the split case, $\Lambda$ is either $G_1^{0,k}$ or $G_1^{\infty,k}$ for a constant $k>0$, or a compact open subgroup of
$G_1$. In all cases $\Lambda$ is independent of $s$.

When $\smallidx>\bigidx$, $I_s^{(i)}$ takes the form
\begin{align}\label{int:iwasawa decomposition of the integral l>n}
P\int_{A_{\bigidx}}W^{\diamond}(diag(a,I_{2(\smallidx-\bigidx)},a^*))W'(a)\absdet{a}^{\frac32\bigidx-\smallidx+s+\half}\delta_{B_{H_{\bigidx}}}^{-1}(a)da.
\end{align}
\end{proposition}
\begin{remark}
Note that in the above integrals, except for $P$ and the exponents of $\absdet{a}$ and $[x]$, none of the terms depend on $s$.
\end{remark}

\begin{proof}[Proof of Proposition~\ref{proposition:iwasawa decomposition of the integral}] 
First assume $\smallidx\leq\bigidx$. According to the Iwasawa decomposition of $G_{\smallidx}$ and since $W$ and $f_s$ are $K_{G_{\smallidx}}$-finite, $\Psi(W,f_s,s)$ can be written as a sum of integrals of the form
\begin{align}\label{int:first iwasawa decomp}
\int_{T_{G_{\smallidx}}}W^{\diamond}(t)\Omega(t\cdot f_s')\delta_{B_{G_{\smallidx}}}^{-1}(t)dt
=\int_{A_{\smallidx-1}}\int_{G_1}W^{\diamond}(ax)\Omega(ax\cdot f_s')\delta_{B_{G_{\smallidx}}}^{-1}(a)dxda,
\end{align}
where $W^{\diamond}\in\Whittaker{\pi}{\psi_{\gamma}^{-1}}$ and $f_s'\in\xi(\tau,hol,s)$. This writing depends on the subgroups of $K_{G_{\smallidx}}$ for which $W$ and $f_s$ are right-invariant and since these are independent of $s$, this decomposition is independent of $s$. This means that there exists a finite number of integrals, each of the form \eqref{int:first iwasawa decomp}, such that for all $s$ their sum equals $\Psi(W,f_s,s)$. Henceforth whenever we write such a decomposition, it will be independent of $s$ in this sense.

If $G_{\smallidx}$ is \quasisplit\ \eqref{int:first iwasawa decomp} reduces to a finite sum of integrals of the form
\begin{align*}
\int_{A_{\smallidx-1}}W^{\diamond}(ax_i)\Omega(ax_i\cdot f_s')\delta_{B_{G_{\smallidx}}}^{-1}(a)da.
\end{align*}
Here the elements $x_i\in G_1$ belong to a finite set independent of
$s$. The section $x_i\cdot f_s'$ is still holomorphic so we can
decompose each integral once more to a sum of integrals of the form
\begin{align*}
P\int_{A_{\smallidx-1}}W^{\diamond}(a)\Omega(a\cdot f_s')\delta_{B_{G_{\smallidx}}}^{-1}(a)da,
\end{align*}
where $P\in\C[q^{-s},q^s]$ and the section $f_s'$ is standard. Here
we changed the order of integration over $A_{\smallidx-1}\times
R_{\smallidx,\bigidx}$ and (finite) summation. This is allowed since
in our domain of absolute convergence, $\Psi(W^{\diamond},f_s'',s)$
is absolutely convergent for all $W^{\diamond}$ and
$f_s''\in\xi(\tau,hol,s)$ whence
\begin{align}\label{int:absolute convergence f_s''}
\int_{A_{\smallidx-1}}\int_{R_{\smallidx,\bigidx}}|W^{\diamond}|(a) |f_s''|(w_{\smallidx,\bigidx}ra,1)\delta_{B_{G_{\smallidx}}}^{-1}(a)drda<\infty.
\end{align}
Now use Proposition~\ref{proposition:dr integral for torus of G_l} (regarding $\smallidx=\bigidx$, see Remark~\ref{remark:l=n still writing dr integral for torus of G_l}) to obtain a sum of integrals
\begin{align}\label{int:iwasawa decomposition final form quasisplit}
P\int_{A_{\smallidx-1}}W^{\diamond}(a)W'(diag(a,I_{\bigidx-\smallidx+1}))
\absdet{a}^{\smallidx-\half\bigidx+s-\half}\delta_{B_{G_{\smallidx}}}^{-1}(a)da.
\end{align}
Here $W'\in\Whittaker{\tau}{\psi}$. This is the required form in the \quasisplit\ case. Again, after plugging in the formula of Proposition~\ref{proposition:dr integral for torus of G_l} we changed the order of integration and summation. This is allowed because we may take $f_s''$ as in Lemma~\ref{lemma:f with small support}, defined with respect to an arbitrary $W'$, then \eqref{int:absolute convergence f_s''} implies
\begin{align*}
\int_{A_{\smallidx-1}}|W^{\diamond}|(a)|W'|(diag(a,I_{\bigidx-\smallidx+1}))
\absdet{a}^{\smallidx-\half\bigidx+\Re(s)-\half}\delta_{B_{G_{\smallidx}}}^{-1}(a)da<\infty.
\end{align*}

If $G_{\smallidx}$ is split we take $k>0$ as in
Proposition~\ref{proposition:dr integral for torus of G_l} and write
the $dx$-integration in \eqref{int:first iwasawa decomp} as a sum of
integrals, the first over $\setof{x\in G_1}{[x]>q^k}$ and the second
over $\setof{x\in G_1}{[x]\leq q^k}$. When $[x]\leq q^k$ we get as
in the \quasisplit\ case a finite sum of integrals of the form
\eqref{int:iwasawa decomposition final form quasisplit}. This form corresponds to
\eqref{int:iwasawa decomposition of the integral l<=n split} when we take
$\Lambda=\mathcal{N}_{G_{\smallidx},k'}$ with $k'>>0$ (depending on $W^{\diamond}$ and $W'$).
The
justification is similar to the above: analogously to
\eqref{int:absolute convergence f_s''} we have
\begin{align}\label{int:absolute convergence f_s'' split}
\int_{A_{\smallidx-1}}\int_{G_1}\int_{R_{\smallidx,\bigidx}}|W^{\diamond}|(ax)
|f_s''|(w_{\smallidx,\bigidx}rax,1)\delta_{B_{G_{\smallidx}}}^{-1}(a)drdxda<\infty
\end{align}
and we can use Lemma~\ref{lemma:f with small support}.

Assume $[x]>q^k$. We apply Proposition~\ref{proposition:dr integral
for torus of G_l} and get that \eqref{int:first iwasawa decomp} is a
sum of integrals of the form
\begin{align*}
P\int_{A_{\smallidx-1}}\int_{\Lambda}W^{\diamond}(ax)W'(diag(a,\lfloor x\rfloor,I_{\bigidx-\smallidx}))
(\absdet{a}\cdot[x]^{-1})^{\smallidx-\half\bigidx+s-\half}\delta_{B_{G_{\smallidx}}}^{-1}(a)dxda,
\end{align*}
with $\Lambda=G_1^{0,k},G_1^{\infty,k}$. Here the justification to
the last step - after plugging in Proposition~\ref{proposition:dr
integral for torus of G_l}, is a bit more complicated. First note
that for a fixed $W'$, it is enough to prove the absolute
convergence of the last integral for $k>>0$ (i.e., not necessarily
the $k$ chosen using Proposition~\ref{proposition:dr integral for
torus of G_l}). Let $W'$ be given and assume $\Lambda=G_1^{0,k}$. Consider the elements $k_0$ and $h_1$ of
Lemma~\ref{lemma:torus elements in double cosets}. According to the proof of
the lemma (see Remark~\ref{remark:torus elements in double cosets h_1 and k_0
small}) it is possible to take $k_0$ and $h_1$ such that $W'$ is
right-invariant by
$\rconj{(w_{\smallidx,\bigidx}h_1)^{-1}}\mathcal{N}_{H_{\bigidx},k_0}\cap
Q_{\bigidx}$. Therefore we may define $f_s''$ with support in
$Q_{\bigidx}w_{\smallidx,\bigidx}h_1\mathcal{N}_{H_{\bigidx},k_0}$,
such that $f_s''(w_{\smallidx,\bigidx}h_1n',b)=W'(b)$
($b\in\GL{\bigidx}$, $n'\in\mathcal{N}_{H_{\bigidx},k_0}$). Then for
all $a\in A_{\smallidx-1}$ and $x\in \Lambda$, where $k>k_0$ is
given by Lemma~\ref{lemma:torus elements in double cosets},
\begin{align*}
&\int_{R_{\smallidx,\bigidx}}|f_s''|(w_{\smallidx,\bigidx}rax,1)dr\\\notag&=(\absdet{a}\cdot[x]^{-1})^{\smallidx-\half\bigidx+\Re(s)-\half}
\int_{R_{\smallidx,\bigidx}}|f_s''|(w_{\smallidx,\bigidx}rh_1,diag(a,\lfloor x\rfloor,I_{\bigidx-\smallidx}))dr.
\end{align*}
Putting this into \eqref{int:absolute convergence f_s'' split} (with
$\Lambda$ instead of $G_1$) and using the smoothness of $f_s''$
yields
\begin{align*}
\int_{A_{\smallidx-1}}\int_{\Lambda}|W^{\diamond}|(ax)|W'|(diag(a,\lfloor x\rfloor,I_{\bigidx-\smallidx}))(\absdet{a}\cdot[x]^{-1})^{\smallidx-\half\bigidx+\Re(s)-\half}\delta_{B_{G_{\smallidx}}}^{-1}(a)dxda<\infty.
\end{align*}
The same argument applies to $\Lambda=G_1^{\infty,k}$, with $h_2$ of
Lemma~\ref{lemma:torus elements in double cosets}.

The result for $\smallidx>\bigidx$ follows again from the Iwasawa
decomposition and from the fact that for each $a\in\GL{\bigidx}$,
the support of the function on $R^{\smallidx,\bigidx}$ given by
\begin{align*}
r\mapsto W(ar)=W(\left(\begin{array}{ccccc}a\\r&I_{\smallidx-\bigidx-1}\\&&I_2\\
&&&I_{\smallidx-\bigidx-1}\\&&&r'&a^*\\\end{array}\right))
\end{align*} is contained in a compact set, independent of $a$ (see \cite{Soudry} Section~4). We mention that the
change to the measure of $R^{\smallidx,\bigidx}$ by conjugating with
$\rconj{(w^{\smallidx,\bigidx})^{-1}}a$ is
$\absdet{a}^{\bigidx-\smallidx+1}$.
\end{proof} 
\begin{remark}\label{remark:taking suitable half-plane for inner integrals}
The absolute convergence of all integrals in the proof is governed
by the parameters of the representations. Hence the justifications
to the formal manipulations concerning integrals with $W^\diamond$ and $W'$ (e.g. \eqref{int:iwasawa decomposition final form quasisplit})
can be obviated, simply by taking a suitable right half-plane $D$.
\end{remark}
This proposition has the following corollary, which is a direct
consequence of the properties of Whittaker functions for
supercuspidal representations (see \cite{CS2} Section~6).
\begin{corollary}\label{corollary:integral is holomorphic for supercuspidal data}
Let $\pi$ and $\tau$ be supercuspidal representations and assume that if
$\smallidx=\bigidx=1$, $G_1$ is \quasisplit. Then $\Psi(W,f_s,s)$ is
holomorphic for $f_s\in\xi(\tau,hol,s)$. 
\end{corollary}

\section{The integrals are \nontrivial}\label{subsection:the
integrals are non-trivial} The following proposition shows that the
integrals do not vanish identically. The actual result means more
than this, namely that the fractional ideal spanned by the integrals
contains the constant $1$ 
and consequently, the g.c.d. of the integrals can be taken in the form
$P(q^{-s})^{-1}$ where $P\in\C[X]$, see
Section~\ref{section:gcd and epsilon factor}. 
It is interesting to note that Gelbart and Piatetski-Shapiro
\cite{GPS} (Section~12) proved a similar result, foreseeing a
definition of the $L$-function of $\pi\times\tau$ as a g.c.d., as we
shall attempt to define.

\begin{proposition}\label{proposition:integral can be made constant}
There exist $W\in\Whittaker{\pi}{\psi_{\gamma}^{-1}}$ and $f_s\in\xi(\tau,hol,s)$ such that
$\Psi(W,f_s,s)$ is absolutely convergent and equals $1$, for all
$s$.
\end{proposition}
\begin{proof}[Proof of Proposition~\ref{proposition:integral can be made constant}] 
The proof follows the arguments of Soudry \cite{Soudry} (Section~6).
First assume $\smallidx>\bigidx$. Let
$W_0\in\Whittaker{\pi}{\psi_{\gamma}^{-1}}$,
$W'\in\Whittaker{\tau}{\psi}$ be such that $W_0(1)\ne0$, $W'(1)\ne0$
(such $W_0,W'$ exist). Define $W_1$ for $W_0$ using
Lemma~\ref{lemma:W with small support} with $j=0$ and $k_1>>0$
(depending on $W_0,W'$) and set $W=(w^{\smallidx,\bigidx})^{-1}\cdot
W_1$. Select
$f_s=ch_{\mathcal{N}_{H_{\bigidx},k},W',s}\in\xi(\tau,std,s)$
for $k>>k_1$ large. 
Assuming that $\Psi(W,f_s,s)$ is absolutely convergent at $s$, we replace the integration over
$\lmodulo{U_{H_{\bigidx}}}{H_{\bigidx}}$ with an integration over
$A_{\bigidx}\overline{Z_{\bigidx}}\overline{U_{\bigidx}}$ and obtain
\begin{align*}
\Psi(W,f_s,s)=\int_{A_{\bigidx}}\int_{\overline{Z_{\bigidx}}}\int_{\overline{U_{\bigidx}}}
\int_{R^{\smallidx,\bigidx}}W_1(rw^{\smallidx,\bigidx}azu(w^{\smallidx,\bigidx})^{-1})f_s(azu,1)\delta(a)drdudzda,
\end{align*}
where $\delta$ is an appropriate modulus character. Since $\overline{U_{\bigidx}}\cap Q_{\bigidx}\mathcal{N}_{H_{\bigidx},k}=\overline{U_{\bigidx}}\cap\mathcal{N}_{H_{\bigidx},k}$ and $k>>k_1$,
the integration over $\overline{U_{\bigidx}}$
reduces to a constant. Then by Lemma~\ref{lemma:W with small
support} the $drdzda$-integration is reduced and the integral equals
$cW_0(1)W'(1)$, where $c>0$ is a constant independent of $s$ ($c$
equals a product of volumes). Hence after suitably normalizing
$W_0$, $\Psi(W,f_s,s)=1$. The same arguments show
\begin{align*}
\int_{\lmodulo{U_{H_{\bigidx}}}{H_{\bigidx}}}\int_{R^{\smallidx,\bigidx}}|W|(rw^{\smallidx,\bigidx}h)|f_s|(h,1)drdh=c|W_0|(1)|W'|(1).
\end{align*}
Therefore $\Psi(W,f_s,s)$ is absolutely convergent for all $s$. This also
justifies our computation.

In the case $\smallidx\leq\bigidx$ take $W_0$ as above and $W'$ such
that $W'(t_{\gamma})\ne0$, where $t_{\gamma}$ is defined in Lemma~\ref{lemma:f with small support}. Then $W_1$ is obtained as before and we
put $W=W_1$. Now $f_s$ is selected by Lemma~\ref{lemma:f with small
support} for $W'$ with a large $k$ (depending on $W,W'$). We use
\eqref{eq:omega right translation by GL l-1} and get
\begin{align*}
\Psi(W,f_s,s)=&
\int_{A_{\smallidx-1}}\int_{\overline{Z_{\smallidx-1}}}\int_{\overline{V_{\smallidx-1}}}\int_{G_1}
W(azvx)\delta(a)\absdet{a}^{\smallidx-\half\bigidx+s-\half}\\\notag&\int_{R_{\smallidx,\bigidx}}f_s(w_{\smallidx,\bigidx}rvx,az)\psi_{\gamma}(r)drdxdvdzda.
\end{align*}
By the selection of $f_s$ this equals
\begin{align*}
c'|\gamma|^{\smallidx(\half\bigidx+s-\half)}\int_{A_{\smallidx-1}}\int_{\overline{Z_{\smallidx-1}}}
W(az)\absdet{a}^{\smallidx-\half\bigidx+s-\half}W'(azt_{\gamma})\delta(a)dzda.
\end{align*}
Here $c'\in\C$ is independent of $s$. As in the previous case we end
up with
$e(q^s)W_0(1)W'(t_{\gamma})$,
where
$e(q^s)=c'|\gamma|^{\smallidx(\half\bigidx+s-\half)}\in\C[q^{-s},q^s]^*$.
The result follows with $e(q^s)^{-1}f_s\in\xi(\tau,hol,s)$
instead of $f_s$. Also observe that if we replace $f_s$ with $|f_s|$
and drop $\psi_{\gamma}$, the arguments of Lemma~\ref{lemma:f with
small support} imply (for some constant $c''$)
\begin{align*}
\int_{R_{\smallidx,\bigidx}}|f_s|(w_{\smallidx,\bigidx}rvx,az)dr=c''|\gamma|^{\smallidx(\half\bigidx+\Re(s)-\half)}|W'|(azt_{\gamma}).
\end{align*}
This shows that $\Psi(W,f_s,s)$ is absolutely convergent for all
$s$.
\end{proof} 

\section{Meromorphic continuation}\label{subsection:meromorphic
continuation}
One of the fundamental requirements of the integrals is meromorphic
continuation to functions in $\C(q^{-s})$. Let $\Phi=\Phi(s)$ be a complex-valued function defined on some domain
$D_{\Phi}\subset\C$, which contains a non-empty open subset. We say that $\Phi$ has a meromorphic continuation to a function in
$\C(q^{-s})$, or extends to a function in $\C(q^{-s})$, if there is some $Q\in\C(q^{-s})$ such that for all $s\in D_{\Phi}$, $\Phi(s)=Q(q^{-s})$. Note that if $Q$ exists,
it is necessarily unique. Also by definition $Q$ has no poles in $D_{\Phi}$, i.e., the function $s\mapsto Q(q^{-s})$ is holomorphic on $D_{\Phi}$.

Let $W\in\Whittaker{\pi}{\psi_{\gamma}^{-1}}$ and $f_s\in\xi(\tau,hol,s)$. Recall that there is some $s_0>0$
depending only on $\pi$ and $\tau$, such that for all $\Re(s)>s_0$, $\Psi(W,f_s,s)$ is absolutely convergent
(see Section~\ref{subsection:the integrals}). 
We provide two proofs of the meromorphic continuation: directly with the aid of
Proposition~\ref{proposition:iwasawa decomposition of the integral}, or using Bernstein's continuation principle
(in \cite{Banks}, see also the formulation of \cite{Mu} Section~8) combined with the uniqueness results of Chapter~\ref{chapter:uniqueness}.
\begin{claim}\label{claim:meromorphic continuation 1}
There is a constant $s_1\geq s_0$ such that the following holds. For any
$W\in\Whittaker{\pi}{\psi_{\gamma}^{-1}}$ and $f_s\in\xi(\tau,hol,s)$ there is $Q\in\C(q^{-s})$ such that
$\Psi(W,f_s,s)=Q(q^{-s})$ for all $\Re(s)>s_1$. The poles of $Q$, in $q^{-s}$,
belong to a finite set depending only on $\pi$ and $\tau$.
\end{claim}
\begin{proof}[Proof of Claim~\ref{claim:meromorphic continuation 1}]
We argue as in \cite{JPSS} (Section~2.7, see also \cite{Soudry} Section~4.3).
According to Proposition~\ref{proposition:iwasawa decomposition of the integral}, in the right half-plane $\Re(s)>s_0$ we can write
$\Psi(W,f_s,s)$ as a sum of integrals $I_s^{(i)}$. Consider each $I_s^{(i)}$ separately. Put $I=I_s^{(i)}$.

We can plug the
asymptotic expansions of Whittaker functions from Section~\ref{section:whittaker props} into $I$.
Taking $s'\geq s_0$, depending only on $\pi$ and $\tau$, we see that $I$ equals a sum of products of Tate-type integrals.
For example, consider the case $1<\smallidx\leq\bigidx$ and split $G_{\smallidx}$. Then $I$ takes the form
\begin{align*}
\int_{A_{\smallidx-1}}\int_{G_1}ch_{\Lambda}(x)W^{\diamond}(ax)W'(
diag(a,\lfloor x\rfloor,I_{\bigidx-\smallidx}))(\absdet{a}\cdot[x]^{-1})^{\smallidx-\half\bigidx+s-\half}\delta_{B_{G_{\smallidx}}}^{-1}(a)dxda.
\end{align*}
Set $a=diag(a_1,\ldots,a_{\smallidx-1})\in A_{\smallidx-1}$. Using the asymptotic expansions for $W^{\diamond}$ and $W'$ and
changing $a_v\mapsto a_va_{v+1}$ for $1\leq v<\smallidx-1$, the integral is equal to a sum of products
$\prod_{j=1}^{\smallidx-1}I_j$ with each $I_j$ of the following form. First assume $j<\smallidx-1$.
\begin{align*}
I_j=\begin{dcases}
\int_{F^*}(\phi_j\cdot\phi_j'\cdot\eta_j\cdot\eta_j')(a_j)|a_j|^{-\smallidx-\half\bigidx+\frac32+s}d^*a_j&j=1,\\
\int_{F^*}(\phi_j\cdot\phi_j'\cdot\eta_j\cdot\eta_j')(a_j)|a_j|^{4j-2\smallidx-\bigidx-3+2s}d^*a_j&1<j<\smallidx-1.
\end{dcases}
\end{align*}
Here $\phi_j,\phi_j'\in\mathcal{S}(F)$, $\eta_j,\eta_j'$ are finite functions on $F^*$. The functions
$\phi_j,\eta_j$ correspond
to the expansion of $W^{\diamond}$ and $\phi_j',\eta_j'$ correspond to the expansion of $W'$. Since any finite function $\eta$ on $F^*$
can be written as a finite sum $\sum_{r}c_{r}\vartheta(a)^{k_{r}}\mu_{r}(a)$ ($a\in F^*$), where
$c_{r}\in\C$, $0\leq k_{r}\in\Integers$ and $\mu_{r}$ is a character, each $I_j$ for $j<\smallidx-1$
is a finite sum of integrals of the form
\begin{align}\label{int:tate example in the proof}
\int_{F^*}\phi(a)\vartheta^m(a)\theta(a)|a|^{A+Bs}d^*a,
\end{align}
where $\phi\in\mathcal{S}(F)$, $0\leq m\in\Integers$, $\theta$ is a unitary character, $A\in\C$ and $0<B\in\Integers$ (actually, here $B=1,2$).
An integral of the form \eqref{int:tate example in the proof} is called a Tate-type integral.
Clearly it has a rational function $Q\in\C(q^{-s})$ such that for $\Re(s)>s_1\geq s'$, the integral is equal to $Q(q^{-s})$. The
poles of $Q$, in $q^{-s}$, belong to a finite set. The value of $s_1$ and this set depend only on the representations.

Regarding $I_{\smallidx-1}$, it takes the form
\begin{align*}
&\int_{F^*}\int_{F^*}ch_{\Lambda}(b)(\phi\cdot\eta)(a_{\smallidx-1}b^{-1},a_{\smallidx-1}b)
(\phi'\cdot\eta')(a_{\smallidx-1}\lfloor b\rfloor^{-1})[(\phi''\cdot\eta'')(\lfloor b\rfloor)]\\&
|a_{\smallidx-1}|^{2\smallidx-\bigidx-7+2s}
|\lfloor b\rfloor|^{\smallidx-\half\bigidx-\half+s}d^*a_{\smallidx-1}d^*b.
\end{align*}
Here the integral over $G_1$ was written as an integral over $F^*$;
$\lfloor b\rfloor=b$ if $|b|\leq|b|^{-1}$ or equivalently if $|b|\leq1$, otherwise $\lfloor b\rfloor=b^{-1}$;
$\Lambda$ is either $\setof{b\in F^*}{|b|<q^{-k}}$ (replacing $G_1^{0,k}$),
$\setof{b\in F^*}{|b|>q^{k}}$ (instead of $G_1^{\infty,k}$), for a constant $k>0$, or a compact open subgroup of
$F^*$; $\phi\in\mathcal{S}(F^2)$, $\eta$ is a finite function on $(F^*)^2$, $\phi$ and $\eta$ correspond to
$W^{\diamond}$; $\phi',\phi''\in \mathcal{S}(F)$, $\eta'$ and $\eta''$ are finite functions on $F^*$, $\phi',\phi'',\eta'$ and $\eta''$ correspond to $W'$;
the factor $[(\phi''\cdot\eta'')(\lfloor b\rfloor)]$ appears only if $\smallidx<\bigidx$.

Assume that $\Lambda$ is a compact open subgroup. We claim that the integrand is a smooth function of $b$.

In general if $\varphi\in\mathcal{S}(F^2)$, the mapping
$(x,y)\mapsto \varphi(xy,xy^{-1})$ ($x,y\in F$, $y\ne0$) vanishes unless $x$ belongs to some compact subset of $F$ (that is, $x$ is bounded from above).
If $y$ belongs to a compact open subset $\Lambda_0$ of $F^*$,
the function $\varphi(xy,xy^{-1})$ is smooth in $y$, i.e., there is some $k_0$ such that
$\varphi(x(y(1+\mathcal{P}^{k_0})),x(y(1+\mathcal{P}^{k_0}))^{-1})=\varphi(xy,xy^{-1})$ for all $x\in F$ and $y\in\Lambda_0$
(because $y$ is bounded from above and below). Here $k_0$ depends on $\varphi$ and $\Lambda_0$.
Similarly if $\varphi\in\mathcal{S}(F)$, $x$ belongs to a compact subset of $F$ and $y\in\Lambda_0$, the mappings
$(x,y)\mapsto\varphi(xy),\varphi(xy^{-1})$ are smooth in $y$. In particular, $\varphi(y)$ and $\varphi(y^{-1})$ are smooth in $y$.

Additionally, for any $y\in F^*$, $k_0>0$ and $z\in F^*$ with $|z|=1$,
if $\lfloor y\rfloor = y$ then $\lfloor y(1+\varpi^{k_0}z)\rfloor = y(1+\varpi^{k_0}z)$
and if $\lfloor y\rfloor = y^{-1}$, $\lfloor y(1+\varpi^{k_0}z)\rfloor = y^{-1}(1+\varpi^{k_0}z)^{-1}$. Hence, for example,
the function $\varphi(\lfloor y\rfloor)$ is smooth in $y$ when $y\in\Lambda_0$.

Regarding the finite functions, any finite function $\eta_0$ on $(F^*)^c$ can be expressed as a finite sum
$\eta_0(y)=\sum_{r}\mu_{r}(y)P_{r}(y)$, where $\mu_{r}$ is a character and $P_{r}$ is a polynomial in
the valuation vector of $y$. Therefore $\eta_0(y)$ and $\eta_0(y^{-1})$ are smooth functions of $y$. Since $P_{r}(xy)$ is a polynomial in
$\vartheta(x_i),\ldots,\vartheta(x_c),\vartheta(y_1),\ldots,\vartheta(y_c)$, both $\eta_0(xy)$ and $\eta_0(xy^{-1})$
are smooth in $y$. Here smoothness holds even if $y$ is not restricted to a compact subset.

Applying these general remarks to the integrand at hand we get that it is a smooth function of $b$. Therefore the $db$-integration can be ignored and we
can again reduce to integrals of the form \eqref{int:tate example in the proof}.

Now assume $\Lambda=\setof{b\in F^*}{|b|<q^{-k}}$.
If $ch_{\Lambda}(b)\ne0$ then $b=\lfloor b\rfloor$ and $I_{\smallidx-1}$ becomes
\begin{align}\label{int:lambda is G_1 0 in meromorphic proof}
&\int_{F^*}\int_{F^*}ch_{\Lambda}(b)(\phi\cdot\eta)(a_{\smallidx-1}\lfloor b\rfloor^{-1},a_{\smallidx-1}\lfloor b\rfloor)
(\phi'\cdot\eta')(a_{\smallidx-1}\lfloor b\rfloor^{-1})[(\phi''\cdot\eta'')(\lfloor b\rfloor)]\\\notag&
|a_{\smallidx-1}|^{2\smallidx-\bigidx-7+2s}
|\lfloor b\rfloor|^{\smallidx-\half\bigidx-\half+s}d^*a_{\smallidx-1}d^*b.
\end{align}
Change $a_{\smallidx-1}\mapsto a_{\smallidx-1}\lfloor b\rfloor$. The integral equals
\begin{align}\label{int:lambda is G_1 0 in meromorphic proof 2}
&\int_{F^*}\int_{F^*}ch_{\Lambda}(b)(\phi\cdot\eta)(a_{\smallidx-1},a_{\smallidx-1}\lfloor b\rfloor^2)
(\phi'\cdot\eta')(a_{\smallidx-1})[(\phi''\cdot\eta'')(\lfloor b\rfloor)]\\\notag&
|a_{\smallidx-1}|^{2\smallidx-\bigidx-7+2s}
|\lfloor b\rfloor|^{3\smallidx-\frac32\bigidx-\frac{15}2+3s}d^*a_{\smallidx-1}d^*b.
\end{align}
If $a_{\smallidx-1}$ is bounded from below, the integrand is a smooth function of $a_{\smallidx-1}$ ($|\lfloor b\rfloor|\leq1$)
whence we get a sum of integrals of the form
\begin{align}\label{int:lambda is G_1 0 in meromorphic proof the db integral alone}
&\int_{\mathcal{P}^{k+1}}(\phi^{\star}\cdot\eta^{\star})(b^2)
[(\phi''\cdot\eta'')(b)]|b|^{3\smallidx-\frac32\bigidx-\frac{15}2+3s}d^*b,
\end{align}
where $\phi^{\star}\in\mathcal{S}(F)$ and $\eta^{\star}$ is a finite function on $F^*$. Each of these integrals can be written
as a sum of integrals of the form \eqref{int:tate example in the proof}.

Thus we can assume that $a_{\smallidx-1}$ is small with respect to the support of $\phi$, that is,
$\phi(a_{\smallidx-1},a_{\smallidx-1}\lfloor b\rfloor^2)$ is constant and can be ignored. Integral~\eqref{int:lambda is G_1 0 in meromorphic proof 2}
becomes a sum of integrals of the form
\begin{align*}
&\int_{F^*}\int_{F^*}ch_{\Lambda}(b)\varsigma_1(a_{\smallidx-1})\varsigma_2(a_{\smallidx-1}\lfloor b\rfloor^2)
(\phi'\cdot\eta')(a_{\smallidx-1})[(\phi''\cdot\eta'')(\lfloor b\rfloor)]\\\notag&
|a_{\smallidx-1}|^{2\smallidx-\bigidx-7+2s}
|\lfloor b\rfloor|^{3\smallidx-\frac32\bigidx-\frac{15}2+3s}d^*a_{\smallidx-1}d^*b.
\end{align*}
Here $\varsigma_1$ and $\varsigma_2$ are finite functions on $F^*$ (obtained from $\eta$).

Since $\varsigma_2(xy)$ can be written as a finite sum
$\sum_{r}c_r(\vartheta(x)+\vartheta(y))^{k_r}\mu_r(x)\mu_r(y)$ (where $c_r\in\C$, $0\leq k_r\in\Integers$ and $\mu_r$ denotes a character),
we can write this integral as a finite sum $\sum_uT_{u,1}T_{u,2}$, where $T_{u,1}$
takes the form \eqref{int:tate example in the proof} (obtained from a $d^*a_{\smallidx-1}$-integral)
and $T_{u,2}$ is of the form \eqref{int:lambda is G_1 0 in meromorphic proof the db integral alone}.
This completes the proof for $\Lambda=\setof{b\in F^*}{|b|<q^{-k}}$.

Finally if $\Lambda=\setof{b\in F^*}{|b|>q^{k}}$, we replace
$(\phi\cdot\eta)(a_{\smallidx-1}\lfloor b\rfloor^{-1},a_{\smallidx-1}\lfloor b\rfloor)$ with
$(\phi\cdot\eta)(a_{\smallidx-1}\lfloor b\rfloor,a_{\smallidx-1}\lfloor b\rfloor^{-1})$ in \eqref{int:lambda is G_1 0 in meromorphic proof},
change $a_{\smallidx-1}\mapsto a_{\smallidx-1}\lfloor b\rfloor$ and since if $|b|\ne1$, $\lfloor b\rfloor=\lfloor b^{-1}\rfloor$,
we can change $b\mapsto b^{-1}$ and again reach \eqref{int:lambda is G_1 0 in meromorphic proof 2} (with $(\phi\cdot\eta)(a_{\smallidx-1}\lfloor b\rfloor^2,a_{\smallidx-1})$).

The arguments when either $\smallidx=1$, $G_{\smallidx}$ is \quasisplit\ or $\smallidx>\bigidx$ are similar and simpler. E.g., if
$G_{\smallidx}$ is \quasisplit\ or $\smallidx>\bigidx$, there is no integration over $G_1$ to handle.

Altogether we have shown that each $I_s^{(i)}$ has a meromorphic continuation
$Q^{(i)}\in\C(q^{-s})$ such that $I_s^{(i)}=Q^{(i)}(q^{-s})$ for all $\Re(s)>s_1$, where $s_1\geq s'$.
The poles of $Q^{(i)}$, in $q^{-s}$, belong to a finite set $\Theta$. The constant $s_1$ and the set $\Theta$ depend only on the representations.
The result for $\Psi(W,f_s,s)$ immediately follows.
\end{proof} 
\begin{remark}
As explained in Remark~\ref{remark:taking suitable half-plane for inner integrals}, some of the justifications in the proof of
Proposition~\ref{proposition:iwasawa decomposition of the integral} can be ignored if we take some $s_0'\geq s_0$ and write the decomposition
to integrals $I_s^{(i)}$ in $\Re(s)>s_0'$. This does not interfere with the proof of the last claim, just take $s'\geq s_0'$.
\end{remark}

\begin{remark}\label{remark:tate integrals for proving convergence}
The proof of Claim~\ref{claim:meromorphic continuation 1} also implies the absolute convergence of the integrals.
In more detail, one can formally apply the arguments of
Proposition~\ref{proposition:iwasawa decomposition of the integral} to $\Psi(W,f_s,s)$ and write
$\Psi(W,f_s,s)$ as a sum of integrals $I_s^{(i)}$. Then proceed (again, formally) as in the proof above to
obtain sums of products of Tate-type integrals of the form \eqref{int:tate example in the proof}. Then the
absolute convergence of these last integrals implies the absolute convergence of $I_s^{(i)}$ and $\Psi(W,f_s,s)$.
This approach is similar to \cite{JPSS} (Section~2.7).
\end{remark}

Let $D'\subset\C$ be any domain containing a non-empty open set such that for all $s\in D'$,
$W\in\Whittaker{\pi}{\psi_{\gamma}^{-1}}$ and $f_s\in V(\tau,s)$, $\Psi(W,f_s,s)$ is absolutely convergent.
In $D'$, $\Psi(W,f_s,s)$ can be regarded as a bilinear form on
$\Whittaker{\pi}{\psi_{\gamma}^{-1}}\times V(\tau,s)$ and analogously to \eqref{eq:global integral in homspace l <= n} and \eqref{eq:global integral in homspace l > n}, it satisfies
\begin{align}\label{eq:bilinear special condition}
\begin{dcases}
\forall g\in G_{\smallidx},v\in N_{\bigidx-\smallidx},\quad\Psi(g\cdot W,(gv)\cdot f_s,s)=\psi_{\gamma}^{-1}(v)\Psi(W,f_s,s)\qquad&\smallidx\leq\bigidx,\\
\forall h\in H_{\bigidx},v\in
N^{\smallidx-\bigidx},\quad\Psi((hv)\cdot W,h\cdot
f_s,s)=\psi_{\gamma}^{-1}(v)\Psi(W,f_s,s)&\smallidx>\bigidx.
\end{dcases}
\end{align}
This (local) relation follows from the formal manipulations in the global construction but
can also be verified directly. Equivalently,
\begin{align*}
\Psi(W,f_s,s)\in\begin{dcases}Bil_{G_{\smallidx}}(\Whittaker{\pi}{\psi_{\gamma}^{-1}},V(\tau,s)_{N_{\bigidx-\smallidx},\psi_{\gamma}^{-1}})&\smallidx\leq\bigidx,\\
Bil_{H_{\bigidx}}((\Whittaker{\pi}{\psi_{\gamma}^{-1}})_{N^{\smallidx-\bigidx},\psi_{\gamma}^{-1}},V(\tau,s))&\smallidx>\bigidx.
\end{dcases}
\end{align*}
(See Sections~\ref{section:uniqneness l<=n} and \ref{section:uniqneness l>n}.)
Because $\Whittaker{\pi}{\psi_{\gamma}^{-1}}$ and $\Whittaker{\tau}{\psi}$ are quotients of $\pi$ and $\tau$ (recall that
in the construction of the integral we assume that $\tau$ is realized in $\Whittaker{\tau}{\psi}$),
according to Theorem~\ref{theorem:uniqueness}, outside of a finite number of values of $q^{-s}$ ($s\in\C$),
the space of bilinear forms
satisfying \eqref{eq:bilinear special condition} is at most
one-dimensional. We use this to conclude meromorphic continuation using Bernstein's continuation principle.
\begin{claim}\label{claim:meromorphic continuation 2}
There is a finite set $B$ of values of $q^{-s}$ (depending only on the representations) such that the following holds.
For any $W\in\Whittaker{\pi}{\psi_{\gamma}^{-1}}$ and $f_s\in\xi(\tau,hol,s)$ there is $Q\in\C(q^{-s})$ such that
$\Psi(W,f_s,s)=Q(q^{-s})$, for each $s\in D'$ where $q^{-s}\notin B$. The poles of $Q$ (in $q^{-s}$)
belong to $B$.
\end{claim}
\begin{proof}[Proof of Claim~\ref{claim:meromorphic continuation 2}]
As in \cite{GPS} (Section~12) and \cite{Soudry} (Section~8.4), the one-dimensionality of the
space of bilinear forms along with
Proposition~\ref{proposition:integral can be made constant} imply,
by virtue of Bernstein's continuation principle (\cite{Banks}), 
that $\Psi(W,f_s,s)$ with $f_s\in\xi(\tau,std,s)$ has a meromorphic continuation with the listed properties.
Now note that if $f_s=\sum_{i=1}^mP_if_s^{(i)}\in \xi(\tau,hol,s)$ ($P_i\in\C[q^{-s},q^s]$, $f_s^{(i)}\in\xi(\tau,std,s)$),
$\Psi(W,f_s,s)=\sum_{i=1}^mP_i\Psi(W,f_s^{(i)},s)$ for all $s\in D'$. Therefore the result extends to $f_s\in\xi(\tau,hol,s)$.
\end{proof} 
\begin{remark}
See also Bump, Friedberg and Furusawa \cite{BFF} (Section~5) who used
Bernstein's principle to conclude meromorphic continuation for
$\smallidx=1$ and unramified representations.
\end{remark}
\begin{remark}\label{remark:proving equality of W and f in any domain without Bernstein continuation principle}
There is another method for proving this claim, without appealing to Bernstein's continuation principle.
Denote by $\Psi(W,f_s,s)_{D'}$ the integral defined in $D'$. According to Claim~\ref{claim:meromorphic continuation 1},
the integral $\Psi(W,f_s,s)$ defined in $\Re(s)>>0$ extends to a
function in $\C(q^{-s})$. Thereby it is possible to regard $\Psi(W,f_s,s)$ as a bilinear form on
$\Whittaker{\pi}{\psi_{\gamma}^{-1}}\times
V(\tau,s)$ satisfying \eqref{eq:bilinear special condition}, for all $s\in D'$ such that $q^{-s}$ does not belong to some finite set
(see Section~\ref{subsection:the gamma factor}). By Theorem~\ref{theorem:uniqueness},
for all but a finite set of values of $q^{-s}$, the space of such forms is at most one-dimensional. Hence there is a finite set $B$ such that
for all $s\in D'$ with $q^{-s}\notin B$, $\Psi(W,f_s,s)$ is proportional to $\Psi(W,f_s,s)_{D'}$.
It is left to show that the proportionality factor is $1$,
then the meromorphic continuation of $\Psi(W,f_s,s)_{D'}$ is given by that of $\Psi(W,f_s,s)$.
Indeed, Proposition~\ref{proposition:integral can be made constant} implies that for a specific selection of
$W$ and $f_s$, $\Psi(W,f_s,s)$ is absolutely convergent
for all $s\in \C$ and equals $1$, hence the meromorphic continuation of $\Psi(W,f_s,s)$ is $1$ and it is equal to
$\Psi(W,f_s,s)_{D'}$ for all $s\in D'$.
\end{remark}


In particular apply Claim~\ref{claim:meromorphic continuation 2} in the domain
$\setof{s\in\C}{\Re(s)>s_0}$. Take $s_1\geq s_0$ large enough, so that the right half-plane $\Re(s)>s_1$ will not contain any $s$ for which $q^{-s}\in B$, where $B$ is given by
Claim~\ref{claim:meromorphic continuation 2}. Then for any $W$ and
$f_s\in\xi(\tau,hol,s)$ there is $Q\in\C(q^{-s})$ such that $\Psi(W,f_s,s)=Q(q^{-s})$, for all $\Re(s)>s_1$. This resembles
Claim~\ref{claim:meromorphic continuation 1}.

Both claims extend to rational sections. We restate the meromorphic continuation in a form
which is convenient for most of our arguments.
\begin{proposition}\label{proposition:meromorphic continuation}
Let $W\in\Whittaker{\pi}{\psi_{\gamma}^{-1}}$ and $f_s\in\xi(\tau,rat,s)$.
The integral $\Psi(W,f_s,s)$ defined in $\Re(s)>>0$ extends to a function in $\C(q^{-s})$.
Specifically, there are $s_1\in\R$ and $Q\in\C(q^{-s})$ such that $\Psi(W,f_s,s)=Q(q^{-s})$ for all $\Re(s)>s_1$.
If $f_s\in\xi(\tau,hol,s)$, the poles of $Q$ (in $q^{-s}$) belong to a finite set
and both this set and $s_1$ depend only on the representations.
If $f_s\notin\xi(\tau,hol,s)$, take $0\ne P\in\C[q^{-s}]$ satisfying $Pf_s\in\xi(\tau,hol,s)$.
Then the finite set of poles (containing the poles of $Q$) and $s_1$ depend only
on the representations and $P$.
\end{proposition}
\begin{proof}[Proof of Proposition~\ref{proposition:meromorphic continuation}]
The assertions concerning a holomorphic section follow from either of the above claims.
Regarding $f_s\in\xi(\tau,rat,s)$, let $s_1$ be such that the following holds for all $s$ with $\Re(s)>s_1$:
\begin{enumerate}
\item $\Psi(W,Pf_s,s)$ is absolutely convergent,
\item There is $R\in\C(X)$, independent of $s$, such that $\Psi(W,Pf_s,s)=R(q^{-s})$,
\item $P(q^{-s})\ne0$,
\item $\Psi(W,f_s,s)$ is absolutely convergent.
\end{enumerate}
Since $Pf_s\in\xi(\tau,hol,s)$, the first two properties can be satisfied by a constant $s_1$ depending
only on the representations. Regarding the last, observe that $f_s\in V(\tau,s)$ for each $s$ with $\Re(s)>s_1$, because
$Pf_s\in\xi(\tau,hol,s)$ and $P(q^{-s})\ne0$. Hence $\Psi(W,f_s,s)$ is absolutely convergent whenever
$\Re(s)>s_1$ and $s_1\geq s_0$.

We have an equality of integrals $\Psi(W,Pf_s,s)=P\Psi(W,f_s,s)$ for $s$ such that $\Re(s)>s_1$, whence
$\Psi(W,f_s,s)$ extends to $Q=P^{-1}R\in\C(q^{-s})$. The poles of $Q$ depend only on the representations (due to $R$)
and $P$.
\end{proof} 

We stress that we usually consider holomorphic sections, rational sections only appear as images of
specific intertwining operators. The following two particular cases essentially cover all integrals with rational sections that are needed.
Let $f_s\in\xi(\tau,hol,s)$ and apply Proposition~\ref{proposition:meromorphic continuation}
to $\nintertwiningfull{\tau}{s}f_s\in\xi(\tau^*,rat,1-s)$, with $P\in\C[q^{s-1}]$ such that $P\equalun\ell_{\tau}(s)^{-1}$
(see Section~\ref{subsection:the intertwining operator for tau}).
Then there is $Q\in\C(q^{-s})$ such that $\Psi(W,\nintertwiningfull{\tau}{s}f_s,1-s)=Q(q^{-s})$,
in a left half-plane ($\Re(1-s)>>0$) depending only on the representations. Moreover, the poles of $Q$ belong to a finite set depending only on the representations.
A similar result holds for $\Psi(W,\nintertwiningfull{\tau^*}{1-s}f_{1-s}',s)$,
where $f_{1-s}'\in\xi(\tau^*,hol,1-s)$. We use $P\in\C[q^{-s}]$ such that
$P\equalun\ell_{\tau^*}(1-s)^{-1}$.


Proposition~\ref{proposition:meromorphic continuation} and the proof of Claim~\ref{claim:meromorphic continuation 1}
imply the following corollary to
Proposition~\ref{proposition:iwasawa decomposition of the integral}.
\begin{corollary}\label{corollary:corro to iwasawa decomposition of the integral}
Let $W\in\Whittaker{\pi}{\psi_{\gamma}^{-1}}$ and $f_s\in\xi(\tau,hol,s)$.
The decomposition of Proposition~\ref{proposition:iwasawa decomposition of the integral},
namely $\Psi(W,f_s,s)=\sum_{i=1}^mI_s^{(i)}$, holds for all $s$ where both sides are defined by meromorphic continuation.
\end{corollary}

\section{Realization of $\tau$ induced from
$\tau_1\otimes\tau_2$}\label{subsection:Realization of tau for
induced representation} When
$\tau=\cinduced{P_{\bigidx_1,\bigidx_2}}{\GL{\bigidx}}{\tau_1\otimes\tau_2}$
it is convenient for the manipulations of $\Psi(W,f_s,s)$ to use an
explicit integral formula for the Whittaker functional on $\tau$. To
ensure convergence of this Jacquet integral, the representations
$\tau_1$ and $\tau_2$ are twisted using an auxiliary complex parameter $\zeta$ as
in \cite{Sh2,JPSS,Soudry,Soudry2}.

Let $\varepsilon_1=\tau_1\absdet{}^{\zeta}$ and
$\varepsilon_2=\tau_2\absdet{}^{-\zeta}$ be realized in their
Whittaker models with respect to $\psi$ and let
$\varepsilon=\cinduced{P_{\bigidx_1,\bigidx_2}}{\GL{\bigidx}}{\varepsilon_1\otimes\varepsilon_2}$ be realized in
the space of the induced representation (i.e., in
$V_{P_{\bigidx_1,\bigidx_2}}^{\GL{\bigidx}}(\varepsilon_1\otimes\varepsilon_2,(\half,\half))$).
Throughout this section we use the notation and results of Sections~\ref{subsection:realization of induced tau_1 and tau_2} and \ref{subsection:the multiplicativity of the intertwining operator for tau induced}. Consider the representation
\begin{align*}
\cinduced{Q_{\bigidx_1}}{H_{\bigidx}}{(\varepsilon_1\otimes
\cinduced{Q_{\bigidx_2}}{H_{\bigidx_2}}{\varepsilon_2\alpha^{s}})\alpha^{s}}
\end{align*}
and denote its space by $V'(\varepsilon_1\otimes\varepsilon_2,(s,s))$.
Any $\varphi_s\in V'(\varepsilon_1\otimes\varepsilon_2,(s,s))$ defines an element $\widehat{f}_{\varphi_s}\in V_{Q_{\bigidx}}^{H_{\bigidx}}(\Whittaker{\varepsilon}{\psi},s)$ by
\begin{align}\label{iso:iso 1}
&\widehat{f}_{\varphi_s}(h,b)=\absdet{b}^{-\half\bigidx-s+\half}\int_{Z_{\bigidx_2,\bigidx_1}}\varphi_s(\omega_{\bigidx_1,\bigidx_2}zbh,I_{\bigidx_1},I_{2\bigidx_2+1},I_{\bigidx_2})
\psi^{-1}(z)dz.
\end{align}
Here $h\in H_{\bigidx}$, $b\in\GL{\bigidx}$.
This (Jacquet) integral always has a sense as a
principal value, but there exists a $\zeta_0>0$ which depends only
on $\tau_1$ and $\tau_2$, such that for all $\zeta$ with
$\Re(\zeta)>\zeta_0$ it is absolutely convergent - $\int_{Z_{\bigidx_2,\bigidx_1}}|\varphi_s|(\omega_{\bigidx_1,\bigidx_2}zbh,1,1,1)dz<\infty$, for all $s$ and
$\varphi_s$.

The integral $\Psi(W,\varphi_s,s)$ is just
$\Psi(W,\widehat{f}_{\varphi_s},s)$ with formula \eqref{iso:iso 1}:
\begin{align}\label{int:n_1<l<=n_2 example starting form}
&\Psi(W,\varphi_s,s)\\\notag&=\begin{dcases}\int_{\lmodulo{U_{G_{\smallidx}}}{G_{\smallidx}}}W(g)
\int_{R_{\smallidx,\bigidx}}
\int_{Z_{\bigidx_2,\bigidx_1}}
\varphi_s(\omega_{\bigidx_1,\bigidx_2}zw_{\smallidx,\bigidx}rg,1,1,1)
\psi^{-1}(z)\psi_{\gamma}(r)dzdrdg&\smallidx\leq\bigidx,\\
\int_{\lmodulo{U_{H_{\bigidx}}}{H_{\bigidx}}}
\int_{R^{\smallidx,\bigidx}}\int_{Z_{\bigidx_2,\bigidx_1}}W(rw^{\smallidx,\bigidx}h)
\varphi_{s}(\omega_{\bigidx_1,\bigidx_2}zh,1,1,1)\psi^{-1}(z)dzdrdh&\smallidx>\bigidx.
\end{dcases}
\end{align}
We stress that $\Psi(W,\varphi_s,s)$ refers to the triple integral. It is absolutely convergent if it is convergent when we substitute $|W|,|\varphi_s|$ for $W,\varphi_s$ and drop the characters. For example when
$\smallidx\leq\bigidx$ and $\Re(s)>>\Re(\zeta)>>0$ (see Claim~\ref{claim:convergence n_1<l<n_2 starting integral} below),
\begin{align*}
\int_{\lmodulo{U_{G_{\smallidx}}}{G_{\smallidx}}}
\int_{R_{\smallidx,\bigidx}}
\int_{Z_{\bigidx_2,\bigidx_1}}|W|(g)|\varphi_{s}|(\omega_{\bigidx_1,\bigidx_2}zw_{\smallidx,\bigidx}rg,1,1,1)
dzdrdg<\infty.
\end{align*}
In contrast, when we write $\Psi(W,\widehat{f}_{\varphi_s},s)$ we
interpret the $dz$-integration by principal value.
Observe that if $\Psi(W,\varphi_s,s)$ is absolutely convergent at $s$ then so is
$\Psi(W,\widehat{f}_{\varphi_s},s)$.

Note that if $\tau$ is irreducible, one can also assume that
$\varepsilon$ is irreducible, this holds for all but finitely many values $q^{-\zeta}$ ($\zeta\in\C$).

We can regard $\zeta$ as fixed, e.g. take $\zeta=0$ and simply use
\eqref{iso:iso 1} to manipulate $\Psi(W,\widehat{f}_{\varphi_s},s)$.
Alternatively we may regard $\zeta$ as a parameter similar to $s$.
Another option is to allow $\zeta$ to vary inside a fixed compact
set. We describe these approaches below.

\subsection{Fixed $\zeta$}\label{subsection:fixed zeta}
Assume that $\zeta$ is fixed. 
The sections $\xi(\varepsilon_1\otimes\varepsilon_2,std,(s,s))$ are realized as images of functions from the space of \begin{align*}
\cinduced{Q_{\bigidx_1}\cap K_{H_{\bigidx}}}{K_{H_{\bigidx}}}{\varepsilon_1\otimes
\cinduced{Q_{\bigidx_2}\cap
K_{H_{\bigidx_2}}}{K_{H_{\bigidx_2}}}{\varepsilon_2}}
\end{align*}
Let $\varphi_s\in\xi(\varepsilon_1\otimes\varepsilon_2,hol,(s,s))$. For a fixed $s$, $\varphi_s$ defines an element $\widehat{f}_{\varphi_s}\in
V_{Q_{\bigidx}}^{H_{\bigidx}}(\Whittaker{\varepsilon}{\psi},s)$ by \eqref{iso:iso 1}. We claim that $\widehat{f}_{\varphi_s}$ is a holomorphic section.
\begin{claim}\label{claim:f_s is the image of varphi_s when zeta is
fixed} If $\varphi_s\in \xi(\varepsilon_1\otimes\varepsilon_2,std,(s,s))$ (resp.
$\varphi_s\in \xi(\varepsilon_1\otimes\varepsilon_2,hol,(s,s))$),
$\widehat{f}_{\varphi_s}\in \xi_{Q_{\bigidx}}^{H_{\bigidx}}(\Whittaker{\varepsilon}{\psi},std,s)$ (resp.
$\widehat{f}_{\varphi_s}\in \xi_{Q_{\bigidx}}^{H_{\bigidx}}(\Whittaker{\varepsilon}{\psi},hol,s)$).
The mapping $\varphi_s\mapsto \widehat{f}_{\varphi_s}$ takes
$\xi(\varepsilon_1\otimes\varepsilon_2,std,(s,s))$ onto $\xi_{Q_{\bigidx}}^{H_{\bigidx}}(\Whittaker{\varepsilon}{\psi},std,s)$ and
$\xi(\varepsilon_1\otimes\varepsilon_2,hol,(s,s))$ onto $\xi_{Q_{\bigidx}}^{H_{\bigidx}}(\Whittaker{\varepsilon}{\psi},hol,s)$.
\end{claim}
\begin{proof}[Proof of Claim~\ref{claim:f_s is the image of varphi_s when zeta is fixed}] 
Let $\varphi_s\in\xi(\varepsilon_1\otimes\varepsilon_2,std,(s,s))$. In order to prove
$\widehat{f}_{\varphi_s}\in\xi_{Q_{\bigidx}}^{H_{\bigidx}}(\Whittaker{\varepsilon}{\psi},std,s)$, it is enough to show that for any $k\in K_{H_{\bigidx}}$, the function $\widehat{f}_{\varphi_s}(k,1)$ is independent of $s$. This holds because when we write an Iwasawa decomposition
$\omega_{\bigidx_1,\bigidx_2}z=v_zt_zk_z$ for $z\in
Z_{\bigidx_2,\bigidx_1}$, with $v_z\in Z_{\bigidx}$, $t_z\in
A_{\bigidx}$ and $k_z\in K_{\GL{\bigidx}}$, we have
$\absdet{t_z}=1$. Now the definition implies that $\xi(\varepsilon_1\otimes\varepsilon_2,hol,(s,s))$ is mapped into
$\xi_{Q_{\bigidx}}^{H_{\bigidx}}(\Whittaker{\varepsilon}{\psi},hol,s)$.

Regarding the other direction, first observe that for any
$f\in V_{Q_{\bigidx}\cap K_{H_{\bigidx}}}^{K_{H_{\bigidx}}}(\Whittaker{\varepsilon}{\psi})$ there is
$f'\in V_{Q_{\bigidx}\cap K_{H_{\bigidx}}}^{K_{H_{\bigidx}}}(\varepsilon)$ such that
\begin{align*}
f(k,1)=\int_{Z_{\bigidx_2,\bigidx_1}}f'(k,\omega_{\bigidx_1,\bigidx_2}z,I_{\bigidx_1},I_{\bigidx_2})\psi^{-1}(z)dz\qquad(k\in K_{H_{\bigidx}}).
\end{align*}
It follows that when $f$ (resp. $f'$) is extended to $f_s\in V_{Q_{\bigidx}}^{H_{\bigidx}}(\Whittaker{\varepsilon}{\psi},s)$
(resp. $f_s'\in V_{Q_{\bigidx}}^{H_{\bigidx}}(\varepsilon,s)$) using the Iwasawa decomposition,
\begin{align}\label{int:functional is onto inside claim proof}
f_{s}(h,b)=\int_{Z_{\bigidx_2,\bigidx_1}}f_{s}'(h,\omega_{\bigidx_1,\bigidx_2}zb,I_{\bigidx_1},I_{\bigidx_2})\psi^{-1}(z)dz\qquad(h\in H_{\bigidx},b\in\GL{\bigidx}).
\end{align}
Since the Iwasawa decomposition defines the onto mapping
\begin{align*}
V_{Q_{\bigidx}\cap K_{H_{\bigidx}}}^{K_{H_{\bigidx}}}(\Whittaker{\varepsilon}{\psi})\rightarrow \xi_{Q_{\bigidx}}^{H_{\bigidx}}(\Whittaker{\varepsilon}{\psi},std,s)
\end{align*}
(see Section~\ref{subsection:sections}),
we get that any $f_{s}\in\xi_{Q_{\bigidx}}^{H_{\bigidx}}(\Whittaker{\varepsilon}{\psi},std,s)$
can be defined by \eqref{int:functional is onto inside claim proof} using $f_{s}'\in\xi_{Q_{\bigidx}}^{H_{\bigidx}}(\varepsilon,std,s)$ and then $f_{s}\in\xi_{Q_{\bigidx}}^{H_{\bigidx}}(\Whittaker{\varepsilon}{\psi},hol,s)$ is defined using $f_{s}'\in\xi_{Q_{\bigidx}}^{H_{\bigidx}}(\varepsilon,hol,s)$.

The representations
$\cinduced{Q_{\bigidx}}{H_{\bigidx}}{\varepsilon\alpha^s}$ and
$\cinduced{Q_{\bigidx_1}}{H_{\bigidx}}{(\varepsilon_1\otimes\cinduced{Q_{\bigidx_2}}{H_{\bigidx_2}}{\varepsilon_2\alpha^{s}})\alpha^{s}}$ are isomorphic according to
\begin{align*}
(h,b_1,h_2,b_2)\mapsto f'(h_2h,I_{\bigidx},b_1,b_2),\quad (h,b,b_1,b_2)\mapsto\absdet{b}^{-\half\bigidx-s+\half}\varphi(bh,b_1,I_{2\bigidx_2+1},b_2).
\end{align*}
Here $f'\in V_{Q_{\bigidx}}^{H_{\bigidx}}(\varepsilon,s)$,
$\varphi$ belongs to the space of $\cinduced{Q_{\bigidx_1}}{H_{\bigidx}}{(\varepsilon_1\otimes\cinduced{Q_{\bigidx_2}}{H_{\bigidx_2}}{\varepsilon_2\alpha^{s}})\alpha^{s}}$, $h_2\in H_{\bigidx_2}$ and 
$b_i\in\GL{\bigidx_i}$. These isomorphisms also define isomorphisms
between
\begin{align*}
\cinduced{Q_{\bigidx}\cap
K_{H_{\bigidx}}}{K_{H_{\bigidx}}}{\varepsilon}, \quad
\cinduced{Q_{\bigidx_1}\cap K_{H_{\bigidx}}}{K_{H_{\bigidx}}}{\varepsilon_1\otimes
\cinduced{Q_{\bigidx_2}\cap
K_{H_{\bigidx_2}}}{K_{H_{\bigidx_2}}}{\varepsilon_2}}.
\end{align*}
Thus for any $f_{s}'\in\xi_{Q_{\bigidx}}^{H_{\bigidx}}(\varepsilon,std,s)$ (resp. $f_{s}'\in\xi_{Q_{\bigidx}}^{H_{\bigidx}}(\varepsilon,hol,s)$) we can find
$\varphi_s\in\xi(\varepsilon_1\otimes\varepsilon_2,std,(s,s))$ (resp. $\varphi_s\in\xi(\varepsilon_1\otimes\varepsilon_2,hol,(s,s))$) such that
\begin{align*}
f_{s}'(h,b,b_1,b_2)=\absdet{b}^{-\half\bigidx-s+\half}\varphi_s(bh,b_1,I_{2\bigidx_2+1},b_2).
\end{align*}
Plugging this into \eqref{int:functional is onto inside claim proof} we obtain $f_{s}=\widehat{f}_{\varphi_s}$ whence the mapping $\varphi_s\mapsto \widehat{f}_{\varphi_s}$ is
onto as required.
\end{proof} 
The following observations relate $\Psi(W,\varphi_s,s)$ to
$\Psi(W,\widehat{f}_{\varphi_s},s)$.
\begin{claim}\label{claim:deducing varphi and f_s are interchangeable in psi}
Assume that for $W\in\Whittaker{\pi}{\psi_{\gamma}^{-1}}$ and
$\varphi_s\in\xi(\varepsilon_1\otimes\varepsilon_2,hol,s)$,
$\Psi(W,\varphi_s,s)$ is absolutely convergent at $s$. 
Then $\Psi(W,\varphi_s,s)=\Psi(W,\widehat{f}_{\varphi_s},s)$ at $s$, as integrals. 
\end{claim}
\begin{proof}[Proof of Claim~\ref{claim:deducing varphi and f_s are interchangeable in psi}] 
Assume for instance $\smallidx\leq\bigidx$, the other case
being similar. We use the idea of Jacquet, Piatetski-Shapiro and
Shalika \cite{JPSS} (p.~424). Let $s$ be such that
$\Psi(W,\varphi_s,s)$ is absolutely convergent. Thus
\begin{align*}
\int_{R_{\smallidx,\bigidx}}\int_{Z_{\bigidx_2,\bigidx_1}}
|\varphi_s|(\omega_{\bigidx_1,\bigidx_2}zw_{\smallidx,\bigidx}rg,1,1,1)dzdr<\infty
\end{align*}
for all $g\in\support{W}\setminus E_1$ where $E_1$ is a set of zero
measure (here $\setminus$ denotes set difference). Since $W$ and $\varphi_s$ are smooth it follows that the
last integral is finite for all $g\in\support{W}$. Hence
$\widehat{f}_{\varphi_s}(w_{\smallidx,\bigidx}rg,1)$ is absolutely
convergent for all $g\in\support{W}$ and $r\in
R_{\smallidx,\bigidx}\setminus E_2$, for a set $E_2$ of zero
measure. Again by smoothness, this is true for all $r\in
R_{\smallidx,\bigidx}$. According to Fubini's Theorem, $\Psi(W,\varphi_s,s)$ can be computed using iterated
integrals and by
Lebesgue's Dominated Convergence Theorem the
$dz$-integration in $\Psi(W,\varphi_s,s)$ can be replaced with
$\widehat{f}_{\varphi_s}(w_{\smallidx,\bigidx}rg,1)$ (defined by
principal value). Therefore the integrals
$\Psi(W,\varphi_s,s)$ and $\Psi(W,\widehat{f}_{\varphi_s},s)$ are equal at $s$.
\end{proof} 
Claim~\ref{claim:convergence n_1<l<n_2 starting integral} below
implies that Claim~\ref{claim:deducing varphi and f_s are
interchangeable in psi} is \nontrivial. We state the result here.
\begin{corollary}\label{corollary:deducing varphi and f_s are interchangeable in psi for all varphi}
There exists a constant $\zeta_0$ depending only on the representations
$\pi$, $\tau_1$ and $\tau_2$ such that the following holds. Fix $\zeta$ with $\Re(\zeta)>\zeta_0$. Then there is a
constant $s_1$ depending only on $\zeta$ and the representations, satisfying
$\Psi(W,\varphi_s,s)=\Psi(W,\widehat{f}_{\varphi_s},s)$ for all $\Re(s)>s_1$, $W\in\Whittaker{\pi}{\psi_{\gamma}^{-1}}$
and $\varphi_s\in\xi(\varepsilon_1\otimes\varepsilon_2,hol,s)$.
Therefore, the integral $\Psi(W,\varphi_s,s)$ also has a meromorphic continuation to a function in $\C(q^{-s})$,
given by the continuation of $\Psi(W,\widehat{f}_{\varphi_s},s)$. Consequently,
$\Psi(W,\varphi_s,s)=\Psi(W,\widehat{f}_{\varphi_s},s)$ in $\C(q^{-s})$.
\end{corollary}
\begin{proof}[Proof of Corollary~\ref{corollary:deducing varphi and f_s are interchangeable in psi for all varphi}] 
Claim~\ref{claim:convergence n_1<l<n_2 starting integral} shows the
existence of such constants $\zeta_0,s_1$ (with $s_1$ depending on $\zeta$, $\Re(\zeta)>\zeta_0$), for which
$\Psi(W,\varphi_s,s)$ is absolutely
convergent for all $\Re(s)>s_1$, $W$ and $\varphi_s$. Hence the assumption of
Claim~~\ref{claim:deducing varphi and f_s are interchangeable in
psi} is satisfied for all such $s$.
\end{proof} 
Finally we have the following result, to be used in Chapter~\ref{chapter:upper boubnds on the gcd} in the proof of Lemma~\ref{lemma:n_1<l<=n_2 basic identity}.
\begin{lemma}\label{lemma:replacing a section with another}
Assume that $\tau$ is realized in $\Whittaker{\tau}{\psi}$. For $\varphi_{s}\in\xi(\varepsilon_1\otimes\varepsilon_2,hol,(s,s))$
let $\widehat{f}_{\varphi_s}$ be
defined by \eqref{iso:iso 1} and write
$\widehat{f}_{\varphi_s}=\sum_{i=1}^mP_ich_{k_iN,W_{\zeta}^{(i)},s}$
with $P_i\in\C[q^{-s},q^{s}]$,
$W_{\zeta}^{(i)}\in\Whittaker{\varepsilon}{\psi}$ as in
\eqref{eq:holomorphic section as combination of disjoint standard
sections}. Let
$f_s\in\xi_{Q_{\bigidx}}^{H_{\bigidx}}(\tau,hol,s)$. Then there exists $\varphi_{s}\in\xi(\varepsilon_1\otimes\varepsilon_2,hol,(s,s))$
such that this expression for $\widehat{f}_{\varphi_s}$ has the following properties:
\begin{enumerate}
\item If we let $\zeta$ vary, the data $(m,k_i,N,P_i)$ is
independent of $\zeta$.
\item For all $\zeta$, $W_{\zeta}^{(i)}$ is right-invariant by
$(\rconj{k_i^{-1}}N)\cap Q_{\bigidx}$.
\item For each $b\in
\GL{\bigidx}$ there is $Q_i\in\C[q^{-\zeta},q^{\zeta}]$ such that for
all $\zeta$, $W_{\zeta}^{(i)}(b)=Q_i$.
\item If we put $\zeta=0$ in the expression for $\widehat{f}_{\varphi_s}$ we get $\widehat{f}_{\varphi_s}=f_s$.
\end{enumerate}
\end{lemma}
\begin{proof}[Proof of Lemma~\ref{lemma:replacing a section with another}]
Denote by $U$ the space of $\cinduced{P_{\bigidx_1,\bigidx_2}}{\GL{\bigidx}}{\tau_1\otimes\tau_2}$.
We make the following general remark. For $v\in U$, $k\in K_{H_{\bigidx}}$ and a compact open subgroup $N<K_{H_{\bigidx}}$ such that $v$ is right-invariant by
$(\rconj{k^{-1}}N)\cap Q_{\bigidx}$,
the function $ch_{kN,v,s}\in\xi_{Q_{\bigidx}}^{H_{\bigidx}}(\cinduced{P_{\bigidx_1,\bigidx_2}}{\GL{\bigidx}}{\tau_1\otimes\tau_2},std,s)$
is defined (with a minor abuse of notation) by
\begin{align*}
ch_{kN,v,s}(h,b,b_1,b_2)=
\begin{dcases}
\absdet{a}^{\half\bigidx+s-\half}v(ba,b_1,b_2)& h\in Q_{\bigidx}kN,\\
0& \text{otherwise}.
\end{dcases}
\end{align*}
Here $h\in H_{\bigidx}$, $b\in\GL{\bigidx}$, $b_i\in\GL{\bigidx_i}$ and for the case $h\in Q_{\bigidx}kN$ we wrote
$h=diag(a,1,a^*)ukn'$ with $a\in \GL{\bigidx}$, $u\in U_{\bigidx}$ and $n'\in N$. Let $W_v\in\Whittaker{\tau}{\psi}$ be the Whittaker function
obtained by applying the Whittaker functional, defined by the $dz$-integration of \eqref{int:functional is onto inside claim proof}, to $v$. A calculation implies
\begin{align*}
ch_{kN,W_v,s}(h,b)=\int_{Z_{\bigidx_2,\bigidx_1}}ch_{kN,v,s}(h,\omega_{\bigidx_1,\bigidx_2}zb,I_{\bigidx_1},I_{\bigidx_2})\psi^{-1}(z)dz.
\end{align*}
In other words, $ch_{kN,W_v,s}\in\xi_{Q_{\bigidx}}^{H_{\bigidx}}(\tau,std,s)$ is the image of
$ch_{kN,v,s}$ under \eqref{int:functional is onto inside claim proof}.

As we explained in the proof of Claim~\ref{claim:f_s is the image of varphi_s when zeta is
fixed}, for any $f_s\in\xi_{Q_{\bigidx}}^{H_{\bigidx}}(\tau,hol,s)$ we can find
$f_s'\in\xi_{Q_{\bigidx}}^{H_{\bigidx}}(\cinduced{P_{\bigidx_1,\bigidx_2}}{\GL{\bigidx}}{\tau_1\otimes\tau_2},hol,s)$
such that $f_s$ is the image of $f_s'$ under \eqref{int:functional is onto inside claim proof}.
Write $f_s'=\sum_{i=1}^mP_ich_{k_iN,v^{(i)},s}$ with
$P_i\in\C[q^{-s},q^{s}]$, $v^{(i)}\in U$ as in
\eqref{eq:holomorphic section as combination of disjoint standard
sections}. According to the definition, $v^{(i)}$ is right-invariant by $(\rconj{k_i^{-1}}N)\cap Q_{\bigidx}$.
Then $f_s=\sum_{i=1}^mP_ich_{k_iN,W^{(i)},s}$ where
$W^{(i)}=W_{v^{(i)}}\in\Whittaker{\tau}{\psi}$ is obtained from $v^{(i)}$. 

Any $v^{(i)}$ can be extended to an element $v_{\zeta}^{(i)}$ in the space $U_{\zeta}$ of
$\cinduced{P_{\bigidx_1,\bigidx_2}}{\GL{\bigidx}}{\varepsilon_1\otimes\varepsilon_2}$,
such that $v_{0}^{(i)}=v^{(i)}$ and for all $\zeta$, $v_{\zeta}^{(i)}$ is right-invariant
by $(\rconj{k_i^{-1}}N)\cap Q_{\bigidx}$. Then for each $\zeta$,
\begin{align*}
f_{\zeta,s}'=\sum_{i=1}^mP_ich_{k_iN,v_{\zeta}^{(i)},s}\in\xi_{Q_{\bigidx}}^{H_{\bigidx}}(\cinduced{P_{\bigidx_1,\bigidx_2}}{\GL{\bigidx}}{\varepsilon_1\otimes\varepsilon_2},hol,s)
\end{align*}
is defined and $f_{0,s}'=f_s'$. Let $W_{\zeta}^{(i)}\in\Whittaker{\varepsilon}{\psi}$ be obtained from $v_{\zeta}^{(i)}$ by applying the Whittaker functional to $v_{\zeta}^{(i)}$.
According to \cite{CS2} (Section~2, see also \cite{Sh2} Section~3), for any fixed $b$,
$W_{\zeta}^{(i)}(b)\in \C[q^{-\zeta},q^{\zeta}]$. Also $W_{\zeta}^{(i)}$ is right-invariant by $(\rconj{k_i^{-1}}N)\cap Q_{\bigidx}$ and $W_{0}^{(i)}=W^{(i)}$.


We define $\varphi_{s}\in\xi(\varepsilon_1\otimes\varepsilon_2,hol,(s,s))$ using the isomorphism described in the proof of Claim~\ref{claim:f_s is the image of varphi_s when zeta is fixed}. Namely,
$\varphi_{s}(h,b_1,h_2,b_2)=f_{\zeta,s}'(h_2h,I_{\bigidx},b_1,b_2)$.
Then $\widehat{f}_{\varphi_s}=\sum_{i=1}^mP_ich_{k_iN,W_{\zeta}^{(i)},s}$ has the required properties.
\end{proof}

\subsection{Variable $\zeta$}\label{subsection zeta vairable}
Here we consider $\zeta$ as a parameter, similar to $s$. Assume that $\tau_i$ is realized in its Whittaker model
$\Whittaker{\tau_i}{\psi}$, $i=1,2$. Let
\begin{align*}
&\Pi_{s+\zeta,s-\zeta}=\cinduced{Q_{\bigidx_1,\bigidx_2}}{H_{\bigidx}}{(\tau_1\otimes\tau_2)\alpha^{(s+\zeta,s-\zeta)}},\\\notag &\Pi_{s+\zeta,s-\zeta}'=\cinduced{Q_{\bigidx_1}}{H_{\bigidx}}{(\tau_1\otimes
\cinduced{Q_{\bigidx_2}}{H_{\bigidx_2}}{\tau_2\alpha^{s-\zeta}})\alpha^{s+\zeta}}.
\end{align*}
These are just $\Pi_{s_1,s_2}$ and $\Pi_{s_1,s_2}'$ of Section~\ref{subsection:realization of induced tau_1 and tau_2} with $s_1=s+\zeta$, $s_2=s-\zeta$.
Since in general if $\phi$ is a representation of $\GL{\bigidx}$, $\Whittaker{\phi\absdet{}^{\zeta}}{\psi}\isomorphic\Whittaker{\phi}{\psi}\absdet{}^{\zeta}$,
\begin{align*}
\Pi_{s+\zeta,s-\zeta}'\isomorphic \cinduced{Q_{\bigidx_1}}{H_{\bigidx}}{(\varepsilon_1\otimes
\cinduced{Q_{\bigidx_2}}{H_{\bigidx_2}}{\varepsilon_2\alpha^{s}})\alpha^{s}}.
\end{align*}
Denote the space of $\Pi_{s+\zeta,s-\zeta}'$ by
$V'(\tau_1\otimes\tau_2,(s+\zeta,s-\zeta))$.
The integral $\Psi(W,\varphi_{\zeta,s},s)$ with
$\varphi_{\zeta,s}\in V'(\tau_1\otimes\tau_2,(s+\zeta,s-\zeta))$, is
defined by \eqref{int:n_1<l<=n_2 example starting form} (for brevity, we write $\varphi_{\zeta,s}$ instead of $\varphi_{s+\zeta,s-\zeta}$). 
The following claim proves that there is a domain in $\zeta$ and $s$
depending only on the representations, for which
$\Psi(W,\varphi_{\zeta,s},s)$ is absolutely convergent. This will imply that the integrals have a meromorphic continuation to functions in $\C(q^{-\zeta},q^{-s})$.
\begin{claim}\label{claim:convergence n_1<l<n_2 starting integral}
There exist constants $C_1,C_2,C_3>0$ which depend only on the
representations $\pi$, $\tau_1$ and $\tau_2$, such that
for all $\zeta$ and $s$ in a domain $D$ of the form
\begin{align*}
\qquad C_1<C_2\Re(\zeta)+C_3<\Re(s),
\end{align*}
$\Psi(W,\varphi_{\zeta,s},s)$ is absolutely convergent for all
$W\in\Whittaker{\pi}{\psi_{\gamma}^{-1}}$ and
$\varphi_{\zeta,s}\in V'(\tau_1\otimes\tau_2,(s+\zeta,s-\zeta))$.
\end{claim}
\begin{proof}[Proof of Claim~\ref{claim:convergence n_1<l<n_2 starting integral}] 
We settle for a proof when $\smallidx\leq\bigidx$. We follow the
line of reasoning of Soudry \cite{Soudry2} (Lemma~3.1).
Using the Iwasawa decomposition of $G_{\smallidx}$ we are reduced to
considering
\begin{align*}
\int_{A_{\smallidx-1}}\int_{G_1}|W|(ax)\delta_{B_{G_{\smallidx}}}^{-1}(a)
\int_{R_{\smallidx,\bigidx}}
\int_{Z_{\bigidx_2,\bigidx_1}}|\varphi_{\zeta,s}|(\omega_{\bigidx_1,\bigidx_2}zw_{\smallidx,\bigidx}rax,1,1,1)
dzdrdxda.
\end{align*}
We may assume that $s$ and $\zeta$ are real. Put $diag(a,I_{\bigidx-\smallidx+1})
=diag(\ddot{a},\dot{a})$ where $\ddot{a}\in A_{\bigidx_2}$ and $\dot{a}\in A_{\bigidx_1}$.
Shifting $a$ to the left in $\varphi_{\zeta,s}$ 
we obtain
\begin{align*}
&\int_{A_{\smallidx-1}}\int_{G_1}|W|(ax)\delta_{B_{G_{\smallidx}}}^{-1}(a)
\absdet{\dot{a}}^{\smallidx-\half\bigidx_1-\bigidx_2+s+\zeta-\half}
\absdet{\ddot{a}}^{\smallidx-\half\bigidx_2+s-\zeta-\half}\\\notag
&\int_{R_{\smallidx,\bigidx}}
\int_{Z_{\bigidx_2,\bigidx_1}}|\varphi_{\zeta,s}|(\omega_{\bigidx_1,\bigidx_2}zw_{\smallidx,\bigidx}rx,\dot{a},1,\ddot{a})
dzdrdxda.
\end{align*}
Further assume $\smallidx>\bigidx_2$. Assume that
$\varphi_{\zeta,s}$ is invariant on the right for
$\mathcal{N}_{H_{\bigidx},k_0}$, $k_0>>0$. Since when $G_1$ is
\quasisplit\ or $[x]$ is bounded, the $dx$-integration
may be ignored, we first treat the case $[x]>q^k$ where $k$ is chosen
according to Lemma~\ref{lemma:torus elements in double cosets},
applied with $k_0$. Arguing as in the proof of
Proposition~\ref{proposition:dr integral for torus of G_l} we get a
sum of two integrals of the form
\begin{align}\label{int:convergence n_1<l<n_2 starting integral Iwasawa 2}
&\int_{A_{\smallidx-1}}\int_{G_1}ch_{\Lambda}(x)|W|(ax)\delta_{B_{G_{\smallidx}}}^{-1}(a)
(\absdet{\dot{a}}[x]^{-1})^{\smallidx-\half\bigidx_1-\bigidx_2+s+\zeta-\half}\\\notag
&\absdet{\ddot{a}}^{\smallidx-\half\bigidx_2+s-\zeta-\half}
\int_{R_{\smallidx,\bigidx}}
\int_{Z_{\bigidx_2,\bigidx_1}}|\varphi_{\zeta,s}|(\omega_{\bigidx_1,\bigidx_2}zw_{\smallidx,\bigidx}rh,\dot{a}x^{\wedge},1,\ddot{a})
dzdrdxda,
\end{align}
where $\Lambda=G_1^{0,k},G_1^{\infty,k}$ and $h=h_1,h_2$ (resp.),
see Proposition~\ref{proposition:iwasawa decomposition of the
integral} and Lemma~\ref{lemma:torus elements in double cosets}, and
$x^{\wedge}=diag(I_{\smallidx-\bigidx_2-1},\lfloor
x\rfloor,I_{\bigidx-\smallidx})\in A_{\bigidx_1}$.

Therefore it is enough to consider the convergence of
\begin{align*}
&\int_{A_{\smallidx-1}}\int_{G_1}ch_{\Lambda}(x)|W|(ax)\delta_{B_{G_{\smallidx}}}^{-1}(a)
(\absdet{\dot{a}}[x]^{-1})^{\smallidx-\half\bigidx_1-\bigidx_2+s+\zeta-\half}\\\notag
&\absdet{\ddot{a}}^{\smallidx-\half\bigidx_2+s-\zeta-\half}
\int_{R_{\smallidx,\bigidx}}
\int_{Z_{\bigidx_2,\bigidx_1}}|\varphi_{\zeta,s}|(\omega_{\bigidx_1,\bigidx_2}zw_{\smallidx,\bigidx}r,\dot{a}x^{\wedge},1,\ddot{a})
dzdrdxda
\end{align*}
for an arbitrary $\varphi_{\zeta,s}$. 
Replace $\varphi_{\zeta,s}$ with
$w^{-1}\cdot\varphi_{\zeta,s}$ for
$w=\omega_{\bigidx_1,\bigidx_2}w_{\smallidx,\bigidx}$. Since
$\rconj{w^{-1}}R_{\smallidx,\bigidx}<\overline{U_{\bigidx}}$ and
$\rconj{\omega_{\bigidx_2,\bigidx_1}}Z_{\bigidx_2,\bigidx_1}=\overline{Z_{\bigidx_1,\bigidx_2}}$, it is enough to
bound
\begin{align}\label{int:convergence n_1<l<n_2 starting integral Iwasawa 3}
&\int_{A_{\smallidx-1}}\int_{G_1}ch_{\Lambda}(x)|W|(ax)\delta_{B_{G_{\smallidx}}}^{-1}(a)
(\absdet{\dot{a}}[x]^{-1})^{\smallidx-\half\bigidx_1-\bigidx_2+s+\zeta-\half}\\\notag
&\absdet{\ddot{a}}^{\smallidx-\half\bigidx_2+s-\zeta-\half}
\int_{\overline{U_{\bigidx}}}
\int_{\overline{Z_{\bigidx_1,\bigidx_2}}}|\varphi_{\zeta,s}|(zu,\dot{a}x^{\wedge},1,\ddot{a})
dzdudxda.
\end{align}
Decompose $z=v_zt_zk_z\in
Z_{\bigidx}A_{\bigidx}K_{\GL{\bigidx}}$, $u=v_ut_uk_u\in
U_{H_{\bigidx}}A_{\bigidx}K_{H_{\bigidx}}$ according to the Iwasawa
decomposition. Write $t_z=diag(\dot{t_z},\ddot{t_z})$,
$t_u=diag(\dot{t_u},\ddot{t_u})$ for $\dot{t_z},\dot{t_u}\in
A_{\bigidx_1}$ and $\ddot{t_z},\ddot{t_u}\in A_{\bigidx_2}$. Note
that $\absdet{t_z}=1$ whence $\absdet{\dot{t_z}}=\absdet{\ddot{t_z}}^{-1}$.
Also $z$ normalizes $U_{H_{\bigidx}}$ and $t_u$ normalizes
$\overline{Z_{\bigidx_1,\bigidx_2}}$ changing
$dz\mapsto\absdet{\dot{t_u}}^{-\bigidx_2}\absdet{\ddot{t_u}}^{\bigidx_1}dz$,
so the $dzdu$-integral equals
\begin{align*}
&\int_{\overline{U_{\bigidx}}}
\int_{\overline{Z_{\bigidx_1,\bigidx_2}}}\absdet{t_u}^{\half\bigidx_1+s-\half}\absdet{\dot{t_u}}^{\zeta}\absdet{\ddot{t_u}}^{\half\bigidx-\zeta}\\\notag&\absdet{\ddot{t_z}}^{-\half\bigidx-2\zeta}
|\varphi_{\zeta,s}|(k_zk_u,\dot{a}x^{\wedge}\dot{t_u}\dot{t_z},1,\ddot{a}\ddot{t_u}\ddot{t_z})
dzdu.
\end{align*}

For any $A\in\Mat{m\times m'}$ denote by $\matrow{A}{j}$ the $j$-th
row of $A$ and let $\norm{A}$ be the sup-norm of $A$. For
$1\leq j\leq2\bigidx+1$ put $\widetilde{j}=2\bigidx+2-j$. By the results
of Soudry \cite{Soudry} (Section~11.15), if
$t_z=diag((t_z)_1,\ldots,(t_z)_{\bigidx})$ and
$t_u=diag((t_u)_1,\ldots,(t_u)_{\bigidx})$,
\begin{align*}
&\max_{k\leq j\leq
\bigidx}{\norm{\matrow{z}{j}}}\leq|(t_z)_k\cdot\ldots\cdot(t_z)_{\bigidx}|\leq\prod_{j=k}^{\bigidx}\norm{\matrow{z}{j}}\qquad
(\forall \bigidx_1<k\leq\bigidx),\\\notag &\max_{1\leq j\leq
k}{\norm{\matrow{u}{\widetilde{j}}}}\leq|(t_u)_1\cdot\ldots\cdot(t_u)_k|^{-1}\leq\prod_{j=1}^k\norm{\matrow{u}{\widetilde{j}}}\qquad(\forall
1\leq k\leq\bigidx).
\end{align*}
This implies
\begin{align*}
&\absdet{\ddot{t_z}}^{-1}\leq(\max_{\bigidx_1<j\leq
\bigidx}{\norm{\matrow{z}{j}}})^{-1},\\
&\absdet{t_u}\leq\norm{u}^{-1},\\
&\absdet{\dot{t_u}}\leq(\max_{1\leq j\leq
\bigidx_1}{\norm{\matrow{u}{\widetilde{j}}}})^{-1},\\
&\absdet{\ddot{t_u}}^{-1}\leq\norm{u}^{\bigidx}\absdet{\dot{t_u}}.
\end{align*}
The last inequality follows using $\absdet{\ddot{t_u}}^{-1}=\absdet{t_u}^{-1}\absdet{\dot{t_u}}$. 
Furthermore, by \cite{Soudry} (\textit{loc. cit.}) there exists a constant
$c>0$ such that $\norm{z}^{-c}\leq\alpha(t_z)\leq\norm{z}^c$ for all
$\alpha\in\Delta_{\GL{\bigidx}}$, and also
$\norm{u}^{-c}\leq\alpha(t_u)\leq\norm{u}^c$ where
$\alpha\in\Delta_{H_{\bigidx}}$ is of the form
$\alpha=\epsilon_i-\epsilon_{i+1}$ with $1\leq i<\bigidx$
(i.e., $\alpha(t_u)=(t_u)_i(t_u)_{i+1}^{-1}$). The constant $c$ depends only on $\bigidx$.

Now $|\varphi_{\zeta,s}|(k_zk_u,\dot{a}x^{\wedge}\dot{t_u}\dot{t_z},1,\ddot{a}\ddot{t_u}\ddot{t_z})$
can be bounded first by replacing $\varphi_{\zeta,s}$ with a finite sum of functions from $V'(\tau_1\otimes\tau_2,(s+\zeta,s-\zeta))$,
each supported on a double coset $Q_{\bigidx_1}kN$ ($k\in K_{H_{\bigidx}}$, $N<K_{H_{\bigidx}}$), then using the asymptotic expansion
of Section~\ref{section:whittaker props} to bound Whittaker functions from $\Whittaker{\tau_1}{\psi}$ and
$\Whittaker{\tau_2}{\psi}$. See \cite{Soudry} (Section~4.4) and \cite{JPSS2} (Section~2.3).

Assume $\zeta>\half\bigidx$ and $s>\half$. Collecting the
arguments above, the $dzdu$-integral is bounded by a finite sum of
integrals
\begin{align}\label{int:convergence n_1<l<n_2 dzdu 2}
&\mu(\dot{a}x^{\wedge})\mu'(\ddot{a})\int_{\overline{U_{\bigidx}}}
\int_{\overline{Z_{\bigidx_1,\bigidx_2}}}\norm{u}^{-\half\bigidx_1-s+\half}
(\max_{1\leq j\leq
\bigidx_1}{\norm{\matrow{u}{\widetilde{j}}}})^{-2\zeta+\half\bigidx}\norm{u}^{\bigidx\zeta-\half\bigidx^2}\\\notag&
(\max_{\bigidx_1<j\leq
\bigidx}{\norm{\matrow{z}{j}}})^{-\half\bigidx-2\zeta}
(\norm{u}\cdot\norm{z})^{cM}dzdu.
\end{align}
Here $\mu$ and $\mu'$ are positive characters and $M>0$ is a
constant depending only on $\tau_1$ and $\tau_2$. Note that
$\max_{\bigidx_1<j\leq \bigidx}{\norm{\matrow{z}{j}}}=\norm{z}$.
Then for the $dz$-integration in \eqref{int:convergence n_1<l<n_2
dzdu 2} to converge we need $-\half\bigidx-2\zeta+cM<<0$, i.e.
$\zeta>C_1$ for a constant $C_1>0$ depending only on the
representations. Also $\max_{1\leq j\leq
\bigidx_1}{\norm{\matrow{u}{\widetilde{j}}}}\geq1$ and
$-2\zeta+\half\bigidx<0$ (because $-\zeta+\half\bigidx<0$) so this
factor only decreases the integrand and can be ignored. Then to bound
\eqref{int:convergence n_1<l<n_2 dzdu 2} we need to bound
\begin{align*}
&\int_{\overline{U_{\bigidx}}}
\norm{u}^{-\half\bigidx_1-s+\half+\bigidx\zeta-\half\bigidx^2+cM}du.
\end{align*}
For $s>>\bigidx\zeta+cM$ this integral is absolutely convergent. Thus $s$ belongs to a domain of the form $s>C_2\zeta+C_3$ with constants $C_2=\bigidx$ and $C_3>0$. Returning to integral~\eqref{int:convergence
n_1<l<n_2 starting integral Iwasawa 3} we have to bound
\begin{align*}
&\int_{A_{\smallidx-1}}\int_{G_1}ch_{\Lambda}(x)|W|(ax)\delta_{B_{G_{\smallidx}}}^{-1}(a)
(\absdet{\dot{a}}[x]^{-1})^{\smallidx-\half\bigidx_1-\bigidx_2+s+\zeta-\half}\\\notag
&\absdet{\ddot{a}}^{\smallidx-\half\bigidx_2+s-\zeta-\half}
\mu(\dot{a}x^{\wedge})\mu'(\ddot{a})dxda.
\end{align*}
This is possible once we take $C_3$ which is large enough, depending on
$\pi$, $\tau_1$ and $\tau_2$. To prove this one can use the asymptotic expansion of $W$ (see Section~\ref{section:whittaker props}),
argue as in \cite{JPSS} (Section~2.7) and eventually bound Tate-type integrals. It is also possible to use a bound of $W$ by a gauge
(see \cite{Soudry} Section~4). 

As mentioned earlier, if $G_{\smallidx}$ is \quasisplit\ or $[x]$ is bounded, one
drops the $dx$-integration and proceeds similarly. 

Note that for the case $\smallidx\leq\bigidx_2$ integral
\eqref{int:convergence n_1<l<n_2 starting integral Iwasawa 2} is
replaced with
\begin{align*}
&\int_{A_{\smallidx-1}}\int_{G_1}ch_{\Lambda}(x)|W|(ax)\delta_{B_{G_{\smallidx}}}^{-1}(a)
(\absdet{\ddot{a}}[x]^{-1})^{\smallidx-\half\bigidx_2+s-\zeta-\half}\\\notag
&\int_{R_{\smallidx,\bigidx}}
\int_{Z_{\bigidx_2,\bigidx_1}}|\varphi_{\zeta,s}|(\omega_{\bigidx_1,\bigidx_2}zw_{\smallidx,\bigidx}rh,1,1,\ddot{a}\cdot
diag(I_{\smallidx-1},\lfloor x\rfloor,I_{\bigidx_2-\smallidx}))
dzdrdxda.
\end{align*}
The proof is similar to the above.
\end{proof} 

Repeating the argument in the proof of Claim~\ref{claim:deducing
varphi and f_s are interchangeable in psi}, for $\varphi_{\zeta,s}\in\xi(\tau_1\otimes\tau_2,hol,(s+\zeta,s-\zeta))$,
$\Psi(W,\varphi_{\zeta,s},s)=\Psi(W,\widehat{f}_{\varphi_{\zeta,s}},s)$
as integrals defined in $D$.

Similarly to Proposition~\ref{proposition:integral can be made
constant}, we show that the integral $\Psi(W,\varphi_{\zeta,s},s)$
can be made constant. In fact, the following result is slightly
stronger than that proposition.
\begin{proposition}\label{proposition:integral can be made constant zeta phi version}
There exist $W\in\Whittaker{\pi}{\psi_{\gamma}^{-1}}$ and
$\varphi_{\zeta,s}\in\xi(\tau_1\otimes\tau_2,hol,(s+\zeta,s-\zeta))$
such that $\Psi(W,\varphi_{\zeta,s},s)$ is absolutely convergent and
equals $1$, for all $\zeta$ and $s$. Moreover, there is a constant
$C$ (independent of $\zeta$ and $s$) such that for all $\zeta$ and $s$,
\begin{align*}
\begin{dcases}
\int_{\lmodulo{U_{H_{\bigidx}}}{H_{\bigidx}}}\int_{R^{\smallidx,\bigidx}}\int_{Z_{\bigidx_2,\bigidx_1}}|W|(rw^{\smallidx,\bigidx}h)|\varphi_{\zeta,s}|(\omega_{\bigidx_1,\bigidx_2}zh,1,1,1)dzdrdh<C&\smallidx>\bigidx,\\
\int_{\lmodulo{U_{G_{\smallidx}}}{G_{\smallidx}}}\int_{R_{\smallidx,\bigidx}}\int_{Z_{\bigidx_2,\bigidx_1}}
|W|(g)|\varphi_{\zeta,s}|(\omega_{\bigidx_1,\bigidx_2}zw_{\smallidx,\bigidx}rg,1,1,1)dzdrdg<C&\smallidx\leq\bigidx.
\end{dcases}
\end{align*}
\end{proposition}

\begin{proof}[Proof of Proposition~\ref{proposition:integral can be made constant zeta phi version}] 
We use the same arguments of Proposition~\ref{proposition:integral
can be made constant} except that we choose the element
$W'\in\Whittaker{\tau}{\psi}$ more carefully.

Assume $\smallidx>\bigidx$. Fix $\zeta$ and $s$. We realize $\cinduced{P_{\bigidx_1,\bigidx_2}}{\GL{\bigidx}}{\tau_1\absdet{}^{\zeta}\otimes\tau_2\absdet{}^{-\zeta}}$ in the space
$V_{P_{\bigidx_1,\bigidx_2}}^{\GL{\bigidx}}(\tau_1\otimes\tau_2,(\zeta+\half,-\zeta+\half))$. Select
$\theta_{\zeta}\in V_{P_{\bigidx_1,\bigidx_2}}^{\GL{\bigidx}}(\tau_1\otimes\tau_2,(\zeta+\half,-\zeta+\half))$ such that
$\support{\theta_{\zeta}}=P_{\bigidx_1,\bigidx_2}\omega_{\bigidx_1,\bigidx_2}\mathcal{N}_{\GL{\bigidx},k'}$,
$k'>>0$, $\theta_{\zeta}(1,1,1)=1$ and for $a=diag(a_1,a_2)\in
A_{\bigidx_1,\bigidx_2}$, $z\in Z_{\bigidx_1,\bigidx_2}$ and
$u\in\mathcal{N}_{\GL{\bigidx},k'}$,
\begin{align*}
\theta_{\zeta}(az\omega_{\bigidx_1,\bigidx_2}u,1,1)=\delta_{P_{\bigidx_1,\bigidx_2}}^{\half}(a)\absdet{a_1}^{\zeta}\absdet{a_2}^{-\zeta}\theta_{\zeta}(1,a_1,a_2).
\end{align*}
Let $W_0,W_1,W=(w^{\smallidx,\bigidx})^{-1}\cdot W_1$ be as in that
proposition. I.e., $W_1$ is selected using Lemma~\ref{lemma:W with
small support} with $W_0$, $j=0$ and $k_1>>k'$. Let
\begin{align*}
f_{\zeta,s}'\in
V_{Q_{\bigidx}}^{H_{\bigidx}}(\cinduced{P_{\bigidx_1,\bigidx_2}}{\GL{\bigidx}}{\tau_1\absdet{}^{\zeta}\otimes\tau_2\absdet{}^{-\zeta}},s) \end{align*}
be with
$\support{f_{\zeta,s}'}=Q_{\bigidx}\mathcal{N}_{H_{\bigidx},k}$,
$k>>k_1$, such that
\begin{align*}
f_{\zeta,s}'(au,1,1,1)=\delta_{Q_{\bigidx}}^{\half}(a)\absdet{a}^{s-\half}\theta_{\zeta}(a,1,1),
\end{align*}
where $a\in\GL{\bigidx}\isomorphic M_{\bigidx}$ and $u\in\mathcal{N}_{H_{\bigidx},k}$. Define a function $\varphi_{\zeta,s}$ in the space of
\begin{align*}
\cinduced{Q_{\bigidx_1}}{H_{\bigidx}}{(\tau_1\otimes\cinduced{Q_{\bigidx_2}}{H_{\bigidx_2}}{\tau_2\alpha^{s-\zeta}})\alpha^{s+\zeta}}
\end{align*}
by $\varphi_{\zeta,s}(h,b_1,h_2,b_2)=f_{\zeta,s}'(h_2h,I_{\bigidx},b_1,b_2)$  (see the proof of Claim~\ref{claim:f_s is the image of varphi_s when zeta is fixed}). If we let $\zeta$ and $s$ vary,
we see that the restriction of the mapping
$h\mapsto f_{\zeta,s}'(h,1,1,1)$ to $K_{H_{\bigidx}}$ is independent of $\zeta$ and $s$. Therefore
\begin{align*}
\varphi_{\zeta,s}\in\xi(\tau_1\otimes\tau_2,std,(s+\zeta,s-\zeta)).
\end{align*}
For $\zeta$ and $s$ such that $\Psi(W,\varphi_{\zeta,s},s)$ is
absolutely convergent,
\begin{align*}
\Psi(W,\varphi_{\zeta,s},s)&=\int_{\overline{B_{\GL{\bigidx}}}}\int_{\overline{U_{\bigidx}}}
\int_{R^{\smallidx,\bigidx}}W(rw^{\smallidx,\bigidx}au)\delta_{Q_{\bigidx}}^{\half}(a)\absdet{a}^{s-\half}\\\notag&
\quad\times\int_{Z_{\bigidx_2,\bigidx_1}}f_{\zeta,s}'(u,\omega_{\bigidx_1,\bigidx_2}za,1,1)\psi^{-1}(z)\delta(a)dzdrduda\notag\\
&=cW_0(1)\int_{Z_{\bigidx_2,\bigidx_1}}\theta_{\zeta}(\omega_{\bigidx_1,\bigidx_2}z,1,1)\psi^{-1}(z)dz.
\end{align*}
Here $c>0$ is a volume constant independent of $\zeta$ and $s$.
Now $\theta_{\zeta}(\omega_{\bigidx_1,\bigidx_2}z,1,1)$ vanishes
unless $\rconj{\omega_{\bigidx_2,\bigidx_1}}z\in
P_{\bigidx_1,\bigidx_2}\mathcal{N}_{\GL{\bigidx},k'}$ and since
$\rconj{\omega_{\bigidx_2,\bigidx_1}}z\in\overline{Z_{\bigidx_1,\bigidx_2}}$,
the $dz$-integration can be disregarded and we get
$c'W_0(1)\theta_{\zeta}(1,1,1)$ for some volume constant $c'>0$
(independent of $\zeta,s$). After suitably normalizing $W_0$,
$\Psi(W,\varphi_{\zeta,s},s)=1$.

The same arguments show that there is a constant $C$ such that for
any $\zeta,s\in\C$,
\begin{align*}
&\int_{\lmodulo{U_{H_{\bigidx}}}{H_{\bigidx}}}\int_{R^{\smallidx,\bigidx}}|W|(rw^{\smallidx,\bigidx}h)\int_{Z_{\bigidx_2,\bigidx_1}}|\varphi_{\zeta,s}|(\omega_{\bigidx_1,\bigidx_2}zh,1,1,1)dzdrdh\\\notag
&=c|W_0|(1)\int_{Z_{\bigidx_2,\bigidx_1}}|\theta_{\zeta}|(\omega_{\bigidx_1,\bigidx_2}z,1,1)dz=c'|W_0|(1)|\theta_{\zeta}|(1,1,1)<C.
\end{align*}

In the case $\smallidx\leq\bigidx$ replace $\theta_{\zeta}$ with
$t_{\gamma}\cdot\theta_{\zeta}$, e.g.
$\support{t_{\gamma}\cdot\theta_{\zeta}}=P_{\bigidx_1,\bigidx_2}\omega_{\bigidx_1,\bigidx_2}\mathcal{N}_{\GL{\bigidx},k'}$ ($t_{\gamma}$ is defined in Lemma~\ref{lemma:f with small support}).
Take $W_0,W_1$ as above and $W=W_1$. The same arguments of
Lemma~\ref{lemma:f with small support} prove that there is a function
$f_{\zeta,s}'$ (in the previous space) such that for any
$b\in\GL{\bigidx}$ and $v\in V=(\overline{V_{\smallidx-1}}\rtimes
G_1)<G_{\smallidx}$,
\begin{align}\label{eq:props of f_s'}
&\int_{R_{\smallidx,\bigidx}}f_{\zeta,s}'(w_{\smallidx,\bigidx}rv,b,1,1)\psi_{\gamma}(r)dr=\begin{dcases}
P_s(\gamma)t_{\gamma}\cdot\theta_{\zeta}(b,1,1)& v\in O_{k}\subset
\mathcal{N}_{G_{\smallidx},k-k_0},\\0 &\text{otherwise}.
\end{dcases}
\end{align}
Here $P_s(\gamma)=|\gamma|^{\smallidx(\half\bigidx+s-\half)}$, $k_0\geq0$ is some constant and $O_{k}$ is a measurable subset.
Let
$\varphi_{\zeta,s}$ be defined as above and normalized by $P_s(\gamma)^{-1}\in\C[q^{-s},q^s]^*$. Then
\begin{align*}
\varphi_{\zeta,s}\in\xi(\tau_1\otimes\tau_2,hol,(s+\zeta,s-\zeta)).
\end{align*}
Now for some volume constants $c'',c'''>0$,
\begin{align*}
\Psi(W,\varphi_{\zeta,s},s)&=c''W(1)\int_{Z_{\bigidx_2,\bigidx_1}}t_{\gamma}\cdot\theta_{\zeta}(\omega_{\bigidx_1,\bigidx_2}z,1,1)\psi^{-1}(z)dz=c'''W(1)t_{\gamma}\cdot\theta_{\zeta}(1,1,1).
\end{align*}
The same arguments show that for any $\zeta$ and $s$,
\begin{align}\label{int:shahidi n_1<l<n_2 first integral subst absolute value}
&\int_{\lmodulo{U_{G_{\smallidx}}}{G_{\smallidx}}}\int_{R_{\smallidx,\bigidx}}
\int_{Z_{\bigidx_2,\bigidx_1}}|W|(g)|\varphi_{\zeta,s}|(\omega_{\bigidx_1,\bigidx_2}zw_{\smallidx,\bigidx}rg,1,1,1)dzdrdg\\\notag
&=c'''|W|(1)|t_{\gamma}\cdot\theta_{\zeta}|(1,1,1)<C.
\end{align}
This completes the proof.
\end{proof} 
The integral $\Psi(W,\varphi_{\zeta,s},s)$, where
$\varphi_{\zeta,s}\in\xi(\tau_1\otimes\tau_2,rat,(s+\zeta,s-\zeta))$,
extends to a function in $\C(q^{-\zeta},q^{-s})$. Similarly to the
description in Section~\ref{subsection:meromorphic continuation}
this is deduced either from the Iwasawa decomposition or from
Bernstein's continuation principle. To use the Iwasawa decomposition one can apply
Proposition~\ref{proposition:iwasawa decomposition of the integral} to $\Psi(W,\widehat{f}_{\varphi_{\zeta,s}},s)$ in $D$, since there
$\Psi(W,\varphi_{\zeta,s},s)=\Psi(W,\widehat{f}_{\varphi_{\zeta,s}},s)$. Then as we prove in Claim~\ref{claim:iwasawa decomposition of the integral with zeta in tau} below,
$W'\in\Whittaker{\tau}{\psi}$ is replaced by $W_{\zeta}'\in\Whittaker{\cinduced{P_{\bigidx_1,\bigidx_2}}{\GL{\bigidx}}{\tau_1\absdet{}^{\zeta}\otimes\tau_2\absdet{}^{-\zeta}}}{\psi}$
(see also the proof of Claim~\ref{claim:gamma is rational in s and zeta}).
To apply Bernstein's principle use
Proposition~\ref{proposition:integral can be made constant zeta phi
version} instead of Proposition~\ref{proposition:integral can be
made constant}.

We also conclude that
$\Psi(W,\widehat{f}_{\varphi_{\zeta,s}},s)$, defined in $D$, extends to $\C(q^{-\zeta},q^{-s})$
and the equality
$\Psi(W,\varphi_{\zeta,s},s)=\Psi(W,\widehat{f}_{\varphi_{\zeta,s}},s)$ 
holds in $\C(q^{-\zeta},q^{-s})$.

\subsection{$\zeta$ in a fixed compact}\label{subsection:zeta in a fixed compact} For any compact subset
$C\subset\C$ there is a constant $s_C>0$ depending only on $C$,
$\pi$, $\tau_1$ and $\tau_2$, such that for all $\zeta\in C$ and
$\Re(s)>s_C$, $\Psi(W,f_s,s)$ is absolutely convergent for all $W$
and $f_s\in V_{Q_{\bigidx}}^{H_{\bigidx}}(\Whittaker{\varepsilon}{\psi},s)$. E.g., for
$\smallidx\leq\bigidx$,
\begin{align*}
\int_{\lmodulo{U_{G_{\smallidx}}}{G_{\smallidx}}} |W|(g)
\int_{R_{\smallidx,\bigidx}}|f_s|(w_{\smallidx,\bigidx}rg,1)drdg<\infty.
\end{align*}
Moreover, this convergence is uniform in $\zeta$. To see this one follows the
proofs of convergence for these integrals and notices that convergence is governed by
the exponents of the representations. The twist by $\zeta$ has the
effect of twisting the exponents by $\absdet{}^{\mp\zeta}$. See
Remark~\ref{remark:tate integrals for proving convergence} and \cite{JPSS,Soudry,Soudry2}.


In Sections~\ref{subsection:The basic identity tau} and \ref{subsection:sub mult second var proof body} we
use an analogue of Proposition~\ref{proposition:iwasawa decomposition of the integral},
when $f_s\in\xi(\Whittaker{\varepsilon}{\psi},hol,s)$ and $\zeta$ is allowed to vary in some fixed compact set. The proposition provides an Iwasawa decomposition
separately for each $\zeta$, but we will need to describe this decomposition when $\zeta$ varies.
\begin{claim}\label{claim:iwasawa decomposition of the integral with zeta in tau}
Let $W\in\Whittaker{\pi}{\psi_{\gamma}^{-1}}$, $\varphi_{\zeta,s}\in\xi(\tau_1\otimes\tau_2,std,(s+\zeta,s-\zeta))$ and
set $f_{\zeta,s}=\widehat{f}_{\varphi_{\zeta,s}}\in\xi(\Whittaker{\varepsilon}{\psi},std,s)$.
Let $C\subset\C$ be a compact subset and $s_C$ be a constant as above.
Then there is a decomposition $\Psi(W,f_{\zeta,s},s)=\sum_{i=1}^mI_s^{(i)}$ for all $\zeta\in C$ and $\Re(s)>s_C$, and the form
of $I_s^{(i)}$ is \eqref{int:iwasawa decomposition of the integral l<=n split} when $\smallidx\leq\bigidx$ or
\eqref{int:iwasawa decomposition of the integral l>n} for $\smallidx>\bigidx$. 
The function $W'\in\Whittaker{\tau}{\psi}$
appearing in $I_s^{(i)}$ is replaced by $W_{\zeta}'\in\Whittaker{\varepsilon}{\psi}$ with the following properties:
\begin{enumerate}
\item There is a compact open subgroup $N<\GL{\bigidx}$ such that $W_{\zeta}'$ is right-invariant by $N$ for all $\zeta$.
\item For any $b\in\GL{\bigidx}$, $W_{\zeta}'(b)\in\C[q^{-\zeta},q^{\zeta}]$.
\end{enumerate}
The function $W_{\zeta}'$ is the only term in 
$I_s^{(i)}$, which depends on $\zeta$.
\end{claim}
\begin{proof}[Proof of Claim~\ref{claim:iwasawa decomposition of the integral with zeta in tau}] 
We review the steps in the proof of Proposition~\ref{proposition:iwasawa decomposition of the integral} and describe the dependency on $\zeta$. Consider the case $\smallidx\leq\bigidx$. The subgroup of $K_{H_{\bigidx}}$ under which
$f_{\zeta,s}$ is right-invariant, is independent of $\zeta$. Also for any $k\in K_{H_{\bigidx}}$,
$k\cdot f_{\zeta,s}=\widehat{f}_{k\cdot\varphi_{\zeta,s}}$ and since $k\cdot\varphi_{\zeta,s}\in\xi(\tau_1\otimes\tau_2,std,(s+\zeta,s-\zeta))$,
$k\cdot f_{\zeta,s}$ is an image of a standard section. Hence for all $\zeta\in C$ and $\Re(s)>s_C$,
$\Psi(W,f_{\zeta,s},s)$ can be written as a sum of integrals of the form \eqref{int:first iwasawa decomp}, i.e.,
\begin{align*}
\int_{T_{G_{\smallidx}}}W^{\diamond}(t)\Omega(t\cdot f_{\zeta,s}')\delta_{B_{G_{\smallidx}}}^{-1}(t)dt.
\end{align*}
Here $W^{\diamond}\in\Whittaker{\pi}{\psi_{\gamma}^{-1}}$ and $f_{\zeta,s}'=\widehat{f}_{\varphi_{\zeta,s}'}$
with $\varphi_{\zeta,s}'\in\xi(\tau_1\otimes\tau_2,std,(s+\zeta,s-\zeta))$. The only dependence of this writing on $\zeta$ and $s$ is in the section $f_{\zeta,s}'$, that is,
there are $m$ integrals of this form such that their sum is equal to $\Psi(W,f_{\zeta,s},s)$, for all
$\zeta\in C$ and $\Re(s)>s_C$.

Since $\varphi_{\zeta,s}$ is a standard section, we can write
\begin{align*}
f_{\zeta,s}'=\sum_{i=1}^vch_{y_iU,W_{\zeta}^{(i)},s},
\end{align*}
where $y_i\in K_{H_{\bigidx}}$, $U<K_{H_{\bigidx}}$ is a compact open subgroup, $W_{\zeta}^{(i)}\in\Whittaker{\varepsilon}{\psi}$
is right-invariant by $\rconj{y_i^{-1}}U\cap Q_{\bigidx}$ for all $\zeta$ and $W_{\zeta}^{(i)}(b)\in\C[q^{-\zeta},q^{\zeta}]$ ($b\in\GL{\bigidx}$). The data $(v,y_i,U)$ do not depend on $\zeta$ and $s$ and
the functions $W_{\zeta}^{(i)}$ do not depend on $s$.

Assume that $G_{\smallidx}$ is \quasisplit. Then we can replace the $dt$-integration with an integration over
$A_{\smallidx-1}$. In this process we replace $f_{\zeta,s}'$ with sections $x\cdot f_{\zeta,s}'$
where $x\in G_1$. In general if $W_{\zeta}\in\Whittaker{\varepsilon}{\psi}$ satisfies the properties stated in the claim,
so does $b\cdot W_{\zeta}$ for any $b\in\GL{\bigidx}$. According to the proof of Claim~\ref{claim:translation of standard section is nearly standard},
for any $h\in H_{\bigidx}$,
\begin{align}\label{eq:f s zeta first writing iwasawa for zeta 2}
h\cdot f_{\zeta,s}'=\sum_{i=1}^{v'}Q_i'ch_{y_i'U',(W_{\zeta}')^{(i)},s},
\end{align}
where $Q_i'\in\C[q^{-s},q^{s}]$ is independent of $\zeta$ and this writing has properties similar to the above
(e.g., $(W_{\zeta}')^{(i)}\in\Whittaker{\varepsilon}{\psi}$
is right-invariant by $\rconj{(y_i')^{-1}}U'\cap Q_{\bigidx}$ for all $\zeta$).
Therefore we need to consider integrals of the form
\begin{align*}
\int_{A_{\smallidx-1}}W^{\diamond}(a)\Omega(a\cdot ch_{y_i'U',(W_{\zeta}')^{(i)},s})\delta_{B_{G_{\smallidx}}}^{-1}(a)da.
\end{align*}

The next step is to apply Proposition~\ref{proposition:dr integral for torus of G_l} to $\Omega(a\cdot ch_{y_i'U',(W_{\zeta}')^{(i)},s})$. The element $a$ will be shifted to the
second argument of $ch_{y_i'U',(W_{\zeta}')^{(i)},s}$, hence can be ignored for now and we consider $\Omega(ch_{kN,W_{\zeta},s})$
(i.e., $k=y_i'$, $N=U'$, $W_{\zeta}=(W_{\zeta}')^{(i)}$).

In order to evaluate $\Omega(ch_{kN,W_{\zeta},s})$, in the proof of Proposition~\ref{proposition:dr integral for torus of G_l}
we replaced $ch_{kN,W_{\zeta},s}$ by $(ch_{kN,W_{\zeta},s})^{O,\psi_{\gamma}}$ for a certain compact open subgroup $O$.
The subgroup $O$ depended on $\support{ch_{kN,W_{\zeta},s}}$ (and on $\psi_{\gamma}$). We can take $O$ so that it would be independent of $\zeta$. This is because
for a fixed $\zeta$, either $W_{\zeta}\nequiv0$ whence $\support{ch_{kN,W_{\zeta},s}}=Q_{\bigidx}kN$, or $W_{\zeta}\equiv0$ and then any $O$ is suitable
since $\support{ch_{kN,W_{\zeta},s}}\equiv0$. Then for all $\zeta$, $(ch_{kN,W_{\zeta},s})^{O,\psi_{\gamma}}\in F_{w_0}$ (in the notation of
Section~\ref{subsection:the integral over R_{\smallidx,\bigidx}}).
Next write
\begin{align*}
(ch_{kN,W_{\zeta},s})^{O,\psi_{\gamma}}=\sum_{i=1}^mP_i
ch_{k_iN',W_{\zeta}^{(i)},s}
\end{align*}
with $P_i\in\C[q^{-s},q^{s}]$, $k_i\in K_{H_{\bigidx}}$, $N'<K_{H_{\bigidx}}$ is a compact open subgroup,
$W_{\zeta}^{(i)}\in\Whittaker{\varepsilon}{\psi}$
is right-invariant by $\rconj{k_i^{-1}}N'\cap Q_{\bigidx}$ for all $\zeta$ and $W_{\zeta}^{(i)}(b)\in\C[q^{-\zeta},q^{\zeta}]$.
The data $(m,k_i,N')$ is independent of $\zeta$ and $s$, $P_i$ is independent of $\zeta$ and
the functions $W_{\zeta}^{(i)}$ are independent of $s$. Note that here (as opposed to the above),
the double cosets $Q_{\bigidx}k_1N',\ldots,Q_{\bigidx}k_mN'$ need not be disjoint.

Since we can assume
that for each $i$ there is some $\zeta$ such that $W_{\zeta}^{(i)}\nequiv0$, we get that for each $i$ there is $b_i\in\GL{\bigidx}$ such that
$W_{\zeta}^{(i)}(b_i)\in\C[q^{-\zeta},q^{\zeta}]$ is not the zero polynomial. Hence except for a finite set $B$ of values of $q^{-\zeta}$,
$W_{\zeta}^{(i)}\nequiv0$ for all $\zeta$ and $1\leq i\leq m$. Fix $\zeta$ such that $q^{-\zeta}\notin B$. We can rewrite
\begin{align*}
(ch_{kN,W_{\zeta},s})^{O,\psi_{\gamma}}=\sum_{i=1}^{m'}P_i'ch_{k_i'N',(W_{\zeta}')^{(i)},s}
\end{align*}
so that if $k_{i_1}'=\ldots=k_{i_c}'$, $(W_{\zeta}')^{(i_1)},\ldots,(W_{\zeta}')^{(i_c)}$ are linearly
independent (see after the proof of Claim~\ref{claim:translation of standard section is nearly standard}).
For each $k_i$, $1\leq i\leq m$, there is $k_i'$ with $k_i=k_i'$ (also each $k_i'$ is equal
to some $k_i$).
Then since $(ch_{kN,W_{\zeta},s})^{O,\psi_{\gamma}}\in F_{w_0}$, for each $i$ we get
$ch_{k_i'N',(W_{\zeta}')^{(i)},s}\in F_{w_0}$ (see the proofs of
Corollary~\ref{corollary:support of standard
section made small} and Proposition~\ref{proposition:dr integral for torus of G_l}). Because $(W_{\zeta}')^{(i)}\nequiv0$,
$Q_{\bigidx}k_i'N\subset Q_{\bigidx}w_{0}Q_{\bigidx-\smallidx}$ for all $1\leq i\leq m'$. Thus
$Q_{\bigidx}k_iN\subset Q_{\bigidx}w_{0}Q_{\bigidx-\smallidx}$ for all $1\leq i\leq m$ whence
$ch_{k_iN',W_{\zeta}^{(i)},s}\in F_{w_0}$ for all $\zeta\in\C$.

Now we can proceed as in Proposition~\ref{proposition:dr integral for torus of G_l}. Let
$B_i\subset Q_{\bigidx-\smallidx}$ be such that for all $s$, and all $\zeta$ with $q^{-\zeta}\notin B$, the
support of $\lambda(w_{\smallidx,\bigidx}^{-1})ch_{k_iN',W_{\zeta}^{(i)},s}$ equals
$Q_{\bigidx}^{w_{\smallidx,\bigidx}}B_i$. Then for all $\zeta$ and $s$,
$\support{\lambda(w_{\smallidx,\bigidx}^{-1})ch_{k_iN',W_{\zeta}^{(i)},s}}\subset Q_{\bigidx}^{w_{\smallidx,\bigidx}}B_i$. By virtue of
Claim~\ref{claim:omega f reduces to sum when support is compact} we conclude that there exist $r_{i,j}\in R_{\smallidx,\bigidx}$, $m_i\geq1$ and
$c_{i,j}\in\C$ such that for all $\zeta$ and $s$,
\begin{align*}
\Omega(ch_{k_iN',W_{\zeta}^{(i)},s})
=\sum_{j=1}^{m_i}c_{i,j}ch_{k_iN',W_{\zeta}^{(i)},s}(w_{\smallidx,\bigidx}r_{i,j},1).
\end{align*}
Next note that
\begin{align*}
c_{i,j}ch_{k_iN',W_{\zeta}^{(i)},s}(w_{\smallidx,\bigidx}r_{i,j},diag(a,I_{\bigidx-\smallidx+1}))=
P_{i,j}\cdot(b_{i,j}\cdot W_{\zeta}^{(i)})(diag(a,I_{\bigidx-\smallidx+1})),
\end{align*}
for some polynomials $P_{i,j}\in\C[q^{-s},q^s]$ and $b_{i,j}\in\GL{\bigidx}$. The function $W_{\zeta}^{(i,j)}=b_{i,j}\cdot W_{\zeta}^{(i)}$ belongs to
$\Whittaker{\varepsilon}{\psi}$ and satisfies the properties stated in the claim (because $W_{\zeta}^{(i)}$ does). Therefore we get for all $\zeta$ and $s$,
\begin{align*}
\Omega(a\cdot ch_{kN,W_{\zeta},s})
=\absdet{a}^{\smallidx-\half\bigidx+s-\half}\sum_{i=1}^mP_i\sum_{j=1}^{m_i}P_{i,j}W_{\zeta}^{(i,j)}(diag(a,I_{\bigidx-\smallidx+1})).
\end{align*}
The only dependence on $s$ in this expression is in the factors $q^{-s}$ appearing in $P_i,P_{i,j}$
and in the exponent of $\absdet{a}$. The only dependence on $\zeta$ is in the
Whittaker functions.

From here we continue as in Proposition~\ref{proposition:iwasawa decomposition of the integral}. Regarding the split case,
when the integration on $G_1$ is over a non-compact subset and we evaluate $\Omega(ax\cdot ch_{kN,W_{\zeta},s})$
with $x\in G_1$, we need to multiply $ch_{kN,W_{\zeta},s}$
by an element $h$ obtained via Lemma~\ref{lemma:torus elements in double cosets}. Then
$h\cdot ch_{kN,W_{\zeta},s}$ can be written as a finite sum as in \eqref{eq:f s zeta first writing iwasawa for zeta 2}.

In case $\smallidx>\bigidx$ note that 
the argument regarding the $dr$-integration depends only on $W\in\Whittaker{\pi}{\psi_{\gamma}^{-1}}$ (and in particular, not on $\zeta$).

For the justifications of the formal steps we can proceed as in the proof of
Proposition~\ref{proposition:iwasawa decomposition of the integral} or simply take $s_C$ large enough
(see Remark~\ref{remark:taking suitable half-plane for inner integrals}).
\end{proof} 

\section{Realization of $\pi$ induced from
$\sigma\otimes\pi'$}\label{subsection:Realization of pi for induced
representation} When $\pi$ is an induced representation, as in
Section~\ref{subsection:Realization of tau for induced
representation} we use an explicit integral for the Whittaker
functional on $\pi$. Assume
$\pi=\cinduced{\overline{P_k}}{G_{\smallidx}}{\sigma\otimes\pi'}$,
$0<k<\smallidx$, $\sigma$ is realized in
$\Whittaker{\sigma}{\psi^{-1}}$ and $\pi'$ is realized in
$\Whittaker{\pi'}{\psi_{\gamma}^{-1}}$. Then a Whittaker functional
is defined by
\begin{align}\label{eq:whittaker for pi, k<l}
\whittakerfunctional(\varphi)=\int_{V_k}\varphi(v,I_k,I_{2(\smallidx-k)})\psi_{\gamma}(v)dv.
\end{align}
Here $\varphi$ belongs to the space of $\pi$, $\psi_{\gamma}=\frestrict{\psi_{\gamma}}{V_k}$ where
$\psi_{\gamma}$ is the character of $U_{G_{\smallidx}}$ (e.g. if
$k<\smallidx-1$, $\psi_{\gamma}(v)=\psi(v_{k,k+1})$). This integral
has a sense as a principal value. The corresponding Whittaker
function $W_{\varphi}\in\Whittaker{\pi}{\psi_{\gamma}^{-1}}$ is $W_{\varphi}(g)=\whittakerfunctional(g\cdot \varphi)$.

\begin{remark}
Alternatively we can take $\pi$ induced from $P_k$ but then
$\whittakerfunctional(\varphi)$ needs to be adapted accordingly, for
instance we need to translate $\varphi$ on the left by a Weyl element and consider separately the
cases of odd and even $k$.
\end{remark}

If $k=\smallidx$ (which is only possible when $G_{\smallidx}$ is
split), we need to consider $\pi$ induced from either
$\overline{P_{\smallidx}}$ or
$\rconj{\kappa}(\overline{P_{\smallidx}})$, where
$\kappa=diag(I_{\smallidx-1},J_2,I_{\smallidx-1})$ (see
Section~\ref{subsection:groups in study}). Let $\sigma$ be a representation of $\GL{\smallidx}$ realized in $\Whittaker{\sigma}{\psi^{-1}}$.
In the former case assume $\pi=\cinduced{\overline{P_{\smallidx}}}{G_{\smallidx}}{\sigma}$, 
\begin{align}\label{eq:whittaker for pi, k=l}
\whittakerfunctional(\varphi)=\int_{V_{\smallidx}}\varphi(v,d_{\gamma})\psi_{\gamma}(v)dv,\qquad
d_{\gamma}=diag(I_{\smallidx-1},4)\in\GL{\smallidx},
\end{align}
where $\psi_{\gamma}(v)=\psi(-\gamma v_{\smallidx-1,\smallidx+1})$.
In the latter case
$\pi=\cinduced{\rconj{\kappa}(\overline{P_{\smallidx}})}{G_{\smallidx}}{\sigma}$,
define
\begin{align}\label{eq:whittaker for pi, k=l 2}
\whittakerfunctional(\varphi)=\int_{\rconj{\kappa}V_{\smallidx}}\varphi(
v,d_{\gamma})\psi_{\gamma}(v)dv,\qquad
d_{\gamma}=diag(I_{\smallidx-1},-\gamma^{-1})\in\GL{\smallidx}.
\end{align}
Here $\psi_{\gamma}(v)=\psi(\quarter v_{\smallidx-1,\smallidx})$ (note that $\rconj{\kappa}\overline{V_{\smallidx}}=\overline{\rconj{\kappa}V_{\smallidx}}$).

Similarly to Section~\ref{subsection:Realization of tau for induced
representation}, introduce an auxiliary parameter $\zeta\in\C$ in
order to ensure the absolute convergence of the integral defining $\whittakerfunctional(\varphi)$.
Again $\zeta$ can be treated as fixed, variable or fixed in some
compact subset. The situation here is simpler, since we do not need
to consider sections in two variables simultaneously. We briefly describe
the results that will be used. 

Let $P<G_{\smallidx}$ be either $\overline{P_k}$ or $\rconj{\kappa}(\overline{P_{\smallidx}})$ according to $\pi$ ($0<k\leq\smallidx$).
Consider the representation $\cinduced{P}{G_{\smallidx}}{(\sigma\otimes\pi')\alpha^{-\zeta+\half}}$ on the space $V_{P}^{G_{\smallidx}}(\sigma\otimes\pi',-\zeta+\half)$,
where $\alpha^{-\zeta+\half}$ is the character of the Levi part of $P$ defined by
$(b,g')\mapsto\alpha^{-\zeta+\half}(b)$ for $(b,g')\in\GL{k}\times G_{\smallidx-k}$ (the Levi part of $P$ is isomorphic to $\GL{k}\times G_{\smallidx-k}$).
The space of standard sections of this representation is
$\xi(\sigma\otimes\pi',std,-\zeta+\half)=\xi_{P}^{G_{\smallidx}}(\sigma\otimes\pi',std,-\zeta+\half)$. Let $\varphi_{\zeta}\in\xi(\sigma\otimes\pi',std,-\zeta+\half)$. The integral
$\Psi(\varphi_{\zeta},f_s,s)$ is just $\Psi(W_{\varphi_{\zeta}},f_s,s)$ with $W_{\varphi_{\zeta}}$ replaced by one of the explicit formulas described above. For instance if $k<\smallidx\leq\bigidx$,
\begin{align}\label{int:k<l<n example starting form}
&\Psi(\varphi_{\zeta},f_s,s)=\int_{\lmodulo{U_{G_{\smallidx}}}{G_{\smallidx}}}
(\int_{V_k}\varphi_{\zeta}(vg,1,1)\psi_{\gamma}(v)dv)\int_{R_{\smallidx,\bigidx}}
f_s(w_{\smallidx,\bigidx}rg,1)\psi_{\gamma}(r)drdg.
\end{align}
This is a triple integral. We say that it is absolutely convergent if it converges when we
replace $\varphi_{\zeta},f_s$ with $|\varphi_{\zeta}|,|f_s|$ and drop the characters.
There is a domain $D$ of the form
\begin{align*}
\Re(\zeta)>C_0,\qquad \Re(\zeta)+C_2<\Re(s)<(1+C_1)\Re(\zeta)+C_3,
\end{align*}
such that $\Psi(\varphi_{\zeta},f_s,s)$ is absolutely convergent. Here
$C_0,\ldots, C_3$ are constants and $C_1>0$. The constants depend only
on $\sigma$, $\pi'$ and $\tau$ (see \cite{Soudry} Proposition~11.15).

When we write $\Psi(W_{\varphi_{\zeta}},f_s,s)$ we refer to \eqref{int:k<l<n example starting form} as a $drdg$-integral, i.e., the $dv$-integration is interpreted as principal value. Then for any compact subset
$C\subset\C$ there is a constant $s_C$ depending only on $C$, $\sigma$,
$\pi'$ and $\tau$ such that for all $\zeta\in C$ and $\Re(s)>s_C$,
$\Psi(W_{\varphi_{\zeta}},f_s,s)$ is absolutely convergent. E.g.,
\begin{align*}
&\int_{\lmodulo{U_{G_{\smallidx}}}{G_{\smallidx}}}\int_{R_{\smallidx,\bigidx}}
|\int_{V_k}\varphi_{\zeta}(vg,1,1)\psi_{\gamma}(v)dv|
|f_s|(w_{\smallidx,\bigidx}rg,1)drdg<\infty.
\end{align*}
Similarly to Claim~\ref{claim:deducing varphi and f_s are interchangeable in psi}, if for a fixed $\zeta$,
$\Psi(\varphi_{\zeta},f_s,s)$ is absolutely convergent at $s$,
$\Psi(\varphi_{\zeta},f_s,s)=\Psi(W_{\varphi_{\zeta}},f_s,s)$ at $s$. However, the domain of absolute convergence
of $\Psi(\varphi_{\zeta},f_s,s)$ does not contain a right half-plane in the parameter $s$. Therefore
an analogue of Corollary~\ref{corollary:deducing varphi and f_s are interchangeable in psi for all varphi}
does not immediately follow. The next claim establishes a similar assertion.
\begin{claim}\label{claim:meromorphic continuation for varphi instead of W in psi}
Fix $\zeta$ such that $\Re(\zeta)>C_0$ and put
\begin{align*}
D'=\setof{s\in\C}{\Re(\zeta)+C_2<\Re(s)<(1+C_1)\Re(\zeta)+C_3}.
\end{align*}
There is a finite set $B$ of values of $q^{-s}$ such that the following holds.
For any $\varphi_{\zeta}\in V_{P}^{G_{\smallidx}}(\sigma\otimes\pi',-\zeta+\half)$
and $f_s\in \xi(\tau,hol,s)$ there is
$Q_1\in\C(q^{-s})$ such that $\Psi(\varphi_{\zeta},f_s,s)=Q_1(q^{-s})$ for each $s\in D'$ such that
$q^{-s}\notin B$. Furthermore, let $Q_2\in\C(q^{-s})$ be such that
$\Psi(W_{\varphi_{\zeta}},f_s,s)=Q_2(q^{-s})$ for $\Re(s)>>0$.
Then $Q_1=Q_2$.
\end{claim}
\begin{remark}
The equality $\Psi(\varphi_{\zeta},f_s,s)=\Psi(W_{\varphi_{\zeta}},f_s,s)$ between integrals holds in $D'$ while
the ``default" domain of definition of $\Psi(W_{\varphi_{\zeta}},f_s,s)$ is a right half-plane.
\end{remark}
\begin{proof}[Proof of Claim~\ref{claim:meromorphic continuation for varphi instead of W in psi}] 
The integral $\Psi(W_{\varphi_{\zeta}},f_s,s)$ is absolutely convergent in both $D'$ and some right half-plane depending only on
$\sigma,\pi',\tau$ and $\zeta$. Let $D''$ be the union of these domains. Then by
Claim~\ref{claim:meromorphic continuation 2} (see also Remark~\ref{remark:proving equality of W and f in any domain without Bernstein continuation principle}), $\Psi(W_{\varphi_{\zeta}},f_s,s)=Q_1(q^{-s})$ for some $Q_1\in\C(q^{-s})$
for each $s\in D''$ such that $q^{-s}$ does not belong to some finite set $B$.
In particular for $s\in D'$ with $q^{-s}\notin B$,
\begin{align*}
\Psi(\varphi_{\zeta},f_s,s)=\Psi(W_{\varphi_{\zeta}},f_s,s)=Q_1(q^{-s}).
\end{align*}
Hence $\Psi(\varphi_{\zeta},f_s,s)$ extends to an element of $\C(q^{-s})$. Now taking
$\Re(s)>>0$ we get $Q_1(q^{-s})=\Psi(W_{\varphi_{\zeta}},f_s,s)=Q_2(q^{-s})$.
\end{proof} 

For any $W\in\Whittaker{\pi}{\psi_{\gamma}^{-1}}$ we can select
$\varphi_{\zeta}\in\xi(\sigma\otimes\pi',std,-\zeta+\half)$
such that
$W_{\varphi_0}=W$. This is because we can take $\varphi$ in the space of $\pi$ satisfying $W=W_{\varphi}$ and extend it to $\varphi_{\zeta}\in\xi(\sigma\otimes\pi',std,-\zeta+\half)$.

In addition, the integral $\Psi(\varphi_{\zeta},f_s,s)$ for
$\varphi_{\zeta}\in\xi(\sigma\otimes\pi',rat,-\zeta+\half)$
and $f_s\in\xi(\tau,rat,s)$ extends to a rational function
in $\C(q^{-\zeta},q^{-s})$.

\newtheorem{theorem}{Theorem}[section]
\newtheorem{proposition}{Proposition}[section]
\newtheorem{corollary}{Corollary}[section]
\newtheorem{lemma}{Lemma}[section]
\newtheorem{claim}{Claim}[section]
\theoremstyle{remark}
\newtheorem{remark}{Remark}[section]
\newtheorem{example}{Example}[section]
\theoremstyle{definition}
\newtheorem{definition}{Definition}[section]
\numberwithin{equation}{section}
\newcommand{\chapter}{\section} 

\end{comment}

\chapter{Local factors}\label{chapter:local factors}
We define the local factors: the $\gamma$-factor, the g.c.d. and the
$\epsilon$-factor, and discuss the definitions. The study of these factors covers most of the remaining chapters.

\section{Definitions of local factors}\label{subsection:definitions of local factors}
\subsection{The $\gamma$-factor}\label{subsection:the gamma factor}
Let $\pi$ be a representation of $G_{\smallidx}$ and let $\tau$ be a
representation of $\GL{\bigidx}$. 
The observation underlying the definition is that $\Psi(W,f_s,s)$
and $\Psi(W,\nintertwiningfull{\tau}{s}f_s,1-s)$ satisfy certain
equivariance properties, which place them in a single
one-dimensional space of bilinear forms.

As explained in Section~\ref{subsection:meromorphic continuation},
for $\Re(s)>>0$ the integral $\Psi(W,f_s,s)$ can be regarded as a
bilinear form on $\Whittaker{\pi}{\psi_{\gamma}^{-1}}\times
V(\tau,s)$ satisfying \eqref{eq:bilinear special condition}. We show how to extend this form to the whole plane
except a finite set of values of $q^{-s}$.

Let $W\in\Whittaker{\pi}{\psi_{\gamma}^{-1}}$, $s_0\in\C$ and
$y_{s_0}\in V(\tau,s_0)$ be given. Take any $f_s\in\xi(\tau,hol,s)$
satisfying $f_{s_0}=y_{s_0}$. According to
Proposition~\ref{proposition:meromorphic continuation} there is
$Q\in\C(q^{-s})$ such that $\Psi(W,f_s,s)=Q(q^{-s})$ for
$\Re(s)>>0$. Assuming that $q^{-s_0}$ is not a pole of $Q(q^{-s})$,
we define $\Psi(W,y_{s_0},s_0)=Q(q^{-s_0})$. Recall that the set of
poles, in $q^{-s}$, of $Q$ belong to a finite set depending only on
the representations.

To show that this is well-defined, assume $g_s\in\xi(\tau,hol,s)$
also satisfies $g_{s_0}=y_{s_0}$. Then $\Psi(W,g_s,s)=R(q^{-s})$ for
$R\in\C(q^{-s})$ and we claim $Q(q^{-s_0})=R(q^{-s_0})$. Indeed, let
$h_s=f_s-g_s$. Then $h_{s_0}=0$. Write $h_s$ as in
\eqref{eq:holomorphic section as combination of disjoint standard
sections}, i.e. $h_s=\sum_{i=1}^mP_i\cdot h_s^{(i)}$ with $0\ne
P_i\in\C[q^{-s},q^s]$, $h_s^{(i)}=ch_{k_iN,v_i,s}\in\xi(\tau,std,s)$
and such that if $k_{i_1}=\ldots=k_{i_c}$, $v_{i_1},\ldots,v_{i_c}$
are linearly independent. As in the proof of
Corollary~\ref{corollary:support of standard section made small}, we
see that $q^{-s_0}$ is a common zero of all polynomials
$P_1,\ldots,P_m$. In detail, let $\{i_1,\ldots,i_c\}$ be a maximal
set of indices such that $k_{i_1}=\ldots=k_{i_c}=k_i$ and put
$P_{i_j}(q^{-s_0},q^{s_0})=\alpha_j$. Then
$0=h_{s_0}(k_i)=\alpha_1v_{i_1}+\ldots+\alpha_cv_{i_c}$. Since
$v_{i_1},\ldots,v_{i_c}$ are linearly independent, $\alpha_j=0$ for
each $j$. It follows that $P_i=(1-q^{-s+s_0})A_i$ with
$A_i\in\C[q^{-s},q^s]$ for each $i$, whence $h_s=(1-q^{-s+s_0})h_s'$
with $h_s'\in\xi(\tau,hol,s)$. Now
\begin{align*}
Q(q^{-s})-R(q^{-s})=\Psi(W,h_s,s)=(1-q^{-s+s_0})\Psi(W,h_s',s)
\end{align*}
and since the poles
of $\Psi(W,h_s',s)$ (in $q^{-s}$) belong to a finite set depending only on the representations, we can assume
that $q^{-s_0}$ is not a pole of $\Psi(W,h_s',s)$. Thus $Q(q^{-s_0})=R(q^{-s_0})$.

In this manner we may regard $\Psi(W,f_s,s)$ as a bilinear form on
$\Whittaker{\pi}{\psi_{\gamma}^{-1}}\times V(\tau,s)$, for all
$s\in\C$ outside finitely many values of $q^{-s}$, and it still
satisfies \eqref{eq:bilinear special condition}. The same applies to
$\Psi(W,\nintertwiningfull{\tau}{s}f_s,1-s)$,
defined first for $\Re(s)<<0$. 

Thus for all but a finite
number of values of $q^{-s}$,
\begin{align*}
\Psi(W,\nintertwiningfull{\tau}{s}f_s,1-s),\Psi(W,f_s,s)\in\begin{dcases}Bil_{G_{\smallidx}}(\Whittaker{\pi}{\psi_{\gamma}^{-1}},V(\tau,s)_{N_{\bigidx-\smallidx},\psi_{\gamma}^{-1}})&\smallidx\leq\bigidx,\\
Bil_{H_{\bigidx}}((\Whittaker{\pi}{\psi_{\gamma}^{-1}})_{N^{\smallidx-\bigidx},\psi_{\gamma}^{-1}},V(\tau,s))&\smallidx>\bigidx.
\end{dcases}
\end{align*}
By Theorem~\ref{theorem:uniqueness} these spaces,
outside of a finite set of $q^{-s}$, are at most one-dimensional (in fact one dimensional, because the meromorphic continuation
of $\Psi(W,f_s,s)$ is \nontrivial, by Proposition~\ref{proposition:integral can be made constant}).

Therefore $\Psi(W,f_s,s)$ and
$\Psi(W,\nintertwiningfull{\tau}{s}f_s,1-s)$, as functions in
$\C(q^{-s})$, are proportional and can be related by a functional
equation. Namely, there exists a proportionality factor
$\gamma(\pi\times\tau,\psi,s)$ such that for all
$W\in\Whittaker{\pi}{\psi_{\gamma}^{-1}}$ and
$f_s\in\xi(\tau,hol,s)=\xi_{Q_{\bigidx}}^{H_{\bigidx}}(\tau,hol,s)$,
\begin{align}\label{eq:gamma def}
\gamma(\pi\times\tau,\psi,s)\Psi(W,f_s,s)=c(\smallidx,\tau,\gamma,s)\Psi(W,\nintertwiningfull{\tau}{s}f_s,1-s).
\end{align}
Here
$c(\smallidx,\tau,\gamma,s)=\omega_{\tau}(\gamma)^{-2}|\gamma|^{-2\bigidx(s-\half)}$
if $\bigidx<\smallidx$ and $c(\smallidx,\tau,\gamma,s)=1$ otherwise.

Note that the $\gamma$-factor $\gamma(\pi\times\tau,\psi,s)$ is
well-defined by Proposition~\ref{proposition:integral can be made
constant}. We immediately have
$\gamma(\pi\times\tau,\psi,s)\in\C(q^{-s})$. Also define
$\gamma(\pi\times\tau,\psi,s)=1$ if $\pi$ is the (trivial)
representation of $G_0=\{1\}$. Additionally, once \eqref{eq:gamma
def} was established, it also holds for any $f_s\in\xi(\tau,rat,s)$.

For an irreducible representation $\tau$, the $\gamma$-factor does
not vanish identically. This follows from \eqref{eq:multiplication
of normalized intertwiners} and
Proposition~\ref{proposition:integral can be made constant}.
Specifically, take $W$ and $f_{1-s}'\in\xi(\tau^*,hol,1-s)$ such
that $\Psi(W,f_{1-s}',1-s)\equiv1$ and apply \eqref{eq:gamma def} to
$W$ and
$f_s=\nintertwiningfull{\tau^*}{1-s}f_{1-s}'\in\xi(\tau,rat,s)$.

The factor $c(\smallidx,\tau,\gamma,s)$ is included in order to get
a compact form for the multiplicativity properties of
$\gamma(\pi\times\tau,\psi,s)$ (see Section~\ref{subsection:Twisting
the embedding}). Similarly to the $\epsilon$-factor of \cite{JPSS},
$c(\smallidx,\tau,\gamma,s)$ is related to the character
$\psi_{\gamma}$ in the integrals.

The central characters in Theorem~\ref{theorem:multiplicity first
var} are introduced by the applications of the $\GL{k}\times
\GL{\bigidx}$-functional equations and \eqref{eq:Shahidi local coefficient def}. In order
to get ``clean" multiplicativity formulas, similar to those
satisfied by Shahidi's $\gamma$-factors, we define a normalized
$\gamma$-factor $\Gamma(\pi\times\tau,\psi,s)$. This factor is
defined only for irreducible representations.

Let $\pi$ and $\tau$ be irreducible representations of
$G_{\smallidx}$ and $\GL{\bigidx}$ (resp.). We define
\begin{align*}
\Gamma(\pi\times\tau,\psi,s)=(\omega_{\pi}(-1)|\rho|^{s-\half}(-1,\rho)\gamma_{\psi}(\rho))^{\bigidx}\omega_{\tau}(-1)^{\smallidx}\omega_{\tau}(2\gamma)
\gamma(\pi\times\tau,\psi,s).
\end{align*}
Here $\omega_{\pi}$ is the central character of $\pi$ (the center of $G_{\smallidx}$ is finite), $(,)$ is the (quadratic) Hilbert symbol of the field $F$ and $\gamma_{\psi}$ is the normalized Weil factor associated to $\psi$, $\gamma_{\psi}(\cdot)^4=1$ (\cite{Weil} Section~14, $\gamma_{\psi}(a)$ is $\gamma_F(a,\psi)$ in the notation of \cite{Rao}).
If $\smallidx=0$, define $\Gamma(\pi\times\tau,\psi,s)=1$.

Note that in the split case we assumed $|\beta|=1$ (see Section~\ref{subsection:groups in study}), hence $|\rho|=|\beta^2|=1$. Also $(-1,x^2)=1$ and $\gamma_{\psi}(x^2)=1$ for any $x\in F^*$. Hence if $G_{\smallidx}$ is split,
\begin{align*}
\Gamma(\pi\times\tau,\psi,s)=\omega_{\pi}(-1)^{\bigidx}\omega_{\tau}(-1)^{\smallidx}\omega_{\tau}(2\gamma)
\gamma(\pi\times\tau,\psi,s).
\end{align*}

In order to apply the global arguments and deduce the equality between $\Gamma(\pi\times\tau,\psi,s)$ and
the $\gamma$-factor of Shahidi in the \quasisplit\ case (see Section~\ref{section:main results}), we must consider the following two issues. First, it is necessary to prove Theorem~\ref{theorem:main multiplicative properties} also when the split $G_{\smallidx}$ is defined with respect to $J=diag(I_{\smallidx-1},\bigl(\begin{smallmatrix}0&1\\-\rho&0\end{smallmatrix}\bigr),I_{\smallidx-1})\cdot
J_{2\smallidx}$, when $\rho\in F^2$ and $|\rho|\ne0$ (in fact, $|\rho|\geq1$ is enough). This is because, as already 
observed in Section~\ref{section:construction of the global integral}, this group appears as a local component of the global \quasisplit\ group $G_{\smallidx}(\Adele)$. Therefore, only in Section~\ref{subsection:the gamma factor}, the split $G_{\smallidx}$ can be defined with respect to either $J_{2\smallidx}$ or $J$.

Second, it is necessary to show that when $\smallidx=\bigidx=1$ and $G_{\smallidx}$ is \quasisplit, $\Gamma(\pi\times\tau,\psi,s)$ is equal to $\gamma^{Artin}(\pi\times\tau,\psi,s)$ - the Artin $\gamma$-factor under the local Langlands correspondence. As proved in \cite{me5} (Section~6.1), in this case
\begin{align*}
\gamma(\pi\times\tau,\psi,s)=\omega_{\tau}(\rho)^{-1}\omega_{\tau}(-1)\gamma_{\psi}(\rho)^{-1}(-1,\rho)|\rho|^{\half-s}\omega_{\pi}(-1)
\gamma^{Artin}(\pi\times\tau,\psi,s).
\end{align*}
This explains the appearance of the Hilbert symbol and Weil factor in the definition of $\Gamma(\pi\times\tau,\psi,s)$. Indeed, it immediately follows that $\Gamma(\pi\times\tau,\psi,s)=\gamma^{Artin}(\pi\times\tau,\psi,s)$. In the split case such a computation is not required.

Theorems~\ref{theorem:multiplicity second var} and \ref{theorem:multiplicity first var}, proved in Chapter~\ref{section:gamma_mult}, readily
imply Theorem~\ref{theorem:main multiplicative properties}:
\begin{proof}[Proof of Theorem~\ref{theorem:main multiplicative properties}]
Applying
Theorem~\ref{theorem:multiplicity second var} inductively shows
$\gamma(\pi\times\tau,\psi,s)=\prod_{i=1}^a\gamma(\pi\times\tau_i,\psi,s)$. 
The multiplicativity in $\tau$ follows 
since
\begin{align*}
\Gamma(\pi\times\tau,\psi,s)&=\prod_{i=1}^a(\omega_{\pi}(-1)|\rho|^{s-\half}(-1,\rho)\gamma_{\psi}(\rho))^{\bigidx_i}\omega_{\tau_i}(-1)^{\smallidx}\omega_{\tau_i}(2\gamma)\gamma(\pi\times\tau_i,\psi,s)\\
&=\prod_{i=1}^a\Gamma(\pi\times\tau_i,\psi,s).
\end{align*}
Consider the multiplicativity in $\pi$. If $k=\smallidx$, $\Gamma(\pi'\times\tau,\psi,s)=1$, otherwise
\begin{align*}
\Gamma(\pi'\times\tau,\psi,s)=
(\omega_{\pi'}(-1)|\rho|^{s-\half}(-1,\rho)\gamma_{\psi}(\rho))^{\bigidx}\omega_{\tau}(-1)^{\smallidx-k}\omega_{\tau}(2\gamma)\gamma(\pi'\times\tau,\psi,s).
\end{align*}
According to Theorem~\ref{theorem:multiplicity first var},
\begin{align}\label{eq:multiplicativity helper results first var even orthogonal}
\gamma(\pi\times\tau,\psi,s)=&\omega_{\sigma}(-1)^{\bigidx}\omega_{\tau}(-1)^k[|\rho|^{-\bigidx(s-\half)}\omega_{\tau}(2\gamma)^{-1}]\\\nonumber&\times\gamma(\pi'\times\tau,\psi,s)\prod_{i=1}^m\gamma(\sigma_i\times\tau,\psi,s)\gamma(\sigma_i^*\times\tau,\psi,s).
\end{align}
Here $\sigma$ denotes a representation of $\GL{k}$ parabolically induced from $\sigma_1\otimes\ldots\otimes\sigma_m$ and the factors in square brackets appear only if $k=\smallidx$ (only possible in the split case). The factor $|\rho|^{\cdots}$ was omitted in the statement of Theorem~\ref{theorem:multiplicity first var} because we assumed $|\beta|=1$ (see Chapter~\ref{section:gamma_mult} after \eqref{int:mult 1st var n<k<=l final form}, the constant $c_{\tau,\beta}$ actually equals $c(\smallidx,\tau,\gamma,s)\omega_{\tau}(2\gamma)|\beta|^{2\bigidx(s-\half)}$). The computations in Chapter~\ref{section:gamma_mult} are performed for $G_{\smallidx}$ defined with respect to $J_{2\smallidx,\rho}$, which is equal to $J_{2\smallidx}$ when $\rho\in F^2$. They remain valid for $G_{\smallidx}$ defined with respect to $J$ and $\rho\in F^2$ ($J$ was given before the proof of the theorem), without any changes except for one case - the proof of \eqref{eq:multiplicativity helper results first var even orthogonal} for $k=\smallidx>\bigidx$ (Section~\ref{subsection:1st var k=l>n+1}), where only minor modifications to the coordinates are needed.
Applying \eqref{eq:multiplicativity helper results first var even orthogonal} and because $\omega_{\pi}(-1)=\omega_{\sigma}(-1)\omega_{\pi'}(-1)$,
\begin{align*}
\Gamma(\pi\times\tau,\psi,s)=&(\omega_{\pi'}(-1)|\rho|^{s-\half}(-1,\rho)\gamma_{\psi}(\rho))^{\bigidx}\omega_{\tau}(-1)^{\smallidx-k}
\omega_{\tau}(2\gamma)[|\rho|^{-\bigidx(s-\half)}\omega_{\tau}(2\gamma)^{-1}]\\&\times\gamma(\pi'\times\tau,\psi,s)\prod_{i=1}^m\gamma(\sigma_i\times\tau,\psi,s)\gamma(\sigma_i^*\times\tau,\psi,s)
\\=&\Gamma(\pi'\times\tau,\psi,s)\prod_{i=1}^m\gamma(\sigma_i\times\tau,\psi,s)\gamma(\sigma_i^*\times\tau,\psi,s).
\end{align*}
Here we used the fact that if $k=\smallidx$, evidently $\rho\in F^2$ whence $(-1,\rho)=\gamma_{\psi}(\rho)=1$.
\end{proof} 

We also have the following claim, proving that
$\Gamma(\pi\times\tau,\psi,s)$ is identical with Shahidi's
$\gamma$-factor in the unramified case.
\begin{claim}\label{claim:gamma factor for unramified representations}
Let $\pi$ and $\tau$ be irreducible unramified representations.
Assume that all data are unramified (see
Section~\ref{subsection:preliminaries unramified}). Then
\begin{align*}
\Gamma(\pi\times\tau,\psi,s)=\gamma(\pi\times\tau,\psi,s)=\frac{L(\pi\times\tau^*,1-s)}{L(\pi\times\tau,s)}.
\end{align*}
\end{claim}
\begin{proof}[Proof of Claim~\ref{claim:gamma factor for unramified representations}] 
When $\pi$ and $\tau$ are irreducible and unramified,
$\Gamma(\pi\times\tau,\psi,s)=\gamma(\pi\times\tau,\psi,s)$. Let
$W\in\Whittaker{\pi}{\psi_{\gamma}^{-1}}$, $f_s\in\xi(\tau,std,s)$
and $f_{1-s}'\in\xi(\tau^*,std,1-s)$ be the normalized unramified
elements. That is, $W$ is right-invariant by $K_{G_{\smallidx}}$,
$W(1)=1$, $f_s$ and $f_{1-s}'$ are right-invariant by
$K_{H_{\bigidx}}$, $f_s(1,1)=f_{1-s}'(1,1)=1$. Then
\begin{align*}
\nintertwiningfull{\tau}{s}f_s&=\frac{L(\tau^*,Sym^2,2-2s)}{L(\tau,Sym^2,2s-1)}\intertwiningfull{\tau}{s}f_s\\\notag&=
\frac{L(\tau^*,Sym^2,2-2s)}{L(\tau,Sym^2,2s-1)}\frac{L(\tau,Sym^2,2s-1)}{L(\tau,Sym^2,2s)}f_{1-s}'.
\end{align*}
The first equality follows from \eqref{eq:gamma up to units} and
Shahidi \cite{Sh3} (Theorem~3.5), for the second equality see e.g.
Gelbart and Piatetski-Shapiro \cite{GPS} (Section~13).  Now the
result follows from Theorem~\ref{theorem:unramified computation}
when we plug $W$ and $f_s$ into \eqref{eq:gamma def}. Note that
$c(\smallidx,\tau,\gamma,s)=1$ since $|\gamma|=1$
($\gamma\in\mathcal{O}^*$ by our assumption) and 
$\frestrict{\omega_{\tau}}{\mathcal{O}^*}\equiv1$. Also $\gamma_{\psi}|_{\mathcal{O}^*}\equiv1$ and 
$(,)|_{\mathcal{O}^*\times\mathcal{O}^*}\equiv1$.
\end{proof} 

\begin{remark}
In the split case Claim~\ref{claim:gamma factor for unramified
representations} follows immediately from Theorem~\ref{theorem:main
multiplicative properties}, \eqref{eq:JPSS relation gamma and
friends} and the computation of the $\GL{k}\times\GL{\bigidx}$
integrals with unramified data in \cite{JS1}.
\end{remark}

\begin{remark}
One can define $\Gamma(\pi\times\tau,\psi,s)$ in a different, more cumbersome way. Let $\pi$ and $\tau$ be irreducible representations of
$G_{\smallidx}$ and $\GL{\bigidx}$ (resp.). There is an irreducible
supercuspidal representation
$\eta_1\otimes\ldots\otimes\eta_b\otimes\pi_0'$ of
$\GL{j_1}\times\ldots\times\GL{j_b}\times G_{\smallidx-k_0}$, where
$k_0=j_1+\ldots+j_b$ ($0\leq k_0\leq\smallidx$), such that $\pi$ is
a quotient of a representation parabolically induced from
$\eta_1\otimes\ldots\otimes\eta_b\otimes\pi_0'$. Put
$\sigma_0=\cinduced{P_{j_1,\ldots,j_b}}{\GL{k_0}}{\eta_1\otimes\ldots\otimes\eta_b}$.
One can define
\begin{align*}
\Gamma'(\pi\times\tau,\psi,s)=\omega_{\sigma_0}(-1)^{\bigidx}\omega_{\tau}(-1)^{k_0}[\omega_{\tau}(2\gamma)]\gamma(\pi\times\tau,\psi,s).
\end{align*}
Here $\omega_{\tau}(2\gamma)$ appears only if $k_0=\smallidx$. In
the trivial case of a representation $\pi$ of $G_0$ put
$\Gamma'(\pi\times\tau,\psi,s)=1$. Note that if $k_0=0$ (i.e.
$\pi=\pi_0'$),
$\Gamma'(\pi\times\tau,\psi,s)=\gamma(\pi\times\tau,\psi,s)$.

We show that $\Gamma'(\pi\times\tau,\psi,s)$ is well-defined. Indeed,
the representation $\eta_1\otimes\ldots\otimes\eta_b\otimes\pi_0'$ exists and is unique
up to conjugation by some Weyl element (\cite{BZ2} Section~2). Let
$\theta_1\otimes\ldots\otimes\theta_c\otimes\theta'$ be an
irreducible supercuspidal representation, where each $\theta_i$ is a
representation of $\GL{m_i}$ and $\theta'$ is a representation of
$G_{\smallidx-j}$ ($0\leq j\leq\smallidx$). Assume that $\pi$ is a
quotient of a representation parabolically induced from
$\theta_1\otimes\ldots\otimes\theta_c\otimes\theta'$. Denote by
$\eta$ a representation parabolically induced from
$\theta_1\otimes\ldots\otimes\theta_c$ such that $\pi$ is a quotient
of a representation parabolically induced from $\eta\otimes\theta'$.
Then $j=k_0$, $c=b$ and up to a change of indices, for each $i$ the
representation $\theta_i$ is isomorphic to $\eta_i$ or
$\widetilde{\eta_{i}}$, whence
$\omega_{\eta}(-1)=\omega_{\sigma_0}(-1)$.

The proof of Theorem~\ref{theorem:main multiplicative properties} for $\Gamma'(\pi\times\tau,\psi,s)$ now proceeds as follows. Assume that $\tau$ is an
irreducible quotient of a representation, parabolically induced from
an irreducible representation $\tau_1\otimes\ldots\otimes\tau_a$, as
in the statement of the theorem. For brevity, we drop $\psi$ and $s$
from the notation of the $\gamma$-factors. Applying
Theorem~\ref{theorem:multiplicity second var} inductively shows
$\gamma(\pi\times\tau)=\prod_{i=1}^a\gamma(\pi\times\tau_i)$ and we
get
\begin{align*}
\Gamma'(\pi\times\tau)=\prod_{i=1}^a\omega_{\sigma_0}(-1)^{\bigidx_i}\omega_{\tau_i}(-1)^{k_0}[\omega_{\tau_i}(2\gamma)]\gamma(\pi\times\tau_i)=\prod_{i=1}^a\Gamma'(\pi\times\tau_i).
\end{align*}

Now assume that $\pi$ is an irreducible quotient of a
representation, parabolically induced from an irreducible
representation $\sigma_1\otimes\ldots\otimes\sigma_m\otimes\pi'$,
with $k<\smallidx$ ($\pi'$ is an irreducible representation of
$G_{\smallidx-k}$). We can assume that $\pi$ is a quotient of
$\cinduced{P_k}{G_{\smallidx}}{\sigma\otimes\pi'}$, where $\sigma$
is a representation of $\GL{k}$ parabolically induced from
$\sigma_1\otimes\ldots\otimes\sigma_m$.

In general denote $S[\varepsilon]=\theta$ if $\varepsilon$ is a
quotient of a representation parabolically induced from an
irreducible supercuspidal representation $\theta$. If
$S[\varepsilon_i]=\theta_i$ for $i=1,2$ and $\varepsilon$ is a
quotient of a representation parabolically induced from
$\varepsilon_1\otimes\varepsilon_2$,
$S[\varepsilon]=\theta_1\otimes\theta_2$ (use \cite{BZ2} 1.9 (a),
(g) and (c)). 

Assume
\begin{align*}
S[\sigma]=\theta_1\otimes\ldots\otimes\theta_c,\quad
S[\pi']=\theta_{c+1}\otimes\ldots\otimes\theta_{c+d}\otimes\theta', 
\end{align*}
where each $\theta_i$ is a representation of $\GL{m_i}$ and
$\theta'$ is a representation of $G_{\smallidx-k-j}$, $0\leq
j\leq\smallidx-k$. Then
\begin{align*}
S[\pi]=\theta_{1}\otimes\ldots\otimes\theta_{c+d}\otimes\theta'.
\end{align*}
Since also $S[\pi]=\eta_1\otimes\ldots\otimes\eta_b\otimes\pi_0'$,
as above we get
$k=k_0-j$ (because $\smallidx-k-j=\smallidx-k_0$), $c+d=b$ and up to
a change of order, each $\theta_i$ is isomorphic to $\eta_i$ or
$\widetilde{\eta_i}$. Hence $k\leq k_0$ and if $\eta$ is a
representation of $\GL{k_0-k}$ parabolically induced from
$\theta_{c+1}\otimes\ldots\otimes\theta_{c+d}$,
$\omega_{\sigma_0}(-1)=\omega_{\sigma}(-1)\omega_{\eta}(-1)$.

According to the definition,
\begin{align*}
&\Gamma'(\pi'\times\tau)=\omega_{\eta}(-1)^{\bigidx}\omega_{\tau}(-1)^{k_0-k}[\omega_{\tau}(2\gamma)]\gamma(\pi'\times\tau),
\end{align*}
where $\omega_{\tau}(2\gamma)$ appears only if $k_0-k=\smallidx-k$,
i.e. if $k_0=\smallidx$. Next, since $\pi$ is a quotient of
$\cinduced{P_k}{G_{\smallidx}}{\sigma\otimes\pi'}$,
Theorem~\ref{theorem:multiplicity first var} yields
\begin{align*}
\gamma(\pi\times\tau)=\omega_{\sigma}(-1)^{\bigidx}\omega_{\tau}(-1)^{k}\gamma(\sigma\times\tau)\gamma(\pi'\times\tau)\gamma(\sigma^*\times\tau).
\end{align*}
Thus
\begin{align*}
\Gamma'(\pi\times\tau)&=\omega_{\sigma_0\otimes\sigma}(-1)^{\bigidx}\omega_{\tau}(-1)^{k_0+k}[\omega_{\tau}(2\gamma)]\gamma(\sigma\times\tau)\gamma(\pi'\times\tau)\gamma(\sigma^*\times\tau)\\
&=\gamma(\sigma\times\tau)\Gamma'(\pi'\times\tau)\gamma(\sigma^*\times\tau).
\end{align*}

If $k=\smallidx$, by looking at $S[\sigma]$ and $S[\pi]$ we get
$k_0=\smallidx$ and $\omega_{\sigma_0}(-1)=\omega_{\sigma}(-1)$,
therefore
\begin{align*}
&\gamma(\pi\times\tau)=\omega_{\sigma}(-1)^{\bigidx}\omega_{\tau}(-1)^{\smallidx}\omega_{\tau}(2\gamma)^{-1}\gamma(\sigma\times\tau)\gamma(\sigma^*\times\tau),\\\notag
&\Gamma'(\pi\times\tau)=\omega_{\sigma}(-1)^{2\bigidx}\omega_{\tau}(-1)^{2\smallidx}\gamma(\sigma\times\tau)\gamma(\sigma^*\times\tau)=\gamma(\sigma\times\tau)\Gamma'(\pi'\times\tau)\gamma(\sigma^*\times\tau).
\end{align*}

Since according to \cite{JPSS} (Theorem~3.1),
$\gamma(\sigma\times\tau)=\prod_{i=1}^m\gamma(\sigma_i\times\tau)$
and
$\gamma(\sigma^*\times\tau)=\prod_{i=1}^m\gamma(\sigma_i^*\times\tau)$,
this completes the proof of Theorem~\ref{theorem:main multiplicative properties} for $\Gamma'(\pi\times\tau,\psi,s)$.

One minor error in the definition of $\Gamma'(\pi\times\tau,\psi,s)$ is that if
$\smallidx=\bigidx=1$ and $G_{\smallidx}$ is \quasisplit, it is not equal to $\gamma^{Artin}(\pi\times\tau,\psi,s)$. Hence in the \quasisplit\ case, the global arguments do not give an equality between
$\Gamma'(\pi\times\tau,\psi,s)$ and the $\gamma$-factor of Shahidi. Also in the split case if one drops the assumption
$|\beta|=1$, the definition of $\Gamma'(\pi\times\tau,\psi,s)$ is imprecise. The definition of
$\Gamma'(\pi\times\tau,\psi,s)$ will not be used here.
\end{remark}

\subsection{The g.c.d. and $\epsilon$-factor}\label{section:gcd and epsilon factor}
In this section we assume that $\tau$ is
irreducible, the g.c.d. is defined only for an irreducible $\tau$
(see Remark~\ref{remark:take irreducible for gcd} below).

As explained in Section~\ref{subsection:func eq gln glm}, Jacquet,
Piatetski-Shapiro and Shalika \cite{JPSS} defined the $L$-factor as
a g.c.d. of the integrals. We adapt their approach to our setting,
using the idea of Piatetski-Shapiro and Rallis \cite{PR2,PS} (see
also \cite{Ik,HKS}). Namely, we define a g.c.d. for integrals with
good sections. Let
\begin{align*}
\xi_{Q_{\bigidx}}^{H_{\bigidx}}(\tau,good,s)=\xi_{Q_{\bigidx}}^{H_{\bigidx}}(\tau,hol,s)\cup
\nintertwiningfull{\tau^*}{1-s}\xi_{Q_{\bigidx}}^{H_{\bigidx}}(\tau^*,hol,1-s)
\end{align*}
be the set of good sections, i.e., either holomorphic sections or
the images of such under the normalized intertwining operator. Set
$\xi(\tau,good,s)=\xi_{Q_{\bigidx}}^{H_{\bigidx}}(\tau,good,s)$.
According to equality~\eqref{eq:multiplication of normalized
intertwiners} (which is valid for an irreducible $\tau$),
\begin{align*}
\nintertwiningfull{\tau}{s}\xi(\tau,good,s)=\xi(\tau^*,good,1-s).
\end{align*}
That is, the intertwining operator $\nintertwiningfull{\tau}{s}$ is
a bijection of good sections.

According to Proposition~\ref{proposition:meromorphic continuation},
the integral $\Psi(W,f_s,s)$ where
$W\in\Whittaker{\pi}{\psi_{\gamma}^{-1}}$ and
$f_s\in\xi(\tau,good,s)$, extends to $Q\in\C(q^{-s})$. The right
half-plane where $\Psi(W,f_s,s)=Q(q^{-s})$ and the finite set of
possible poles of $Q$ depend only on the representations (see the
discussion after the proposition).

The integrals $\Psi(W,f_s,s)$ where
$W\in\Whittaker{\pi}{\psi_{\gamma}^{-1}}$ and
$f_s\in\xi(\tau,good,s)$, regarded by meromorphic continuation as
elements of $\C(q^{-s})$, span a fractional ideal
$\mathcal{I}_{\pi\times\tau}(s)$ of $\C[q^{-s},q^s]$ which contains
the constant $1$ (see Proposition~\ref{proposition:integral can be
made constant}). Hence, it admits a unique generator in the form
$P(q^{-s})^{-1}$, with $P\in\C[X]$ such that $P(0)=1$. This
generator is what
we call the g.c.d. of the integrals $\Psi(W,f_s,s)$. 
Denote $\gcd(\pi\times\tau,s)=P(q^{-s})^{-1}$. Here the character $\psi$ of the field,
used to define the Whittaker models 
and the character of $R_{\smallidx,\bigidx}$, is absent. In Corollary~\ref{corollary:ideal independent of character of the field} below
we show that the g.c.d. is indeed independent of $\psi$.

In case $\pi$ is a representation of $G_0$ we define $\gcd(\pi\times\tau,s)=1$.

By the one-dimensionality results described in
Section~\ref{subsection:the gamma factor}, the quotients
\begin{align*}
\frac{\Psi(W,f_s,s)}{\gcd(\pi\times\tau,s)},\frac{\Psi(W,\nintertwiningfull{\tau}{s}f_s,1-s)}{\gcd(\pi\times\tau^*,1-s)}\qquad(f_s\in\xi(\tau,good,s))
\end{align*}
which by definition belong to $\C[q^{-s},q^s]$ are proportional.
There exists a proportionality factor
$\epsilon(\pi\times\tau,\psi,s)$ satisfying for all
$W\in\Whittaker{\pi}{\psi_{\gamma}^{-1}}$ and
$f_s\in\xi(\tau,good,s)$,
\begin{align}\label{eq:epsilon def}
\frac{\Psi(W,\nintertwiningfull{\tau}{s}f_s,1-s)}{\gcd(\pi\times\tau^*,1-s)}
=c(\smallidx,\tau,\gamma,s)^{-1}\epsilon(\pi\times\tau,\psi,s)\frac{\Psi(W,f_s,s)}{\gcd(\pi\times\tau,s)}.
\end{align}
This is our functional equation for $\pi\times\tau$.

Combining \eqref{eq:gamma def} and \eqref{eq:epsilon def} we derive a relation similar to
\eqref{eq:JPSS relation gamma and friends},
\begin{align}\label{eq:gamma and epsilon}
\gamma(\pi\times\tau,\psi,s)
=\epsilon(\pi\times\tau,\psi,s)\frac{\gcd(\pi\times\tau^*,1-s)}{\gcd(\pi\times\tau,s)}.
\end{align}
Equality~\eqref{eq:gamma and epsilon} resembles the relation between
the $\gamma$, $L$ and $\epsilon$-factors of Shahidi \cite{Sh3},
where the g.c.d. is replaced by the $L$-function. By Shahidi's
definitions, when $\pi$ and $\tau$ are standard modules, the
$\gamma$-factor and $L$-function are multiplicative in their
inducing data, hence so is the $\epsilon$-factor. Here
$\gamma(\pi\times\tau,\psi,s)$ is multiplicative, but in order to
deduce this for $\epsilon(\pi\times\tau,\psi,s)$ we still need to
establish proper multiplicative properties for the g.c.d. Currently
this task seems difficult.

Another fundamental property of the $\epsilon$-factor, is that it is
invertible.
\begin{claim}\label{claim:epsilon is a unit}
$\epsilon(\pi\times\tau,\psi,s)\in\C[q^{-s},q^s]^*$.
\end{claim}
\begin{proof}[Proof of Claim~\ref{claim:epsilon is a unit}] 
A priori $\epsilon(\pi\times\tau,\psi,s)\in\C(q^{-s})$. Take $W_i$
and $f_s^{(i)}\in\xi(\tau,good,s)$, $i=\intrange{1}{k}$, such that
\begin{align*}
\gcd(\pi\times\tau,s)=\sum_{i=1}^k\Psi(W_i,f_s^{(i)},s).
\end{align*}
Plugging $W_i,f_s^{(i)}$ into \eqref{eq:epsilon def} and summing,
we get $\epsilon(\pi\times\tau,\psi,s)\in\C[q^{-s},q^s]$. According
to the definition of $\epsilon(\pi\times\tau^*,\psi,1-s)$,
\begin{align*}
&\frac{\Psi(W,\nintertwiningfull{\tau^*}{1-s}\nintertwiningfull{\tau}{s}f_s,s)}{\gcd(\pi\times\tau,s)}\\\notag
&=c(\smallidx,\tau^*,\gamma,1-s)^{-1}\epsilon(\pi\times\tau^*,\psi,1-s)
\frac{\Psi(W,\nintertwiningfull{\tau}{s}f_s,1-s)}{\gcd(\pi\times\tau^*,1-s)}.
\end{align*}
Using \eqref{eq:multiplication of normalized intertwiners} and
\eqref{eq:epsilon def} on the \lhs, and since $c(\smallidx,\tau,\gamma,s)c(\smallidx,\tau^*,\gamma,1-s)=1$,
\begin{align*}
\frac{\Psi(W,\nintertwiningfull{\tau}{s}f_s,1-s)}{\gcd(\pi\times\tau^*,1-s)}
=\epsilon(\pi\times\tau,\psi,s)\epsilon(\pi\times\tau^*,\psi,1-s)\frac{\Psi(W,\nintertwiningfull{\tau}{s}f_s,1-s)}{\gcd(\pi\times\tau^*,1-s)}.
\end{align*}
Since the \lhs\ is not identically zero (because $\tau$ is irreducible),
$\epsilon(\pi\times\tau,\psi,s)\epsilon(\pi\times\tau^*,\psi,1-s)=1$,
whence $\epsilon(\pi\times\tau,\psi,s)$ is a unit.
\end{proof} 
As we have seen in this section, the g.c.d. definition using good
sections is natural in the sense that it immediately implies three
expected fundamental results: a standard functional equation
\eqref{eq:epsilon def}, a standard identity for the $\gamma$-factor
\eqref{eq:gamma and epsilon} and an invertible $\epsilon$-factor. In
Section~\ref{subsection:motivation underlying the good sections} we
discuss a preliminary (unsuccessful) attempt to define the g.c.d.
using only holomorphic sections and provide the background for the
method of good sections.
\begin{remark}\label{remark:take irreducible for gcd}
The fact that $\tau$ is irreducible is needed for
\eqref{eq:multiplication of normalized intertwiners}. In the absence
of \eqref{eq:multiplication of normalized intertwiners} the \lhs\ of
\eqref{eq:epsilon def} might not be a polynomial. In turn, the
$\epsilon$-factor might not be invertible.
\end{remark}

\section{The integral for a good section}\label{subsection:integral
with respect to sections} When describing the integral
$\Psi(W,f_s,s)$ for a good section $f_s$, it is often convenient to
assume that $f_s$ is holomorphic or even standard.
\begin{proposition}\label{proposition:integral for good section}
For any $f_s\in\xi(\tau,good,s)$ there exist $P_i\in\C[q^{-s},q^s]$,
$f_s^{(i)}\in\xi(\tau,std,s)$ such that
\begin{align*}
\Psi(W,f_s,s)=\ell_{\tau^*}(1-s)\sum_{i=1}^kP_i\Psi(W,f_s^{(i)},s).
\end{align*}
If $f_s\in\xi(\tau,hol,s)$, the factor $\ell_{\tau^*}(1-s)$ may be dropped.
This is an equality of integrals defined for $\Re(s)>>0$, hence also an equality in $\C(q^{-s})$ between meromorphic continuations.
\end{proposition}
\begin{proof}[Proof of Proposition~\ref{proposition:integral for good section}]
Let $Q\in\C(q^{-s})$. Taking $s$ with $\Re(s)$ large enough, depending only on the representations and the poles of $Q$, the integral defined at $s$ is a bilinear form and also $\Psi(W,Qf_s,s)=Q\Psi(W,f_s,s)$. Now the proposition follows from the rationality of the intertwining operator
(see Section~\ref{subsection:the intertwining operator for tau}).
\end{proof}

\section{The g.c.d. for supercuspidal representations}\label{section:gcd for supercuspidal reps}
Assume that $\pi$ and $\tau$ are supercuspidal representations. In
Corollary~\ref{corollary:integral is holomorphic for supercuspidal
data} we proved that $\Psi(W,f_s,s)$ is holomorphic for
$f_s\in\xi(\tau,hol,s)$. This corollary together with
Proposition~\ref{proposition:integral for good section} immediately imply the
following result for the g.c.d.
\begin{corollary}\label{corollary:gcd for supercuspidal data}
Let $\pi$ be a supercuspidal representation and $\tau$ be an
irreducible supercuspidal representation. Assume that if
$\smallidx=\bigidx=1$, $G_1$ is \quasisplit. Then
$\gcd(\pi\times\tau,s)\in \ell_{\tau^*}(1-s)\C[q^{-s},q^s]$.
\end{corollary}
If $\pi$ and $\tau$ are irreducible unitary supercuspidal
representations (in particular, tempered), 
Theorem~\ref{theorem:gcd for tempered reps} will imply 
$\gcd(\pi\times\tau,s)=L(\pi\times\tau,s)$, then
Corollary~\ref{corollary:gcd for supercuspidal data} shows that
$L(\pi\times\tau,s)^{-1}$ divides $L(\tau,Sym^2,2s)^{-1}$, since
$\ell_{\tau^*}(1-s)=L(\tau,Sym^2,2s)$ (see
Section~\ref{subsection:the intertwining operator for tau}).

\section{The motivation underlying good sections}\label{subsection:motivation underlying the good sections}
One of the basic properties required of the integrals is meromorphic
continuation to functions in $\C(q^{-s})$. Therefore the largest
reasonable set of sections to consider would be rational sections,
i.e., elements of $\xi(\tau,rat,s)$. We are looking for poles of the
integrals rather than superfluous poles introduced by sections,
hence we try to be conservative in our usage of rational sections.
Indeed, it will have been best to use just standard sections.
However, the space of standard sections is not closed for
right-translations by $H_{\bigidx}$, so a minimal set of sections to
start with would be holomorphic sections.

The first attempt to define a g.c.d. in our setting is to consider
the fractional ideal spanned by $\Psi(W,f_s,s)$ with
$f_s\in\xi(\tau,hol,s)$. This definition does not fit well in the
functional equation. If only holomorphic sections were used, the
ratio
$\gcd(\pi\times\tau^*,1-s)^{-1}\Psi(W,\nintertwiningfull{\tau}{s}f_s,1-s)$
might not be a polynomial, because $\nintertwiningfull{\tau}{s}f_s$
is not necessarily a holomorphic section. In turn the
$\epsilon$-factor might not be an exponential.

Moreover, there is an intrinsic problem with considering only
holomorphic sections. Recall that one of our goals is to relate the
g.c.d. to the $L$-function (see Chapter~\ref{chapter:introduction}).
Assume that $\pi$ and $\tau$ are irreducible supercuspidal. Then as
we have seen in Corollary~\ref{corollary:integral is holomorphic for
supercuspidal data} (granted that either $\smallidx>1$, $\bigidx>1$
or $G_{\smallidx}$ is \quasisplit),
for $f_s\in\xi(\tau,hol,s)$, $\Psi(W,f_s,s)$ is holomorphic so
the g.c.d. of the integrals is $1$. However, 
the Langlands $L$-function $L(\pi\times\tau,s)$ may have a pole. To
see this consider the local functorial lift of $\pi$ to
$\GL{2\smallidx}$ (see e.g. \cite{Co}). This gives an irreducible
self-dual representation $\Pi$ of $\GL{2\smallidx}$ such that
$L(\pi\times\tau,s)=L(\Pi\times\tau,s)$. If $\Pi$ is also
supercuspidal, taking $\bigidx=2\smallidx$ and $\tau=\Pi$ we get
$L(\pi\times\tau,s)=L(\Pi\times\Pi,s)$, which has a pole at $s=0$.

Piatetski-Shapiro and Rallis \cite{PR2,PS} studied a (global) Rankin-Selberg construction 
for an automorphic cuspidal representation $\pi$ of
$\GLF{2}{\Adele_{\mathbb{K}}}^{\circ}=\setof{g\in\GLF{2}{\Adele_{\mathbb{K}}}}{\det(g)\in
K^*}$, where $\mathbb{K}$ is a semi-simple commutative algebra of
degree $3$ over a number field $K$ and $\Adele_{\mathbb{K}}$ is the
Adele ring of $\mathbb{K}$. Their global integral represented the
Langlands global $L$-function $L(\pi,\sigma,s)$, where $\sigma$ is a
certain representation of the Langlands $L$-group of
$\GLF{2}{\Adele_{\mathbb{K}}}^{\circ}$ in
$\C_2\otimes\C_2\otimes\C_2$. Let $S_{\infty}$ be the set of
infinite primes and let $S\supset S_{\infty}$ be a finite set of
primes such that for $\nu\notin S$ all data are unramified. They
defined the partial $L$-function $L^S(\pi,\sigma,s)=\prod_{\nu\notin
S}L_{\nu}(\pi_{\nu},\sigma_{\nu},s)$. Each factor
$L_{\nu}(\pi_{\nu},\sigma_{\nu},s)$ ($\nu\notin S$) was defined
using the Satake Isomorphism Theorem.
Piatetski-Shapiro and Rallis proved that $L^S(\pi,\sigma,s)$ has a meromorphic continuation with a finite number of poles and found the possible locations of the poles. 
In order to extend their results to $L^{S_{\infty}}(\pi,\sigma,s)$,
a definition of $L_{\nu}(\pi_{\nu},\sigma_{\nu},s)$ at the ramified
primes was needed. To accomplish this they introduced the notion of
good sections mentioned above, and defined
$L_{\nu}(\pi_{\nu},\sigma_{\nu},s)$ as the unique (normalized)
generator of the fractional ideal spanned by the local integrals
with good sections (i.e., the g.c.d.). This led to a functional
equation in the form \eqref{eq:epsilon def}, with an
$\epsilon$-factor. They also concluded a global functional equation
for $L^{S_{\infty}}(\pi,\sigma,s)$.

Ikeda \cite{Ik} extended the work of Piatetski-Shapiro and Rallis
\cite{PS} and analyzed the poles of the global $L$-function
$L(\pi,\sigma,s)$. He also studied the poles of the local
$L$-functions. For the finite unramified case he showed that the
g.c.d. definition coincides with the definition based on the Satake
parameterization. We note that the definition of good sections used
by Ikeda is somewhat different from \cite{PS}. In \cite{Ik2}, Ikeda
studied the g.c.d. for the \archimedean\ case and, for a specific
class of representations, related it to the standard 
$L$-function.

The method of good sections has also been applied by Harris, Kudla
and Sweet \cite{HKS}, in the context of the local theta
correspondence between unitary groups, in order to define
$\epsilon$-factors using the doubling method.

Although the actual definition of good sections varies to some
extent among the studies mentioned above, there are essential
properties of such a set that are common to all. Firstly, it is
stable under the normalized intertwining operator, i.e., the
operator is a bijection of good sections. Secondly, it contains the
holomorphic sections, required in order for the integrals to span a
fractional ideal.

The idea of using good sections instead of just holomorphic sections addresses the
lack of symmetry in the functional equation. Since
$\nintertwiningfull{\tau}{s}\xi(\tau,good,s)=\xi(\tau^*,good,1-s)$,
both sides of the equation are polynomials.
The g.c.d. may contain poles originating from the intertwining operator.
Then $\epsilon(\pi\times\tau,\psi,s)$ is seen to be 
invertible. We are also in a better position to confront the
scenario of supercuspidal representations $\pi$ and $\tau=\Pi$
illustrated above, since now $\Psi(W,f_s,s)$ for a good section
$f_s$ may not be holomorphic, so $\gcd(\pi\times\Pi,s)$ may contain
poles. In fact as explained in Section~\ref{section:gcd for supercuspidal reps} (see Corollary~\ref{corollary:gcd for supercuspidal data}), 
$\gcd(\pi\times\Pi,s)^{-1}$ divides $L(\Pi,Sym^2,2s)^{-1}$. Also
$L(\Pi\times\Pi,s)=L(\Pi,Sym^2,s)L(\Pi,\Lambda^2,s)$ ($\Lambda^2$ -
the antisymmetric square representation) and the pole at $s=0$ of
$L(\Pi\times\Pi,s)$ is expected to appear in $L(\Pi,Sym^2,s)$, so
$\gcd(\pi\times\Pi,s)$ and $L(\pi\times\Pi,s)=L(\Pi\times\Pi,s)$ may
coincide. Of course, Corollary~\ref{corollary:gcd for pi tempered
tau supercuspidal split case} will imply
$\gcd(\pi\times\Pi,s)=L(\pi\times\Pi,s)$ in this setting. 

\section{The g.c.d. is independent of $\psi$}\label{subsection:properties of the ideal}
We discuss the dependence of $\mathcal{I}_{\pi\times\tau}(s)$ on the
character used to define the Whittaker model $\Whittaker{\pi}{\psi_{\gamma}^{-1}}$. 
The character $\psi_{\gamma}$ of $U_{G_{\smallidx}}$ was given in Section~\ref{subsection:the integrals}.
Any character $\psi'$ of $U_{G_{\smallidx}}$ in the
$T_{G_{\smallidx}}$-orbit of $\psi_{\gamma}$ is in the form
$\rconj{t}\psi_{\gamma}$ for some $t\in T_{G_{\smallidx}}$, where
$\rconj{t}\psi_{\gamma}(u)=\psi_{\gamma}(tut^{-1})$. Replacing
$\psi_{\gamma}$ with $\rconj{t}\psi_{\gamma}$ entails replacing the
character in $\Whittaker{\tau}{\psi}$, the embedding of
$G_{\smallidx}$ in $H_{\bigidx}$ for $\smallidx\leq\bigidx$ (or the
embedding $H_{\bigidx}<G_{\smallidx}$ when $\smallidx>\bigidx$) and
the character of $R_{\smallidx,\bigidx}$ (for $\smallidx<\bigidx$).
The following propositions detail the invariancy of
$\mathcal{I}_{\pi\times\tau}(s)$ under such a change.

We start with the case $\smallidx\leq\bigidx$. Write $t=ax\in
T_{G_{\smallidx}}$ for $a\in A_{\smallidx-1}$, $x\in G_1$. Decompose
the image of $x$ in
$H_1$, $x=diag(m_x,1,m_x^{-1})z_x$ where 
$z_x\in U_{H_1}K_{H_1}$.
Also let $b\in A_{\bigidx-\smallidx}$ (if $\smallidx<\bigidx$).
Set $d=diag(a,m_x,b),d'=diag(b^*,a,m_x)\in A_{\bigidx}$.
We have,
\begin{proposition}\label{proposition:ideal independent of characters l<=n}
For $\smallidx\leq\bigidx$, let
$\mathcal{I}_{\pi\times\tau}(s)^{t,z_x,b}$ be the fractional ideal
spanned by $\Psi(W,f_s,s)$ with
$W\in\Whittaker{\pi}{\rconj{t}\psi_{\gamma}^{-1}}$, $\tau$ realized
in $\Whittaker{\tau}{\rconj{d}\psi}$ and
\begin{align*}
[g]_{\mathcal{E}_{H_{\bigidx}}}=diag(I_{\bigidx-\smallidx},z_xM^{-1}[g]_{\mathcal{E}_{G_{\smallidx}}'}Mz_x^{-1},I_{\bigidx-\smallidx}),
\end{align*}
where $M$ is defined in Section~\ref{subsection:G_l in H_n}. If
$\smallidx<\bigidx$, the character of $R_{\smallidx,\bigidx}$ is
given by $\rconj{d'}\psi_{\gamma}$. Then
$\mathcal{I}_{\pi\times\tau}(s)^{t,z_x,b}=\mathcal{I}_{\pi\times\tau}(s)$.
\end{proposition}
\begin{proof}[Proof of Proposition~\ref{proposition:ideal independent of characters l<=n}] 
The proof follows the arguments of Soudry \cite{Soudry}
(Section~10). If $W\in\Whittaker{\pi}{\psi_{\gamma}^{-1}}$, the function $g\mapsto W(tg)$
belongs to $\Whittaker{\pi}{\rconj{t}\psi_{\gamma}^{-1}}$. Similarly for
$\tau$ realized in $\Whittaker{\tau}{\psi}$ and $f_s\in V(\tau,s)$, the mapping $y\mapsto f_s(h,dy)$
lies in $\Whittaker{\tau}{\rconj{d}\psi}$. Then
$\mathcal{I}_{\pi\times\tau}'(s)$ is spanned by integrals of the form
\begin{align*}
\int_{\lmodulo{U_{G_{\smallidx}}}{G_{\smallidx}}}\int_{R_{\smallidx,\bigidx}}
W(tg)f_s(w_{\smallidx,\bigidx}r(\rconj{z_x^{-1}}g),d)\rconj{d'}\psi_{\gamma}(r)drdg,
\end{align*}
with $W\in\Whittaker{\pi}{\psi_{\gamma}^{-1}}$ and
$f_s\in\xi(\tau,good,s)$. Note that as input to $f_s$, $g$ is
written according to our usual embedding (i.e.,
$g=diag(I_{\bigidx-\smallidx},M^{-1}[g]_{\mathcal{E}_{G_{\smallidx}}'}M,I_{\bigidx-\smallidx})$).
We may move $d$ to the first argument of $f_s$ and conjugate it by
$w_{\smallidx,\bigidx}$, it normalizes $R_{\smallidx,\bigidx}$ and
the integral becomes
\begin{align*}
C(d)\int_{\lmodulo{U_{G_{\smallidx}}}{G_{\smallidx}}}\int_{R_{\smallidx,\bigidx}}
W(tg)f_s(w_{\smallidx,\bigidx}r(\rconj{w_{\smallidx,\bigidx}}d)(\rconj{z_x^{-1}}g),1)\psi_{\gamma}(r)drdg.
\end{align*}
Here $C(d)\in\C[q^{-s},q^s]^*$ is a function of $d$. Changing
$g\mapsto t^{-1}gt$ in the integral yields, for some $C'(t)\in\C^*$,
\begin{align*}
C(d)C'(t)\int_{\lmodulo{U_{G_{\smallidx}}}{G_{\smallidx}}}\int_{R_{\smallidx,\bigidx}}
t\cdot W(g)(\rconj{w_{\smallidx,\bigidx}}d)\cdot
f_s(w_{\smallidx,\bigidx}rg,1)\psi_{\gamma}(r)drdg,
\end{align*}
which is just $C(d)C'(t)\Psi(t\cdot
W,\rconj{w_{\smallidx,\bigidx}}d\cdot
f_s,s)\in\mathcal{I}_{\pi\times\tau}(s)$.
\end{proof} 

Now consider the case $\smallidx>\bigidx$. Write
$t=diag(b,a,x,a^*,b^*)\in T_{G_{\smallidx}}$ where $b\in
A_{\bigidx}$, $a\in A_{\smallidx-\bigidx-1}$ and $x\in G_1$.
Then we show,
\begin{proposition}\label{proposition:ideal independent of characters l>n}
For $\smallidx>\bigidx$, let
$\mathcal{I}_{\pi\times\tau}(s)^{t,x,b}$ be the fractional ideal
spanned by $\Psi(W,f_s,s)$ with
$W\in\Whittaker{\pi}{\rconj{t}\psi_{\gamma}^{-1}}$, $\tau$ realized
in $\Whittaker{\tau}{\rconj{b}\psi}$ and
\begin{align*}
[h]_{\mathcal{E}_{G_{\smallidx}}}=diag(I_{\smallidx-\bigidx-1},x^{-1}M^{-1}[h]_{\mathcal{E}_{H_{\bigidx}}'}Mx,I_{\smallidx-\bigidx-1}).
\end{align*}
Here $M$ is the matrix defined in Section~\ref{subsection:H_n in
G_l}. Then
$\mathcal{I}_{\pi\times\tau}(s)^{t,x,b}=\mathcal{I}_{\pi\times\tau}(s)$.
\end{proposition}
\begin{proof}[Proof of Proposition~\ref{proposition:ideal independent of characters l>n}] 
Let $W\in\Whittaker{\pi}{\psi_{\gamma}^{-1}}$,
$f_s\in\xi(\tau,good,s)$. As in the case $\smallidx\leq\bigidx$, for
suitable $C(t),C'(t)\in\C[q^{-s},q^s]^*$,
\begin{align*}
&\int_{\lmodulo{U_{H_{\bigidx}}}{H_{\bigidx}}}
\int_{R^{\smallidx,\bigidx}}W(trw^{\smallidx,\bigidx}(\rconj{x}h))f_s(h,b)drdh\\\notag
&=C(t)\int_{\lmodulo{U_{H_{\bigidx}}}{H_{\bigidx}}}
\int_{R^{\smallidx,\bigidx}}W(rw^{\smallidx,\bigidx}(\rconj{w^{\smallidx,\bigidx}}t)(\rconj{x}h))f_s(bh,1)drdh\\\notag
&=C'(t)\int_{\lmodulo{U_{H_{\bigidx}}}{H_{\bigidx}}}
\int_{R^{\smallidx,\bigidx}}\rconj{w^{\smallidx,\bigidx}}t\cdot W(rw^{\smallidx,\bigidx}h)b\cdot f_s(h,1)drdh.
\end{align*}
Here as input to $W$,
$h=diag(I_{\smallidx-\bigidx-1},M^{-1}[h]_{\mathcal{E}_{H_{\bigidx}}'}M,I_{\smallidx-\bigidx-1})$.
The result follows from this.
\end{proof} 

Recall that we have a fixed \nontrivial\ unitary additive character
$\psi$ of the field, used to define the characters of the Whittaker
models and $R_{\smallidx,\bigidx}$. As a corollary to the
propositions above, we show that the g.c.d. is independent of
$\psi$. Any other such character $\psi'$ takes the form
$\psi'(x)=\psi(cx)$ for some $c\ne0$. Changing $\psi$ effectively
changes $\Whittaker{\pi}{\psi_{\gamma}^{-1}}$,
$\Whittaker{\tau}{\psi}$ and the character $\psi_{\gamma}$ of
$R_{\smallidx,\bigidx}$ (for $\smallidx<\bigidx$).
\begin{corollary}\label{corollary:ideal independent of character of the field}
Let $\mathcal{I}_{\pi\times\tau}'(s)$ be the fractional ideal
spanned by the integrals $\Psi(W,f_s,s)$ with the character $\psi$
in the construction replaced by $\psi'$. Then
$\mathcal{I}_{\pi\times\tau}'(s)=\mathcal{I}_{\pi\times\tau}(s)$.
\end{corollary}
\begin{proof}[Proof of Corollary~\ref{corollary:ideal independent of character of the field}] 
Assume $\psi'(x)=\psi(cx)$. If $\smallidx\leq\bigidx$ we apply
Proposition~\ref{proposition:ideal independent of characters l<=n}
with $a=diag(c^{\smallidx-1},\ldots,c^2,c)$, $t=diag(a,I_2,a^*)$,
$b=diag(c^{-1},c^{-2},\ldots,c^{-(\bigidx-\smallidx)})$,
$d=diag(a,1,b)$ and $d'=diag(b^*,a,1)$.

In the case $\smallidx>\bigidx$, use
Proposition~\ref{proposition:ideal independent of characters l>n}
with $a=diag(c^{\smallidx-\bigidx-1},\ldots,c^2,c)$,\\
$b=diag(c^{\smallidx-1},c^{\smallidx-2},\ldots,c^{\smallidx-\bigidx})$
and $t=diag(b,a,I_2,a^*,b^*)$.
\end{proof} 


\newtheorem{theorem}{Theorem}[section]
\newtheorem{proposition}{Proposition}[section]
\newtheorem{corollary}{Corollary}[section]
\newtheorem{lemma}{Lemma}[section]
\newtheorem{claim}{Claim}[section]
\theoremstyle{remark}
\newtheorem{remark}{Remark}[section]
\newtheorem{example}{Example}[section]
\theoremstyle{definition}
\newtheorem{definition}{Definition}[section]
\numberwithin{equation}{section}
\newcommand{\chapter}{\section} 

\end{comment}

\chapter{Multiplicativity of the $\gamma$-factor}\label{section:gamma_mult}
In this chapter we prove that the $\gamma$-factor
$\gamma(\pi\times\tau,\psi,s)$ is multiplicative in both variables $\pi$ and $\tau$.

\section{Multiplicativity in the second variable}\label{section:2nd variable}

\subsection{Outline of the proof}
Let $\pi$ be a representation of $G_{\smallidx}$. We prove
Theorem~\ref{theorem:multiplicity second var}, namely if $\tau$ is a
quotient of
$\cinduced{P_{\bigidx_1,\bigidx_2}}{\GL{\bigidx}}{\tau_1\otimes\tau_2}$,
\begin{align}\label{eq:gamma second var}
\gamma(\pi\times\tau,\psi,s)=\gamma(\pi\times\tau_1,\psi,s)\gamma(\pi\times\tau_2,\psi,s).
\end{align}
We prove the theorem for
$\tau=\cinduced{P_{\bigidx_1,\bigidx_2}}{\GL{\bigidx}}{\tau_1\otimes\tau_2}$.
The case where $\tau$ is a quotient of this representation follows
immediately, because we can replace the Whittaker model of $\tau$
with the Whittaker model of
$\cinduced{P_{\bigidx_1,\bigidx_2}}{\GL{\bigidx}}{\tau_1\otimes\tau_2}$.

The proof is divided into five cases, in each we apply three
functional equations: for $\pi\times\tau_2$, $\tau_1\times\tau_2$
and $\pi\times\tau_1$, in this order. The form of the first equation
depends on whether $\smallidx\leq\bigidx_2$ or
$\smallidx>\bigidx_2$, since this determines the form of the integral for $\pi\times\tau_2$. 
Then we apply
Shahidi's functional equation \eqref{eq:shahidi func eq}. Next we
have two cases again, according to the form of the integral 
for $\pi\times\tau_1$. Altogether these are four cases which we
handle when $\smallidx\leq\bigidx$, the last
case is when $\smallidx>\bigidx$. 
The final step is to use \eqref{eq:multiplicative property of
intertwiners} in order to combine the applications of three
intertwining operators, each emerging from the respective equation,
into one operator $\nintertwiningfull{\tau}{s}$.

The arguments involve two essentially different types of passages
between integrals. One occurs between integrals which have a common
domain of absolute convergence, the other when the integrals
converge in possibly disjoint domains. 

\subsection{Twisting the embedding and the mysterious $c(\smallidx,\tau,\gamma,s)$}\label{subsection:Twisting the embedding}
Throughout the proof we will obtain integrals $\Psi(W,f_s,s)$ 
where the embedding $G_{\smallidx}<H_{\bigidx}$ (or
$H_{\bigidx}<G_{\smallidx}$) is ``twisted" by some element. The twisted
embedding still preserves the character $\psi_{\gamma}$ of $N_{\bigidx-\smallidx}$ (or $N^{\smallidx-\bigidx}$, when $\smallidx>\bigidx$) used to define the original embedding but
the actual basis changes. We describe these twists here for later
reference.

In general let $A$ be a group and $B<A$ a subgroup. For a function
$f$ defined on $B$ and $a\in A$ which normalizes $B$, define
$f^a$ by $f^a(b)= f(\rconj{a}b)$, i.e.
$f^a=a\cdot\lambda(a) f$.

For $k_1,k_2\geq1$ set
\begin{align*}
b_{k_1,k_2}=diag(I_{k_1},(-1)^{k_2},I_{k_1})\in\GL{2k_1+1}.
\end{align*}
Let $\tau$ be a representation of $\GL{\bigidx}$ and let $k\geq1$.
Let $f_s\in\xi(\tau,std,s)$. Since $b_{\bigidx,k}$ commutes with the
elements of $M_{\bigidx}$ and normalizes $U_{\bigidx}$ and
$K_{H_{\bigidx}}$, $f_s^{b_{\bigidx,k}}\in\xi(\tau,std,s)$. It
follows that the mapping $f_s\mapsto f_s^{b_{\bigidx,k}}$ takes
holomorphic sections to holomorphic sections. If $g\in
G_{\smallidx}$ is regarded as an element of $H_{\bigidx}$ (assuming
$\smallidx\leq\bigidx$), $\rconj{b_{\bigidx,k}}g$ still stabilizes
$e_{\gamma}$ and in fact this conjugation corresponds to multiplying
the matrix $M$ of Section~\ref{subsection:G_l in H_n} on the right
by $b_{\smallidx,k}$. Because
$\rconj{b_{\bigidx,k}}R_{\smallidx,\bigidx}=R_{\smallidx,\bigidx}$
and $b_{\bigidx,k}$ stabilizes $\psi_{\gamma}$,
\begin{align}\label{eq:integral with b n k}
\Psi(W,f_s^{b_{\bigidx,k}},s)
&=\int_{\lmodulo{U_{G_{\smallidx}}}{G_{\smallidx}}}W(g)\int_{R_{\smallidx,\bigidx}}f_s(w_{\smallidx,\bigidx}r(\rconj{b_{\bigidx,k}}g),1)\psi_{\gamma}(r)drdg.
\end{align}
In addition
$\nintertwiningfull{\tau}{s}f_s^{b_{\bigidx,k}}=(\nintertwiningfull{\tau}{s}f_s)^{b_{\bigidx,k}}$.

Let $\epsilon=(-1)^{\bigidx-\smallidx}$. For $k\geq1$ put
\begin{align*}
t_k=diag(\epsilon\gamma^{-1}I_k,1,\epsilon\gamma I_k)\in H_k.
\end{align*}
Then for $f_s\in\xi(\tau,hol,s)$,
$f_s^{t_{\bigidx}}=\omega_{\tau}(\epsilon\gamma)|\gamma|^{\bigidx(\half\bigidx+s-\half)}t_{\bigidx}\cdot f_s\in\xi(\tau,hol,s)$. If $|\gamma|\ne1$, it is possible that for
a standard section $f_s$, $f_s^{t_{\bigidx}}$ is not a standard section.
For $f_s\in\xi(\tau,hol,s)$,
\begin{align*}
\Psi(W,f_s^{t_{\bigidx}},s)
=\omega_{\tau}(\epsilon\gamma)|\gamma|^{\bigidx(\half\bigidx+s-\half)}\Psi(W,t_{\bigidx}\cdot f_s,s).
\end{align*}
Using the definition of
$\nintertwiningfull{\tau}{s}$ (see Section~\ref{subsection:the
intertwining operator for tau}),
\begin{align*}
\nintertwiningfull{\tau}{s}\lambda(t_{\bigidx}) f_s=
\omega_{\tau}(\gamma)^{2}|\gamma|^{2\bigidx(s-\half)}\lambda(t_{\bigidx})\nintertwiningfull{\tau}{s}f_s.
\end{align*}
Therefore
\begin{align}\label{eq:intertwiner and left and right translation}
\nintertwiningfull{\tau}{s} f_s^{t_{\bigidx}}=
\omega_{\tau}(\gamma)^{2}|\gamma|^{2\bigidx(s-\half)}(\nintertwiningfull{\tau}{s}f_s)^{t_{\bigidx}}.
\end{align}
Note that when $\bigidx<\smallidx$, the normalization factor on the
\rhs\ is just $c(\smallidx,\tau,\gamma,s)^{-1}$. In the course of
the proof whenever $\bigidx_i<\smallidx\leq\bigidx$ and we form a
$G_{\smallidx}\times\GL{\bigidx_i}$ integral for $\pi\times\tau_i$,
the argument $h\in \lmodulo{U_{H_{\bigidx_i}}}{H_{\bigidx_i}}$ of
the section corresponding to the representation induced from
$\tau_i$ will be twisted by $t_{\bigidx_i}$, introducing the factor
$c(\smallidx,\tau_i,\gamma,s)$ (see the proof of
Claim~\ref{claim:n_1 < l < n_2 functional eq pi and tau_1}).

\subsection{The basic identity}\label{subsection:The basic identity tau}
We use the realization of $\tau$ described in
Section~\ref{subsection:fixed zeta}, 
i.e., replace $\tau$ with $\varepsilon$, fix $\zeta$ with
$\Re(\zeta)>>0$ and realize the space
$\xi(\varepsilon_1\otimes\varepsilon_2,std,(s,s))$ by extending
functions of the space of $\cinduced{Q_{\bigidx_1}\cap
K_{H_{\bigidx}}}{K_{H_{\bigidx}}}{\varepsilon_1\otimes
\cinduced{Q_{\bigidx_2}\cap
K_{H_{\bigidx_2}}}{K_{H_{\bigidx_2}}}{\varepsilon_2}}$.

For $W\in\Whittaker{\pi}{\psi_{\gamma}^{-1}}$ and
$\varphi_s\in\xi(\varepsilon_1\otimes\varepsilon_2,hol,(s,s))$, by
Corollary~\ref{corollary:deducing varphi and f_s are interchangeable
in psi for all varphi} we have
$\Psi(W,\varphi_{s},s)=\Psi(W,\widehat{f}_{\varphi_s},s)$ in
$\C(q^{-s})$. Additionally, Claim~\ref{claim:deducing varphi and f_s
are interchangeable in psi} implies that for $\Re(s)<<0$,
$\Psi(W,\nintertwiningfull{\varepsilon}{s}\varphi_{s},1-s)=\Psi(W,\widehat{f}_{\nintertwiningfull{\varepsilon}{s}\varphi_{s}},1-s)$.
Since by Claim~\ref{claim:intertwining operator and Jacquet integral
commute},
$\widehat{f}_{\nintertwiningfull{\varepsilon}{s}\varphi_{s}}=\nintertwiningfull{\varepsilon}{s}\widehat{f}_{\varphi_{s}}$,
$\Psi(W,\nintertwiningfull{\varepsilon}{s}\varphi_{s},1-s)=\Psi(W,\nintertwiningfull{\varepsilon}{s}\widehat{f}_{\varphi_{s}},1-s)$.
Therefore
$\Psi(W,\nintertwiningfull{\varepsilon}{s}\varphi_{s},1-s)$ also
extends to a function in $\C(q^{-s})$ and the last equality holds in
$\C(q^{-s})$.

In Sections~\ref{subsection:n_1<l<=n_2}-\ref{subsection:n<l} we
prove that for all $W\in\Whittaker{\pi}{\psi_{\gamma}^{-1}}$ and
$\varphi_s\in\xi(\varepsilon_1\otimes\varepsilon_2,hol,(s,s))$, in
$\C(q^{-s})$,
\begin{align}\label{eq:gamma second var for zeta large with integrals}
&\gamma(\pi\times\varepsilon_1,\psi,s)\gamma(\pi\times\varepsilon_2,\psi,s)\Psi(W,\varphi_{s},s)=c(\smallidx,\varepsilon,\gamma,s)\Psi(W,\nintertwiningfull{\varepsilon}{s}\varphi_{s},1-s).
\end{align}
Let $f_s=\widehat{f}_{\varphi_{s}}\in\xi(\Whittaker{\varepsilon}{\psi},hol,s)$ be defined by \eqref{iso:iso 1}.
It follows that
\begin{align*}
&\gamma(\pi\times\varepsilon_1,\psi,s)\gamma(\pi\times\varepsilon_2,\psi,s)\Psi(W,f_{s},s)
=c(\smallidx,\varepsilon,\gamma,s)\Psi(W,\nintertwiningfull{\varepsilon}{s}f_{s},1-s)
\end{align*}
and according to the definitions,
\begin{align*}
&\gamma(\pi\times\tau_1,\psi,s+\zeta)\gamma(\pi\times\tau_2,\psi,s-\zeta)\Psi(W,f_{s},s)
=c(\smallidx,\varepsilon,\gamma,s)\Psi(W,\nintertwiningfull{\varepsilon}{s}f_{s},1-s).
\end{align*}
Hence for all $\Re(\zeta)>>0$,
\begin{align}\label{eq:gamma second var for zeta large 2}
&\gamma(\pi\times\tau_1,\psi,s+\zeta)\gamma(\pi\times\tau_2,\psi,s-\zeta)=\gamma(\pi\times\varepsilon,\psi,s).
\end{align}
Finally we claim,
\begin{claim}\label{claim:gamma is rational in s and zeta}
$\gamma(\pi\times\varepsilon,\psi,s)\in\C(q^{-\zeta},q^{-s})$.
\end{claim}
This means that \eqref{eq:gamma second var for zeta large 2} holds
for all $\zeta$ and 
\eqref{eq:gamma second var} follows immediately by putting $\zeta=0$.

\begin{proof}[Proof of Claim~\ref{claim:gamma is rational in s and
zeta}]  
Let $W\in\Whittaker{\pi}{\psi_{\gamma}^{-1}}$ and
$\varphi_{\zeta,s}\in\xi(\tau_1\otimes\tau_2,hol,(s+\zeta,s-\zeta))$.
For any fixed $\zeta$,
$\varphi_{\zeta,s}\in\xi(\varepsilon_1\otimes\varepsilon_2,hol,(s,s))$.
Since $\nintertwiningfull{\varepsilon}{s}$ is the composition of
\begin{align*}
&\nintertwiningfull{\varepsilon_2}{s}=\nintertwiningfull{\tau_2}{s-\zeta},\\\notag
&\nintertwiningfull{\varepsilon_1\otimes\varepsilon_2^*}{(s,1-s)}=\nintertwiningfull{\tau_1\otimes\tau_2^*}{(s+\zeta,1-s+\zeta)},\\\notag
&\nintertwiningfull{\varepsilon_1}{s}=\nintertwiningfull{\tau_1}{s+\zeta}
\end{align*}
(see \eqref{eq:multiplicative property of intertwiners}), we have
$\nintertwiningfull{\varepsilon}{s}\varphi_{\zeta,s}\in
\xi(\tau_2^*\otimes\tau_1^*,rat,(1-s+\zeta,1-s-\zeta))$. In
particular, $\nintertwiningfull{\varepsilon}{s}\varphi_{\zeta,s}$ is
defined for any $\zeta$ and if we fix $\zeta$,
$\nintertwiningfull{\varepsilon}{s}\widehat{f}_{\varphi_{\zeta,s}}
=\widehat{f}_{\nintertwiningfull{\varepsilon}{s}\varphi_{\zeta,s}}
\in\xi(\Whittaker{\varepsilon^*}{\psi},rat,1-s)$ is also defined.
Now from \eqref{eq:gamma def}, for any fixed $\zeta$,
\begin{align*}
\gamma(\pi\times\varepsilon,\psi,s)\Psi(W,\widehat{f}_{\varphi_{\zeta,s}},s)=
c(\smallidx,\varepsilon,\gamma,s)\Psi(W,\nintertwiningfull{\varepsilon}{s}\widehat{f}_{\varphi_{\zeta,s}},1-s).
\end{align*}

Take $W$ and $\varphi_{\zeta,s}$ according to
Proposition~\ref{proposition:integral can be made constant zeta phi
version}. Then for all $\zeta$ and $s$,
$\Psi(W,\widehat{f}_{\varphi_{\zeta,s}},s)=1$. Hence for any fixed
$\zeta$,
\begin{align*}
\gamma(\pi\times\varepsilon,\psi,s)=c(\smallidx,\varepsilon,\gamma,s)\Psi(W,\nintertwiningfull{\varepsilon}{s}\widehat{f}_{\varphi_{\zeta,s}},1-s).
\end{align*}

Let $C\subset\C$ be a compact set. We will show that there is
$Q_C\in\C(q^{-\zeta},q^{-s})$ such that
$\Psi(W,\nintertwiningfull{\varepsilon}{s}\widehat{f}_{\varphi_{\zeta,s}},1-s)=Q_C(q^{-\zeta},q^{-s})$
for all $\zeta\in C$ and $s$ in a left half-plane depending on $C$.
Since $C$ is arbitrary, it follows that there is
$Q\in\C(q^{-\zeta},q^{-s})$ such that for all complex $\zeta$, and
$s$ in a left half-plane depending on $\zeta$,
$\Psi(W,\nintertwiningfull{\varepsilon}{s}\widehat{f}_{\varphi_{\zeta,s}},1-s)=Q(q^{-\zeta},q^{-s})$.
Thus for all $\zeta\in \C$,
$\gamma(\pi\times\varepsilon,\psi,s)=c(\smallidx,\varepsilon,\gamma,s)Q$.
In addition
$c(\smallidx,\varepsilon,\gamma,s)\in\C[q^{\mp\zeta},q^{\mp s}]^*$,
because if $\smallidx>\bigidx$,
$c(\smallidx,\varepsilon,\gamma,s)=|\gamma|^{2\zeta(\bigidx_2-\bigidx_1)}c(\smallidx,\tau,\gamma,s)$.
Therefore $\gamma(\pi\times\tau,\psi,s)\in\C(q^{-\zeta},q^{-s})$.

As explained in Section~\ref{subsection:zeta in a fixed compact},
there is a constant $s_C$ depending only on $C$, $\pi$ and $\tau$,
such that in a domain $D_1=\setof{\zeta,s}{\zeta\in C,\Re(s)<S_C}$,
$\Psi(W,\theta_{1-s},1-s)$ is absolutely convergent for any $W$ and
$\theta_{1-s}\in
V_{Q_{\bigidx}}^{H_{\bigidx}}(\Whittaker{\varepsilon^*}{\psi},1-s)$.

Write
$\nintertwiningfull{\varepsilon}{s}\varphi_{\zeta,s}=\ell_{\varepsilon}(s)\sum_{i=1}^mP_i\varphi_{\zeta,s}^{(i)}$
where $P_i\in\C[q^{\mp\zeta},q^{\mp s}]$ and
$\varphi_{\zeta,s}^{(i)}\in
\xi(\tau_2^*\otimes\tau_1^*,std,(1-s+\zeta,1-s-\zeta))$. For any
fixed $\zeta\in C$ the meromorphic continuation of
$\Psi(W,\nintertwiningfull{\varepsilon}{s}\widehat{f}_{\varphi_{\zeta,s}},1-s)$
is equal to the meromorphic continuation of
$\sum_{i=1}^m\ell_{\varepsilon}(s)P_i\Psi(W,\widehat{f}_{\varphi_{\zeta,s}^{(i)}},1-s)$.

According to Claim~\ref{claim:iwasawa decomposition of the integral
with zeta in tau}, we can write
$\Psi(W,\widehat{f}_{\varphi_{\zeta,s}^{(i)}},1-s)$ as a sum of
integrals $I_{1-s}^{(j)}$ in the form prescribed by
Proposition~\ref{proposition:iwasawa decomposition of the integral},
where $W'\in\Whittaker{\tau^*}{\psi}$ is replaced by
$W_{\zeta}'\in\Whittaker{\varepsilon^*}{\psi}$ with the properties
stated in the claim. Taking the constant $s_C$ small enough, the
integrals $I_{1-s}^{(j)}$ are also absolutely convergent in $D_1$.
Therefore we should prove that there is
$Q_j\in\C(q^{-\zeta},q^{-s})$ such that
$I_{1-s}^{(j)}=Q_j(q^{-\zeta},q^{-s})$ in $D_1$.

We can use the asymptotic expansion of the Whittaker functions
$W^{\diamond}\in\Whittaker{\pi}{\psi_{\gamma}^{-1}}$ and
$W_{\zeta}'$ (see Section~\ref{section:whittaker props}) in order to
write $I_{1-s}^{(j)}$ in $D_1$, as a sum of products of Tate-type
integrals, as in the proof of Claim~\ref{claim:meromorphic
continuation 1}. 

In the notation of Section~\ref{section:whittaker props}, let
$\mathcal{A}_{\tau^*}$ (resp. $\mathcal{A}_{\varepsilon^*}$) be the
finite set of functions for $\tau^*$ (resp. $\varepsilon^*$). The
twist of $\tau_2^*\otimes\tau_1^*$ by
$\absdet{}^{\zeta}\otimes\absdet{}^{-\zeta}$ has the effect of
twisting the exponents of $\tau^*$ by $\absdet{}^{\mp\zeta}$. These
exponents are related to $\mathcal{A}_{\varepsilon^*}$. When we
write an expansion for $W_{\zeta}'$, we can use the explicit formula
for Whittaker functions given by Lapid and Mao \cite{LM}, detailed
in the proof of Claim~\ref{claim:convergence of absolute value l<=n
split SO part for s>0}. The expansion takes the same form described
in Section~\ref{section:whittaker props} (it is a refinement), but
the characters from $\mathcal{A}_{\varepsilon^*}$ are exponents.
Then each character $\eta\in\mathcal{A}_{\varepsilon^*}$ appearing
in the expansion of $W_{\zeta}'$ is equal to
$\absdet{}^{\mp\zeta}\eta_i'$ for some character $\eta_i'\in
\mathcal{A}_{\tau^*}$. The other terms in the expansion are
independent of $\zeta$. Therefore each of the Tate-type integrals
belongs to $\C(q^{-\zeta},q^{-s})$.
\end{proof} 
For any pair of representations $\xi_1\times\xi_2$ of
$\GL{k_1}\times\GL{k_2}$ and $s_1,s_2\in\C$, denote by
$V'(\xi_1\otimes\xi_2,(s_1,s_2))$ the space of the representation
$\cinduced{Q_{k_1}}{H_{k_1+k_2}}{(\xi_1\otimes
\cinduced{Q_{k_2}}{H_{k_2}}{\xi_2\alpha^{s_2}})\alpha^{s_1}}$. Let
$W\in\Whittaker{\pi}{\psi_{\gamma}^{-1}}$ and
$\varphi_s\in\xi(\varepsilon_1\otimes\varepsilon_2,hol,(s,s))$.
\subsection{The case $\bigidx_1<\smallidx\leq\bigidx_2$}\label{subsection:n_1<l<=n_2}
Recall that $\Psi(W,\varphi_s,s)$ equals
\begin{align}\label{int:mult l<n_2 starting form}
\int_{\lmodulo{U_{G_{\smallidx}}}{G_{\smallidx}}}W(g)
\int_{R_{\smallidx,\bigidx}}
\int_{Z_{\bigidx_2,\bigidx_1}}\varphi_s(\omega_{\bigidx_1,\bigidx_2}zw_{\smallidx,\bigidx}rg,1,1,1)
\psi^{-1}(z)\psi_{\gamma}(r)dzdrdg.
\end{align}
As proved in Claim~\ref{claim:convergence n_1<l<n_2 starting
integral} this integral is absolutely convergent in some domain in
$\zeta$ and $s$ depending only on $\pi$, $\tau_1$ and $\tau_2$, and
we can take $\zeta$ with $\Re(\zeta)>>0$. The domains of convergence
will change in the course of the proof, from
$\{\Re(s)>>\Re(\zeta)\}$ to $\{A_1\leq\Re(s)<<\Re(\zeta)\}$, then
$\{-\Re(\zeta)<<\Re(s)<<\Re(\zeta),\Re(s)\leq A_2\}$ ($A_1,A_2$ are
constants) and finally $\{\Re(1-s)>>\Re(\zeta)\}$. The specific
parameters, which depend only on the representations, are similar to
the ones computed by Soudry \cite{Soudry2}. The important thing to
note is, that there is a constant $C>0$ depending only on the
representations, such that any $\zeta$ with $\Re(\zeta)>C$ is
simultaneously suitable for all the domains. Thus a change of
domains effectively changes just $s$.

In the domain of absolute convergence of \eqref{int:mult l<n_2
starting form}, the forthcoming manipulations are justified.
If we write
\begin{align*}
Z_{\bigidx_2,\bigidx_1}=\{\left(\begin{array}{ccc}
I_{\smallidx}&0&z_1\\&I_{\bigidx_2-\smallidx}&z_2\\&&I_{\bigidx_1}\\\end{array}\right)\},
\end{align*}
\begin{align*}
\rconj{w_{\smallidx,\bigidx}}Z_{\bigidx_2,\bigidx_1}=\{\left(\begin{array}{ccccccc}
I_{\bigidx_1}&z_2'&&&\gamma^{-1}z_1'\\&I_{\bigidx_2-\smallidx}\\&&I_{\smallidx}&&&&\gamma^{-1}z_1\\&&&1\\
&&&&I_{\smallidx}\\&&&&&I_{\bigidx_2-\smallidx}&z_2\\&&&&&&I_{\bigidx_1}\end{array}\right)\}.
\end{align*}
We see that $\rconj{w_{\smallidx,\bigidx}}Z_{\bigidx_2,\bigidx_1}$
is a subgroup of $N_{\bigidx-\smallidx}$ which normalizes
$R_{\smallidx,\bigidx}$. Therefore after conjugating $z$ by
$w_{\smallidx,\bigidx}$, we may change the order of
$\rconj{w_{\smallidx,\bigidx}}z$ and $r$ to get
\begin{align*}
\int_{\lmodulo{U_{G_{\smallidx}}}{G_{\smallidx}}}W(g)
\int_{R_{\smallidx,\bigidx}}
\int_{Z_{\bigidx_2,\bigidx_1}}\varphi_s(\omega_{\bigidx_1,\bigidx_2}w_{\smallidx,\bigidx}r(\rconj{w_{\smallidx,\bigidx}}z)g,1,1,1)
\psi^{-1}(z)\psi_{\gamma}(r)dzdrdg.
\end{align*}
If $\smallidx<\bigidx_2$,
$z_2'=-J_{\bigidx_1}\transpose{z_2}J_{\bigidx_2-\smallidx}$, whence
\begin{align*}
\psi^{-1}(z)=\psi^{-1}((z_2)_{\bigidx_2-\smallidx,1})=
\psi((z_2')_{\bigidx_1,1})=\psi_{\gamma}(\rconj{w_{\smallidx,\bigidx}}z).
\end{align*}
When $\smallidx=\bigidx_2$ also
$\psi^{-1}(z)=\psi_{\gamma}(\rconj{w_{\smallidx,\bigidx}}z)$.

Decompose $R_{\smallidx,\bigidx}=R_{\smallidx,\bigidx_2}\ltimes
R_{\bigidx_1}$, where $R_{\smallidx,\bigidx_2}<H_{\bigidx_2}$ and
\begin{align*}
R_{\bigidx_1}=R_{\smallidx,\bigidx}\cap
U_{\bigidx_1}=\{\left(\begin{array}{ccccccc}
I_{\bigidx_1}&&x_1&y_1&&m_1&m_2\\&I_{\bigidx_2-\smallidx}&&&&&m_1'\\&&I_{\smallidx}&&&&\\&&&1&&&y_1'\\
&&&&I_{\smallidx}&&x_1'\\&&&&&I_{\bigidx_2-\smallidx}&\\&&&&&&I_{\bigidx_1}\end{array}\right)\}.
\end{align*}
The integral equals
\begin{align*}
&\int_{\lmodulo{U_{G_{\smallidx}}}{G_{\smallidx}}}W(g)
\int_{R_{\smallidx,\bigidx_2}} \int_{R_{\bigidx_1}}
\int_{Z_{\bigidx_2,\bigidx_1}}\\&\varphi_s(\omega_{\bigidx_1,\bigidx_2}w_{\smallidx,\bigidx}rr_1(\rconj{w_{\smallidx,\bigidx}}z)g,1,1,1)
\psi_{\gamma}(rr_1)\psi_{\gamma}(\rconj{w_{\smallidx,\bigidx}}z)dzdr_1drdg.
\end{align*}
Note that if $\smallidx<\bigidx_2$,
$\frestrict{\psi_{\gamma}}{R_{\bigidx_1}}\equiv1$. Now
$R_{\bigidx_1}\rtimes
\rconj{w_{\smallidx,\bigidx}}Z_{\bigidx_2,\bigidx_1}=U_{\bigidx_1}$
and we define $du=dzdr_1$.
Therefore we have 
\begin{align*}
\int_{\lmodulo{U_{G_{\smallidx}}}{G_{\smallidx}}}W(g)
\int_{R_{\smallidx,\bigidx_2}}
\int_{U_{\bigidx_1}}\varphi_s(\omega_{\bigidx_1,\bigidx_2}w_{\smallidx,\bigidx}rug,1,1,1)
\psi_{\gamma}(r)\psi_{\gamma}(u)dudrdg.
\end{align*}

Since $G_{\smallidx}<M_{\bigidx_1}$, $G_{\smallidx}$ normalizes
$U_{\bigidx_1}$, the measure $du$ and the character $\psi_{\gamma}$
remain unchanged and we get
\begin{align*}
\int_{\lmodulo{U_{G_{\smallidx}}}{G_{\smallidx}}}W(g)
\int_{R_{\smallidx,\bigidx_2}}\int_{U_{\bigidx_1}}
\varphi_s(\omega_{\bigidx_1,\bigidx_2}w_{\smallidx,\bigidx}rgu,1,1,1)
\psi_{\gamma}(r)\psi_{\gamma}(u)dudrdg.
\end{align*}
Observe that
$\omega_{\bigidx_1,\bigidx_2}w_{\smallidx,\bigidx}=w_{\smallidx,\bigidx_2}w'$
with
\begin{align*}
w'=
\left(\begin{array}{ccc}
&&I_{\bigidx_1}\\
&b_{\bigidx_2,\bigidx_1}\\
I_{\bigidx_1}\end{array}\right)\in H_{\bigidx}.
\end{align*}
The element $w'$ normalizes $R_{\smallidx,\bigidx_2}$ preserving
$dr$ and $\psi_{\gamma}$. Since
$w_{\smallidx,\bigidx_2},r,\rconj{w'}g\in H_{\bigidx_2}$, we may
shift these elements to the third argument of $\varphi_s$. After
changing integration order we arrive at
\begin{align}\label{int:mult l<n_2 before applying eq for inner tau}
\int_{U_{\bigidx_1}}
\int_{\lmodulo{U_{G_{\smallidx}}}{G_{\smallidx}}}W(g)
\int_{R_{\smallidx,\bigidx_2}}
\varphi_s(w'u,1,w_{\smallidx,\bigidx_2}r(\rconj{b_{\bigidx_2,\bigidx_1}}g),1)
\psi_{\gamma}(r)\psi_{\gamma}(u)drdgdu.
\end{align}
The inner $drdg$-integration in \eqref{int:mult l<n_2 before
applying eq for inner tau} is a Rankin-Selberg integral for
$G_{\smallidx}\times\GL{\bigidx_2}$ and $\pi\times\varepsilon_2$
(regarding the conjugation by $b_{\bigidx_2,\bigidx_1}$, see
Section~\ref{subsection:Twisting the embedding}). Integral~\eqref{int:mult l<n_2 before applying eq for inner tau} is absolutely convergent if it is convergent when we replace $W,\varphi_s$ with $|W|,|\varphi_s|$ and remove $\psi_{\gamma}$. If we repeat the manipulations
\eqref{int:mult l<n_2 starting form}-\eqref{int:mult l<n_2 before applying eq for
inner tau} with $W,\varphi_s$ replaced with $|W|,|\varphi_s|$ and without $\psi_{\gamma},\psi^{-1}$, we reach
\eqref{int:mult l<n_2 before applying eq for inner tau} with $|W|,|\varphi_s|$ and without $\psi_{\gamma}$. This shows that \eqref{int:mult l<n_2 starting form} and \eqref{int:mult l<n_2 before applying eq for
inner tau} are absolutely convergent in the same domain. Let
\begin{align*}
\varphi_{s,1-s}=\nintertwiningfull{\varepsilon_2}{s}\varphi_s\in\xi(\varepsilon_1\otimes\varepsilon_2^*,rat,(s,1-s))
\end{align*}
be defined as in Section~\ref{subsection:the multiplicativity of the
intertwining operator for tau induced}. Formally, after applying the
functional equation \eqref{eq:gamma def} we expect to have
\begin{align}\label{int:mult l<n_2 after applying eq for inner tau}
\int_{U_{\bigidx_1}}
\int_{\lmodulo{U_{G_{\smallidx}}}{G_{\smallidx}}}W(g)
\int_{R_{\smallidx,\bigidx_2}}
\varphi_{s,1-s}(w'u,1,w_{\smallidx,\bigidx_2}r(\rconj{b_{\bigidx_2,\bigidx_1}}g),1)
\psi_{\gamma}(r)\psi_{\gamma}(u)drdgdu.
\end{align}
Integral~\eqref{int:mult l<n_2 after applying eq for inner tau}
converges absolutely in some domain in $\zeta$ and $s$ depending
only on the representations, see Soudry \cite{Soudry2} (Lemma~3.3).

As observed in \cite{Soudry} (p.70), there is a problem with the
passage from integral~\eqref{int:mult l<n_2 before applying eq for
inner tau} to integral~\eqref{int:mult l<n_2 after applying eq for
inner tau}, namely that they might have disjoint domains of absolute
convergence. Following the reasoning of \cite{Soudry,Soudry2}, this
passage is justified by considering each integral as a meromorphic
function in $\C(q^{-s})$.

Integral~\eqref{int:mult l<n_2 after applying eq for inner tau}
extends to a function in $\C(q^{-s})$. To see this, note that we can
repeat the steps \eqref{int:mult l<n_2 starting
form}-\eqref{int:mult l<n_2 before applying eq for inner tau} with
$\varphi_{s,1-s}$ replacing $\varphi_s$ and obtain \eqref{int:mult
l<n_2 after applying eq for inner tau}. This proves that in its
domain of absolute convergence \eqref{int:mult l<n_2 after applying
eq for inner tau} satisfies the same equivariance properties as
$\Psi(W,\varphi_s,s)$. Also analogously to
Proposition~\ref{proposition:integral can be made constant zeta phi
version} (for a fixed $\zeta$), we can select data
$W,\theta_{s,1-s}$
($\theta_{s,1-s}\in\xi(\varepsilon_1\otimes\varepsilon_2^*,hol,(s,1-s))$)
such that \eqref{int:mult l<n_2 starting form} and \eqref{int:mult
l<n_2 after applying eq for inner tau} (with $\theta_{s,1-s}$) equal
$1$ for all $s$. Now the meromorphic continuation follows as
explained in the proof of Claim~\ref{claim:meromorphic continuation
2}.

Therefore except for finitely many values of
$q^{-s}$, both integrals~\eqref{int:mult l<n_2 before applying eq for
inner tau} and \eqref{int:mult l<n_2 after applying eq for
inner tau} can be considered by meromorphic continuation as bilinear forms on
$\Whittaker{\pi}{\psi_{\gamma}^{-1}}\times
V'(\varepsilon_1\otimes\varepsilon_2,(s,s))$ satisfying
\eqref{eq:bilinear special condition}, hence they are proportional
(see Section~\ref{subsection:the gamma factor}). The
proportionality factor is calculated using specific substitutions of
$W$ and $\varphi_s$, as we show in the next claim.
\begin{claim}\label{claim:n_1 < l < n_2 functional eq pi and tau_2}
Integral~\eqref{int:mult l<n_2 before applying eq for inner tau}
multiplied by $\gamma(\pi\times\varepsilon_2,\psi,s)$ equals
integral~\eqref{int:mult l<n_2 after applying eq for inner tau}, as
meromorphic functions. 
\end{claim}
The proofs of this and forthcoming claims are postponed to
Section~\ref{subsection:proofs claims 2nd var}.

In fact, one does not need to prove meromorphic continuation for
\eqref{int:mult l<n_2 after applying eq for inner tau}
independently, repeating the arguments of
Claim~\ref{claim:meromorphic continuation 2}. If
$D^*=\{A_1\leq\Re(s)<<\Re(\zeta)\}$ is the domain of absolute
convergence of \eqref{int:mult l<n_2 after applying eq for inner
tau}, we can consider the integral \eqref{int:mult l<n_2 after
applying eq for inner tau} and the meromorphic continuation of
\eqref{int:mult l<n_2 before applying eq for inner tau} as bilinear
forms on $\Whittaker{\pi}{\psi_{\gamma}^{-1}}\times
V'(\varepsilon_1\otimes\varepsilon_2,(s,s))$ only for $s\in D^*$.
Then the above claim shows that they are proportional by a function
in $\C(q^{-s})$ and the meromorphic continuation of \eqref{int:mult
l<n_2 after applying eq for inner tau} follows from that of
\eqref{int:mult l<n_2 before applying eq for inner tau}. A similar
argument was used in Remark~\ref{remark:proving equality of W and f
in any domain without Bernstein continuation principle}. 

Hereby we consider \eqref{int:mult l<n_2 after applying eq for inner
tau} in its domain of absolute convergence, where our forthcoming
manipulations are allowed. Apply the steps from \eqref{int:mult
l<n_2 starting form} to \eqref{int:mult l<n_2 before applying eq for
inner tau} in reverse order to integral~\eqref{int:mult l<n_2
after applying eq for inner tau} and arrive at
\begin{align}\label{int:mult l<n_2 after eq inner tau starting form}
\int_{\lmodulo{U_{G_{\smallidx}}}{G_{\smallidx}}}W(g)
\int_{R_{\smallidx,\bigidx}}
\int_{Z_{\bigidx_2,\bigidx_1}}\varphi_{s,1-s}(\omega_{\bigidx_1,\bigidx_2}zw_{\smallidx,\bigidx}rg,1,1,1)
\psi^{-1}(z)\psi_{\gamma}(r)dzdrdg.
\end{align}
Denote
\begin{align*}
\varphi_{1-s,s}'=\nintertwiningfull{\varepsilon_1\otimes\varepsilon_2^*}{(s,1-s)}\varphi_{s,1-s}\in\xi(\varepsilon_2^*\otimes\varepsilon_1,rat,(1-s,s))
\end{align*}
(see Section~\ref{subsection:the multiplicativity of the
intertwining operator for tau induced}). Formally apply Shahidi's
functional equation \eqref{eq:shahidi func eq} to \eqref{int:mult
l<n_2 after eq inner tau starting form} and reach
\begin{align}\label{int:mult l<n_2 after shahidi equation}
&\int_{\lmodulo{U_{G_{\smallidx}}}{G_{\smallidx}}}W(g)
\int_{R_{\smallidx,\bigidx}} \int_{Z_{\bigidx_1,\bigidx_2}}
\varphi_{1-s,s}'(\omega_{\bigidx_2,\bigidx_1}mw_{\smallidx,\bigidx}rg,1,1,1)
\psi^{-1}(m)\psi_{\gamma}(r)dmdrdg.
\end{align}
There is a domain in $\zeta$ and $s$ at which
integral~\eqref{int:mult l<n_2 after shahidi equation} converges
absolutely (see \cite{Soudry2} Lemma~3.6). This domain and
the domain of \eqref{int:mult l<n_2 after eq inner tau starting
form} might be disjoint. We tackle this as above, considering
\eqref{int:mult l<n_2 after eq inner tau starting form} and
\eqref{int:mult l<n_2 after shahidi equation} by meromorphic continuation as bilinear forms on
$\Whittaker{\pi}{\psi_{\gamma}^{-1}}\times
V'(\varepsilon_1\otimes\varepsilon_2^*,(s,1-s))$
and using a specific
substitution.
\begin{claim}\label{claim:n_1 < l < n_2 shahidi equation}
Integral~\eqref{int:mult l<n_2 after eq inner tau starting form}
equals integral~\eqref{int:mult l<n_2 after shahidi equation}, as
meromorphic functions.
\end{claim}

We continue with integral~\eqref{int:mult l<n_2 after shahidi
equation} in its domain of absolute convergence. Next we show how a part of the
$dm$-integration may be absorbed into $U_{G_{\smallidx}}$. Write
\begin{align*}
m=\left(\begin{array}{ccc}I_{\bigidx_1}&&m_3\\&I_{\smallidx-\bigidx_1}\\&&I_{\bigidx-\smallidx}\\\end{array}\right)
\left(\begin{array}{cccc}I_{\bigidx_1}&m_1&m_2&0\\&I_{\smallidx-\bigidx_1-1}\\&&1\\&&&I_{\bigidx-\smallidx}\\\end{array}\right)
=\eta\varsigma.
\end{align*}
Then the $dm$-integration in \eqref{int:mult l<n_2 after shahidi
equation} equals
\begin{align*}
&\int_{\Mat{\bigidx_1\times
\bigidx-\smallidx}}\int_{\Mat{\bigidx_1\times \smallidx-\bigidx_1}}
\varphi_{1-s,s}'(\omega_{\bigidx_2,\bigidx_1}\eta\varsigma
w_{\smallidx,\bigidx}rg,1,1,1) \psi^{-1}(\varsigma)d\varsigma d\eta.
\end{align*}

Let $Q_{\bigidx_1}<H_{\bigidx_1}$ be the standard parabolic subgroup
whose Levi part is isomorphic to $\GL{\bigidx_1}$ and let
$U_{\bigidx_1}<Q_{\bigidx_1}$ be its unipotent radical. We can
regard $U_{\bigidx_1}$ as a subgroup of
$V_{\bigidx_1}<G_{\smallidx}$. The subgroup
$Z_{\bigidx_1,\smallidx-\bigidx_1-1}<\GL{\smallidx-1}$, embedded in
$G_{\smallidx}$ through $L_{\smallidx-1}$, is also a subgroup of
$V_{\bigidx_1}$ (if $\smallidx=\bigidx_1+1$,
$Z_{\bigidx_1,\smallidx-\bigidx_1-1}=\{1\}$). Put
\begin{align*}
w''=\left(\begin{array}{ccccc}I_{\bigidx_1}\\&&&I_{\smallidx-\bigidx_1-1}&\\
&&I_2\\
&I_{\smallidx-\bigidx_1-1}\\
&&&&I_{\bigidx_1}\\
\end{array}\right)\in\GL{2\smallidx}.
\end{align*}
Now let
$V_{\bigidx_1}'=U_{\bigidx_1}\cdot\rconj{w''}Z_{\bigidx_1,\smallidx-\bigidx_1-1}$
(a direct product). Then $V_{\bigidx_1}'$ is a normal subgroup of
$V_{\bigidx_1}$ and we fix a set of representatives
$V_{\bigidx_1}''$ for $\lmodulo{V_{\bigidx_1}'}{V_{\bigidx_1}}$. In
the split case,
\begin{align*}
&V_{\bigidx_1}'=\setof{v\in V_{\bigidx_1}}
{[v]_{\mathcal{E}_{G_{\smallidx}}}=\left(\begin{array}{cccccc}I_{\bigidx_1}&0&v_3&\frac1{4\gamma}v_3&v_4&v_5
\\&I_{\smallidx-\bigidx_1-1}&&&&v_4'\\
&&1&&&\frac1{4\gamma}v_3'\\
&&&1&&v_3'\\
&&&&I_{\smallidx-\bigidx_1-1}&0\\
&&&&&I_{\bigidx_1}\\
\end{array}\right)},\\
&V_{\bigidx_1}''=\setof{v\in V_{\bigidx_1}}
{[v]_{\mathcal{E}_{G_{\smallidx}}}=\left(\begin{array}{cccccc}I_{\bigidx_1}&v_1&0&v_2&0&0
\\&I_{\smallidx-\bigidx_1-1}&&&&0\\
&&1&&&v_2'\\
&&&1&&0\\
&&&&I_{\smallidx-\bigidx_1-1}&v_1'\\
&&&&&I_{\bigidx_1}\\
\end{array}\right)}.
\end{align*}
In the \quasisplit\ case,
\begin{align*}
&V_{\bigidx_1}'=\setof{v\in V_{\bigidx_1}}
{[v]_{\mathcal{E}_{G_{\smallidx}}}=\left(\begin{array}{cccccc}I_{\bigidx_1}&0&v_3&0&v_4&v_5
\\&I_{\smallidx-\bigidx_1-1}&&&&-J_{\smallidx-\bigidx_1-1}\transpose{v_4}J_{\bigidx_1}\\
&&1&&&-\transpose{v_3}J_{\bigidx_1}\\
&&&1&&0\\
&&&&I_{\smallidx-\bigidx_1-1}&0\\
&&&&&I_{\bigidx_1}\\
\end{array}\right)},\\
&V_{\bigidx_1}''=\setof{v\in V_{\bigidx_1}}
{[v]_{\mathcal{E}_{G_{\smallidx}}}=\left(\begin{array}{cccccc}I_{\bigidx_1}&v_1&0&v_2&0&A(v_2)
\\&I_{\smallidx-\bigidx_1-1}&&&&0\\
&&1&&&0\\
&&&1&&\frac1{\rho}(\transpose{v_2})J_{\bigidx_1}\\
&&&&I_{\smallidx-\bigidx_1-1}&-J_{\smallidx-\bigidx_1-1}\transpose{v_1}J_{\bigidx_1}\\
&&&&&I_{\bigidx_1}\\
\end{array}\right)},
\end{align*}
where $A(v_2)=\frac1{2\rho}v_2(\transpose{v_2})J_{\bigidx_1}$. For $v\in V_{\bigidx_1}''$, the first $\bigidx_1$ rows of $\rconj{w_{\smallidx,\bigidx}^{-1}}[v]_{\mathcal{E}_{H_{\bigidx}}}$
are of the form
\begin{align*}
\begin{dcases}
\left(\begin{array}{cccccccc} I_{\bigidx_1}&v_1&
-\gamma v_2&0_{\bigidx-\smallidx}&(-1)^{\bigidx-\smallidx}\frac{\beta^3}2 v_2&0_{\bigidx-\smallidx}&\gamma^2 v_2&0_{\smallidx-1}\\
\end{array}\right)&\text{split $G_{\smallidx}$,}\\
\left(\begin{array}{ccccccc} I_{\bigidx_1}&v_1&
\half v_2&0_{2(\bigidx-\smallidx)+1}&-\half\gamma v_2&0_{\smallidx-\bigidx_1-1}&\gamma^2 A(v_2)\\
\end{array}\right)&\text{\quasisplit\ $G_{\smallidx}$}.
\end{dcases}
\end{align*}
Here $0_k$ denotes the zero matrix of $\Mat{\bigidx_1\times k}$.
Note that while in the split case we in fact have
$V_{\bigidx_1}=V_{\bigidx_1}''\ltimes V_{\bigidx_1}'$, in the
\quasisplit\ case $V_{\bigidx_1}''$ is not a subgroup of
$V_{\bigidx_1}$, but the quotient topology on $V_{\bigidx_1}''$ is
homeomorphic to the natural topology $V_{\bigidx_1}''$ inherits from
$\Mat{\bigidx_1\times\smallidx-\bigidx_1}$.

We see that $\varsigma$ is a general element from the projection of
$\rconj{w_{\smallidx,\bigidx}^{-1}}V_{\bigidx_1}''$ into
$M_{\bigidx}$ and $\eta$ commutes with any element from the
projection of $\rconj{w_{\smallidx,\bigidx}^{-1}}V_{\bigidx_1}''$
into $U_{\bigidx}$. Since
$\varphi_{1-s,s}'\in\xi(\varepsilon_2^*\otimes\varepsilon_1,rat,(1-s,s))$
and $\omega_{\bigidx_2,\bigidx_1}$ normalizes $U_{\bigidx}$, the
function on $H_{\bigidx}$ given by
$h\mapsto\varphi_{1-s,s}'(\omega_{\bigidx_2,\bigidx_1}h,1,1,1)$ is
left $U_{\bigidx}$-invariant. Therefore integral~\eqref{int:mult
l<n_2 after shahidi equation} equals
\begin{align*}
&\int_{\lmodulo{U_{G_{\smallidx}}}{G_{\smallidx}}}W(g)
\int_{R_{\smallidx,\bigidx}} \int_{\Mat{\bigidx_1\times
\bigidx-\smallidx}}\int_{V_{\bigidx_1}''}
\\\notag&\varphi_{1-s,s}'(\omega_{\bigidx_2,\bigidx_1}\eta
w_{\smallidx,\bigidx}v''rg,1,1,1)
\psi_{\gamma}^{-1}(v'')\psi_{\gamma}(r)dv''d\eta drdg.
\end{align*}
Here the measure $dv''$ on $V_{\bigidx_1}''$ is normalized according
to $d\varsigma$ and $\psi_{\gamma}^{-1}(v'')$ refers to the
character $\psi_{\gamma}$ of $U_{G_{\smallidx}}$. Note that the
$dv''$-integration makes sense, i.e. for $v'\in V_{\bigidx_1}'$,
\begin{align*}
\varphi_{1-s,s}'(\omega_{\bigidx_2,\bigidx_1}\eta
w_{\smallidx,\bigidx}v'v'',1,1,1)
\psi_{\gamma}^{-1}(v'v'')=\varphi_{1-s,s}'(\omega_{\bigidx_2,\bigidx_1}\eta
w_{\smallidx,\bigidx}v'',1,1,1) \psi_{\gamma}^{-1}(v'').
\end{align*}
Hence we may also change the order of integration, $dv''d\eta
dr\mapsto d\eta drdv''$.

In general for $v\in V_{\bigidx_1}$ and $r\in
R_{\smallidx,\bigidx}$, $\rconj{v}r$ is a member of
$N_{\bigidx-\smallidx}$ not necessarily in $R_{\smallidx,\bigidx}$.
However, we may write $\rconj{v}r=n_vr_v$ for $n_v\in
N_{\bigidx-\smallidx}$ such that $n_v\notin R_{\smallidx,\bigidx}$
and $r_v\in R_{\smallidx,\bigidx}$. By a direct verification,
\begin{align*}
&\int_{R_{\smallidx,\bigidx}}\int_{\Mat{\bigidx_1\times
\bigidx-\smallidx}}
\varphi_{1-s,s}'(\omega_{\bigidx_2,\bigidx_1}\eta
w_{\smallidx,\bigidx}n_vr_v,1,1,1) \psi_{\gamma}(r)d\eta
dr\\\notag&=\int_{R_{\smallidx,\bigidx}}\int_{\Mat{\bigidx_1\times
\bigidx-\smallidx}}
\varphi_{1-s,s}'(\omega_{\bigidx_2,\bigidx_1}\eta
w_{\smallidx,\bigidx}r,1,1,1) \psi_{\gamma}(r)d\eta dr.
\end{align*}
In fact, $n_v$ contributes a value of the form $\psi(\cdots)$ that
is canceled by a change of variables $r_v\mapsto r$ which affects
$\psi_{\gamma}(r)$.
Therefore it is allowed to change the order of $v''$ and $r$.
The integral becomes
\begin{align*}
&\int_{\lmodulo{U_{G_{\smallidx}}}{G_{\smallidx}}}W(v''g)
\int_{V_{\bigidx_1}''}\int_{R_{\smallidx,\bigidx}}\int_{\Mat{\bigidx_1\times
\bigidx-\smallidx}}
\varphi_{1-s,s}'(\omega_{\bigidx_2,\bigidx_1}\eta
w_{\smallidx,\bigidx}rv''g,1,1,1) \psi_{\gamma}(r)d\eta drdv''dg.
\end{align*}

Since $U_{G_{\smallidx}}=(Z_{\bigidx_1}\ltimes V_{\bigidx_1})\rtimes
U_{G_{\smallidx-\bigidx_1}}
$ and $V_{\bigidx_1}'$ is normalized by $U_{G_{\smallidx-\bigidx_1}}$, we get
\begin{align}\label{integral:mult l<n_2 integral before defining factoring R}
&\int_{\lmodulo{Z_{\bigidx_1}V_{\bigidx_1}'U_{G_{\smallidx-\bigidx_1}}}{G_{\smallidx}}}W(g)
\int_{R_{\smallidx,\bigidx}}\int_{\Mat{\bigidx_1\times
\bigidx-\smallidx}}
\varphi_{1-s,s}'(\omega_{\bigidx_2,\bigidx_1}\eta
w_{\smallidx,\bigidx}rg,1,1,1)\psi_{\gamma}(r)d\eta drdg.
\end{align}

Observe that
$R^{\smallidx,\bigidx_1}(V_{\bigidx_1}'U_{G_{\smallidx-\bigidx_1}})=(V_{\bigidx_1}'U_{G_{\smallidx-\bigidx_1}})R^{\smallidx,\bigidx_1}$,
whence it is a subgroup. In coordinates, for split $G_{\smallidx}$,
\begin{align*}
R^{\smallidx,\bigidx_1}V_{\bigidx_1}'U_{G_{\smallidx-\bigidx_1}}=
\Big\{\left(\begin{array}{cccccc}I_{\bigidx_1}&0&v_3&\frac1{4\gamma}v_3&v_4&v_5\\
x&z&u_1&u_2&u_3&v_4'\\
&&1&0&u_2'&\frac1{4\gamma}v_3'\\
&&&1&u_1'&v_3'\\
&&&&z^*&0\\
&&&&x'&I_{\bigidx_1}
\end{array}\right):z\in Z_{\smallidx-\bigidx_1-1}\Big\}
\end{align*}
(in the \quasisplit\ case instead of $\frac1{4\gamma}v_3$ we have
$0$, see the form of $V_{\bigidx_1}'$ above). Then we see that
$R^{\smallidx,\bigidx_1}V_{\bigidx_1}'U_{G_{\smallidx-\bigidx_1}}$
is normalized by $Z_{\bigidx_1}$. Therefore
integral~\eqref{integral:mult l<n_2 integral before defining
factoring R} factors through $R^{\smallidx,\bigidx_1}$. Furthermore,
the function on $G_{\smallidx}$ defined by
\begin{align*}
g\mapsto\int_{R_{\smallidx,\bigidx}}\int_{\Mat{\bigidx_1\times
\bigidx-\smallidx}}
\varphi_{1-s,s}'(\omega_{\bigidx_2,\bigidx_1}\eta
w_{\smallidx,\bigidx}rg,1,1,1)\psi_{\gamma}(r) d\eta dr
\end{align*}
is $R^{\smallidx,\bigidx_1}$-invariant on the left. To see this,
first note that $R^{\smallidx,\bigidx_1}$ normalizes
$R_{\smallidx,\bigidx}$ keeping $dr$ and $\psi_{\gamma}$ unchanged.
Second, if $r'\in R^{\smallidx,\bigidx_1}$,
$\rconj{w_{\smallidx,\bigidx}^{-1}}r'$ is the image of
\begin{align*}
\left(\begin{array}{cccc}
I_{\bigidx_1}&&&\\
x&I_{\smallidx-\bigidx_1-1}&&\\
&&1&\\
&&&I_{\bigidx-\smallidx}\end{array}\right)\in\overline{Z_{\bigidx_1,\bigidx_2}}
\end{align*}
in $M_{\bigidx}$ and
$\rconj{(\rconj{w_{\smallidx,\bigidx}^{-1}}r')}\eta=\upsilon\eta$
where $\upsilon$ is the image in $M_{\bigidx}$ of
\begin{align*}
\left(\begin{array}{cccc}
I_{\bigidx_1}&&&\\
&I_{\smallidx-\bigidx_1-1}&&-xm_3\\
&&1&\\
&&&I_{\bigidx-\smallidx}\\\end{array}\right)\in
Z_{\bigidx_1,\bigidx_2}.
\end{align*}
Now for any $h\in H_{\bigidx}$,
$\varphi_{1-s,s}'(\omega_{\bigidx_2,\bigidx_1}(\rconj{w_{\smallidx,\bigidx}^{-1}}r')h,1,1,1)
=\varphi_{1-s,s}'(\omega_{\bigidx_2,\bigidx_1}h,1,1,1)$
and
\begin{align*}
\varphi_{1-s,s}'(\omega_{\bigidx_2,\bigidx_1}\upsilon
h,1,1,1)=\psi(\rconj{\omega_{\bigidx_2,\bigidx_1}^{-1}}\upsilon)\varphi_{1-s,s}'(\omega_{\bigidx_2,\bigidx_1}h,1,1,1)
=\varphi_{1-s,s}'(\omega_{\bigidx_2,\bigidx_1}h,1,1,1).
\end{align*}
Thus we obtain
\begin{align*}
&\int_{\lmodulo{R^{\smallidx,\bigidx_1}Z_{\bigidx_1}V_{\bigidx_1}'U_{G_{\smallidx-\bigidx_1}}}{G_{\smallidx}}}
(\int_{R^{\smallidx,\bigidx_1}}W(r'g)dr')
\int_{R_{\smallidx,\bigidx}}\int_{\Mat{\bigidx_1\times
\bigidx-\smallidx}}\\\notag
&\varphi_{1-s,s}'(\omega_{\bigidx_2,\bigidx_1}\eta
w_{\smallidx,\bigidx}rg,1,1,1)\psi_{\gamma}(r)d\eta drdg.
\end{align*}

Next (inspecting the coordinates above) we see that
\begin{align*}
\rconj{w^{\smallidx,\bigidx_1}}(R^{\smallidx,\bigidx_1}Z_{\bigidx_1}V_{\bigidx_1}'U_{G_{\smallidx-\bigidx_1}})
=U_{H_{\bigidx_1}}\ltimes(Z_{\smallidx-\bigidx_1-1}\ltimes
V_{\smallidx-\bigidx_1-1}),
\end{align*}
where $U_{H_{\bigidx_1}}$ is embedded in $G_{\smallidx}$ through
$G_{\bigidx_1+1}$.
Hence
$H_{\bigidx_1}\rconj{w^{\smallidx,\bigidx_1}}(R^{\smallidx,\bigidx_1}Z_{\bigidx_1}V_{\bigidx_1}'U_{G_{\smallidx-\bigidx_1}})$
is a subgroup of $G_{\smallidx}$
($H_{\bigidx_1}<G_{\bigidx_1+1}<G_{\smallidx}$) and we may factor
the integral through $H_{\bigidx_1}$ to get
\begin{align}\label{integral:mult l<n_2 integral before defining F}
&\int_{\lmodulo{H_{\bigidx_1}Z_{\smallidx-\bigidx_1-1}V_{\smallidx-\bigidx_1-1}}{G_{\smallidx}}}\int_{\lmodulo{U_{H_{\bigidx_1}}}{H_{\bigidx_1}}}
(\int_{R^{\smallidx,\bigidx_1}}W(r'w^{\smallidx,\bigidx_1}h'g)dr')\\\notag
&\int_{R_{\smallidx,\bigidx}}\int_{\Mat{\bigidx_1\times
\bigidx-\smallidx}}
\varphi_{1-s,s}'(\omega_{\bigidx_2,\bigidx_1}\eta
w_{\smallidx,\bigidx}rw^{\smallidx,\bigidx_1}h'g,1,1,1)\psi_{\gamma}(r)d\eta
drdh'dg.
\end{align}
Note that here $H_{\bigidx_1}$ is considered as a subgroup of
$G_{\smallidx}$, so as input to $W$, $h'$ is written relative to
$\mathcal{E}_{G_{\smallidx}}$. In the first argument of
$\varphi_{1-s,s}'$ we write $h'$ by passing from
$\mathcal{E}_{G_{\smallidx}}$ to $\mathcal{E}_{H_{\bigidx}}$. Now
the following technical claim reveals the inner integral we seek.
\begin{claim}\label{claim:n_1 < l < n_2 properties of F}
In its domain of absolute convergence, integral~\eqref{integral:mult
l<n_2 integral before defining F} equals
\begin{align}\label{int:mult var 2 before applying eq for outer tau}
&\int_{\lmodulo{H_{\bigidx_1}Z_{\smallidx-\bigidx_1-1}V_{\smallidx-\bigidx_1-1}}{G_{\smallidx}}}
\int_{R_{\smallidx,\bigidx}}\int_{\Mat{\bigidx_1\times\bigidx-\smallidx}}
\int_{\lmodulo{U_{H_{\bigidx_1}}}{H_{\bigidx_1}}}\\\notag
&(\int_{R^{\smallidx,\bigidx_1}}W(r'w^{\smallidx,\bigidx_1}h'g)dr')\varphi_{1-s,s}'(\omega_{\bigidx_2,\bigidx_1}\eta
w_{\smallidx,\bigidx}rw^{\smallidx,\bigidx_1}g,1,\rconj{t_{\bigidx_1}}h',1)\psi_{\gamma}(r)dh'd\eta
drdg.
\end{align}
Here as input to $\varphi_{1-s,s}'$, $h'$ is written in coordinates
relative to $\mathcal{E}_{H_{\bigidx_1}}$.
\end{claim}

Integral~\eqref{int:mult var 2 before applying eq for outer tau} is
absolutely convergent if it is convergent when $W,\varphi_{1-s,s}'$
are replaced with $|W|,|\varphi_{1-s,s}'|$ and $\psi_{\gamma}$ is
dropped. The manipulations \eqref{int:mult l<n_2 after shahidi
equation}-\eqref{int:mult var 2 before applying eq for outer tau}
remain valid if we take $|W|,|\varphi_{1-s,s}'|$ and drop the
characters, then we reach \eqref{int:mult var 2 before applying eq
for outer tau} with $|W|,|\varphi_{1-s,s}'|$ and without
$\psi_{\gamma}$. This shows that both \eqref{int:mult l<n_2 after
shahidi equation} and \eqref{int:mult var 2 before applying eq for
outer tau} are absolutely convergent in the same domain.

One recognizes the inner $dr'dh'$-integration in \eqref{int:mult var
2 before applying eq for outer tau} as a Rankin-Selberg integral for
$G_{\smallidx}\times\GL{\bigidx_1}$ and $\pi\times\varepsilon_1$
(see Section~\ref{subsection:Twisting the embedding} for the
conjugation by $t_{\bigidx_1}$). Let
\begin{align*}
\varphi_{1-s}^*=\nintertwiningfull{\varepsilon_1}{s}\varphi_{1-s,s}'\in\xi(\varepsilon_2^*\otimes\varepsilon_1^*,rat,(1-s,1-s)).
\end{align*}
After (formally) applying the functional equation \eqref{eq:gamma
def} we should get
\begin{align}\label{int:mult var 2 after applying eq for outer tau}
&\int_{\lmodulo{H_{\bigidx_1}Z_{\smallidx-\bigidx_1-1}V_{\smallidx-\bigidx_1-1}}{G_{\smallidx}}}
\int_{R_{\smallidx,\bigidx}}\int_{\Mat{\bigidx_1\times\bigidx-\smallidx}}
\int_{\lmodulo{U_{H_{\bigidx_1}}}{H_{\bigidx_1}}}\\\notag
&(\int_{R^{\smallidx,\bigidx_1}}W(r'w^{\smallidx,\bigidx_1}h'g)dr')\varphi_{1-s}^*(\omega_{\bigidx_2,\bigidx_1}\eta
w_{\smallidx,\bigidx}rw^{\smallidx,\bigidx_1}g,1,\rconj{t_{\bigidx_1}}h',1)\psi_{\gamma}(r)dh'd\eta
drdg.
\end{align}
Integral~\eqref{int:mult var 2 after applying eq for outer tau}
converges absolutely in a domain in $\zeta$ and $s$ (see
\cite{Soudry2} Lemma~3.10) and can be regarded by meromorphic continuation as
a bilinear form on
$\Whittaker{\pi}{\psi_{\gamma}^{-1}}\times
V'(\varepsilon_2^*\otimes\varepsilon_1,(1-s,s))$. The passage is justified by the next claim.
\begin{claim}\label{claim:n_1 < l < n_2 functional eq pi and tau_1}
Integral~\eqref{int:mult var 2 before applying eq for outer tau}
multiplied by $\gamma(\pi\times\varepsilon_1,\psi,s)$ equals
integral~\eqref{int:mult var 2 after applying eq for outer tau}, as
meromorphic functions.
\end{claim}
Reversing the passage \eqref{int:mult l<n_2 after shahidi
equation}-\eqref{int:mult var 2 before applying eq for outer tau}
and using the fact that by \eqref{eq:multiplicative property of
intertwiners},
$\varphi_{1-s}^*=\nintertwiningfull{\varepsilon}{s}\varphi_s$,
yields $\Psi(W,\nintertwiningfull{\varepsilon}{s}\varphi_s,1-s)$,
i.e.
\begin{align*}
&\int_{\lmodulo{U_{G_{\smallidx}}}{G_{\smallidx}}}W(g)
\int_{R_{\smallidx,\bigidx}} \int_{Z_{\bigidx_1,\bigidx_2}}
\nintertwiningfull{\varepsilon}{s}\varphi_s(\omega_{\bigidx_2,\bigidx_1}mw_{\smallidx,\bigidx}rg,1,1,1)
\psi^{-1}(m)\psi_{\gamma}(r)dmdrdg.
\end{align*}
This also shows that the domain of absolute convergence of
\eqref{int:mult var 2 after applying eq for outer tau} is of the
form prescribed by Claim~\ref{claim:convergence n_1<l<n_2 starting
integral} with $s$ replaced by $1-s$, i.e.
$\{\Re(1-s)>>\Re(\zeta)\}$.

Altogether collecting Claims~\ref{claim:n_1 < l < n_2 functional eq
pi and tau_2}, \ref{claim:n_1 < l < n_2 shahidi equation} and
\ref{claim:n_1 < l < n_2 functional eq pi and tau_1} gives
\eqref{eq:gamma second var for zeta large with integrals} (note that
$\smallidx<\bigidx$ and whenever $\smallidx\leq\bigidx$,
$c(\smallidx,\varepsilon,\gamma,s)=1$).

\begin{remark}\label{remark:technical claims are the same for gamma second var}
In Sections~\ref{subsection:l<=n_1,n_2}-\ref{subsection:n<l},
justifications to integral manipulations are implicit. Passages
involving the application of a functional equation are always
between meromorphic continuations. Other manipulations take place in
the same domain of absolute convergence.
\end{remark}

\subsection{The case $\smallidx\leq\bigidx_1,\bigidx_2$}\label{subsection:l<=n_1,n_2}
We may repeat the steps in Section~\ref{subsection:n_1<l<=n_2} from
integral~\eqref{int:mult l<n_2 starting form} to
integral~\eqref{int:mult l<n_2 after shahidi equation}. Our starting
point is thus
\begin{align}\label{int:mult l<=n_1,n_2 after shahidi equation}
&\int_{\lmodulo{U_{G_{\smallidx}}}{G_{\smallidx}}}W(g)
\int_{R_{\smallidx,\bigidx}}\int_{Z_{\bigidx_1,\bigidx_2}}
\varphi_{1-s,s}'(\omega_{\bigidx_2,\bigidx_1}mw_{\smallidx,\bigidx}rg,1,1,1)
\psi^{-1}(m)\psi_{\gamma}(r)dmdrdg.
\end{align}

We continue by repeating the steps from \eqref{int:mult l<n_2
starting form} to \eqref{int:mult l<n_2 before applying eq for inner
tau}, with the roles of $\bigidx_1$ and $\bigidx_2$ exchanged. We look at
$\rconj{w_{\smallidx,\bigidx}}Z_{\bigidx_1,\bigidx_2}<N_{\bigidx-\smallidx}$
and decompose $R_{\smallidx,\bigidx}=R_{\smallidx,\bigidx_1}\ltimes
R_{\bigidx_2}$, with $R_{\smallidx,\bigidx_1}<H_{\bigidx_1}$. Here
$R_{\bigidx_2}\rtimes\rconj{w_{\smallidx,\bigidx}}Z_{\bigidx_1,\bigidx_2}=U_{\bigidx_2}$
and $\psi^{-1}(m)=\psi_{\gamma}(\rconj{w_{\smallidx,\bigidx}}m)$.

Now the integral takes the form
\begin{align}\label{int:mult l<=n_1,n_2 before applying eq for outer tau}
&\int_{U_{\bigidx_2}}\int_{\lmodulo{U_{G_{\smallidx}}}{G_{\smallidx}}}W(g)
\int_{R_{\smallidx,\bigidx_1}}
\varphi_{1-s,s}'(w'u,1,w_{\smallidx,\bigidx_1}r(\rconj{b_{\bigidx_1,\bigidx_2}}g),1)
\psi_{\gamma}(r)\psi_{\gamma}(u)drdgdu.
\end{align}
The inner $drdg$-integration in \eqref{int:mult l<=n_1,n_2 before
applying eq for outer tau} comprises an integral for
$G_{\smallidx}\times\GL{\bigidx_1}$ and $\pi\times\varepsilon_1$.
Let
\begin{align}\label{int:mult l<=n_1,n_2 after applying eq for outer tau}
&\int_{U_{\bigidx_2}}\int_{\lmodulo{U_{G_{\smallidx}}}{G_{\smallidx}}}W(g)
\int_{R_{\smallidx,\bigidx_1}}
\varphi_{1-s}^*(w'u,1,w_{\smallidx,\bigidx_1}r(\rconj{b_{\bigidx_1,\bigidx_2}}g),1)
\psi_{\gamma}(r)\psi_{\gamma}(u)drdgdu.
\end{align}
Then integral~\eqref{int:mult l<=n_1,n_2 before applying eq for
outer tau} multiplied by $\gamma(\pi\times\varepsilon_1,\psi,s)$
equals integral~\eqref{int:mult l<=n_1,n_2 after applying eq for
outer tau}, as meromorphic functions. Reproducing the steps from
\eqref{int:mult l<=n_1,n_2 after shahidi equation} to
\eqref{int:mult l<=n_1,n_2 before applying eq for outer tau} in an
opposite direction one arrives at
$\Psi(W,\nintertwiningfull{\varepsilon}{s}\varphi_s,1-s)$. As in
Section~\ref{subsection:n_1<l<=n_2} this establishes \eqref{eq:gamma
second var for zeta large with integrals}.

\subsection{The case $\bigidx_2<\smallidx\leq\bigidx_1$}\label{subsection:n_2<l<=n_1}
Essentially, this case is obtained by exchanging
$\varepsilon_i,\bigidx_{i}\leftrightarrow
\varepsilon_{3-i},\bigidx_{3-i}$ in
Section~\ref{subsection:n_1<l<=n_2}. However we still start with
constructing an integral for $\pi\times\varepsilon_2$. Begin with
$\Psi(W,\varphi_s,s)$ and argue as in the passage \eqref{int:mult
l<n_2 after shahidi equation}-\eqref{int:mult var 2 before applying
eq for outer tau}, with the roles of $\bigidx_1$ and $\bigidx_2$
reversed. There we showed how to collapse a part of the
$dm$-integration into $U_{G_{\smallidx}}$. Here this is done with
the $dz$-integration over $Z_{\bigidx_2,\bigidx_1}$.
Then $\Psi(W,\varphi_s,s)$ equals
\begin{align}\label{integral:mult n_2<l<=n_1 integral before defining F}
\int_{\lmodulo{Z_{\bigidx_2}V_{\bigidx_2}'U_{G_{\smallidx-\bigidx_2}}}{G_{\smallidx}}}W(g)
\int_{R_{\smallidx,\bigidx}}
\int_{\Mat{\bigidx_2\times\bigidx-\smallidx}}
\varphi_s(\omega_{\bigidx_1,\bigidx_2}\eta
w_{\smallidx,\bigidx}rg,1,1,1) \psi_{\gamma}(r)d\eta drdg.
\end{align}
This integral resembles integral~\eqref{integral:mult l<n_2 integral
before defining factoring R}. 
Similarly to the passage \eqref{integral:mult l<n_2 integral before
defining factoring R}-\eqref{integral:mult l<n_2 integral before
defining F}, the $d\eta dr$-integration is
$R^{\smallidx,\bigidx_2}$-invariant on the left (as a function of
$g$) and we may also factor through $H_{\bigidx_2}$. Analogously to
Claim~\ref{claim:n_1 < l < n_2 properties of F},
\begin{claim}\label{claim:n_2<l<=n_1 properties of F}
In its domain of absolute convergence, integral~\eqref{integral:mult
n_2<l<=n_1 integral before defining F} equals
\begin{align}\label{int:mult var 2 n_2<l<=n_1 before applying eq for inner tau}
&\int_{\lmodulo{H_{\bigidx_2}Z_{\smallidx-\bigidx_2-1}V_{\smallidx-\bigidx_2-1}}{G_{\smallidx}}}
\int_{R_{\smallidx,\bigidx}}\int_{\Mat{\bigidx_2\times\bigidx-\smallidx}}
\int_{\lmodulo{U_{H_{\bigidx_2}}}{H_{\bigidx_2}}}\\\notag
&(\int_{R^{\smallidx,\bigidx_2}}W(r'w^{\smallidx,\bigidx_2}h'g)dr')\varphi_s(\omega_{\bigidx_1,\bigidx_2}\eta
w_{\smallidx,\bigidx}rw^{\smallidx,\bigidx_2}g,1,\rconj{t_{\bigidx_2}}h',1)\psi_{\gamma}(r)dh'd\eta
drdg.
\end{align}
Here as input to $\varphi_s$, $h'$ is written in coordinates
relative to $\mathcal{E}_{H_{\bigidx_2}}$.
\end{claim}
The proof of this claim is omitted. 
Applying the functional equation \eqref{eq:gamma def} for
$G_{\smallidx}\times\GL{\bigidx_2}$ and $\pi\times\varepsilon_2$ to
the $dr'dh'$-integration in \eqref{int:mult var 2 n_2<l<=n_1 before
applying eq for inner tau} gives
\begin{align}\label{int:mult var 2 n_2<l<=n_1 after applying eq for inner tau}
&\int_{\lmodulo{H_{\bigidx_2}Z_{\smallidx-\bigidx_2-1}V_{\smallidx-\bigidx_2-1}}{G_{\smallidx}}}
\int_{R_{\smallidx,\bigidx}}\int_{\Mat{\bigidx_2\times\bigidx-\smallidx}}
\int_{\lmodulo{U_{H_{\bigidx_2}}}{H_{\bigidx_2}}}\\\notag
&(\int_{R^{\smallidx,\bigidx_2}}W(r'w^{\smallidx,\bigidx_2}h'g)dr')\varphi_{s,1-s}(\omega_{\bigidx_1,\bigidx_2}\eta
w_{\smallidx,\bigidx}rw^{\smallidx,\bigidx_2}g,1,\rconj{t_{\bigidx_2}}h',1)\psi_{\gamma}(r)dh'd\eta
drdg.
\end{align}
Integral~\eqref{int:mult var 2 n_2<l<=n_1 before applying eq for
inner tau} multiplied by $\gamma(\pi\times\varepsilon_2,\psi,s)$
equals integral~\eqref{int:mult var 2 n_2<l<=n_1 after applying eq
for inner tau}, as meromorphic functions.

Reversing the passage leading from $\Psi(W,\varphi_s,s)$ to
\eqref{int:mult var 2 n_2<l<=n_1 before applying eq for inner tau}
we obtain
\begin{align*}
\int_{\lmodulo{U_{G_{\smallidx}}}{G_{\smallidx}}}W(g)
\int_{R_{\smallidx,\bigidx}}
\int_{Z_{\bigidx_2,\bigidx_1}}\varphi_{s,1-s}(\omega_{\bigidx_1,\bigidx_2}zw_{\smallidx,\bigidx}rg,1,1,1)
\psi^{-1}(z)\psi_{\gamma}(r)dzdrdg,
\end{align*}
which is exactly integral~\eqref{int:mult l<n_2 after eq inner tau
starting form}. Applying \eqref{eq:shahidi func eq} 
we get
\begin{align}\label{int:mult n_2<l<=n_1 after shahidi equation}
&\int_{\lmodulo{U_{G_{\smallidx}}}{G_{\smallidx}}}W(g)
\int_{R_{\smallidx,\bigidx}} \int_{Z_{\bigidx_1,\bigidx_2}}
\varphi_{1-s,s}'(\omega_{\bigidx_2,\bigidx_1}mw_{\smallidx,\bigidx}rg,1,1,1)
\psi^{-1}(m)\psi_{\gamma}(r)dmdrdg,
\end{align}
which is actually integral~\eqref{int:mult l<=n_1,n_2 after shahidi equation}
from Section~\ref{subsection:l<=n_1,n_2}, where also $\smallidx\leq\bigidx_1$.
Repeating the argument there yields \eqref{eq:gamma second var for zeta large with integrals}.

\subsection{The case $\bigidx_1,\bigidx_2<\smallidx\leq\bigidx$}\label{subsection:n_1,n_2<l<n}
We start by repeating the steps of
Section~\ref{subsection:n_2<l<=n_1} from $\Psi(W,\varphi_s,s)$
to 
integral~\eqref{int:mult n_2<l<=n_1 after shahidi equation},
which is the same as integral~\eqref{int:mult l<n_2 after shahidi equation} of Section~\ref{subsection:n_1<l<=n_2},
where also $\bigidx_1<\smallidx$ and we proceed as prescribed there.

\subsection{The case $\bigidx<\smallidx$}\label{subsection:n<l}
Now $\Psi(W,\varphi_s,s)$ takes the form
\begin{align}\label{int:mult n<l starting form}
\int_{\lmodulo{U_{H_{\bigidx}}}{H_{\bigidx}}}
(\int_{R^{\smallidx,\bigidx}}W(rw^{\smallidx,\bigidx}h)dr)
\int_{Z_{\bigidx_2,\bigidx_1}}\varphi_s(\omega_{\bigidx_1,\bigidx_2}zh,1,1,1)
\psi^{-1}(z)dzdh.
\end{align}
Recall that in the Jacquet integral for $\varphi_s$,
$Z_{\bigidx_2,\bigidx_1}$ is a subgroup of $H_{\bigidx}$ embedded
through $M_{\bigidx}$.
For
$z=\left(\begin{array}{cc}I_{\bigidx_2}&z\\&I_{\bigidx_1}\end{array}\right)\in
Z_{\bigidx_2,\bigidx_1}$,
\begin{align*}
\rconj{(w^{\smallidx,\bigidx})^{-1}}z=\left(\begin{array}{ccccc}
I_{\bigidx_2}&z\\&I_{\bigidx_1}\\&&I_{2(\smallidx-\bigidx)}\\
&&&I_{\bigidx_1}&z'\\&&&&I_{\bigidx_2}\\\end{array}\right),
\end{align*}
so $\rconj{(w^{\smallidx,\bigidx})^{-1}}z$ normalizes
$R^{\smallidx,\bigidx}$. Because
$\bigidx_2\leq\bigidx-1\leq\smallidx-2$,
$W((\rconj{(w^{\smallidx,\bigidx})^{-1}}z)g)=\psi^{-1}(z)W(g)$ for
any $g\in G_{\smallidx}$.
It follows that integral~\eqref{int:mult n<l starting form} equals
\begin{align*}
\int_{\lmodulo{U_{H_{\bigidx}}}{H_{\bigidx}}}
\int_{Z_{\bigidx_2,\bigidx_1}} \int_{R^{\smallidx,\bigidx}}
W(rw^{\smallidx,\bigidx}zh)
\varphi_s(\omega_{\bigidx_1,\bigidx_2}zh,1,1,1)drdzdh.
\end{align*}

Write $U_{H_{\bigidx}}=((N_{\bigidx_1}\cdot N_{\bigidx_2})\ltimes Z_{\bigidx_2,\bigidx_1})\ltimes U_{\bigidx}$ with
\begin{align*}
N_{\bigidx_1}=\setof{\left(\begin{array}{cc}I_{\bigidx_2}\\&z_1\\\end{array}\right)}{z_1\in Z_{\bigidx_1}},\qquad
N_{\bigidx_2}=\setof{\left(\begin{array}{cc}z_2\\&I_{\bigidx_1}\\\end{array}\right)}{z_2\in Z_{\bigidx_2}}.
\end{align*}
Here $((N_{\bigidx_1}\cdot N_{\bigidx_2})\ltimes
Z_{\bigidx_2,\bigidx_1})<\GL{\bigidx}$ is considered as a subgroup
of $M_{\bigidx}$. We readily collapse the integration over
$Z_{\bigidx_2,\bigidx_1}$ to obtain
\begin{align*}
\int_{\lmodulo{N_{\bigidx_1}N_{\bigidx_2}U_{\bigidx}}{H_{\bigidx}}}
\int_{R^{\smallidx,\bigidx}} W(rw^{\smallidx,\bigidx}h)
\varphi_s(\omega_{\bigidx_1,\bigidx_2}h,1,1,1)drdh.
\end{align*}

Decompose $R^{\smallidx,\bigidx}=R^{\smallidx,\bigidx}_1\cdot R^{\smallidx,\bigidx}_2$, a direct product where
\begin{align*}
R^{\smallidx,\bigidx}_1=\{\left(\begin{array}{cccc}I_{\bigidx_2}\\&I_{\bigidx_1}\\&x_1&I_{\smallidx-\bigidx-1}\\&&&1\\\end{array}\right)\},\qquad
R^{\smallidx,\bigidx}_2=\{\left(\begin{array}{cccc}I_{\bigidx_2}\\&I_{\bigidx_1}\\x_2&&I_{\smallidx-\bigidx-1}\\&&&1\\\end{array}\right)\}.
\end{align*}
The mapping $h\mapsto\varphi_s(h,1,1,1)$ is
$Z_{\bigidx_1,\bigidx_2}$-invariant on the left, whence
$h\mapsto\varphi_s(\omega_{\bigidx_1,\bigidx_2}h,1,1,1)$ is left
$\overline{Z_{\bigidx_2,\bigidx_1}}=\rconj{\omega_{\bigidx_1,\bigidx_2}}Z_{\bigidx_1,\bigidx_2}$-invariant.
Let
$N_{\bigidx_1,\bigidx_2}=\overline{Z_{\bigidx_2,\bigidx_1}}N_{\bigidx_1}N_{\bigidx_2}$.
It is a subgroup of $M_{\bigidx}$ and thus normalizes $U_{\bigidx}$.
Therefore we may integrate over $\overline{Z_{\bigidx_2,\bigidx_1}}$
and our integral equals
\begin{align*}
\int_{\lmodulo{N_{\bigidx_1,\bigidx_2}U_{\bigidx}}{H_{\bigidx}}}
\int_{\overline{Z_{\bigidx_2,\bigidx_1}}}
\int_{R^{\smallidx,\bigidx}_1}\int_{R^{\smallidx,\bigidx}_2}
W(r_2r_1(\rconj{(w^{\smallidx,\bigidx})^{-1}}z)w^{\smallidx,\bigidx}h)
\varphi_s(\omega_{\bigidx_1,\bigidx_2}h,1,1,1)dr_2dr_1dzdh.
\end{align*}
For $r_1\in R^{\smallidx,\bigidx}_1$ and
$z\in\overline{Z_{\bigidx_2,\bigidx_1}}$,
$\rconj{(w^{\smallidx,\bigidx})^{-1}}z$ satisfies
$\rconj{(\rconj{(w^{\smallidx,\bigidx})^{-1}}z)}r_1=r_{2,z}r_1$ for
some $r_{2,z}\in R^{\smallidx,\bigidx}_2$. Also, as a direct product
$\rconj{(w^{\smallidx,\bigidx})^{-1}}\overline{Z_{\bigidx_2,\bigidx_1}}\cdot
R^{\smallidx,\bigidx}_2=R^{\smallidx,\bigidx_2}$. Thus we have
\begin{align*}
&\int_{\lmodulo{N_{\bigidx_1,\bigidx_2}U_{\bigidx}}{H_{\bigidx}}}
\int_{R^{\smallidx,\bigidx}_1} \int_{R^{\smallidx,\bigidx_2}}
W(rr_1w^{\smallidx,\bigidx}h)
\varphi_s(\omega_{\bigidx_1,\bigidx_2}h,1,1,1)drdr_1dh.
\end{align*}

For $h_2\in H_{\bigidx_2}$, where $H_{\bigidx_2}$ is regarded as a
subgroup of $H_{\bigidx}$, denote
$h_2'=\rconj{\omega_{\bigidx_1,\bigidx_2}}h_2$ and
$H_{\bigidx_2}'=\rconj{\omega_{\bigidx_1,\bigidx_2}}H_{\bigidx_2}$.
We factor the integration through $H_{\bigidx_2}'$. Explicitly
writing $[h_2']_{\mathcal{E}_{G_{\smallidx}}}$, we find that
$\rconj{(w^{\smallidx,\bigidx})^{-1}}h_2'$ commutes with any element
of $R^{\smallidx,\bigidx}_1$ and
$\rconj{(w^{\smallidx,\bigidx})^{-1}}h_2'=\rconj{(w^{\smallidx,\bigidx_2})^{-1}}h_2$.
Hence the integral is equal to
\begin{align}\label{int:mult n<l before applying eq for inner tau}
&\int_{\lmodulo{H_{\bigidx_2}'N_{\bigidx_1,\bigidx_2}U_{\bigidx}}{H_{\bigidx}}}
\int_{R^{\smallidx,\bigidx}_1}
\int_{\lmodulo{U_{H_{\bigidx_2}'}}{H_{\bigidx_2}'}}
\int_{R^{\smallidx,\bigidx_2}}\\\notag
&W(rw^{\smallidx,\bigidx_2}h_2(w^{\smallidx,\bigidx_2})^{-1}r_1w^{\smallidx,\bigidx}h)
\varphi_s(\omega_{\bigidx_1,\bigidx_2}h,1,h_2,1)drdh_2'dr_1dh.
\end{align}
Note that
$N_{\bigidx_1,\bigidx_2}U_{\bigidx}=\rconj{\omega_{\bigidx_1,\bigidx_2}}U_{H_{\bigidx}}$
and $H_{\bigidx_2}U_{H_{\bigidx}}<H_{\bigidx}$, whence $H_{\bigidx_2}'N_{\bigidx_1,\bigidx_2}U_{\bigidx}<H_{\bigidx}$.

The inner $drdh_2'$-integration in \eqref{int:mult n<l before
applying eq for inner tau} is an integral for
$G_{\smallidx}\times\GL{\bigidx_2}$ and $\pi\times\varepsilon_2$.
Let
\begin{align}\label{int:mult n<l after applying eq for inner tau}
&\int_{\lmodulo{H_{\bigidx_2}'N_{\bigidx_1,\bigidx_2}U_{\bigidx}}{H_{\bigidx}}}
\int_{R^{\smallidx,\bigidx}_1}
\int_{\lmodulo{U_{H_{\bigidx_2}'}}{H_{\bigidx_2}'}}\int_{R^{\smallidx,\bigidx_2}}\\\notag
&W(rw^{\smallidx,\bigidx_2}h_2(w^{\smallidx,\bigidx_2})^{-1}r_1w^{\smallidx,\bigidx}h)
\varphi_{s,1-s}(\omega_{\bigidx_1,\bigidx_2}h,1,h_2,1)drdh_2'dr_1dh.
\end{align}
By \eqref{eq:gamma def}, integral~\eqref{int:mult n<l before
applying eq for inner tau} multiplied by
$c(\smallidx,\varepsilon_2,\gamma,s)^{-1}\gamma(\pi\times\varepsilon_2,\psi,s)$
equals \eqref{int:mult n<l after applying eq for inner tau}.

Reversing the steps from \eqref{int:mult n<l starting form} to
\eqref{int:mult n<l before applying eq for inner tau},
integral~\eqref{int:mult n<l after applying eq for inner tau}
becomes
\begin{align}\label{int:mult n<l before applying shahidi equation}
\int_{\lmodulo{U_{H_{\bigidx}}}{H_{\bigidx}}}
(\int_{R^{\smallidx,\bigidx}}W(rw^{\smallidx,\bigidx}h)dr)
\int_{Z_{\bigidx_2,\bigidx_1}}\varphi_{s,1-s}(\omega_{\bigidx_1,\bigidx_2}zh,1,1,1)
\psi^{-1}(z)dzdh.
\end{align}
Applying 
\eqref{eq:shahidi func eq} to \eqref{int:mult n<l before applying shahidi equation} we get
\begin{align}\label{int:mult l<n after shahidi equation}
&\int_{\lmodulo{U_{H_{\bigidx}}}{H_{\bigidx}}}
(\int_{R^{\smallidx,\bigidx}}W(rw^{\smallidx,\bigidx}h)dr)
\int_{Z_{\bigidx_1,\bigidx_2}}
\varphi_{1-s,s}'(\omega_{\bigidx_2,\bigidx_1}mh,1,1,1)
\psi^{-1}(m)dmdh.
\end{align}

Repeat the steps from \eqref{int:mult n<l starting form} to
\eqref{int:mult n<l before applying eq for inner tau}, with the
roles of $\varepsilon_1$, $\bigidx_1$ and $\varepsilon_2$,
$\bigidx_2$ reversed. The first step is to collapse the
$dm$-integration. The relevant modifications of the definitions are
\begin{align*}
\begin{array}{ll}
N_{\bigidx_1}=\setof{\left(\begin{array}{cc}z_1\\&I_{\bigidx_2}\\\end{array}\right)}{z_1\in
Z_{\bigidx_1}},&
N_{\bigidx_2}=\setof{\left(\begin{array}{cc}I_{\bigidx_1}\\&z_2\\\end{array}\right)}{z_2\in
Z_{\bigidx_2}},\\
R^{\smallidx,\bigidx}_1=\{\left(\begin{array}{cccc}I_{\bigidx_1}\\&I_{\bigidx_2}\\x_1&&I_{\smallidx-\bigidx-1}\\&&&1\\\end{array}\right)\},&
R^{\smallidx,\bigidx}_2=\{\left(\begin{array}{cccc}I_{\bigidx_1}\\&I_{\bigidx_2}\\&x_2&I_{\smallidx-\bigidx-1}\\&&&1\\\end{array}\right)\},\\
N_{\bigidx_2,\bigidx_1}=\overline{Z_{\bigidx_1,\bigidx_2}}N_{\bigidx_1}N_{\bigidx_2},&
H_{\bigidx_1}'=\rconj{\omega_{\bigidx_2,\bigidx_1}}H_{\bigidx_1}.
\end{array}
\end{align*}
\begin{remark}
Note that $N_{\bigidx_i}$ and $R^{\smallidx,\bigidx}_i$ are obtained
from the previous definitions by conjugating with
$\omega_{\bigidx_2,\bigidx_1}$.
\end{remark}
Also use the fact that the function
$h\mapsto\varphi_{1-s,s}'(\omega_{\bigidx_2,\bigidx_1}h,1,1,1)$
is left $\overline{Z_{\bigidx_1,\bigidx_2}}$-invariant.
We reach the following integral
\begin{align}\label{int:mult n<l before applying eq for outer tau}
&\int_{\lmodulo{H_{\bigidx_1}'N_{\bigidx_2,\bigidx_1}U_{\bigidx}}{H_{\bigidx}}}
\int_{R^{\smallidx,\bigidx}_2}
\int_{\lmodulo{U_{H_{\bigidx_1}'}}{H_{\bigidx_1}'}}\int_{R^{\smallidx,\bigidx_1}}\\\notag
&W(rw^{\smallidx,\bigidx_1}h_1(w^{\smallidx,\bigidx_1})^{-1}r_2w^{\smallidx,\bigidx}h)
\varphi_{1-s,s}'(\omega_{\bigidx_2,\bigidx_1}h,1,h_1,1)drh_1'dr_2dh.
\end{align}
The inner $drdh_1'$-integration in \eqref{int:mult n<l before
applying eq for outer tau} is an integral for
$G_{\smallidx}\times\GL{\bigidx_1}$ and $\pi\times\varepsilon_1$.
Integral~\eqref{int:mult n<l before applying eq for outer tau}
multiplied by
$c(\smallidx,\varepsilon_1,\gamma,s)^{-1}\gamma(\pi\times\varepsilon_1,\psi,s)$
equals 
\begin{align}\label{int:mult n<l after applying eq for outer tau}
&\int_{\lmodulo{H_{\bigidx_1}'N_{\bigidx_2,\bigidx_1}U_{\bigidx}}{H_{\bigidx}}}
\int_{R^{\smallidx,\bigidx}_2}
\int_{\lmodulo{U_{H_{\bigidx_1}'}}{H_{\bigidx_1}'}}\int_{R^{\smallidx,\bigidx_1}}
\\\notag&W(rw^{\smallidx,\bigidx_1}h_1(w^{\smallidx,\bigidx_1})^{-1}r_2w^{\smallidx,\bigidx}h)
\varphi_{1-s}^*(\omega_{\bigidx_2,\bigidx_1}h,1,h_1,1)drh_1'dr_2dh.
\end{align}
Now \eqref{int:mult n<l after applying eq for outer tau} resembles
\eqref{int:mult n<l before applying eq for inner tau}. Reversing the
transitions \eqref{int:mult n<l starting form}-\eqref{int:mult n<l
before applying eq for inner tau} (with the new definitions of
$N_{\bigidx_i}$, $R^{\smallidx,\bigidx}_i$ etc.) leads to
$\Psi(W,\nintertwiningfull{\varepsilon}{s}\varphi_s,1-s)$. Finally
note that when $\bigidx<\smallidx$, in particular
$\bigidx_1,\bigidx_2<\smallidx$ hence
$c(\smallidx,\varepsilon_1,\gamma,s)c(\smallidx,\varepsilon_2,\gamma,s)=c(\smallidx,\varepsilon,\gamma,s)$.

\subsection{Proofs of claims
}\label{subsection:proofs claims 2nd var}
The following lemma is
used to justify all passages between integrals with disjoint domains
of absolute convergence.

\begin{lemma}\label{lemma:uniformly bounded integral is constant}
Let $\pi$ be a representation of $G_{\smallidx}$ and let $\rho$ be a
representation of $\GL{\bigidx}$ realized in
$\Whittaker{\rho}{\psi}$. 
Let $W\in\Whittaker{\pi}{\psi_{\gamma}^{-1}}$,
$f_s\in\xi(\rho,hol,s)=\xi_{Q_{\bigidx}}^{H_{\bigidx}}(\rho,hol,s)$.
Assume that there is a constant $C$ (independent $s$) such that for
all $s\in\C$,
\begin{align*}
\begin{dcases}
\int_{\lmodulo{U_{H_{\bigidx}}}{H_{\bigidx}}}\int_{R^{\smallidx,\bigidx}}|W|(rw^{\smallidx,\bigidx}h)|f_s|(h,1)drdh<C&\smallidx>\bigidx,\\
\int_{\lmodulo{U_{G_{\smallidx}}}{G_{\smallidx}}}\int_{R_{\smallidx,\bigidx}}
|W|(g)|f_s|(w_{\smallidx,\bigidx}rg,1)drdg<C&\smallidx\leq\bigidx.
\end{dcases}
\end{align*}
Then for all $s$, $\Psi(W,f_s,s)=C'$ where $C'$ is a constant
independent of $s$.
\end{lemma}
\begin{proof}[Proof of Lemma~\ref{lemma:uniformly bounded integral is constant}] 
First assume $\smallidx\leq\bigidx$. According to the Iwasawa
decomposition of $G_{\smallidx}$ there are elements $k_i\in
K_{G_{\smallidx}}$, $i=\intrange{1}{m}$, such that for all $s\in\C$,
\begin{align*}
C&>\sum_{i}\int_{A_{\smallidx-1}}\int_{G_1}|k_{i}\cdot
W|(ax)\int_{R_{\smallidx,\bigidx}}|k_{i}\cdot
f_s|(w_{\smallidx,\bigidx}rax,1)\delta(a)drdxda\\\notag
&\geq\sum_{i}\int_{A_{\smallidx-1}}\int_{G_1}|k_{i}\cdot
W|(ax)\Big|\int_{R_{\smallidx,\bigidx}}k_{i}\cdot
f_s(w_{\smallidx,\bigidx}rax,1)\psi_{\gamma}^{-1}(r)dr\Big|\delta(a)dxda
\end{align*}
($\delta=\delta_{B_{G_{\smallidx}}}^{-1}$). Assume that
$G_{\smallidx}$ is split. The next step is to plug in the formula of
Proposition~\ref{proposition:dr integral for torus of G_l}. This
entails writing the integration over $G_1$ using the subsets
$G_1^{0,k},G_1^{\infty,k}$ (defined in
Section~\ref{subsection:Iwasawa decomposition for Psi(W,f,s)}) and
$\setof{x\in G_1}{[x]\leq q^k}$, where $k$ is a constant independent
of $s$. We get a finite sum bounded by $C$,
\begin{align*}
&\sum_{i,j}\int_{A_{\smallidx-1}}\int_{\Lambda_j}|W_{i,j}|(ax)\Big|\sum_{\alpha}P_{\alpha}(q^{-s},q^s)W_{\alpha}(diag(a,\lfloor
x\rfloor,I_{\bigidx-\smallidx}))\Big|\\\notag&(\absdet{a}[x]^{-1})^{M+\Re(s)}\delta(a)dxda.
\end{align*}
Here $\Lambda_j$ varies over the aforementioned subsets. The
elements $W_{\alpha},P_{\alpha}(X,X^{-1})$ depend on $i$ and $j$ but
not on $s$. The summation over $\alpha$ is finite. Also $M$ is some
real constant independent of $s$. In particular, the only dependence
on $s$ is in the factor $q^{-s}$ replacing $X$ and in the exponent
of $\absdet{a}[x]^{-1}$.

Each $dxda$-integral can be written as a series of the form
\begin{align}\label{lemma:bounded is constant sum absvalue}
\sum_{t=N}^{\infty}\left(\int\int|W_{i,j}|(ax)\Big|\sum_{\alpha}P_{\alpha}(q^{-s},q^s)W_{\alpha}(diag(a,\lfloor
x\rfloor,I_{\bigidx-\smallidx}))\Big|\delta(a)dxda\right)q^{-t(M+\Re(s))}.
\end{align}
Here the double integration is over all $a$ and $x$ such that
$\absdet{a}[x]^{-1}=q^{-t}$. The fact that $W_{\alpha}$ vanishes
away from zero (see Section~\ref{section:whittaker props}) implies
that the coordinates of $a$ are all bounded from above. Hence the
integration is over a compact set independent of $s$, and
$N\in\Integers$ is also independent of $s$.

The same manipulations apply without the absolute value:
\begin{align*}
\Psi(W,f_s,s)=&\sum_{i,j}\int_{A_{\smallidx-1}}\int_{\Lambda_j}W_{i,j}(ax)\sum_{\alpha}P_{\alpha}(q^{-s},q^s)W_{\alpha}(diag(a,\lfloor
x\rfloor,I_{\bigidx-\smallidx}))\\\notag&(\absdet{a}[x]^{-1})^{M+s}\delta(a)dxda
\end{align*}
and each $dxda$-integral takes the form $\sum_{t=N}^{\infty}B_t(s)$,
where
\begin{align}\label{lemma:bounded is constant sum}
B_t(s)=\left(\int\int
W_{i,j}(ax)\sum_{\alpha}P_{\alpha}(q^{-s},q^s)W_{\alpha}(diag(a,\lfloor
x\rfloor,I_{\bigidx-\smallidx}))\delta(a)dxda\right)q^{-t(M+s)}.
\end{align}
Observe that $B_t(s)\in\C[q^{-s},q^s]$. This follows because
$W_{i,j}$ and $W_{\alpha}$ are smooth and the integration in
\eqref{lemma:bounded is constant sum} is over a compact set
independent of $s$. Hence $B_t$ is an entire function of $s$.
Moreover because \eqref{lemma:bounded is constant sum absvalue} is
bounded by $C$, in particular $|B_t(s)|\leq C$ for all $s$. Hence
$B_t(s)=c_t$ is a constant. The series $\sum_{t=N}^{\infty}c_t$ is
absolutely convergent because it is majorized by
\eqref{lemma:bounded is constant sum absvalue}. We conclude that
$\sum_{t=N}^{\infty}B_t(s)$ is a constant, whence $\Psi(W,f_s,s)$ is
a constant, independent of $s$.

The proof in the \quasisplit\ case is almost identical. There is no
need to partition $G_1$ and we replace $diag(a,\lfloor
x\rfloor,I_{\bigidx-\smallidx})$ with
$diag(a,I_{\bigidx-\smallidx+1})$.

The case $\smallidx>\bigidx$ is similar, we describe it briefly. Put
$W_1=w^{\smallidx,\bigidx}\cdot W$. Using the Iwasawa decomposition
of $H_{\bigidx}$ and the argument regarding the support of $W(ar)$
for $r\in R^{\smallidx,\bigidx}$ (see
Proposition~\ref{proposition:iwasawa decomposition of the
integral}), we have for some $r_1,\ldots,r_{v}\in
R^{\smallidx,\bigidx}$ and $k_1,\ldots,k_m\in K_{H_{\bigidx}}$,
\begin{align*}
C&>\sum_{i,j}\int_{A_{\bigidx}}|r_jk_i \cdot W_1
|(\rconj{(w^{\smallidx,\bigidx})^{-1}}a)|k_i\cdot
f_s|(a,1)\delta(a)da
\end{align*}
($\delta=\delta_{B_{H_{\bigidx}}}^{-1}$). For each $i$, $k_i\cdot
f_s=\sum_{\alpha}P_{\alpha}(q^{-s},q^s)f_s^{(\alpha)}$ where
$f_s^{(\alpha)}\in\xi(\rho,std,s)$ ($P_{\alpha},f_s^{(\alpha)}$
depend on $i$). Then we get
\begin{align*}
\sum_{i,j}\int_{A_{\bigidx}}|r_jk_i\cdot
W_1|(\rconj{(w^{\smallidx,\bigidx})^{-1}}a)\Big|
\sum_{\alpha}P_{\alpha}(q^{-s},q^s)W_{\alpha}(a)\Big|\delta(a)\absdet{a}^{M'+\Re(s)}da.
\end{align*}
Here $M'$ is some real constant. In addition,
\begin{align*}
\Psi(W,f_s,s)=\sum_{i,j}\int_{A_{\bigidx}}r_jk_i\cdot
W_1(\rconj{(w^{\smallidx,\bigidx})^{-1}}a)
\sum_{\alpha}P_{\alpha}(q^{-s},q^s)W_{\alpha}(a)\delta(a)\absdet{a}^{M'+s}da.
\end{align*}
The proof follows from this, using the fact that $r_jk_i\cdot W_1$
vanishes away from zero.
\end{proof} 

\begin{proof}[Proof of Claim~\ref{claim:n_1 < l < n_2 functional eq pi and tau_2}] 
Recall that \eqref{int:mult l<n_2 before applying eq for inner tau}
and \eqref{int:mult l<n_2 after applying eq for inner tau} are
\begin{align*}
&\int_{U_{\bigidx_1}}
\int_{\lmodulo{U_{G_{\smallidx}}}{G_{\smallidx}}}W(g)
\int_{R_{\smallidx,\bigidx_2}}
\varphi_s(w'u,1,w_{\smallidx,\bigidx_2}r(\rconj{b_{\bigidx_2,\bigidx_1}}g),1)
\psi_{\gamma}(r)\psi_{\gamma}(u)drdgdu,\\\notag
&\int_{U_{\bigidx_1}}
\int_{\lmodulo{U_{G_{\smallidx}}}{G_{\smallidx}}}W(g)
\int_{R_{\smallidx,\bigidx_2}}
\varphi_{s,1-s}(w'u,1,w_{\smallidx,\bigidx_2}r(\rconj{b_{\bigidx_2,\bigidx_1}}g),1)
\psi_{\gamma}(r)\psi_{\gamma}(u)drdgdu.
\end{align*}
We use the idea of \cite{Soudry} (p.68) and prove that
integrals~\eqref{int:mult l<n_2 before applying eq for inner tau}
and \eqref{int:mult l<n_2 after applying eq for inner tau} are
proportional using a specific substitution. Let $D$ (resp. $D^*$) be
the domain of absolute convergence of \eqref{int:mult l<n_2 before
applying eq for inner tau} (resp. \eqref{int:mult l<n_2 after
applying eq for inner tau}). These domains do not depend on the
actual functions $W,\varphi_s$ and $\varphi_{s,1-s}$. We recall that
$D^*$ takes the form $\{A_1\leq\Re(s)<<\Re(\zeta)\}$ (see
Section~\ref{subsection:n_1<l<=n_2}).

Replace $W,\varphi_s$ with $W,\varphi_s'$ given by
Proposition~\ref{proposition:integral can be made constant zeta phi
version} (for a fixed $\zeta$). The manipulations \eqref{int:mult
l<n_2 starting form}-\eqref{int:mult l<n_2 before applying eq for
inner tau} still apply and 
since integral~\eqref{int:mult l<n_2 starting
form} is absolutely convergent and equals $1$ for all $s$, the same
holds for \eqref{int:mult l<n_2 before applying eq for inner tau}.

Denote the inner $drdg$-integration in \eqref{int:mult l<n_2 before
applying eq for inner tau} (resp. \eqref{int:mult l<n_2 after
applying eq for inner tau}), defined for each $u\in U_{\bigidx_1}$,
by $I_u(s)$ (resp. $\widetilde{I}_u(s)$). Integral $I_u(s)$ is
actually
$\Psi(W,\varphi(w'u,1,\cdot,\cdot)^{b_{\bigidx_2,\bigidx_1}},s-\zeta)$,
where
$\varphi(w'u,1,\cdot,\cdot)\in\xi_{Q_{\bigidx_2}}^{H_{\bigidx_2}}(\tau_2,hol,s-\zeta)$
(defined in Section~\ref{subsection:the multiplicativity of the
intertwining operator for tau induced}) and
\begin{align*}
\varphi(w'u,1,h_2,b_2)^{b_{\bigidx_2,\bigidx_1}}=\varphi(w'u,1,\rconj{b_{\bigidx_2,\bigidx_1}}h_2,b_2).
\end{align*}
Integral $\widetilde{I}_u(s)$ is
\begin{align*}
&\Psi(W,(\nintertwiningfull{\varepsilon_2}{s}\varphi(w'u,1,\cdot,\cdot))^{b_{\bigidx_2,\bigidx_1}},1-s+\zeta)\\\notag
&=\Psi(W,\nintertwiningfull{\varepsilon_2}{s}\varphi(w'u,1,\cdot,\cdot)^{b_{\bigidx_2,\bigidx_1}},1-s+\zeta),
\end{align*}
where the equality follows from Section~\ref{subsection:Twisting the
embedding}. These are double integrals over $G_{\smallidx}\times
R_{\smallidx,\bigidx_2}$ and since $\zeta$ is fixed,
$\xi_{Q_{\bigidx_2}}^{H_{\bigidx_2}}(\tau_2,hol,s-\zeta)=\C[q^{\mp
s}]\otimes\xi_{Q_{\bigidx_2}}^{H_{\bigidx_2}}(\tau_2,std,s-\zeta)$.
The integral $I_u(s)$ is absolutely convergent for all $s\in\C$ and
$\widetilde{I}_u(s)$ is absolutely convergent in $D^*$, because
$\varphi_s,\varphi_{s,1-s}$ and $\psi_{\gamma}$ are smooth.

Denote by $Q_u\in\C(q^{-s})$ the meromorphic continuation of
$I_u(s)$, obtained as described in
Section~\ref{subsection:meromorphic continuation}. It satisfies
$I_u(s)=Q_u(q^{-s})$ whenever $\Re(s)>s_0$, for some $s_0>0$.
Similarly let $\widetilde{Q}_u\in\C(q^{-s})$ be the meromorphic
continuation of $\widetilde{I}_u(s)$. Then
$\widetilde{I}_u(s)=\widetilde{Q}_u(q^{-s})$ for all
$s\in\Omega=\setof{s\in\C}{\Re(s-\zeta)<s_1}$, for some $s_1<0$. We
may assume (by a priori taking $\zeta$ large enough so that
$\Re(s-\zeta)<<0$) that $D^*\subset\Omega$, hence
$\widetilde{I}_u(s)=\widetilde{Q}_u(q^{-s})$ in $D^*$.

Next we claim $I_u(s)=Q_u(q^{-s})$ also in $D^*$. Indeed, $I_u(s)$
satisfies the condition of Lemma~\ref{lemma:uniformly bounded
integral is constant}, because the constant $C$ of
Proposition~\ref{proposition:integral can be made constant zeta phi
version} is independent of $s$ and moreover, the functions
$\varphi_s,\varphi_{s,1-s}$ and $\psi_{\gamma}$ are smooth. Then the
lemma implies that $I_u(s)$ equals a constant $C'$ independent of
$s$. Hence also $Q_u(q^{-s})=C'$ and $I_u(s)=Q_u(q^{-s})$ for all
complex $s$.

According to \eqref{eq:gamma def}, in $\C(q^{-s})$ we have
$\gamma(\pi\times\varepsilon_2,\psi,s)Q_u=\widetilde{Q}_u$ (here
$c(\smallidx,\varepsilon_2,\gamma,s)=1$ because
$\smallidx\leq\bigidx_2$). Since the poles of
$\gamma(\pi\times\varepsilon_2,\psi,s)$ (in $q^{-s}$) belong to a
finite set, we can further assume that
$\gamma(\pi\times\varepsilon_2,\psi,s)$ does not have a pole in
$D^*$.

Thus for all $s\in D^*$,
\begin{align}\label{eq:special chain}
\gamma(\pi\times\varepsilon_2,\psi,s)I_u(s)=\gamma(\pi\times\varepsilon_2,\psi,s)Q_u(q^{-s})=
\widetilde{Q}_u(q^{-s})=\widetilde{I}_u(s).
\end{align}
As a meromorphic function \eqref{int:mult l<n_2 before applying eq
for inner tau} equals $1$ also for $s\in D^*$ hence we can consider
integral~\eqref{int:mult l<n_2 before applying eq for inner tau} in
$D^*$ (instead of $D$). 
For $s\in D^*$, by Fubini's Theorem and \eqref{eq:special chain} we
can replace the inner $drdg$-integration in \eqref{int:mult l<n_2
before applying eq for inner tau}, multiplied by
$\gamma(\pi\times\varepsilon_2,\psi,s)$, with the inner
$drdg$-integration of \eqref{int:mult l<n_2 after applying eq for
inner tau} and obtain (again using Fubini's Theorem) \eqref{int:mult
l<n_2 after applying eq for inner tau}. We conclude that the
proportionality factor between \eqref{int:mult l<n_2 before applying
eq for inner tau} and \eqref{int:mult l<n_2 after applying eq for
inner tau} is $\gamma(\pi\times\varepsilon_2,\psi,s)$.
\end{proof} 

\begin{proof}[Proof of Claim~\ref{claim:n_1 < l < n_2 shahidi equation}] 
Recall that \eqref{int:mult l<n_2 after eq inner tau starting form}
and \eqref{int:mult l<n_2 after shahidi equation} are
\begin{align*}
&\int_{\lmodulo{U_{G_{\smallidx}}}{G_{\smallidx}}}W(g)
\int_{R_{\smallidx,\bigidx}}
\int_{Z_{\bigidx_2,\bigidx_1}}\varphi_{s,1-s}(\omega_{\bigidx_1,\bigidx_2}zw_{\smallidx,\bigidx}rg,1,1,1)
\psi^{-1}(z)\psi_{\gamma}(r)dzdrdg,\\\notag
&\int_{\lmodulo{U_{G_{\smallidx}}}{G_{\smallidx}}}W(g)
\int_{R_{\smallidx,\bigidx}} \int_{Z_{\bigidx_1,\bigidx_2}}
\varphi_{1-s,s}'(\omega_{\bigidx_2,\bigidx_1}mw_{\smallidx,\bigidx}rg,1,1,1)
\psi^{-1}(m)\psi_{\gamma}(r)dmdrdg.
\end{align*}
In order to find the proportionality factor we may replace
$\varphi_{s,1-s}$, which depends on
$\nintertwiningfull{\varepsilon_2}{s}$, in \eqref{int:mult l<n_2
after eq inner tau starting form} with an arbitrary element
$\theta_{s,1-s}\in\xi(\varepsilon_1\otimes\varepsilon_2^*,rat,(s,1-s))$
and then $\varphi_{1-s,s}'$ is replaced with
$\nintertwiningfull{\varepsilon_1\otimes\varepsilon_2^*}{(s,1-s)}\theta_{s,1-s}$.
This is because both integrals are considered (by meromorphic
continuation) as bilinear forms on
$\Whittaker{\pi}{\psi_{\gamma}^{-1}}\times
V'(\varepsilon_1\otimes\varepsilon_2^*,(s,1-s))$, hence the
proportionality factor can be calculated using any element of
$\Whittaker{\pi}{\psi_{\gamma}^{-1}}\times
V'(\varepsilon_1\otimes\varepsilon_2^*,(s,1-s))$. 
We can also apply 
Proposition~\ref{proposition:integral can be made constant zeta phi
version} to the representation
$\cinduced{Q_{\bigidx_1}}{H_{\bigidx}}{(\varepsilon_1\otimes\cinduced{Q_{\bigidx_2}}{H_{\bigidx_2}}{\varepsilon_2^*\alpha^{1-s}})\alpha^s}$ 
and obtain $W$ and
$\theta_{s,1-s}\in\xi(\varepsilon_1\otimes\varepsilon_2^*,hol,(s,1-s))$
for which \eqref{int:mult l<n_2 after eq inner tau starting form} is
absolutely convergent and equals $1$, for all $s$.

Now proceed as in the proof of Claim~\ref{claim:n_1 < l < n_2
functional eq pi and tau_2}. Note that the inner $dz$ and
$dm$-integrations above form both sides of Shahidi's functional
equation \eqref{eq:shahidi func eq} and the proportionality factor
between them is $1$. The smoothness of $W,\theta_{s,1-s}$ and
$\nintertwiningfull{\varepsilon_1\otimes\varepsilon_2^*}{(s,1-s)}\theta_{s,1-s}$
implies that these inner integrations are absolutely convergent for
all $g\in G_{\smallidx}$ such that $W(g)\ne0$ and $r\in
R_{\smallidx,\bigidx}$.
\end{proof} 

\begin{proof}[Proof of Claim~\ref{claim:n_1 < l < n_2 properties of F}] 
Let $R'=\rconj{w_{\smallidx,\bigidx}}\Mat{\bigidx_1\times
\bigidx-\smallidx}\ltimes R_{\smallidx,\bigidx}$. It is the subgroup
of $U_{\bigidx-\smallidx}$ of elements whose first
$\bigidx-\smallidx$ rows take the form
\begin{align*}
\left(\begin{array}{ccccccc}I_{\bigidx-\smallidx}&x&y&0_{\smallidx-\bigidx_1}&m&z\\\end{array}\right),
\end{align*}
where $x\in\Mat{\bigidx-\smallidx\times\smallidx}$,
$y\in\Mat{\bigidx-\smallidx\times1}$, $0_{\smallidx-\bigidx_1}$ is
the zero matrix of
$\Mat{(\bigidx-\smallidx)\times(\smallidx-\bigidx_1)}$ and
$m\in\Mat{\bigidx-\smallidx\times\bigidx_1}$. Define $dr'=d\eta dr$.
First we show that
$\rconj{(w^{\smallidx,\bigidx_1})^{-1}}H_{\bigidx_1}$ normalizes
$R'$ and fixes $dr'$ and $\psi_{\gamma}$, by using the Bruhat
decomposition
$H_{\bigidx_1}=\overline{Q_{\bigidx_1}}W\overline{Q_{\bigidx_1}}$,
where $W\isomorphic\Integers_2^{\bigidx_1}$ is a subgroup of the
Weyl group of $H_{\bigidx_1}$. Let $x\in\GL{\bigidx_1}\isomorphic
M_{\bigidx_1}$. Since $H_{\bigidx_1}$ is considered as a subgroup of
$G_{\smallidx}$, to write
$x'=\rconj{(w^{\smallidx,\bigidx_1})^{-1}}x$ as an element of
$H_{\bigidx}$ we pass through the embedding
$H_{\bigidx_1}<G_{\smallidx}$. If
\begin{align*}
&[x]_{\mathcal{E}_{G_{\smallidx}}}=\left(\begin{array}{ccccc}
I_{\smallidx-\bigidx_1-1}&&&\\
&x\\
&&I_2&\\
&&&x^*\\
&&&&I_{\smallidx-\bigidx_1-1}\\\end{array}\right),\\
&[x']_{\mathcal{E}_{H_{\bigidx}}}=\left(\begin{array}{ccccccc}
I_{\bigidx-\smallidx}\\
&x\\
&&I_{\smallidx-\bigidx_1}\\
&&&1\\
&&&&I_{\smallidx-\bigidx_1}\\
&&&&&x^*\\
&&&&&&I_{\bigidx-\smallidx}\\\end{array}\right).
\end{align*}
(See Sections~\ref{subsection:H_n in G_l} and \ref{subsection:G_l in H_n}.) Evidently $x'$ normalizes $R'$, fixes $dr'$ and $\psi_{\gamma}$. For
$u\in \overline{U_{\bigidx_1}}$ the computation varies if
$G_{\smallidx}$ is split or \quasisplit, but the coordinates
relative to $\mathcal{E}_{H_{\bigidx}}$ are the same. Set
$u'=\rconj{(w^{\smallidx,\bigidx_1})^{-1}}u$. If
\begin{align*}
[u]_{\mathcal{E}_{H_{\bigidx_1}}}=\left(\begin{array}{ccc}I_{\bigidx_1}&&\\u_1&1&\\u_2&u_1'&I_{\bigidx_1}\\\end{array}\right),
\end{align*}
\begin{align*}
[u']_{\mathcal{E}_{H_{\bigidx}}}=\left(\begin{array}{ccccccc}I_{\bigidx-\smallidx}\\&I_{\bigidx_1}&&&&\\
&&I_{\smallidx-\bigidx_1}\\&u_1&&1&&\\&&&&I_{\smallidx-\bigidx_1}\\&u_2&&u_1'&&I_{\bigidx_1}\\&&&&&&I_{\bigidx-\smallidx}\\\end{array}\right).
\end{align*}
Hence $u'$ preserves $R'$, $dr'$ and $\psi_{\gamma}$ as claimed.
Regarding $w\in W$, note that $w$ is a product of transpositions
$w_i\in W$ satisfying 
\begin{align*}
&[\rconj{(w^{\smallidx,\bigidx_1})^{-1}}w_i]_{\mathcal{E}_{H_{\bigidx}}}=\left(\begin{array}{ccccccc}
I_{\bigidx-\smallidx+r_1}\\
&0&&&&1\\
&&I_{r_2+\smallidx-\bigidx_1}\\
&&&-1\\
&&&&I_{r_2+\smallidx-\bigidx_1}\\
&1&&&&0\\
&&&&&&I_{\bigidx-\smallidx+r_1}\end{array}\right),
\end{align*}
with $r_1,r_2\geq0$ such that $r_1+r_2=\bigidx_1-1$. Such elements have the required properties. 
These computations show that integral~\eqref{integral:mult l<n_2
integral before defining F} equals
\begin{align*}
&\int_{\lmodulo{H_{\bigidx_1}Z_{\smallidx-\bigidx_1-1}V_{\smallidx-\bigidx_1-1}}{G_{\smallidx}}}\int_{\lmodulo{U_{H_{\bigidx_1}}}{H_{\bigidx_1}}}
(\int_{R^{\smallidx,\bigidx_1}}W(r'w^{\smallidx,\bigidx_1}h'g)dr')\\\notag
&\int_{R_{\smallidx,\bigidx}}\int_{\Mat{\bigidx_1\times
\bigidx-\smallidx}}
\varphi_{1-s,s}'(\omega_{\bigidx_2,\bigidx_1}w_{\smallidx,\bigidx}w^{\smallidx,\bigidx_1}h'
(\rconj{\rconj{w^{\smallidx,\bigidx_1}}}(\rconj{w_{\smallidx,\bigidx}}\eta
r))g,1,1,1)\psi_{\gamma}(r)d\eta drdh'dg.
\end{align*}
In order to get the required form of the integral we need to slide
$h'$ to the third argument of $\varphi_{1-s,s}'$.
Set
$w=\omega_{\bigidx_2,\bigidx_1}w_{\smallidx,\bigidx}w^{\smallidx,\bigidx_1}$.
Since we can write the $dh'$-integration over
$\overline{B_{H_{\bigidx_1}}}$, we can assume
$h'\in\overline{B_{H_{\bigidx_1}}}$. Then it is left to show that
for all $h\in H_{\bigidx}$, $x\in M_{\bigidx_1}<H_{\bigidx_1}$ and
$u\in\overline{U_{\bigidx_1}}<H_{\bigidx_1}$,
\begin{align*}
&\varphi_{1-s,s}'(wxh,1,1,1)=\varphi_{1-s,s}'(wh,1,x,1),\\\notag
&\varphi_{1-s,s}'(wuh,1,x,1)=\varphi_{1-s,s}'(wh,1,x(\rconj{t_{\bigidx_1}}u),1).
\end{align*}
Since $t_{\bigidx_1}$ commutes with $x$,
$x(\rconj{t_{\bigidx_1}}u)=\rconj{t_{\bigidx_1}}(xu)$. Writing
$h'=xu$ yields the requested form of the integral.

We compute $\rconj{w^{-1}}x$. With the same notation as above,
\begin{align*}
\rconj{w^{-1}}x=\rconj{\omega_{\bigidx_2,\bigidx_1}^{-1}}(\rconj{w_{\smallidx,\bigidx}^{-1}}x')=
\left(\begin{array}{ccccc}I_{\bigidx_2}\\&x\\&&1\\&&&x^*\\&&&&I_{\bigidx_2}\\\end{array}\right).
\end{align*}
We see that $\rconj{w^{-1}}x$ belongs to $H_{\bigidx_1}$ directly
embedded in $H_{\bigidx}$ (without going through $G_{\smallidx}$),
i.e., embedded in the middle block of $Q_{\bigidx_2}$ - the standard
parabolic subgroup of $H_{\bigidx}$ whose Levi part is isomorphic to
$\GL{\bigidx_2}\times H_{\bigidx_1}$ . Hence
$\varphi_{1-s,s}'(wxh,1,1,1)=\varphi_{1-s,s}'(wh,1,x,1)$.

For $u$,
\begin{align*}
\rconj{w^{-1}}u=\rconj{\omega_{\bigidx_2,\bigidx_1}^{-1}}(\rconj{w_{\smallidx,\bigidx}^{-1}}u')=
\left(\begin{array}{ccccc}I_{\bigidx_2}\\&I_{\bigidx_1}&&\\&\gamma^{-1}(-1)^{\bigidx-\smallidx}
u_1&1&\\&\gamma^{-2}u_2&\gamma^{-1}(-1)^{\bigidx-\smallidx}
u_1'&I_{\bigidx_1}\\&&&&I_{\bigidx_2}\\\end{array}\right).
\end{align*}
Again $\rconj{w^{-1}}u\in H_{\bigidx_1}<H_{\bigidx}$,
$\varphi_{1-s,s}'(wuh,1,x,1)=\varphi_{1-s,s}'(wh,1,x(\rconj{t_{\bigidx_1}}u),1)$.
\end{proof} 

\begin{proof}[Proof of Claim~\ref{claim:n_1 < l < n_2 functional eq pi and tau_1}] 
As in Claim~\ref{claim:n_1 < l < n_2 shahidi equation} we can
replace $\varphi_{1-s,s}'$, which depends on
$\nintertwiningfull{\varepsilon_2}{s}$ and
$\nintertwiningfull{\varepsilon_1\otimes\varepsilon_2^*}{(s,1-s)}$,
with an arbitrary element of
$\xi(\varepsilon_2^*\otimes\varepsilon_1,rat,(1-s,s))$. The
arguments of Claim~\ref{claim:n_1 < l < n_2 functional eq pi and
tau_2} apply here as well. Proposition~\ref{proposition:integral can
be made constant zeta phi version} is used to obtain $W$ and
$\theta_{1-s,s}\in\xi(\varepsilon_2^*\otimes\varepsilon_1,hol,(1-s,s))$.

The inner $dr'dh'$-integrations will be absolutely convergent for
all $g$, $r$ and $\eta$. The proportionality factor between these
integrations needs to be recalculated, because of the conjugation by
$t_{\bigidx_1}$. In general for
$W\in\Whittaker{\pi}{\psi_{\gamma}^{-1}}$ and
$f_s'\in\xi(\varepsilon_1,rat,s)$, according to \eqref{eq:gamma def}
and \eqref{eq:intertwiner and left and right translation},
\begin{align*}
\gamma(\pi\times\varepsilon_1,\psi,s)\Psi(W,(f_s')^{t_{\bigidx_1}},s)&=c(\smallidx,\varepsilon_1,\gamma,s)\Psi(W,\nintertwiningfull{\varepsilon_1}{s}(f_s')^{t_{\bigidx_1}},1-s)\\\notag
&=\Psi(W,(\nintertwiningfull{\varepsilon_1}{s}f_s')^{t_{\bigidx_1}},1-s).
\end{align*}
Thus the inner $dr'dh'$-integrations are proportional by
$\gamma(\pi\times\varepsilon_1,\psi,s)$.
\end{proof}  

\section{Multiplicativity in the first variable}\label{section:1st variable}

\subsection{Reduction of the proof}
Let $\tau$ be a representation of $\GL{\bigidx}$. In this section we
prove Theorem~\ref{theorem:multiplicity first var}: if $\sigma$ is a
representation of $\GL{k}$, $0<k\leq\smallidx$, and $\pi$ is a
quotient of $\cinduced{P_k}{G_{\smallidx}}{\sigma\otimes\pi'}$ or
$\cinduced{\rconj{\kappa}P_{\smallidx}}{G_{\smallidx}}{\sigma}$,
\begin{align}\label{eq:gamma first var}
\gamma(\pi\times\tau,\psi,s)=\omega_{\sigma}(-1)^{\bigidx}\omega_{\tau}(-1)^k[\omega_{\tau}(2\gamma)^{-1}]\gamma(\sigma\times\tau,\psi,s)\gamma(\pi'\times\tau,\psi,s)\gamma(\sigma^*\times\tau,\psi,s).
\end{align}
As in Section~\ref{section:2nd variable} we may
assume that $\pi$ is equal to the induced representation. 

The proof is reduced to two particular cases:
$k<\smallidx\leq\bigidx$ and $k=\smallidx>\bigidx$. First we explain
how these cases together with Theorem~\ref{theorem:multiplicity
second var} imply the general case.

Assume $k<\smallidx$ and $\smallidx>\bigidx$. Let $\tau_1$ be a
representation of $\GL{m}$ for $m>\smallidx$ such that
$\gamma(\pi\times\tau_1,\psi,s)\ne0$ (e.g. take an irreducible
$\tau_1$) and consider
$\rho=\cinduced{P_{\bigidx,m}}{\GL{\bigidx+m}}{\tau\otimes\tau_1}$.
For brevity, we drop $\psi$ and $s$ from the notation of the
$\gamma$-factors. According to Theorem~\ref{theorem:multiplicity
second var},
\begin{align}\label{eq:multi first var second var}
\gamma(\pi\times\rho)= \gamma(\pi\times\tau)\gamma(\pi\times\tau_1).
\end{align}
Since $k<\smallidx<m+\bigidx$, $\gamma(\pi\times\rho)$ is
multiplicative in the first variable. In addition the
$\gamma$-factors of $\GL{k}\times\GL{\bigidx+m}$ are multiplicative
(\cite{JPSS} Theorem~3.1). Hence
\begin{align}\label{eq:multi first var second var opening}
\gamma(\pi\times\rho)
=&\omega_{\sigma}(-1)^{\bigidx+m}\omega_{\tau}(-1)^k\omega_{\tau_1}(-1)^k\\\notag&\gamma(\sigma\times\tau)\gamma(\sigma\times\tau_1)
\gamma(\pi'\times\rho)
\gamma(\sigma^*\times\tau)\gamma(\sigma^*\times\tau_1).
\end{align}
Also $\gamma(\pi'\times\rho)$ is multiplicative in $\rho$  and
because $k<\smallidx<m$, $\gamma(\pi\times\tau_1)$ is multiplicative
in $\pi$. Therefore
\begin{align*}
&\gamma(\pi'\times\rho)=
\gamma(\pi'\times\tau)\gamma(\pi'\times\tau_1),\\
&\gamma(\pi\times\tau_1)=
\omega_{\sigma}(-1)^m\omega_{\tau_1}(-1)^k\gamma(\sigma\times\tau_1)\gamma(\pi'\times\tau_1)\gamma(\sigma^*\times\tau_1).
\end{align*}
Applying these equalities to \eqref{eq:multi first var second var
opening} we get
\begin{align}\label{eq:multi first var second var lhs}
\gamma(\pi\times\rho)
=\omega_{\sigma}(-1)^{\bigidx}\omega_{\tau}(-1)^k\gamma(\sigma\times\tau)
\gamma(\pi'\times\tau)\gamma(\pi\times\tau_1)
\gamma(\sigma^*\times\tau).
\end{align}
Comparing \eqref{eq:multi first var second var} and \eqref{eq:multi
first var second var lhs},
\begin{align*}
\gamma(\pi\times\tau)=
\omega_{\sigma}(-1)^{\bigidx}\omega_{\tau}(-1)^k\gamma(\sigma\times\tau)\gamma(\pi'\times\tau)
\gamma(\sigma^*\times\tau).
\end{align*}

The last case is when $k=\smallidx\leq\bigidx$ and $\pi$ is induced
from either $P_{\smallidx}$ or $\rconj{\kappa}P_{\smallidx}$. Since
the argument is identical for either parabolic, assume
$\pi=\cinduced{P_{\smallidx}}{G_{\smallidx}}{\sigma}$. Let
$\sigma_1$ be a representation of $\GL{m}$ with $m>\bigidx$. Define
$\rho=\cinduced{P_{m+\smallidx}}{G_{m+\smallidx}}{\cinduced{P_{m,\smallidx}}{\GL{m+\smallidx}}{\sigma_1\otimes\sigma}}$.
Since
$\rho\isomorphic\cinduced{P_m}{G_{m+\smallidx}}{\sigma_1\otimes\pi}$
and $m<m+\smallidx$,
\begin{align}\label{eq:multi first var k=l}
\gamma(\rho\times\tau)=&
\omega_{\sigma_1}(-1)^{\bigidx}\omega_{\tau}(-1)^{m}
\gamma(\sigma_1\times\tau)\gamma(\pi\times\tau)\gamma(\sigma_1^*\times\tau).
\end{align}
Because $m+\smallidx>\bigidx$,
\begin{align*}
\gamma(\rho\times\tau)=&
\omega_{\sigma_1}(-1)^{\bigidx}
\omega_{\sigma}(-1)^{\bigidx}\omega_{\tau}(-1)^{m+\smallidx}\omega_{\tau}(2\gamma)^{-1}\\\notag&
\gamma(\cinduced{P_{m,\smallidx}}{\GL{m+\smallidx}}{\sigma_1\otimes\sigma}\times\tau)\gamma((\cinduced{P_{m,\smallidx}}{\GL{m+\smallidx}}{\sigma_1\otimes\sigma})^*\times\tau).
\end{align*}
Combining this with \eqref{eq:multi first var k=l} yields
\begin{align*}
\gamma(\pi\times\tau)=
\omega_{\sigma}(-1)^{\bigidx}\omega_{\tau}(-1)^{\smallidx}\omega_{\tau}(2\gamma)^{-1}\gamma(\sigma\times\tau)\gamma(\sigma^*\times\tau).
\end{align*}

\subsection{The basic identity}\label{subsection:The basic identity pi}
Let $W\in\Whittaker{\pi}{\psi_{\gamma}^{-1}}$ and
$f_s\in\xi(\tau,hol,s)$. We use the realization of $\pi$ described
in Section~\ref{subsection:Realization of pi for induced
representation}: assume
$\pi=\cinduced{P}{G_{\smallidx}}{\sigma\otimes\pi'}$ where $P$ is
either $\overline{P_k}$ or
$\rconj{\kappa}(\overline{P_{\smallidx}})$, $\sigma$ is realized in
$\Whittaker{\sigma}{\psi^{-1}}$ and $\pi'$ (if $k<\smallidx$) is
realized in $\Whittaker{\pi'}{\psi_{\gamma}^{-1}}$, and select
$\varphi_{\zeta}\in\xi_{P}^{G_{\smallidx}}(\sigma\otimes\pi',std,-\zeta+\half)$
for $W$ (i.e., $W_{\varphi_0}=W$).

According to Claim~\ref{claim:meromorphic continuation for varphi
instead of W in psi},
$\Psi(\varphi_{\zeta},f_s,s)=\Psi(W_{\varphi_{\zeta}},f_s,s)$ in
$\C(q^{-s})$. This claim can be easily adapted to showing
$\Psi(\varphi_{\zeta},\nintertwiningfull{\tau}{s}f_s,1-s)=\Psi(W_{\varphi_{\zeta}},\nintertwiningfull{\tau}{s}f_s,1-s)$,
in $\C(q^{-s})$. 

In Sections~\ref{subsection:1st
var l<n}-\ref{subsection:1st var k=l>n+1} we prove for a fixed
$\zeta$ with $\Re(\zeta)>>0$ that in $\C(q^{-s})$,
\begin{align}\label{eq:gamma first var for zeta large with integrals}
&\omega_{\sigma}(-1)^{\bigidx}\omega_{\tau}(-1)^k\gamma(\sigma\times\tau,\psi,s-\zeta)\gamma(\pi'\times\tau,\psi,s)\gamma(\sigma^*\times\tau,\psi,s+\zeta)\\\notag&
c(\smallidx,\tau,\gamma,s)^{-1}[\omega_{\tau}(2\gamma)^{-1}]
\Psi(\varphi_{\zeta},f_s,s)=\Psi(\varphi_{\zeta},\nintertwiningfull{\tau}{s}f_s,1-s),
\end{align}
where the factor $\omega_{\tau}(2\gamma)^{-1}$ appears only if
$k=\smallidx$. 
Analogously to Claim~\ref{claim:gamma is rational in s and zeta},
\begin{align*}
\gamma(\cinduced{P}{G_{\smallidx}}{(\sigma\otimes\pi')\alpha^{-\zeta+\half}}\times\tau,\psi,s)\in\C(q^{-\zeta},q^{-s})
\end{align*}
(the proof is actually simpler, since the twist by $\zeta$ does not
involve the intertwining operator and the sections related to
$\tau$). Arguing as in Section~\ref{subsection:The basic identity
tau}, equality~\eqref{eq:gamma first var} follows by letting
$\zeta\rightarrow0$. 
\subsection{The case $k<\smallidx\leq\bigidx$}\label{subsection:1st var l<n}
The proof follows the arguments of Soudry \cite{Soudry}
(Theorem~11.4). We apply the functional equations for
$\sigma\times\tau$, $\pi'\times\tau$ and $\sigma\times\tau^*$.
Manipulations of integrals with possibly disjoint domains of
absolute convergence are explained using meromorphic continuations
and uniqueness properties as in Section~\ref{subsection:n_1<l<=n_2}.
Proofs of convergence can be derived using the techniques
of \cite{Soudry} (Section~11). 
The domains of convergence change from
$\{\Re(\zeta)<<\Re(s)<<(1+C_1)\Re(\zeta)\}$, where $C_1>0$, to
$\{(1-C_2)\Re(\zeta)<<\Re(s)<<\Re(\zeta)\}$ with $1>C_2>0$, then
$\{(1-C_2)\Re(\zeta)<<\Re(1-s)<<\Re(\zeta)\}$ and finally
$\{\Re(\zeta)<<\Re(1-s)<<(1+C_1)\Re(\zeta)\}$. The constants $C_1$
and $C_2$ depend only on the representations.

Our starting point is
\begin{align}\label{eq:1st var l<n start form}
&\Psi(\varphi_{\zeta},f_s,s)=\int_{\lmodulo{U_{G_{\smallidx}}}{G_{\smallidx}}}
(\int_{V_k}\varphi_{\zeta}(vg,1,1)\psi_{\gamma}(v)dv)\int_{R_{\smallidx,\bigidx}}
f_s(w_{\smallidx,\bigidx}rg,1)\psi_{\gamma}(r)drdg.
\end{align}
Here we used \eqref{eq:whittaker for pi, k<l} for
$W_{\varphi_{\zeta}}$. Since $U_{G_{\smallidx}}=(V_k\rtimes
Z_k)\rtimes U_{G_{\smallidx-k}}$, the integral becomes
\begin{align*}
\int_{\lmodulo{Z_k U_{G_{\smallidx-k}}}{G_{\smallidx}}}
\varphi_{\zeta}(g,1,1)
\int_{R_{\smallidx,\bigidx}}f_s(w_{\smallidx,\bigidx}rg,1)\psi_{\gamma}(r)drdg.
\end{align*}

We have the following integration formula, derived using
$G_{\smallidx}=K_{G_{\smallidx}}P^{\star}$ where
$P^{\star}=(\GL{k}\times B_{G_{\smallidx-k}})\ltimes\overline{V_k}$
is a parabolic subgroup.
\begin{align}\label{int:formula multiple integrations}
\int_{\lmodulo{Z_k U_{G_{\smallidx-k}}}{G_{\smallidx}}}F(g)dg=
\int_{K_{G_{\smallidx}}}\int_{T_{G_{\smallidx-k}}}\int_{\overline{V_k''}}
\int_{\lmodulo{Z_k}{\GL{k}}}\int_{\overline{V_k'}}F(amvtk)dmdadvdtdk,
\end{align}
where
\begin{align*}
\overline{V_k'}&=\{\left(\begin{array}{ccccc}I_k\\m&I_{\smallidx-1-k}\\&&I_2\\&&&I_{\smallidx-1-k}\\&&&m'&I_k\end{array}\right)\},\\
\overline{V_k''}&=\{\left(\begin{array}{cccccc}I_k&\\0&I_{\smallidx-1-k}\\v_1&0&1&&&\\v_2&0&0&1&&\\v&0&0&0&I_{\smallidx-1-k}&\\z&v'&*&*&0&I_k\\\end{array}\right)\}.
\end{align*}
Using this formula, 
the integral equals
\begin{align}\label{int:1st var l<n before taking a out}
&\int_{K_{G_{\smallidx}}}\int_{T_{G_{\smallidx-k}}}\int_{\overline{V_k''}}
\int_{\lmodulo{Z_k}{\GL{k}}}\int_{\overline{V_k'}}
\varphi_{\zeta}(amvtk,1,1)\\\notag
&\int_{R_{\smallidx,\bigidx}}f_s(w_{\smallidx,\bigidx}ramvtk,1)\psi_{\gamma}(r)dr
dmdadvdtdk.
\end{align}

For $a\in\GL{k}$, $\delta_{P_k}(a)=|\det{a}|^{2\smallidx-k-1}$,
hence
\begin{align*}
\varphi_{\zeta}(a,1,1)=\absdet{a}^{-\smallidx+\frac{k+1}2-\zeta}\varphi_{\zeta}(1,a,1).
\end{align*}
Also according to \eqref{eq:omega right translation by GL l-1}, 
\begin{align*}
\int_{R_{\smallidx,\bigidx}}f_s(w_{\smallidx,\bigidx}ram,1)\psi_{\gamma}(r)dr=
\delta_{Q_{\bigidx}}^{\half}(a)\absdet{a}^{\smallidx-\bigidx+s-\half}\int_{R_{\smallidx,\bigidx}}f_s(w_{\smallidx,\bigidx}r,am)\psi_{\gamma}(r)dr.
\end{align*}
\begin{remark}
The main obstacle in addressing the case $k=\smallidx\leq\bigidx$
here is that $\rconj{w_{\smallidx,\bigidx}^{-1}}L_{\smallidx}$ is
not contained in $Q_{\bigidx}$, so \eqref{eq:omega right translation
by GL l-1} is not applicable.
\end{remark}
Combining these observations \eqref{int:1st var l<n before taking a
out} becomes
\begin{align}\label{eq:1st var l<n before func eq sigma}
&\int_{K_{G_{\smallidx}}}\int_{T_{G_{\smallidx-k}}}\int_{\overline{V_k''}}
\int_{R_{\smallidx,\bigidx}} \int_{\lmodulo{Z_k}{\GL{k}}}
\int_{\Mat{\smallidx-1-k\times k}} \varphi_{\zeta}(vtk,a,1)\\\notag
&f_s(w_{\smallidx,\bigidx}rvtk,am)
\absdet{a}^{s-\zeta-\frac{\bigidx-k}2}\psi_{\gamma}(r)dmdadrdvdtdk.
\end{align}
Here we identified $\overline{V_k'}$ with $\Mat{\smallidx-1-k\times
k}$. The inner $dmda$-integration is a Rankin-Selberg integral for
$\GL{k}\times\GL{\bigidx}$ and $\sigma\times\tau$. By
\eqref{eq:modified func eq glm gln} with $j=\smallidx-1-k$ we
(formally) obtain
\begin{align}\label{eq:1st var l<n after func eq sigma}
&\int_{K_{G_{\smallidx}}}\int_{T_{G_{\smallidx-k}}}\int_{\overline{V_k''}}
\int_{R_{\smallidx,\bigidx}}
\int_{\lmodulo{Z_k}{\GL{k}}}\int_{\Mat{k\times \bigidx-\smallidx}}
\varphi_{\zeta}(vtk,a,1)\\\notag &f_s(w_{\smallidx,\bigidx}rvtk,
\left(\begin{array}{ccc}0&I_{\smallidx-k}&0\\0&0&I_{\bigidx-\smallidx}\\I_k&0&m\end{array}\right)
a)\absdet{a}^{s-\zeta-\frac{\bigidx+k}2+\smallidx-1}\psi_{\gamma}(r)dmdadrdvdtdk.\notag
\end{align}
This integral is absolutely convergent in the sense that it is convergent if we replace
$\varphi_{\zeta},f_s$ with $|\varphi_{\zeta}|,|f_s|$ and drop $\psi_{\gamma}$.
In its domain of absolute convergence \eqref{eq:1st var l<n after
func eq sigma} satisfies the same equivariance properties
\eqref{eq:bilinear special condition} as
$\Psi(\varphi_{\zeta},f_s,s)$. This is evident from \eqref{int:after
func sigma and tau gamma first var showing invariancy} below, which
is equal as an integral to \eqref{eq:1st var l<n after func eq
sigma}. It is also possible to show that there are functions
$\varphi_{\zeta}$ and $f_s$ for which \eqref{eq:1st var l<n after
func eq sigma} is absolutely convergent and equals $1$, for all $s$
(see the proof of Claim~\ref{claim:l<n functional eq pi prime}).
Then from Section~\ref{subsection:meromorphic continuation} it
follows that \eqref{eq:1st var l<n after func eq sigma} has a
meromorphic continuation to a function in $\C(q^{-s})$. Thus for all
but a finite set of $q^{-s}$, it is a bilinear form on
$V_{\overline{P_k}}^{G_{\smallidx}}(\sigma\otimes\pi',-\zeta+\half)\times
V(\tau,s)$ satisfying \eqref{eq:bilinear special condition}
(recall that $\zeta$ is fixed). This also holds for \eqref{eq:1st
var l<n before func eq sigma}, since it is equal as an integral to
$\Psi(\varphi_{\zeta},f_s,s)$ and because of
Claim~\ref{claim:meromorphic continuation for varphi instead of W in
psi}. Thus \eqref{eq:1st var l<n before func eq sigma} and
\eqref{eq:1st var l<n after func eq sigma} are proportional. In
Section~\ref{subsection:proofs claims 1st var} we prove,
\begin{claim}\label{claim:l<n functional eq sigma}
Integral~\eqref{eq:1st var l<n before func eq sigma} multiplied by
$\omega_{\sigma}(-1)^{\bigidx-1}\gamma(\sigma\times\tau,\psi,s-\zeta)$
equals integral~\eqref{eq:1st var l<n after func eq sigma}, as
meromorphic functions.
\end{claim}
As in Section~\ref{subsection:n_1<l<=n_2} (see the paragraph after
Claim~\ref{claim:n_1 < l < n_2 functional eq pi and tau_2}), we can
skip the independent proof of meromorphic continuation for
\eqref{eq:1st var l<n after func eq sigma} and regard the integral
\eqref{eq:1st var l<n after func eq sigma} and the meromorphic
continuation of \eqref{eq:1st var l<n before func eq sigma} as
bilinear forms on
$V_{\overline{P_k}}^{G_{\smallidx}}(\sigma\otimes\pi',-\zeta+\half)\times
V(\tau,s)$ only for $s$ in the domain of absolute convergence of
\eqref{eq:1st var l<n after func eq sigma}.

Reverse the steps \eqref{int:1st var l<n before taking a
out}-\eqref{eq:1st var l<n before func eq sigma} to get that
\eqref{eq:1st var l<n after func eq sigma} equals
\begin{align*}
&\int_{K_{G_{\smallidx}}}\int_{T_{G_{\smallidx-k}}}\int_{\overline{V_k''}}\int_{\lmodulo{Z_k}{\GL{k}}}
\int_{\Mat{k\times \bigidx-\smallidx}}\int_{R_{\smallidx,\bigidx}}\varphi_{\zeta}(avtk,1,1)\\
&f_s(w_{\smallidx,\bigidx}ravtk,\left(\begin{array}{ccc}0&I_{\smallidx-k}&0\\0&0&I_{\bigidx-\smallidx}\\I_k&0&m\end{array}\right))
\absdet{a}^{\smallidx-1-k}\psi_{\gamma}(r)drdmdadvdtdk.
\end{align*}

Changing the order $am\mapsto ma$ in \eqref{int:formula multiple
integrations} multiplies $dm$ by $\absdet{a}^{\smallidx-1-k}$, then
equality~\eqref{int:formula multiple integrations} implies the
formula
\begin{align*}
\int_{\lmodulo{\overline{V_k'}Z_k
U_{G_{\smallidx-k}}}{G_{\smallidx}}}F(g)dg=
\int_{K_{G_{\smallidx}}}\int_{T_{G_{\smallidx-k}}}\int_{\overline{V_k''}}
\int_{\lmodulo{Z_k}{\GL{k}}}F(avtk)\absdet{a}^{\smallidx-1-k}dadvdtdk.
\end{align*}
Here $\overline{V_k'}Z_k U_{G_{\smallidx-k}}<G_{\smallidx}$ because
$\overline{V_k'}(Z_kU_{G_{\smallidx-k}})=(Z_kU_{G_{\smallidx-k}})\overline{V_k'}$.

Let
\begin{align*}
\left(\begin{array}{ccc}0&I_{\smallidx-k}&0\\0&0&I_{\bigidx-\smallidx}\\I_k&0&m\end{array}\right)
=\left(\begin{array}{ccc}0&I_{\smallidx-k}&0\\0&0&I_{\bigidx-\smallidx}\\I_k&0&0\end{array}\right)
\left(\begin{array}{ccc}I_k&0&m\\0&I_{\smallidx-k}&0\\0&0&I_{\bigidx-\smallidx}\\\end{array}\right)
=\omega_{\bigidx-k,k}m.
\end{align*}
Since the function on $G_{\smallidx}$ given by
\begin{align*}
g\mapsto\int_{\Mat{k\times \bigidx-\smallidx}}
\int_{R_{\smallidx,\bigidx}}\varphi_{\zeta}(g,1,1)f_s(w_{\smallidx,\bigidx}rg,\omega_{\bigidx-k,k}m)\psi_{\gamma}(r)drdm
\end{align*}
is $\overline{V_k'}Z_kU_{G_{\smallidx-k}}$-invariant on the left
(invariancy by $\overline{V_k'}U_{G_{\smallidx-k}}$ holds even for
each fixed $m$),
we may apply the latter formula to the integral and derive
\begin{align}\label{int:after func sigma and tau gamma first var
showing invariancy}
&\int_{\lmodulo{\overline{V_k'}Z_kU_{G_{\smallidx-k}}}{G_{\smallidx}}}
\int_{\Mat{k\times \bigidx-\smallidx}} \int_{R_{\smallidx,\bigidx}}
\varphi_{\zeta}(g,1,1)
f_s(w_{\smallidx,\bigidx}rg,\omega_{\bigidx-k,k}m)\psi_{\gamma}(r)drdmdg.
\end{align}

Now our task is to prepare an inner integration for
$\pi'\times\tau$. Essentially it is shown how to mix
$\overline{V_k''}$ and $R_{\smallidx,\bigidx}$ to form
$R_{\smallidx-k,\bigidx}$.
Start with factoring the integral through $\overline{V_k''}$, noting
that $\overline{V_k'}\ltimes\overline{V_k''}=\overline{V_k}$ and
$\varphi_{\zeta}(vg,1,1)=\varphi_{\zeta}(g,1,1)$ for
$v\in\overline{V_k}$. This leads to
\begin{align}\label{eq:1st var l<n after eq for sigma and after int formula}
&\int_{\lmodulo{\overline{V_k}Z_kU_{G_{\smallidx-k}}}{G_{\smallidx}}}
\varphi_{\zeta}(g,1,1)
\int_{\overline{V_k''}} \int_{\Mat{k\times \bigidx-\smallidx}}
\int_{R_{\smallidx,\bigidx}}
f_s(w_{\smallidx,\bigidx}rvg,\omega_{\bigidx-k,k}m)
\psi_{\gamma}(r)drdmdvdg.
\end{align}
By the following claim it is possible to change the order of $m$ and
$rv$.
\begin{claim}\label{claim:l<n first var inner integral m r and v}
\begin{align*}
&\int_{\overline{V_k''}}
\int_{\Mat{k\times \bigidx-\smallidx}}
\int_{R_{\smallidx,\bigidx}}f_s(\omega_{\bigidx-k,k}mw_{\smallidx,\bigidx}rvg,1)\psi_{\gamma}(r)drdmdv\\
&=\int_{\overline{V_k''}} \int_{\Mat{k\times \bigidx-\smallidx}}
\int_{R_{\smallidx,\bigidx}}f_s(\omega_{\bigidx-k,k}w_{\smallidx,\bigidx}rv(\rconj{w_{\smallidx,\bigidx}}m)g,1)\psi_{\gamma}(r)drdmdv.
\end{align*}
\end{claim}

Write
$\omega_{\bigidx-k,k}w_{\smallidx,\bigidx}=w_{\smallidx-k,\bigidx}w$
with
\begin{align*}
w=\left(\begin{array}{ccccc}0&0&0&\gamma^{-1}I_k&0\\
I_{\bigidx-\smallidx}&0&0&0&0\\
0&0&b_{\smallidx-k,k}&0&0\\
0&0&0&0&I_{\bigidx-\smallidx}\\
0&\gamma I_k&0&0&0\end{array}\right)\in H_{\bigidx},
b_{\smallidx-k,k}=diag(I_{\smallidx-k},(-1)^k,I_{\smallidx-k}).
\end{align*}
The next claim shows how to combine the coordinates of $\overline{V_k''}$ and $R_{\smallidx,\bigidx}$.
\begin{claim}\label{claim:l<n first var inner integral r and v}
\begin{align*}
&\int_{\overline{V_k''}}
\int_{\Mat{k\times \bigidx-\smallidx}}
\int_{R_{\smallidx,\bigidx}}f_s(\omega_{\bigidx-k,k}w_{\smallidx,\bigidx}rv(\rconj{w_{\smallidx,\bigidx}}m)g,1)\psi_{\gamma}(r)drdmdv\\
&=\int_{\Mat{k\times \bigidx-\smallidx}}
\int_{R_{\smallidx-k,\bigidx}}f_s(w_{\smallidx-k,\bigidx}r'w(\rconj{w_{\smallidx,\bigidx}}m)g,1)\psi_{\gamma}(r')dr'dm.
\end{align*}
On the \rhs\ $\psi_{\gamma}$ denotes the character of
$N_{\bigidx-(\smallidx-k)}$ restricted to $R_{\smallidx-k,\bigidx}$.
\end{claim}

Using Claims~\ref{claim:l<n first var inner integral m r and v} and
\ref{claim:l<n first var inner integral r and v},
integral~\eqref{eq:1st var l<n after eq for sigma and after int
formula} becomes
\begin{align*}
&\int_{\lmodulo{\overline{V_k}Z_kU_{G_{\smallidx-k}}}{G_{\smallidx}}}
\int_{\Mat{k\times \bigidx-\smallidx}}
\int_{R_{\smallidx-k,\bigidx}}
\varphi_{\zeta}(g,1,1)
f_s(w_{\smallidx-k,\bigidx}r'w(\rconj{w_{\smallidx,\bigidx}}m)g,1)\psi_{\gamma}(r')dr'dmdg.
\end{align*}
This integral may be factored through $G_{\smallidx-k}$. For $g'\in
G_{\smallidx-k}$, $(\rconj{w_{\smallidx,\bigidx}}m)$ commutes with
$g'$ and $\rconj{w^{-1}}g'=\rconj{b_{\bigidx,k}}g'$ 
(where $g'$ is considered as an element of $H_{\bigidx}$, if we
consider $g'$ as an element of $H_{\smallidx-k}$ we get
$\rconj{b_{\smallidx-k,k}}g'$).
The integral equals
\begin{align}\label{eq:1st var l<n before eq for pi prime}
&\int_{\lmodulo{\overline{V_k}Z_kG_{\smallidx-k}}{G_{\smallidx}}}
\int_{\Mat{k\times \bigidx-\smallidx}}
\int_{\lmodulo{U_{G_{\smallidx-k}}}{G_{\smallidx-k}}}
\int_{R_{\smallidx-k,\bigidx}}
\\\notag
&\varphi_{\zeta}(g,1,g')
f_s(w_{\smallidx-k,\bigidx}r'(\rconj{b_{\bigidx,k}}g')w(\rconj{w_{\smallidx,\bigidx}}m)g,1)\psi_{\gamma}(r')dr'dg'dmdg.
\end{align}
The manipulations \eqref{eq:1st var l<n after func eq sigma}-\eqref{eq:1st var l<n before eq for pi prime} still hold when we replace $\varphi_{\zeta},f_s$ with $|\varphi_{\zeta}|,|f_s|$ and drop the character. This means that
\eqref{eq:1st var l<n after func eq sigma} and \eqref{eq:1st var l<n before eq for pi prime} are absolutely convergent in the same domain. The inner $dr'dg'$-integration may be regarded as an integral for
$G_{\smallidx-k}\times\GL{\bigidx}$ and $\pi'\times\tau$ (see
Section~\ref{subsection:Twisting the embedding} for the conjugation
$\rconj{b_{\bigidx,k}}g'$). Applying the functional equation to integral~\eqref{eq:1st var l<n before eq for pi prime} we arrive at
\begin{align}\label{eq:1st var l<n after eq for pi prime}
&\int_{\lmodulo{\overline{V_k}Z_kG_{\smallidx-k}}{G_{\smallidx}}}
\int_{\Mat{k\times \bigidx-\smallidx}}
\int_{\lmodulo{U_{G_{\smallidx-k}}}{G_{\smallidx-k}}}
\int_{R_{\smallidx-k,\bigidx}}\\\notag
&\varphi_{\zeta}(g,1,g')\nintertwiningfull{\tau}{s}
f_s(w_{\smallidx-k,\bigidx}r'(\rconj{b_{\bigidx,k}}g')w(\rconj{w_{\smallidx,\bigidx}}m)g,1)\psi_{\gamma}(r')dr'dg'dmdg.
\end{align}
We have already shown that \eqref{eq:1st var l<n before eq for pi
prime} (as a meromorphic function) is proportional to
$\Psi(\varphi_{\zeta},f_s,s)$. Below we show that \eqref{eq:1st var
l<n after eq for pi prime} resembles \eqref{eq:1st var l<n after
func eq sigma} and it will follow that it has a meromorphic
continuation which is proportional to
$\Psi(\varphi_{\zeta},\nintertwiningfull{\tau}{s}f_s,1-s)$. The
integrals $\Psi(\varphi_{\zeta},f_s,s)$ and
$\Psi(\varphi_{\zeta},\nintertwiningfull{\tau}{s}f_s,1-s)$ are
proportional, since they are equal as rational functions to
$\Psi(W_{\varphi_{\zeta}},f_s,s)$ and
$\Psi(W_{\varphi_{\zeta}},\nintertwiningfull{\tau}{s}f_s,1-s)$
(resp.), see Section~\ref{subsection:The basic identity pi}. 
Therefore \eqref{eq:1st var l<n before eq for pi prime} and
\eqref{eq:1st var l<n after eq for pi prime} are also proportional
and it remains to use a specific substitution to find the
proportionality factor.
\begin{claim}\label{claim:l<n functional eq pi prime}
Integral~\eqref{eq:1st var l<n before eq for pi prime} multiplied by
$\gamma(\pi'\times\tau,\psi,s)$ equals integral~\eqref{eq:1st var
l<n after eq for pi prime}, as meromorphic functions.
\end{claim}

Reproduce the steps \eqref{eq:1st var l<n after func eq
sigma}-\eqref{eq:1st var l<n before eq for pi prime} backwards to
land at

\begin{align}\label{eq:1st var l<n before func eq dual sigma}
&\int_{K_{G_{\smallidx}}}\int_{T_{G_{\smallidx-k}}}\int_{\overline{V_k''}}\int_{R_{\smallidx,\bigidx}}
\int_{\lmodulo{Z_k}{\GL{k}}}\int_{\Mat{k\times \bigidx-\smallidx}}
\varphi_{\zeta}(vtk,a,1)\\&\nintertwiningfull{\tau}{s}f_s(w_{\smallidx,\bigidx}rvtk,
\left(\begin{array}{ccc}0&I_{\smallidx-k}&0\\0&0&I_{\bigidx-\smallidx}\\I_k&0&m\end{array}\right)
a)\absdet{a}^{-s-\zeta-\frac{\bigidx+k}2+\smallidx}\psi_{\gamma}(r)\notag\\&dmdadrdvdtdk.\notag
\end{align}
This is just \eqref{eq:1st var l<n after func eq sigma} with
$s\mapsto1-s$ and $f_s\mapsto\nintertwiningfull{\tau}{s}f_s$. Apply
the functional equation for $\GL{k}\times\GL{\bigidx}$ and
$\sigma\times\tau^*$ to get (compare to \eqref{eq:1st var l<n before
func eq sigma})

\begin{align}\label{eq:1st var l<n after func eq dual sigma}
&\int_{K_{G_{\smallidx}}}\int_{T_{G_{\smallidx-k}}}\int_{\overline{V_k''}}\int_{R_{\smallidx,\bigidx}}
\int_{\lmodulo{Z_k}{\GL{k}}}\int_{\Mat{\smallidx-1-k\times k}}
\varphi_{\zeta}(vtk,a,1)\\\notag&\nintertwiningfull{\tau}{s}f_s(w_{\smallidx,\bigidx}rvtk,
am)\absdet{a}^{1-s-\zeta-\frac{\bigidx-k}2}\psi_{\gamma}(r)dmdadrdvdtdk.
\end{align}
Observe that by Claim~\ref{claim:l<n functional eq sigma} it
immediately follows that integral~\eqref{eq:1st var l<n before func
eq dual sigma} multiplied by
$\omega_{\sigma}(-1)^{1-\bigidx}\gamma(\sigma\times\tau^*,\psi,1-s-\zeta)^{-1}$
equals integral~\eqref{eq:1st var l<n after func eq dual sigma}, as
meromorphic functions. This is because we established the
proportionality factor between \eqref{eq:1st var l<n before func eq
sigma} and \eqref{eq:1st var l<n after func eq sigma} for
arbitrary $(\sigma,\tau,\varphi_{\zeta},f_s,s)$. 

We return from \eqref{eq:1st var l<n after func eq dual sigma}
reversing the passage \eqref{eq:1st var l<n start
form}-\eqref{eq:1st var l<n before func eq sigma} and reach
\begin{align*}
&\int_{\lmodulo{U_{G_{\smallidx}}}{G_{\smallidx}}}
(\int_{V_k}\varphi_{\zeta}(vg,1,1)\psi_{\gamma}(v)dv)
\int_{R_{\smallidx,\bigidx}}\nintertwiningfull{\tau}{s}f_s(w_{\smallidx,\bigidx}rg,1)\psi_{\gamma}(r)drdg\\
&=\Psi(\varphi_{\zeta},\nintertwiningfull{\tau}{s}f_s,1-s).
\end{align*}
Altogether it was shown that
\begin{align*}
&\gamma(\sigma\times\tau^*,\psi,1-s-\zeta)^{-1}\gamma(\pi'\times\tau,\psi,s)\gamma(\sigma\times\tau,\psi,s-\zeta)\Psi(\varphi_{\zeta},f_s,s)\\
&=\Psi(\varphi_{\zeta},\nintertwiningfull{\tau}{s}f_s,1-s).
\end{align*}
Since
$\gamma(\sigma\times\tau^*,\psi,1-s-\zeta)^{-1}=\omega_{\sigma}(-1)^{\bigidx}\omega_{\tau}(-1)^k\gamma(\sigma^*\times\tau,\psi,s+\zeta)$,
equality~\eqref{eq:gamma first var for zeta large with integrals}
follows. 

\subsection{The case $k=\smallidx>\bigidx$}\label{subsection:1st var k=l>n+1}
Here $G_{\smallidx}$ is necessarily split. Assume first that $\pi$
is induced from $\overline{P_{\smallidx}}$. Start with
\begin{align}\label{int:mult 1st var l=k>n+1 starting form}
\Psi(\varphi_{\zeta},f_s,s)=\int_{\lmodulo{U_{H_{\bigidx}}}{H_{\bigidx}}}
\int_{R^{\smallidx,\bigidx}}
(\int_{V_{\smallidx}}\varphi_{\zeta}(vrw^{\smallidx,\bigidx}h,d_{\gamma})\psi_{\gamma}(v)dv)
f_s(h,1)drdh,
\end{align}
where $W_{\varphi_{\zeta}}$ was replaced by \eqref{eq:whittaker
for pi, k=l} ($d_{\gamma}=diag(I_{\smallidx-1},4)$). Decompose
$V_{\smallidx}=V_{\smallidx}'\cdot V_{\smallidx}''$ with
\begin{align*}
&V_{\smallidx}'=\{\left(\begin{array}{cccccc}
I_{\bigidx}&0&0&\frac1{2\beta}v_1&0&v_2\\
&I_{\smallidx-\bigidx-1}&0&0&0&0\\
&&1&0&0&\frac1{2\beta}v_1'\\
&&&1&0&0\\
&&&&I_{\smallidx-\bigidx-1}&0\\
&&&&&I_{\bigidx}\\
\end{array}\right)\},\\
&V_{\smallidx}''=\{\left(\begin{array}{cccccc}
I_{\bigidx}&0&0&0&v_4&0\\
&I_{\smallidx-\bigidx-1}&0&v_3&v_5&v_4'\\
&&1&0&v_3'&0\\
&&&1&0&0\\
&&&&I_{\smallidx-\bigidx-1}&0\\
&&&&&I_{\bigidx}\\
\end{array}\right)\}.
\end{align*}
Here if $\smallidx=\bigidx+1$, $V_{\smallidx}''=\{1\}$. We write the integral as
\begin{align}\label{int:mult 1st var l=k>n+1 after starting form}
\int_{\lmodulo{U_{H_{\bigidx}}}{H_{\bigidx}}}
\int_{R^{\smallidx,\bigidx}}\int_{V_{\smallidx}'}\int_{V_{\smallidx}''}
\varphi_{\zeta}(v''v'rw^{\smallidx,\bigidx}h,d_{\gamma})\psi_{\gamma}(v''v')
f_s(h,1)dv''dv'drdh.
\end{align}

It is first shown how to replace the $dv'$-integration with a
$du_0$-integration over $U_{\bigidx}$. Let
\begin{align*}
u_0=\left(\begin{array}{ccc}I_{\bigidx}&v_1&v_2+\half
v_1v_1'\\&1&v_1'\\&&I_{\bigidx}\\\end{array}\right)\in U_{\bigidx},
\end{align*}
with $\transpose{v_2}J_{\bigidx}+J_{\bigidx}v_2=0$ and
$v_1'=-\transpose{v_1}J_{\bigidx}$. If we put
$u_0^{\star}=\rconj{(w^{\smallidx,\bigidx})^{-1}}u_0$,
$u_0^{\star}=\eta(v')v'$ where $\eta(v')$ is the image in
$L_{\smallidx}$ of
\begin{align*}
&\left(\begin{array}{ccc}I_{\bigidx}&&\beta v_1\\&I_{\smallidx-\bigidx-1}\\&&1\\\end{array}\right)\in\GL{\smallidx}
\end{align*}
and
\begin{align*}
&v'=\left(\begin{array}{cccccc}
I_{\bigidx}&0&0&\frac1{2\beta}v_1&0&v_2\\
&I_{\smallidx-\bigidx-1}&0&0&0&0\\
&&1&0&0&\frac1{2\beta}v_1'\\
&&&1&0&0\\
&&&&I_{\smallidx-\bigidx-1}&0\\
&&&&&I_{\bigidx}\\
\end{array}\right).
\end{align*}

Assume $\smallidx>\bigidx+1$. In this case $\varphi_{\zeta}$ is
left-invariant by $\eta(v')$,
$\psi_{\gamma}(v''v')=\psi_{\gamma}(v'')$ and $\eta(v')$ normalizes
$V_{\smallidx}''$ without changing $\psi_{\gamma}$. Hence
\eqref{int:mult 1st var l=k>n+1 after starting form} equals
\begin{align*}
&\int_{\lmodulo{U_{H_{\bigidx}}}{H_{\bigidx}}}
\int_{R^{\smallidx,\bigidx}}\int_{V_{\smallidx}'}\int_{V_{\smallidx}''}
\varphi_{\zeta}(v''\eta(v') v'rw^{\smallidx,\bigidx}h,d_{\gamma})
\psi_{\gamma}(v'')f_s(h,1)dv''dv'drdh\\
&=\int_{\lmodulo{U_{H_{\bigidx}}}{H_{\bigidx}}}
\int_{R^{\smallidx,\bigidx}}
\int_{U_{\bigidx}}\int_{V_{\smallidx}''}
\varphi_{\zeta}(v''u_0^{\star}rw^{\smallidx,\bigidx}h,d_{\gamma})
\psi_{\gamma}(v'')f_s(h,1)dv''du_0drdh.
\end{align*}
Here the measure $du_0$ is normalized according to $dv'$. For a
fixed
\begin{align*}
r=\left(\begin{array}{ccccc}I_{\bigidx}&&\\x&I_{\smallidx-\bigidx-1}\\&&I_2\\&&&I_{\smallidx-\bigidx-1}\\&&&x'&I_{\bigidx}\end{array}\right)\in
R^{\smallidx,\bigidx},
\end{align*}
$\rconj{(u_0^{\star})^{-1}}r=b_{r,u_0}z_{r,u_0}r$ where $b_{r,u_0}$
is the image in $L_{\smallidx}$ of
\begin{align*}
\left(\begin{array}{ccc}I_{\bigidx}&&\\&I_{\smallidx-\bigidx-1}&-\beta
xv_1\\&&1\\\end{array}\right)\in\GL{\smallidx}
\end{align*}
and
\begin{align*}
&z_{r,u_0}=\left(\begin{array}{cccccc}
I_{\bigidx}&0&0&0&*&0\\
&I_{\smallidx-\bigidx-1}&0&-\frac1{2\beta}xv_1&*&*\\
&&1&0&\frac1{2\beta}v_1'x'&0\\
&&&1&0&0\\
&&&&I_{\smallidx-\bigidx-1}&0\\
&&&&&I_{\bigidx}\\
\end{array}\right)\in V_{\smallidx}''.
\end{align*}
Then the integral is
\begin{align*}
\int_{\lmodulo{U_{H_{\bigidx}}}{H_{\bigidx}}}
\int_{R^{\smallidx,\bigidx}}\int_{U_{\bigidx}}\int_{V_{\smallidx}''}
\varphi_{\zeta}(v''b_{r,u_0}z_{r,u_0}rw^{\smallidx,\bigidx}u_0h,d_{\gamma})
\psi_{\gamma}(v'')f_s(h,1)dv''du_0drdh.
\end{align*}
Now $b_{r,u_0}$ normalizes $V_{\smallidx}''$ with no change to
$\psi_{\gamma}$ and the change $v''\mapsto v''z_{r,u_0}^{-1}$
changes
$\psi_{\gamma}(v'')\mapsto\psi_{\gamma}(v'')\psi^{-1}(\quarter\beta
(xv_1)_{\smallidx-\bigidx-1})$. Since
$\varphi_{\zeta}(b_{r,u_0}g,d_{\gamma})=\psi(\quarter\beta
(xv_1)_{\smallidx-\bigidx-1})\varphi_{\zeta}(g,d_{\gamma})$ we get
\begin{align}\label{int:int before formula multiple integrations in H}
\int_{\lmodulo{U_{H_{\bigidx}}}{H_{\bigidx}}}
\int_{R^{\smallidx,\bigidx}}
\int_{U_{\bigidx}}\int_{V_{\smallidx}''}
\varphi_{\zeta}(v''rw^{\smallidx,\bigidx}u_0h,d_{\gamma})
\psi_{\gamma}(v'')f_s(h,1)dv''du_0drdh.
\end{align}

If $\smallidx=\bigidx+1$, $\eta(v')$ satisfies
$\varphi_{\zeta}(1,d_{\gamma})\psi_{\gamma}(v')=\varphi_{\zeta}(\eta(v'),d_{\gamma})$
(note that $\psi_{\gamma}(v')=\psi(-\quarter\beta(v_1)_{\bigidx})$)
whence
\begin{align*}
\int_{V_{\smallidx}'}\varphi_{\zeta}(v'h,d_{\gamma})\psi_{\gamma}(v')dv'=\int_{V_{\smallidx}'}\varphi_{\zeta}(\eta(v')v'h,d_{\gamma})dv'=
\int_{U_{\bigidx}}\varphi_{\zeta}(u_0h,d_{\gamma})du_0,
\end{align*}
where $du_0$ is normalized as above, and \eqref{int:mult 1st var
l=k>n+1 after starting form} still equals \eqref{int:int before
formula multiple integrations in H}. In this case
$\psi_{\gamma}(v'')\equiv1$ since $V_{\smallidx}''=\{1\}$.

We proceed with \eqref{int:int before formula multiple integrations
in H}, for any $\smallidx>\bigidx$. Utilize the integration formula
\begin{align*}
\int_{\lmodulo{U_{H_{\bigidx}}}{H_{\bigidx}}}F(h)dh=\int_{\overline{U_{\bigidx}}}\int_{\lmodulo{Z_{\bigidx}}{\GL{\bigidx}}}F(au)\absdet{a}^{-\bigidx}dadu.
\end{align*}
The subgroup $U_{\bigidx}<H_{\bigidx}$ is normalized by $a\in
\GL{\bigidx}<M_{\bigidx}$ with a change of measure
$du_0\mapsto\absdet{a}^{\bigidx}du_0$. The element
$\rconj{(w^{\smallidx,\bigidx})^{-1}}a\in L_{\smallidx}$ normalizes
$R^{\smallidx,\bigidx}$ and $V_{\smallidx}''$, multiplies $dr$ by
$\absdet{a}^{\bigidx-\smallidx+1}$ and $dv''$ by
$\absdet{a}^{\smallidx-\bigidx-1}$, preserves $\psi_{\gamma}$ and
its image in $\GL{\smallidx}$ commutes with $d_{\gamma}$. Also
$\varphi_{\zeta}(a,1)=\absdet{a}^{-\frac{\smallidx-1}2-\zeta}\varphi_{\zeta}(1,a)$.
The integral becomes 
\begin{align}\label{int:int before formula multiple integrations in H after a}
&\int_{\overline{U_{\bigidx}}}
\int_{\lmodulo{Z_{\bigidx}}{\GL{\bigidx}}}
\int_{R^{\smallidx,\bigidx}}
\int_{U_{\bigidx}}\int_{V_{\smallidx}''}
\varphi_{\zeta}(v''rw^{\smallidx,\bigidx}u_0u,ad_{\gamma})\\\notag&f_s(u,a)\absdet{a}^{s-\zeta+\frac{\bigidx-\smallidx}2}\psi_{\gamma}(v'')
dv''du_0drdadu.
\end{align}
The $da$-integration defines an integral for
$\GL{\bigidx}\times\GL{\smallidx}$. However, applying
\eqref{eq:modified func eq glm gln} now (with $j=0$) introduces a
unipotent integration which interferes one step ahead (see
Remark~\ref{remark:problem with j=0 and func eq} below). This
problem is overcome by using the coordinates of $r$. Since
$R^{\smallidx,\bigidx}<L_{\smallidx}$ normalizes $V_{\smallidx}''$
without affecting $\psi_{\gamma}$ and its projection on
$GL_{\smallidx}$ commutes with $d_{\gamma}$, we get
\begin{align}\label{int:mult 1st var l=k>n+1 before applying func eq sigma and tau}
&\int_{\overline{U_{\bigidx}}}
\int_{U_{\bigidx}}\int_{V_{\smallidx}''}
\int_{\lmodulo{Z_{\bigidx}}{\GL{\bigidx}}}
\int_{\Mat{\smallidx-\bigidx-1\times\bigidx}}
\varphi_{\zeta}(v''w^{\smallidx,\bigidx}u_0u,ard_{\gamma})\\\notag&f_s(u,a)\absdet{a}^{s-\zeta+\frac{\bigidx-\smallidx}2}\psi_{\gamma}(v'')
drdadv''du_0du.
\end{align}

Now the $drda$-integration forms an integral for
$\GL{\bigidx}\times\GL{\smallidx}$ and $\tau\times\sigma$. Using
\eqref{eq:modified func eq glm gln} with $j=\smallidx-\bigidx-1$ we
obtain
\begin{align}\label{int:mult 1st var k=l>n+1 after applying func eq sigma and tau}
&\int_{\overline{U_{\bigidx}}}
\int_{U_{\bigidx}}\int_{V_{\smallidx}''}
\int_{\lmodulo{Z_{\bigidx}}{\GL{\bigidx}}}
\varphi_{\zeta}(v''w^{\smallidx,\bigidx}u_0u,
\left(\begin{array}{cc}0&I_{\smallidx-\bigidx}\\a&0\\\end{array}\right)d_{\gamma})\\\notag&f_s(u,a)\absdet{a}^{s-\zeta-\frac{\bigidx-\smallidx}2-1}\psi_{\gamma}(v'')
dadv''du_0du.
\end{align}
As in Section~\ref{subsection:1st var l<n} (\eqref{eq:1st var l<n
before func eq sigma}-\eqref{eq:1st var l<n after func eq sigma})
integrals~\eqref{int:mult 1st var l=k>n+1 before applying func eq
sigma and tau} and \eqref{int:mult 1st var k=l>n+1 after applying
func eq sigma and tau} have proportional meromorphic continuations.
Note that \eqref{int:mult 1st var k=l>n+1 after applying func eq
sigma and tau} is equal as an integral to \eqref{int:k=l>n+1
reversing steps without dr} below and by a direct verification,
integral \eqref{int:k=l>n+1 reversing steps without dr} satisfies
\eqref{eq:bilinear special condition}.
\begin{claim}\label{claim:k=l>n+1 functional eq sigma}
Integral~\eqref{int:mult 1st var l=k>n+1 before applying func eq
sigma and tau} multiplied by
$\omega_{\tau}(-1)^{\smallidx-1}\gamma(\sigma\times\tau,\psi^{-1},s-\zeta)$
equals integral~\eqref{int:mult 1st var k=l>n+1 after applying func
eq sigma and tau}, as meromorphic functions.
\end{claim}

Reversing steps \eqref{int:int before formula multiple integrations
in H}-\eqref{int:int before formula multiple integrations in H after
a} (without $dr$) we get
\begin{align}\label{int:k=l>n+1 reversing steps without dr}
&\int_{\lmodulo{U_{H_{\bigidx}}}{H_{\bigidx}}}
\int_{U_{\bigidx}}\int_{V_{\smallidx}''}\varphi_{\zeta}(v''w^{\smallidx,\bigidx}u_0h,
\omega_{\smallidx-\bigidx,\bigidx}d_{\gamma})\psi_{\gamma}(v'')f_s(h,1)dv''du_0dh.
\end{align}
Since $f_s(u_0h,1)=f_s(h,1)$ for $u_0\in U_{\bigidx}$ and
$U_{H_{\bigidx}}=Z_{\bigidx}U_{\bigidx}$, the $du_0dh$-integration
collapses into an integral over
$\lmodulo{Z_{\bigidx}}{H_{\bigidx}}$.
Factoring through $\overline{U_{\bigidx}}$ leads to
\begin{align*}
&\int_{\lmodulo{\overline{U_{\bigidx}}Z_{\bigidx}}{H_{\bigidx}}}
\int_{\overline{U_{\bigidx}}} \int_{V_{\smallidx}''}
\varphi_{\zeta}(v''w^{\smallidx,\bigidx}uh,
\omega_{\smallidx-\bigidx,\bigidx}d_{\gamma})\psi_{\gamma}(v'')f_s(uh,1)dv''dudh.
\end{align*}

Recall that in Section~\ref{subsection:1st var l<n} the next step
was to apply a functional equation for
$G_{\smallidx-k}\times\GL{\bigidx}$. Here we construct the \lhs\ of
Shahidi's functional equation \eqref{eq:Shahidi local coefficient
def}. 
Define a function
$F(h)$ on $H_{\bigidx}$ by
\begin{align*}
F(h)=\int_{V_{\smallidx}''}\varphi_{\zeta}(v''w^{\smallidx,\bigidx}h,
\omega_{\smallidx-\bigidx,\bigidx}d_{\gamma})\psi_{\gamma}(v'')dv''.
\end{align*}

\begin{claim}\label{claim:k=l>n+1 W part invariant for u bar}
Let $u\in\overline{U_{\bigidx}}$. For any $h\in H_{\bigidx}$,
$F(uh)=\psi^{-1}(\frac2{\beta}u_{\bigidx+1,1})F(h)$, where $u$ is
written relative to $\mathcal{E}_{H_{\bigidx}}$.
\end{claim}

\begin{remark}\label{remark:problem with j=0 and func eq}
If we applied the functional equation for $\tau\times\sigma$ with
$j=0$, a similar claim would fail.
\end{remark}

Then the integral equals
\begin{align*}
&\int_{\lmodulo{\overline{U_{\bigidx}}Z_{\bigidx}}{H_{\bigidx}}}
F(h)\int_{\overline{U_{\bigidx}}}
f_s(uh,1)\psi^{-1}(\frac2{\beta}u_{\bigidx+1,1})dudh.
\end{align*}
\begin{remark}\label{remark:l>n gamma proof uncovers H_n times GL_l integral}
This resembles the Rankin-Selberg integral for
$SO_{2\bigidx+1}\times\GL{\smallidx}$ studied by Soudry
\cite{Soudry}. The mapping $f_s\mapsto\int_{\overline{U_{\bigidx}}}
f_s(u,1)\psi^{-1}(\frac2{\beta}u_{\bigidx+1,1})du$ defines a
Whittaker functional on $V(\tau,s)$.
\end{remark}
Rewriting the $du$-integration over $U_{\bigidx}$ we obtain
\begin{align}\label{int:1var k=l>n+1 before applying Shahidi local coefficient}
&\int_{\lmodulo{\overline{U_{\bigidx}}Z_{\bigidx}}{H_{\bigidx}}}F(h)
(\int_{U_{\bigidx}}
f_s(w_{\bigidx}uw_{\bigidx}^{-1}h,1)\psi^{-1}(\frac2{\beta}(-1)^{\bigidx+1}u_{\bigidx,\bigidx+1})du)dh.
\end{align}
According to equality~\eqref{eq:Shahidi local coefficient def}, as meromorphic continuations
integral~\eqref{int:1var k=l>n+1 before applying Shahidi local
coefficient} equals
\begin{align}\label{int:1var k=l>n+1 after applying Shahidi local coefficient}
&c_{\tau,\beta}\int_{\lmodulo{\overline{U_{\bigidx}}Z_{\bigidx}}{H_{\bigidx}}}F(h)
(\int_{U_{\bigidx}}
\nintertwiningfull{\tau}{s}f_s(w_{\bigidx}uw_{\bigidx}^{-1}h,1)\psi^{-1}(\frac2{\beta}(-1)^{\bigidx+1}u_{\bigidx,\bigidx+1})du)dh.
\end{align}
Here
$c_{\tau,\beta}=|\frac{\beta}2|^{-2\bigidx(s-\half)}\omega_{\tau}(\frac{\beta}2)^{-2}$
is calculated by substituting $y\cdot f_s$ for $f_s$ in
\eqref{eq:Shahidi local coefficient def} where
\begin{align*}
y=diag((-1)^{\bigidx}\frac2{\beta}I_{\bigidx},1,(-1)^{\bigidx}\frac{\beta}{2}I_{\bigidx})
diag((d_{\bigidx}^*)^{-1},1,d_{\bigidx}^{-1})w_{\bigidx}^{-1}h.
\end{align*}

The justification to the last passage is the same as in
Section~\ref{subsection:1st var l<n} (Claim~\ref{claim:l<n
functional eq pi prime}).
Now we backtrack the transitions \eqref{int:mult 1st var
k=l>n+1 after applying func eq sigma and tau}-\eqref{int:1var
k=l>n+1 before applying Shahidi local coefficient} and arrive at
\begin{align}\label{int:mult 1st var k=l>n+1 before applying func eq sigma and dual tau}
&c_{\tau,\beta}\int_{\overline{U_{\bigidx}}}
\int_{U_{\bigidx}}\int_{V_{\smallidx}''}
\int_{\lmodulo{Z_{\bigidx}}{\GL{\bigidx}}}
\varphi_{\zeta}(v''w^{\smallidx,\bigidx}u_0u,
\left(\begin{array}{cc}0&I_{\smallidx-\bigidx}\\a&0\\\end{array}\right)d_{\gamma})\\\notag&
\nintertwiningfull{\tau}{s}f_s(u,a)\absdet{a}^{-s-\zeta-\frac{\bigidx-\smallidx}2}\psi_{\gamma}(v'')
dadv''du_0du.
\end{align}
This is just \eqref{int:mult 1st var k=l>n+1 after applying func eq
sigma and tau} with $s\mapsto1-s$ and
$f_s\mapsto\nintertwiningfull{\tau}{s}f_s$. The functional equation
for $\GL{\bigidx}\times\GL{\smallidx}$ and $\tau^*\times\sigma$
implies that integral~\eqref{int:mult 1st var k=l>n+1 before
applying func eq sigma and dual tau} multiplied by
$\omega_{\tau}(-1)^{\smallidx-1}\gamma(\sigma\times\tau^*,\psi^{-1},1-s-\zeta)^{-1}$
equals (compare to \eqref{int:mult 1st var l=k>n+1 before applying
func eq sigma and tau})
\begin{align}\label{int:mult 1st var k=l>n+1 after applying func eq sigma and dual tau}
&c_{\tau,\beta}\int_{\overline{U_{\bigidx}}}
\int_{U_{\bigidx}}\int_{V_{\smallidx}''}
\int_{\lmodulo{Z_{\bigidx}}{\GL{\bigidx}}}
\int_{\Mat{\smallidx-\bigidx-1\times\bigidx}}
\varphi_{\zeta}(v''w^{\smallidx,\bigidx}u_0u,ard_{\gamma})\\\notag
&\nintertwiningfull{\tau}{s}f_s(u,a)\absdet{a}^{1-s-\zeta+\frac{\bigidx-\smallidx}2}\psi_{\gamma}(v'')
drdadv''du_0du.
\end{align}

As in Section~\ref{subsection:1st var l<n} (see \eqref{eq:1st var
l<n before func eq dual sigma}-\eqref{eq:1st var l<n after func eq
dual sigma}) the last passage, which is between meromorphic
continuations, is already justified by Claim~\ref{claim:k=l>n+1
functional eq sigma}.
Finally, reversing the steps \eqref{int:mult 1st var l=k>n+1
starting form}-\eqref{int:mult 1st var l=k>n+1 before applying func
eq sigma and tau} yields
\begin{align}\label{int:mult 1st var n<k<=l final form}
c_{\tau,\beta}\int_{\lmodulo{U_{H_{\bigidx}}}{H_{\bigidx}}}
\int_{R^{\smallidx,\bigidx}}
(\int_{V_{\smallidx}}\varphi_{\zeta}(vrw^{\smallidx,\bigidx}h,d_{\gamma})\psi_{\gamma}(v)dv)
\nintertwiningfull{\tau}{s}f_s(h,1)drdh.
\end{align}

Since
\begin{align*}
&\gamma(\sigma\times\tau,\psi^{-1},s-\zeta)=\omega_{\sigma}(-1)^{\bigidx}\omega_{\tau}(-1)^{\smallidx}\gamma(\sigma\times\tau,\psi,s-\zeta),\\ &\gamma(\sigma\times\tau^*,\psi^{-1},1-s-\zeta)^{-1}=\gamma(\sigma^*\times\tau,\psi,s+\zeta),\\
&c_{\tau,\beta}=c(\smallidx,\tau,\gamma,s)\omega_{\tau}(\beta)^2|\beta|^{2\bigidx(s-\half)}=c(\smallidx,\tau,\gamma,s)\omega_{\tau}(2\gamma)
\end{align*}
(recall that we assume $|\beta|=1$), we conclude
\begin{align*}
&c(\smallidx,\tau,\gamma,s)^{-1}\omega_{\sigma}(-1)^{\bigidx}\omega_{\tau}(-1)^{\smallidx}\omega_{\tau}(2\gamma)^{-1}\gamma(\sigma\times\tau,\psi,s-\zeta)
\gamma(\sigma^*\times\tau,\psi,s+\zeta)\Psi(\varphi_{\zeta},f_s,s)\\
&=\Psi(\varphi_{\zeta},\nintertwiningfull{\tau}{s}f_s,1-s).
\end{align*}
This completes the proof for $\pi$ induced from
$\overline{P_{\smallidx}}$.

We explain the necessary adjustments in case
$\pi=\cinduced{\rconj{\kappa}(\overline{P_{\smallidx}})}{G_{\smallidx}}{\sigma}$.
We use the Whittaker functional for $\pi$ given by
\eqref{eq:whittaker for pi, k=l 2}.
Decompose
$\rconj{\kappa}V=\rconj{\kappa}(V_{\smallidx}')\rconj{\kappa}(V_{\smallidx}'')$
and write $u_0^{\star}=\eta(v') v'$, where now $\eta(v')$ equals the
matrix denoted above by $v'$ (without the coordinates of $v_2$) 
and normalizes $\rconj{\kappa}V_{\smallidx}''$. In the decomposition
$b_{r,u_0}z_{r,u_0}r$ we exchange the matrices of $b_{r,u_0}$ and
$z_{r,u_0}$, whence $z_{r,u_0}\in\rconj{\kappa}V_{\smallidx}''$ and
$\varphi_{\zeta}(b_{r,u_0}g,d_{\gamma})=\psi^{-1}(\quarter \beta
(xv_1)_{\smallidx-\bigidx-1})\varphi_{\zeta}(g,d_{\gamma})$. Since
$\smallidx>\bigidx$, the passage to the integration over
$\lmodulo{Z_\bigidx}{\GL{\bigidx}}$ is the same as above. Also
$R^{\smallidx,\bigidx}=\rconj{\kappa}R^{\smallidx,\bigidx}$ so this
subgroup normalizes $\rconj{\kappa}V_{\smallidx}''$. The function
$F(h)$ is defined in the same way but now satisfies
$F(uh)=\psi(\frac2{\beta}u_{\bigidx+1,1})F(h)$, which does not
change the constant $c_{\tau,\beta}$ in \eqref{int:1var k=l>n+1
after applying Shahidi local coefficient}.
There are no further differences.

\subsection{Proofs of claims
}\label{subsection:proofs claims 1st var}
\begin{proof}[Proof of claim~\ref{claim:l<n functional eq sigma}] 
Recall that integral~\eqref{eq:1st var l<n before func eq sigma} is
\begin{align*}
&\int_{K_{G_{\smallidx}}}\int_{T_{G_{\smallidx-k}}}\int_{\overline{V_k''}}
\int_{R_{\smallidx,\bigidx}} \int_{\lmodulo{Z_k}{\GL{k}}}
\int_{\Mat{\smallidx-1-k\times k}} \varphi_{\zeta}(vtk,a,1)\\\notag
&f_s(w_{\smallidx,\bigidx}rvtk,am)
\absdet{a}^{s-\zeta-\frac{\bigidx-k}2}\psi_{\gamma}(r)dmdadrdvdtdk.
\end{align*}
Let $W_{\sigma}\in\Whittaker{\sigma}{\psi^{-1}}$ and
$W_{\tau}\in\Whittaker{\tau}{\psi}$ be arbitrary. Let $k_1>0$ be
such that $W_{\tau}$ is right-invariant by
$\mathcal{N}_{\GL{\bigidx},k_1}$.

If $k<\smallidx-1$, select
$W_{\pi'}\in\Whittaker{\pi'}{\psi_{\gamma}^{-1}}$ by
Lemma~\ref{lemma:W with small support} with $j=0$ and $k_2>>k_1$, for some $W_0\in\Whittaker{\pi'}{\psi_{\gamma}^{-1}}$ such that
$W_0(1)\ne0$. 
In case $k=\smallidx-1$, $\pi'$ is a character of $G_1$ and we put
$W_{\pi'}=\pi'$.

Choose $f_s\in\xi(\tau,hol,s)$ according to Lemma~\ref{lemma:f with
small support} with $W_{\tau}$ and $k_3>>k_2$. Finally we take
$\varphi_{\zeta}$ with support in
$\overline{P_k}\mathcal{N}_{G_{\smallidx},k_4}$ such that for all
$a\in\GL{k}$, $h\in G_{\smallidx-k}$, $v\in\overline{V_k}$ and $u\in
\mathcal{N}_{G_{\smallidx},k_4}$,
$\varphi(ahvu,1,1)=\absdet{a}^{-\smallidx+\frac{k+1}2-\zeta}W_{\sigma}(a)W_{\pi'}(h)$.
Taking $k_4>>k_3$ we may assume that 
$f_s$ is right-invariant on the image of
$\mathcal{N}_{G_{\smallidx},k_4}$ in $H_{\bigidx}$.

With the above selection \eqref{eq:1st var l<n before func eq sigma} becomes
\begin{align*}
&c\int_{\lmodulo{U_{G_{\smallidx-k}}}{G_{\smallidx-k}}}\int_{\overline{V_k''}}
\int_{R_{\smallidx,\bigidx}} \int_{\lmodulo{Z_k}{\GL{k}}}
\int_{\Mat{\smallidx-1-k\times k}} W_{\sigma}(a)W_{\pi'}(h)\\\notag
&f_s(w_{\smallidx,\bigidx}rvh,am)
\absdet{a}^{s-\zeta-\frac{\bigidx-k}2}\psi_{\gamma}(r)dmdadrdvdh.
\end{align*}
Here $c>0$ is a volume constant depending on $k_4$. Let $V'=\overline{V_{\smallidx-k-1}}\rtimes G_1<G_{\smallidx-k}$.
Using the formula
\begin{align*}
\int_{\lmodulo{U_{G_{\smallidx-k}}}{G_{\smallidx-k}}}F(h)dh=
\int_{\overline{B_{\GL{\smallidx-k-1}}}}\int_{V'}F(bv')\delta(b)dv'db,
\end{align*}
where $\delta$ is a suitable modulus character, we get
\begin{align}\label{eq:substitution integral first var K<l}
&c\int_{\overline{B_{\GL{\smallidx-k-1}}}}\int_{V'}\int_{\overline{V_k''}}
\int_{R_{\smallidx,\bigidx}} \int_{\lmodulo{Z_k}{\GL{k}}}
\int_{\Mat{\smallidx-1-k\times k}}
W_{\sigma}(a)W_{\pi'}(bv')\\\notag
&f_s(w_{\smallidx,\bigidx}rvv',amb)
\absdet{a}^{s-\zeta-\frac{\bigidx-k}2}\psi_{\gamma}(r)\absdet{b}^{-k+\smallidx-\bigidx+s-\half}\delta(b)dmdadrdvdv'db.
\end{align}

For fixed $a,m$ and $b$, since $vv'\in
\overline{V_{\smallidx-1}}\rtimes G_1$, we deduce that the
$dr$-integration vanishes unless
$v,v'\in\mathcal{N}_{G_{\smallidx},k_3-k_0}$ (with $k_0$ - the
constant of Lemma~\ref{lemma:f with small support}). Thereby since
$k_3>>k_2$, $W_{\pi'}(bv')=W_{\pi'}(b)$. It follows that up to a
measure constant,
\begin{align*}
&\int_{V'}\int_{\overline{V_k''}}\int_{R_{\smallidx,\bigidx}}
W_{\pi'}(bv')f_s(w_{\smallidx,\bigidx}rvv',amb)\psi_{\gamma}(r)drdvdv'
=P_s(\gamma)W_{\pi'}(b)W_{\tau}(ambt_{\gamma}),
\end{align*}
where $P_s(\gamma)=|\gamma|^{\smallidx(\half\bigidx+s-\half)}$ and
$t_{\gamma}$ is given by Lemma~\ref{lemma:f with small support}.

Now $W_{\pi'}(b)=0$ unless $b\in\mathcal{N}_{G_{\smallidx-k},k_2}$
and because $k_2>>k_1$,
$W_{\tau}(ambt_{\gamma})=W_{\tau}(amt_{\gamma})$ (if
$k=\smallidx-1$, there is no $db$-integration at all). Hence
\eqref{eq:substitution integral first var K<l} equals
\begin{align}\label{int:substitution claim k<l<=n lhs of equation}
&c'P_s(\gamma)\int_{\lmodulo{Z_k}{\GL{k}}}\int_{\Mat{\smallidx-1-k\times
k}}W_{\sigma}(a)W_{\tau}(amt_{\gamma})
\absdet{a}^{s-\zeta-\frac{\bigidx-k}2}dmda.
\end{align}
The constant $c'>0$ equals a product of volumes depending on $k_i$,
$1\leq i\leq 4$ (but not on $s$).

Integral~\eqref{int:substitution claim k<l<=n lhs of equation} forms
the \lhs\ of the $\GL{k}\times\GL{\bigidx}$ functional equation
\eqref{eq:modified func eq glm gln}, with the following exception:
on the one hand, our computation is valid in the domain $D$ of
absolute convergence of \eqref{eq:1st var l<n before func eq sigma},
which takes the form $\{\Re(\zeta)<<\Re(s)<<(1+C_1)\Re(\zeta)\}$. On
the other hand, the integral on the \lhs\ of \eqref{eq:modified func
eq glm gln} is defined in a right half-plane $\Re(s-\zeta)>s_0$,
where $s_0$ is a constant depending only on $\sigma$ and $\tau$.
However since $C_1>0$,
$lim_{\Re(\zeta)\rightarrow\infty}C_1\Re(\zeta)=\infty$ hence we can
take $\Re(\zeta)>>0$ such that $D$ contains a non-empty domain $D_0$
of the form $\setof{s\in\C}{s_0<<\Re(s-\zeta)<<C_1\Re(\zeta)}$. Then
in $D_0$ integral~\eqref{int:substitution claim k<l<=n lhs of
equation} is equal to the integral on the \lhs\ of
\eqref{eq:modified func eq glm gln}.

When we apply the same
substitution to integral~\eqref{eq:1st var l<n after func eq sigma}
we obtain
\begin{align}\label{int:substitution claim k<l<=n rhs of equation}
&c'P_s(\gamma)\int_{\lmodulo{Z_k}{\GL{k}}}\int_{\Mat{k\times
\bigidx-\smallidx}}W_{\sigma}(a)W_{\tau}
(\left(\begin{array}{ccc}0&I_{\smallidx-k}&0\\0&0&I_{\bigidx-\smallidx}\\I_k&0&m\end{array}\right)at_{\gamma})
\absdet{a}^{s-\zeta-\frac{\bigidx+k}2+\smallidx-1}dmda.
\end{align}
This integral comprises the \rhs\ of \eqref{eq:modified func eq glm
gln}, but it is defined in a domain $D^*$ of the form
$\{(1-C_2)\Re(\zeta)<<\Re(s)<<\Re(\zeta)\}$, while the \rhs\ of
\eqref{eq:modified func eq glm gln} is defined in a left half-plane
$\Re(s-\zeta)<s_1$. Because $C_2>0$, we have
$lim_{\Re(\zeta)\rightarrow\infty}-C_2\Re(\zeta)=-\infty$ whence
$D^*$ contains a non-empty domain $D_1$ of the form
$\setof{s\in\C}{-C_2\Re(\zeta)<<\Re(s-\zeta)<<s_1}$. In $D_1$
integral~\eqref{int:substitution claim k<l<=n rhs of equation} is
equal to the integral on the \rhs\ of \eqref{eq:modified func eq glm
gln}.

Now it follows from \eqref{eq:modified func eq glm gln} that as functions in $\C(q^{-s})$,
\eqref{int:substitution claim k<l<=n lhs of equation} multiplied by $\omega_{\sigma}(-1)^{\bigidx-1}\gamma(\sigma\times\tau,\psi,s-\zeta)$ equals \eqref{int:substitution claim k<l<=n rhs of equation}.
\end{proof} 

\begin{proof}[Proof of Claim~\ref{claim:l<n first var inner integral m r and v}] 
We start with
\begin{align*}
\int_{\overline{V_k''}} \int_{\Mat{k\times \bigidx-\smallidx}}
\int_{R_{\smallidx,\bigidx}}f_s(\omega_{\bigidx-k,k}mw_{\smallidx,\bigidx}rvg,1)\psi_{\gamma}(r)drdmdv.
\end{align*}
Let $r\in R_{\smallidx,\bigidx}$ be with the first
$\bigidx-\smallidx$ rows
\begin{align*}
\left(\begin{array}{ccccccc}
I_{\bigidx-\smallidx}&x_1&x_2&x_3&x_4&0_{\smallidx}&z\\
\end{array}\right),
\end{align*}
where $x_1\in\Mat{\bigidx-\smallidx\times k}$,
$x_2\in\Mat{\bigidx-\smallidx\times\smallidx-1-k}$,
$x_3,x_4\in\Mat{\bigidx-\smallidx\times1}$. 
Let $y\in
\overline{V_k''}$ and put
\begin{align*}
v=[y]_{\mathcal{E}_{H_{\bigidx}}}=\left(\begin{array}{ccccccccc}
I_{\bigidx-\smallidx}\\
&I_k\\
&0&I_{\smallidx-1-k}\\
&v_1&0&1\\
&v_2&0&0&1\\
&v_3&0&0&0&1\\
&v_4&0&0&0&0&I_{\smallidx-1-k}&\\
&v_5&v_4'&v_3'&v_2'&v_1'&0&I_k\\
&&&&&&&&I_{\bigidx-\smallidx}\\
\end{array}\right).
\end{align*}
Also let
\begin{align*}
&\rconj{w_{\smallidx,\bigidx}}m=\left(\begin{array}{ccccc}
I_{\bigidx-\smallidx}&&&\gamma^{-1}m'\\
&I_k&&&\gamma^{-1}m\\
&&I_{2(\smallidx-k)+1}\\
&&&I_k\\
&&&&I_{\bigidx-\smallidx}\\
\end{array}\right).
\end{align*}

By an explicit calculation
$\rconj{(\rconj{w_{\smallidx,\bigidx}}m)^{-1}}(rv)=ur_mv$, where
$u\in U_{\bigidx-\smallidx}$ is such that
$\rconj{(w_{\smallidx,\bigidx}^{-1}\omega_{\bigidx-k,k}^{-1})}u$ is
the image in $M_{\bigidx}$ of
\begin{align*}
\left(\begin{array}{cccc}
I_{\smallidx-1-k}\\&1&-v_1m\\&&I_{\bigidx-\smallidx}\\&&&I_k\\\end{array}\right)\in
Z_{\bigidx},
\end{align*}
$r_m\in R_{\smallidx,\bigidx}$ and its first $\bigidx-\smallidx$ rows are
\begin{align*}
\left(\begin{array}{ccccccc}
I_{\bigidx-\smallidx}&x_1+\ldots&x_2+\ldots&x_3+\gamma^{-1}m'v_3'&x_4+\ldots&0_{\smallidx}&z+\ldots\\\end{array}\right).
\end{align*}
By a change of variables $r_m\mapsto r$. This changes the character
$\psi_{\gamma}(r)=\psi((x_3)_{\bigidx-\smallidx})$ to
$\psi_{\gamma}(r)\psi(-(\gamma^{-1}m'v_3')_{\bigidx-\smallidx})$.
However, as we show next this change to $\psi_{\gamma}$ is canceled
by $u$. Indeed,
$f_s(\rconj{(w_{\smallidx,\bigidx}^{-1}\omega_{\bigidx-k,k}^{-1})}u,1)=\psi(-(v_1m)_1)f_s(1,1)$
and
\begin{align*}
\psi(-(v_1m)_1)\psi(-(\gamma^{-1}m'v_3')_{\bigidx-\smallidx})=1.
\end{align*}
To
see this one needs to inspect the coordinates $v_1,v_3'$ which
correspond to certain coordinates of $y$. Let
\begin{align*}
&y=\left(\begin{array}{cccccc}I_k&\\0&I_{\smallidx-1-k}\\y_1&0&1&&&\\y_2&0&0&1&&\\y_4&0&0&0&I_{\smallidx-1-k}&\\y_5&y_4'&*&*&0&I_k\\\end{array}\right).
\end{align*}
Then
\begin{align*}
&(v_1,v_2,v_3)=\begin{cases}(y_1-\frac1{2\beta^2}y_2,\beta y_1+\frac1{2\beta}y_2,-\gamma y_1+\quarter y_2)&\text{split
$G_{\smallidx}$,}\\
(y_2,y_1,- \gamma y_2)&\text{\quasisplit\ $G_{\smallidx}$}
\end{cases}
\end{align*}
and $v_i'=-J_{k}\transpose{v_i}$ for $1\leq i\leq3$. In the split
case,
\begin{align*}
-\gamma^{-1}m'v_3'=
-\gamma^{-1}(-J_{\bigidx-\smallidx}\transpose{m}J_k)(\gamma J_k\transpose{y_1}-\quarter J_k\transpose{y_2})=J_{\bigidx-\smallidx}\transpose{(y_1m)}-\frac1{2\beta^2}J_{\bigidx-\smallidx}\transpose{(y_2m)},
\end{align*}
so
\begin{align*}
-(\gamma^{-1}m'v_3')_{\bigidx-\smallidx}=(y_1m)_1-\frac1{2\beta^2}(y_2m)_1=(v_1m)_1.
\end{align*}
In the \quasisplit\ case,
\begin{align*}
-(\gamma^{-1}m'v_3')_{\bigidx-\smallidx}=(m'(-J_k\transpose{y_2}))_{\bigidx-\smallidx}=(y_2m)_1=(v_1m)_1.
\end{align*}

Combining the above manifests the requested form of the integral
\begin{equation*}
\int_{\overline{V_k''}} \int_{\Mat{k\times \bigidx-\smallidx}}
\int_{R_{\smallidx,\bigidx}}f_s(\omega_{\bigidx-k,k}w_{\smallidx,\bigidx}rv(\rconj{w_{\smallidx,\bigidx}}m)g,1)\psi_{\gamma}(r)drdmdv.\qedhere
\end{equation*}
\end{proof} 

\begin{proof}[Proof of Claim~\ref{claim:l<n first var inner integral r and v}] 
Begin with
\begin{align*}
&\int_{\overline{V_k''}} \int_{\Mat{k\times \bigidx-\smallidx}}
\int_{R_{\smallidx,\bigidx}}f_s(\omega_{\bigidx-k,k}w_{\smallidx,\bigidx}rv(\rconj{w_{\smallidx,\bigidx}}m)g,1)\psi_{\gamma}(r)drdmdv.
\end{align*}
We use the forms of $r$ and $v$ from the proof of
Claim~\ref{claim:l<n first var inner integral m r and v}. Recall
that
$\omega_{\bigidx-k,k}w_{\smallidx,\bigidx}=w_{\smallidx-k,\bigidx}w$.
We see that $\rconj{w^{-1}}(rv)=ur'_{v_1}$, where $u\in U_k$,
$\rconj{w_{\smallidx-k,\bigidx}^{-1}}u$ is the image in $
M_{\bigidx}$ of
\begin{align*}
\left(\begin{array}{cccc}I_{\smallidx-1-k}&&&\\
&1&&v_1\\
&&I_{\bigidx-\smallidx}&\\
&&&I_k\\\end{array}\right)\in Z_{\bigidx},
\end{align*}
$r'_{v_1}\in R_{\smallidx-k,\bigidx}$ and its first $\bigidx-\smallidx+k$ rows are
\begin{align*}
\left(\begin{array}{ccccccccc}
I_k&0&\gamma^{-1}v_4'&\gamma^{-1}v_3'&(-1)^k\gamma^{-1}v_2'&0_{\smallidx-k}&\gamma^{-1}(x_1'-v_1'x_3')&\gamma^{-2}(v_5-v_1'v_3)\\
0&I_{\bigidx-\smallidx}&x_2&x_3&(-1)^kx_4&0_{\smallidx-k}&z&\gamma^{-1}x_1+\ldots\\\end{array}\right).
\end{align*}
Using a variables change in $x_1$ and $v_5$ (which does not change
$\psi_{\gamma}(r)$), $r'_{v_1}\mapsto r'$, i.e., the dependence on
$v_1'$ is removed. In addition, by inspecting the coordinates of $v$
as in the proof of Claim~\ref{claim:l<n first var inner integral m r
and v} one sees that for general $r$ and $v$, $r'$ so obtained is a
general element of $R_{\smallidx-k,\bigidx}$. The measure $dv$ is
given by $dv=dy_1dy_2dy_4dy_5$. We define the measure $dr'=drdv$.

In addition, if $\smallidx<\bigidx$,
$f_s(\rconj{w_{\smallidx-k,\bigidx}^{-1}}u,1)=f_s(1,1)$ and
$\psi_{\gamma}(r)=\psi((x_3)_{\bigidx-\smallidx})=\psi_{\gamma}(r')$,
where $\psi_{\gamma}$ on the \rhs\ designates the character of
$N_{\bigidx-(\smallidx-k)}$ (restricted to
$R_{\smallidx-k,\bigidx}$). For $\smallidx=\bigidx$, in which case
the $dr$-integration is missing ($R_{\smallidx,\bigidx}=\{1\}$),
$f_s(\rconj{w_{\smallidx-k,\bigidx}^{-1}}u,1)=\psi((v_1)_1)f_s(1,1)$
and one sees that
$\psi((v_1)_1)=\psi((\gamma^{-1}v_3')_k)=\psi_{\gamma}(r')$.

Therefore the integral equals
\begin{equation*}
\int_{\Mat{k\times \bigidx-\smallidx}}
\int_{R_{\smallidx-k,\bigidx}}f_s(w_{\smallidx-k,\bigidx}r'w(\rconj{w_{\smallidx,\bigidx}}m)g,1)\psi_{\gamma}(r')dr'dm.\qedhere
\end{equation*}
\end{proof} 

\begin{proof}[Proof of Claim~\ref{claim:l<n functional eq pi prime}] 
The argument used for proving Claim~\ref{claim:n_1 < l < n_2
functional eq pi and tau_2} applies here as well, except that
Proposition~\ref{proposition:integral can be made constant zeta phi
version} needs to be replaced and the domains of convergence are different.

Exactly as in \cite{Soudry} (p.68) we let
$W_{\sigma}\in\Whittaker{\sigma}{\psi^{-1}}$,
$W_{\tau}\in\Whittaker{\tau}{\psi}$ be such that the
$\GL{k}\times\GL{\bigidx}$ integral on the right hand side of
\eqref{eq:func eq glm gln} converges absolutely for all $s$ and
equals $1$. Such functions exist by \cite{JPSS} (Section~2.7), in
fact one simply takes $W_{\tau}$ such that the support of
$\frestrict{(diag(I_k,J_{\bigidx-k})\cdot
\widetilde{W_{\tau}})}{Y_{\bigidx}}$ belongs to a small compact
neighborhood of the identity. We may use $W_{\sigma}$ and $W_{\tau}$
to construct $\varphi_{\zeta}$ and $f_s$ (resp.), as in the proof of
Claim~\ref{claim:l<n functional eq sigma}. The resulting integral
\eqref{eq:1st var l<n after func eq sigma} converges absolutely and
equals $1$, for all $s$ (we need to multiply $f_s$ by
$(c')^{-1}P_s(\gamma)^{-1}$ to get $1$, recall that $c'$ is
independent of $s$ and
$P_s(\gamma)=|\gamma|^{\smallidx(\half\bigidx+s-\half)}$, so $f_s$
is still a holomorphic section). In addition if we replace
$\varphi_{\zeta},f_s$ with $|\varphi_{\zeta}|,|f_s|$ and remove
$\psi_{\gamma}$, the integral is bounded
by a constant $C$ independent of $s$. The same properties hold for \eqref{eq:1st var l<n before eq for pi prime}. 

Regarding the domains of convergence, let $D^*$ be the domain of
absolute convergence of \eqref{eq:1st var l<n after eq for pi
prime}. It takes the form
$\{(1-C_2)\Re(\zeta)<<\Re(1-s)<<\Re(\zeta)\}$. As in the proof of
Claim~\ref{claim:n_1 < l < n_2 functional eq pi and tau_2}, let
$\widetilde{I}_{m,g}(s)$ denote the inner $dr'dg'$-integration of
\eqref{eq:1st var l<n after eq for pi prime} ($m\in \Mat{k\times
\bigidx-\smallidx}$, $g\in
\lmodulo{\overline{V_k}Z_kG_{\smallidx-k}}{G_{\smallidx}}$). Its
meromorphic continuation $\widetilde{Q}_{m,g}\in\C(q^{-s})$
satisfies $\widetilde{I}_{m,g}(s)=\widetilde{Q}_{m,g}(q^{-s})$ for
all $s\in\Omega=\setof{s\in\C}{\Re(s)<s_1}$. Taking $\Re(\zeta)$
large enough, we may assume $D^*\subset \Omega$ and also that
$\gamma(\pi'\times\tau,\psi,s)$ does not have a pole in $D^*$.

Now we continue as in Claim~\ref{claim:n_1 < l < n_2
functional eq pi and tau_2}.
\end{proof} 

\begin{proof}[Proof of Claim~\ref{claim:k=l>n+1 functional eq sigma}] 
Recall that integral~\eqref{int:mult 1st var l=k>n+1 before applying
func eq sigma and tau} is
\begin{align*}
&\int_{\overline{U_{\bigidx}}}\int_{U_{\bigidx}}\int_{V_{\smallidx}''}
\int_{\lmodulo{Z_{\bigidx}}{\GL{\bigidx}}}
\int_{\Mat{\smallidx-\bigidx-1\times\bigidx}}
\varphi_{\zeta}(v''w^{\smallidx,\bigidx}u_0u,ard_{\gamma})f_s(u,a)\\\notag&\absdet{a}^{s-\zeta+\frac{\bigidx-\smallidx}2}\psi_{\gamma}(v'')
drdadv''du_0du.
\end{align*}
Let $W_{\sigma}\in\Whittaker{\sigma}{\psi^{-1}}$,
$W_{\tau}\in\Whittaker{\tau}{\psi}$ be arbitrary. Define
$\varphi_{\zeta}$ using $W_{\sigma}$ as in the proof of
Claim~\ref{claim:l<n functional eq sigma} with
$\support{\varphi_{\zeta}}=\overline{P_{\smallidx}}w^{\smallidx,\bigidx}\mathcal{N}_{G_{\smallidx},k_1}$,
$k_1>>0$. Similarly define $f_s=ch_{\mathcal{N}_{H_{\bigidx},k_2},W_{\tau},s}$, 
$k_2>k_1$. Then $f_s(u,a)=0$ unless $u\in
\mathcal{N}_{H_{\bigidx},k_2}$ and the $du$-integration can be
disregarded. Now
$\varphi_{\zeta}(v''w^{\smallidx,\bigidx}u_0,ard_{\gamma})=0$ unless
$v''\rconj{(w^{\smallidx,\bigidx})^{-1}}u_0\in
\overline{P_{\smallidx}}\mathcal{N}_{G_{\smallidx},k_1}$. 
Write
\begin{align*}
u_0=\left(\begin{array}{ccc}I_{\bigidx}&c_1&c_2\\&1&c_1'\\&&I_{\bigidx}\end{array}\right)\in
U_{\bigidx}.
\end{align*}
Then $v''\rconj{(w^{\smallidx,\bigidx})^{-1}}u_0=b_1y$ where $b_1\in
L_{\smallidx}$ and $y\in V_{\smallidx}$ is of the form
\begin{align*}
\left(\begin{array}{cccccc}
I_{\bigidx}&0&0&\frac1{2\beta}c_1&*&c_2-\half c_1c_1'\\
&I_{\smallidx-\bigidx-1}&0&*&*&*\\
&&1&0&*&\frac1{2\beta}c_1'\\
&&&1&0&0\\
&&&&I_{\smallidx-\bigidx-1}&0\\
&&&&&I_{\bigidx}\\
\end{array}\right).
\end{align*}
Hence
$v''\rconj{(w^{\smallidx,\bigidx})^{-1}}u_0\in\overline{P_{\smallidx}}\mathcal{N}_{G_{\smallidx},k_1}$
if and only if
$u_0,v''\in\mathcal{N}_{G_{\smallidx},k_1}$.
Also for $u_0,v''\in\mathcal{N}_{G_{\smallidx},k_1}$, $\varphi_{\zeta}(v''w^{\smallidx,\bigidx}u_0,ard_{\gamma})=W_{\sigma}(ard_{\gamma})$. Thus the $dv''du_0$-integration can be ignored. 
The integral becomes
\begin{align*}
c\int_{\lmodulo{Z_{\bigidx}}{\GL{\bigidx}}}
\int_{\Mat{\smallidx-\bigidx-1\times\bigidx}}
W_{\sigma}(ard_{\gamma})W_{\tau}(a)\absdet{a}^{s-\zeta+\frac{\bigidx-\smallidx}2}drda.
\end{align*}
Here $c>0$ depends on $k_1$ and $k_2$. Applying the same arguments
to integral~\eqref{int:mult 1st var k=l>n+1 after applying func eq
sigma and tau} we find that it equals
\begin{align*}
c\int_{\lmodulo{Z_{\bigidx}}{\GL{\bigidx}}} W_{\sigma}(
\left(\begin{array}{cc}0&I_{\smallidx-\bigidx}\\a&0\\\end{array}\right)d_{\gamma})
W_{\tau}(a)\absdet{a}^{s-\zeta-\frac{\bigidx-\smallidx}2-1}da.
\end{align*}
By \eqref{eq:modified func eq glm gln} the integrals are
proportional by
$\omega_{\tau}(-1)^{\smallidx-1}\gamma(\sigma\times\tau,\psi^{-1},s-\zeta)$.
\end{proof} 

\begin{proof}[Proof of Claim~\ref{claim:k=l>n+1 W part invariant for u bar}] 
Recall the definition
\begin{align*}
F(h)=\int_{V_{\smallidx}''}\varphi_{\zeta}(v''w^{\smallidx,\bigidx}h,
\omega_{\smallidx-\bigidx,\bigidx}d_{\gamma})\psi_{\gamma}(v'')dv''.
\end{align*}
For $u\in\overline{U_{\bigidx}}$ with
\begin{align*}
[u]_{\mathcal{E}_{H_{\bigidx}}}=\left(\begin{array}{ccc}I_{\bigidx}\\u_1&1\\u_2&u_1'&I_{\bigidx}\\\end{array}\right),
\end{align*}
\begin{align*}
\rconj{(w^{\smallidx,\bigidx})^{-1}}u=
\left(\begin{array}{cccccc}I_{\bigidx}\\0&I_{\smallidx-\bigidx-1}\\\frac1{2\beta}u_1&0&1\\\beta
u_1&0&0&1\\0&0&0&0&I_{\smallidx-\bigidx-1}\\u_2&0&\beta
u_1'&\frac1{2\beta}u_1'&0&I_{\bigidx}\\\end{array}\right).
\end{align*}
Let
\begin{align*}
v''=\left(\begin{array}{cccccc}
I_{\bigidx}&0&0&0&v_4&0\\
&I_{\smallidx-\bigidx-1}&0&v_3&v_5&v_4'\\
&&1&0&v_3'&0\\
&&&1&0&0\\
&&&&I_{\smallidx-\bigidx-1}&0\\
&&&&&I_{\bigidx}\\
\end{array}\right)\in V_{\smallidx}''.
\end{align*}
Then $\rconj{(\rconj{(w^{\smallidx,\bigidx})^{-1}}u)}v''=b_uv_u$,
where $b_u$ is the image in $L_{\smallidx}$ of
\begin{align*}
\left(\begin{array}{ccc} I_{\bigidx}&&\\\beta
v_3u_1+v_4'u_2&I_{\smallidx-\bigidx-1}&\beta
v_4'u_1'\\&&1\\\end{array}\right)\in\GL{\smallidx}
\end{align*}
and
\begin{align*}
v_u=\left(\begin{array}{cccccc}
I_{\bigidx}&0&0&0&v_4&0\\
&I_{\smallidx-\bigidx-1}&0&v_3+\frac1{2\beta}v_4'u_1'&v_5+\ldots&v_4'\\
&&1&0&v_3'-\frac1{2\beta}u_1v_4&0\\
&&&1&0&0\\
&&&&I_{\smallidx-\bigidx-1}&0\\
&&&&&I_{\bigidx}\\
\end{array}\right)\in V_{\smallidx}''.
\end{align*}
It follows that
\begin{align*}
F(uh)=\int_{V_{\smallidx}''}\varphi_{\zeta}((\rconj{(w^{\smallidx,\bigidx})^{-1}}u)b_uv_uw^{\smallidx,\bigidx}h,
\omega_{\smallidx-\bigidx,\bigidx}d_{\gamma})\psi_{\gamma}(v'')dv''.
\end{align*}
Now on the one hand, changing variables in $v_u$ removes the
dependence on $u$ and changes $\psi_{\gamma}(v'')=\psi(-\gamma
(v_3)_{\smallidx-\bigidx-1})$ to
$\psi_{\gamma}(v'')\psi(\frac{\beta}4(v_4'u_1')_{\smallidx-\bigidx-1})$.
On the other hand,
\begin{align*}
\varphi_{\zeta}((\rconj{(w^{\smallidx,\bigidx})^{-1}}u)b_u,
\omega_{\smallidx-\bigidx,\bigidx}d_{\gamma})
=\psi^{-1}(\frac2{\beta}(u_1)_1+\frac{\beta}4(v_4'u_1')_{\smallidx-\bigidx-1})\varphi_{\zeta}(1,
\omega_{\smallidx-\bigidx,\bigidx}d_{\gamma}).
\end{align*}
We conclude $F(uh)=\psi^{-1}(\frac2{\beta}(u_1)_1)F(h)$, or when
considering $u$ in coordinates relative to
$\mathcal{E}_{H_{\bigidx}}$,
$F(uh)=\psi^{-1}(\frac2{\beta}u_{\bigidx+1,1})F(h)$ as claimed.
\end{proof} 

\newtheorem{theorem}{Theorem}[section]
\newtheorem{proposition}{Proposition}[section]
\newtheorem{corollary}{Corollary}[section]
\newtheorem{lemma}{Lemma}[section]
\newtheorem{claim}{Claim}[section]
\theoremstyle{remark}
\newtheorem{remark}{Remark}[section]
\newtheorem{example}{Example}[section]
\theoremstyle{definition}
\newtheorem{definition}{Definition}[section]
\numberwithin{equation}{section}
\newcommand{\chapter}{\section} 

\end{comment}

\chapter{Archimedean places}\label{chapter:archimedean results}
In this chapter the field $F$ is either $\R$ or $\C$.

\section{The \archimedean\ integrals}\label{subsection:the archimendea integrals}
The groups and embeddings are defined as described in
Section~\ref{subsection:groups in study}.
If $F=\C$, $G_{\smallidx}$ is
always split.

Let $\pi$ and $\tau$ be a pair of continuous representations of
$G_{\smallidx}$ and $\GL{\bigidx}$ (resp.), on Fr\'{e}chet spaces,
which are smooth, of moderate growth, admissible, finitely generated
and generic. Assume that $\pi$ is realized in
$\Whittaker{\pi}{\psi_{\gamma}^{-1}}$, where $\psi_{\gamma}$ is
defined as in Section~\ref{subsection:the integrals}, and $\tau$ is
realized in $\Whittaker{\tau}{\psi}$.

For $s\in\C$ form the representation
$\cinduced{Q_{\bigidx}}{H_{\bigidx}}{\tau\alpha^s}$ on the space
$V(\tau,s)=V_{Q_{\bigidx}}^{H_{\bigidx}}(\tau,s)$, consisting of the
smooth functions $f_s:H_{\bigidx}\times \GL{\bigidx}\rightarrow\C$
such that for $q=au\in Q_{\bigidx}$ ($a\in\GL{\bigidx}\isomorphic
M_{\bigidx}$, $u\in U_{\bigidx}$), $h\in H_{\bigidx}$ and
$b\in\GL{\bigidx}$,
$f_s(qh,b)=\delta_{Q_{\bigidx}}^{\half}(q)\absdet{a}^{s-\half}f_s(h,ba)$,
and the mapping $b\mapsto f_s(h,b)$ lies in
$\Whittaker{\tau}{\psi}$.

Fix $s\in\C$ and let $W\in\Whittaker{\pi}{\psi_{\gamma}^{-1}}$,
$f_s\in V(\tau,s)$. The Rankin-Selberg integral $\Psi(W,f_s,s)$ is
given by Definition~\ref{definition:shape of local integral}. It is
absolutely convergent for $\Re(s)>>0$.

\section{The functional equation and $\gamma$-factor}
Until the end of this chapter, assume that $G_{\smallidx}$ is split.
Let $\pi$ and $\tau$ be irreducible representations. We have the
following proposition, essentially proved by Soudry \cite{Soudry3}.
\begin{proposition}\label{proposition:archimedean split results}
\begin{enumerate}[leftmargin=*]
\item The integrals $\Psi(W,f_s,s)$ admit a meromorphic continuation
to $\C$ and this continuation defines a continuous bilinear form on
$\Whittaker{\pi}{\psi_{\gamma}^{-1}}\times V(\tau,s)$ satisfying
\eqref{eq:bilinear special condition}.
\item There is a
meromorphic function $\gamma(\pi\times\tau,\psi,s)$ satisfying for
all $W$ and $f_s$,
\begin{align*}
\gamma(\pi\times\tau,\psi,s)\Psi(W,f_s,s)=c(\smallidx,\tau,\gamma,s)\Psi(W,\nintertwiningfull{\tau}{s}f_s,1-s).
\end{align*}
Here
$c(\smallidx,\tau,\gamma,s)=\omega_{\tau}(\gamma)^{-2}|\gamma|^{-2\bigidx(s-\half)}$
if $\bigidx<\smallidx$ and $c(\smallidx,\tau,\gamma,s)=1$ otherwise
(this is \eqref{eq:gamma def} with the same factor
$c(\smallidx,\tau,\gamma,s)$).
\item Write $\pi=\cinduced{P}{G_{\smallidx}}{\sigma}$
with $\sigma$ realized in $\Whittaker{\sigma}{\psi^{-1}}$, where $P$
is either $\overline{P_{\smallidx}}$ or
$\rconj{\kappa}\overline{P_{\smallidx}}$. Then
\begin{align*}
\gamma(\pi\times\tau,\psi,s)=
\omega_{\sigma}(-1)^{\bigidx}\omega_{\tau}(-1)^{\smallidx}\omega_{\tau}(2\gamma)^{-1}\gamma(\sigma\times\tau,\psi,s)\gamma(\sigma^*\times\tau,\psi,s).
\end{align*}
\end{enumerate}
\end{proposition}

Write $\pi=\cinduced{P}{G_{\smallidx}}{\sigma}$ as in the
proposition and define
\begin{align*}
\Gamma(\pi\times\tau,\psi,s)=\omega_{\sigma}(-1)^{\bigidx}\omega_{\tau}(-1)^{\smallidx}\omega_{\tau}(2\gamma)\gamma(\pi\times\tau,\psi,s).
\end{align*}
Then
\begin{align*}
\Gamma(\pi\times\tau,\psi,s)=\gamma(\sigma\times\tau,\psi,s)\gamma(\sigma^*\times\tau,\psi,s).
\end{align*}
We conclude that in the \archimedean\ places (where $G_{\smallidx}$
is split), $\Gamma(\pi\times\tau,\psi,s)$ is identical with the corresponding $\gamma$-factor of Shahidi 
on $SO_{2\smallidx}\times\GL{\bigidx}$ defined in \cite{Sh3}.

As stated above, the proposition was proved in \cite{Soudry3}, but
the integrals are not exactly the ones we use. We provide a short
sketch of the proof, focusing on the modifications to the integrals
and formal identities.
\begin{proof}[Proof of Proposition~\ref{proposition:archimedean
split results}] 
Let $\pi=\cinduced{\overline{P_{\smallidx}}}{G_{\smallidx}}{\sigma}$
(the proof for
$\pi=\cinduced{\rconj{\kappa}\overline{P_{\smallidx}}}{G_{\smallidx}}{\sigma}$
is similar and skipped). We introduce an auxiliary complex parameter
$\zeta$ and consider the space
$V_{\overline{P_{\smallidx}}}^{G_{\smallidx}}(\sigma,-\zeta+\half)$.
A Whittaker functional on this space is given by \eqref{eq:whittaker
for pi, k=l}, i.e., for $\varphi_{\zeta}\in
V_{\overline{P_{\smallidx}}}^{G_{\smallidx}}(\sigma,-\zeta+\half)$,
\begin{align*}
\whittakerfunctional(\varphi_{\zeta})=\int_{V_{\smallidx}}\varphi_{\zeta}(v,diag(I_{\smallidx-1},4))\psi_{\gamma}(v)dv.
\end{align*}
The Whittaker function
$W_{\varphi_{\zeta}}\in\Whittaker{\cinduced{\overline{P_{\smallidx}}}{G_{\smallidx}}{\sigma\alpha^{-\zeta+\half}}}{\psi_{\gamma}^{-1}}$
is defined by $W_{\varphi_{\zeta}}(g)=\whittakerfunctional(g\cdot
\varphi_{\zeta})$.

Let $s\in\C$. For any $\mu\in F^*$, define a generic character
$\psi_{\mu}$ of $U_{H_{\bigidx}}$ by
$\psi_{\mu}(u)=\psi(-\sum_{i=1}^{\bigidx-1}u_{i,i+1}+\mu
u_{\bigidx,\bigidx+1})$. A Whittaker functional on $V(\tau,s)$ with
respect to $\psi_{\mu}$ is given by
\begin{align*}
\whittakerfunctional'(f_s)=\int_{U_{\bigidx}}f_s(w_{\bigidx}u,I_{\bigidx})\psi^{-1}(\mu
u_{\bigidx,\bigidx+1})du,
\end{align*}
where $f_s\in V(\tau,s)$. Then
$W_{f_s}(h)=\whittakerfunctional'(h\cdot f_s)$. Our normalization of
$\nintertwiningfull{\tau}{s}$ was defined by the functional equation
\eqref{eq:Shahidi local coefficient def}, where the Whittaker
functional on $V(\tau,s)$ corresponded to the character
$u\mapsto\psi(\sum_{i=1}^{\bigidx}u_{i,i+1})$. Keeping the same
normalization we have
\begin{align}\label{eq:archimedean functional equation for Whittaker
on indued from tau}
W_{f_s}=|\mu|^{2\bigidx(s-\half)}\omega_{\tau}(\mu)^{2}W_{\nintertwiningfull{\tau}{s}f_s}.
\end{align}
This is obtained from \eqref{eq:Shahidi local coefficient def} by
substituting $y\cdot f_s$ for $f_s$ where
\begin{align*}
y=diag(d_{\bigidx}^*,1,d_{\bigidx})diag(\mu(-1)^{\bigidx}I_{\bigidx},1,\mu^{-1}(-1)^{\bigidx}I_{\bigidx}).
\end{align*}

First assume $\smallidx\leq\bigidx$. The following equality is a
minor modification of equality~(7.8) of \cite{Soudry3} (Section~7).
\begin{align}\label{eq:arhcimedean l<=n, even l}
\Psi(w\cdot W_{\varphi_{\zeta}},m\cdot
f_s,s)=C\gamma(\sigma^*\times\tau^*,\psi,1+\zeta-s)A(W_{f_s},\varphi_{\zeta},\zeta).
\end{align}
Here
\begin{align*}
&\mu=(-1)^{\smallidx}\beta,\\
&w=\begin{cases}w_{\smallidx}&\text{$\smallidx$ is even,}\\
I_{2\smallidx}&\text{$\smallidx$ is odd},\end{cases}\\\notag
&\widetilde{M}=diag(1,-\beta,1)M\in H_1\qquad(\text{$M$ - the matrix
defined in Section~\ref{subsection:G_l in H_n}}),\\
&m=\widetilde{M}^{-1}\cdot
diag(\gamma^{-1}I_{\bigidx-\smallidx},I_{2\smallidx+1},\gamma
I_{\bigidx-\smallidx})w_{\bigidx-\smallidx,\bigidx},\\
&C=|2|^{3\smallidx+2\zeta-3}
\omega_{\sigma}(\gamma)^{-1}|\gamma|^{-\smallidx(\half\smallidx-\bigidx+\half-\zeta)}\omega_{\sigma^*}(-1)^{\bigidx-1};
\end{align*}
$A(W_{f_s},\varphi_{\zeta},\zeta)$ is the Rankin-Selberg integral
for $SO_{2\bigidx+1}\times\GL{\smallidx}$ and
$\cinduced{Q_{\bigidx}}{H_{\bigidx}}{\tau\alpha^s}\times
\sigma\alpha^{-\zeta+\half}$ of \cite{Soudry,Soudry3} defined by
\begin{align*}
A(W_{f_s},\varphi_{\zeta},\zeta)=\begin{dcases}
\int_{\lmodulo{U_{G_{\smallidx}}}{G_{\smallidx}}}\int_{X_{(\smallidx,\bigidx)}}
W_{f_s}(xj_{\smallidx,\bigidx}(g))\varphi_{\zeta}(w_{\smallidx}g,I_{\smallidx})dxdg&\text{even
$\smallidx$,}\\
\int_{\lmodulo{\overline{U_{G_{\smallidx}}}}{G_{\smallidx}}}\int_{X_{(\smallidx,\bigidx)}}
W_{f_s}(xj_{\smallidx,\bigidx}(\rconj{w_{\smallidx}}g))\varphi_{\zeta}(g,I_{\smallidx})dxdg
&\text{odd $\smallidx$,}\end{dcases}
\end{align*}
where
\begin{align*}
X_{(\smallidx,\bigidx)}=\{\left(\begin{array}{ccccc}I_{\smallidx}\\x&I_{\bigidx-\smallidx}\\&&1\\&&&I_{\bigidx-\smallidx}\\&&&x'&I_{\smallidx}\\\end{array}\right)\}<H_{\bigidx}
\end{align*}
and 
\begin{align*}
j_{\smallidx,\bigidx}(\left(\begin{array}{cc}A&B\\C&D\end{array}\right))=\left(\begin{array}{ccc}A&&B\\&I_{2(\bigidx-\smallidx)+1}\\C&&D\end{array}\right)\in
H_{\bigidx}\qquad
(\left(\begin{array}{cc}A&B\\C&D\end{array}\right)\in
G_{\smallidx}).
\end{align*}

Equality~\eqref{eq:arhcimedean l<=n, even l} holds for
$\Re(\zeta),\Re(s)>>0$, where $\Psi(w\cdot
W_{\varphi_{\zeta}},m\cdot f_s,s)$ is absolutely convergent, and
$A(W_{f_s},\varphi_{\zeta},\zeta)$ is defined by meromorphic
continuation.

Let $B(\zeta,s)$ denote the space of continuous bilinear forms on
$\Whittaker{\cinduced{\overline{P_{\smallidx}}}{G_{\smallidx}}{\sigma\alpha^{-\zeta+\half}}}{\psi_{\gamma}^{-1}}\times
V(\tau,s)$ satisfying \eqref{eq:bilinear special condition}. Soudry
\cite{Soudry3} (Section~3) proved that except for a discrete set of
$\zeta$ and $s$, this space is at most one dimensional.

Soudry \cite{Soudry3} (Section~5) proved that
$A(W_{f_s},\varphi_{\zeta},\zeta)$, as a function of $\zeta$ and
$s$, has a meromorphic continuation to $\C^2$ and this continuation,
in its domain of definition, defines a continuous bilinear form in
$B(\zeta,s)$. In addition, the integral $\Psi(w\cdot
W_{\varphi_{\zeta}},m\cdot f_s,s)$ in its domain of absolute
convergence, i.e., for $\Re(\zeta),\Re(s)>>0$, also belongs to
$B(\zeta,s)$. The continuity of the integral is proved similarly to
the proof of Lemma~1 of \cite{Soudry3} (Section~6). Hence for
$\Re(\zeta),\Re(s)>>0$, the meromorphic continuation of
$A(W_{f_s},\varphi_{\zeta},\zeta)$ and the integral $\Psi(w\cdot
W_{\varphi_{\zeta}},m\cdot f_s,s)$ belong to $B(\zeta,s)$, thus they
are proportional. The proportionality factor is
$C\gamma(\sigma^*\times\tau^*,\psi,1+\zeta-s)$.

Now it immediately follows that $\Psi(w\cdot
W_{\varphi_{\zeta}},m\cdot f_s,s)$ has a meromorphic continuation to
$\C^2$ which, when defined, belongs to $B(\zeta,s)$. Then the first
assertion of the proposition follows by substituting $0$ for
$\zeta$.

When we replace $f_s$ with $\nintertwiningfull{\tau}{s}f_s$ in
\eqref{eq:arhcimedean l<=n, even l} we obtain
\begin{align}\label{eq:arhcimedean l<=n, even l with intertwiner}
&\Psi(w\cdot W_{\varphi_{\zeta}},m\cdot
\nintertwiningfull{\tau}{s}f_s,1-s)\\\notag&=C\gamma(\sigma^*\times\tau,\psi,1+\zeta-(1-s))A(W_{\nintertwiningfull{\tau}{s}f_s},\varphi_{\zeta},\zeta).
\end{align}
This implies that $\Psi(w\cdot W_{\varphi_{\zeta}},m\cdot
\nintertwiningfull{\tau}{s}f_s,1-s)$ also extends to a meromorphic
function on $\C^2$, belonging to $B(\zeta,s)$ for all $\zeta$ and
$s$ where it is defined. Now the uniqueness properties of
$B(\zeta,s)$ imply the existence of the functional equation
\eqref{eq:gamma def} in the \archimedean\ case. 

The function $W_{\nintertwiningfull{\tau}{s}f_s}$ appearing in
$A(W_{\nintertwiningfull{\tau}{s}f_s},\varphi_{\zeta},\zeta)$ is
defined by meromorphic continuation. Therefore we can use
\eqref{eq:archimedean functional equation for Whittaker on indued
from tau} and obtain, noting that
$\omega_{\tau}(\beta)^2=\omega_{\tau}(2\gamma)$ and by our
assumptions $|\beta|=1$,
\begin{align*}
A(W_{\nintertwiningfull{\tau}{s}f_s},\varphi_{\zeta},\zeta)=\omega_{\tau}(2\gamma)^{-1}A(W_{f_s},\varphi_{\zeta},\zeta).
\end{align*}
Now dividing \eqref{eq:arhcimedean l<=n, even l with intertwiner} by
\eqref{eq:arhcimedean l<=n, even l} and letting $\zeta\rightarrow0$
we conclude
\begin{align*}
\gamma(\pi\times\tau,\psi,s)=\omega_{\sigma}(-1)^{\bigidx}
\omega_{\tau}(-1)^{\smallidx}\omega_{\tau}(2\gamma)^{-1}\gamma(\sigma\times\tau,\psi,s)
\gamma(\sigma^*\times\tau,\psi,s).
\end{align*}

Assume $\smallidx>\bigidx$. In this case we can use the formal
manipulations from \eqref{int:mult 1st var l=k>n+1 starting form} to
\eqref{int:1var k=l>n+1 before applying Shahidi local coefficient}
(in Section~\ref{subsection:1st var k=l>n+1}, see
Remark~\ref{remark:l>n gamma proof uncovers H_n times GL_l
integral}) and deduce the following identity, which is a minor
modification of identity (6.10) of \cite{Soudry3} (Section~6).
\begin{align}\label{eq:arhcimedean l>n}
\Psi(W_{\varphi_{\zeta}},f_s,s)=\omega_{\tau}(-1)^{\smallidx-1}\gamma(\sigma\times\tau,\psi^{-1},s-\zeta)^{-1}
A(W_{f_s},\varphi_{\zeta},\zeta).
\end{align}
Here $A(W_{f_s},\varphi_{\zeta},\zeta)$ is the Rankin-Selberg
integral for $SO_{2\bigidx+1}\times\GL{\smallidx}$ and
$\cinduced{Q_{\bigidx}}{H_{\bigidx}}{\tau\alpha^s}\times
\sigma\alpha^{-\zeta+\half}$ of \cite{Soudry,Soudry3} defined by
\begin{align*}
&\int_{\lmodulo{\overline{U_{\bigidx}}Z_{\bigidx}}{H_{\bigidx}}}
W_{f_s}(w_{\bigidx}^{-1}h)
\int_{V_{\smallidx}''}\varphi_{\zeta}(v''w^{\smallidx,\bigidx}h,
\omega_{\smallidx-\bigidx,\bigidx}\cdot
diag(I_{\smallidx-1},4))\psi_{\gamma}(v'')dv''dh,
\end{align*}
where $\mu=\frac2{\beta}(-1)^{\bigidx+1}$,
\begin{align*}
&V_{\smallidx}''=\{\left(\begin{array}{cccccc}
I_{\bigidx}&0&0&0&v_4&0\\
&I_{\smallidx-\bigidx-1}&0&v_3&v_5&v_4'\\
&&1&0&v_3'&0\\
&&&1&0&0\\
&&&&I_{\smallidx-\bigidx-1}&0\\
&&&&&I_{\bigidx}\\
\end{array}\right)\}<G_{\smallidx}
\end{align*}
and $\psi_{\gamma}(v'')=\psi(-\gamma(v_3)_{\smallidx-\bigidx-1})$.

Equality~\eqref{eq:arhcimedean l>n} holds for
$\Re(\zeta),\Re(s)>>0$, where $A(W_{f_s},\varphi_{\zeta},\zeta)$ is
absolutely convergent and $\Psi(W_{\varphi_{\zeta}},f_s,s)$ is
defined by meromorphic continuation. In fact, the arguments of
Soudry \cite{Soudry3} (Section~5) show that
$\Psi(W_{\varphi_{\zeta}},f_s,s)$, which resembles
$A(W_{f_s},\varphi_{\zeta},\zeta)$ in the case
$\smallidx\leq\bigidx$, has a meromorphic continuation to $\C^2$
which belongs (when defined) to $B(\zeta,s)$. When
$\smallidx>\bigidx$, $A(W_{f_s},\varphi_{\zeta},\zeta)$ was proved
to be continuous in its domain of absolute convergence (\textit{loc.
cit.} Section~6, Lemma~1), hence in this domain it belongs to
$B(\zeta,s)$.

As above the result follows by substituting
$\nintertwiningfull{\tau}{s}f_s$ for $f_s$ in \eqref{eq:arhcimedean
l>n}. 
\end{proof} 

\newtheorem{theorem}{Theorem}[section]
\newtheorem{proposition}{Proposition}[section]
\newtheorem{corollary}{Corollary}[section]
\newtheorem{lemma}{Lemma}[section]
\newtheorem{claim}{Claim}[section]
\theoremstyle{remark}
\newtheorem{remark}{Remark}[section]
\newtheorem{example}{Example}[section]
\theoremstyle{definition}
\newtheorem{definition}{Definition}[section]
\numberwithin{equation}{section}
\newcommand{\chapter}{\section} 

\end{comment}

\chapter{The g.c.d.}\label{chapter:the gcd}
In this chapter we relate the g.c.d. to Shahidi's 
$L$-function in the tempered case and provide a lower bound for the
more general case, where $\pi$ is an arbitrary irreducible
representation and $\tau$ is tempered (as always, $\pi$ and $\tau$ are smooth, admissible and generic). This bound is sharp in the
sense that
it coincides with the upper bound (Theorem~\ref{theorem:gcd sub multiplicity first var}) up to the poles of $M_{\tau}(s)$. 
Several other properties proved here, for example the results of
Section~\ref{subsection:gcd for ramified twist of tau} concerning
highly ramified representations, will be used in
Chapter~\ref{chapter:upper boubnds on the gcd}.

Throughout this chapter $\tau$ is always irreducible.

\section{Computation of the g.c.d. for tempered representations}\label{subsection:tempered}
Assume that $\pi$ and $\tau$ are tempered representations (in particular, irreducible). The following proposition is the key ingredient in the proof of 
Theorem~\ref{theorem:gcd for tempered reps}. 
\begin{proposition}\label{proposition:convergence for tempered}
The integral $\Psi(W,f_s,s)$ with $f_s\in\xi(\tau,std,s)$ has no
poles for $\Re(s)>0$.
\end{proposition}
We derive the theorem first. Namely, we prove that $L(\pi\times\tau,s)^{-1}$ divides
$\gcd(\pi\times\tau,s)^{-1}$, $\gcd(\pi\times\tau,s)\in
L(\pi\times\tau,s)M_{\tau}(s)\C[q^{-s},q^s]$ and under a certain assumption on the intertwining operators, $\gcd(\pi\times\tau,s)=L(\pi\times\tau,s)$.

Recall that by
Corollary~\ref{corollary:gamma is identical to local coeff in split case},
$\gamma(\pi\times\tau,\psi,s)$ given by \eqref{eq:gamma def} equals
(up to an invertible factor in $\C[q^{-s},q^s]$) the corresponding
$\gamma$-factor of Shahidi. By Shahidi's definition of the
$\gamma$-factor \cite{Sh3} and \eqref{eq:gamma and epsilon},
\begin{align}\label{eq:back to back quotients gamm aL and gcd}
\frac{\gcd(\pi\times\tau^*,1-s)}{\gcd(\pi\times\tau,s)}\equalun\gamma(\pi\times\tau,\psi,s)\equalun\frac{L(\pi\times\tau^*,1-s)}{L(\pi\times\tau,s)}.
\end{align}
(Recall that $\equalun$ means up to invertible factors in $\C[q^{-s},q^s]$.)
Here the $L$-functions on the \rhs\ are the ones defined by
Shahidi. Casselman and Shahidi \cite{CSh} (Section 4) proved that
when $\pi$ and $\tau$ are (generic) tempered, the $L$-function
$L(\pi\times\tau,s)$ is holomorphic for $\Re(s)>0$. Similarly,
$L(\pi\times\tau^*,1-s)$ is holomorphic for $\Re(s)<1$ 
(since $\tau$ is tempered, $\dualrep{\tau}$ is also tempered so $\tau^*\isomorphic\dualrep{\tau}$ is tempered). Therefore the quotient on the \rhs\ is reduced. It follows immediately that
$L(\pi\times\tau,s)^{-1}$ divides $\gcd(\pi\times\tau,s)^{-1}$.

The integrals $\Psi(W,f_s,s)$ with $f_s\in\xi(\tau,hol,s)$
span a fractional ideal of $\C[q^{-s},q^s]$ which contains $1$,
according to Proposition~\ref{proposition:integral can be made
constant}. 
Thus there is a polynomial $P_0\in\C[X]$ with $P_0(0)=1$, of minimal degree
such that $P_0(q^{-s})\Psi(W,f_s,s)\in\C[q^{-s},q^s]$ for all $W$
and $f_s\in\xi(\tau,hol,s)$. Put
${\gcd}_0(\pi\times\tau,s)=P_0(q^{-s})^{-1}$. Then
${\gcd}_0(\pi\times\tau,s)^{-1}$ divides
$\gcd(\pi\times\tau,s)^{-1}$ and according to
Proposition~\ref{proposition:integral for good section} we can write
\begin{align*}
\gcd(\pi\times\tau,s)={\gcd}_0(\pi\times\tau,s)\ell_{\tau^*}(1-s)P(q^{-s},q^s),
\end{align*}
where $P\in\C[q^{-s},q^s]$ divides
$\ell_{\tau^*}(1-s)^{-1}$. Similarly
\begin{align*}
\gcd(\pi\times\tau^*,1-s)={\gcd}_0(\pi\times\tau^*,1-s)\ell_{\tau}(s)\widetilde{P}(q^{-s},q^s),
\end{align*}
here $\widetilde{P}\in\C[q^{-s},q^s]$ divides
$\ell_{\tau}(s)^{-1}$. Consider the
quotient
\begin{align*}
\frac{\gcd(\pi\times\tau^*,1-s)}{\gcd(\pi\times\tau,s)}=\frac{{\gcd}_0(\pi\times\tau^*,1-s)\ell_{\tau}(s)\widetilde{P}(q^{-s},q^s)}{{\gcd}_0(\pi\times\tau,s)\ell_{\tau^*}(1-s)P(q^{-s},q^s)}.
\end{align*}
Proposition~\ref{proposition:convergence for tempered} implies that
${\gcd}_0(\pi\times\tau^*,1-s)^{-1}$ and
${\gcd}_0(\pi\times\tau,s)^{-1}$ are relatively prime. However,
${\gcd}_0(\pi\times\tau,s)$ and
$\ell_{\tau}(s)\widetilde{P}(q^{-s},q^{s})$ may have common factors
and factors of $\ell_{\tau^*}(1-s)P(q^{-s},q^{s})$ may also appear
in the numerator. Canceling common factors and using \eqref{eq:back
to back quotients gamm aL and gcd} we see that there are
$Q_1,Q_2\in\C[q^{-s},q^s]$ satisfying
\begin{align*}
L(\pi\times\tau,s)=Q_1(q^{-s},q^s){\gcd}_0(\pi\times\tau,s)\ell_{\tau^*}(1-s)P(q^{-s},q^s)Q_2(q^{-s},q^s).
\end{align*}
Here $Q_1$ consists of the common factors of
${\gcd}_0(\pi\times\tau,s)^{-1}$ and
$(\ell_{\tau}(s)\widetilde{P})^{-1}$. The polynomial $Q_2$ divides
$(\ell_{\tau^*}(1-s)P)^{-1}$ (note that
$(\ell_{\tau}(s)\widetilde{P})^{-1},(\ell_{\tau^*}(1-s)P)^{-1}\in\C[q^{-s},q^s]$).
Hence
\begin{align*}
{\gcd}_0(\pi\times\tau,s)\in
Q_1(q^{-s},q^s)^{-1}L(\pi\times\tau,s)\C[q^{-s},q^s]\subset
\ell_{\tau}(s)L(\pi\times\tau,s)\C[q^{-s},q^s].
\end{align*}
Therefore
\begin{align*}
\gcd(\pi\times\tau,s)\in
L(\pi\times\tau,s)M_{\tau}(s)\C[q^{-s},q^s].
\end{align*}

Now assume that the intertwining operators
\begin{align*}
L(\tau,Sym^2,2s-1)^{-1}\intertwiningfull{\tau}{s},\qquad
L(\tau^*,Sym^2,1-2s)^{-1}\intertwiningfull{\tau^*}{1-s}
\end{align*}
are holomorphic. According to equality~\eqref{eq:gamma up to units}, there is some $e(q^{-s},q^{s})\in\C[q^{-s},q^s]^*$ such that
\begin{align*}
\nintertwiningfull{\tau^*}{1-s}&=\gamma(\tau^*,Sym^2,\psi,1-2s)\intertwiningfull{\tau^*}{1-s}\\&=e(q^{-s},q^{s})\frac{L(\tau,Sym^2,2s)}{L(\tau^*,Sym^2,1-2s)}\intertwiningfull{\tau^*}{1-s}.
\end{align*}
Since we assume that $L(\tau^*,Sym^2,1-2s)^{-1}\intertwiningfull{\tau^*}{1-s}$ is holomorphic,
the poles of $\nintertwiningfull{\tau^*}{1-s}$
are contained in the poles of $L(\tau,Sym^2,2s)$, which by
Theorem~\ref{theorem:tempered L
function holomorphic for half plane} lie in $\Re(s)\leq0$.
Thus by Propositions~\ref{proposition:integral for good section} and
\ref{proposition:convergence for tempered} the poles of
$\Psi(W,f_s,s)$ for $f_s\in\xi(\tau,good,s)$ are in $\Re(s)\leq0$
and the same is true for $\gcd(\pi\times\tau,s)$. Similarly we see
that the poles of $\gcd(\pi\times\tau^*,1-s)$ are in $\Re(s)\geq1$.
Thus the quotient $\gcd(\pi\times\tau^*,1-s)\gcd(\pi\times\tau,s)^{-1}$
is reduced and then \eqref{eq:back to back quotients gamm aL and
gcd} implies
\begin{align*}
\gcd(\pi\times\tau,s)=L(\pi\times\tau,s).
\end{align*}
This is an exact equality (i.e., not just up to invertible factors)
since the normalization of these factors is identical.

\begin{proof}[Proof of Proposition~\ref{proposition:convergence for tempered}] 
Consider first the case $\smallidx\leq\bigidx$ and split
$G_{\smallidx}$. By Proposition~\ref{proposition:iwasawa
decomposition of the integral} and Corollary~\ref{corollary:corro to
iwasawa decomposition of the integral}, as meromorphic continuations
$\Psi(W,f_s,s)=\sum_{i=1}^mI_s^{(i)}$, with $I_s^{(i)}$ of the form
\eqref{int:iwasawa decomposition of the integral l<=n split}. Hence
showing that the meromorphic continuation of any integral of the
form \eqref{int:iwasawa decomposition of the integral l<=n split} is
holomorphic for $\Re(s)>0$ implies this also for $\Psi(W,f_s,s)$.

The meromorphic continuation of integral~\eqref{int:iwasawa
decomposition of the integral l<=n split} is a function
$Q\in\C(q^{-s})$ such that for $\Re(s)>>0$, $Q(q^{-s})$ equals the
value of the integral at $s$. We will
prove that the integral (see \eqref{int:iwasawa
decomposition of the integral l<=n split})
\begin{align*}
\int_{A_{\smallidx-1}}\int_{G_1}ch_{\Lambda}(x)|W^{\diamond}|(ax)|W'|(diag(a,\lfloor
x\rfloor,I_{\bigidx-\smallidx})
)
(\absdet{a}[x]^{-1})^{\smallidx-\half\bigidx+\Re(s)-\half}\delta_{B_{G_{\smallidx}}}^{-1}(a)dxda
\end{align*}
is convergent for $\Re(s)>0$ and this convergence is uniform for
$\Re(s)$ in a compact set. This shows that integral~\eqref{int:iwasawa
decomposition of the integral l<=n split} is defined
for all $s$ with $\Re(s)>0$ and is a holomorphic function of $s$ in
this domain. Since it coincides with $Q(q^{-s})$ for $\Re(s)>>0$, it
follows that $Q(q^{-s})$ (as a function of $s$) is holomorphic for
$\Re(s)>0$.

We may already take $s\in\R$. Because on
$A_{\smallidx-1}$,
$\delta_{B_{G_{\smallidx}}}=\absdet{}^{2\smallidx-2\bigidx-1}\delta_{Q_{\bigidx}}\delta_{B_{\GL{\bigidx}}}$
and also
\begin{align*}
\delta_{B_{\GL{\bigidx}}}(diag(I_{\smallidx-1},\lfloor
x\rfloor,I_{\bigidx-\smallidx})
)=|\lfloor
x\rfloor|^{-2\smallidx+\bigidx+1}
=([x]^{-1})^{-2\smallidx+\bigidx+1},
\end{align*}
this integral is bounded from above by
\begin{align}\label{int:absolute value of iwasawa decompoed integral l<=n split}
\int_{A_{\smallidx-1}}\int_{G_1}ch_{\Lambda}(x)|W^{\diamond}|(ax)\delta_{B_{G_{\smallidx}}}^{-\half}(a)
(\delta_{B_{\GL{\bigidx}}}^{-\half}\cdot\absdet{}^s\cdot |W'|)(
diag(a,\lfloor x\rfloor,I_{\bigidx-\smallidx})
)dxda.
\end{align}
Since $|\lfloor x\rfloor|\leq1$ and $W'$ vanishes away from zero
(see Section~\ref{section:whittaker props}), the coordinates of $a$
are all bounded from above. Hence we may simply replace
$ch_{\Lambda}(x)$ in the last integral with a non-negative Schwartz
function $\Phi\in\mathcal{S}(F^{\smallidx})$, which is a function of
$a$ and $\lfloor x\rfloor$. By the Cauchy-Schwarz Inequality
integral~\eqref{int:absolute value of iwasawa decompoed integral
l<=n split} is bounded by the product of square roots of the
following two integrals:
\begin{align}\label{int:absolute value l<=n split SO part}
&\int_{A_{\smallidx-1}}\int_{G_1}|W^{\diamond}|^2(ax)\absdet{a}^s[x]^{-s}\delta_{B_{G_{\smallidx}}}^{-1}(a)dxda,\\
\label{int:absolute value l<=n split GL part with x floor}
&\int_{A_{\smallidx-1}}\int_{G_1}\Phi^2(a,\lfloor
x\rfloor)(\delta_{B_{\GL{\bigidx}}}^{-1}\cdot\absdet{}^s\cdot
|W'|^2)(diag(a,\lfloor x\rfloor,I_{\bigidx-\smallidx})
)dxda.
\end{align}
We can replace the $dx$-integration in \eqref{int:absolute value
l<=n split GL part with x floor} with an integration over
$a_{\smallidx}\in F^*$. Then \eqref{int:absolute value l<=n split GL
part with x floor} is a sum of two integrals - the first with
$|a_{\smallidx}|\leq1$, the second with $|a_{\smallidx}|>1$, both
bounded by an integral of the form
\begin{align}
\label{int:absolute value l<=n split GL part}
&\int_{A_{\smallidx}}|W'|^2(a)\Phi^2(a)\absdet{a}^s\delta_{B_{\GL{\bigidx}}}^{-1}(a)da.
\end{align}

Integrals~\eqref{int:absolute value l<=n split SO part} and
\eqref{int:absolute value l<=n split GL part} converge for $s>0$,
uniformly for $s$ in a compact set, since $\pi$ and $\tau$ are
tempered. This is proved using the asymptotic expansion of
Whittaker functions (\cite{CS2} Section~6, see also \cite{LM}) and
the fact that the exponents of a tempered representation are
non-negative (\cite{W} Proposition~III.2.2).

Note that the convergence of integral~\eqref{int:absolute value l<=n
split GL part} was proved by Jacquet, Piatetski-Shapiro and Shalika
\cite{JPSS} (Section~8) for a square-integrable $\tau$, this implies
the convergence in the tempered case, see Jacquet and Shalika
\cite{JS}. 

Below, we provide a proof for the convergence of \eqref{int:absolute
value l<=n split SO part}.
\begin{claim}\label{claim:convergence of absolute value l<=n split SO
part for s>0} Integral~\eqref{int:absolute value l<=n split SO part}
converges for $s>0$, uniformly for $s$ in a compact set.
\end{claim}


For $\smallidx\leq\bigidx$ and \quasisplit\ $G_{\smallidx}$ one just
repeats the above arguments, ignoring the integration over $G_1$.

Assume $\smallidx>\bigidx$. As above, it is enough to show that an integral $I_s^{(i)}$, now of the form
\eqref{int:iwasawa decomposition of the integral l>n}, is absolutely convergent for $s>0$ and this convergence is uniform for $s$ in a compact set (taking $s\in\R$). Thus we need to bound
\begin{align*}
\int_{A_{\bigidx}}|W^{\diamond}|(a)|W'|(a)\absdet{a}^{\frac32\bigidx-\smallidx+s+\half}\delta_{B_{H_{\bigidx}}}^{-1}(a)da.
\end{align*}
Since $W^{\diamond}$ vanishes away from zero and $\smallidx>\bigidx$, there is
a constant $c$ (depending on $W^{\diamond}$) such that $|a_i|<c$ for all $1\leq
i\leq\bigidx$. 
Therefore we can find a non-negative $\Phi\in
\mathcal{S}(F^{\bigidx})$ such that the last integral is bounded by
\begin{align*}
\int_{A_{\bigidx}}|W^{\diamond}|(a)|W'|(a)\Phi(a)\absdet{a}^{\frac32\bigidx-\smallidx+s+\half}\delta_{B_{H_{\bigidx}}}^{-1}(a)da.
\end{align*}
Using $\delta_{B_{H_{\bigidx}}}=\delta_{Q_{\bigidx}}\delta_{B_{\GL{\bigidx}}}$ this becomes
\begin{align*}
\int_{A_{\bigidx}}|W^{\diamond}|(a)\delta_{B_{H_{\bigidx}}}^{-\half}(a)|W'|(a)\Phi(a)\absdet{a}^{\bigidx-\smallidx+s+\half}\delta_{B_{\GL{\bigidx}}}^{-\half}(a)da.
\end{align*}
As above the Cauchy-Schwarz Inequality 
implies that we should bound
\begin{align*}
\int_{A_{\bigidx}}|W^{\diamond}|^2(a)\absdet{a}^s\delta_{B_{G_{\smallidx}}}^{-1}(a)da,
\end{align*}
which is bounded by \eqref{int:absolute value l<=n split SO part}
multiplied by a measure constant (the integrand of
\eqref{int:absolute value l<=n split SO part} is smooth), and an
integral of the form
\begin{align*}
\int_{A_{\bigidx}}|W'|^2(a)\Phi^2(a)\absdet{a}^s\delta_{B_{\GL{\bigidx}}}^{-1}(a)da,
\end{align*}
bounded in a similar manner as \eqref{int:absolute value l<=n split
GL part}.
\end{proof}

\begin{proof}[Proof of Claim~\ref{claim:convergence of absolute value l<=n split SO part for s>0}]
If $\smallidx=1$, we may assume $W^{\diamond}=\pi$, then the fact
that $\pi$ is tempered implies $|W^{\diamond}|=1$ and the assertion
of the claim clearly holds. Now assume $\smallidx>1$. We need some
notation. Recall that $\Delta_{G_{\smallidx}}$ denotes the set of
simple roots of $G_{\smallidx}$ 
and let $R^+$ be the set of positive roots. Any
$\Delta\subset\Delta_{G_{\smallidx}}$ determines a parabolic
subgroup $P_{\Delta}$, e.g. $P_{\emptyset}=B_{G_{\smallidx}}$, $P_{\Delta_{G_{\smallidx}}}=G_{\smallidx}$. For a
parabolic subgroup $P=P_{\Delta}$ let $R_P^+$ be the set of positive
roots which belong to the Levi part of $P$, for instance
$R^+=R_{G_{\smallidx}}^+$. Also put $T=T_{G_{\smallidx}}$. With this
notation $\delta_{P}(t)=\prod_{r\in R^+\setminus R_P^+}|r(t)|$ for
$t\in T$.

We formulate the results of Lapid and Mao \cite{LM} for
$W^{\diamond}\in\Whittaker{\pi}{\psi_{\gamma}^{-1}}$ (here $G_{\smallidx}$ is
split). 
For $t\in T$ define the valuation vector of $t$,
$H(t)\in\Integers^{|\Delta_{G_{\smallidx}}|}$, by
$H(t)_{\alpha}=\vartheta(\alpha(t))$ where
$\alpha\in\Delta_{G_{\smallidx}}$ (recall that
$|x|=q^{-\vartheta(x)}$ for $x\in F^*$). For $P=P_{\Delta}$ let
$M_P\subset F^{|\Delta_{G_{\smallidx}}|}$ be given by
\begin{align*}
M_P=\prod_{\alpha\in\Delta_{G_{\smallidx}}}\begin{cases}F^*&\alpha\in\Delta,\\F&\alpha\notin\Delta.\end{cases}
\end{align*}
Let $\theta$ belong to $\mathcal{S}(M_P)$ - the space of Schwartz
functions on $M_P$. Then $\theta$ defines a function on $T$ by
$\theta(t)=\theta(\eta(t))$, where $\eta(t)\in
(F^*)^{|\Delta_{G_{\smallidx}}|}$ is given by
$\eta(t)_{\alpha}=\alpha(t)$ for $\alpha\in\Delta_{G_{\smallidx}}$.
For $x\in F$, if there is some constant $c>0$ such that $|x|<c$,
write $x\prec0$. If moreover $c^{-1}<|x|<c$, write $x\prec1$. If
$t\in\support{\theta}$, for all $\alpha\in\Delta_{G_{\smallidx}}$ we
have $\alpha(t)\prec0$, and if $\alpha\in\Delta$, also
$\alpha(t)\prec1$.

Let $Z_M$ be the center of the Levi part $M$ of $P$. To a character
$\chi$ of $Z_M$ and an integer $m\geq1$, let $\mathcal{F}_{\chi,m}$
be the space of functions $t\mapsto\chi(t)Q(H(t))$ where $Q$ is a
polynomial in the coordinates of $H(t)$ of degree less than $m$.
Then define
\begin{align*}
\mathcal{F}_{P,\chi,m}=\Span{\C}\setof{\xi(t)\theta(t)}{\xi\in\mathcal{F}_{\chi,m},\theta\in\mathcal{S}(M_P)}.
\end{align*}
Also let $\mathcal{E}_{P}(\pi)$ denote the set of cuspidal exponents
of $\pi$ with respect to $P$. We are ready to state the asymptotic
expansion. 
According to \cite{LM} (Theorem~3.1) there exists
$m\geq1$ such that for any $W\in\Whittaker{\pi}{\psi_{\gamma}^{-1}}$
there are functions
$\Phi_{P_{\Delta},\chi}\in\mathcal{F}_{P_{\Delta},\chi,m}$
satisfying
\begin{align*}
W(t)=\sum_{\Delta\subset\Delta_{G_{\smallidx}}}\delta_{P_{\Delta}}^{\half}(t)\sum_{\chi\in\mathcal{E}_{P_{\Delta}}(\pi)}\Phi_{P_{\Delta},\chi}(t)\qquad(\forall
t\in T).
\end{align*}

Putting this formula into \eqref{int:absolute value l<=n split SO
part} we see that the integral equals
\begin{align*}
\int_{T}\sum_{\Delta_1,\Delta_2\subset\Delta_{G_{\smallidx}}}
\sum_{\chi_1\in\mathcal{E}_{P_1}(\pi)}
\sum_{\chi_2\in\mathcal{E}_{P_2}(\pi)}
(\delta_{B_{G_{\smallidx}}}^{-1}\delta_{P_1}^{\half}\delta_{P_2}^{\half}\Phi_{P_1,\chi_1}\overline{\Phi_{P_2,\chi_2}})(t)
(\prod_{1\leq i\leq\smallidx-1}|t_i|)^{s}[t_{\smallidx}]^{-s}dt.
\end{align*}
Here $P_i=P_{\Delta_i}$,
$[t_{\smallidx}]=\max(|t_{\smallidx}|,|t_{\smallidx}|^{-1})$ and
recall that $s$ is real. In order to bound this it is enough to show
that for any $\Delta_i$ and $\chi_i$, $i=1,2$,
\begin{align*}
\int_{T}
(\delta_{B_{G_{\smallidx}}}^{-1}\delta_{P_1}^{\half}\delta_{P_2}^{\half}|\Phi_{P_1,\chi_1}\overline{\Phi_{P_2,\chi_2}}|)(t)
(\prod_{1\leq
i\leq\smallidx-1}|t_i|)^{s}[t_{\smallidx}]^{-s}dt<\infty.
\end{align*}
We may assume $\Phi_{P_i,\chi_i}(t)=\chi_i(t)Q_i(H(t))\theta_i(t)$
with $\theta_i\in\mathcal{S}(M_{P_i})$.

Let $t\in T$. First we show that the modulus characters may be
ignored in the computation. Let $r\in R^+$ and write
$r=\sum_{\alpha\in\Delta_{G_{\smallidx}}}n_{\alpha}\alpha$ with
integer coefficients $n_{\alpha}\geq0$ (then
$r(t)=\prod_{\alpha\in\Delta_{G_{\smallidx}}}\alpha(t)^{n_{\alpha}}$).
If $\alpha'\in \Delta_1\cup\Delta_2$, without loss of generality
$\alpha'\in\Delta_1$. Since we may assume $t\in\support{\theta_1}$
(otherwise $\theta_1(t)=0$), $\alpha'(t)\prec1$. If
$\alpha'\notin\Delta_1\cup\Delta_2$ and $n_{\alpha'}\ne0$, we get
$r\notin R_{P_1}^+\cup R_{P_2}^+$ (because $R_{P_i}^+$ is spanned by
$\Delta_i$, see \cite{Spr} Section~8.4). Hence $r(t)$ appears in
both products $\delta_{P_1}^{\half}(t)$ and
$\delta_{P_2}^{\half}(t)$ and contributes $\alpha'(t)^{\half
n_{\alpha'}}$ to each, canceling the factor
$\alpha'(t)^{-n_{\alpha'}}$ appearing in
$\delta_{B_{G_{\smallidx}}}^{-1}(t)$ due to $r(t)$. This shows that
any factor $\alpha'(t)$ appearing in the product
$(\delta_{B_{G_{\smallidx}}}^{-1}\delta_{P_1}^{\half}\delta_{P_2}^{\half})(t)$
(in the expression of any $r\in R^+$) is bounded from above and
below. Thus we can ignore
$\delta_{B_{G_{\smallidx}}}^{-1}\delta_{P_1}^{\half}\delta_{P_2}^{\half}$
altogether (if the exponent of $\alpha'(t)$ in
$(\delta_{B_{G_{\smallidx}}}^{-1}\delta_{P_1}^{\half}\delta_{P_2}^{\half})(t)$
is positive, we use the upper bound, otherwise we use the lower
bound).

Second, assuming $t\in\support{\theta_1}$, we get $\alpha(t)\prec0$
for all $\alpha\in\Delta_{G_{\smallidx}}$. In particular
$t_{\smallidx-1}t_{\smallidx}^{-1}\prec0$ and
$t_{\smallidx-1}t_{\smallidx}\prec0$, hence $t_{\smallidx-1}\prec0$.
Then $t_{\smallidx-2}t_{\smallidx-1}^{-1}\prec0$ implies
$t_{\smallidx-2}\prec0$ and it follows that $t_i\prec0$ for all
$1\leq i\leq\smallidx-1$. Since the functions $Q_i$ are polynomials
in the valuations of the simple roots of $t$, the convergence of the
integral depends only on the factor
$|\chi_1|(t)|\chi_2|(t)(\prod_{1\leq
i\leq\smallidx-1}|t_i|)^{s}[t_{\smallidx}]^{-s}$. Because $\chi_1$
and $\chi_2$ are \nonnegative\ exponents ($\pi$ is tempered),
whenever $s>0$ the integral converges and it is clear that this
convergence is uniform, when $s$ belongs to a compact set.
\end{proof} 

\begin{proof}[Proof of Corollary~\ref{corollary:tempered theorem holds for essentially
tempered}] 
Let $\pi$ be tempered and let $\tau$ be essentially tempered. Take
$v\in\C$ such that $\tau_v=\absdet{}^v\tau$ is tempered. According
to the definitions,
\begin{align*}
\begin{array}{ll}
L(\pi\times\tau_v,s)=L(\pi\times\tau,s+v),&
\gcd(\pi\times\tau_v,s)=\gcd(\pi\times\tau,s+v),\\
\intertwiningfull{\tau_v}{s}=\intertwiningfull{\tau}{s+v}&
\intertwiningfull{(\tau_v)^*}{1-s}=\intertwiningfull{\tau^*}{1-(s+v)}
\end{array}
\end{align*}
and $M_{\tau_v}(s)=M_{\tau}(s+v)$. Applying Theorem~\ref{theorem:gcd
for tempered reps} to $\pi$ and $\tau_v$ yields
\begin{align*}
&L(\pi\times\tau,s+v)\in
\gcd(\pi\times\tau,s+v)\C[q^{-s},q^s],\\\notag
&\gcd(\pi\times\tau,s+v)\in
L(\pi\times\tau,s+v)M_{\tau}(s+v)\C[q^{-s},q^s].
\end{align*}
Also $L(\tau_v,Sym^2,2s-1)=L(\tau,Sym^2,2(s+v)-1)$, hence if
$L(\tau,Sym^2,2s-1)^{-1}\intertwiningfull{\tau}{s}$ is holomorphic,
so is
\begin{align*}
L(\tau_v,Sym^2,2s-1)^{-1}\intertwiningfull{\tau_v}{s}=L(\tau,Sym^2,2(s+v)-1)^{-1}\intertwiningfull{\tau}{s+v}.
\end{align*}
Thus if $L(\tau,Sym^2,2s-1)^{-1}\intertwiningfull{\tau}{s}$ and
$L(\tau^*,Sym^2,1-2s)^{-1}\intertwiningfull{\tau^*}{1-s}$ are
holomorphic,
\begin{align*}
\gcd(\pi\times\tau,s+v)=\gcd(\pi\times\tau_v,s)=L(\pi\times\tau_v,s)=L(\pi\times\tau,s+v).
\end{align*}
Changing $s\mapsto s-v$, Theorem~\ref{theorem:gcd for tempered reps}
also holds when $\tau$ is essentially tempered.
\end{proof} 

\section{Weak lower bound on the g.c.d.}\label{embedding poles in pi}
Let $\pi$ be parabolically induced from a representation $\sigma\otimes\pi'$ of $L_k$, where $\sigma$ is a representation of $\GL{k}$, and let $\tau$ be an irreducible representation of $\GL{\bigidx}$. We consider cases where the poles of the Rankin-Selberg $\GL{k}\times\GL{\bigidx}$ integrals for $\sigma\times\tau$ are contained in
$\gcd(\pi\times\tau,s)$. This provides a weak lower bound on
$\gcd(\pi\times\tau,s)$, which will be strengthened in the process
of proving Theorem~\ref{theorem:lower bound for gcd 2}. 
\begin{lemma}\label{lemma:sub multiplicativity embed some sections in first second var k<l or k=l>n}
Let
$\pi=\cinduced{\overline{P_k}}{G_{\smallidx}}{\sigma\otimes\pi'}$
with $0<k\leq\smallidx$, where $\sigma$ is a representation of
$\GL{k}$ and $\pi'$ is a representation of $G_{\smallidx-k}$. If
$k=\smallidx$, assume $\smallidx>\bigidx$ and that $\pi$ is induced
from either $\overline{P_{\smallidx}}$ or
$\rconj{\kappa}(\overline{P_{\smallidx}})$. Then
$L(\sigma\times\tau,s)\in\gcd(\pi\times\tau,s)\C[q^{-s},q^s]$.
\end{lemma}

\begin{proof}[Proof of Lemma~\ref{lemma:sub multiplicativity embed some sections in first second var k<l or k=l>n}] 
Let $W_{\sigma}\in\Whittaker{\sigma}{\psi^{-1}}$ and
$W_{\tau}\in\Whittaker{\tau}{\psi}$ be arbitrary. Assume that
$W_{\sigma}$ (resp. $W_{\tau}$) is right-invariant by
$\mathcal{N}_{\GL{k},k_0}$ (resp. $\mathcal{N}_{\GL{\bigidx},k_0}$).
Select $\varphi$ in the space of $\pi$ with support in
$\overline{P_k}\mathcal{N}_{G_{\smallidx},k_1}$, $k_1>>k_0$, which
is right-invariant by $\mathcal{N}_{G_{\smallidx},k_1}$ and such
that $\varphi(a,1,1)=\delta_{P_k}^{-\half}(a)W_{\sigma}(a)$ for
$a\in\GL{k}$ ($\delta_{P_k}(a)=\absdet{a}^{2\smallidx-k-1}$). Then
$\varphi$ defines a Whittaker function
$W_{\varphi}\in\Whittaker{\pi}{\psi_{\gamma}^{-1}}$ by virtue of
\eqref{eq:whittaker for pi, k<l}. Let $W_1$ be as in
Lemma~\ref{lemma:W with small support}, with $j$ to be specified
below, defined for $W_{\varphi}$, with $k_2>>k_1$ (i.e.,
$W_{\varphi}$ is $W_0$ of the lemma). The additional option in the
lemma for $W_1$ concerning the last row of $a\in\GL{j}$ is not used
yet. If $\bigidx<\smallidx$, set
$W=(w^{\smallidx,\bigidx})^{-1}\cdot W_1$ and otherwise $W=W_1$.
Finally let $f_s=ch_{\mathcal{N}_{H_{\bigidx},k_3},W_{\tau},s}\in\xi(\tau,std,s)$, 
$k_3>>k_2$. 
The proof varies according to the relative sizes
of $k$, $\smallidx$ and $\bigidx$.
\begin{enumerate}[leftmargin=*]
\item\label{case:embedding pi n<k} $\bigidx<k<\smallidx$. In the selection of $W$ above, Lemma~\ref{lemma:W with small
support} is applied with $j=\bigidx$. The starting point is
\begin{align*}
\Psi(W,f_s,s)=\int_{\lmodulo{U_{H_{\bigidx}}}{H_{\bigidx}}}
(\int_{R^{\smallidx,\bigidx}}W(rw^{\smallidx,\bigidx}h)dr)f_s(h,1)dh.
\end{align*}
Using the formula
\begin{align*}
\int_{\lmodulo{U_{H_{\bigidx}}}{H_{\bigidx}}}F(h)dh=
\int_{\lmodulo{Z_{\bigidx}}{\GL{\bigidx}}}\int_{\overline{U_{\bigidx}}}F(au)\absdet{a}^{-\bigidx}duda,
\end{align*}
the integral becomes
\begin{align*}
\int_{\lmodulo{Z_{\bigidx}}{\GL{\bigidx}}}\int_{\overline{U_{\bigidx}}}
(\int_{R^{\smallidx,\bigidx}}W(rw^{\smallidx,\bigidx}au)dr)f_s(u,a)\absdet{a}^{-\half\bigidx+s-\half}duda.
\end{align*}
By our choice of $f_s$, $f_s(u,a)=0$ unless $u\in\mathcal{N}_{H_{\bigidx},k_3}$ in which case $f_s(u,a)=f_s(1,a)$. Taking $k_3$ large enough, $W$ is right-invariant for such elements $u$. Therefore the $du$-integration can be ignored. Also
conjugating $R^{\smallidx,\bigidx}$ by $\rconj{(w^{\smallidx,\bigidx})^{-1}}a$ changes $dr$ by
$\absdet{a}^{\bigidx-\smallidx+1}$. Thus we get, up to some constant
depending on $k_3$,
\begin{align*}
\int_{\lmodulo{Z_{\bigidx}}{\GL{\bigidx}}}
(\int_{R^{\smallidx,\bigidx}}W(\left(\begin{array}{ccccc}a\\r&I_{\smallidx-\bigidx-1}\\&&I_2\\
&&&I_{\smallidx-\bigidx-1}\\&&&r'&a^*\end{array}\right)w^{\smallidx,\bigidx}
)dr)W_{\tau}(a)\absdet{a}^{\half\bigidx-\smallidx+s+\half}da.
\end{align*}
According to the selection of $W$ and $W_1$ the $dr$-integration is ignored and (up
to a constant depending on $k_2$) we have
\begin{align}\label{int:before shifting a across V_k}
\int_{\lmodulo{Z_{\bigidx}}{\GL{\bigidx}}}W_{\varphi}(diag(a,I_{2(\smallidx-\bigidx)},a^*))W_{\tau}(a)\absdet{a}^{\half\bigidx-\smallidx+s+\half}da.
\end{align}
Now substitute \eqref{eq:whittaker for pi, k<l} for $W_{\varphi}$. Integral~\eqref{int:before
shifting a across V_k} equals
\begin{align*}
\int_{\lmodulo{Z_{\bigidx}}{\GL{\bigidx}}}
(\int_{V_k}\varphi(v\cdot diag(a,I_{2(\smallidx-\bigidx)},a^*),I_k,I_{2(\smallidx-k)})\psi_{\gamma}(v)dv)
W_{\tau}(a)\absdet{a}^{\half\bigidx-\smallidx+s+\half}da.
\end{align*}
Since $\bigidx<k$, $diag(a,I_{2(\smallidx-\bigidx)},a^*)$ stabilizes $\psi_{\gamma}$ (and normalizes $V_k$) whence the last
integral equals
\begin{align*}
\int_{\lmodulo{Z_{\bigidx}}{\GL{\bigidx}}}
(\int_{V_k}\varphi(v,diag(a,I_{k-\bigidx}),1)\psi_{\gamma}(v)dv)W_{\tau}(a)\absdet{a}^{s-\half(k-\bigidx)}da.
\end{align*}
By our choice of $\varphi$ the $dv$-integration is discarded (up to a constant depending on $k_1$) whence
we have
\begin{align*}
\int_{\lmodulo{Z_{\bigidx}}{\GL{\bigidx}}}
W_{\sigma}(diag(a,I_{k-\bigidx}))W_{\tau}(a)\absdet{a}^{s-\half(k-\bigidx)}da.
\end{align*}
Our domain of absolute convergence of $\Psi(W,f_s,s)$, where this
computation is justified, is a right half-plane. Hence for
$\Re(s)>>0$, the last integral is equal to
$\Psi(W_{\tau},W_{\sigma},0,s)$ defined in
Section~\ref{subsection:func eq gln glm}. Because $W_{\sigma}$ and
$W_{\tau}$ are arbitrary and according to the definition of
$L(\sigma\times\tau,s)$ (see Section~\ref{subsection:func eq gln
glm}), this shows
\begin{align*}
L(\sigma\times\tau,s)\in\gcd(\pi\times\tau,s)\C[q^{-s},q^s].
\end{align*}
\item\label{case:embedding pi k<n<l} $k<\bigidx<\smallidx$. In choosing $W_1$ we take $j=k$. The starting form of the integral is
the same as in case~\eqref{case:embedding pi n<k} but now we utilize
the formula
\begin{align*}
\int_{\lmodulo{U_{H_{\bigidx}}}{H_{\bigidx}}}F(h)dh=
\int_{\lmodulo{Z_k}{\GL{k}}}\int_{\overline{Z_{k,\bigidx-k}}}\int_{\overline{B_{\GL{\bigidx-k}}}}\int_{\overline{U_\bigidx}}F(azbu)
\absdet{a}^{-2\bigidx+k}\delta(b)dudbdzda.
\end{align*}
Here $\GL{k}<M_k<H_{\bigidx}$,
\begin{align*}
\overline{Z_{k,\bigidx-k}}=\{\left(\begin{array}{ccccc}I_k\\z&I_{\bigidx-k}\\&&1\\&&&I_{\bigidx-k}\\&&&z'&I_k\\\end{array}\right)\},
\end{align*}
$\overline{B_{\GL{\bigidx-k}}}<M_{\bigidx-k}<H_{\bigidx-k}$ and
$\delta$ is a modulus character.
 Note that
$f_s(azbu,1)=\absdet{ab}^{\half\bigidx+s-\half}f_s(u,azb)$. As above
it follows that $u$ can be ignored. This leads to
\begin{align*}
&\int_{\lmodulo{Z_k}{\GL{k}}}\int_{\overline{Z_{k,\bigidx-k}}}\int_{\overline{B_{\GL{\bigidx-k}}}}
(\int_{R^{\smallidx,\bigidx}}W(\left(\begin{array}{cccc}a\\z&b\\r_1&r_2&I_{\smallidx-\bigidx-1}\\&&&1\\\end{array}\right)w^{\smallidx,\bigidx}
)d(r_1,r_2))\\\notag&W_{\tau}(azb)\delta(b)\absdet{b}^{\frac32\bigidx-\smallidx+s+\half}\absdet{a}^{-\half\bigidx+k-\smallidx+s+\half}dbdzda,
\end{align*}
where the matrix in $\GL{\smallidx}$ in the argument of $W$ is regarded as an element of $L_{\smallidx}$.
By our choice of $W$ and because $k_2>k_0$, the $d(r_1,r_2)dbdz$-integration reduces to a
constant and we have
\begin{align}\label{int:case embedding pi k<n<l before stabilizing explanation}
& \int_{\lmodulo{Z_k}{\GL{k}}}
(\int_{V_k}\varphi(v\cdot diag(a,I_{2(\smallidx-k)},a^*)
,1,1)\psi_{\gamma}(v)dv)\\\notag&W_{\tau}(diag(a,I_{\bigidx-k}))\absdet{a}^{-\half\bigidx+k-\smallidx+s+\half}da.
\end{align}
Note that $a\in\GL{k}$ does not stabilize $\psi_{\gamma}$. 
Write $a=x_ay_ab_a$ with $x_a\in F^*$ in the center of $\GL{k}$,
$y_a\in Y_k$ ($Y_k$ - the mirabolic subgroup) and $b_a\in
K_{\GL{k}}(=\GLF{k}{\mathcal{O}})$. Since $W_{\tau}$ vanishes away from zero and $k<\bigidx$, $W_{\tau}(diag(a,I_{\bigidx-k}))$ vanishes unless $|x_a|<c$ where
$c$ is some positive constant depending on $W_{\tau}$. If
$v^{(k+1)}\in F^k$ is the column containing the first $k$ rows of
the $(k+1)$-th column of $v$, $\psi_{\gamma}(v)=\psi((v^{(k+1)})_k)$
($k<\smallidx-1$) and
\begin{align*}
&\int_{V_k}\varphi(v\cdot diag(a,I_{2(\smallidx-k)},a^*)
,1,1)\psi_{\gamma}(v)dv\\\notag&=
\delta_{P_k}^{\half}(a)
\int_{V_k}\varphi(v,a,1)\psi(((x_ay_ab_a)v^{(k+1)})_k)dv.
\end{align*}
According to our choice of $\varphi$, the integrand vanishes unless
$v\in\mathcal{N}_{G_{\smallidx},k_1}$ and if we choose $k_1$ large
enough, depending on the constant $c$,
$\psi(((x_ay_ab_a)v^{(k+1)})_k)$ will be identically $1$ for all
$x_a$, $y_a$ and $b_a$ with $|x_a|<c$. Hence as above we get
\begin{align*}
\int_{V_k}\varphi(v,a,1)\psi(((x_ay_ab_a)v^{(k+1)})_k)dv=W_{\sigma}(a).
\end{align*}
Therefore the integral equals
\begin{align*}
\int_{\lmodulo{Z_k}{\GL{k}}}W_{\sigma}(a)W_{\tau}(diag(a,I_{\bigidx-k}))\absdet{a}^{s-\half(\bigidx-k)}da
=\Psi(W_{\sigma},W_{\tau},0,s).
\end{align*}

\item\label{case:embedding pi l>n} $\bigidx<k=\smallidx$. Assume that $\pi$ is induced from
$\overline{P_{\smallidx}}$. The same substitutions and arguments of
case~\eqref{case:embedding pi n<k} apply here as well, except that
we plug \eqref{eq:whittaker for pi, k=l} instead of
\eqref{eq:whittaker for pi, k<l} into \eqref{int:before shifting a
across V_k}. Now if $\bigidx<\smallidx-1$,
$diag(a,I_{2(\smallidx-\bigidx)},a^*)$ with $a\in\GL{\bigidx}$
stabilizes $\psi_{\gamma}(v)=\psi(-\gamma
v_{\smallidx-1,\smallidx+1})$ and the result follows. Otherwise
$\bigidx=\smallidx-1$, continue as in case~\eqref{case:embedding pi
k<n<l} using the fact that $W_{\sigma}$ vanishes away from zero
(instead of $W_{\tau}$). We obtain
\begin{align*}
\int_{V_{\smallidx}}\varphi(v\cdot diag(a,I_{2(\smallidx-\bigidx)},a^*),d_{\gamma})\psi_{\gamma}(v)dv=
\delta_{P_{\smallidx}}^{\half}(a)W_{\sigma}(diag(a,I_{k-\bigidx})).
\end{align*}
The result follows when this is put into \eqref{int:before shifting
a across V_k}. When $\pi$ is induced from
$\rconj{\kappa}(\overline{P_{\smallidx}})$ we utilize
\eqref{eq:whittaker for pi, k=l 2}.

\item\label{case:embedding pi n=k} $\bigidx=k<\smallidx$. Contrary to
case~\eqref{case:embedding pi n<k}, $a$ does not stabilize
$\psi_{\gamma}$, while the method of case~\eqref{case:embedding pi
k<n<l} falls short because the properties of $W_{\tau}$ do not allow
to bound $x_a$. Following Cogdell and Piatetski-Shapiro \cite{CPS2}
(Section~3), it is enough to obtain two types of integrals:
$\Psi(W_{\sigma},W_{\tau},\Phi,s)$ where $\Phi\in\mathcal{S}(F^{k})$
satisfies $\Phi(0)\ne0$ (any such $\Phi$ is sufficient) and
\begin{align}
\label{int:glm gln non exceptional poles integrals}
&\int_{\lmodulo{Z_k}{Y_k}}W_{\sigma}(y)W_{\tau}(y)\absdet{y}^{s-1}d_ry.
\end{align}
In fact, assume that $L(\sigma\times\tau,s)$ has a pole at $s_0$. Then the
residue of $\Psi(W_{\sigma},W_{\tau},\Phi,s_0)$ defines a
\nontrivial\ trilinear form on
$\Whittaker{\sigma}{\psi^{-1}}\times\Whittaker{\tau}{\psi}\times\mathcal{S}(F^{k})$.
If this form vanishes identically on the subspace
$\mathcal{S}_0=\setof{\Phi\in \mathcal{S}(F^{k})}{\Phi(0)=0}$, it
represents a \nontrivial\ trilinear form on
$\Whittaker{\sigma}{\psi^{-1}}\times\Whittaker{\tau}{\psi}\times\lmodulo{\mathcal{S}_0}{\mathcal{S}(F^{k})}$.
Then the pole is obtained by $\Psi(W_{\sigma},W_{\tau},\Phi,s)$ with any $\Phi$ such that $\Phi(0)\ne0$. Otherwise, $s_0$
is a pole of some $\Psi(W_{\sigma},W_{\tau},\Phi,s)$ with
$\Phi\in\mathcal{S}_0$, in which case using the Iwasawa
decomposition the integral is seen to be equal to a sum of
integrals, each of the form \eqref{int:glm gln non exceptional poles
integrals}.

We show that both types of integrals can be obtained.
As in case~\eqref{case:embedding pi n<k} starting from
$\Psi(W,f_s,s)$, where $W_1$ is selected with $j=k$ $(=\bigidx)$ we
reach \eqref{int:before shifting a across V_k}. In general our
selection of $\varphi$ implies
\begin{align*}
\int_{V_k}\varphi(va,1,1)\psi_{\gamma}(v)dv=
\delta_{P_k}^{\half}(a)W_{\sigma}(a)\int_{V_k\cap\mathcal{N}_{G_{\smallidx},k_1}}\psi_{\gamma}(\rconj{a^{-1}}v)dv.
\end{align*}
Writing $a=x_ay_ab_a$ with the above notation we see that there is a
constant $c>0$ depending on $\psi_{\gamma}$ and on $k_1$, such that
for $|x_a|\geq c$ the integral on the \rhs\ vanishes and for
$|x_a|<c$, we get $\delta_{P_k}^{\half}(a)W_{\sigma}(a)$ multiplied by a
measure constant depending on $k_1$.
Note that here (as opposed to case \eqref{case:embedding pi k<n<l})
it is possible that $k=\smallidx-1$, then
$\psi_{\gamma}(v)=\psi(\alpha_1 v_{\smallidx-1,\smallidx}+\alpha_2
v_{\smallidx-1,\smallidx+1})$ where $\alpha_1,\alpha_2\in F$ depend
on whether $G_{\smallidx}$ is split or not. 

Let $\Lambda_c=\setof{a\in\GL{k}}{|x_a|<c}$. The integral becomes
\begin{align*}
\int_{\lmodulo{Z_k}{\GL{k}}}W_{\sigma}(a)W_{\tau}(a)ch_{\Lambda_c}(a)\absdet{a}^sda.
\end{align*}
We can replace $ch_{\Lambda_c}(a)$ with $\Phi(\eta_k a)$,
$\eta_k=(0,\ldots,0,1)$, for some $\Phi\in\mathcal{S}(F^k)$ such
that $\Phi(0)\ne0$. Hence we have $\Psi(W_{\sigma},W_{\tau},\Phi,s)$
for arbitrary $W_{\sigma}$ and $W_{\tau}$. 

Now we turn to integrals of the form \eqref{int:glm gln non
exceptional poles integrals}, where we can assume $k>1$. Here $W_1$
is chosen using Lemma~\ref{lemma:W with small support} with $j=k$,
defined for $W_{\varphi}$ with $k_2>>k_1$, and so that $W_1$
vanishes on $a\in\GL{k}$ unless $\eta_ka$ lies in
$\eta_k+\MatF{1\times k}{\mathcal{P}^{k_2}}$. The constant $k_2$ is
large enough so that $W_{\varphi}$ is right-invariant on
$\mathcal{N}_{G_{\smallidx},k_2}$. As in case~\eqref{case:embedding
pi n<k}, $\Psi(W,f_s,s)$ becomes
\begin{align*}
\int_{\lmodulo{Z_k}{\GL{k}}}
(\int_{R^{\smallidx,k}}W(\left(\begin{array}{ccc}a\\r&I_{\smallidx-k-1}\\&&1\\\end{array}\right)w^{\smallidx,k}
)dr)W_{\tau}(a)\absdet{a}^{\half k-\smallidx+s+\half}da.
\end{align*}
As above the $dr$-integration may be ignored. We use the following
formula, derived using $\GL{k}=P_{k-1,1}K_{\GL{k}}$:
\begin{align*}
\int_{\lmodulo{Z_k}{\GL{k}}}F(g)dg=\int_{K_{\GL{k}}}\int_{\lmodulo{Z_k}{Y_k}}\int_{F^*}F(yxb)\absdet{y}^{-1}|x|^{k-1}d^*xd_rydb.
\end{align*}
Here $x=\left(\begin{array}{cc}I_{k-1}\\&x\end{array}\right)$. The
integral is now
\begin{align*}
&\int_{K_{\GL{k}}}\int_{\lmodulo{Z_k}{Y_k}}\int_{F^*} W_1(
diag(yxb,I_{2(\smallidx-k)},(yxb)^*))W_{\tau}(yxb)\\\notag&\absdet{y}^{\half
k-\smallidx+s-\half}|x|^{\frac32k-\smallidx+s-\half}d^*xd_rydb.
\end{align*}
Denote
\begin{align*}
b=\left(\begin{array}{cc}b_1&b_2\\b_3&b_4\end{array}\right)\in
K_{\GL{k}},\qquad b_1\in\Mat{k-1\times k-1}.
\end{align*}
The condition imposed by $W_1$ on $\eta_kyxb=\eta_kxb$ is that
$xb_3\in\MatF{1\times k-1}{\mathcal{P}^{k_2}}$ and
$xb_4\in1+\mathcal{P}^{k_2}$. When this is fulfilled, $|x|=1$,
$W_1(yxb)=W_{\varphi}(yxb)$ and we have a decomposition of $xb$ as
\begin{align*}
&\left(\begin{array}{cc}b_1&b_2\\xb_3&xb_4\end{array}\right) \\\notag
&=
\left(\begin{array}{cc}b_1-b_2b_4^{-1}b_3&b_2(xb_4)^{-1}\\0&1\end{array}\right)
\left(\begin{array}{cc}I_{k-1}&0\\xb_3&xb_4\end{array}\right)\in
(Y_k\cap K_{\GL{k}})\mathcal{N}_{\GL{k},k_2}.
\end{align*}
Since $W_{\varphi}$ (as well as $W_{\tau}$) is right-invariant on
$\mathcal{N}_{\GL{k},k_2}$ and the measure $d_ry$ is invariant for
translations on the right, we get that up to a constant the integral
equals
\begin{align*}
&\int_{\lmodulo{Z_k}{Y_k}}W_{\varphi}(diag(y,I_{2(\smallidx-k)},y^*))
W_{\tau}(y)\absdet{y}^{\half k-\smallidx+s-\half}d_ry.
\end{align*}
Here the hidden constant equals the volume of the subsets of
$K_{\GL{k}}$ and $F^*$ for which $\eta_kxb$ lies in $\eta_k+\MatF{1\times
k}{\mathcal{P}^{k_2}}$. We plug in \eqref{eq:whittaker for pi, k<l}
and get
\begin{align*}
&\int_{\lmodulo{Z_k}{Y_k}}
(\int_{V_k}\varphi(v\cdot diag(y,I_{2(\smallidx-k)},y^*)
,1,1)\psi_{\gamma}(v)dv)
W_{\tau}(y)\absdet{y}^{\half k-\smallidx+s-\half}d_ry.
\end{align*}
Since $Y_k$ stabilizes $\psi_{\gamma}(v)$ we can shift $y$ to the
second argument of $\varphi$ and obtain \eqref{int:glm gln non
exceptional poles integrals}.

\item\label{case:embedding pi l<=n} $\bigidx\geq\smallidx>k$. 
Again $W_1$ is chosen with $j=k$.
Here, as opposed to the previous cases, the section $f_s$ is
selected according to Lemma~\ref{lemma:f with small support} for
$t_{\gamma}^{-1}\cdot W_{\tau}$, with $k_3>>k_2$ large so that $W$
is right-invariant by $\mathcal{N}_{G_{\smallidx},k_3-c_0}$
($t_{\gamma}$ - given in Lemma~\ref{lemma:f with small support},
$c_0$ is the constant $k_0$ of Lemma~\ref{lemma:f with small
support}). We use the formula
\begin{align*}
&\int_{\lmodulo{U_{G_{\smallidx}}}{G_{\smallidx}}}F(g)dg\\\notag&=
\int_{\overline{B_{\GL{\smallidx-1-k}}}}\int_{G_1}\int_{\overline{V_{\smallidx-1}}}\int_{\lmodulo{Z_k}{\GL{k}}}
\int_{\overline{Z_{k,\smallidx-1-k}}}F(ambvx)\delta_{P_k}^{-1}(a)\delta(b)dmdadvdxdb,
\end{align*}
where $\overline{B_{\GL{\smallidx-1-k}}}$ is a subgroup of $G_{\smallidx-k}$, $\GL{k}<L_k$
and
\begin{align*}
\overline{Z_{k,\smallidx-1-k}}=\{\left(\begin{array}{ccccc}I_k\\m&I_{\smallidx-1-k}\\&&I_2\\
&&&I_{\smallidx-1-k}\\&&&m'&I_k\end{array}\right)\}.
\end{align*}
Note that $a$, $m$ and $b$ belong to
$\GL{\smallidx-1}<L_{\smallidx-1}$ and using \eqref{eq:omega right
translation by GL l-1} we obtain
\begin{align*}
\Psi(W,f_s,s)=&
\int_{\overline{B_{\GL{\smallidx-1-k}}}}\int_{G_1}\int_{\overline{V_{\smallidx-1}}}\int_{\lmodulo{Z_k}{\GL{k}}}
\int_{\overline{Z_{k,\smallidx-1-k}}}W(ambvx)\int_{R_{\smallidx,\bigidx}}
\\\notag&f_s(w_{\smallidx,\bigidx}rvx,diag(amb,I_{\bigidx-\smallidx+1}))\psi_{\gamma}(r)\absdet{a}^
{-\half\bigidx+k-\smallidx+s+\half}\\\notag&\absdet{b}^{s+M}drdmdadvdxdb.
\end{align*}
Here $M$ is some constant. By our choice of $f_s$ this equals a
constant in $\C[q^{-s},q^s]^*$ times
\begin{align*}
&\int_{\overline{B_{\GL{\smallidx-1-k}}}}\int_{\lmodulo{Z_k}{\GL{k}}}\int_{\overline{Z_{k,\smallidx-1-k}}}W(amb)
W_{\tau}(diag(amb,I_{\bigidx-\smallidx+1}))\\\notag&\absdet{a}^{-\half\bigidx+k-\smallidx+s+\half}\absdet{b}^{s+M}dmdadb.
\end{align*}
Using the properties of $W$, the $dmdb$-integration may be ignored
and after plugging in \eqref{eq:whittaker for pi, k<l} this equals
\begin{align*}
\int_{\lmodulo{Z_k}{\GL{k}}}(\int_{V_k}\varphi(va,1,1)\psi_{\gamma}(v)dv)W_{\tau}(diag(a,I_{\bigidx-k}))\absdet{a}^{-\half\bigidx+k-\smallidx+s+\half}da.
\end{align*}
Actually this is \eqref{int:case embedding pi k<n<l before
stabilizing explanation}. Because $k<\bigidx$, proceeding as in case~\eqref{case:embedding pi k<n<l} the
fact that $W_{\tau}$ vanishes away from zero enables us to obtain
$\Psi(W_{\sigma},W_{\tau},0,s)$.\qedhere
\end{enumerate}
\end{proof} 

\section{Lower bound for an irreducible $\pi$ and a tempered $\tau$}\label{subsection:Lower bound for pi induced tau tempered}
Let $\pi$ be an irreducible (generic) representation of
$G_{\smallidx}$. According to the standard module conjecture of
Casselman and Shahidi \cite{CSh} proved by Mui\'{c} \cite{Mu3} in
the context of classical groups, we may assume that $\pi$ is a
standard module. Namely,
$\pi=\cinduced{P_k}{G_{\smallidx}}{\sigma\otimes\pi'}$ where
$\sigma=\cinduced{P_{k_1,\ldots,k_m}}{\GL{k}}{\sigma_1\otimes\ldots\otimes\sigma_m}$
(if $m=1$, $\sigma=\sigma_1$), $\sigma_i=\absdet{}^{e_i}\sigma'_i$,
$\sigma'_i$ is a square-integrable representation of $\GL{k_i}$,
$e_i\in\R$, $0>e_1\geq\ldots\geq e_m$ and $\pi'$ is a tempered
representation of $G_{\smallidx-k}$ (see also \cite{Mu2}). Note that
$0\leq k\leq\smallidx$ and if $k=\smallidx$, it is also possible
that
$\pi=\cinduced{\rconj{\kappa}P_{\smallidx}}{G_{\smallidx}}{\sigma}$.

Let $\tau$ be tempered such that the intertwining operators
\begin{align*}
L(\tau,Sym^2,2s-1)^{-1}\intertwiningfull{\tau}{s},\qquad
L(\tau^*,Sym^2,1-2s)^{-1}\intertwiningfull{\tau^*}{1-s}
\end{align*}
are holomorphic. 
We prove Theorem~\ref{theorem:lower bound for gcd 2}, namely if
$k<\smallidx$ or $k=\smallidx>\bigidx$,
\begin{align*}
L(\sigma\times\tau,s)\gcd(\pi'\times\tau,s)L(\sigma^*\times\tau,s)\in\gcd(\pi\times\tau,s)\C[q^{-s},q^s].
\end{align*}
Recall that if $k=\smallidx$, by definition
$\gcd(\pi'\times\tau,s)=1$. Since the theorem is trivial when $k=0$,
assume $k>0$.

Applying Lemma~\ref{lemma:sub multiplicativity embed some sections
in first second var k<l or k=l>n} (valid for our range of
$k,\smallidx$ and $\bigidx$), for some $P_1,P_2\in\C[q^{-s},q^s]$,
\begin{align}\label{eq:general pi tempered tau substitution}
L(\sigma\times\tau,s)=\gcd(\pi\times\tau,s)P_1,\quad
L(\sigma\times\tau^*,1-s)=\gcd(\pi\times\tau^*,1-s)P_2.
\end{align}
According to Theorem~\ref{theorem:multiplicity first var},
\begin{align*}
\gamma(\pi\times\tau,\psi,s)\equalun\gamma(\sigma\times\tau,\psi,s)\gamma(\pi'\times\tau,\psi,s)\gamma(\sigma^*\times\tau,\psi,s).
\end{align*}
Then by \eqref{eq:gamma and epsilon} and \eqref{eq:JPSS relation
gamma and friends},
\begin{align*}
\frac{\gcd(\pi\times\tau^*,1-s)}{\gcd(\pi\times\tau,s)}\equalun
\frac{L(\sigma^*\times\tau^*,1-s)\gcd(\pi'\times\tau^*,1-s)L(\sigma\times\tau^*,1-s)}
{L(\sigma\times\tau,s)\gcd(\pi'\times\tau,s)L(\sigma^*\times\tau,s)}.
\end{align*}
Now using \eqref{eq:general pi tempered tau substitution} we obtain
\begin{align}\label{eq:quotient}
\frac{P_2L(\sigma^*\times\tau^*,1-s)\gcd(\pi'\times\tau^*,1-s)}{P_1L(\sigma^*\times\tau,s)\gcd(\pi'\times\tau,s)}\equalun1.
\end{align}

As proved in Section~\ref{subsection:tempered} the poles of
$\gcd(\pi'\times\tau,s)$ lie in $\Re(s)\leq0$, because $\pi'$ and
$\tau$ are tempered. 
According to \cite{JPSS} (Theorem~3.1),
\begin{align*}
L(\sigma^*\times\tau,s)\in \prod_{i=1}^m
L(\sigma_i^*\times\tau,s)\C[q^{-s},q^s].
\end{align*}
Hence any pole of $L(\sigma^*\times\tau,s)$ is contained in some
$L(\sigma_i^*\times\tau,s)=L((\sigma'_i)^*\times\tau,s-e_i)$, and
since $\sigma'_i$ is square-integrable we get from 
\cite{JPSS} (Section~8) that the poles of
$L((\sigma'_i)^*\times\tau,s-e_i)$ lie in $\Re(s)\leq e_i<0$. We
conclude that the poles of the denominator of \eqref{eq:quotient}
are in $\Re(s)\leq0$.

In a similar manner we prove that the poles of the numerator of
\eqref{eq:quotient} are in $\Re(s)\geq1$. Specifically, the poles of
$\gcd(\pi'\times\tau^*,1-s)$ are in $\Re(1-s)\leq0$ because $\pi'$
and $\tau^*$ are tempered, and any pole of
$L(\sigma^*\times\tau^*,1-s)$ appears in some
$L((\sigma'_i)^*\times\tau^*,1-s-e_i)$.

Thus any pole appearing in the denominator must be canceled by
$P_1$, i.e.,
\begin{align*}
P_1L(\sigma^*\times\tau,s)\gcd(\pi'\times\tau,s)\in\C[q^{-s},q^s].
\end{align*}
Together with \eqref{eq:general pi tempered tau substitution} this gives 
the result.

\section{The g.c.d. of ramified twists}\label{subsection:gcd for ramified twist of tau}
Let $\tau$ be an irreducible representation of $\GL{\bigidx}$ and
let $\mu$ be a character of $F^*$ extended to $\GL{\bigidx}$ by $b\mapsto\mu(\det(b))$. 

Recall that a character of $F^*$ is ramified if it is \nontrivial\
on $\mathcal{O}^*$ and any character of $F^*$ is trivial on
$1+\mathcal{P}^{k}$ for some $k>0$. We call a character highly
ramified, if it is \nontrivial\ on $1+\mathcal{P}^{k}$ for some
large $k$. We say that a certain property is satisfied for all
sufficiently highly ramified characters, if there is some $k>0$ such
that for any character which is \nontrivial\ on $1+\mathcal{P}^{k}$,
this property holds. For a character $\mu$ and $a\in\Integers$, the character $\mu^a$ is
defined by $\mu^a(x)=\mu(x)^a$. Clearly if $\mu^a$ is highly
ramified, so is $\mu$. The next claim establishes a few facts about
ramified characters that we will need.
\begin{claim}\label{claim:properties of sufficiently ramified characters}
The following statements are valid.
\begin{enumerate}
\item For all sufficiently highly ramified $\mu^{2\bigidx}$, no unramified twist of $\tau\mu$ is
self-dual.
\item
Assume that no unramified twist of $\tau$ is self-dual. Then also no
unramified twist of $\tau^*$ is self-dual.
\item For any $0\ne a\in\Integers$ there is $M\geq 0$ such that for each
$k>M$, there is a unitary character $\mu$ of $F^*$ such that $\mu^a$
is \nontrivial\ on $1+\mathcal{P}^k$. In particular for any $k>>0$
there is $\mu$ such that $\mu^{2\bigidx}$ is \nontrivial\ on
$1+\mathcal{P}^k$.
\end{enumerate}
\end{claim}
\begin{proof}[Proof of Claim~\ref{claim:properties of sufficiently ramified
characters}]
\begin{enumerate}[leftmargin=*]
\item Suppose $\tau\mu\absdet{}^u\isomorphic\tau^*\mu^{-1}\absdet{}^{-u}$ for
some $u\in\C$ ($\tau$ is irreducible whence
$\dualrep{\tau}\isomorphic\tau^*$). In particular the central
characters are equal, which is impossible if $k$ is large so that
$\frestrict{\omega_{\tau}}{1+\mathcal{P}^k}\equiv1$ (regarding $F^*$
as the center of $\GL{\bigidx}$),
$\frestrict{\omega_{\tau^*}}{1+\mathcal{P}^k}\equiv1$ and
$\frestrict{\mu^{2\bigidx}}{1+\mathcal{P}^k}\nequiv1$.
\item Otherwise for some $u\in\C$,
$\tau^*\absdet{}^u\isomorphic(\tau^*\absdet{}^u)^*$ whence
$\tau\absdet{}^{-u}\isomorphic(\tau\absdet{}^{-u})^*$,
contradiction.
\item It is enough to construct a character of $\mathcal{O}^*$ with
the required properties. Furthermore, it is enough to construct a
character $\mu$ of $\rmodulo{1+\mathcal{P}^k}{1+\mathcal{P}^m}$, for
some $m>k$, so that $\mu^a$ is \nontrivial, since any such character
lifts to a character of $\rmodulo{\mathcal{O}^*}{1+\mathcal{P}^m}$
(because $\rmodulo{\mathcal{O}^*}{1+\mathcal{P}^m}$ is a finite
abelian group), which lifts to a character of $\mathcal{O}^*$.

Assume $|a|=q^{-v}$ where $v\in\Integers$. Let $k>0$ be given and
let $m$ satisfy $k<m\leq 2k$ and $k+v<m$. Since $m\leq 2k$,
$\rmodulo{1+\mathcal{P}^k}{1+\mathcal{P}^m}\isomorphic
\rmodulo{\mathcal{P}^k}{\mathcal{P}^m}$ as groups. Let $\psi'$ be a
unitary additive character of $F$ such that
$\frestrict{\psi'}{\mathcal{P}^{m-1}}\nequiv1$ and
$\frestrict{\psi'}{\mathcal{P}^m}\equiv1$. Then $\mu(1+b)=\psi'(b)$
($b\in\mathcal{P}^k$) is a character of $1+\mathcal{P}^k$, trivial
on $1+\mathcal{P}^m$. Also $\mu^a(1+b)=\psi'(ab)$ and because
$k+v<m$, $a\mathcal{P}^k$ contains $\mathcal{P}^{m-1}$ whence
$\mu^a$ is \nontrivial. Put $m=2k$, $M=\max(v,0)$. Then for any
$k>M$, $k+v<m$.\qedhere
\end{enumerate}
\end{proof} 
We show that for all sufficiently highly ramified characters
$\mu^2$, a twist of $\tau$ by $\mu$ removes all poles except perhaps
those of the intertwining operator. This scenario is considered,
first because it resembles a result proved by Jacquet,
Piatetski-Shapiro and Shalika \cite{JPSS} (Proposition~2.13) and
this property is expected from the g.c.d. Second, this result will
be useful in the course of proving Theorem~\ref{theorem:gcd sub
multiplicity first var} (similarly to its use in \cite{JPSS}
Section~7).
\begin{proposition}\label{proposition:gcd for ramified twist of tau}
For any character $\mu$ such that $\mu^2$ is sufficiently highly
ramified,
\begin{align*}
\gcd(\pi\times(\tau\mu),s)\in \ell_{(\tau\mu)^*}(1-s)\C[q^{-s},q^s].
\end{align*}
\end{proposition}
\begin{proof}[Proof of Proposition~\ref{proposition:gcd for ramified twist of tau}] 
We prove the proposition for the case $1<\smallidx\leq\bigidx$ and
split $G_{\smallidx}$. The other cases are similar (and simpler). We
follow the method of \textit{loc. cit.} According to
Proposition~\ref{proposition:integral for good section} it is enough
to show that $\Psi(W,f_s,s)\in\C[q^{-s},q^s]$ for
$f_s\in\xi(\tau\mu,std,s)$. By virtue of
Proposition~\ref{proposition:iwasawa decomposition of the integral}
(and using the same notation) we should show this for integrals of
the form \eqref{int:iwasawa decomposition of the integral l<=n
split},
\begin{align*}
&\int_{A_{\smallidx-1}}\int_{G_1}ch_{\Lambda}(x)W^{\diamond}(ax)(\mu\cdot
W')(diag(a,\lfloor x\rfloor,I_{\bigidx-\smallidx})
)\\\notag
&(\absdet{a}\cdot[x]^{-1})^{\smallidx-\half\bigidx+s-\half}\delta_{B_{G_{\smallidx}}}^{-1}(a)dxda.
\end{align*}

According to the asymptotic expansion described in
Section~\ref{section:whittaker props},
\begin{align*}
&W^{\diamond}(t)=\sum_{\eta_1,\ldots,\eta_{\smallidx}\in\mathcal{A}_{\pi}}\phi_{\eta_1,\ldots,\eta_{\smallidx}}(\alpha_1(t),\ldots,\alpha_{\smallidx}(t))\prod_{i=1}^{\smallidx}\eta_i(\alpha_i(t)),\\\notag
&W'(t')=\sum_{\eta_1',\ldots,\eta_{\bigidx-1}'\in
\mathcal{A}_{\tau}}\phi'_{\eta_1',\ldots,\eta_{\bigidx-1}'}(\alpha_1'(t'),\ldots,\alpha_{\bigidx-1}'(t'))\prod_{i=1}^{\bigidx-1}\eta_i'(\alpha_i'(t')).
\end{align*}
Here $t\in T_{G_{\smallidx}}$,
$\phi_{(\ldots)}\in\mathcal{S}(F^{\smallidx})$,
$\alpha_i\in\Delta_{G_{\smallidx}}$, $t'\in A_{\bigidx}$,
$\phi'_{(\ldots)}\in\mathcal{S}(F^{\bigidx-1})$ and
$\alpha_i'\in\Delta_{\GL{\bigidx}}$. We plug these formulas into the
integral. Let $k>0$ be such that for all $\eta\in\mathcal{A}_{\pi}$,
$\eta'\in\mathcal{A}_{\tau}$, $y\in F^*$ and $u\in 1+\mathcal{P}^k$,
$\eta(yu)=\eta(y)$, $\eta'(yu)=\eta'(y)$. Assume that $\mu^2$ is a
\nontrivial\ character of $1+\mathcal{P}^k$ (hence so is $\mu$).
Set $a=diag(a_1,\ldots,a_{\smallidx-1})\in A_{\smallidx-1}$ and
$x=diag(b,b^{-1})\in G_1$. We change variables $a_i\mapsto
a_ia_{i+1}$ for all $1\leq i<\smallidx-1$. Assume that $\Lambda$ is
either $G_1^{0,k}$ or $G_1^{\infty,k}$ for some $k>0$. Then either
$\lfloor x\rfloor=b$ for all $x\in\Lambda$, or $\lfloor
x\rfloor=b^{-1}$ for all $x\in\Lambda$. We also change
$a_{\smallidx-1}\mapsto a_{\smallidx-1}\lfloor x\rfloor$. This
changes $\prod_{i=1}^{\smallidx-1}\mu(a_i)\mu(\lfloor x\rfloor)$ to
$\mu(a_1)\prod_{i=2}^{\smallidx-1}\mu^2(a_i)\mu^2(\lfloor
x\rfloor)$. Then by fixing $x$ and all of the coordinates $a_i$
except some $a_{i_0}$, if $|a_{i_0}|$ is small the integral
vanishes. Hence $a_i$ is bounded from below for all $1\leq
i\leq\smallidx-1$. Note that $a_1,\ldots, a_{\smallidx-1}$ are also
bounded from above. Now fixing $a_1,\ldots, a_{\smallidx-1}$, if
$\lfloor x\rfloor$ is small, the integral vanishes whence $[x]$ is
bounded (by definition $[x]\geq1$). This shows that the
$dx$-integration is a finite sum. We conclude that the integral is a
polynomial. If $\Lambda$ is a compact open subgroup, we can ignore
the $dx$-integration altogether and again we see that the
coordinates of $a$ are bounded from above and below.
\end{proof} 

\begin{example}\label{example:choosing tau so that twist has no poles}
Let $\tau'$ be a unitary irreducible supercuspidal representation of
$\GL{\bigidx}$ and take a unitary ramified character $\mu$. Set
$\tau=\tau'\mu$. According to Claim~\ref{claim:properties of
sufficiently ramified characters} we can take $\mu$ such that
$\mu^{2\bigidx}$ is sufficiently highly ramified (with respect to $\tau'$) so
that no unramified twist of $\tau$ (or $\tau^*$) would be self-dual.
Then $L(\tau,Sym^2,2s)=1$ by Shahidi \cite{Sh5} (Theorem 6.2). Now
equality~\eqref{eq:gamma up to units} implies
\begin{align*}
\nintertwiningfull{\tau^*}{1-s}=e(q^{-s},q^{s})L(\tau^*,Sym^2,1-2s)^{-1}\intertwiningfull{\tau^*}{1-s}
\end{align*}
for some $e(q^{-s},q^{s})\in\C[q^{-s},q^s]^*$ and since by
Theorem~\ref{theorem:poles of normalized intertwining for tau
cuspidal}, $L(\tau^*,Sym^2,1-2s)^{-1}\intertwiningfull{\tau^*}{1-s}$
is holomorphic ($\tau^*\isomorphic\dualrep{\tau}$ is irreducible
supercuspidal), $\ell_{\tau^*}(1-s)=1$. Again according to
Claim~\ref{claim:properties of sufficiently ramified characters}, we
can assume that $\mu^{2\bigidx}$ is also sufficiently highly ramified with
respect to $\pi$, then so is $\mu^2$. Then by Proposition~\ref{proposition:gcd for
ramified twist of tau}, $\gcd(\pi\times\tau,s)=1$.
\end{example}

There is an analogous result for a ramified character $\pi$ of
$G_1$.
\begin{proposition}\label{proposition:gcd for l is 1 and pi is ramified}
Let $\pi$ be a representation of $G_1$. If $\pi$ is a sufficiently
highly ramified character or $G_1$ is \quasisplit,
\begin{align*}
\gcd(\pi\times\tau,s)\in \ell_{\tau^*}(1-s)\C[q^{-s},q^s].
\end{align*}
\end{proposition}
\begin{proof}[Proof of Proposition~\ref{proposition:gcd for l is 1 and pi is ramified}] 
If $G_1$ is \quasisplit, the result follows immediately by looking
at \eqref{int:iwasawa decomposition of the integral l<=n split}.
Regarding the split case, as in the proof of
Proposition~\ref{proposition:gcd for ramified twist of tau} consider
\begin{align*}
&\int_{G_1}ch_{\Lambda}(x)\pi(x)W'(diag(\lfloor
x\rfloor,I_{\bigidx-1}))
[x]^{\half\bigidx-s-\half}dx.
\end{align*}
Since $|\lfloor x\rfloor|$ tends to zero as $x$ leaves a compact
subset of $G_1$, we see that by selecting a sufficiently highly
ramified $\pi$, with respect to the functions in
$\mathcal{A}_{\tau}$ as in the proof above (or if $\bigidx=1$, with
respect to $\tau$, which is then a character), the integral vanishes
for all $x$ outside of some compact subset (which depends on $W'$).
\end{proof} 

\section{Preserving poles while increasing $\bigidx$}\label{section:Locating poles in induced representations}
Let $\pi$ and $\tau$ be a pair of representations of $G_{\smallidx}$
and $\GL{\bigidx}$ (resp.). We show that the fractional ideal
$\mathcal{I}_{\pi\times\tau}(s)$ is contained in
$\mathcal{I}_{\pi\times\varepsilon}(s)$ for a representation
$\varepsilon$ induced from $\tau$ and another auxiliary
representation. This result 
will be used in Section~\ref{section:Sub-multiplicativity in the
first variable} to prove Theorem~\ref{theorem:gcd sub multiplicity
first var}. It suggests an inductive passage from $\pi\times\tau$ to
$\pi\times\varepsilon$ which increases $\bigidx$ while preserving
the original poles, hence any upper bound of
$\gcd(\pi\times\varepsilon,s)$ implies a similar bound of
$\gcd(\pi\times\tau,s)$.

\begin{lemma}\label{lemma:sub multiplicativity embed some sections in gcd second var l<=n}
Let $\tau_1$ be a representation of $\GL{m}$ such that
$\smallidx<m+\bigidx$ and
$\varepsilon=\cinduced{P_{m,\bigidx}}{\GL{m+\bigidx}}{\tau_1\otimes\tau}$
is irreducible. For $W\in\Whittaker{\pi}{\psi_{\gamma}^{-1}}$, set
$W_0=W$ if $\smallidx\leq\bigidx$ or $G_{\smallidx}$ is split,
otherwise $W_0=y^{-1}\cdot W$ where $y$ is given by
Lemma~\ref{lemma:integration formula for quotient space H_n G_n+1}.
Then for any $f_s\in\xi_{Q_{\bigidx}}^{H_{\bigidx}}(\tau,hol,s)$
there is
$f_s'\in\xi_{Q_{m+\bigidx}}^{H_{m+\bigidx}}(\varepsilon,hol,s)$,
where $\varepsilon$ is realized in $\Whittaker{\varepsilon}{\psi}$,
such that for all $W\in\Whittaker{\pi}{\psi_{\gamma}^{-1}}$,
\begin{align*}
\Psi(W_0,f'_s,s)=\Psi(W,f_s,s).
\end{align*}
\end{lemma}

\begin{proof}[Proof of Lemma~\ref{lemma:sub multiplicativity embed some sections in gcd second var l<=n}] 
We use the notation and results of
Section~\ref{subsection:Realization of tau for induced
representation} for $\zeta=0$. We will define
$\varphi_s\in\xi_{Q_{m,\bigidx}}^{H_{m+\bigidx}}(\tau_1\otimes\tau,hol,(s,s))$
such that $I_1=\Psi(W_0,\varphi_s,s)$ is absolutely convergent for
$\Re(s)>>0$ and equals $\Psi(W,f_s,s)$. Then
$f_s'=\widehat{f}_{\varphi_s}$ is defined by \eqref{iso:iso 1} and
according to Claim~\ref{claim:deducing varphi and f_s are
interchangeable in psi}, for all $\Re(s)>>0$,
$\Psi(W_0,\widehat{f}_{\varphi_s},s)=I_1=\Psi(W,f_s,s)$. Hence
$\Psi(W_0,\widehat{f}_{\varphi_s},s)=\Psi(W,f_s,s)$, as functions in
$\C(q^{-s})$. The proof depends on the relative sizes of $\smallidx$
and $\bigidx$.
\begin{enumerate}[leftmargin=*]
\item\label{case:embedding tau l<=n} $\smallidx\leq\bigidx$.
In Sections~\ref{subsection:n_1<l<=n_2} and
\ref{subsection:l<=n_1,n_2} we transformed $I_1$ into
\eqref{int:mult l<n_2 before applying eq for inner tau}, which with
the current notation takes the form
\begin{align*}
I_2=\int_{U_m} \int_{\lmodulo{U_{G_{\smallidx}}}{G_{\smallidx}}}W(g)
\int_{R_{\smallidx,\bigidx}}
\varphi_s(w'u,1,w_{\smallidx,\bigidx}r'(\rconj{b_{\bigidx,m}}g),1)
\psi_{\gamma}(r')\psi_{\gamma}(u)dr'dgdu
\end{align*}
($W_0=W$ since $\smallidx\leq\bigidx$). Here
\begin{align*}
w'= \left(\begin{array}{ccc}
&&I_m\\
&b_{\bigidx,m}\\
I_m
\end{array}\right),\qquad b_{\bigidx,m}=diag(I_{\bigidx},(-1)^m,I_{\bigidx}).
\end{align*}
The subgroup $R_{\smallidx,m+\bigidx}$ (in $I_1$) is decomposed as
$R_{\smallidx,\bigidx}\ltimes R_m$ where
$R_m=R_{\smallidx,m+\bigidx}\cap U_m$, $dr=dr'dr_m$ and in $I_2$,
$du=dzdr_m$. Integral $I_2$ is absolutely convergent at $s$ if it is
finite when we replace $W,\varphi_s$ with $|W|,|\varphi_s|$ and drop
the characters. The manipulations of
Section~\ref{subsection:n_1<l<=n_2} prove the following claim.
\begin{claim}\label{claim:embedding tau l<=n integral manipulation}
Integral $I_1$ is absolutely convergent at $s$ if and only if $I_2$
is absolutely convergent at $s$. When either is absolutely
convergent, $I_1=I_2$.
\end{claim}
Using this claim we derive the result. We follow the argument of
Soudry \cite{Soudry2} (Lemma~3.4) 
to find a specific $\varphi_s$
for which $I_2=\Psi(W,f_s,s)$, almost what we need. 
Set $N=\mathcal{N}_{H_{m+\bigidx},k}$, $k>>0$. Let $\varphi_s\in
\xi_{Q_{m,\bigidx}}^{H_{m+\bigidx}}(\tau_1\otimes\tau,hol,(s,s))$ be
such that as a function on $H_{m+\bigidx}$,
$\support{\varphi_s}=Q_mw'N$, $\varphi_s$ is right-invariant by $N$
and for $x\in H_{\bigidx}$ and $y\in\GL{\bigidx}$,
\begin{align*}
\varphi_{s}(w',1,x,y)=f_s^{b_{\bigidx,m}}(x,y)=f_s(\rconj{b_{\bigidx,m}}x,y).
\end{align*}
(The function $f_s^{b_{\bigidx,m}}$ was defined in
Section~\ref{subsection:Twisting the embedding}.) Note that
$\rconj{w'}N=N$.
Then $w'u\in\support{\varphi_s}$ if and only if
$\rconj{(w')^{-1}}u\in N$, because
$\rconj{(w')^{-1}}u\in\overline{U_m}$. 
Formally, for this $\varphi_s$,
\begin{align*}
I_2&=c\int_{\lmodulo{U_{G_{\smallidx}}}{G_{\smallidx}}}W(g)
\int_{R_{\smallidx,\bigidx}}
\varphi_s(w',1,w_{\smallidx,\bigidx}r'(\rconj{b_{\bigidx,m}}g),1)
\psi_{\gamma}(r')dr'dg\\\notag&=c\int_{\lmodulo{U_{G_{\smallidx}}}{G_{\smallidx}}}W(g)\int_{R_{\smallidx,\bigidx}}
f_s^{b_{\bigidx,m}}(\rconj{b_{\bigidx,m}}(w_{\smallidx,\bigidx}r'g),1)
\psi_{\gamma}(r')dr'dg=c\Psi(W,f_s,s).
\end{align*}
Here $c=vol(U_m\cap N)$. The same reasoning implies
\begin{align}\label{integral:first substitution n_1 < l < n_2 first side absolute value}
&\int_{U_m} \int_{\lmodulo{U_{G_{\smallidx}}}{G_{\smallidx}}}
\int_{R_{\smallidx,\bigidx}} |W|(g)
|\varphi_s|(w'u,1,w_{\smallidx,\bigidx}r'(\rconj{b_{\bigidx,m}}g),1)dr'dgdu\\\notag
&=c\int_{\lmodulo{U_{G_{\smallidx}}}{G_{\smallidx}}}\int_{R_{\smallidx,\bigidx}}|W|(g)|f_s|(w_{\smallidx,\bigidx}r'g,1)dr'dg.
\end{align}
Now we use the idea of Jacquet, Piatetski-Shapiro and Shalika
\cite{JPSS} (p.~424). Since $\Psi(W,f_s,s)$ is absolutely convergent
for $\Re(s)>>0$ (i.e. the \rhs\ of \eqref{integral:first
substitution n_1 < l < n_2 first side absolute value} is finite),
the \lhs\ of \eqref{integral:first substitution n_1 < l < n_2 first
side absolute value} is finite. Hence we can apply
Claim~\ref{claim:embedding tau
l<=n integral manipulation} 
to conclude firstly that $I_1$ is absolutely convergent for
$\Re(s)>>0$ and secondly, $I_1=I_2=c\Psi(W,f_s,s)$.
\item\label{case:embedding tau l>n} $\smallidx>\bigidx$. The idea of the proof is the same as in the previous
case but the actual integral manipulations and selection of
$\varphi_s$ differ. Regard $H_{\bigidx}$ as a subgroup of
$G_{\bigidx+1}$, embedded in $G_{\smallidx}$ (which is embedded in
$H_{m+\bigidx}$) through $L_{\smallidx-\bigidx-1}$. Set
$r=\smallidx-\bigidx-1$, $\epsilon=(-1)^{m+\bigidx-\smallidx}$ and
$t_{\bigidx}=diag(\epsilon\gamma^{-1}I_{\bigidx},1,\epsilon\gamma
I_{\bigidx})\in H_{\bigidx}$. Let
\begin{align*}
I_2=&\int_{\lmodulo{H_{\bigidx}Z_{r}V_{r}}{G_{\smallidx}}}
\int_{R_{\smallidx,m+\bigidx}}\int_{\Mat{\bigidx\times
m+\bigidx-\smallidx}} \int_{\lmodulo{U_{H_{\bigidx}}}{H_{\bigidx}}}
(\int_{R^{\smallidx,\bigidx}}W_0(r'w^{\smallidx,\bigidx}h'g)dr')\\\notag&\varphi_s(\omega_{m,\bigidx}\eta
w_{\smallidx,m+\bigidx}rw^{\smallidx,\bigidx}g,1,\rconj{t_{\bigidx}}h',1)\psi_{\gamma}(r)dh'd\eta
drdg.
\end{align*}
Integral $I_2$ is absolutely convergent at $s$ if it is finite with
$|W_0|,|\varphi_s|$ and without $\psi_{\gamma}$. According to
Claims~\ref{claim:n_1 < l < n_2 properties of F} and
\ref{claim:n_2<l<=n_1 properties of F}, for any $s$, $I_1$ is
absolutely convergent if and only if $I_2$ is, and in this case
$I_1=I_2$.

Now we describe the choice of $\varphi_s$. Write
$w_{\smallidx,m+\bigidx}=t_{\gamma}w'_{\smallidx,m+\bigidx}$ for
\begin{align*}
&t_{\gamma}=diag(\gamma
I_{\smallidx},I_{2(m+\bigidx-\smallidx)+1},\gamma^{-1}I_{\smallidx})\in
H_{m+\bigidx},\\
&w'_{\smallidx,m+\bigidx}=\left(\begin{array}{ccccc}&I_{\smallidx}\\&&&&I_{m+\bigidx-\smallidx}\\
&&\epsilon\\I_{m+\bigidx-\smallidx}\\&&&I_{\smallidx}\end{array}\right).
\end{align*}
Further put $t_{\gamma}'=diag(\gamma I_{\bigidx},1,\gamma^{-1}
I_{\bigidx})\in H_{\bigidx}$, $t_{\gamma}''=diag(\gamma
I_{\smallidx-\bigidx},I_{m+\bigidx-\smallidx})\in\GL{m}$ and
$w^{\star}=\omega_{m,\bigidx}w'_{\smallidx,m+\bigidx}w^{\smallidx,\bigidx}$.
Let $y^{\star}=[y]_{\mathcal{E}_{H_{m+\bigidx}}}$ where $y\in
G_{\bigidx+1}$ is either $I_{2(\bigidx+1)}$ if $G_{\smallidx}$ is
split, or in the \quasisplit\ case
\begin{align*}
y=\left(\begin{array}{cccccc}I_{\bigidx-1}\\&1\\&0&1\\&2\rho^{-2}&0&1\\&2\rho^{-3}&0&2\rho^{-1}&1\\&&&&&I_{\bigidx-1}\end{array}\right)
\end{align*}
is the element defined in Lemma~\ref{lemma:integration formula for
quotient space H_n G_n+1}.

Pick $W_{\tau_1}\in\Whittaker{\tau_1}{\psi}$ such that the
restriction of $t_{\gamma}''\cdot W_{\tau_1}$ to the mirabolic
subgroup $Y_m$ of $\GL{m}$ vanishes outside of a small neighborhood
of the identity $\mathcal{N}_{\GL{m},o}\cap Y_m$ where $o>>0$, and
for $v\in\mathcal{N}_{\GL{m},o}\cap Y_m$, $t_{\gamma}''\cdot
W_{\tau_1}(v)=t_{\gamma}''\cdot W_{\tau_1}(I_m)\ne0$. Let
$\varphi_s\in
\xi_{Q_{m,\bigidx}}^{H_{m+\bigidx}}(\tau_1\otimes\tau,hol,(s,s))$ be
such that $\support{\varphi_s}=Q_mw^{\star}y^{\star}N$ with
$N=\mathcal{N}_{H_{m+\bigidx},k}$ and $k>>0$, and for all $a\in
\GL{m}<M_{m}$, $u\in N$, $x\in H_{\bigidx}$ and $b\in\GL{\bigidx}$,
\begin{align*}
\varphi_{s}(aw^{\star}y^{\star}u,1,x,b)=&
c_{\tau,\gamma}\delta_{Q_m}^{\half}(a)\absdet{a}^{s-\half}W_{\tau_1}(a)
((t_{\bigidx}t_{\gamma}')^{-1}\cdot f_s)(x,b).
\end{align*}
Here
$c_{\tau,\gamma}=\omega_{\tau}(\epsilon\gamma)^{-1}|\gamma|^{-\smallidx(\bigidx+\half
m+s-\half)-\half\bigidx(\bigidx+m-2\smallidx)}$. Note that
$w^{\star}$ and $y^{\star}$ normalize $N$, because
$w^{\star},y^{\star}\in K_{H_{m+\bigidx}}$ (recall from
Section~\ref{subsection:groups in study} that we assume
$|\rho|\geq1$). The constant $o$ is selected so that $W$ is
right-invariant by $\mathcal{N}_{G_{\smallidx},o}$. We take $k>>o$
so that $W_{\tau_1}$ is right-invariant by $\mathcal{N}_{\GL{m},k}$,
$W_0$ is right-invariant by $\mathcal{N}_{G_{\smallidx},k}$ and
$k-k_0\geq o$ where $k_0\geq0$ is a constant to be specified later
(depending only on $|2|$).
\begin{claim}\label{claim:embedding second var l>n integration formula}
$I_2=\Psi(W,f_s,s)$.
\end{claim}
As in the case $\smallidx\leq\bigidx$, it follows that $I_1$ is
absolutely convergent for $\Re(s)>>0$ and $I_1=I_2=\Psi(W,f_s,s)$.

\begin{proof}[Proof of Claim~\ref{claim:embedding second var l>n integration formula}] 
We use the formula
\begin{align*}
\int_{\lmodulo{H_{\bigidx}Z_rV_r}{G_{\smallidx}}} F(g)dg=
\int_{\lmodulo{Z_{r}}{\GL{r}}}\int_{\overline{V_r}}\int_{\lmodulo{H_{\bigidx}}{G_{\bigidx+1}}}
F(avg)\delta(a)dgdvda.
\end{align*}
Here $\delta(a)$ is a suitable modulus character. The integral
becomes
\begin{align*}
&\int_{\lmodulo{Z_{r}}{\GL{r}}}\int_{\overline{V_r}}\int_{\lmodulo{H_{\bigidx}}{G_{\bigidx+1}}}
\int_{R_{\smallidx,m+\bigidx}} \int_{\Mat{\bigidx\times
m+\bigidx-\smallidx}}\int_{\lmodulo{U_{H_{\bigidx}}}{H_{\bigidx}}}(\int_{R^{\smallidx,\bigidx}}
W_0(r'w^{\smallidx,\bigidx}h'avg)dr')\\\notag &
\varphi_s(\omega_{m,\bigidx}\eta
w_{\smallidx,m+\bigidx}rw^{\smallidx,\bigidx}avg,1,\rconj{t_{\bigidx}}h',1)\psi_{\gamma}(r)\delta(a)dh'd\eta
 drdgdvda.
\end{align*}

First observe that $a\in\GL{r}$ can be shifted to the second
argument of $\varphi_s$. If
\begin{align*}
&[a]_{\mathcal{E}_{G_{\smallidx}}}=diag(a,I_{2\bigidx+2},a^*),\qquad
[\rconj{(w^{\smallidx,\bigidx})^{-1}}a]_{\mathcal{E}_{G_{\smallidx}}}=diag(I_{\bigidx},a,I_2,a^*,I_{\bigidx}).
\end{align*}
Then $\rconj{(w^{\smallidx,\bigidx})^{-1}}a\in
L_{\smallidx-1}<G_{\smallidx}$ normalizes $R_{\smallidx,\bigidx}$
and fixes $\psi_{\gamma}$. We obtain
\begin{align*}
&\int_{\lmodulo{Z_{r}}{\GL{r}}}\int_{\overline{V_r}}\int_{\lmodulo{H_{\bigidx}}{G_{\bigidx+1}}}
\int_{R_{\smallidx,m+\bigidx}} \int_{\Mat{\bigidx\times
m+\bigidx-\smallidx}}
\int_{\lmodulo{U_{H_{\bigidx}}}{H_{\bigidx}}}(\int_{R^{\smallidx,\bigidx}}
W_0(r'w^{\smallidx,\bigidx}h'avg)dr')\\\notag&\varphi_s(\omega_{m,\bigidx}\eta
w_{\smallidx,m+\bigidx}(\rconj{(w^{\smallidx,\bigidx})^{-1}}a)rw^{\smallidx,\bigidx}vg,1,\rconj{t_{\bigidx}}h',1)\psi_{\gamma}(r)
\absdet{a}^{\smallidx-m-\bigidx}\delta(a)dh'd\eta drdgdvda.
\end{align*}
Now we see that
$\rconj{w_{\smallidx,m+\bigidx}^{-1}}(\rconj{(w^{\smallidx,\bigidx})^{-1}}a)$
commutes with $\eta$ and after conjugating it by
$\omega_{m,\bigidx}$ it lies in $Y_m<\GL{m}$, where $\GL{m}$ is
viewed as a subgroup of $M_{m}<H_{m+\bigidx}$. We get
\begin{align*}
&\int_{\lmodulo{Z_{r}}{\GL{r}}}\int_{\overline{V_r}}\int_{\lmodulo{H_{\bigidx}}{G_{\bigidx+1}}}
\int_{R_{\smallidx,m+\bigidx}} \int_{\Mat{\bigidx\times
m+\bigidx-\smallidx}}
\int_{\lmodulo{U_{H_{\bigidx}}}{H_{\bigidx}}}(\int_{R^{\smallidx,\bigidx}}
W_0(r'w^{\smallidx,\bigidx}h'avg)dr')\\\notag&\varphi_s(\omega_{m,\bigidx}\eta
w_{\smallidx,m+\bigidx}rw^{\smallidx,\bigidx}vg,a,\rconj{t_{\bigidx}}h',1)\psi_{\gamma}(r)\delta_s'(a)dh'd\eta
drdgdvda.
\end{align*}
Here
$\delta_s'(a)=\delta_{Q_m}^{\half}(a)\absdet{a}^{\smallidx-m-\bigidx+s-\half}\delta(a)$.

Put $w_{\smallidx,m+\bigidx}=t_{\gamma}w'_{\smallidx,m+\bigidx}$ and
change variables $\eta\mapsto\rconj{t_{\gamma}^{-1}}\eta$. This
change multiplies $d\eta$ by
$|\gamma|^{\bigidx(m+\bigidx-\smallidx)}$. Denote
\begin{align*}
u^{\star}=(\rconj{\omega_{m,\bigidx}^{-1}}\eta)(\rconj{(\omega_{m,\bigidx}w'_{\smallidx,m+\bigidx})^{-1}}(r(\rconj{(w^{\smallidx,\bigidx})^{-1}}v))).
\end{align*}
Then $\omega_{m,\bigidx}t_{\gamma}\eta
w_{\smallidx,m+\bigidx}'rw^{\smallidx,\bigidx}vg=(\rconj{\omega_{m,\bigidx}^{-1}}t_{\gamma})u^{\star}w^{\star}g$.
We write the coordinates of $u^{\star}$ explicitly. Let
$v\in\overline{V_r}$,
\begin{align*}
&[v]_{\mathcal{E}_{G_{\smallidx}}}=\left(\begin{array}{cccccc}I_r\\v_1&I_{\bigidx}\\v_2&0&1\\v_3&0&0&1\\v_4&0&0&0&I_{\bigidx}\\v_5&v_4'&*&*&v_1'&I_r\\\end{array}\right),\\\notag
&[\rconj{(w^{\smallidx,\bigidx})^{-1}}v]_{\mathcal{E}_{H_{m+\bigidx}}}=\left(\begin{array}{ccccccccc}
I_{m+\bigidx-\smallidx}\\
&I_{\bigidx}&v_1\\
&&I_r\\
&&u_1&1\\
&&u_2&0&1\\
&&u_3&0&0&1\\
&v_4'&v_5&u_3'&u_2'&u_1'&I_r&v_1'\\
&&v_4&&&&&I_{\bigidx}\\
&&&&&&&&I_{m+\bigidx-\smallidx}\end{array}\right),
\end{align*}
\begin{align*}
r=\left(\begin{array}{ccccc}I_{m+\bigidx-\smallidx}&r_1&r_2&0&r_3\\&I_{\smallidx}&&&0\\&&1&&r_2'\\&&&I_{\smallidx}&r_1'\\&&&&I_{m+\bigidx-\smallidx}\\\end{array}\right)\in
R_{\smallidx,m+\bigidx},
\eta=\left(\begin{array}{ccc}I_{\bigidx}&&\eta\\&I_{\smallidx-\bigidx}\\&&I_{m+\bigidx-\smallidx}\\\end{array}\right).
\end{align*}
Also denote $r_1=(r_{1,1},r_{1,2},r_{1,3})$ where $r_{1,1}\in\Mat{m+\bigidx-\smallidx\times\bigidx}$, $r_{1,2}\in\Mat{m+\bigidx-\smallidx\times r}$.
Then
\begin{align*}
u^{\star}=\left(\begin{array}{ccccccccc}
I_r\\
u_1&1\\
0&0&I_{m+\bigidx-\smallidx}\\
v_1&0&\eta&I_{\bigidx}\\
\epsilon u_2&0&\epsilon r_2'&0&1\\
v_4&0&r_{1,1}'&0&0&I_{\bigidx}\\
r_{1,2}+\ldots&r_{1,3}&r_3+\eta' r_{1,1}'&r_{1,1}&\epsilon r_2&\eta'&I_{m+\bigidx-\smallidx}\\
u_3&0&r_{1,3}'&0&0&0&0&1\\
v_5&u_3'&r_{1,2}'&v_4'&\epsilon u_2'&v_1'&0&u_1'&I_r\\\end{array}\right).
\end{align*}

We show that the $dg$-integration can be ignored. Assume first that
$G_{\smallidx}$ is split (then $y^{\star}=I_{2(m+\bigidx)+1}$).
Using the integration formula for
$\lmodulo{H_{\bigidx}}{G_{\bigidx+1}}$ of
Lemma~\ref{lemma:integration formula for quotient space H_n G_n+1},
in the notation of the lemma, the $dg$-integration is replaced with
an integration over $\GL{1}\times\overline{Z_{\bigidx,1}}\times
\Xi_{\bigidx}$.  Write $g=b\lambda\alpha$ with $b\in \Xi_{\bigidx}$,
$\lambda\in\overline{Z_{\bigidx,1}}$ and $\alpha\in \GL{1}$. In
coordinates,
\begin{align*}
b=\left(\begin{array}{cccc}I_{\bigidx}&0&b&0\\&1&0&b'\\&&1&0\\&&&I_{\bigidx}\end{array}\right)
,\qquad \lambda\alpha=\left(\begin{array}{cccc}I_{\bigidx}&0&0&0\\\lambda&\alpha&0&0\\&&\alpha^{-1}&0\\&&\lambda'\alpha^{-1}&I_{\bigidx}\end{array}\right)\quad(\lambda'=-J_{\bigidx}\transpose{\lambda}).
\end{align*}
Then
\begin{align*}
&[g]_{\mathcal{E}_{H_{\bigidx+1}}}=\left(\begin{array}{ccccc}
I_{\bigidx}&-\gamma b\alpha^{-1}&\beta b\alpha^{-1}&b\alpha^{-1}&0\\
\lambda&h_{1,1}&h_{1,2}&h_{1,3}&b'\\
\beta \lambda&h_{2,1}&h_{2,2}&h_{2,3}&\beta b'\\
-\gamma \lambda&h_{3,1}&h_{3,2}&h_{3,3}&-\gamma b'\\
0&-\gamma \lambda'\alpha^{-1}&\beta \lambda'\alpha^{-1}&\lambda'\alpha^{-1}&I_{\bigidx} \\\end{array}\right), \qquad(h_{i,j})\in\Mat{3\times3}.
\end{align*}
Set $g^{\star}=\rconj{(w^{\star})^{-1}}g$.
It can be verified that the last $m$ rows of $u^{\star}g^{\star}$ take the form
\begin{align*}
\left(\begin{array}{ccccccccc}
*&*&*&*&*&*&I_{m+\bigidx-\smallidx}&*&0\\
u_3&h_{3,1}&r_{1,3}'&-\gamma \lambda&\epsilon h_{3,2}&-\gamma b'&0&h_{3,3}&0\\
*&*&*&*&*&*&0&*&I_r\\\end{array}\right).
\end{align*}
In order for
$(\rconj{\omega_{m,\bigidx}^{-1}}t_{\gamma})u^{\star}g^{\star}w^{\star}$
to belong to the support of $\varphi_s$ we must have
$u^{\star}g^{\star}\in Q_mN$. We proceed to argue that in such a
case $g\in\mathcal{N}_{G_{\bigidx+1},k}\cap N$, then the coordinates
of $v$, $\eta$ and $r$ will be small ($v$ will ``almost" belong to
$\mathcal{N}_{G_{\smallidx},k}$, $\eta$ and $r$ will lie in $N$).

Let $x$ be the lower-right $m\times m$ block of
$u^{\star}g^{\star}$, i.e.,
\begin{align*}
x=\left(\begin{array}{ccc}
I_{m+\bigidx-\smallidx}&*&0\\
0&h_{3,3}&0\\
0&*&I_r\\\end{array}\right)\in\Mat{m\times m}.
\end{align*}
If $x\notin\GL{m}$ then $u^{\star}g^{\star}\notin Q_mN$, so assume
that $x$ is invertible, equivalently $h_{3,3}\ne0$. Then
$x_1=diag((x^*)^{-1},I_{2\bigidx+1},x^{-1})\in Q_m$
and the last $m$ rows of $x_1u^{\star}g^{\star}$ are of the form
\begin{align*}
\left(\begin{array}{ccccccccc}
*&*&*&*&*&*&I_{m+\bigidx-\smallidx}&0&0\\
h_{3,3}^{-1}u_3&h_{3,3}^{-1}h_{3,1}&h_{3,3}^{-1}r_{1,3}'&-\gamma h_{3,3}^{-1}\lambda&\epsilon h_{3,3}^{-1}h_{3,2}&-\gamma h_{3,3}^{-1}b'&0&1&0\\
*&*&*&*&*&*&0&0&I_r\\\end{array}\right).
\end{align*}
Now $u^{\star}g^{\star}\in Q_mN$ if and only if
$x_1u^{\star}g^{\star}\in Q_mN$. If $qx_1u^{\star}g^{\star}\in N$
for some $q\in Q_m$ and the Levi part of $q$ is
$diag(m(q)^*,h(q),m(q))$ ($h(q)\in H_{\bigidx}$), then
$m(q)\in\mathcal{N}_{\GL{m},k}$, whence
$m(q)_1=diag((m(q)^*)^{-1},I_{2\bigidx+1},m(q)^{-1})\in N$ and
$m(q)_1qx_1u^{\star}g^{\star}\in N$. Therefore the coordinates of
lower-left $m\times(m+2\bigidx+1)$ block of $x_1u^{\star}g^{\star}$
(the first $m+2\bigidx+1$ columns of the matrix above) belong to
$\mathcal{P}^k$. In particular
\begin{align*}
|h_{3,3}^{-1}h_{3,1}|,|h_{3,3}^{-1}h_{3,2}|\leq q^{-k}.
\end{align*}
Put $c=b'\lambda'$. The last row of $(h_{i,j})$ takes the form
\begin{align*}
&\left(\begin{array}{ccc}
\frac{\gamma}{4\alpha}(-\alpha^2+2c\beta^2-1+2\alpha)&-\frac{\beta}{4\alpha}(\alpha^2+2c\beta^2-1)&
-\frac{1}{4\alpha}(-\alpha^2+2c\beta^2-1-2\alpha)\end{array}\right).
\end{align*}

Our first step is to show $|\alpha|=1$. Since
$|h_{3,3}^{-1}h_{3,1}|\leq q^{-k}$ and $k>>0$,
\begin{align*}
|-\alpha^2+2c\beta^2-1+2\alpha|<|-\alpha^2+2c\beta^2-1-2\alpha|.
\end{align*}
Hence
\begin{align*}
|-\alpha^2+2c\beta^2-1-2\alpha|=|-\alpha^2+2c\beta^2-1-2\alpha-(-\alpha^2+2c\beta^2-1+2\alpha)|=|4\alpha|
\end{align*}
and
\begin{align*}
|h_{3,3}|=|4\alpha|^{-1}|-\alpha^2+2c\beta^2-1-2\alpha|=1.
\end{align*}
Let $\xi=\alpha-(2c\beta^2-1)\alpha^{-1}$. If $|\xi|>|2|$,
\begin{align*}
|h_{3,3}|=|2^{-1}+4^{-1}\xi|=|4^{-1}\xi|>|2|^{-1}\geq1,
\end{align*}
contradiction. Also if $|\xi|<|2|$, $|h_{3,1}|=|\half\gamma|$ contradicting $|h_{3,3}^{-1}h_{3,1}|\leq q^{-k}$.
Thus $|\xi|=|2|$. Also $|h_{3,3}^{-1}h_{3,2}|\leq q^{-k}$ so
\begin{align*}
|\alpha+(2c\beta^2-1)\alpha^{-1}|\leq |4|q^{-k}\leq q^{-k}
\end{align*}
(recall that we assume $|\beta|=1$), whence
$\alpha=-(2c\beta^2-1)\alpha^{-1}+\varpi^k\theta$ with
$|\theta|\leq1$. Then
\begin{align*}
|2|=|\xi|=|2(2c\beta^2-1)\alpha^{-1}-\varpi^k\theta|\leq\max(|2||(2c\beta^2-1)\alpha^{-1}|,q^{-k}).
\end{align*}
Since $k>>0$, we first get $|(2c\beta^2-1)\alpha^{-1}|\geq1$, then
$|2|=|2||(2c\beta^2-1)\alpha^{-1}|$ so
$|(2c\beta^2-1)\alpha^{-1}|=1$ and $|\alpha|=1$.

The second step is to prove that $\lambda$ and $b$ can be made zero.
Because $|\alpha|=1$ and for all $1\leq i\leq\bigidx$, $|\gamma
\lambda_i|=|\gamma h_{3,3}^{-1}\lambda_i|\leq q^{-k}$ and $|\gamma
b'_i|=|\gamma h_{3,3}^{-1}b'_i|\leq q^{-k}$, the element $y\in
G_{\bigidx+1}$ given by
\begin{align*}
&[y]_{\mathcal{E}_{G_{\bigidx+1}}}=\left(\begin{array}{cccc}I_{\bigidx}&\\-\alpha^{-1}\lambda&1\\&&1&\\&&-\alpha^{-1}\lambda'&I_{\bigidx}\\\end{array}\right)
\left(\begin{array}{cccc}I_{\bigidx}&0&-\alpha^{-1}b&0\\&1&0&-\alpha^{-1}b'\\&&1&0\\&&&I_{\bigidx}\\\end{array}\right)
\end{align*}
belongs to $\mathcal{N}_{G_{\bigidx+1},k}$ and also when viewed as an element of $H_{m+\bigidx}$, $y\in N$.
Set $g_1=gy$,
\begin{align*}
&[g_1]_{\mathcal{E}_{G_{\bigidx+1}}}=\left(\begin{array}{cccc}I_{\bigidx}\\&\alpha&&\\&&\alpha^{-1}\\&&&I_{\bigidx}\\\end{array}\right).
\end{align*}

The third step is to show $g_1\in \mathcal{N}_{G_{\bigidx+1},k}\cap
N$. We may replace $g$ with $g_1$ and argue as above, because
$(\rconj{\omega_{m,\bigidx}^{-1}}t_{\gamma})u^{\star}g^{\star}w^{\star}\in
Q_mw^{\star}N$ if and only if
$(\rconj{\omega_{m,\bigidx}^{-1}}t_{\gamma})u^{\star}g_1^{\star}w^{\star}\in
Q_mw^{\star}N$, with $g_1^{\star}=\rconj{(w^{\star})^{-1}}g_1$. We
conclude $|h_{3,3}|=1$ and $|\alpha|=|\alpha|^{-1}=1$. Now
$(h_{i,j})$ is given by \eqref{eq:embedding of SO_2 split} and we
proceed as in the proof of Lemma~\ref{lemma:f with small support}.
Specifically, $|h_{3,2}|\leq q^{-k}$ implies
$\alpha-\alpha^{-1}\in4\mathcal{P}^{k}$ and then
$(\alpha-1)(\alpha+1)\in4\mathcal{P}^{k}$. If
$\alpha+1\in4\mathcal{P}$, we get $|h_{3,1}|=|\gamma|$,
contradiction. Therefore $|\alpha+1|>|4|q^{-1}$ so
$\alpha\in1+\mathcal{P}^k$ and thus
$g_1\in\mathcal{N}_{G_{\bigidx+1},k}$. Looking at
\eqref{eq:embedding of SO_2 split}, the conditions
$|h_{3,1}|,|h_{3,2}|\leq q^{-k}$ imply $g_1\in N$. 
Because
$y,g_1\in \mathcal{N}_{G_{\bigidx+1},k}\cap N$, $g=g_1y^{-1}\in
\mathcal{N}_{G_{\bigidx+1},k}\cap N$.

Now assume that $G_{\smallidx}$ is \quasisplit. By virtue of
Lemma~\ref{lemma:integration formula for quotient space H_n G_n+1}
we can replace the $dg$-integration with an integration over
$\GL{1}\times \overline{Z_{\bigidx-1,1}}\times (V_{\bigidx}\cap
G_2)\times \Xi_{\bigidx}$. Put $g=yb\varsigma\lambda\alpha$ with
$b\in \Xi_{\bigidx}$, $\varsigma\in V_{\bigidx}\cap G_2$,
$\lambda\in\overline{Z_{\bigidx-1,1}}$ and $\alpha\in \GL{1}$. In
coordinates,
\begin{align*}
b=\left(\begin{array}{cccccc}I_{\bigidx-1}&0&0&\varrho&0&A(\varrho)\\&1&&&&0\\&&1&&&0\\&&&1&&\varrho'\\&&&&1&0\\&&&&&I_{\bigidx-1}\end{array}\right),\quad
\varsigma=\left(\begin{array}{cccccc}I_{\bigidx-1}&&&&&\\&1&\varsigma_1&\varsigma_2&*&\\&&1&0&\varsigma_1'\\&&&1&\varsigma_2'\\&&&&1&
\\&&&&&I_{\bigidx-1}\end{array}\right)
\end{align*}
and
\begin{align*}
\lambda\alpha=\left(\begin{array}{cccccc}I_{\bigidx-1}&&&&&\\\lambda&\alpha&&&&\\&&1&\\&&&1\\&&&&\alpha^{-1}&
\\&&&&\lambda'\alpha^{-1}&I_{\bigidx-1}\end{array}\right)\qquad(\lambda'=-J_{\bigidx-1}\transpose{\lambda}).
\end{align*}
Set
$(b\varsigma\lambda\alpha)^{\star}=\rconj{(w^{\star}y^{\star})^{-1}}(b\varsigma\lambda\alpha)$.
Then
\begin{align*}
u^{\star}w^{\star}g=
u^{\star}w^{\star}y^{\star}(b\varsigma\lambda\alpha)=
u^{\star}(b\varsigma\lambda\alpha)^{\star}w^{\star}y^{\star}.
\end{align*}
Now $u^{\star}w^{\star}g\in Q_mw^{\star}y^{\star}N$ if and only if
$u^{\star}(b\varsigma\lambda\alpha)^{\star}\in Q_mN$
($\rconj{(w^{\star}y^{\star})^{-1}}N=N$). Let $x$ be the lower-right
$m\times m$ block of $u^{\star}(b\varsigma\lambda\alpha)^{\star}$.
As in the split case $x\notin\GL{m}$ implies
$u^{\star}(b\varsigma\lambda\alpha)^{\star}\notin Q_mN$. One shows
that the coordinates of $\varsigma_1,\varsigma_2$ and $\lambda$ are
small, and also $|1-\alpha|$ is small. Then the coordinates of
$\varrho$ are seen to be small and
$b\varsigma\lambda\alpha\in\mathcal{N}_{G_{\bigidx+1},k}\cap N$.

We continue with the proof, for both split and \quasisplit\
$G_{\smallidx}$. Going back to the integral, since $W_0$ is
invariant on the right for $\mathcal{N}_{G_{\smallidx},k}$, the
$dg$-integration reduces to a volume constant which may be ignored.
Thus we have
\begin{align*}
&|\gamma|^{\bigidx(m+\bigidx-\smallidx)}\int_{\lmodulo{Z_{r}}{\GL{r}}}\int_{\overline{V_r}}
\int_{R_{\smallidx,m+\bigidx}} \int_{\Mat{\bigidx\times
m+\bigidx-\smallidx}}
\int_{\lmodulo{U_{H_{\bigidx}}}{H_{\bigidx}}}(\int_{R^{\smallidx,\bigidx}}
W_0(r'w^{\smallidx,\bigidx}h'avy)dr')\\\notag&\varphi_s((\rconj{\omega_{m,\bigidx}^{-1}}t_{\gamma})u^{\star}w^{\star}y^{\star},a,\rconj{t_{\bigidx}}h',1)\psi_{\gamma}(r)\delta_s'(a)dh'd\eta
drdvda.
\end{align*}
(In the \quasisplit\ case $y$ and $y^{\star}$ appear as a result of
the application of Lemma~\ref{lemma:integration formula for quotient
space H_n G_n+1}.)

Finally consider $u^{\star}$ again. Let
\begin{align*}
z=\left(\begin{array}{ccc}
I_{m+\bigidx-\smallidx}\\
&1\\
&u_1'&I_r\end{array}\right)\in\GL{m},\qquad
z_1=\left(\begin{array}{ccc}
(z^*)^{-1}\\
&I_{2\bigidx+1}\\
&&z^{-1}\end{array}\right)\in Q_m.
\end{align*}
Then $u^{\star}\in Q_mN$ (i.e.,
$(\rconj{\omega_{m,\bigidx}^{-1}}t_{\gamma})u^{\star}w^{\star}y^{\star}$
belongs to the support of $\varphi_s$) if and only if $z_1u^{\star}\in Q_mN$,
\begin{align*}
z_1u^{\star}=\left(\begin{array}{ccccccccc}
I_r\\
0&1\\
0&0&I_{m+\bigidx-\smallidx}\\
v_1&0&\eta&I_{\bigidx}\\
\epsilon u_2&0&\epsilon r_2'&0&1\\
v_4&0&r_{1,1}'&0&0&I_{\bigidx}\\
r_{1,2}+\ldots&r_{1,3}&r_3+\eta' r_{1,1}'&r_{1,1}&\epsilon r_2&\eta'&I_{m+\bigidx-\smallidx}\\
u_3&0&r_{1,3}'&0&0&0&0&1\\
v_5-u_1'u_3&u_3'&r_{1,2}'-u_1'r_{1,3}'&v_4'&\epsilon u_2'&v_1'&0&0&I_r\\\end{array}\right).
\end{align*}
As above the coordinates of the lower-left $m\times(m+2\bigidx+1)$ block lie in $\mathcal{P}^k$.
Inspecting the other (non-constant) coordinates of $z_1u^{\star}$, they all lie there and $z_1u^{\star}\in N$.
Moreover
\begin{align*}
r_{1,3},r_{1,1},r_2,\eta,u_3,v_4,u_2,v_1\in\mathcal{P}^k,
\end{align*}
where with a minor abuse of notation we write, e.g.,
$v_1\in\mathcal{P}^k$ instead of $v_1\in\MatF{\bigidx\times
r}{\mathcal{P}^k}$. Because $u_2=\beta v_2+\frac1{2\beta}v_3$ and
$u_3=-\gamma v_2+\quarter v_3$ we see that
$v_2\in\half\mathcal{P}^k$, $v_3\in\mathcal{P}^k$ and
$u_1=v_2-\frac1{2\beta^2}v_3\in\half\mathcal{P}^k$. Since
$u_1'u_3\in\half\mathcal{P}^{2k}$ and $k>>0$,
$u_1'u_3\in\mathcal{P}^k$ whence $v_5\in\mathcal{P}^k$. Similarly,
$r_{1,2}\in\mathcal{P}^k$ ($u_1'r_{1,3}'\in\half\mathcal{P}^{2k}$).
Also $r_3\in\mathcal{P}^k$. It follows that $\eta,r\in N$ hence the
$d\eta dr$-integration shrinks into a volume constant, hereby
ignored. Additionally each $v_i$, $i=\intrange{1}{5}$, lies in
$\half\mathcal{P}^k$, thus $v\in\mathcal{N}_{G_{\smallidx},k-k_0}$
where $k_0\geq0$ equals the valuation of $2$ (depends only on
$|2|$).

The integral becomes
\begin{align*}
&c_{\tau,\gamma}^{-1}
\int_{\lmodulo{Z_{r}}{\GL{r}}}\int_{\overline{V_r}\cap\mathcal{N}_{G_{\smallidx},k-k_0}}
\int_{\lmodulo{U_{H_{\bigidx}}}{H_{\bigidx}}}(\int_{R^{\smallidx,\bigidx}}
\\\notag& W(r'w^{\smallidx,\bigidx}h'av)dr')\varphi_s(w^{\star}y^{\star},
\left(\begin{array}{ccc}a\\u_1&1\\&&I_{m+\bigidx-\smallidx}\end{array}\right)t_{\gamma}'',h't_{\bigidx}t_{\gamma}',1)\delta_s'(a)dh'
dvda.
\end{align*}
(Note that
$\rconj{\omega_{m,\bigidx}^{-1}}t_{\gamma}=diag(t_{\gamma}'',t_{\gamma}',(t_{\gamma}'')^*)$.)
By our selection of $W_{\tau_1}$, since $m+\bigidx-\smallidx>0$ it
vanishes unless the coordinates of $u_1$ belong to $\mathcal{P}^{o}$
and $a\in\mathcal{N}_{\GL{r},o}$, in which case $t_{\gamma}''\cdot
W_{\tau_1}$ equals a \nonzero\ constant. Since $W$ is invariant on
the right for $\mathcal{N}_{G_{\smallidx},o}$ and $k-k_0\geq o$, the
$dvda$-integration can be discarded. Ignoring volume constants
independent of $s$, the integral equals
\begin{align*}
\int_{\lmodulo{U_{H_{\bigidx}}}{H_{\bigidx}}}(\int_{R^{\smallidx,\bigidx}}
W(r'w^{\smallidx,\bigidx}h')dr')(t_{\bigidx}t_{\gamma}')^{-1}\cdot
f_s(h't_{\bigidx}t_{\gamma}',1)dh'=\Psi(W,f_s,s).
\end{align*}
The claim is established.
\end{proof} 
\end{enumerate}
The proof of the lemma is complete.
\end{proof}  

Lemma~\ref{lemma:sub multiplicativity embed some sections in gcd
second var l<=n} shows that the poles of $\gcd(\pi\times\tau,s)$
originating from holomorphic sections appear in
$\gcd(\pi\times\varepsilon,s)$. However, $\gcd(\pi\times\tau,s)$ may
also contain poles due to non-holomorphic sections. Under a certain
assumption these poles will also be included in
$\gcd(\pi\times\varepsilon,s)$.
We show,
\begin{corollary}\label{corollary:sub multiplicativity embed all sections in gcd second var l<=n}
Let 
$\tau_1$ and
$\varepsilon=\cinduced{P_{m,\bigidx}}{\GL{m+\bigidx}}{\tau_1\otimes\tau}$
be as in Lemma~\ref{lemma:sub multiplicativity embed some sections
in gcd second var l<=n}. If the operator
\begin{align}\label{op:embed all sections in gcd second var l<=n}
\nintertwiningfull{\tau_1}{s}\nintertwiningfull{\tau_1\otimes\tau^*}{(s,1-s)}
\end{align}
is holomorphic,
\begin{align*}
\gcd(\pi\times\tau,s)\in\gcd(\pi\times\varepsilon,s)\C[q^{-s},q^s].
\end{align*}
\end{corollary}
\begin{remark}
The composition in \eqref{op:embed all sections in gcd second var
l<=n} was described in Section~\ref{subsection:the multiplicativity
of the intertwining operator for tau induced}.
\end{remark}
\begin{proof}[Proof of Corollary~\ref{corollary:sub multiplicativity embed all sections in gcd second var
l<=n}] 
It is enough to show that for any $f_{1-s}\in\xi(\tau^*,hol,1-s)$
there exists $f'_s\in\xi(\varepsilon,good,s)$ such that
\begin{align*}
\Psi(W_0,f'_s,s)=\Psi(W,\nintertwiningfull{\tau^*}{1-s}f_{1-s},s),
\end{align*}
where $W_0$ is defined in Lemma~\ref{lemma:sub multiplicativity
embed some sections in gcd second var l<=n}.

Let
$\varphi_{s,1-s}\in\xi_{Q_{m,\bigidx}}^{H_{m+\bigidx}}(\tau_1\otimes\tau^*,hol,(s,1-s))$
be defined as in Lemma~\ref{lemma:sub multiplicativity embed some
sections in gcd second var l<=n}, using $f_{1-s}$. For example
assume $\smallidx\leq\bigidx$, then $\varphi_{s,1-s}$ is supported
(as a function on $H_{m+\bigidx}$) in $Q_mw'N$, right-invariant by
$N$ and for $x\in H_{\bigidx}$ and $y\in\GL{\bigidx}$,
$\varphi_{s,1-s}(w',1,x,y)=f_{1-s}^{b_{\bigidx,m}}(x,y)$. Denote
$\varphi_s'=\nintertwiningfull{\tau^*}{1-s}\varphi_{s,1-s}\in\xi_{Q_{m,\bigidx}}^{H_{m+\bigidx}}(\tau_1\otimes\tau,rat,(s,s))$.
It is (still) supported in $Q_mw'N$ and invariant on the right by
$N$, but
$\varphi_s'(w',1,x,y)=\nintertwiningfull{\tau^*}{1-s}f_{1-s}^{b_{\bigidx,m}}(x,y)$.

Our assumption on the operator \eqref{op:embed all sections in gcd
second var l<=n} implies that the image of any section in
$\xi_{Q_{m,\bigidx}}^{H_{m+\bigidx}}(\tau_1\otimes\tau^*,hol,(s,1-s))$
under \eqref{op:embed all sections in gcd second var l<=n} lies in
$\xi_{Q_{\bigidx,m}}^{H_{m+\bigidx}}(\tau^*\otimes\tau_1^*,hol,(1-s,1-s))$.
This combined with \eqref{eq:multiplication of normalized
intertwiners} and with the (similar) equality
\begin{align*}
\nintertwiningfull{\tau^*\otimes\tau_1}{(1-s,s)}\nintertwiningfull{\tau_1\otimes\tau^*}{(s,1-s)}=1
\end{align*}
shows that 
we can find
$\Phi_{1-s}\in\xi_{Q_{\bigidx,m}}^{H_{m+\bigidx}}(\tau^*\otimes\tau_1^*,hol,(1-s,1-s))$
such that
\begin{align*}
\nintertwiningfull{\tau^*\otimes\tau_1}{(1-s,s)}\nintertwiningfull{\tau_1^*}{1-s}\Phi_{1-s}=\varphi_{s,1-s}.
\end{align*}
Hence using \eqref{eq:multiplicative property of intertwiners} with
$\varepsilon^*=\cinduced{P_{\bigidx,m}}{\GL{m+\bigidx}}{\tau^*\otimes\tau_1^*}$,
\begin{align*}
\varphi_s'=\nintertwiningfull{\tau^*}{1-s}\nintertwiningfull{\tau^*\otimes\tau_1}{(1-s,s)}\nintertwiningfull{\tau_1^*}{1-s}\Phi_{1-s}=\nintertwiningfull{\varepsilon^*}{1-s}\Phi_{1-s}.
\end{align*}
Define $f_s'=\widehat{f}_{\varphi_s'}$ by \eqref{iso:iso 1}.
According to Claim~\ref{claim:intertwining operator and Jacquet
integral commute},
$f_s'=\nintertwiningfull{\varepsilon^*}{1-s}\widehat{f}_{\Phi_{1-s}}$
and by Claim~\ref{claim:f_s is the image of varphi_s when zeta is
fixed}, $\widehat{f}_{\Phi_{1-s}}\in\xi(\varepsilon^*,hol,1-s)$.
Therefore $f_s'\in\xi(\varepsilon,good,s)$ and as in the lemma
$\Psi(W_0,f'_s,s)=\Psi(W,\nintertwiningfull{\tau^*}{1-s}f_{1-s},s)$.
\end{proof} 

We will resort to Corollary~\ref{corollary:sub multiplicativity
embed all sections in gcd second var l<=n} for given representations
$\pi$ and $\tau$ with the luxury of selecting $\tau_1$. 
The following demonstrates how
to select $\tau_1$ so that the corollary would be applicable.

\begin{corollary}\label{corollary:auxiliary rep to embed all sections in gcd second var l<=n}
Let $\tau$ be an irreducible representation of $\GL{\bigidx}$. For
any $m>\max(\bigidx,\smallidx-\bigidx)$ and unitary irreducible
supercuspidal representation $\tau_1$ of $\GL{m}$ twisted by a
unitary sufficiently highly ramified character, we have
\begin{align*}
M_{\tau_1}(s)=\ell_{\tau^*\otimes\tau_1}(1-s)=\ell_{\tau_1\otimes\tau^*}(s)=1,
\end{align*}
the representation
$\varepsilon=\cinduced{P_{m,\bigidx}}{\GL{m+\bigidx}}{\tau_1\otimes\tau}$
is irreducible and
\begin{align*}
\gcd(\pi\times\tau,s)\in\gcd(\pi\times\varepsilon,s)\C[q^{-s},q^s],
\end{align*}
for any representation $\pi$ of $G_{\smallidx}$.
\end{corollary}
\begin{remark}
Indeed, the restrictions imposed by Lemma~\ref{lemma:sub multiplicativity embed some sections in gcd second var l<=n} and Corollary~\ref{corollary:sub multiplicativity
embed all sections in gcd second var l<=n} refer only to $\tau$ and $\tau_1$ and do not depend on $\pi$.
\end{remark}
\begin{proof}[Proof of Corollary~\ref{corollary:auxiliary rep to embed all sections in gcd second var l<=n}] 
Note that since $\tau$ is irreducible (generic), it is isomorphic to
a representation parabolically induced from a representation
$\eta_1\otimes\ldots\otimes\eta_a$ of
$\GL{\bigidx_1}\times\ldots\times\GL{\bigidx_a}$ where each $\eta_i$
is essentially square-integrable (\cite{Z3}, Section~9). Then since
$m>\bigidx$ and $\tau_1$ is irreducible supercuspidal, the
representation $\varepsilon$ will be irreducible according to
\textit{loc. cit.}

It is enough to show that $\tau_1$ satisfies
$M_{\tau_1}(s)=\ell_{\tau^*\otimes\tau_1}(1-s)=\ell_{\tau_1\otimes\tau^*}(s)=1$.
This is because $M_{\tau_1}(s)=\ell_{\tau_1}(s)\ell_{\tau_1^*}(1-s)$
and according to the definitions, $M_{\tau_1}(s)=1$ if and only if
$\ell_{\tau_1}(s)=\ell_{\tau_1^*}(1-s)=1$, so
$\nintertwiningfull{\tau_1}{s}$ has no poles and similarly
$\nintertwiningfull{\tau_1\otimes\tau^*}{(s,1-s)}$ has
no poles (because $\ell_{\tau_1\otimes\tau^*}(s)=1$). 
Now the result regarding $\gcd(\pi\times\tau,s)$ follows from Corollary~\ref{corollary:sub multiplicativity
embed all sections in gcd second var l<=n}.

As explained in Example~\ref{example:choosing tau so that twist has
no poles} a representation $\tau_1$ with the properties listed
satisfies $L(\tau_1^*,Sym^2,2-2s)=L(\tau_1,Sym^2,2s)=1$ and both
$\nintertwiningfull{\tau_1}{s}$, $\nintertwiningfull{\tau_1^*}{1-s}$
are holomorphic whence $M_{\tau_1}(s)=1$.

Next we show
$\ell_{\tau^*\otimes\tau_1}(1-s)=\ell_{\tau_1\otimes\tau^*}(s)=1$.
Choose
\begin{align*}
\phi^*=\cinduced{P_{\bigidx_1,\ldots,\bigidx_k}}{\GL{\bigidx}}{\phi_1^*\otimes\ldots\otimes\phi_k^*}
\end{align*}
with irreducible supercuspidal representations $\phi_i^*$
such that $\tau^*$ is a sub-representation of $\phi^*$.
Let
\begin{align}\label{op:long Levi}
\intertwiningfull{\phi_1^*\otimes\ldots\otimes\phi_k^*\otimes\tau_1}{(1-s,\ldots,1-s,s)}
\end{align}
be the standard intertwining operator
\begin{align*}
&V_{P_{\bigidx_1,\ldots,\bigidx_k,m}}^{\GL{m+\bigidx}}(\phi_1^*\absdet{}^{\frac{\bigidx}2}\otimes\ldots\otimes\phi_k^*\absdet{}^{\frac{\bigidx}2}\otimes\tau_1\absdet{}^{\frac{\bigidx}2},(1-s,\ldots,1-s,s))\\\notag&
\rightarrow
V_{P_{m,\bigidx_1,\ldots,\bigidx_k}}^{\GL{m+\bigidx}}(\tau_1\absdet{}^{\frac{\bigidx}2}\otimes\phi_1^*\absdet{}^{\frac{\bigidx}2}\otimes\ldots\otimes\phi_k^*\absdet{}^{\frac{\bigidx}2},(s,1-s,\ldots,1-s)).
\end{align*}
Here $\phi_1^*\otimes\ldots\otimes\phi_k^*\otimes\tau_1$ is
considered as an irreducible supercuspidal representation of
$A_{\bigidx_1,\ldots,\bigidx_k,m}$. If \eqref{op:long Levi} is
holomorphic, $\intertwiningfull{\tau^*\otimes\tau_1}{(1-s,s)}$ is
holomorphic (the converse is not true in general), because
$\cinduced{P_{\bigidx,m}}{\GL{m+\bigidx}}{\tau^*\otimes\tau_1}$ is a
sub-representation of
$\cinduced{P_{\bigidx_1,\ldots,\bigidx_k,m}}{\GL{m+\bigidx}}{\phi_1^*\otimes\ldots\otimes\phi_k^*\otimes\tau_1}$.
According to the
multiplicativity of the intertwining operator (\cite{Sh4}
Theorem~2.1.1), operator \eqref{op:long Levi} is a composition of
operators $\intertwiningfull{\phi_i^*\otimes\tau_1}{(1-s,s)}$. By
Theorem~\ref{theorem:poles of normalized intertwining for tau_1
tau_2 cuspidal} for each $1\leq i\leq k$,
\begin{align*}
L(\phi_i^*\otimes\tau_1^*,1-2s)^{-1}\intertwiningfull{\phi_i^*\otimes\tau_1}{(1-s,s)}
\end{align*}
is holomorphic. Since whenever $m>\bigidx$ (and $\tau_1$ is
irreducible supercuspidal) Theorem~\ref{theorem:tau cuspidal n large
L factor of glm gln 1} implies $L(\phi_i^*\times\tau_1^*,1-2s)=1$,
we get that $\intertwiningfull{\phi_i^*\otimes\tau_1}{(1-s,s)}$ is
already holomorphic and so is \eqref{op:long Levi}. Now for some
$e(q^{-s},q^s)\in\C[q^{-s},q^s]^*$,
\begin{align*}
\nintertwiningfull{\tau^*\otimes\tau_1}{(1-s,s)}=e(q^{-s},q^s)
\frac{L(\tau\times\tau_1,2s)}{L(\tau^*\times\tau_1^*,1-2s)}\intertwiningfull{\tau^*\otimes\tau_1}{(1-s,s)}.
\end{align*}
Again by Theorem~\ref{theorem:tau cuspidal n large L factor of glm
gln 1}, 
$L(\tau\times\tau_1,2s)=
1$ concluding that
$\nintertwiningfull{\tau^*\otimes\tau_1}{(1-s,s)}$ is holomorphic
and $\ell_{\tau^*\otimes\tau_1}(1-s)=1$.

The same reasoning applies to
$\nintertwiningfull{\tau_1\otimes\tau^*}{(s,1-s)}$, note that
\begin{align*}
L(\tau_1\times\phi_i,2s-1)^{-1}\intertwiningfull{\tau_1\otimes\phi_i^*}{(s,1-s)}
\end{align*}
is holomorphic and for some $d(q^{-s},q^s)\in\C[q^{-s},q^s]^*$,
\begin{align*}
\nintertwiningfull{\tau_1\otimes\tau^*}{(s,1-s)}=d(q^{-s},q^s)
\frac{L(\tau_1^*\times\tau^*,2-2s)}{L(\tau_1\times\tau,2s-1)}\intertwiningfull{\tau_1\otimes\tau^*}{(s,1-s)}.
\end{align*}
Therefore $\ell_{\tau_1\otimes\tau^*}(s)=1$.
\end{proof} 

\section{A relation between $\gcd(\pi\times\tau,s)$ and $\gcd(\pi\times\tau^*,1-s)$}\label{section:connection pi tau and pi tau star}
We mention a useful, simple observation concerning the g.c.d., which
follows from the multiplicative properties of the $\gamma$-factor -
Theorems~\ref{theorem:multiplicity second var} and
\ref{theorem:multiplicity first var}. The claim (and certain
variations of it) will be used numerously in
Chapter~\ref{chapter:upper boubnds on the gcd}.
\begin{claim}\label{claim:extra P factor in sub multiplicativity is mutual}
Let
$\tau=\cinduced{P_{\bigidx_1,\ldots,\bigidx_a}}{\GL{\bigidx}}{\tau_1\otimes\ldots\otimes\tau_a}$
be irreducible, $a\geq1$. Assume that for some $L\in\C(q^{-s})$,
$\gcd(\pi\times\tau,s)=\prod_{i=1}^a\gcd(\pi\times\tau_i,s)L$.
Then
\begin{align*}
\gcd(\pi\times\tau^*,1-s)\equalun\prod_{i=1}^a\gcd(\pi\times\tau_i^*,1-s)L.
\end{align*}
A similar assertion holds when $\pi$ is induced from a
representation $\sigma\otimes\pi'$ of $\GL{k}\times G_{\smallidx-k}$
($k\leq\smallidx$): if
$\gcd(\pi\times\tau,s)=L(\sigma\times\tau,s)\gcd(\pi'\times\tau,s)L(\sigma^*\times\tau,s)L$,
\begin{align*}
\gcd(\pi\times\tau^*,1-s)\equalun
L(\sigma\times\tau^*,1-s)\gcd(\pi'\times\tau^*,1-s)L(\sigma^*\times\tau^*,1-s)L.
\end{align*}
\end{claim}
\begin{proof}[Proof of Claim~\ref{claim:extra P factor in sub multiplicativity is mutual}] 
In general there exists $L'\in\C(q^{-s})$ 
such that
\begin{align*}
\gcd(\pi\times\tau^*,1-s)=\prod_{i=1}^a\gcd(\pi\times\tau_i^*,1-s)L'.
\end{align*}
Theorem~\ref{theorem:multiplicity second var} implies  
\begin{align*}
\prod_{i=1}^a\frac{\gcd(\pi\times\tau_i^*,1-s)}{\gcd(\pi\times\tau_i,s)}
\equalun\frac{\gcd(\pi\times\tau^*,1-s)}{\gcd(\pi\times\tau,s)}=
\prod_{i=1}^a\frac{\gcd(\pi\times\tau_i^*,1-s)}{\gcd(\pi\times\tau_i,s)}\frac{L'}L.
\end{align*}
Hence $L'\equalun L$. For $\pi$ induced from $\sigma\otimes\pi'$ one uses Theorem~\ref{theorem:multiplicity first var} and \eqref{eq:JPSS relation gamma and friends}.
\end{proof} 

In particular we shall apply Claim~\ref{claim:extra P factor in sub
multiplicativity is mutual} with
$L=M_{\tau_1\otimes\ldots\otimes\tau_a}(s)P$, for some
$P\in\C[q^{-s},q^s]$.

\section{A comment about the unramified case}\label{subsection:comment about the unramified}
Assume that $\pi$ and $\tau$ are irreducible unramified (generic)
representations. In particular they are induced from Borel
subgroups. We describe the upper and lower bounds on the g.c.d. that
we can obtain using our results and compare them to the results of
the theory of $\GL{\smallidx}\times\GL{\bigidx}$. Consider the split
case first. Then
\begin{align*}
\pi=\cinduced{P_{\smallidx}}{G_{\smallidx}}{\sigma},\qquad \sigma=\cinduced{B_{\GL{\smallidx}}}{\GL{\smallidx}}{\sigma_1\otimes\ldots\otimes\sigma_\smallidx}
,\qquad
\tau=\cinduced{B_{\GL{\bigidx}}}{\GL{\bigidx}}{\tau_1\otimes\ldots\otimes\tau_{\bigidx}}.
\end{align*}
Here $\sigma_j,\tau_i$ are unramified characters of $\GL{1}$. 
By Theorem~\ref{theorem:gcd sub multiplicity first var} there is an
upper bound
\begin{align*}
\gcd(\pi\times\tau,s)\in L(\sigma\times\tau,s)L(\sigma^*\times\tau,s)M_{\tau_1\otimes\ldots\otimes\tau_{\bigidx}}(s)\C[q^{-s},q^s]
\end{align*}
(when $\smallidx\leq\bigidx$, to apply Theorem~\ref{theorem:gcd sub
multiplicity first var} rewrite $\tau$ as a representation of
Langlands' type).
Let
$\sigma'=\cinduced{B_{\GL{\smallidx-1}}}{\GL{\smallidx-1}}{\sigma_1\otimes\ldots\otimes\sigma_{\smallidx-1}}$.
Then
$\pi=\cinduced{P_{\smallidx-1}}{G_{\smallidx}}{\sigma'\otimes\pi'}$,
$\pi'=\cinduced{B_{G_1}}{G_1}{\sigma_{\smallidx}}$ and according to
Lemma~\ref{lemma:sub multiplicativity embed some sections in first
second var k<l or k=l>n} we have a lower bound
\begin{align}\label{relation:lower bound unramified split}
L(\sigma'\times\tau,s)\in\gcd(\pi\times\tau,s)\C[q^{-s},q^s].
\end{align}

In the \quasisplit\ case
$\pi=\cinduced{P_{\smallidx-1}}{G_{\smallidx}}{\sigma'\otimes\pi'}$
with $\sigma'$ as above and the trivial representation $\pi'=1$ of
$G_1$. Theorem~\ref{theorem:gcd sub multiplicity first var} implies 
\begin{align*}
\gcd(\pi\times\tau,s)\in
L(\sigma'\times\tau,s)\gcd(\pi'\times\tau,s)L((\sigma')^*\times\tau,s)M_{\tau_1\otimes\ldots\otimes\tau_{\bigidx}}(s)\C[q^{-s},q^s].
\end{align*}
According to Proposition~\ref{proposition:iwasawa decomposition of the integral}, for $\smallidx=1$ and \quasisplit\ $G_{\smallidx}$ the integrals with holomorphic sections are holomorphic (see \eqref{int:iwasawa decomposition of the integral l<=n split}). Then Proposition~\ref{proposition:integral for good section} shows that
$\gcd(\pi'\times\tau,s)^{-1}$ divides $\ell_{\tau^*}(1-s)^{-1}$.
Similarly, $\gcd(\pi'\times\tau^*,1-s)^{-1}$ divides $\ell_{\tau}(s)^{-1}$. Hence
\begin{align*}
\gcd(\pi'\times\tau,s)=\ell_{\tau^*}(1-s)P_0,\qquad \gcd(\pi'\times\tau^*,1-s)=\ell_{\tau}(s)\widetilde{P}_0,
\end{align*}
for some $P_0,\widetilde{P}_0\in\C[q^{-s},q^s]$. Now by
Corollary~\ref{corollary:first var k<l<n exact sub mult} we obtain
\begin{align*}
\gcd(\pi\times\tau,s)\in L(\sigma'\times\tau,s)L((\sigma')^*\times\tau,s)M_{\tau}(s)\C[q^{-s},q^s].
\end{align*}
Relation~\eqref{relation:lower bound unramified split} also holds in
the \quasisplit\ case.

We compare this to the results of the theory of
$\GL{\smallidx}\times\GL{\bigidx}$ for unramified data. Recall that
the $L$-factor $L(\sigma\times\tau,s)$ was defined by Jacquet,
Piatetski-Shapiro and Shalika \cite{JPSS} as a g.c.d. (see
Section~\ref{subsection:func eq gln glm}). In the unramified case,
Theorem~3.1 of \cite{JPSS} and a short calculation of
$\GL{1}\times\GL{1}$ integrals show, that Langlands' $L$-function
for $\sigma\times\tau$ defined using the Satake parameters (see
Section~\ref{subsection:local L functions def}) is an upper bound
for the g.c.d. Additionally, Jacquet and Shalika \cite{JS1}
(Section~2) calculated the Rankin-Selberg integral for
$\sigma\times\tau$ with unramified Whittaker functions and showed
that it is equal to this $L$-function. Therefore, the g.c.d. is
equal to the $L$-function and it is obtained by the integral with
unramified data. Thus the notation $L(\sigma\times\tau,s)$ for the
g.c.d. is compatible, in the sense that in the unramified case the
g.c.d. is actually the $L$-function.

The upper bounds we obtained here imply that in the split (resp.
\quasisplit) case,
$L(\pi\times\tau,s)M_{\tau_1\otimes\ldots\otimes\tau_{\bigidx}}(s)$
(resp. $\det(1-(t'_{\pi}\otimes t_{\tau})q^{-s})^{-1}M_{\tau}(s)$)
is an upper bound for the g.c.d. (see Section~\ref{subsection:local
L functions def} for the definition of $t'_{\pi}$).

Let $W_0\in\Whittaker{\pi}{\psi_{\gamma}^{-1}}$ and
$f_s^0\in\xi(\tau,std,s)$ be the normalized unramified vectors (see Section~\ref{section:Computation of the local integral with unramified data}).
According to Theorem~\ref{theorem:unramified computation}, the
integral $\Psi(W^0,f_s^0,s)$ with all data unramified 
equals
\begin{align*}
\frac{L(\pi\times\tau,s)}{L(\tau,Sym^2,2s)}=
\begin{cases}L(\sigma\times\tau,s)L(\sigma^*\times\tau,s)L(\tau,Sym^2,2s)^{-1}&\text{split $G_{\smallidx}$,}\\
L(\sigma'\times\tau,s)L((\sigma')^*\times\tau,s)L(\tau,\Lambda^2,2s)^{-1}&\text{\quasisplit\
$G_{\smallidx}$.}
\end{cases}
\end{align*}
In
particular the unramified integral may have zeros. Thus in general
it does not represent the g.c.d. and moreover, we can describe
inducing data such that some of the poles of
$L(\sigma'\times\tau,s)$ do not belong to the set of poles of
$\Psi(W^0,f_s^0,s)$ (e.g., take 
$\sigma$ induced from $\sigma_1\otimes\sigma_2$ and $\tau$ induced
from $\sigma_1\otimes\sigma_1^2$, then some poles of
$L(\sigma'\times\tau,s)$ are canceled by zeros of
$L(\tau,Sym^2,2s)$).

In the original definition of Piatetski-Shapiro and Rallis \cite{PS}
(Section~5), the unramified case is dealt with specifically by enlarging the set of good sections (see also
\cite{HKS} Section~6). Following this here we could add
$L(\tau,Sym^2,2s)f_s^0$ 
to the set of good sections in the unramified case. While this
remedies the inconsistency in the sense that
$\Psi(W^0,L(\tau,Sym^2,2s)f_s^0,s)=L(\pi\times\tau,s)$, we are still
unable to show $\gcd(\pi\times\tau,s)=L(\pi\times\tau,s)$, i.e.,
that the intertwining operator does not introduce additional poles.


\newtheorem{theorem}{Theorem}[section]
\newtheorem{proposition}{Proposition}[section]
\newtheorem{corollary}[proposition]{Corollary}[section]
\newtheorem{lemma}[proposition]{Lemma}[section]
\newtheorem{claim}[proposition]{Claim}[section]
\theoremstyle{remark}
\newtheorem{remark}{Remark}[section]
\newtheorem{example}{Example}[section]
\theoremstyle{definition}
\newtheorem{definition}{Definition}[section]
\numberwithin{equation}{section}
\newcommand{\chapter}{\section} 

\end{comment}
\chapter{Upper bounds on the g.c.d.}\label{chapter:upper boubnds on the gcd}
In Chapter~\ref{chapter:the gcd} we calculated the g.c.d. in a few
cases. Here we prove upper bounds that apply in the general case.
The main idea of the proofs is to convert the integral manipulations
of Chapter~\ref{section:gamma_mult} into identities of formal
Laurent series. This approach was developed by Jacquet,
Piatetski-Shapiro and Shalika \cite{JPSS} to establish the upper
bounds of the g.c.d. in their case.
\let\thefootnote\relax\footnote{This chapter is dedicated to Yael and Sophie.}

We briefly recall the construction of \cite{JPSS} (Section~4).
Consider for example a pair of representations $\xi$ and $\phi$, of
$\GL{r}$. With the notation of Section~\ref{subsection:func eq gln
glm},
\begin{align*}
\Psi(W_{\xi},W_{\phi},\Phi,s)=\int_{\lmodulo{Z_{r}}{\GL{r}}}W_{\xi}(a)W_{\phi}(a)\Phi(\eta_r
a)\absdet{a}^sda.
\end{align*}
For $m\in\Integers$, denote
$\GL{r}^m=\setof{b\in\GL{r}}{\absdet{b}=q^{-m}}$. We define a
Laurent series in $\C[[X,X^{-1}]]$,
\begin{align*}
\Sigma(W_{\xi},W_{\phi},\Phi,s)=\sum_{m\in\Integers}X^m\int_{\lmodulo{Z_{r}}{\GL{r}^m}}W_{\xi}(a)W_{\phi}(a)\Phi(\eta_r
a)da.
\end{align*}
If we replace $X$ with $q^{-s}$ this sum equals, for $\Re(s)>>0$,
the value of the integral $\Psi(W_{\xi},W_{\phi},\Phi,s)$. Assume
$\xi=\cinduced{P_{r_1,r_2}}{\GL{r}}{\xi_1\otimes\xi_2}$. The series
$\Sigma(W_{\xi},W_{\phi},\Phi,s)$ is converted into
$\Sigma(\widetilde{W_{\xi}},\widetilde{W_{\phi}},\widehat{\Phi},1-s)$
by applying integration formulas to the integrals over
$\lmodulo{Z_{r}}{\GL{r}^m}$ and utilizing the functional equations
for $\xi_1\times\phi$ and $\xi_2\times\phi$. In this manner both the
multiplicativity of $\gamma(\xi\times\phi,\psi,s)$ and an upper
bound $L(\xi\times\phi,s)\in
L(\xi_1\times\phi,s)L(\xi_2\times\phi,s)\C[q^{-s},q^s]$ are deduced
simultaneously. Note that in order to convert the integral to a
series, one needs only partition the measure space $(\GL{r},dg)$. In
particular the unipotent integration introduced when $\xi$ is a
representation of $\GL{k}$ and $k<r$, as well as the Jacquet
integral used to realize the Whittaker functional on $\xi$
(analogously to Section~\ref{subsection:Realization of tau for
induced representation}), remain intact.

Following this method here, we attach a Laurent series to an integral $\Psi(W,f_s,s)$.
Two noticeable differences between the
$\GL{k}\times\GL{\bigidx}$ integrals and the Rankin-Selberg
constructions for orthogonal groups are relevant in this context. First, the dependence on $s$ of
the space induced from $\tau$, leading to the distinction between
standard, holomorphic and rational sections (in fact in the global
setting of $\GL{\bigidx}\times\GL{\bigidx}$ a special section is
used, see \cite{JS1} Section~4 and \cite{CPS} Section~2.1). Second,
the appearance of the intertwining operators in the integrals and
functional equations. In our setting, in order to associate a series
to $\Psi(W,f_s,s)$ one must also deal with the properties of $f_s$,
not just with those of the measure space. For instance if $f_s$ is a
good section which is not holomorphic, e.g. $f_s=\nintertwiningfull{\tau^*}{1-s}f_{1-s}'$
where $f_{1-s}'\in\xi(\tau^*,hol,1-s)$ and $f_s\notin\xi(\tau,hol,s)$, it is not clear how to define
such a series. We will need to ensure that we always have
holomorphic sections. In the last example this can be done by replacing
$f_s$ with
$\ell_{\tau^*}(1-s)^{-1}f_s$.

In order to define the coefficient of $X^m$ in a series attached to $\Psi(W,f_s,s)$ we will consider, say
if $\smallidx\leq\bigidx$, the set of $(g,r)\in G_{\smallidx}\times
R_{\smallidx,\bigidx}$ such that in the Iwasawa decomposition
$w_{\smallidx,\bigidx}rg=auk$, where $a\in\GL{\bigidx}\isomorphic
M_{\bigidx}$, $u\in U_{\bigidx}$ and $k\in K_{H_{\bigidx}}$,
$\absdet{a}=q^{-m}$. Any integration related to $f_s$ (or
$\varphi_s$ such that $f_s=\widehat{f}_{\varphi_s}$) will need to be
partitioned. Furthermore, it seems difficult to apply our integral manipulations
coefficient-wise.

In Section~\ref{section:integrals and series} 
we develop a framework for converting our integrals into Laurent
series and tools which allow us to translate integral manipulations
into operations on those series. In particular, we prove a certain
version of Fubini's Theorem for series - Lemma~\ref{lemma:double
iterated series}, that will be used to apply a functional equation
to an ``inner series" representing an inner integral. For example if
$\tau=\cinduced{P_{\bigidx_1,\bigidx_2}}{\GL{\bigidx}}{\tau_1\otimes\tau_2}$,
the series representing the $G_{\smallidx}\times\GL{\bigidx}$
integral $\Psi(W,\varphi_s,s)$ will be equal to an ``iterated"
series, where the inner series represents a
$G_{\smallidx}\times\GL{\bigidx_2}$ integral. Then instead of
multiplying the integral by $\gamma(\pi\times\tau_2,\psi,s)$, we
multiply the series by the polynomial
$\gcd(\pi\times\tau_2,s)^{-1}$.

Our results also apply to the Rankin-Selberg integrals of
$SO_{2\smallidx+1}\times\GL{\bigidx}$ studied by Soudry
\cite{Soudry,Soudry2} and imply similar upper bounds. In the case
$\GL{k}\times\GL{\bigidx}$ our construction gives the same series as
\cite{JPSS}, see Example~\ref{example:GL(r) series with our defs}
below, and our tools can be used to reproduce their results.

\section{Integrals and Laurent series}\label{section:integrals and series}
\subsection{Functions and Laurent series}\label{subsection:functions and laurent series}
Let $\Sigma(X)=\C[[X,X^{-1}]]$ be the complex vector space of formal
Laurent series $\sum_{m\in\Integers}a_mX^m$, $a_m\in\C$. It is an
$R(X)=\C[X,X^{-1}]$-module with torsion. A series $\Sigma
=\sum_{m\in\Integers}a_mX^m\in\Sigma(X)$ is said to be absolutely
convergent at $s_0\in\C$ if the complex series obtained from
$\Sigma$ by replacing $X$ with $q^{-s_0}$ is absolutely convergent,
i.e., if $\sum_{m\in\Integers}|a_mq^{-s_0}|<\infty$. Then the value
of $\Sigma$ at $s_0$ is the value of the complex series
$\sum_{m\in\Integers}a_mq^{-s_0}$.

Let $f=f(s)$ be a complex-valued function defined on some domain
$D_f\subset\C$, where a domain will always refer to a subset
containing a non-empty open set. We say that $f$ has a
representation $\Sigma\in \Sigma(X)$ in $D_f$ if for all $s_0\in
D_f$, $\Sigma$ is absolutely convergent at $s_0$ and equals
$f(s_0)$. Note that such a representation is unique, i.e., if
$\Sigma'$ also represents $f$ in $D_f$, $\Sigma=\Sigma'$ in
$\Sigma(X)$. When clear from the context, we omit the domain and say
that $f$ is representable by $\Sigma$.

Let $f_1$ and $f_2$ be complex-valued functions defined on some common
domain $D\subset\C$. Assume that $\Sigma_i\in\Sigma(X)$ represents
$f_i$ in $D$ for $i=1,2$. Then $\Sigma_1+\Sigma_2$ represents
$f_1+f_2$ in $D$. Moreover if $P=P(X,X^{-1})\in R(X)$, $P\Sigma_1\in\Sigma(X)$ represents the function $s\mapsto
P(q^{-s},q^s)f_1(s)$ (in $D$).

\subsection{Representations of integrals as series}\label{subsection:Representations of integrals as series}
Let $\Gamma$ be an $l$-space, i.e. a Hausdorff, locally compact
zero-dimensional topological space (see 
\cite{BZ1}, 1.1), with a Borelian measure $dx$. For a ring $R$,
denote by $C^{\infty}(\Gamma,R)$ the set of locally constant
functions $\phi:\Gamma\rightarrow R$. This is a ring with the
pointwise operations. We usually take $R$ to be the polynomial ring
$\C[q^{-s},q^s]$, or $\C(q^{-s})$. Also recall that $\emptyset$ denotes the empty set and for
$\phi\in C^{\infty}(\Gamma,R)$, $\support{\phi}$ is the support of
$\phi$ (see Section~\ref{subsection:additional notation and symbols}).

\begin{example}\label{example:det is a holomorphic section}
The function $\alpha^s$ belongs to
$C^{\infty}(\GL{\bigidx},\C[q^{-s},q^s])$.
\end{example}

\begin{example}\label{example:f_s is a holomorphic section}
Let
$f_s\in\xi(\tau,hol,s)=\xi_{Q_{\bigidx}}^{H_{\bigidx}}(\tau,hol,s)$
where $\tau$ is realized in $\Whittaker{\tau}{\psi}$. For a fixed
$s$, $f_s\in V(\tau,s)$ and then for a fixed $h$, $b\mapsto
f_s(h,b)$ belongs to $\Whittaker{\tau}{\psi}$ (see
Section~\ref{subsection:the integrals}). Write
$f_s=\sum_{i=1}^mP_i(q^{-s},q^{s})f_s^{(i)}$ where
$P_i\in\C[q^{-s},q^s]$ and $f_s^{(i)}\in\xi(\tau,std,s)$. For $h\in H_{\bigidx}$ write $h=buk$
with $b\in \GL{\bigidx}\isomorphic M_{\bigidx}$, $u\in U_{\bigidx}$
and $k\in K_{H_{\bigidx}}$ according to the Iwasawa decomposition.
Then
$f_s^{(i)}(h,1)=\absdet{b}^{s-\half}\delta_{Q_{\bigidx}}^{\half}(b)f_s^{(i)}(k,b)$
and since $f_s^{(i)}(k,b)$ is independent of $s$,
$f_s^{(i)}(h,1)\in\C[q^{-s},q^s]$. Hence the function $h\mapsto
f_s(h,1)=\sum_{i=1}^mP_i(q^{-s},q^{s})f_s^{(i)}(h,1)$ is a locally
constant function on $H_{\bigidx}$ with values in
$\C[q^{-s},q^{s}]$. Therefore we may regard $f_s$ as an element of
$C^{\infty}(H_{\bigidx},\C[q^{-s},q^{s}])$. Similarly,
$f_s\in\xi(\tau,rat,s)$ may be viewed as an element of
$C^{\infty}(H_{\bigidx},\C(q^{-s}))$ (the function $h\mapsto
f_s(h,1)$ takes values in $\C(q^{-s})$).
\end{example}

For a non-empty finite subset of integers $M$, let
$\mathcal{V}_M=\Span{\C}{\setof{q^{-sj}}{j\in
M}}\subset\C[q^{-s},q^s]$. 
Let $\phi\in
C^{\infty}(\Gamma,\C[q^{-s},q^{s}])$. The $M$-support of $\phi$ is
the set of $x\in\Gamma$ such that $0\ne\phi(x)\in\mathcal{V}_M$ and
for all $\emptyset\ne M'\subsetneq M$,
$\phi(x)\notin\mathcal{V}_{M'}$. We
denote this set (which may be empty) by $supp_M(\phi)$. Since $\phi$
is locally constant, $supp_M(\phi)$ is an open subset and in particular measurable. Note that
$x\in supp_M(\phi)$ means that $M$ is exactly the subset of integers
$m$ such that $q^{-sm}$ appears in the polynomial $\phi(x)$ with a
\nonzero\ coefficient.

\begin{claim}\label{claim:support of phi is divided uniquely}
If $\support{\phi}\ne\emptyset$, there is a unique (necessarily countable) collection of
non-empty finite subsets of integers $\{M_i\}_{i\in I}$, such that
$supp_{M_i}(\phi)\ne\emptyset$ for all $i\in I$ and
$\support{\phi}=\dotcup_{i\in I}supp_{M_i}(\phi)$. 
\end{claim}
\begin{proof}[Proof of Claim~\ref{claim:support of phi is divided
uniquely}] 
For any $x\in\support{\phi}$, $\phi(x)$ is a \nonzero\ polynomial in
$q^{-s},q^s$ whence there is a unique finite set $\emptyset\ne
M\subset\Integers$ containing precisely the integers $m$ such that
$q^{-sm}$ appears in $\phi(x)$ with a \nonzero\ coefficient. Hence
$\phi(x)\in \mathcal{V}_M$ and for all $\emptyset\ne M'\subsetneq
M$, $\phi(x)\notin\mathcal{V}_{M'}$. Thus $x\in supp_{M}(\phi)$. It
follows that $\support{\phi}=\bigcup_{i\in I}supp_{M_i}(\phi)$ for
some collection of sets $\{M_i\}_{i\in I}$.

If $x\in supp_{M}(\phi)\bigcap supp_{N}(\phi)$ for another non-empty
finite subset $N$, then
$\phi(x)\in\mathcal{V}_M\bigcap\mathcal{V}_N$. 
Since $\phi(x)\ne0$ we obtain $M\bigcap N\ne\emptyset$. If $M\bigcap
N=M$, we get $\emptyset\ne M\subsetneq N$ and
$\phi(x)\in\mathcal{V}_M$, contradicting the fact that $x\in
supp_{N}(\phi)$. But now $\emptyset\ne M\cap N\subsetneq M$
satisfies $\phi(x)\in \mathcal{V}_{M\cap N}$ contradicting
$x\in supp_{M}(\phi)$. This shows $\support{\phi}=\dotcup_{i\in
I}supp_{M_i}(\phi)$ and that the collection $\{M_i\}_{i\in I}$ is unique. 
\end{proof} 
We say that $\Gamma$ can be divided into the simple supports of
$\phi$ if $\support{\phi}=\emptyset$ or if the collection
$\{M_i\}_{i\in I}$ of Claim~\ref{claim:support of phi is divided
uniquely} satisfies $M_i=\{m_i\}$ for some $m_i\in\Integers$ for all
$i\in I$.
\begin{example}\label{example:alpha s and simple supports}
The $l$-space $\GL{\bigidx}$ can be divided into the simple supports
of $\alpha^s$. In fact, for any $m\in\Integers$,
\begin{align*}
supp_{\{m\}}(\alpha^s)=\setof{b\in
\GL{\bigidx}}{\absdet{b}=q^{-m}}.
\end{align*}

\end{example}
\begin{example}\label{example:phi which depends mostly on s}
Let $P=\sum_{j\in \Integers}a_jq^{-sj}\in \C[q^{-s},q^s]$ (i.e.,
$a_j=0$ for almost all $j$). The function $\phi(x)=P$ defined on
$\Gamma$ trivially belongs to $C^{\infty}(\Gamma,\C[q^{-s},q^s])$.
Assuming $P\ne0$, $\support{\phi}=\Gamma=supp_M(\phi)$ where
$M=\setof{j\in\Integers}{a_j\ne0}$. This example can be extended by
replacing the coefficients $a_j$ with functions in
$C^{\infty}(\Gamma,\C)$.
\end{example}
\begin{example}\label{example:division of standard section supports}
Let $f_s\in\xi(\tau,std,s)$ be regarded as a function in
$C^{\infty}(H_{\bigidx},\C[q^{-s},q^s])$ as explained in
Example~\ref{example:f_s is a holomorphic section}, i.e., the actual
function is $h\mapsto f_s(h,1)$. For $h\in H_{\bigidx}$,
$f_s(h,1)=\absdet{b}^{s-\half}\delta_{Q_{\bigidx}}^{\half}(b)f_s(k,b)$
where $h=buk$ is written according to Example~\ref{example:f_s is a
holomorphic section}. Since $f_s(k,b)$ is independent of $s$,
\begin{align*}
supp_{\{m\}}(f_s)=\setof{h\in
H_{\bigidx}}{h=buk,\absdet{b}=q^{-m},f_s(k,b)\ne0}.
\end{align*}
This shows that $H_{\bigidx}$ can be divided into the simple
supports of $f_s$.
\end{example}
For $m\in\Integers$ denote by $P_m:\C[q^{-s},q^s]\rightarrow\C$ the mapping
$P_m(\sum_{j\in\Integers}a_jq^{-sj})=a_m$. The function $x\mapsto
P_m(\phi(x))$ is locally constant (hence measurable).
\begin{definition}\label{def:series for bundle}
Let $\phi\in C^{\infty}(\Gamma,\C[q^{-s},q^s])$. Assume $\int_{\Gamma}|P_m(\phi(x))|dx<\infty$, for all $m\in\Integers$. Then we
define a Laurent series in $\Sigma(X)$,
\begin{align*}
\oint_{\Gamma}\phi(x)dx=\sum_{m\in\Integers}X^m\int_{\Gamma}P_m(\phi(x))dx.
\end{align*}
\end{definition}
We define a series as above to be strongly convergent at $s$ if
\begin{align*}
\sum_{m\in\Integers}q^{-\Re(s)m}\int_{\Gamma}|P_m(\phi(x))|dx<\infty,
\end{align*}
a condition stronger than being absolutely convergent at $s$, 
which is just that
\begin{align*}
\sum_{m\in\Integers}q^{-\Re(s)m}\Big|\int_{\Gamma}P_m(\phi(x))dx\Big|<\infty.
\end{align*}
\begin{example}\label{example:phi which does not depend on s}
Let $a\in C^{\infty}(\Gamma,\C)\subset
C^{\infty}(\Gamma,\C[q^{-s},q^s])$. Then
$\support{a}=supp_{\{0\}}(a)$ and provided
$\int_{\Gamma}|a(x)|dx<\infty$,
$\oint_{\Gamma}a(x)dx=\int_{\Gamma}a(x)dx$ is a constant term as an
element of $\Sigma(X)$.
\end{example}
Next we define an integral for $\phi\in
C^{\infty}(\Gamma,\C[q^{-s},q^s])$. Any fixed $s_0\in\C$ induces a
homomorphism $\C[q^{-s},q^s]\rightarrow\C$ by evaluation. Denote
by $[\phi(x)](s_0)$ the value of $\phi(x)$ under this homomorphism.
For example, if $x\in supp_M(\phi)$, $\phi(x)=\sum_{j\in
M}a_j(x)q^{-sj}$ where $0\ne a_j(x)\in\C$ and
$[\phi(x)](s_0)=\sum_{j\in M}a_j(x)q^{-s_0j}\in\C$.

For any fixed $s\in\C$, the integral
$\Phi(s)=\int_{\Gamma}[\phi(x)](s)dx$ is absolutely convergent if
$\int_{\Gamma}|[\phi(x)](s)|dx<\infty$. If there is a domain
$D\subset\C$ such that for all $s\in D$, $\Phi(s)$ is absolutely
convergent, $s\mapsto\Phi(s)$ is a complex-valued function on $D$.
The following results show how to use the series defined above to
represent the complex function $\Phi$ in the sense of
Section~\ref{subsection:functions and laurent series}.

We introduce the following notation that will be used repeatedly
below. For $\phi\in C^{\infty}(\Gamma,\C[q^{-s},q^s])$, denote
$\Phi_{\phi}(s)=\int_{\Gamma}[\phi(x)](s)dx$ and
$\Sigma_{\phi}=\oint_{\Gamma}\phi(x)dx$ (assuming these are
defined). Let $D\subset\C$ be a domain. We write
$\Sigma_{\phi}\sim_D\Phi_{\phi}$ if for all $s\in D$,
$\Phi_{\phi}(s)$ is absolutely convergent, $\Sigma_{\phi}$ is
strongly convergent at $s$ and $\Sigma_{\phi}$ represents
$\Phi_{\phi}$ in $D$.

In order to deduce that $\Sigma_{\phi}$ represents
$\Phi_{\phi}$ in $D$, given that $\Phi_{\phi}(s)$ is absolutely convergent and $\Sigma_{\phi}$ is
strongly convergent, we need only show that substituting $q^{-s}$ for $X$ in $\Sigma_{\phi}$ gives $\Phi_{\phi}(s)$ for $s\in D$.
\begin{lemma}\label{lemma:integral representation for integral with m supports}
Let $\phi\in C^{\infty}(\Gamma,\C[q^{-s},q^s])$ be such that
$\Gamma$ can be divided into its simple supports. Assume that
$\Phi_{\phi}(s)$ is absolutely convergent in a domain $D\subset\C$.
Then $\Sigma_{\phi}$ is defined and
$\Sigma_{\phi}\sim_D\Phi_{\phi}$.
\end{lemma}
\begin{proof}[Proof of Lemma~\ref{lemma:integral representation for integral with m supports}] 
If $x\in supp_{\{m\}}(\phi)$, write $\phi(x)=q^{-sm}a_m(x)$ with
$0\ne a_m(x)\in\C$. In $D$,
\begin{align*}
\infty>\int_{\Gamma}|[\phi(x)](s)|dx&=\sum_{m\in\Integers}\int_{supp_{\{m\}}(\phi)}|[\phi(x)](s)|dx\\\notag&=
\sum_{m\in\Integers}\int_{supp_{\{m\}}(\phi)}q^{-\Re(s)m}|a_m(x)|dx\\\notag
&=\sum_{m\in\Integers}q^{-\Re(s)m}\int_{supp_{\{m\}}(\phi)}|P_m(\phi(x))|dx
\\\notag&=\sum_{m\in\Integers}q^{-\Re(s)m}\int_{\Gamma}|P_m(\phi(x))|dx.
\end{align*}
Note that whenever $supp_{\{m\}}(\phi)\ne\emptyset$, $a_m(x)$ is
defined. Hence
$\int_{\Gamma}|P_m(\phi(x))|dx<\infty$ for all $m\in\Integers$. Therefore $\Sigma_{\phi}$ is defined,
\begin{align*}
\Sigma_{\phi}=\sum_{m\in\Integers}X^m\int_{\Gamma}P_m(\phi(x))dx
\end{align*}
and it is strongly convergent for all $s\in D$.

Additionally for $s\in D$,
\begin{align*}
\Phi_{\phi}(s)&=\sum_{m\in\Integers}\int_{supp_{\{m\}}(\phi)}[\phi(x)](s)dx=
\sum_{m\in\Integers}\int_{supp_{\{m\}}(\phi)}q^{-sm}a_m(x)dx\\\notag
&=\sum_{m\in\Integers}q^{-sm}\int_{supp_{\{m\}}(\phi)}P_m(\phi(x))dx
=\sum_{m\in\Integers}q^{-sm}\int_{\Gamma}P_m(\phi(x))dx.
\end{align*}
Evidently, replacing $X$ with $q^{-s}$ in $\Sigma_{\phi}$ we obtain
$\Phi_{\phi}(s)$, showing that $\Sigma_{\phi}$ represents
$\Phi_{\phi}$ in $D$.
\end{proof} 
\begin{example}\label{example:GL(r) series with our defs}
Consider the $\GL{r}\times\GL{r}$ integral
$\Psi(W_{\xi},W_{\phi},\Phi,s)$ of Jacquet, Piatetski-Shapiro and
Shalika \cite{JPSS} discussed in the beginning of this chapter. Let
$\Gamma=\lmodulo{Z_{r}}{\GL{r}}$ and define $\phi\in
C^{\infty}(\Gamma,\C[q^{-s},q^s])$ by
$\phi(x)=W_{\xi}(x)W_{\phi}(x)\Phi(\eta_rx)\absdet{x}^s$. Then
$\Phi_{\phi}(s)=\Psi(W_{\xi},W_{\phi},\Phi,s)$ is defined in a
suitable right half-plane $D$. Note that
\begin{align*}
supp_{\{m\}}(\phi)=\setof{x\in\Gamma}{\absdet{x}=q^{-m},W_{\xi}(x)W_{\phi}(x)\Phi(\eta_rx)\ne0}.
\end{align*}
Consequently, $\Gamma$ can be divided into the simple supports of $\phi$.
The absolute convergence of $\Phi_{\phi}(s)$ in $D$ implies that for
each $m$,
\begin{align*}
\int_{\Gamma}|P_m(\phi)|dx=\int_{\lmodulo{Z_{r}}{\GL{r}^m}}|W_{\xi}(x)W_{\phi}(x)\Phi(\eta_rx)|dx<\infty.
\end{align*}
Thus $\Sigma_{\phi}$ is defined,
\begin{align*}
\Sigma_{\phi}=\sum_{m\in\Integers}X^m\int_{\lmodulo{Z_{r}}{\GL{r}^m}}W_{\xi}(x)W_{\phi}(x)\Phi(\eta_rx)dx=\Sigma(W_{\xi},W_{\phi},\Phi,s)
\end{align*}
and according to Lemma~\ref{lemma:integral representation for
integral with m supports}, $\Sigma_{\phi}\sim_D\Phi_{\phi}$.
\end{example}
More generally,
\begin{lemma}\label{lemma:integral representation for general integral}
Let $\phi\in C^{\infty}(\Gamma,\C[q^{-s},q^s])$ be such that
$\Sigma_{\phi}$ is defined. Assume that $\Phi_{\phi}(s)$ is
absolutely convergent and $\Sigma_{\phi}$ is strongly convergent in
a domain $D\subset\C$. Then $\Sigma_{\phi}\sim_D\Phi_{\phi}$.
\end{lemma}
\begin{proof}[Proof of Lemma~\ref{lemma:integral representation for general integral}] 
According to the assumptions, it is left to show that
$\Sigma_{\phi}$ represents $\Phi_{\phi}$ in $D$. For $x\in
supp_M(\phi)$ write $\phi(x)=\sum_{j\in M}a_{j}(x)q^{-sj}$ where
$0\ne a_{j}(x)\in\C$. Let $\mathcal{Z}$ be the (countable) set of
finite non-empty subsets of $\Integers$. Observe that for a fixed
$m\in\Integers$,
\begin{align}\label{eq:absolute gamma}
\int_{\Gamma}|P_m(\phi(x))|dx&=\sum_{\setof{M\in\mathcal{Z}}{m\in
M}}\int_{supp_M(\phi)}|P_m(\phi(x))|dx\\\notag&=\sum_{\setof{M\in\mathcal{Z}}{m\in
M}}\int_{supp_M(\phi)}|a_{m}(x)|dx.
\end{align}
(The summation is over all subsets $M\in\mathcal{Z}$ containing
$m$.) Note that $supp_M(\phi)$ may be empty, in which case the
$dx$-integration over $supp_M(\phi)$ vanishes. Also for any two
distinct $M,N\in\mathcal{Z}$, as we proved in
Claim~\ref{claim:support of phi is divided uniquely} the sets
$supp_M(\phi)$, $supp_N(\phi)$ are disjoint.

Let $s\in D$. Since $\Sigma_{\phi}$ is strongly convergent in $D$,
using \eqref{eq:absolute gamma} yields
\begin{align}\label{infty sum:integrals representing series}
\sum_{m\in\Integers}q^{-\Re(s)m}\sum_{\setof{M\in\mathcal{Z}}{m\in
M}}\int_{supp_M(\phi)}|a_{m}(x)|dx<\infty.
\end{align}
We also have
\begin{align}\label{infty sum:phi s}
\Phi_{\phi}(s)=\sum_{M\in\mathcal{Z}}\int_{supp_M(\phi)}[\phi(x)](s)dx
=\sum_{M\in\mathcal{Z}}\int_{supp_M(\phi)}\sum_{j\in
M}a_{j}(x)q^{-sj}dx.
\end{align}
For fixed $M$ and $j\in M$, $\int_{supp_M(\phi)}|a_{j}(x)|dx<\infty$
because it is majorized by $q^{\Re(s)j}$ times \eqref{infty
sum:integrals representing
series}. Hence
\begin{align*}
\int_{supp_M(\phi)}\sum_{j\in
M}|a_{j}(x)q^{-sj}|dx<\infty
\end{align*}
and we may change the order of summation and integration in \eqref{infty
sum:phi s} and obtain
\begin{align*}
\sum_{M\in\mathcal{Z}}\sum_{j\in
M}q^{-sj}\int_{supp_M(\phi)}a_{j}(x)dx.
\end{align*}
Again using \eqref{infty sum:integrals representing series} 
we
change the order of summation,
\begin{align*}
\Phi_{\phi}(s)&=\sum_{M\in\mathcal{Z}}\sum_{j\in
M}q^{-sj}\int_{supp_M(\phi)}a_{j}(x)dx\\\notag&=
\sum_{m\in\Integers}q^{-sm}\sum_{\setof{M\in\mathcal{Z}}{m\in
M}}\int_{supp_M(\phi)}a_{m}(x)dx\\\notag
&=\sum_{m\in\Integers}q^{-sm}\sum_{\setof{M\in\mathcal{Z}}{m\in
M}}\int_{supp_M(\phi)}P_m(\phi(x))dx\\\notag&=\sum_{m\in\Integers}q^{-sm}\int_{\Gamma}P_m(\phi(x))dx.
\end{align*}
This shows that $\Sigma_{\phi}$ represents $\Phi_{\phi}$ in $D$.
\end{proof} 
Next we consider the integral of a sum of holomorphic sections.
\begin{lemma}\label{lemma:sum of series represents sum of integrals}
Let $\phi_1,\phi_2\in C^{\infty}(\Gamma,\C[q^{-s},q^s])$ and put
$\phi=\phi_1+\phi_2$. If $\Sigma_{\phi_i}$ is defined for $i=1,2$, then
$\Sigma_{\phi}$ is defined. Moreover if
$\Sigma_{\phi_i}\sim_D\Phi_{\phi_i}$ for both $i$, then
$\Sigma_{\phi}\sim_D\Phi_{\phi}$ and
$\Sigma_{\phi}=\Sigma_{\phi_1}+\Sigma_{\phi_2}$. 
\end{lemma}
\begin{proof}[Proof of Lemma~\ref{lemma:sum of series represents sum of integrals}] 
The series $\Sigma_{\phi}$ is defined because
$\Sigma_{\phi_1}$ and $\Sigma_{\phi_2}$ are defined and
$P_m(\phi(x))=P_m(\phi_1(x))+P_m(\phi_2(x))$. Now consider $s\in D$.
Since $\Phi_{\phi_1}(s)$ and $\Phi_{\phi_2}(s)$ are absolutely
convergent, so is $\Phi_{\phi}(s)$ and
$\Phi_{\phi}(s)=\Phi_{\phi_1}(s)+\Phi_{\phi_2}(s)$. Also
$\Sigma_{\phi}$ is strongly convergent, because $\Sigma_{\phi_1}$ and
$\Sigma_{\phi_2}$ are. By Lemma~\ref{lemma:integral representation
for general
integral}, $\Sigma_{\phi}\sim_D\Phi_{\phi}$. 
Since the series $\Sigma_{\phi_1}+\Sigma_{\phi_2}$ also represents $\Phi_{\phi}$ in $D$, $\Sigma_{\phi}=\Sigma_{\phi_1}+\Sigma_{\phi_2}$.
\end{proof} 
As in Example~\ref{example:phi which depends mostly on s} let
$P
\in \C[q^{-s},q^s]$ be considered as an element of
$C^{\infty}(\Gamma,\C[q^{-s},q^s])$, then $P\cdot\phi\in
C^{\infty}(\Gamma,\C[q^{-s},q^s])$. This defines a structure of a
$\C[q^{-s},q^s]$-module on
$C^{\infty}(\Gamma,\C[q^{-s},q^s])$. The next lemma 
shows 
that the $\oint$ operation commutes with multiplication by
a polynomial.
\begin{lemma}\label{lemma:interchange integration and polynomial}
Let $\phi\in C^{\infty}(\Gamma,\C[q^{-s},q^s])$ be such that
$\Sigma_{\phi}$ is defined and assume $\Sigma_{\phi}\sim_D\Phi_{\phi}$. Then for any $P\in
\C[q^{-s},q^s]$, $\Sigma_{P\phi}$ is defined,
$\Sigma_{P\phi}\sim_D\Phi_{P\phi}$ and
$P(X,X^{-1})\Sigma_{\phi}=\Sigma_{P\phi}$, i.e.,
\begin{align*}
P(X,X^{-1})\oint_{\Gamma}\phi(x)dx=\oint_{\Gamma}(P\phi)(x)dx.
\end{align*}
\end{lemma}
\begin{proof}[Proof of Lemma~\ref{lemma:interchange integration and polynomial}] 
Write $P=\sum_{j\in M}a_jq^{-sj}$, 
$\phi_j=a_jq^{-sj}\cdot\phi$. 
It follows from the definitions that
$\Sigma_{\phi_j}=\oint_{\Gamma}\phi_j(x)dx$ is defined and in $D$,
$\Sigma_{\phi_j}$ is strongly convergent and
$\Phi_{\phi_j}(s)=\int_{\Gamma}[\phi_j(x)](s)dx$ is absolutely
convergent. By Lemma~\ref{lemma:integral representation for general
integral}, $\Sigma_{\phi_j}\sim_D\Phi_{\phi_j}$. Since
$P\phi=\sum_{j\in M}\phi_j$, Lemma~\ref{lemma:sum of
series represents sum of integrals} shows that 
$\Sigma_{P\phi}=\oint_{\Gamma}(P\phi)(x)dx$ is defined and
$\Sigma_{P\phi}\sim_D\Phi_{P\phi}$. For $s\in D$,
$\Phi_{P\phi}(s)=P(q^{-s},q^s)\Phi_{\phi}(s)$ whence
$P(X,X^{-1})\Sigma_{\phi}$ also represents $\Phi_{P\phi}$ in $D$, proving
$\Sigma_{P\phi}=P(X,X^{-1})\Sigma_{\phi}$.
\end{proof} 
Let $\Gamma\times\Gamma'$ be a product of $l$-spaces (this is also
an $l$-space) and let $\phi\in
C^{\infty}(\Gamma\times\Gamma',\C[q^{-s},q^s])$. For any
$x\in\Gamma$ define a function $\phi(x,\cdot)\in
C^{\infty}(\Gamma',\C[q^{-s},q^s])$
by $x'\mapsto \phi(x,x')$. 
We say that $\phi$ is smooth in $\Gamma$ if 
the mapping $x\mapsto\phi(x,\cdot)$ belongs to \\
$C^{\infty}(\Gamma,C^{\infty}(\Gamma',\C[q^{-s},q^s]))$. Put
$\Phi_{\phi}(s)=\int_{\Gamma\times\Gamma'}[\phi(x,x')](s)d(x,x')$,
$\Sigma_{\phi}=\oint_{\Gamma\times\Gamma'}\phi(x,x')d(x,x')$. We
prove a certain analogue of Fubini's Theorem for series.
\begin{lemma}\label{lemma:double iterated series}
Let $\phi\in C^{\infty}(\Gamma\times\Gamma',\C[q^{-s},q^s])$ be
smooth in $\Gamma$, for which $\Sigma_{\phi}$ is defined. Assume
$\Sigma_{\phi}\sim_D\Phi_{\phi}$. Then for all $x\in\Gamma$, the
series $\Sigma_{\phi(x,\cdot)}=\oint_{\Gamma'}\phi(x,x')dx'$ is
defined and $\Sigma_{\phi(x,\cdot)}\sim_D\Phi_{\phi(x,\cdot)}$,
where $\Phi_{\phi(x,\cdot)}(s)=\int_{\Gamma'}[\phi(x,x')](s)dx'$.
Further assume that for all $x\in\Gamma$, $\Sigma_{\phi(x,\cdot)}\in
R(X)$. Then the function
$\phi_{\Gamma'}(x)=\Sigma_{\phi(x,\cdot)}(q^{-s},q^s)$ belongs to
$C^{\infty}(\Gamma,\C[q^{-s},q^s])$, 
$\Sigma_{\phi_{\Gamma'}}=\oint_{\Gamma}\phi_{\Gamma'}(x)dx$ is
defined, $\Sigma_{\phi_{\Gamma'}}\sim_D\Phi_{\phi}$ and
$\Sigma_{\phi}=\Sigma_{\phi_{\Gamma'}}$.
\end{lemma}

\begin{proof}[Proof of Lemma~\ref{lemma:double iterated series}] 
Let $x\in\Gamma$. Since $\phi$ is smooth in $\Gamma$, there is a
compact open neighborhood $N_x\subset\Gamma$ of $x$, such that
$\phi(x,\cdot)=\phi(n_x,\cdot)$ for all $n_x\in N_x$. Put
$c_x=vol(N_x)^{-1}$. Because $\Sigma_{\phi}$ is defined, for all $m$
we have
\begin{align*}
\int_{\Gamma'}|P_m(\phi(x,x'))|dx'\leq c_x
\int_{\Gamma\times\Gamma'}|P_m(\phi(x,x'))|d(x,x')<\infty.
\end{align*}
Therefore $\Sigma_{\phi(x,\cdot)}$ is defined. The coefficient of
$X^m$ in $\Sigma_{\phi(x,\cdot)}$ is
$\int_{\Gamma'}P_m(\phi(x,x'))dx'$.
In addition since $\Sigma_{\phi}$ is strongly convergent at $s\in
D$,
\begin{align*}
\sum_{m\in\Integers}q^{-\Re(s)m}\int_{\Gamma'}|P_m(\phi(x,x'))|dx\leq
c_x\sum_{m\in\Integers}q^{-\Re(s)m}\int_{\Gamma\times\Gamma'}|P_m(\phi(x,x'))|d(x,x')<\infty.
\end{align*}
Hence $\Sigma_{\phi(x,\cdot)}$ is strongly convergent. Also
$\Phi_{\phi(x,\cdot)}(s)$ is absolutely convergent in $D$. According
to Lemma~\ref{lemma:integral representation for general integral},
$\Sigma_{\phi(x,\cdot)}\sim_D\Phi_{\phi(x,\cdot)}$.

Now assume $\Sigma_{\phi(x,\cdot)}\in R(X)$ for all $x$. 
Then $\phi_{\Gamma'}\in C^{\infty}(\Gamma,\C[q^{-s},q^s])$ because $\phi$ is smooth in $\Gamma$. By Tonelli's
Theorem and using the fact that $\Sigma_{\phi}$ is defined,
\begin{align*}
\int_{\Gamma}|P_m(\phi_{\Gamma'}(x))|dx&=\int_{\Gamma}|\int_{\Gamma'}P_m(\phi(x,x'))dx'|dx\leq\int_{\Gamma\times\Gamma'}|P_m(\phi(x,x'))|d(x,x')<\infty.
\end{align*}
It follows that $\Sigma_{\phi_{\Gamma'}}$
is defined. 
Since $\Sigma_{\phi}$ is strongly convergent for $s\in D$, so is
$\Sigma_{\phi_{\Gamma'}}$. In fact,
\begin{align*}
\sum_{m\in\Integers}q^{-\Re(s)m}\int_{\Gamma}|P_m(\phi_{\Gamma'}(x))|dx\leq
\sum_{m\in\Integers}q^{-\Re(s)m}\int_{\Gamma\times\Gamma'}|P_m(\phi(x,x'))|d(x,x')<\infty.
\end{align*}
Also $\Phi_{\phi_{\Gamma'}}(s)=\int_{\Gamma}[\phi_{\Gamma'}(x)](s)dx$
is absolutely convergent in $D$ because
\begin{align*}
\int_{\Gamma}|[\phi_{\Gamma'}(x)](s)|dx\leq
\int_{\Gamma}\int_{\Gamma'}|[\phi(x,x')](s)|dx'dx=\int_{\Gamma\times\Gamma'}|[\phi(x,x')](s)|d(x,x')<\infty.
\end{align*}

Appealing to Lemma~\ref{lemma:integral representation for general
integral}, $\Sigma_{\phi_{\Gamma'}}\sim_D\Phi_{\phi_{\Gamma'}}$ and
since for $s\in D$,\\
$\Phi_{\phi_{\Gamma'}}(s)=\int_{\Gamma}\int_{\Gamma'}[\phi(x,x')](s)dx'dx$,
by Fubini's
Theorem $\Phi_{\phi_{\Gamma'}}(s)=\Phi_{\phi}(s)$ for all $s\in D$. 
Hence $\Sigma_{\phi_{\Gamma'}}\sim_D\Phi_{\phi}$ and
$\Sigma_{\phi_{\Gamma'}}=\Sigma_{\phi}$.
\end{proof} 
\subsection{Laurent representation for $\Psi(W,f_s,s)$}\label{subsection:Laurent representation for the integral}
We will use the series described in
Section~\ref{subsection:Representations of integrals as series} to
represent the integral $\Psi(W,f_s,s)$. Recall that there is a right half-plane $D\subset\C$ of absolute convergence for $\Psi(W,f_s,s)$,
$f_s\in\xi(\tau,hol,s)$, which depends only on the representations
$\pi$ and $\tau$ (see Section~\ref{subsection:the integrals}). Since
for $f_s\in\xi(\tau,good,s)$ we have
$\ell_{\tau^*}(1-s)^{-1}f_s\in\xi(\tau,hol,s)$, we can assume that
$\Psi(W,f_s,s)$ is absolutely convergent in $D$ for all good
sections, and $D$ still depends only on the representations ($D$ is some other right half-plane).
\begin{lemma}\label{lemma:integral for holomorphic section representable as a series}
Let $W\in\Whittaker{\pi}{\psi_{\gamma}^{-1}}$ and $f_s\in\xi(\tau,hol,s)$. Consider $\Psi(W,f_s,s)$ as a function
of $s$, defined in $D$. It is representable in $\Sigma(X)$ by a
strongly convergent series $\Sigma(W,f_s,s)$ with finitely many
negative coefficients.
\end{lemma}
\begin{proof}[Proof of Lemma~\ref{lemma:integral for holomorphic section representable as a series}] 
We prove the case $\smallidx\leq\bigidx$, the other case being
similar. Since $U_{G_{\smallidx}}\overline{B_{G_{\smallidx}}}$ is a
dense open subset of $G_{\smallidx}$ such that its complement is a
subset of zero measure, we can replace the $dg$-integration in
$\Psi(W,f_s,s)$ with an integration over
$\overline{B_{G_{\smallidx}}}$. Then the integral takes the form
\begin{align*}
\Psi(W,f_s,s)=\int_{\overline{B_{G_{\smallidx}}}}
\int_{R_{\smallidx,\bigidx}}W(g)f_s(w_{\smallidx,\bigidx}rg,1)\psi_{\gamma}(r)drdg.
\end{align*}
Here $dg$ is actually a right-invariant Haar measure on
$\overline{B_{G_{\smallidx}}}$. Let
$\Gamma=\overline{B_{G_{\smallidx}}}\times R_{\smallidx,\bigidx}$.
For $f_s\in\xi(\tau,hol,s)$, set
$\phi_{f_s}(g,r)=W(g)f_s(w_{\smallidx,\bigidx}rg,1)\psi_{\gamma}(r)\in\C[q^{-s},q^s]$
($g\in\overline{B_{G_{\smallidx}}}$, $r\in R_{\smallidx,\bigidx}$). Evidently,
$\phi_{f_s}\in C^{\infty}(\Gamma,\C[q^{-s},q^s])$. Then $\Phi_{\phi_{f_s}}(s)=\Psi(W,f_s,s)$ is absolutely convergent
in $D$.

First assume $f_s\in\xi(\tau,std,s)$. For $(g,r)\in\Gamma$
write $w_{\smallidx,\bigidx}rg=buk$ with $b\in
GL_{\bigidx}\isomorphic M_{\bigidx}$, $u\in U_{\bigidx}$ and $k\in
K_{H_{\bigidx}}$ according to the Iwasawa decomposition. Then
\begin{align*}
\phi_{f_s}(g,r)=W(g)\absdet{b}^{s-\half}\delta_{Q_{\bigidx}}^{\half}(b)f_s(k,b)\psi_{\gamma}(r)
\end{align*}
and since the only factor depending on $s$ is $\absdet{b}^{s}$
(because $f_s(k,b)$ is independent of $s$),
\begin{align*}
supp_{\{m\}}(\phi_{f_s})=\setof{(g,r)\in\Gamma}{w_{\smallidx,\bigidx}rg=buk,\absdet{b}=q^{-m},W(g)f_s(k,b)\ne0}.
\end{align*}
Thus $\Gamma$ can be divided into the simple supports of
$\phi_{f_s}$ (compare to Example~\ref{example:division of
standard section supports}). 
Applying Lemma~\ref{lemma:integral representation for integral with
m supports}, $\Sigma_{\phi_{f_s}}\in\Sigma(X)$ is defined and $\Sigma_{\phi_{f_s}}\sim_D\Phi_{\phi_{f_s}}$. Regarding
$f_s\in\xi(\tau,hol,s)$, write $f_s=\sum_{i=1}^mP_if_s^{(i)}$ for
$P_i\in \C[q^{-s},q^s]$, $f_s^{(i)}\in\xi(\tau,std,s)$. Then $\Sigma_{\phi_{f_s^{(i)}}}$ is defined and
\begin{align*}
\Sigma_{\phi_{f_s^{(i)}}}\sim_D\Phi_{\phi_{f_s^{(i)}}}
\end{align*}
for all $i$ ($\Phi_{\phi_{f_s^{(i)}}}(s)=\Psi(W,f_s^{(i)},s)$ is absolutely convergent in $D$ because $D$ depends only on the representations).
By Lemma~\ref{lemma:interchange integration and polynomial}, $\Sigma_{P_i\phi_{f_s^{(i)}}}$ is defined and
\begin{align*}
\Sigma_{P_i\phi_{f_s^{(i)}}}\sim_D\Phi_{P_i\phi_{f_s^{(i)}}}.
\end{align*}
Since $\sum_{i=1}^mP_i\phi_{f_s^{(i)}}=\phi_{f_s}$,
Lemma~\ref{lemma:sum of series represents sum of integrals} implies that the series
$\Sigma_{\phi_{f_s}}$ is defined and $\Sigma_{\phi_{f_s}}\sim_D\Phi_{\phi_{f_s}}$. We set
$\Sigma(W,f_s,s)=\Sigma_{\phi_{f_s}}$.

Regarding the negative coefficients, decompose $\Psi(W,f_s,s)$ as in
Proposition~\ref{proposition:iwasawa decomposition of the integral}.
We argue that each integral $I_s^{(j)}$ in this decomposition is
representable as a series which has a finite number of negative
coefficients, i.e., a series $\sum_{m=M}^{\infty}a_mX^m$. Looking at \eqref{int:iwasawa decomposition of the integral l<=n
split}, in the split case we write the integral $I_s^{(j)}$ in the form
\begin{align}\label{int:int to series from proposition iwasawa}
\int_{A_{\smallidx-1}}\int_{G_1}(\absdet{a}\cdot[x]^{-1})^s
\theta(x,a)dxda,
\end{align}
where
\begin{align*}
\theta(x,a)=ch_{\Lambda}(x)W^{\diamond}(ax)W'(
diag(a,\lfloor x\rfloor,I_{\bigidx-\smallidx}))(\absdet{a}\cdot[x]^{-1})^{\smallidx-\half\bigidx-\half}\delta_{B_{G_{\smallidx}}}^{-1}(a).
\end{align*}
For $\Re(s)>>0$,
\begin{align*}
\int_{A_{\smallidx-1}}\int_{G_1}(\absdet{a}\cdot[x]^{-1})^{\Re(s)}|\theta(x,a)|dxda<\infty.
\end{align*}
This follows from the proof of Proposition~\ref{proposition:iwasawa decomposition of the integral} but can also be
proved directly, using the expansions of Section~\ref{section:whittaker props}. Hence
if we set
\begin{align*}
(A_{\smallidx-1}\times G_1)^m=\setof{a\in A_{\smallidx-1},x\in G_1}{\absdet{a}\cdot[x]^{-1}=q^{-m}}\qquad(m\in\Integers),
\end{align*}
$\int_{(A_{\smallidx-1}\times G_1)^m}|\theta(a,x)|d(a,x)<\infty$ for all $m\in\Integers$ and \eqref{int:int to series from proposition iwasawa} is equal to
\begin{align}\label{int:int to series from proposition iwasawa to series}
\sum_{m\in\Integers}q^{-sm}\int_{(A_{\smallidx-1}\times G_1)^m}\theta(a,x)d(a,x).
\end{align}
In the \quasisplit\ case we obtain a similar form, without the $dx$-integration.
The series in $\Sigma(X)$ representing \eqref{int:int to series from proposition iwasawa} is obtained by replacing
$q^{-s}$ with $X$ in \eqref{int:int to series from proposition iwasawa to series}. To show that it has a finite number of negative coefficients, we need to prove that $\theta(a,x)$ vanishes when $m$ is small.
Indeed, if $m$ is small, $\absdet{a}$ must be large (in the split case $[x]^{-1}=|\lfloor x\rfloor|\leq1$), then
$\theta(a,x)\equiv0$ because $W^{\diamond}\in\Whittaker{\pi}{\psi_{\gamma}^{-1}}$ vanishes away
from zero (i.e., $W^{\diamond}(ax)=0$ unless each coordinate of $a$ is bounded from above).
Note that we could similarly use the vanishing away from zero of $W'\in\Whittaker{\tau}{\psi}$. When $\smallidx>\bigidx$ we
consider \eqref{int:iwasawa decomposition of the integral l>n}
instead of \eqref{int:iwasawa decomposition of the integral l<=n
split} for this argument.
\end{proof} 
For any $f_s\in\xi(\tau,good,s)$ there exists $P\in
\C[q^{-s},q^s]$ which divides $\ell_{\tau^*}(1-s)^{-1}$, such that
$Pf_s\in\xi(\tau,hol,s)$. According to the above, $\Psi(W,Pf_s,s)$
defined in $D$ is representable in $\Sigma(X)$.

\subsection{Extension to $k$ variables}\label{subsection:Extension to two variables}
We may extend the definitions and results of
Sections~\ref{subsection:functions and laurent series} and
\ref{subsection:Representations of integrals as series} to functions
$f(s_1,\ldots,s_k)$ in $k$ complex variables. This extension will not, however, be used in the sequel.
Consider the ring $C^{\infty}(\Gamma,\C[q^{\mp
s_1},\ldots, q^{\mp s_k}])$. For a finite non-empty subset
$M\subset\Integers^k$ let
\begin{align*}
\mathcal{V}_{M}=\Span{\C}{\setof{q^{-s_1j_1-\ldots-s_kj_k}}{(j_1,\ldots,j_k)\in M}}\subset\C[q^{\mp
s_1},\ldots, q^{\mp s_k}].
\end{align*}
Let $\phi\in C^{\infty}(\Gamma,\C[q^{\mp s_1},\ldots, q^{\mp
s_k}])$. The set $supp_{M}(\phi)$ is defined exactly as in Section~\ref{subsection:Representations of integrals as series}, with respect to the new definition of $\mathcal{V}_{M}$.
The space $\Gamma$ can be divided into the simple supports of $\phi$ if
whenever $M$ is not a singleton, $supp_{M}(\phi)=\emptyset$.
For $\underline{m}=(m_1,\ldots,m_k)\in\Integers^k$ denote
\begin{align*}
P_{\underline{m}}(\sum_{j_1,\ldots,j_k\in\Integers}a_{j_1,\ldots,j_k}q^{-s_1j_1-\ldots-s_kj_k})=a_{m_1,\ldots,m_k}\in\C.
\end{align*}
Provided $\int_{\Gamma}|P_{\underline{m}}(\phi(x))|dx<\infty$ for all $\underline{m}\in\Integers^k$, the Laurent series in $\Sigma(X_1,\ldots,X_k)=\C[[X_1^{\mp1},\ldots,X_k^{\mp1}]]$ is defined by
\begin{align*}
\oint_{\Gamma}\phi(x)dx=\sum_{\underline{m}\in\Integers^k}X_1^{m_1}\cdot\ldots\cdot X_k^{m_k} \int_{\Gamma}P_{\underline{m}}(\phi(x))dx.
\end{align*}
The definition closely resembles Definition~\ref{def:series for
bundle}. Now the results of Section~\ref{subsection:Representations
of integrals as series} readily extend to $k$ variables.

\section{Upper bound in the second
variable}\label{section:Sub-multiplicativity in the second variable}
\subsection{Outline}\label{subsection:second var outline}
Let $\pi$ be a representation of $G_{\smallidx}$.
In this section we
prove Theorem~\ref{theorem:gcd sub multiplicity second var}. Namely,
if
$\tau=\cinduced{P_{\bigidx_1,\ldots,\bigidx_k}}{\GL{\bigidx}}{\tau_1\otimes\ldots\otimes\tau_k}$ is irreducible,
\begin{align*}
\gcd(\pi\times\tau,s)\in
(\prod_{i=1}^k\gcd(\pi\times\tau_i,s))M_{\tau_1\otimes\ldots\otimes\tau_k}(s)\C[q^{-s},q^s].
\end{align*}
We revisit the arguments of Section~\ref{section:2nd variable} where
we proved that $\gamma(\pi\times\tau,\psi,s)$ is multiplicative in
the second variable.
The proof relied on manipulations of integrals, involving the
application of three functional equations - for $\pi\times\tau_2$,
$\tau_1\times\tau_2$ and $\pi\times\tau_1$, to $\Psi(W,f_s,s)$.
Here we reinterpret the passages as passages between Laurent series
and apply the functional equations to the series $\Sigma(W,f_s,s)$
which represents $\Psi(W,f_s,s)$. We utilize the notation and
results of Section~\ref{section:integrals and series} (e.g.
$\Sigma(X)=\C[[X,X^{-1}]]$,
$R(X)=\C[X,X^{-1}]$).

\subsection{Interpretation of functional equations}\label{subsection:Interpretation of functional equations}
The functional equations, at first defined between meromorphic
continuations, can sometimes be interpreted as equations in
$\Sigma(X)$, by substituting $X$ for $q^{-s}$, as observed in 
\cite{JPSS} (Section~4.3). Consider a
typical functional equation,
\begin{align}\label{eq:n_1<l<=n_2 example func eq}
&\gcd(\pi\times\tau,s)^{-1}\Psi(W,f_s,s)\\\notag&=c(\smallidx,\tau,\gamma,s)\epsilon(\pi\times\tau,\psi,s)^{-1}\gcd(\pi\times\tau^*,1-s)^{-1}\Psi(W,\nintertwiningfull{\tau}{s}f_s,1-s).
\end{align}
The integral on the \lhs\ (resp. \rhs) is absolutely convergent in a right (resp. left) half-plane.
This equality is a priori between meromorphic continuations, but by
the definition of the g.c.d. and because of Claim~\ref{claim:epsilon
is a unit}, both sides are actually polynomials. Then \eqref{eq:n_1<l<=n_2 example func eq} may be
reinterpreted as an equality in $R(X)$ by replacing $q^{-s}$ with $X$. If $\gcd(\pi\times\tau,s)^{-1}\Psi(W,f_s,s)=B(q^{-s},q^s)$ and the
\rhs\ equals $\widetilde{B}(q^{-s},q^s)$ for some $B,\widetilde{B}\in \C[q^{-s},q^s]$,
equality~\eqref{eq:n_1<l<=n_2 example func eq} implies $B(X,X^{-1})=\widetilde{B}(X,X^{-1})$ in $R(X)$.

Furthermore, in a right half-plane the meromorphic continuation of $\Psi(W,f_s,s)$ coincides with the integral, hence the equality
$\gcd(\pi\times\tau,s)^{-1}\Psi(W,f_s,s)=B(q^{-s},q^s)$ holds for $\Re(s)>>0$ where $\Psi(W,f_s,s)$ is the integral.
Similarly for $\Re(s)<<0$, the \rhs\ equals $\widetilde{B}(q^{-s},q^s)$ where $\Psi(W,\nintertwiningfull{\tau}{s}f_s,1-s)$ is regarded as the integral.

By definition $\gcd(\pi\times\tau,s)^{-1}$ is a polynomial in $q^{-s}$. Put
$G(q^{-s})^{-1}=\gcd(\pi\times\tau,s)^{-1}$ and denote by $G(X)^{-1}\in
R(X)$ (in fact, $G(X)^{-1}\in\C[X]$) the polynomial obtained by replacing
$q^{-s}$ with $X$. Similarly put
$\widetilde{G}(q^{-s},q^s)^{-1}=c(\smallidx,\tau,\gamma,s)\epsilon(\pi\times\tau,\psi,s)^{-1}\gcd(\pi\times\tau^*,1-s)^{-1}$
and form $\widetilde{G}(X,X^{-1})^{-1}\in R(X)$.

If $f_s\in\xi(\tau,hol,s)$, by Lemma~\ref{lemma:integral for
holomorphic section representable as a series} there exists a series
$\Sigma(W,f_s,s)$ which represents the integral $\Psi(W,f_s,s)$ in some right half-plane. Hence in $\Sigma(X)$ it holds that
$G(X)^{-1}\Sigma(W,f_s,s)=B(X,X^{-1})$.
If also $\nintertwiningfull{\tau}{s}f_s\in\xi(\tau^*,hol,1-s)$,
equality~\eqref{eq:n_1<l<=n_2 example func eq} may be interpreted in
$\Sigma(X)$ as
\begin{align*}
G(X)^{-1}\Sigma(W,f_s,s)=\widetilde{G}(X,X^{-1})^{-1}\Sigma(W,\nintertwiningfull{\tau}{s}f_s,1-s).
\end{align*}
However, it may be the case that $\nintertwiningfull{\tau}{s}f_s$
has poles (due to the intertwining operator). Let $0\ne P\in
\C[q^{-s},q^s]$ be such that
$P(q^{-s},q^s)\nintertwiningfull{\tau}{s}f_s$ is holomorphic. Since
\begin{align*}
P(q^{-s},q^s)\Psi(W,\nintertwiningfull{\tau}{s}f_s,1-s)=\Psi(W,P(q^{-s},q^s)\nintertwiningfull{\tau}{s}f_s,1-s),
\end{align*}
we can multiply both sides of \eqref{eq:n_1<l<=n_2 example func eq},
which are polynomials, by $P$ and reach the equivalent equation
\begin{align*}
&P(q^{-s},q^s)\gcd(\pi\times\tau,s)^{-1}\Psi(W,f_s,s)\\\notag&=c(\smallidx,\tau,\gamma,s)\epsilon(\pi\times\tau,\psi,s)^{-1}\gcd(\pi\times\tau^*,1-s)^{-1}\Psi(W,P(q^{-s},q^s)\nintertwiningfull{\tau}{s}f_s,1-s).
\end{align*}
Now this may be interpreted in $\Sigma(X)$ as
\begin{align*}
&P(X,X^{-1})G(X)^{-1}\Sigma(W,f_s,s)=\widetilde{G}(X,X^{-1})^{-1}\Sigma(W,P(q^{-s},q^s)\nintertwiningfull{\tau}{s}f_s,1-s).
\end{align*}

We also mention that if an integral $\Psi(W,f_s,s)$ for
$f_s\in\xi(\tau,hol,s)$ is already a polynomial, $\Sigma(W,f_s,s)\in
R(X)$ and the analytic continuation of $\Psi(W,f_s,s)$ equals
$(\Sigma(W,f_s,s))(q^{-s},q^s)$.
\subsection{Proof of the bound}\label{subsection:sub mult second var proof body}
The proof of Theorem~\ref{theorem:gcd sub multiplicity second var} is based on the following lemma.
\begin{lemma}\label{lemma:n_1<l<=n_2 basic identity}
Assume that
$\tau=\cinduced{P_{\bigidx_1,\bigidx_2}}{\GL{\bigidx}}{\tau_1\otimes\tau_2}$
is irreducible. Let $P_1,P_2,P_3\in\C[X]$ be normalized by
$P_1(0)=P_2(0)=P_3(0)=1$, and of minimal degree such that
\begin{align*}
&P_1(q^{-s})\gcd(\pi\times\tau_2^*,1-s)^{-1}\nintertwiningfull{\tau_2}{s},\\
&P_2(q^{-s})\nintertwiningfull{\tau_1\otimes\tau_2^*}{(s,1-s)},\\
&P_3(q^{-s})\gcd(\pi\times\tau_1^*,1-s)^{-1}\nintertwiningfull{\tau_1}{s}
\end{align*}
are holomorphic. Set $P_{\pi\times\tau}=P_1P_2P_3\in\C[X]$. Then for
$f_s\in\xi(\tau,hol,s)$,
\begin{align}\label{eq:second var gcd of holomorphic sections}
&\Psi(W,f_s,s)\in\gcd(\pi\times\tau_1,s)\gcd(\pi\times\tau_2,s)P_{\pi\times\tau}(q^{-s})^{-1}\C[q^{-s},q^s],\\
&\label{eq:second var gcd of holomorphic sections 2}
\Psi(W,\nintertwiningfull{\tau}{s}f_s,1-s)\in\gcd(\pi\times\tau_1^*,1-s)\gcd(\pi\times\tau_2^*,1-s)P_{\pi\times\tau}(q^{-s})^{-1}\C[q^{-s},q^s].
\end{align}
\end{lemma}
Before getting to the proof of the lemma we derive the
theorem. First assume $k=2$,
$\tau=\cinduced{P_{\bigidx_1,\bigidx_2}}{\GL{\bigidx}}{\tau_1\otimes\tau_2}$.
By applying Lemma~\ref{lemma:n_1<l<=n_2 basic identity}
to $\tau^*$
, for $f_{1-s}'\in\xi(\tau^*,hol,1-s)$,
\begin{align}\label{eq:second var gcd of holomorphic sections for tau star}
&\Psi(W,\nintertwiningfull{\tau^*}{1-s}f_{1-s}',s)\in\gcd(\pi\times\tau_1,s)\gcd(\pi\times\tau_2,s)P_{\pi\times\tau^*}(q^{s-1})^{-1}\C[q^{-s},q^s].
\end{align}
According to the definitions of $P_{\pi\times\tau}$, $P_1$,
$\ell_{\tau_2}(s)$ ($P_2$, etc.), we have
\begin{align*}
P_{\pi\times\tau}(q^{-s})^{-1}\in
\ell_{\tau_1}(s)\ell_{\tau_1\otimes\tau_2^*}(s)\ell_{\tau_2}(s)\C[q^{-s},q^s].
\end{align*}
Similarly,
\begin{align*}
P_{\pi\times\tau^*}(q^{s-1})^{-1}\in
\ell_{\tau_1^*}(1-s)\ell_{\tau_2^*\otimes\tau_1}(1-s)\ell_{\tau_2^*}(1-s)\C[q^{-s},q^s].
\end{align*}
We also apply the lemma to $\tau$ and $f_s\in\xi(\tau,hol,s)$.
Combining \eqref{eq:second var gcd of holomorphic sections} and
\eqref{eq:second var gcd of holomorphic sections for tau star} we
see that for any $f_s\in\xi(\tau,good,s)$,
\begin{align*}
\Psi(W,f_s,s)&\in\gcd(\pi\times\tau_1,s)\gcd(\pi\times\tau_2,s)M_{\tau_1}(s)
\ell_{\tau_1\otimes\tau_2^*}(s)\ell_{\tau_2^*\otimes\tau_1}(1-s)
M_{\tau_2}(s)\C[q^{-s},q^s]\\\notag&=
\gcd(\pi\times\tau_1,s)\gcd(\pi\times\tau_2,s)M_{\tau_1\otimes\tau_2}(s)\C[q^{-s},q^s].
\end{align*}

Now for the general case let $k>2$. Put 
$G_{\pi\times(\tau_1\otimes\ldots\otimes\tau_k)}(s)=\prod_{i=1}^k\gcd(\pi\times\tau_i,s)$.
Let
$\eta=\cinduced{P_{\bigidx_2,\ldots,\bigidx_k}}{\GL{\bigidx-\bigidx_1}}{\tau_2\otimes\ldots\otimes\tau_k}$ ($\eta$ is irreducible).
Applying induction hypothesis to $\eta^*$ yields
\begin{align*}
\gcd(\pi\times\eta^*,1-s)\in
G_{\pi\times(\tau_k^*\otimes\ldots\otimes\tau_2^*)}(1-s)M_{\tau_k^*\otimes\ldots\otimes\tau_2^*}(1-s)\C[q^{-s},q^s].
\end{align*}

Apply Lemma~\ref{lemma:n_1<l<=n_2 basic identity} to
$\tau=\cinduced{P_{\bigidx_1,\bigidx-\bigidx_1}}{\GL{\bigidx}}{\tau_1\otimes\eta}$.
By the minimality of $P_1$ and since $\ell_{\eta}(s)\in
M_{\tau_k^*\otimes\ldots\otimes\tau_2^*}(1-s)\C[q^{-s},q^s]$
(because of the multiplicativity of $\nintertwiningfull{\eta}{s}$,
see \eqref{eq:multiplicative property of poles of intertwiners}),
\begin{align*}
P_1(q^{-s})^{-1}\gcd(\pi\times\eta^*,1-s)\in
G_{\pi\times(\tau_k^*\otimes\ldots\otimes\tau_2^*)}(1-s)M_{\tau_k^*\otimes\ldots\otimes\tau_2^*}(1-s)\C[q^{-s},q^s].
\end{align*}
Also
$\ell_{\tau_1\otimes\eta^*}(s)\in\prod_{j=2}^k\ell_{\tau_1\otimes\tau_j^*}(s)\C[q^{-s},q^s]$
and $\ell_{\tau_1}(s)\in M_{\tau_1^*}(1-s)\C[q^{-s},q^s]$. Hence
\begin{align}
\notag&P_2(q^{-s})^{-1}P_1(q^{-s})^{-1}\gcd(\pi\times\eta^*,1-s)\\\notag&\qquad\in G_{\pi\times(\tau_k^*\otimes\ldots\otimes\tau_2^*)}(1-s)M_{\tau_k^*\otimes\ldots\otimes\tau_2^*}(1-s)\prod_{j=2}^k\ell_{\tau_1\otimes\tau_j^*}(s)\C[q^{-s},q^s],\notag\\
\label{eq:P inverse is enough for right hand side basic identity
second var}
&P_{\pi\times\tau}(q^{-s})^{-1}\gcd(\pi\times\tau_1^*,1-s)\gcd(\pi\times\eta^*,1-s)\\\notag&\qquad\in
G_{\pi\times(\tau_k^*\otimes\ldots\otimes\tau_1^*)}(1-s)M_{\tau_k^*\otimes\ldots\otimes\tau_1^*}(1-s)\C[q^{-s},q^s].
\end{align}
Then it follows from \eqref{eq:second var gcd of holomorphic
sections 2} ($\eta^*$ replaces $\tau_2^*$) and \eqref{eq:P inverse is
enough for right hand side basic identity second var} that for
$f_s\in\xi(\tau,hol,s)$,
\begin{align}\label{eq:second var gcd general image of intertwining}
\Psi(W,\nintertwiningfull{\tau}{s}f_s,1-s)\in
G_{\pi\times(\tau_k^*\otimes\ldots\otimes\tau_1^*)}(1-s)M_{\tau_k^*\otimes\ldots\otimes\tau_1^*}(1-s)\C[q^{-s},q^s].
\end{align}
Using Theorem~\ref{theorem:multiplicity second var} 
and \eqref{eq:gamma and epsilon},
\begin{align*}
\frac{G_{\pi\times(\tau_k^*\otimes\ldots\otimes\tau_1^*)}(1-s)}{G_{\pi\times(\tau_1\otimes\ldots\otimes\tau_k)}(s)}\equalun\frac{\gcd(\pi\times\tau^*,1-s)}{\gcd(\pi\times\tau,s)}\equalun\frac
{\gcd(\pi\times\tau_1^*,1-s)\gcd(\pi\times\eta^*,1-s)}{\gcd(\pi\times\tau_1,s)\gcd(\pi\times\eta,s)}.
\end{align*}
Then from \eqref{eq:P inverse is enough for
right hand side basic identity second var} we deduce, similarly to the proof of Claim~\ref{claim:extra P factor in sub
multiplicativity is mutual},
\begin{align}\label{eq:deducing relation basic identity}
P_{\pi\times\tau}(q^{-s})^{-1}\gcd(\pi\times\tau_1,s)\gcd(\pi\times\eta,s)\in
G_{\pi\times(\tau_1\otimes\ldots\otimes\tau_k)}(s)M_{\tau_1\otimes\ldots\otimes\tau_k}(s)\C[q^{-s},q^s].
\end{align}
($M_{\tau_1\otimes\ldots\otimes\tau_k}(s)=M_{\tau_k^*\otimes\ldots\otimes\tau_1^*}(1-s)$.)
Combining \eqref{eq:second var gcd of holomorphic sections} with
\eqref{eq:deducing relation basic identity} we assert that for
$f_{s}\in\xi(\tau,hol,s)$,
\begin{align*}
\Psi(W,f_s,s)\in
G_{\pi\times(\tau_1\otimes\ldots\otimes\tau_k)}(s)M_{\tau_1\otimes\ldots\otimes\tau_k}(s)\C[q^{-s},q^s].
\end{align*}
As in the case $k=2$, repeating the arguments above for $\tau^*$ and
taking $f_{1-s}'\in\xi(\tau^*,hol,1-s)$, \eqref{eq:second var gcd
general image of intertwining} becomes
\begin{align*}
\Psi(W,\nintertwiningfull{\tau^*}{1-s}f_{1-s}',s)\in
G_{\pi\times(\tau_1\otimes\ldots\otimes\tau_k)}(s)M_{\tau_1\otimes\ldots\otimes\tau_k}(s)\C[q^{-s},q^s].
\end{align*}
We conclude the theorem,
\begin{align*}
\gcd(\pi\times\tau,s)\in
G_{\pi\times(\tau_1\otimes\ldots\otimes\tau_k)}(s)M_{\tau_1\otimes\ldots\otimes\tau_k}(s)\C[q^{-s},q^s].
\end{align*}

The result we obtain 
is somewhat stronger. 
In the statement of the lemma we see, for instance, that if
$\gcd(\pi\times\tau_2^*,1-s)$ already contains the poles of
$\nintertwiningfull{\tau_2}{s}$, $P_1=1$. Hence the factor
$M_{\tau_2}(s)$ in the upper bound for $k=2$ will be replaced with
just $\ell_{\tau_2^*}(1-s)$. Applying induction more carefully,
keeping track of the poles of intertwining operators which are
canceled by the g.c.d. factors, yields the following refinement of
Theorem~\ref{theorem:gcd sub multiplicity second var}.
\begin{corollary}\label{corollary:refinement gcd second var bound}
Let $\tau=\cinduced{P_{\bigidx_1,\ldots,\bigidx_k}}{\GL{\bigidx}}{\tau_1\otimes\ldots\otimes\tau_k}$ be irreducible.
For all $1\leq i\leq k$, let $A_i,\widetilde{A}_i\in\C[X]$ be
normalized by $A_i(0)=\widetilde{A}_i(0)=1$ and of minimal degree
such that
\begin{align*}
A_i(q^{-s})\gcd(\pi\times\tau_i^*,1-s)^{-1}\nintertwiningfull{\tau_i}{s},\qquad
\widetilde{A}_i(q^{s-1})\gcd(\pi\times\tau_i,s)^{-1}\nintertwiningfull{\tau_i^*}{1-s}
\end{align*}
are holomorphic. Let $Q_i\in\C[q^{-s},q^{s}]$ be a least common
multiple of $A_i(q^{-s})$ and $\widetilde{A}_i(q^{s-1})$. Then
\begin{align*}
\gcd(\pi\times\tau,s)\in(\prod_{i=1}^k\gcd(\pi\times\tau_i,s)Q_i(q^{-s},q^s)^{-1})\prod_{1\leq
i<j\leq
k}\ell_{\tau_i\otimes\tau_j^*}(s)\ell_{\tau_j^*\otimes\tau_i}(1-s)\C[q^{-s},q^s].
\end{align*}
\end{corollary}
\begin{proof}[Proof of Corollary~\ref{corollary:refinement gcd second var bound}]
First assume $k=2$. Let $P_{\pi\times\tau}=P_1P_2P_3$ be as in the lemma applied to $\tau$ and
$f_{s}\in\xi(\tau,hol,s)$, and
$P_{\pi\times\tau^*}=\widetilde{P}_1\widetilde{P}_2\widetilde{P}_3$ (where $\widetilde{P}_i$ is the polynomial
$P_i$ when the lemma is applied to $\tau^*$ and $f_{1-s}\in\xi(\tau^*,hol,1-s)$).
Then
\begin{align*}
&P_1|A_2, \quad P_2(q^{-s})|\ell_{\tau_1\otimes\tau_2^*}(s)^{-1},
\quad  P_3|A_1,\\\notag &\widetilde{P}_1|\widetilde{A}_1,\quad
\widetilde{P}_2(q^{s-1})|\ell_{\tau_2^*\otimes\tau_1}(1-s)^{-1},
\quad \widetilde{P}_3|\widetilde{A}_2.
\end{align*}
Here $|$ denotes polynomial division in either $\C[X]$ or $\C[q^{-s},q^s]$. Also let
$lcm(\cdot,\cdot)$ signify a least common multiple in $\C[q^{-s},q^s]$.
We see that $lcm(P_{\pi\times\tau}(q^{-s}),P_{\pi\times\tau^*}(q^{s-1}))$ divides
\begin{align*}
(\prod_{i=1}^2lcm(A_i(q^{-s}),\widetilde{A}_i(q^{s-1})))(\ell_{\tau_1\otimes\tau_2^*}(s)\ell_{\tau_2^*\otimes\tau_1}(1-s))^{-1}.
\end{align*}
Combining \eqref{eq:second var gcd of holomorphic sections} and
\eqref{eq:second var gcd of holomorphic sections for tau star} we get that for any
$f_s\in\xi(\tau,good,s)$,
\begin{align*}
&\Psi(W,f_s,s)\in\gcd(\pi\times\tau_1,s)\gcd(\pi\times\tau_2,s)
lcm(P_{\pi\times\tau}(q^{-s}),P_{\pi\times\tau^*}(q^{s-1}))^{-1}\C[q^{-s},q^s].
\end{align*}
Therefore
\begin{align*}
\gcd(\pi\times\tau,s)\in
(\prod_{i=1}^2\gcd(\pi\times\tau_i,s)Q_i(q^{-s},q^s)^{-1})\ell_{\tau_1\otimes\tau_2^*}(s)\ell_{\tau_2^*\otimes\tau_1}(1-s)\C[q^{-s},q^s].
\end{align*}

Now assume $k>2$ and use induction on $k$. If
$\eta=\cinduced{P_{\bigidx_2,\ldots,\bigidx_k}}{\GL{\bigidx-\bigidx_1}}{\tau_2\otimes\ldots\otimes\tau_k}$,
by the induction hypothesis
\begin{align*}
&\gcd(\pi\times\eta,s)\\&\quad\in
(\prod_{i=2}^k\gcd(\pi\times\tau_i,s)Q_i(q^{-s},q^s)^{-1})\prod_{2\leq
i<j\leq k}\ell_{\tau_i\otimes\tau_j^*}(s)\ell_{\tau_j^*\otimes\tau_i}(1-s)\C[q^{-s},q^s].
\end{align*}
Then Claim~\ref{claim:extra P factor
in sub multiplicativity is mutual} implies
\begin{align*}
&\gcd(\pi\times\eta^*,1-s)\\\notag&\quad\in
(\prod_{i=2}^k\gcd(\pi\times\tau_i^*,1-s)Q_i(q^{-s},q^s)^{-1})\prod_{2\leq
i<j\leq k}\ell_{\tau_i\otimes\tau_j^*}(s)\ell_{\tau_j^*\otimes\tau_i}(1-s)\C[q^{-s},q^s].
\end{align*}
Apply
Lemma~\ref{lemma:n_1<l<=n_2 basic identity} to
$\tau=\cinduced{P_{\bigidx_1,\bigidx-\bigidx_1}}{\GL{\bigidx}}{\tau_1\otimes\eta}$
and $f_s\in\xi(\tau,hol,s)$. Let $P_1,P_2$ and $P_3$ be the corresponding polynomials of the lemma. According to \eqref{eq:multiplicative property
of poles of intertwiners} with $\eta$ (instead of $\tau$) and because for each $2\leq i\leq k$,
\begin{align*}
\gcd(\pi\times\tau_i^*,1-s)^{-1}Q_i(q^{-s},q^s)\nintertwiningfull{\tau_i}{s}
\end{align*}
is holomorphic,
\begin{align*}
&P_1(q^{-s})^{-1}\gcd(\pi\times\eta^*,1-s)\\\notag&\quad\in
(\prod_{i=2}^k\gcd(\pi\times\tau_i^*,1-s)Q_i(q^{-s},q^s)^{-1})\prod_{2\leq
i<j\leq k}\ell_{\tau_i\otimes\tau_j^*}(s)\ell_{\tau_j^*\otimes\tau_i}(1-s)\C[q^{-s},q^s].
\end{align*}
In addition $P_3|A_1$ and 
\begin{align*}
\ell_{\tau_1\otimes\eta^*}(s)\in\prod_{2\leq
j\leq k}\ell_{\tau_1\otimes\tau_j^*}(s)\ell_{\tau_j^*\otimes\tau_1}(1-s)\C[q^{-s},q^s].
\end{align*}
Thus we get
\begin{align}\label{eq:corollary refinement of upper bound second
variable}
P_{\pi\times\tau}(q^{-s})^{-1}&\gcd(\pi\times\tau_1^*,1-s)\gcd(\pi\times\eta^*,1-s)\\\notag
\in& A_1(q^{-s})^{-1}\prod_{i=1}^k\gcd(\pi\times\tau_i^*,1-s)\prod_{i=2}^kQ_i(q^{-s},q^s)^{-1}\\\notag
&\times\prod_{1\leq i<j\leq
k}\ell_{\tau_i\otimes\tau_j^*}(s)\ell_{\tau_j^*\otimes\tau_i}(1-s)\C[q^{-s},q^s].
\end{align}
Hence from \eqref{eq:second var gcd of holomorphic sections 2}, for
any $W$ and $f_s\in\xi(\tau,hol,s)$,
\begin{align*}
\Psi(W,\nintertwiningfull{\tau}{s}&f_s,1-s)\\\notag
\in& A_1(q^{-s})^{-1}\prod_{i=1}^k\gcd(\pi\times\tau_i^*,1-s)\prod_{i=2}^kQ_i(q^{-s},q^s)^{-1}
\\\notag&\times\prod_{1\leq i<j\leq
k}\ell_{\tau_i\otimes\tau_j^*}(s)\ell_{\tau_j^*\otimes\tau_i}(1-s)\C[q^{-s},q^s].
\end{align*}
In addition from \eqref{eq:corollary refinement of upper bound
second variable} using Theorem~\ref{theorem:multiplicity second var} 
(as in Claim~\ref{claim:extra P factor in sub multiplicativity is mutual}) we obtain
\begin{align*}
P_{\pi\times\tau,s}(q^{-s})^{-1}&\gcd(\pi\times\tau_1,s)\gcd(\pi\times\eta,s)\\\notag
\in &A_1(q^{-s})^{-1}\prod_{i=1}^k\gcd(\pi\times\tau_i,s)\prod_{i=2}^kQ_i(q^{-s},q^s)^{-1}\\\notag
&\times\prod_{1\leq i<j\leq
k}\ell_{\tau_i\otimes\tau_j^*}(s)\ell_{\tau_j^*\otimes\tau_i}(1-s)\C[q^{-s},q^s].
\end{align*}
Therefore \eqref{eq:second var gcd of holomorphic sections} implies that for any $f_s\in\xi(\tau,hol,s)$,
\begin{align}\label{eq:corollary improves sub multiplicativity second var last step}
\Psi(W,f_s,s)\in& A_1(q^{-s})^{-1}\prod_{i=1}^k\gcd(\pi\times\tau_i,s)\prod_{i=2}^kQ_i(q^{-s},q^s)^{-1}\\\notag
&\times\prod_{1\leq i<j\leq
k}\ell_{\tau_i\otimes\tau_j^*}(s)\ell_{\tau_j^*\otimes\tau_i}(1-s)\C[q^{-s},q^s].
\end{align}

Repeating the above for $\tau^*=\cinduced{P_{\bigidx-\bigidx_1,\bigidx_1}}{\GL{\bigidx}}{\eta^*\otimes\tau_1^*}$ and $f_{1-s}'\in\xi(\tau^*,hol,1-s)$, we get
$\widetilde{P}_1|\widetilde{A}_1$ ($\widetilde{P}_1$ - the first
polynomial used in the lemma) and
\begin{align}\label{eq:corollary improves sub multiplicativity second var one step before last}
\Psi(W,\nintertwiningfull{\tau^*}{1-s}&f_{1-s}',s)\\\notag
\in&\widetilde{A}_1(q^{s-1})^{-1}\prod_{i=1}^k\gcd(\pi\times\tau_i,s)\prod_{i=2}^kQ_i(q^{-s},q^s)^{-1}
\\\notag&\times\prod_{1\leq i<j\leq
k}\ell_{\tau_i\otimes\tau_j^*}(s)\ell_{\tau_j^*\otimes\tau_i}(1-s)\C[q^{-s},q^s].
\end{align}

Finally since $Q_1$ is a least common multiple of $A_1(q^{-s})$ and
$\widetilde{A}_1(q^{s-1})$, when we combine \eqref{eq:corollary improves sub multiplicativity second var last step} and
\eqref{eq:corollary improves sub multiplicativity second var one step before last} we obtain
\begin{align*}
&\gcd(\pi\times\tau,s)\\\notag&\quad\in(\prod_{i=1}^k\gcd(\pi\times\tau_i,s)Q_i(q^{-s},q^s)^{-1})\prod_{1\leq
i<j\leq k}\ell_{\tau_i\otimes\tau_j^*}(s)\ell_{\tau_j^*\otimes\tau_i}(1-s)\C[q^{-s},q^s],
\end{align*}
as required.
\end{proof} 

\begin{proof}[Proof of Lemma~\ref{lemma:n_1<l<=n_2 basic identity}] 
We use the notation and results of Section~\ref{subsection:Realization of tau for induced
representation}, replace $\tau$ with $\varepsilon$ and prove the
lemma for $\varepsilon$, keeping $\zeta$ fixed with $\Re(\zeta)>>0$
throughout the proof. Recall that we may assume that $\varepsilon$ is
irreducible.
Eventually we shall put $\zeta=0$ in order to derive the result for $\tau$.
In contrast with the proof of
Theorem~\ref{theorem:multiplicity second var} (see
Section~\ref{subsection:The basic identity tau}),
$\gcd(\pi\times\varepsilon,s)$ is not expected to be rational in
$q^{-\zeta}$ - for example, in a similar setting of $\GL{k}\times\GL{\bigidx}$, the g.c.d. need not be rational in $q^{-\zeta}$.
Thus any data we use will need to be analytic in $\zeta$. This follows the method of 
\cite{JPSS}.
\begin{remark}
Alternatively, we could take $\zeta$ as
a parameter and use Laurent series in two variables. There are minimal
technical differences between these approaches.
\end{remark}
The polynomials $P_i$ are replaced with $P_i^{\zeta}$ as prescribed by the following claim.
\begin{claim}\label{claim:ring homomorphism does not change P_i props}
There exist $P_i^{\zeta}\in\C[X]$, $i=1,2,3$, with coefficients that are polynomial in $q^{\mp\zeta}$, such that
the operators
\begin{align*}
&P_1^{\zeta}(q^{-s})\gcd(\pi\times\varepsilon_2^*,1-s)^{-1}\nintertwiningfull{\varepsilon_2}{s},\\
&P_2^{\zeta}(q^{-s})\nintertwiningfull{\varepsilon_1\otimes\varepsilon_2^*}{(s,1-s)},\\
&P_3^{\zeta}(q^{-s})\gcd(\pi\times\varepsilon_1^*,1-s)^{-1}\nintertwiningfull{\varepsilon_1}{s}
\end{align*}
are holomorphic, and $P_i^0=P_i$ for each $1\leq i\leq3$.
\end{claim}
\begin{proof}[Proof of Claim~\ref{claim:ring homomorphism does not change P_i props}] 
Start with $P_1^{\zeta}$. By definition, the zeros of
$\gcd(\pi\times\varepsilon_2^*,1-s)^{-1}=\gcd(\pi\times\tau_2^*,1-(s-\zeta))^{-1}$
can be written in the form
$(1-aq^{-s+\zeta})$, $a\in\C^*$. The poles of
$\nintertwiningfull{\varepsilon_2}{s}=\nintertwiningfull{\tau_2}{s-\zeta}$
are of the form $(1-aq^{-s+\zeta})^{-1}$. The multiplicity of a
factor $(1-aq^{-s+\zeta})^{\pm1}$ in either
$\gcd(\pi\times\varepsilon_2^*,1-s)^{-1}$ or
$\ell_{\varepsilon_2}(s)$ does not change when we
substitute $0$ for $\zeta$. Then $P_1^{\zeta}$ is chosen by taking
suitable factors $(1-aq^{-s+\zeta})$ and replacing $q^{-s}$ with $X$. The argument for $P_3^{\zeta}$
is similar.

The poles of
$\nintertwiningfull{\varepsilon_1\otimes\varepsilon_2^*}{(s,1-s)}$
appear either in $L(\varepsilon_1^*\times\varepsilon_2^*,2-2s)$ (due
to the normalization of the intertwining operator) or as poles of
$\intertwiningfull{\varepsilon_1\otimes\varepsilon_2^*}{(s,1-s)}$.
An argument as in the proof of Corollary~\ref{corollary:auxiliary
rep to embed all sections in gcd second var l<=n} shows that the
latter poles appear in $L(\phi_i\times\theta_j^*,2s-1)$, where
$\phi_i,\theta_j$ are irreducible supercuspidal representations
independent of $\zeta$ (e.g. $\tau_2^*$ is a sub-representation of a
representation parabolically induced from $\theta_1\otimes\ldots\otimes\theta_m$).
We let $P_2^{\zeta}$ be a product of factors $(1-aX^2)$.
\end{proof} 
Let $W\in\Whittaker{\pi}{\psi_{\gamma}^{-1}}$ and
$f_s\in\xi_{Q_{\bigidx}}^{H_{\bigidx}}(\tau,hol,s)$. Let
$\varphi_s\in\xi_{Q_{\bigidx_1,\bigidx_2}}^{H_{\bigidx}}(\varepsilon_1\otimes\varepsilon_2,hol,(s,s))$
be defined by Lemma~\ref{lemma:replacing a section with another} for $f_s$. Then
$\widehat{f}_{\varphi_s}$, given by \eqref{iso:iso 1}, satisfies $\widehat{f}_{\varphi_s}=f_s$
for $\zeta=0$. The function $h\mapsto \varphi_s(h,1,1,1)$ belongs to
$C^{\infty}(H_{\bigidx},\C[q^{-s},q^s])$ (see
Example~\ref{example:f_s is a holomorphic section}). We replace
$\Psi(W,f_s,s)$ with $\Psi(W,\varphi_s,s)$ (see
\eqref{int:n_1<l<=n_2 example starting form}). Claim~\ref{claim:convergence n_1<l<n_2
starting integral} shows that there is a domain $D$ in $\zeta$ and
$s$, depending only on $\pi$, $\tau_1$ and $\tau_2$, such that
$\Psi(W,\varphi_s,s)$ is absolutely convergent. This
domain is of the form $\setof{\zeta,s\in\C}{\Re(s)>>\Re(\zeta)>>0}$.
Recall that we fix $\zeta$,
hence we refer to this domain as a domain in $s$.

As in the proof of
Lemma~\ref{lemma:integral for holomorphic section representable as a
series}, one shows that $\Psi(W,\varphi_s,s)$ is represented by a
series $\Sigma(W,\varphi_s,s)\in\Sigma(X)$ which is strongly
convergent in $D$. For instance when $\smallidx\leq\bigidx$, we take
$\Gamma=\overline{B_{G_{\smallidx}}}\times
R_{\smallidx,\bigidx}\times Z_{\bigidx_2,\bigidx_1}$ and define
$\phi\in C^{\infty}(\Gamma,\C[q^{-s},q^s])$ using the integrand,
starting with
a standard section, then
using Lemmas~\ref{lemma:interchange integration and polynomial} and
\ref{lemma:sum of series
represents sum of integrals}.

During our series computations,
it is allowed to suppress certain factors in $R(X)^*$ (mostly $\epsilon$-factors). For example, in
the proof of Claim~\ref{claim:ring homomorphism does not change P_i
props} we wrote the zeros of $\gcd(\pi\times\tau_2^*,1-(s-\zeta))^{-1}$
in the form $(1-aq^{-s+\zeta})$ while by definition
$\gcd(\pi\times\tau_2^*,1-(s-\zeta))^{-1}\in\C[q^{s-\zeta-1}]$. These factors must
also be analytic in $\zeta$, we indicate them whenever they occur.

First we apply the functional equation for $\pi\times\varepsilon_2$.
For $h\in H_{\bigidx}$, $b_1\in\GL{\bigidx_1}$, denote by
$\varphi_s(h,b_1,\cdot,\cdot)$ the function $(h_2,b_2)\mapsto
\varphi_s(h,b_1,h_2,b_2)$ ($h_2\in H_{\bigidx_2}$,
$b_2\in\GL{\bigidx_2}$). Then
$\varphi_s(h,b_1,\cdot,\cdot)\in\xi_{Q_{\bigidx_2}}^{H_{\bigidx_2}}(\varepsilon_2,hol,s)$.
The function
\begin{align*}
\nintertwiningfull{\varepsilon_2}{s}\varphi_s\in\xi_{Q_{\bigidx_1,\bigidx_2}}^{H_{\bigidx}}(\varepsilon_1\otimes\varepsilon_2^*,rat,(s,1-s))
\end{align*}
is obtained by applying the intertwining operator to
$\varphi_s(h,b_1,\cdot,\cdot)$ (see Section~\ref{subsection:the multiplicativity of the intertwining operator for tau induced}). Let
\begin{align*}
\varphi_{s,1-s}=P_1^{\zeta}(q^{-s})\gcd(\pi\times\varepsilon_2^*,1-s)^{-1}\nintertwiningfull{\varepsilon_2}{s}\varphi_s\in
\xi_{Q_{\bigidx_1,\bigidx_2}}^{H_{\bigidx}}(\varepsilon_1\otimes\varepsilon_2^*,hol,(s,1-s)).
\end{align*}
We use the notation $\Psi(W,\theta_{s,1-s},(s,1-s))$ to denote the
Rankin-Selberg integral for $G_{\smallidx}\times\GL{\bigidx}$, where
$\theta_{s,1-s}$ is in the space of the induced representation\\
$\cinduced{Q_{\bigidx_1}}{H_{\bigidx}}{(\varepsilon_1\otimes
\cinduced{Q_{\bigidx_2}}{H_{\bigidx_2}}{\varepsilon_2\alpha^{1-s}})\alpha^{s}}$
and integral~\eqref{iso:iso 1} is applied to $\theta_{s,1-s}$. This is
the triple integral \eqref{int:n_1<l<=n_2 example starting form} with $\theta_{s,1-s}$
replacing $\varphi_s$. The integral is absolutely convergent in a
domain $D^*$ of the form
$\setof{\zeta,s\in\C}{\Re(\zeta)>>\Re(s)\geq A_1}$ with some constant $A_1$. Similarly to the domain $D$, since $\zeta$ is fixed,
$D^*$ is a domain in $s$ depending only on $\pi$, $\tau_1$ and $\tau_2$
(the parameters for all of the domains we consider are similar to
those calculated in \cite{Soudry2}, see also Section~\ref{subsection:n_1<l<=n_2}). Because $\varphi_{s,1-s}$ is a holomorphic section, the proof of
Lemma~\ref{lemma:integral for holomorphic section representable as a
series} shows that we have a series
$\Sigma(W,\varphi_{s,1-s},(s,1-s))$ representing
$\Psi(W,\varphi_{s,1-s},(s,1-s))$ in $D^*$.

Let $G(\pi\times\varepsilon_i,X)^{-1}\in R(X)$ be the polynomial
obtained from $\gcd(\pi\times\varepsilon_i,s)^{-1}$ by replacing
$q^{-s}$ with $X$, for $i=1,2$. We have,
\begin{claim}\label{claim:n_1<l<=n_2 func pi and tau_2}
$P_1^{\zeta}(X)G(\pi\times\varepsilon_2,X)^{-1}\Sigma(W,\varphi_s,s)=\Sigma(W,\varphi_{s,1-s},(s,1-s))$.
\end{claim}
\begin{proof}[Proof of Claim~\ref{claim:n_1<l<=n_2 func pi and tau_2}] 
We prove this claim for the case $\smallidx\leq\bigidx_2$.
Essentially, three versions of the proof are needed: for
$\smallidx\leq\bigidx_2$, $\bigidx_2<\smallidx\leq\bigidx$ and
$\smallidx>\bigidx$. The integral manipulations in each case are
different but the method of proof is the same.

According to the transition \eqref{int:mult l<n_2 starting
form}-\eqref{int:mult l<n_2 before applying eq for inner tau}, in
$D$ it holds that
\begin{align}\label{int:mult n_1<l<=n_2 before applying eq for inner tau}
\Psi(W,\varphi_s,s)&=\int_{U_{\bigidx_1}}
\int_{\overline{B_{G_{\smallidx}}}}
\int_{R_{\smallidx,\bigidx_2}}
W(g)\varphi_s(w'u,1,w_{\smallidx,\bigidx_2}r(\rconj{b_{\bigidx_2,\bigidx_1}}g),1)
\psi_{\gamma}(r)\psi_{\gamma}(u)drdgdu.
\end{align}
Here we replaced the $dg$-integration over
$\lmodulo{U_{G_{\smallidx}}}{G_{\smallidx}}$ by an integration
over $\overline{B_{G_{\smallidx}}}$, $w'\in H_{\bigidx}$ is a Weyl
element and
$b_{\bigidx_2,\bigidx_1}=diag(I_{\bigidx_2},(-1)^{\bigidx_1},I_{\bigidx_2})\in\GL{2\bigidx_2+1}$. If we substitute $|W|,|\varphi_s|$ for $W,\varphi_s$ and drop the characters, the \rhs\ of \eqref{int:mult n_1<l<=n_2 before applying eq for inner tau} is
convergent in $D$.
We transform this equality into an equality of series.
Let $\Gamma=U_{\bigidx_1}$,
$\Gamma'=\overline{B_{G_{\smallidx}}}\times
R_{\smallidx,\bigidx_2}$. The function
\begin{align*}
\phi(u,(g,r))=W(g)\varphi_s(w'u,1,w_{\smallidx,\bigidx_2}r(\rconj{b_{\bigidx_2,\bigidx_1}}g),1)
\psi_{\gamma}(r)\psi_{\gamma}(u)
\end{align*}
belongs to $C^{\infty}(\Gamma\times\Gamma',\C[q^{-s},q^s])$. Since
$\varphi_s$ and $\psi_{\gamma}$ are smooth, $\phi$ is smooth in
$\Gamma$. The integral
$\Phi_{\phi}(s)=\int_{\Gamma\times\Gamma'}[\phi(u,(g,r))](s)d(u,(g,r))$ equals
the \rhs\ of \eqref{int:mult n_1<l<=n_2 before applying eq for inner tau}.
As in the proof of Lemma~\ref{lemma:integral for
holomorphic section representable as a series} we see that
$\Sigma_{\phi}=\oint_{\Gamma\times\Gamma'}\phi(u,(g,r))d(u,(g,r))$
is defined and 
$\Sigma_{\phi}\sim_D\Phi_{\phi}$. Since $\Psi(W,\varphi_s,s)=\Phi_{\phi}(s)$ in $D$, we get
$\Sigma(W,\varphi_s,s)=\Sigma_{\phi}$.

Apply Lemma~\ref{lemma:double iterated series} to $\Sigma_{\phi}$.
Then for all $u\in \Gamma$, $\Sigma_{\phi(u,\cdot)}$ is defined and
$\Sigma_{\phi(u,\cdot)}\sim_D\Phi_{\phi(u,\cdot)}$. Note that
$\Phi_{\phi(u,\cdot)}=\psi_{\gamma}(u)\Psi(W,\varphi_s^{\rconj{b_{\bigidx_2,\bigidx_1}}}(w'u,1,\cdot,\cdot),s)$, where
$\varphi_s^{\rconj{b_{\bigidx_2,\bigidx_1}}}(w'u,1,\cdot,\cdot)$ denotes the function
\begin{align*}
(h_2,b_2)\mapsto\varphi_s(w'u,1,\rconj{b_{\bigidx_2,\bigidx_1}}h_2,b_2).
\end{align*}
Set $Q=P_1^{\zeta}(q^{-s})\gcd(\pi\times\varepsilon_2,s)^{-1}\in
\C[q^{-s}]$. By Lemma~\ref{lemma:interchange integration and
polynomial},
\begin{align*}
\Sigma_{Q\phi(u,\cdot)}\sim_DQ(q^{-s})\psi_{\gamma}(u)\Psi(W,\varphi_s^{\rconj{b_{\bigidx_2,\bigidx_1}}}(w'u,1,\cdot,\cdot),s).
\end{align*}
Since
$\varphi_s^{\rconj{b_{\bigidx_2,\bigidx_1}}}(w'u,1,\cdot,\cdot)\in\xi_{Q_{\bigidx_2}}^{H_{\bigidx_2}}(\varepsilon_2,hol,s)$ (because $\varphi_s(w'u,1,\cdot,\cdot)\in\xi_{Q_{\bigidx_2}}^{H_{\bigidx_2}}(\varepsilon_2,hol,s)$),
\begin{align*}
\Sigma_{Q\phi(u,\cdot)}=Q(X)\psi_{\gamma}(u)\Sigma(W,\varphi_s^{\rconj{b_{\bigidx_2,\bigidx_1}}}(w'u,1,\cdot,\cdot),s)\in
R(X).
\end{align*}
Also by Lemma~\ref{lemma:interchange integration and
polynomial}, $Q(X)\Sigma_{\phi}=\Sigma_{Q\phi}$.
Now apply Lemma~\ref{lemma:double iterated series} again, to
$\Sigma_{Q\phi}$ and obtain
\begin{align*}
(Q\phi)_{\Gamma'}(u)=\Sigma_{Q\phi(u,\cdot)}(q^{-s},q^s),\qquad
\Sigma_{Q\phi}=\Sigma_{(Q\phi)_{\Gamma'}}.
\end{align*}
According to the functional equation for $G_{\smallidx}\times
\GL{\bigidx_2}$ and $\pi\times\varepsilon_2$, in $\C[q^{-s},q^s]$ we
have
\begin{align}\label{eq:n_1<l<=n_2 func pi and tau_2 for single u}
&\gcd(\pi\times\varepsilon_2,s)^{-1}\Psi(W,\varphi_s^{\rconj{b_{\bigidx_2,\bigidx_1}}}(w'u,1,\cdot,\cdot),s)
=c(\smallidx,\varepsilon_2,\gamma,s)\epsilon(\pi\times\varepsilon_2,\psi,s)^{-1}\\\notag&\times\gcd(\pi\times\varepsilon_2^*,1-s)^{-1}
\Psi(W,\nintertwiningfull{\varepsilon_2}{s}\varphi_s^{\rconj{b_{\bigidx_2,\bigidx_1}}}(w'u,1,\cdot,\cdot),1-s).
\end{align}
The factors
$c(\smallidx,\varepsilon_2,\gamma,s)=c(\smallidx,\tau_2,\gamma,s-\zeta)$
and
$\epsilon(\pi\times\varepsilon_2,\psi,s)=\epsilon(\pi\times\tau_2,\psi,s-\zeta)$
belong to $\C[q^{-(s-\zeta)},q^{(s-\zeta)}]^*$ and may be ignored (actually since we assume $\smallidx\leq\bigidx_2$,
$c(\smallidx,\varepsilon_2,\gamma,s)=1$). Note that the
integrals $\Psi(\cdot,\cdot,s)$ for $\pi\times\varepsilon_2$ are
defined in some right half-plane $\setof{s\in\C}{\Re(s-\zeta)>>0}$ and
$D$ can be taken so that it intersects this right half-plane in a
domain. Thus for any $s\in D$
we can replace the meromorphic continuation of $\Psi(W,\varphi_s^{\rconj{b_{\bigidx_2,\bigidx_1}}}(w'u,1,\cdot,\cdot),s)$ on the \lhs\ of
\eqref{eq:n_1<l<=n_2 func pi and tau_2 for single u} with the integral $\Psi(W,\varphi_s^{\rconj{b_{\bigidx_2,\bigidx_1}}}(w'u,1,\cdot,\cdot),s)$.

Let
$\widetilde{Q}=P_1^{\zeta}(q^{-s})\gcd(\pi\times\varepsilon_2^*,1-s)^{-1}\in\C[q^{-s},q^s]$.
Since
$\widetilde{Q}(q^{-s},q^s)\nintertwiningfull{\varepsilon_2}{s}$ is a
holomorphic operator, as explained in
Section~\ref{subsection:Interpretation of functional equations} we
can multiply both sides of \eqref{eq:n_1<l<=n_2 func pi and tau_2 for single u} by $P_1^{\zeta}$ and obtain
an equality in $\Sigma(X)$, 
\begin{align}\label{eq:n_1<l<=n_2 func pi and tau_2 for single u as series pre}
Q(X)\Sigma(W,\varphi_s^{\rconj{b_{\bigidx_2,\bigidx_1}}}(w'u,1,\cdot,\cdot),s)=\Sigma(W,\widetilde{Q}(q^{-s},q^s)\nintertwiningfull{\varepsilon_2}{s}
\varphi_s^{\rconj{b_{\bigidx_2,\bigidx_1}}}(w'u,1,\cdot,\cdot),1-s).
\end{align}
Because
\begin{align*}
\nintertwiningfull{\varepsilon_2}{s}\varphi_s^{\rconj{b_{\bigidx_2,\bigidx_1}}}(w'u,1,\cdot,\cdot)
=(\nintertwiningfull{\varepsilon_2}{s}\varphi_s)^{\rconj{b_{\bigidx_2,\bigidx_1}}}(w'u,1,\cdot,\cdot)
\end{align*}
(see Section~\ref{subsection:Twisting the embedding}) and
$\varphi_{s,1-s}=\widetilde{Q}(q^{-s},q^s)\nintertwiningfull{\varepsilon_2}{s}\varphi_s$, we get
\begin{align*}
\widetilde{Q}(q^{-s},q^s)\nintertwiningfull{\varepsilon_2}{s}
\varphi_s^{\rconj{b_{\bigidx_2,\bigidx_1}}}(w'u,1,\cdot,\cdot)=
\varphi_{s,1-s}^{\rconj{b_{\bigidx_2,\bigidx_1}}}(w'u,1,\cdot,\cdot).
\end{align*}
Putting this into \eqref{eq:n_1<l<=n_2 func pi and tau_2 for single u as series pre} gives
\begin{align}\label{eq:n_1<l<=n_2 func pi and tau_2 for single u as series}
Q(X)\Sigma(W,\varphi_s^{\rconj{b_{\bigidx_2,\bigidx_1}}}(w'u,1,\cdot,\cdot),s)=\Sigma(W,\varphi_{s,1-s}^{\rconj{b_{\bigidx_2,\bigidx_1}}}(w'u,1,\cdot,\cdot),1-s).
\end{align}
The \lhs\ of \eqref{eq:n_1<l<=n_2 func pi and tau_2 for
single u as series} equals $\psi_{\gamma}^{-1}(u)\Sigma_{Q\phi(u,\cdot)}$
and the \rhs\ represents
the integral $\Psi(W,\varphi_{s,1-s}^{\rconj{b_{\bigidx_2,\bigidx_1}}}(w'u,1,\cdot,\cdot),1-s)$,
which is a polynomial.

Next, in $D^*$ equality~\eqref{int:mult n_1<l<=n_2 before applying eq for inner tau}
is applicable with $\varphi_{s,1-s}$ replacing $\varphi_{s}$. Define
$\phi^*$ similarly to $\phi$ but with $\varphi_{s,1-s}$ instead of
$\varphi_s$. Then as above Lemma~\ref{lemma:integral for
holomorphic section representable as a series} shows
$\Sigma_{\phi^*}\sim_{D^*}\Phi_{\phi^*}$ and
$\Sigma(W,\varphi_{s,1-s},(s,1-s))=\Sigma_{\phi^*}$. The domain
$D^*$ is taken so that it intersects the left half-plane
$\setof{s\in\C}{\Re(1-s+\zeta)>>0}$ where the integrals
$\Psi(\cdot,\cdot,1-s)$ for $\pi\times\varepsilon_2^*$ on the \rhs\
of \eqref{eq:n_1<l<=n_2 func pi and tau_2 for single u} are defined
(this means taking $\Re(\zeta)$ large enough). Then for $s\in D^*$, we can replace the meromorphic continuation of
$\Psi(W,\nintertwiningfull{\varepsilon_2}{s}\varphi_s^{\rconj{b_{\bigidx_2,\bigidx_1}}}(w'u,1,\cdot,\cdot),1-s)$ on the \rhs\ of
\eqref{eq:n_1<l<=n_2 func pi and tau_2 for single u} with the integral $\Psi(W,\nintertwiningfull{\varepsilon_2}{s}\varphi_s^{\rconj{b_{\bigidx_2,\bigidx_1}}}(w'u,1,\cdot,\cdot),1-s)$.

As above,
\begin{align*}
&\Sigma_{\phi^*(u,\cdot)}\sim_{D^*}\Phi_{\phi^*(u,\cdot)}=\psi_{\gamma}(u)\Psi(W,\varphi_{s,1-s}^{\rconj{b_{\bigidx_2,\bigidx_1}}}(w'u,1,\cdot,\cdot),1-s),\\\notag
&\Sigma_{\phi^*(u,\cdot)}=\psi_{\gamma}(u)\Sigma(W,\varphi_{s,1-s}^{\rconj{b_{\bigidx_2,\bigidx_1}}}(w'u,1,\cdot,\cdot),1-s)\in R(X).
\end{align*}
Then Lemma~\ref{lemma:double iterated series} shows $\Sigma_{\phi^*}=\Sigma_{(\phi^*)_{\Gamma'}}$. Now
\eqref{eq:n_1<l<=n_2 func pi and tau_2 for single u as series}
implies $\Sigma_{Q\phi(u,\cdot)}=\Sigma_{\phi^*(u,\cdot)}$, for all $u$.
Hence $(Q\phi)_{\Gamma'}=(\phi^*)_{\Gamma'}$. Putting the pieces
together,
\begin{equation*}
Q(X)\Sigma(W,\varphi_s,s)=\Sigma_{Q\phi}=\Sigma_{(Q\phi)_{\Gamma'}}=
\Sigma_{(\phi^*)_{\Gamma'}}=\Sigma_{\phi^*}=\Sigma(W,\varphi_{s,1-s},(s,1-s)).\qedhere
\end{equation*}
\end{proof} 
We continue with the functional equation for $\varepsilon_1\times\varepsilon_2$.
In the notation of Section~\ref{subsection:the multiplicativity of the intertwining operator for tau induced},
for any $h\in H_{\bigidx}$,
\begin{align*}
h\cdot\varphi_{s,1-s}(\cdot,\cdot,I_{2\bigidx_2+1},\cdot)\in
\xi_{P_{\bigidx_1,\bigidx_2}}^{\GL{\bigidx}}(\varepsilon_1\absdet{}^{\frac{\bigidx}2}\otimes
\varepsilon_2^*\absdet{}^{\frac{\bigidx}2},hol,(s,1-s)).
\end{align*}
Let
\begin{align*}
\varphi_{1-s,s}'=P_2^{\zeta}(q^{-s})\nintertwiningfull{\varepsilon_1\otimes\varepsilon_2^*}{(s,1-s)}\varphi_{s,1-s}\in
\xi_{Q_{\bigidx_2,\bigidx_1}}^{H_{\bigidx}}(\varepsilon_2^*\otimes\varepsilon_1,hol,(1-s,s)).
\end{align*}
The integral $\Psi(W,\varphi_{1-s,s}',(1-s,s))$ is absolutely
convergent in a domain $D^{**}$ of the form
$\setof{\zeta,s\in\C}{-\Re(\zeta)<<\Re(s)<<\Re(\zeta),\Re(s)\leq A_2}$, for some constant $A_2$.
\begin{claim}\label{claim:n_1<l<=n_2 func tau_1 and tau_2}
$P_2^{\zeta}(X)\Sigma(W,\varphi_{s,1-s},(s,1-s))=\Sigma(W,\varphi_{1-s,s}',(1-s,s))$.
\end{claim}
\begin{proof}[Proof of Claim~\ref{claim:n_1<l<=n_2 func tau_1 and tau_2}] 
The proof is similar to the proof of Claim~\ref{claim:n_1<l<=n_2
func pi and tau_2} and described briefly. For instance assume $\smallidx\leq\bigidx$ (the proof when $\smallidx>\bigidx$ is almost identical). Replace the $dg$-integration in $\Psi(W,\varphi_{s,1-s},(s,1-s))$ with an integration over
$\overline{B_{G_{\smallidx}}}$. Then $\Psi(W,\varphi_{s,1-s},(s,1-s))$ equals
\begin{align*}
\int_{\overline{B_{G_{\smallidx}}}}\int_{R_{\smallidx,\bigidx}}\int_{
Z_{\bigidx_2,\bigidx_1}}W(g)
(w_{\smallidx,\bigidx}rg)\cdot\varphi_{s,1-s}(\omega_{\bigidx_1,\bigidx_2}z,I_{\bigidx_1},I_{2\bigidx_2+1},I_{\bigidx_2})\psi^{-1}(z)\psi_{\gamma}(r)dzdrdg.
\end{align*}
Let $\Gamma=\overline{B_{G_{\smallidx}}}\times
R_{\smallidx,\bigidx}$, $\Gamma'=Z_{\bigidx_2,\bigidx_1}$ and let $D^*$ be as in Claim~\ref{claim:n_1<l<=n_2 func pi and tau_2}. Define $\phi\in C^{\infty}(\Gamma\times\Gamma',\C[q^{-s},q^s])$ by
\begin{align*}
\phi((g,r),z)=W(g)(w_{\smallidx,\bigidx}rg)\cdot
\varphi_{s,1-s}(\omega_{\bigidx_1,\bigidx_2}z,1,1,1)
\psi^{-1}(z)\psi_{\gamma}(r).
\end{align*}
Then
\begin{align*}
\int_{\Gamma'}\phi((g,r),z)dz=W(g)\psi_{\gamma}(r)\whittakerfunctional((w_{\smallidx,\bigidx}rg)\cdot\varphi_{s,1-s}(\cdot,\cdot,I_{2\bigidx_2+1},\cdot)),
\end{align*}
where $\whittakerfunctional$ denotes the Whittaker functional given
by the \lhs\ of \eqref{eq:shahidi func eq} on the space
\begin{align*}
V_{P_{\bigidx_1,\bigidx_2}}^{\GL{\bigidx}}(\varepsilon_1\absdet{}^{\frac{\bigidx}2}\otimes\varepsilon_2^*\absdet{}^{\frac{\bigidx}2},(s,1-s)).
\end{align*}
According to the functional equation \eqref{eq:shahidi func eq}, 
\begin{align}\label{eq:n_1<l<=n_2 shahidi func}
&\whittakerfunctional(
(w_{\smallidx,\bigidx}rg)\cdot\varphi_{s,1-s}(\cdot,\cdot,I_{2\bigidx_2+1},\cdot))\\\notag&=
\whittakerfunctional^*(\nintertwiningfull{\varepsilon_1\otimes\varepsilon_2^*}{(s,1-s)}
(w_{\smallidx,\bigidx}rg)\cdot\varphi_{s,1-s}(\cdot,\cdot,I_{2\bigidx_1+1},\cdot)).
\end{align}
Here $\whittakerfunctional^*$ denotes the integral on the \rhs\ of \eqref{eq:shahidi func eq}, which comprises a Whittaker functional on
\begin{align*}
V_{P_{\bigidx_2,\bigidx_1}}^{\GL{\bigidx}}(\varepsilon_2^*\absdet{}^{\frac{\bigidx}2}\otimes\varepsilon_1\absdet{}^{\frac{\bigidx}2},(1-s,s)).
\end{align*}
Equality~\eqref{eq:n_1<l<=n_2 shahidi func} is a priori between meromorphic continuations in $\C(q^{-s})$, but because
\begin{align*}
\varphi_{s,1-s}(\cdot,\cdot,I_{2\bigidx_2+1},\cdot)\in\xi_{P_{\bigidx_1,\bigidx_2}}^{\GL{\bigidx}}(\varepsilon_1\absdet{}^{\frac{\bigidx}2}\otimes
\varepsilon_2^*\absdet{}^{\frac{\bigidx}2},hol,(s,1-s)),
\end{align*}
it is actually in $\C[q^{-s},q^s]$. In addition there are constants $s_1,s_2>0$, independent of $\zeta$, such that the integral on the \lhs\ (resp. \rhs) of \eqref{eq:n_1<l<=n_2 shahidi func} is absolutely convergent for $\Re(s)>s_1$ (resp. $\Re(s)<-s_2$). By taking $\Re(\zeta)$ large enough, $D^*$ (resp. $D^{**}$) will intersect the domain $\setof{s\in\C}{\Re(s)>s_1}$ (resp. $\setof{s\in\C}{\Re(s)<-s_2}$) in a domain.

We define
$\Gamma''=Z_{\bigidx_1,\bigidx_2}$ and $\phi^*\in
C^{\infty}(\Gamma\times\Gamma'',\C[q^{-s},q^s])$ by
\begin{align*}
\phi^*((g,r),z)=W(g)(w_{\smallidx,\bigidx}rg)\cdot
\varphi_{1-s,s}'(\omega_{\bigidx_2,\bigidx_1}z,1,1,1)
\psi^{-1}(z)\psi_{\gamma}(r).
\end{align*}
In $D^{**}$, for all $(g,r)\in\Gamma$,
\begin{align*}
&\int_{\Gamma'}\phi^*((g,r),z)dz\\\notag&=P_2^{\zeta}(q^{-s})W(g)\psi_{\gamma}(r)\whittakerfunctional^*(\nintertwiningfull{\varepsilon_1\otimes\varepsilon_2^*}{(s,1-s)}(w_{\smallidx,\bigidx}rg)\cdot\varphi_{s,1-s}(\cdot,\cdot,I_{2\bigidx_1+1},\cdot)).
\end{align*}
Proceeding as in Claim~\ref{claim:n_1<l<=n_2 func pi and tau_2}, we
use Lemmas~\ref{lemma:interchange integration and polynomial},
\ref{lemma:double iterated series} and \ref{lemma:integral for
holomorphic section representable as a series} to conclude
\begin{align*}
P_2^{\zeta}(X)\Sigma(W,\varphi_{s,1-s},(s,1-s))&=\Sigma_{P_2^{\zeta}\phi}=\Sigma_{(P_2^{\zeta}\phi)_{\Gamma'}}\\\notag&=\Sigma_{(\phi^*)_{\Gamma''}}=\Sigma_{\phi^*}=\Sigma(W,\varphi_{1-s,s}',(1-s,s)).
\end{align*}
The equality $\Sigma_{(P_2^{\zeta}\phi)_{\Gamma'}}=\Sigma_{(\phi^*)_{\Gamma''}}$ holds because according to
\eqref{eq:n_1<l<=n_2
shahidi func}, \\$(P_2^{\zeta}\phi)_{\Gamma'}=(\phi^*)_{\Gamma''}$.
\end{proof} 
Next we apply the functional equation for $\pi\times\varepsilon_1$. 
Define
\begin{align*}
\varphi_{1-s}^*&=
P_3^{\zeta}(q^{-s})\gcd(\pi\times\varepsilon_1^*,1-s)^{-1}\nintertwiningfull{\varepsilon_1}{s}\varphi_{1-s,s}'\\\notag&\in\xi_{Q_{\bigidx_2,\bigidx_1}}^{H_{\bigidx}}(\varepsilon_2^*\otimes\varepsilon_1^*,hol,(1-s,1-s)).
\end{align*}
According to \eqref{eq:multiplicative property of intertwiners} (see the paragraph after the proof of
Claim~\ref{claim:verification of multiplicativity intertwiners}) we obtain
\begin{align*}
\varphi_{1-s}^*=
P_1^{\zeta}(q^{-s})P_2^{\zeta}(q^{-s})P_3^{\zeta}(q^{-s})\gcd(\pi\times\varepsilon_1^*,1-s)^{-1}
\gcd(\pi\times\varepsilon_2^*,1-s)^{-1}\nintertwiningfull{\varepsilon}{s}\varphi_s.
\end{align*}
The domain $D^{***}$ of absolute convergence of
$\Psi(W,\varphi_{1-s}^*,1-s)$ takes the form $\setof{\zeta,s\in\C}{\Re(1-s)>>\Re(\zeta)>>0}$.
\begin{claim}\label{claim:n_1<l<=n_2 func pi and tau_1}
$P_3^{\zeta}(X)G(\pi\times\varepsilon_1,X)^{-1}\Sigma(W,\varphi_{1-s,s}',(1-s,s))
=\Sigma(W,\varphi_{1-s}^*,1-s)$.
\end{claim}
\begin{proof}[Proof of Claim~\ref{claim:n_1<l<=n_2 func pi and tau_1}] 
The proof is analogous to the proof of Claim~\ref{claim:n_1<l<=n_2
func pi and tau_2}. Three versions are needed, for
$\smallidx\leq\bigidx_1$, $\bigidx_1<\smallidx\leq\bigidx$ and
$\smallidx>\bigidx$. Here we address the case
$\bigidx_1<\smallidx\leq\bigidx$ (together with Claim~\ref{claim:n_1<l<=n_2
func pi and tau_2} this essentially covers all cases except for $\smallidx>\bigidx$,
see Remark~\ref{remark:claims dont prove everything} below). According to the transitions
\eqref{int:mult l<n_2 after shahidi equation}-\eqref{int:mult var 2
before applying eq for outer tau}, in $D^{**}$ the integral
$\Psi(W,\varphi_{1-s,s}',(1-s,s))$ equals
\begin{align}\label{int:mult var 2 before applying eq for outer tau formula in }
&\int_{\lmodulo{H_{\bigidx_1}Z_{\smallidx-\bigidx_1-1}V_{\smallidx-\bigidx_1-1}}{G_{\smallidx}}}
\int_{R_{\smallidx,\bigidx}}\int_{\Mat{\bigidx_1\times\bigidx-\smallidx}}
\int_{\lmodulo{U_{H_{\bigidx_1}}}{H_{\bigidx_1}}}\\\notag
&(\int_{R^{\smallidx,\bigidx_1}}W(r'w^{\smallidx,\bigidx_1}h'g)dr')\varphi_{1-s,s}'(\omega_{\bigidx_2,\bigidx_1}\eta
w_{\smallidx,\bigidx}rw^{\smallidx,\bigidx_1}g,1,\rconj{t_{\bigidx_1}}h',1)\psi_{\gamma}(r)dh'd\eta
drdg.
\end{align}
Here $t_{\bigidx_1}=diag((-1)^{\bigidx-\smallidx}\gamma^{-1}I_{\bigidx_1},1,(-1)^{\bigidx-\smallidx}\gamma I_{\bigidx_1})$. We write the $dg$-integral over
$\overline{B_{\GL{\smallidx-\bigidx_1-1}}}\times\overline{V_{\smallidx-\bigidx_1-1}}\times
(\lmodulo{H_{\bigidx_1}}{G_{\bigidx_1+1}})$, where $\overline{B_{\GL{\smallidx-\bigidx_1-1}}},G_{\bigidx_1+1}<L_{\smallidx-\bigidx_1-1}$. Then we use Lemma~\ref{lemma:integration formula for quotient space H_n
G_n+1} to write the integral over $\lmodulo{H_{\bigidx_1}}{G_{\bigidx_1+1}}$. In the notation of the lemma,
in the split case,
\begin{align*}
\Gamma&=\overline{B_{\GL{\smallidx-\bigidx_1-1}}}\times\overline{V_{\smallidx-\bigidx_1-1}}\times
\GL{1}\times\overline{Z_{\bigidx_1,1}}\times \Xi_{\bigidx_1}\times
R_{\smallidx,\bigidx}\times\Mat{\bigidx_1\times\bigidx-\smallidx}.
\end{align*}
If $G_{\smallidx}$ is \quasisplit,
\begin{align*}
\Gamma&=\overline{B_{\GL{\smallidx-\bigidx_1-1}}}\times\overline{V_{\smallidx-\bigidx_1-1}}
\times\GL{1}\times\overline{Z_{\bigidx_1-1,1}}\times(V_{\bigidx_1}\cap G_2)\times \Xi_{\bigidx_1}\times
R_{\smallidx,\bigidx}\times\Mat{\bigidx_1\times\bigidx-\smallidx}.
\end{align*}
In both cases let $\Gamma'=\overline{B_{H_{\bigidx_1}}}\times
R^{\smallidx,\bigidx_1}$. We define $\phi\in
C^{\infty}(\Gamma\times\Gamma',\C[q^{-s},q^s])$ using the integrand of \eqref{int:mult var 2 before applying eq for outer tau formula in }.
Then $\Sigma(W,\varphi_{1-s,s}',(1-s,s))=\Sigma_{\phi}$.

We make the following remark regarding the usage of $\Xi_{\bigidx_1}$. In the split case, it is a closed subgroup of $G_{\bigidx_1+1}$ and in
particular, an $l$-space. In the \quasisplit\ case $\Xi_{\bigidx_1}$ is homeomorphic to
$F^{\bigidx_1-1}$ and the measure is defined using the measure of $F^{\bigidx_1-1}$, hence it can also be regarded as
an $l$-space with a Borelian measure. 

By virtue of the functional equation for $G_{\smallidx}\times \GL{\bigidx_1}$
and $\pi\times\varepsilon_1$, and equality~\eqref{eq:intertwiner and left and right translation}, for any $g\in G_{\smallidx}$ and $h\in\ H_{\bigidx}$,
\begin{align*}
&\gcd(\pi\times\varepsilon_1,s)^{-1}\Psi(g\cdot
W,(\varphi_{1-s,s}')^{t_{\bigidx_1}}(h,1,\cdot,\cdot),s)\\\notag&=
\epsilon(\pi\times\varepsilon_1,\psi,s)^{-1}\gcd(\pi\times\varepsilon_1^*,1-s)^{-1}\Psi(g\cdot
W,(\nintertwiningfull{\varepsilon_1}{s}\varphi_{1-s,s}')^{t_{\bigidx_1}}(h,1,\cdot,\cdot),1-s).
\end{align*}
The factor $\epsilon(\pi\times\varepsilon_1,\psi,s)\in \C[q^{-s-\zeta},q^{s+\zeta}]^*$ can be ignored.

Let
\begin{align*}
&Q=P_3^{\zeta}(q^{-s})\gcd(\pi\times\varepsilon_1,s)^{-1}\in\C[q^{-s}],\\\notag
&\widetilde{Q}=P_3^{\zeta}(q^{-s})\gcd(\pi\times\varepsilon_1^*,1-s)^{-1}\in\C[q^{-s},q^s].
\end{align*}
Formula \eqref{int:mult var 2 before applying eq for outer tau formula in } stated above
for $\Psi(W,\varphi_{1-s,s}',(1-s,s))$ holds in $D^{***}$ for $\Psi(W,\varphi_{1-s}^*,1-s)$, as explained in Section~\ref{subsection:n_1<l<=n_2} after
\eqref{int:mult var 2 after applying eq for outer tau}. Define $\phi^*\in C^{\infty}(\Gamma\times\Gamma',\C[q^{-s},q^s])$ similarly to $\phi$, with $\varphi_{1-s}^*$ instead of $\varphi_{1-s,s}'$. Then as in Claim~\ref{claim:n_1<l<=n_2 func pi and tau_2},
\begin{equation*}
Q(X)\Sigma(W,\varphi_{1-s,s}',(1-s,s))
=\Sigma_{(Q\phi)_{\Gamma'}}=
\Sigma_{(\phi^*)_{\Gamma'}}
=\Sigma(W,\varphi_{1-s}^*,1-s).\qedhere
\end{equation*}
\end{proof} 

\begin{remark}\label{remark:claims dont prove everything}
In the proofs of Claims~\ref{claim:n_1<l<=n_2 func pi and tau_2} and \ref{claim:n_1<l<=n_2 func pi and tau_1} we translated the integral manipulations of Section~\ref{subsection:n_1<l<=n_2} into transitions between Laurent series. As we have seen in Section~\ref{section:2nd variable}, a combination of these manipulations is used for all cases except that of $\smallidx>\bigidx$. The manipulations in the latter case, given in Section~\ref{subsection:n<l}, are simpler.
\end{remark}

Collecting Claims~\ref{claim:n_1<l<=n_2 func pi and
tau_2}, \ref{claim:n_1<l<=n_2 func tau_1 and tau_2} and
\ref{claim:n_1<l<=n_2 func pi and tau_1},
\begin{align}\label{eq:n_1<l<=n_2 claims combined}
P_1^{\zeta}(X)P_2^{\zeta}(X)P_3^{\zeta}(X)G(\pi\times\varepsilon_1,X)^{-1}G(\pi\times\varepsilon_2,X)^{-1}\Sigma(W,\varphi_{s},s)=\Sigma(W,\varphi_{1-s}^*,1-s).
\end{align}
The final step is to put $\zeta=0$ into this equality. Note that in
general such a substitution in a series $\sum_{m\in\Integers}a_m(\zeta)X^m$ is not
defined, since it is not clear how each coefficient $a_m(\zeta)$ depends on
$\zeta$.

Let
\begin{align*}
f_{1-s}^*=P_{\pi\times\tau}(q^{-s})\gcd(\pi\times\tau_1^*,1-s)^{-1}\gcd(\pi\times\tau_2^*,1-s)^{-1}\nintertwiningfull{\tau}{s}f_s.
\end{align*}
The definition of $P_{\pi\times\tau}$ and \eqref{eq:multiplicative property of poles of intertwiners} imply
$f_{1-s}^*\in\xi_{Q_{\bigidx}}^{H_{\bigidx}}(\tau^*,hol,1-s)$. Let
$\gcd(\pi\times\tau_i,X)^{-1}\in R(X)$ be obtained from
$\gcd(\pi\times\tau_i,s)^{-1}$ by replacing $q^{-s}$ with $X$.
\begin{claim}\label{claim:putting Y=1}
Putting $\zeta=0$ in \eqref{eq:n_1<l<=n_2 claims combined} gives
\begin{align}\label{eq:n_1<l<=n_2 claims combined no zeta}
P_{\pi\times\tau}(X)\gcd(\pi\times\tau_1,X)^{-1}\gcd(\pi\times\tau_2,X)^{-1}\Sigma(W,f_s,s)=\Sigma(W,f_{1-s}^*,1-s).
\end{align}
\end{claim}
\begin{proof}[Proof of Claim~\ref{claim:putting Y=1}] 
Since Claim~\ref{claim:ring homomorphism does not change P_i props}
proves $P_i^{0}=P_i$, it remains to consider 
$\Sigma(W,\varphi_s,s)$ and
$\Sigma(W,\varphi_{1-s}^*,1-s)$. We begin with
$\Sigma(W,\varphi_s,s)$.

The domain $D\subset\C\times\C$ in $\zeta$ and $s$ of absolute
convergence of $\Psi(W,\varphi_s,s)$ contains a domain
$D_0=\setof{\zeta,s\in\C}{\zeta_0\leq\Re(\zeta)\leq\zeta_1,\Re(s)>s_0}$, where $\zeta_0,\zeta_1,s_0$ are constants depending only on $\pi,\tau_1$ and $\tau_2$.
We restrict $\Psi(W,\varphi_s,s)$ to $D_0$. Put
$D_0'=\setof{\zeta\in\C}{\zeta_0\leq\Re(\zeta)\leq\zeta_1}$. The first step is to
show that $\Sigma(W,\varphi_s,s)$ is of the form
$\sum_{m=M}^{\infty}a_m(\zeta)X^m$, where $M$ is independent of
$\zeta$ and for each $m$, $a_m:D_0'\rightarrow\C$ is an analytic
function - a polynomial in $\C[q^{-\zeta},q^{\zeta}]$.

According to Claim~\ref{claim:deducing varphi and f_s are
interchangeable in psi}, in $D_0$ we have
$\Psi(W,\varphi_s,s)=\Psi(W,\widehat{f}_{\varphi_s},s)$ where
$\widehat{f}_{\varphi_s}\in\xi(\Whittaker{\varepsilon}{\psi},hol,s)$ is defined by
\eqref{iso:iso 1} using $\varphi_s$. Since $\varphi_{s}\in\xi(\varepsilon_1\otimes\varepsilon_2,hol,(s,s))$,
we can write $\varphi_{s}=\sum_{i=1}^vP_i\varphi_{s}^{(i)}$ where
$P_i\in\C[q^{-s},q^s]$ and $\varphi_{s}^{(i)}\in\xi(\varepsilon_1\otimes\varepsilon_2,std,(s,s))=
\xi(\tau_1\otimes\tau_2,std,(s+\zeta,s-\zeta))$. Then
$\Psi(W,\widehat{f}_{\varphi_s},s)=\sum_{i=1}^vP_i\Psi(W,\widehat{f}_{\varphi_s^{(i)}},s)$.
Fix $\zeta\in D_0'$ and use
Claim~\ref{claim:iwasawa decomposition of the integral with zeta in tau} 
to write $\Psi(W,\widehat{f}_{\varphi_s^{(i)}},s)$ as a sum of integrals, say if
$\smallidx\leq\bigidx$, of the form \eqref{int:iwasawa decomposition
of the integral l<=n split}. Here $W'\in\Whittaker{\tau}{\psi}$ is
replaced by $W_{\zeta}'\in\Whittaker{\varepsilon}{\psi}$. The
number of negative coefficients is bounded as in
Lemma~\ref{lemma:integral for holomorphic section representable as a
series}, using the fact that $W^{\diamond}\in\Whittaker{\pi}{\psi_{\gamma}^{-1}}$ vanishes away from zero. Now if we
let $\zeta$ vary, this bound remains fixed because $W^{\diamond}$ is
independent of $\zeta$ (it is also possible to use $W_{\zeta}'$ to
obtain a bound independent of $\zeta$). Since the coordinates of $a\in A_{\smallidx-1}$ are bounded from above, if $G_{\smallidx}$ is split
(resp. \quasisplit) for any $m\in\Integers$ the set of $a,x$ (resp.
$a$) such that $\absdet{a}\cdot[x]^{-1}=q^m$ (resp.
$\absdet{a}=q^m$) is compact in $A_{\smallidx-1}\times G_1$ (resp.
$A_{\smallidx-1}$). Then observe that according to
Claim~\ref{claim:iwasawa decomposition of the integral with zeta in tau}, there is a
compact open subgroup $N<\GL{\bigidx}$, independent of $\zeta$, such
that $W_{\zeta}'$ is right-invariant by $N$, and for a fixed $b\in
\GL{\bigidx}$, $W_{\zeta}'(b)\in\C[q^{-\zeta},q^{\zeta}]$. Therefore
the coefficient of $X^m$ is a polynomial
in $q^{\mp\zeta}$. The same applies when $\smallidx>\bigidx$, by
considering \eqref{int:iwasawa decomposition of the integral l>n}.

Let $C\subset\C$ be a compact set containing $0$ and $s_C>0$ be a
constant, depending only on $\pi,\tau_1,\tau_2$ and $C$, such that
in $D_1=\setof{\zeta,s}{\zeta\in C,\Re(s)>s_C}$, $\Psi(W,\widehat{f}_{\varphi_s},s)$
is absolutely convergent and $D_1\cap D_0$ is a domain of
$\C\times\C$. By Lemma~\ref{lemma:integral for holomorphic section
representable as a series}, for any $\zeta\in C$, in the domain
$\setof{s}{\Re(s)>s_C}$, the integral $\Psi(W,\widehat{f}_{\varphi_s},s)$ has a
representation $\Sigma(W,\widehat{f}_{\varphi_s},s)\in\Sigma(X)$. Note that Lemma~\ref{lemma:integral for
holomorphic section representable as a series} is applicable because
$D_1$ does not depend on $\widehat{f}_{\varphi_s}$ or $W$. The computation in the previous paragraph gives
$\Sigma(W,\widehat{f}_{\varphi_s},s)=\sum_{m=M}^{\infty}b_m(\zeta)X^m$
with properties as above.

Since $D_1\cap D_0$ is a domain where both series $\Sigma(W,\varphi_s,s)$ and
$\Sigma(W,\widehat{f}_{\varphi_s},s)$ represent $\Psi(W,\widehat{f}_{\varphi_s},s)$,
$b_m(\zeta)=a_m(\zeta)$ for all $\zeta\in\C$ and $m\geq M$. Additionally for
$\zeta=0$, $\Sigma(W,\widehat{f}_{\varphi_s},s)$ is the series representing
$\Psi(W,f_s,s)$ (because $0\in C$ and for $\zeta=0$, $\widehat{f}_{\varphi_s}=f_s$). 
It follows that putting $\zeta=0$ in $\Sigma(W,\varphi_s,s)$ yields
$\Sigma(W,f_s,s)$.


Regarding $\Sigma(W,\varphi_{1-s}^*,1-s)$, the arguments above
show that when we substitute $0$ for $\zeta$ in $\Sigma(W,\varphi_{1-s}^*,1-s)$ we obtain the series $\Sigma(W,\widehat{f}_{\varphi_{1-s}^*},1-s)$ with $\zeta=0$. Note that by Claim~\ref{claim:intertwining operator and Jacquet integral
commute},
\begin{align*}
\widehat{f}_{\varphi_{1-s}^*}=&(\prod_{i=1}^3P_i^{\zeta}(q^{-s}))\gcd(\pi\times\tau_1^*,1-(s+\zeta))^{-1}\\\notag&\times\gcd(\pi\times\tau_2^*,1-(s-\zeta))^{-1}\nintertwiningfull{\varepsilon}{s}\widehat{f}_{\varphi_s}.
\end{align*}
Equality~\eqref{eq:multiplicative property of intertwiners} and the
determination of the poles in Claim~\ref{claim:ring homomorphism
does not change P_i props} imply that for $\zeta=0$,
$\nintertwiningfull{\varepsilon}{s}=\nintertwiningfull{\tau}{s}$
(see proof of Claim~\ref{claim:gamma is rational in s and zeta}) and $\widehat{f}_{\varphi_{1-s}^*}=f_{1-s}^*$. Hence for $\zeta=0$,
$\Sigma(W,\widehat{f}_{\varphi_{1-s}^*},1-s)=\Sigma(W,f_{1-s}^*,s)$.
\end{proof} 
According to Lemma~\ref{lemma:integral for holomorphic section
representable as a series}, $\Sigma(W,f_s,s)$ (resp.
$\Sigma(W,f_{1-s}^*,1-s)$) is a Laurent series with finitely many
negative (resp. positive) coefficients. Hence both sides of
\eqref{eq:n_1<l<=n_2 claims combined no zeta} are polynomials. Since
\begin{align*}
P_{\pi\times\tau}(q^{-s})\gcd(\pi\times\tau_1,s)^{-1}\gcd(\pi\times\tau_2,s)^{-1}\Psi(W,f_s,s)
\end{align*}
is represented by the \lhs\ of \eqref{eq:n_1<l<=n_2 claims combined
no zeta},
\begin{align*}
\Psi(W,f_s,s)\in\gcd(\pi\times\tau_1,s)\gcd(\pi\times\tau_2,s)P_{\pi\times\tau}(q^{-s})^{-1}\C[q^{-s},q^{s}].
\end{align*}
Additionally $\Psi(W,f_{1-s}^*,1-s)$ is represented by the polynomial
$\Sigma(W,f_{1-s}^*,1-s)$ and according to the definition of $f_{1-s}^*$,
\begin{align*}
\Psi(W,\nintertwiningfull{\tau}{s}&f_s,1-s)\\\notag&\in
\gcd(\pi\times\tau_1^*,1-s)\gcd(\pi\times\tau_2^*,1-s)P_{\pi\times\tau}(q^{-s})^{-1}\C[q^{-s},q^{s}].\qedhere
\end{align*}
\end{proof} 

\section{Upper bound in the first
variable}\label{section:Sub-multiplicativity in the first variable}
\subsection{Outline}\label{subsection:first var outline} In this section we
prove Theorem~\ref{theorem:gcd sub multiplicity first var}. The proof is divided into three parts, according to the relative sizes of $k$, $\smallidx$ and $\bigidx$.
\subsection{The case $k<\smallidx$}\label{subsection:sub mult first var proof body k<l}
Let $\pi=\cinduced{P_k}{G_{\smallidx}}{\sigma\otimes\pi'}$,
$\tau=\cinduced{P_{\bigidx_1,\ldots,\bigidx_a}}{\GL{\bigidx}}{\tau_1\otimes\ldots\otimes\tau_a}$ and assume that $\tau$ is
irreducible (as always, all representations are generic). We prove,
\begin{align*}
\gcd(\pi\times\tau,s)\in
L(\sigma\times\tau,s)(\prod_{i=1}^a\gcd(\pi'\times\tau_i,s))L(\sigma^*\times\tau,s)M_{\tau_1\otimes\ldots\otimes\tau_a}(s)\C[q^{-s},q^s].
\end{align*}
The method of proof is similar to that of Section~\ref{section:Sub-multiplicativity in the second
variable}: the integral manipulations in Section~\ref{section:1st variable}
(the multiplicativity of $\gamma(\pi\times\tau,\psi,s)$ in the first variable)
involving the application of functional equations
for $\sigma\times\tau$, $\pi'\times\tau$ and $\sigma\times\tau^*$,
are rephrased in terms of Laurent series. 
The proof is reduced to the following lemma.
\begin{lemma}\label{lemma:sub multiplicativity first var k<l<n}
Assume $k<\smallidx\leq\bigidx$. Let $P_{\tau},P_{\tau^*}\in\C[X]$ be normalized by
$P_{\tau}(0)=P_{\tau^*}(0)=1$ and of minimal degree such that the operators
\begin{align*}
&P_{\tau}(q^{-s})\gcd(\pi'\times\tau^*,1-s)^{-1}\nintertwiningfull{\tau}{s},\qquad
P_{\tau^*}(q^{s-1})\gcd(\pi'\times\tau,s)^{-1}\nintertwiningfull{\tau^*}{1-s}
\end{align*}
are holomorphic. Let $Q\in \C[q^{-s},q^s]$ be a least common multiple of $P_{\tau}(q^{-s})$ and $P_{\tau^*}(q^{s-1})$ (note that $Q^{-1}\in
M_{\tau}(s)\C[q^{-s},q^s]$). Then
\begin{align*}
\gcd(\pi\times\tau,s)\in
L(\sigma\times\tau,s)\gcd(\pi'\times\tau,s)L(\sigma^*\times\tau,s)Q(q^{-s},q^s)^{-1}\C[q^{-s},q^s].
\end{align*}
In particular Theorem~\ref{theorem:gcd sub multiplicity first var}
holds for $a=1$ and $k<\smallidx\leq\bigidx$.
\end{lemma}

Before proving the lemma we use it to establish the theorem for
$k<\smallidx$ and general $\bigidx$ and $a$. Put
$G_{\pi'\times(\tau_1\otimes\ldots\otimes\tau_a)}(s)=\prod_{i=1}^a\gcd(\pi'\times\tau_i,s)$.
First assume
$\smallidx\leq\bigidx$. 
By Theorem~\ref{theorem:gcd sub multiplicity second var} applied to
$\pi'\times\tau$ and Claim~\ref{claim:extra P factor in sub multiplicativity is mutual}, for some $P_0\in\C[q^{-s},q^s]$,
\begin{align}\label{eq:starting point pi prime}
&\gcd(\pi'\times\tau,s)=G_{\pi'\times(\tau_1\otimes\ldots\otimes\tau_a)}(s)M_{\tau_1\otimes\ldots\otimes\tau_a}(s)P_0,\\\notag
&\gcd(\pi'\times\tau^*,1-s)\equalun G_{\pi'\times(\tau_1^*\otimes\ldots\otimes\tau_a^*)}(1-s)M_{\tau_1\otimes\ldots\otimes\tau_a}(s)P_0.
\end{align}
Since $M_{\tau}(s)\in M_{\tau_1\otimes\ldots\otimes\tau_a}(s)\C[q^{-s},q^s]$ (see Section~\ref{subsection:the factors M tau s}),
we get that the operators
\begin{align*}
P_0\gcd(\pi'\times\tau^*,1-s)^{-1}\nintertwiningfull{\tau}{s},\qquad
P_0\gcd(\pi'\times\tau,s)^{-1}\nintertwiningfull{\tau^*}{1-s}
\end{align*}
are holomorphic. Applying Lemma~\ref{lemma:sub multiplicativity
first var k<l<n} to $\pi\times\tau$, the polynomials $P_{\tau}(q^{-s})$ and $P_{\tau^*}(q^{s-1})$ divide
$P_0$ (in $\C[q^{-s},q^s]$), hence the polynomial $Q$ of the lemma also
divides $P_0$. Therefore
\begin{align*}
\gcd(\pi\times\tau,s) &\in
L(\sigma\times\tau,s)\gcd(\pi'\times\tau,s)
L(\sigma^*\times\tau,s)P_0^{-1}\C[q^{-s},q^s]\\\notag&=L(\sigma\times\tau,s)G_{\pi'\times(\tau_1\otimes\ldots\otimes\tau_a)}(s)
L(\sigma^*\times\tau,s)M_{\tau_1\otimes\ldots\otimes\tau_a}(s)\C[q^{-s},q^s],
\end{align*}
so the result holds.

Now assume $\smallidx>\bigidx$. Let $\eta$ be a unitary irreducible
supercuspidal representation of $\GL{m}$ for $m>\smallidx$ chosen by
Corollary~\ref{corollary:auxiliary rep to embed all sections in gcd
second var l<=n} (see also Example~\ref{example:choosing tau so that
twist has no poles}). Then if
$\varepsilon=\cinduced{P_{m,\bigidx}}{\GL{m+\bigidx}}{\eta\otimes\tau}$
(which is irreducible),
\begin{align}\label{eq:multi first var embedding result k<l l>=n}
\gcd(\pi\times\tau,s)\in\gcd(\pi\times\varepsilon,s)\C[q^{-s},q^s].
\end{align}
According to Proposition~\ref{proposition:gcd for ramified twist of
tau} and since by Corollary~\ref{corollary:auxiliary rep to embed
all sections in gcd second var l<=n}, $M_{\eta}(s)=1$, we may select
$\eta$ which also satisfies
\begin{align*}
\gcd(\pi'\times\eta,s)=\gcd(\pi'\times\eta^*,1-s)=1.
\end{align*}
Corollary~\ref{corollary:auxiliary rep to embed all sections in gcd
second var l<=n} guarantees that
$\ell_{\eta\otimes\tau^*}(s)=\ell_{\tau^*\otimes\eta}(1-s)=1$ and because
\begin{align*}
M_{\varepsilon}(s)\in M_{\eta\otimes\tau}(s)\C[q^{-s},q^s]=M_{\eta}(s)\ell_{\eta\otimes\tau^*}(s)\ell_{\tau^*\otimes\eta}(1-s)M_{\tau}(s)\C[q^{-s},q^s],
\end{align*}
we get
\begin{align*}
M_{\varepsilon}(s)\in
M_{\tau}(s)\C[q^{-s},q^s]\subset M_{\tau_1\otimes\ldots\otimes\tau_a}(s)\C[q^{-s},q^s].
\end{align*}
The proof of the corollary also shows
$M_{\eta\otimes\tau_1\otimes\ldots\otimes\tau_a}(s)=M_{\tau_1\otimes\ldots\otimes\tau_a}(s)$.
Thus according to Theorem~\ref{theorem:gcd sub multiplicity second
var} applied to $\pi'\times\varepsilon$ and by our choice of $\eta$,
for some $Q_0\in \C[q^{-s},q^s]$,
\begin{align}\label{eq:gcd sub mult first var k<l l>=n pi' and P_0}
\gcd(\pi'\times\varepsilon,s)=G_{\pi'\times(\tau_1\otimes\ldots\otimes\tau_a)}(s)M_{\tau_1\otimes\ldots\otimes\tau_a}(s)Q_0.
\end{align}
Similar to the proof of Claim~\ref{claim:extra P factor in sub
multiplicativity is mutual},
\begin{align*}
\gcd(\pi'\times\varepsilon^*,1-s)\equalun G_{\pi'\times(\tau_1^*\otimes\ldots\otimes\tau_a^*)}(1-s)M_{\tau_1\otimes\ldots\otimes\tau_a}(s)Q_0.
\end{align*}
Therefore 
the operators
\begin{align}\label{operators:holomorphic inside proof upper bound k<l}
Q_0\gcd(\pi'\times\varepsilon^*,1-s)^{-1}\nintertwiningfull{\varepsilon}{s},\qquad
Q_0\gcd(\pi'\times\varepsilon,s)^{-1}\nintertwiningfull{\varepsilon^*}{1-s}
\end{align}
are holomorphic. Since $k<\smallidx<m+\bigidx$, Lemma~\ref{lemma:sub
multiplicativity first var k<l<n} is applicable to
$\pi\times\varepsilon$ and $Q$ of the lemma divides $Q_0$. Hence
\begin{align*}
\gcd(\pi\times\varepsilon,s) \in
L(\sigma\times\varepsilon,s)\gcd(\pi'\times\varepsilon,s)
L(\sigma^*\times\varepsilon,s)Q_0^{-1}\C[q^{-s},q^s].
\end{align*}
Since also $k<m$ and $\eta$ is irreducible supercuspidal,
Theorem~\ref{theorem:tau cuspidal n large L factor of glm gln 1} shows
\begin{align*}
L(\sigma\times\eta,s)=L(\sigma^*\times\eta,s)=1.
\end{align*}
According to \cite{JPSS} (Theorem~3.1),
\begin{align*}
L(\sigma\times\varepsilon,s)\in L(\sigma\times\eta,s)L(\sigma\times\tau,s)\C[q^{-s},q^s].
\end{align*}
Therefore
\begin{align*}
\gcd(\pi\times\varepsilon,s) \in
L(\sigma\times\tau,s)\gcd(\pi'\times\varepsilon,s)
L(\sigma^*\times\tau,s)Q_0^{-1}\C[q^{-s},q^s].
\end{align*}
Now using \eqref{eq:multi first var embedding result k<l l>=n} and
\eqref{eq:gcd sub mult first var k<l l>=n pi' and P_0} we complete
the result,
\begin{align*}
\gcd(\pi\times\tau,s) \in
L(\sigma\times\tau,s)G_{\pi'\times(\tau_1\otimes\ldots\otimes\tau_a)}(s)L(\sigma^*\times\tau,s)M_{\tau_1\otimes\ldots\otimes\tau_a}(s)
\C[q^{-s},q^s].
\end{align*}

We summarize a corollary of the proof.
\begin{corollary}\label{corollary:first var k<l<n exact sub mult}
Let $\pi=\cinduced{P_k}{G_{\smallidx}}{\sigma\otimes\pi'}$
($k<\smallidx$) and write
\begin{align}\label{eq:corollary equality for pi prime given in terms of L}
&\gcd(\pi'\times\tau,s)=L\ell_{\tau^*}(1-s)P_0,\qquad
\gcd(\pi'\times\tau^*,1-s)=\widetilde{L}\ell_{\tau}(s)\widetilde{P}_0,
\end{align}
for some polynomials $L^{-1},P_0,\widetilde{L}^{-1},\widetilde{P}_0\in\C[q^{-s},q^s]$. Let $lcm(P_0,\widetilde{P}_0)$ denote a least common multiple of $P_0$ and $\widetilde{P}_0$.
Then
\begin{align*}
\gcd(\pi\times\tau,s)\in L(\sigma\times\tau,s)\gcd(\pi'\times\tau,s)L(\sigma^*\times\tau,s)lcm(P_0,\widetilde{P}_0)^{-1}\C[q^{-s},q^s].
\end{align*}
In particular if $P_0=\widetilde{P}_0$,
\begin{align*}
\gcd(\pi\times\tau,s)\in L(\sigma\times\tau,s)L(\sigma^*\times\tau,s)L\ell_{\tau^*}(1-s)\C[q^{-s},q^s].
\end{align*}
\end{corollary}
\begin{proof}[Proof of Corollary~\ref{corollary:first var k<l<n exact
sub mult}] 
For $\smallidx\leq\bigidx$, the result is an immediate consequence of Lemma~\ref{lemma:sub
multiplicativity first var k<l<n}. Specifically, \eqref{eq:corollary equality for pi prime given in terms of L}
implies that $P_{\tau^*}(q^{s-1})$ divides $P_0$ and $P_{\tau}(q^{-s})$ divides $\widetilde{P}_0$, whence the polynomial $Q$ of the lemma
divides $lcm(P_0,\widetilde{P}_0)$.

For the case $\smallidx>\bigidx$, let
$\varepsilon=\cinduced{P_{m,\bigidx}}{\GL{m+\bigidx}}{\eta\otimes\tau}$
be as above. Since $M_{\eta}(s)=1$, we may apply
Corollary~\ref{corollary:refinement gcd second var bound} to $\pi'\times \varepsilon$ with
$A_1=\widetilde{A}_1=1$, $A_2(q^{-s})$ which divides $\widetilde{P}_0$ and
$\widetilde{A}_2(q^{s-1})$ dividing $P_0$. Because also $\ell_{\eta\otimes\tau^*}(s)=\ell_{\tau^*\otimes\eta}(1-s)=1$ and $\gcd(\pi'\times\eta,s)=1$, the corollary implies
\begin{align*}
&\gcd(\pi'\times\varepsilon,s)\in\gcd(\pi'\times\tau,s)lcm(P_0,\widetilde{P}_0)^{-1}\C[q^{-s},q^s].
\end{align*}
Then using
$\gcd(\pi'\times\eta,s)=\gcd(\pi'\times\eta^*,1-s)=1$, Claim~\ref{claim:extra P factor in sub multiplicativity is mutual} yields
\begin{align*}
&\gcd(\pi'\times\varepsilon,s)=\gcd(\pi'\times\tau,s)lcm(P_0,\widetilde{P}_0)^{-1}Q_0,\\\notag
&\gcd(\pi'\times\varepsilon^*,1-s)\equalun\gcd(\pi'\times\tau^*,1-s)lcm(P_0,\widetilde{P}_0)^{-1}Q_0,
\end{align*}
for some $Q_0\in\C[q^{-s},q^s]$.
Write $lcm(P_0,\widetilde{P}_0)=P_0B=\widetilde{P}_0\widetilde{B}$ for some $B,\widetilde{B}\in\C[q^{-s},q^s]$. Then
\begin{align*}
&\gcd(\pi'\times\varepsilon,s)=L\ell_{\tau^*}(1-s)B^{-1}Q_0,\qquad
\gcd(\pi'\times\varepsilon^*,1-s)\equalun\widetilde{L}\ell_{\tau}(s)\widetilde{B}^{-1}Q_0.
\end{align*} 

According to \eqref{eq:multiplicative property of poles of intertwiners},
$\ell_{\varepsilon}(s)\in \ell_{\tau}(s)\C[q^{-s},q^s]$ and
$\ell_{\varepsilon^*}(1-s)\in \ell_{\tau^*}(1-s)\C[q^{-s},q^s]$. Hence
the operators \eqref{operators:holomorphic inside proof upper bound k<l} are holomorphic. Then we find
\begin{align*}
\gcd(\pi\times\varepsilon,s)\in
L(\sigma\times\tau,s)\gcd(\pi'\times\varepsilon,s)L(\sigma^*\times\tau,s)Q_0^{-1}\C[q^{-s},q^s],
\end{align*}
leading to the result.
\end{proof} 

\begin{proof}[Proof of Lemma~\ref{lemma:sub multiplicativity first var k<l<n}] 
Essentially, the arguments are similar to those of
Lemma~\ref{lemma:n_1<l<=n_2 basic identity} and are described
briefly. Let $W\in\Whittaker{\pi}{\psi_{\gamma}^{-1}}$ and
$f_s\in\xi(\tau,hol,s)$. We use the realization of
$\pi$ described in Section~\ref{subsection:Realization of pi for
induced representation} and select
$\varphi_{\zeta}\in\xi_{\overline{P_k}}^{G_{\smallidx}}(\sigma\otimes\pi',std,-\zeta+\half)$ for
$W$ ($W_{\varphi_0}=W$). Again we fix $\zeta$ with
$\Re(\zeta)>>0$ throughout the proof.

The integral $\Psi(\varphi_{\zeta},f_s,s)$ given by \eqref{int:k<l<n example
starting form} is representable by a strongly convergent series
$\Sigma(\varphi_{\zeta},f_s,s)\in \Sigma(X)$ in a domain $D$ of the form $\{\Re(\zeta)<<\Re(s)<<(1+C_1)\Re(\zeta)\}$, where $C_1>0$.
To construct the series we write the $dg$-integration over
$\overline{B_{G_{\smallidx}}}$, put
$\Gamma=\overline{B_{G_{\smallidx}}}\times V_k\times R_{\smallidx,\bigidx}$ and define
$\phi\in C^{\infty}(\Gamma,\C[q^{-s},q^s])$ by
\begin{align*}
\phi(g,v,r)=\varphi_{\zeta}(vg,I_k,I_{2(\smallidx-k)})\psi_{\gamma}(v)f_s(w_{\smallidx,\bigidx}rg,1)\psi_{\gamma}(r).
\end{align*}
Then (starting with a standard section $f_s$) we construct $\Sigma_{\phi}\in\Sigma(X)$ which represents $\Phi_{\phi}(s)=\Psi(\varphi_{\zeta},f_s,s)$ and set $\Sigma(\varphi_{\zeta},f_s,s)=\Sigma_{\phi}$.

Throughout the proof we use the following notation. 
The function
$W_{\sigma}^{\varphi_{\zeta}}(a)=\varphi_{\zeta}(1,a,1)$
($a\in\GL{k}$) belongs to $\Whittaker{\sigma}{\psi^{-1}}$ and
$W_{\pi'}^{\varphi_{\zeta}}(g')=\varphi_{\zeta}(1,1,g')$ ($g'\in G_{\smallidx-k}$) lies in
$\Whittaker{\pi'}{\psi_{\gamma}^{-1}}$. For a fixed $s$,
the mapping $W_{\tau}^{f_s}(b)=f_s(1,b)$ is in $\Whittaker{\tau}{\psi}$.

In Section~\ref{subsection:1st var l<n} we defined
integral~\eqref{eq:1st var l<n before eq for pi prime}. For
$\varphi_{\zeta}$ and $\theta_s\in\xi(\tau,hol,s)$ it is given
by
\begin{align}\label{eq:1st var l<n before eq for pi prime series}
\Psi_1(\varphi_{\zeta},\theta_s,s)=&\int_{\overline{B_{\GL{k}}}}\int_{V_k}
\int_{\Mat{k\times \bigidx-\smallidx}}
\int_{\overline{B_{G_{\smallidx-k}}}} \int_{R_{\smallidx-k,\bigidx}}\varphi_{\zeta}(vb,1,g')
\\\notag
&\theta_s(w_{\smallidx-k,\bigidx}r'(\rconj{b_{\bigidx,k}}g')w(\rconj{w_{\smallidx,\bigidx}}m)vb,1)\psi_{\gamma}(r')dr'dg'dmdvdb,
\end{align}
where $db$ is a right-invariant Haar measure on $\overline{B_{\GL{k}}}$ (we replaced the $dg$-integration over $\lmodulo{\overline{V_k}Z_kG_{\smallidx-k}}{G_{\smallidx}}$ by the $dvdb$-integration), $G_{\smallidx-k}<G_{\smallidx}<H_{\bigidx}$,
$dg'$ is a right-invariant Haar measure on $\overline{B_{G_{\smallidx-k}}}$, $m$ is the image
in $Q_{\bigidx}$ of
\begin{align*}
\left(\begin{array}{ccc}I_k&&m\\&I_{\smallidx-k}
\\&&I_{\bigidx-\smallidx}\end{array}\right)\in\GL{\bigidx},
\end{align*}
$b_{\bigidx,k}=diag(I_{\bigidx},(-1)^k,I_{\bigidx})$ and
$w=w_{\smallidx-k,\bigidx}^{-1}\omega_{\bigidx-k,k}w_{\smallidx,\bigidx}$.
This integral is absolutely convergent, that is, the integral with $\varphi_{\zeta},\theta_s$ replaced by
$|\varphi_{\zeta}|,|\theta_s|$ and without $\psi_{\gamma}$, is convergent, in a domain depending only
on the representations. For a fixed $\zeta$ with $\Re(\zeta)>>0$
this is a strip in $s$, denoted by $D^*$, of the form
$\{(1-C_2)\Re(\zeta)<<\Re(s)<<\Re(\zeta)\}$ with $1>C_2>0$.
Let
\begin{align*}
\Gamma=
\overline{B_{\GL{k}}}\times V_k\times\Mat{k\times
\bigidx-\smallidx},\qquad\Gamma'=\overline{B_{G_{\smallidx-k}}}\times
R_{\smallidx-k,\bigidx}.
\end{align*}
The integrand defines an element of
$C^{\infty}(\Gamma\times\Gamma',\C[q^{-s},q^s])$, smooth in $\Gamma$.
In turn, $\Psi_1(\varphi_{\zeta},\theta_s,s)$ is represented by a
strongly convergent series which is denoted by
$\Sigma_1(\varphi_{\zeta},\theta_s,s)$. 
The $dr'dg'$-integration in \eqref{eq:1st var l<n before eq for pi
prime series} resembles the integral for $G_{\smallidx-k}\times
\GL{\bigidx}$ and $\pi'\times\tau$, 
\begin{align}\label{eq:1st var l<n before eq for pi prime inner double}
\Psi_1(\varphi_{\zeta},\theta_s,s)=\int_{\overline{B_{\GL{k}}}}\int_{V_k}
\int_{\Mat{k\times \bigidx-\smallidx}}\Psi(W_{\pi'}^{vb\cdot\varphi_{\zeta}},((w(\rconj{w_{\smallidx,\bigidx}}m)vb)\cdot
\theta_s)^{b_{\bigidx,k}},s)dmdvdb.
\end{align}

When we multiply $L(\sigma\times\tau,s-\zeta)^{-1}$, $\gcd(\pi'\times\tau,s)^{-1}$ or similar factors
by a series in $\Sigma(X)$, we regard them as polynomials in $R(X)$ by replacing
$q^{-s}$ with $X$. To simplify notation we still denote, for example, $\gcd(\pi'\times\tau,s)^{-1}$,
instead of $\gcd(\pi'\times\tau,X)^{-1}$.

We turn to the proof of the lemma. First we apply the functional
equation for $\sigma\times\tau$.
\begin{claim}\label{claim:k<l<n func sigma and tau}
$L(\sigma\times\tau,s-\zeta)^{-1}\Sigma(\varphi_{\zeta},f_s,s)=L(\sigma^*\times\tau^*,1-(s-\zeta))^{-1}\Sigma_1(\varphi_{\zeta},f_s,s)$.
\end{claim}
\begin{proof}[Proof of Claim~\ref{claim:k<l<n func sigma and tau}] 
Integral $\Psi(\phi_{\zeta},f_s,s)$ is equal to
integral~\eqref{eq:1st var l<n before func eq sigma}, in its domain
of absolute convergence, while $\Psi_1(\phi_{\zeta},f_s,s)$ equals
\eqref{eq:1st var l<n after func eq sigma}, in a different domain.
We use Lemmas~\ref{lemma:interchange integration and polynomial},
\ref{lemma:double iterated series} and the
functional equation \eqref{eq:modified func eq glm gln}. The $\epsilon$-factor
$\epsilon(\sigma\times\tau,\psi,s-\zeta)\in\C[q^{-(s-\zeta)},q^{s-\zeta}]^*$
and $\omega_{\sigma}(-1)^{\bigidx-1}$ are ignored. We take $\Re(\zeta)>>0$ so that $D$ (resp. $D^*$) intersects
the right half-plane (resp. left half-plane) where the \lhs\ (resp. \rhs) of \eqref{eq:modified func eq glm gln} is defined, in
some domain (see the proof of Claim~\ref{claim:l<n functional eq sigma}). Note that
\eqref{eq:modified func eq glm gln} has an interpretation in
$\Sigma(X)$ according to the definition of the $L$-factor of
Jacquet, Piatetski-Shapiro and Shalika \cite{JPSS}.
\end{proof} 
Now apply the functional equation for $\pi'\times\tau$. Set
\begin{align*}
f_{1-s}^*=P_{\tau}(q^{-s})\gcd(\pi'\times\tau^*,1-s)^{-1}\nintertwiningfull{\tau}{s}f_s\in\xi(\tau^*,hol,1-s).
\end{align*}
\begin{claim}\label{claim:k<l<n func pi' and tau}
$P_{\tau}(X)\gcd(\pi'\times\tau,s)^{-1}\Sigma_1(\varphi_{\zeta},f_s,s)=\Sigma_1(\varphi_{\zeta},f_{1-s}^*,1-s)$.
\end{claim}
\begin{proof}[Proof of Claim~\ref{claim:k<l<n func pi' and tau}] 
This follows from the interpretation of the passage \eqref{eq:1st var
l<n before eq for pi prime}-\eqref{eq:1st var l<n after eq for pi
prime}, using the functional equation for $G_{\smallidx-k}\times
\GL{\bigidx}$ and $\pi'\times\tau$, as series. The factor $\epsilon(\pi'\times\tau,\psi,s)\in\C[q^{-s},q^s]^*$ is ignored. The fact that $f_{1-s}^*$
is a holomorphic section implies the existence of the series
$\Sigma_1(\varphi_{\zeta},f_{1-s}^*,1-s)$.
\end{proof} 
Last we utilize the functional equation for $\sigma\times\tau^*$. By
virtue of Claim~\ref{claim:k<l<n func sigma and tau} with $f_{1-s}^*$ instead of $f_s$
and $(\tau^*,1-s)$ replacing $(\tau,s)$,
\begin{align}\label{eq:k<l<n func sigma and tau^*}
L(\sigma^*\times\tau,s+\zeta)^{-1}\Sigma_1(\varphi_{\zeta},f_{1-s}^*,1-s)=L(\sigma\times\tau^*,1-(s+\zeta))^{-1}\Sigma(\varphi_{\zeta},f_{1-s}^*,1-s).
\end{align}

Combining Claims~\ref{claim:k<l<n func sigma and tau},
\ref{claim:k<l<n func pi' and tau} and equality~\eqref{eq:k<l<n func
sigma and tau^*},
\begin{align*}
&P_{\tau}(X)L(\sigma\times\tau,s-\zeta)^{-1}\gcd(\pi'\times\tau,s)^{-1}L(\sigma^*\times\tau,s+\zeta)^{-1}\Sigma(\varphi_{\zeta},f_s,s)\\
&=L(\sigma^*\times\tau^*,1-(s-\zeta))^{-1}L(\sigma\times\tau^*,1-(s+\zeta))^{-1}\Sigma(\varphi_{\zeta},f_{1-s}^*,1-s).
\end{align*}
As in the proof of Lemma~\ref{lemma:n_1<l<=n_2 basic identity} we
put $\zeta=0$. The integral $\Psi(\varphi_{\zeta},f_s,s)$ is absolutely convergent in $D$, where it is equal to
$\Psi(W_{\varphi_{\zeta}},f_s,s)$. The integral $\Psi(W_{\varphi_{\zeta}},f_s,s)$ is absolutely convergent in $D$ and also in a right half-plane $\Re(s)>>0$. Let
$D''$ be the union of these domains and let $\Sigma(W_{\varphi_{\zeta}},f_s,s)\in\Sigma(X)$ be the series representing
$\Psi(W_{\varphi_{\zeta}},f_s,s)$ in $D''$ (obtained as explained in Lemma~\ref{lemma:integral for holomorphic section representable as a series}).
Then $\Sigma(\varphi_{\zeta},f_s,s)=\Sigma(W_{\varphi_{\zeta}},f_s,s)$ because they represent the same integral in $D$. Therefore we obtain the equality
\begin{align}\label{eq:first var before substituting zero 0}
&P_{\tau}(X)L(\sigma\times\tau,s-\zeta)^{-1}\gcd(\pi'\times\tau,s)^{-1}L(\sigma^*\times\tau,s+\zeta)^{-1}\Sigma(W_{\varphi_{\zeta}},f_s,s)\\\notag
&=L(\sigma^*\times\tau^*,1-(s-\zeta))^{-1}L(\sigma\times\tau^*,1-(s+\zeta))^{-1}\Sigma(W_{\varphi_{\zeta}},f_{1-s}^*,1-s).
\end{align}

As in the proof of Claim~\ref{claim:putting Y=1}, $\Sigma(W_{\varphi_{\zeta}},f_s,s)$ is a series of the form
$\sum_{m=M}^{\infty}a_m(\zeta)X^m$ where $M$ is independent of
$\zeta$ and each $a_m$ is a polynomial in $q^{\mp\zeta}$. Specifically, we use
Proposition~\ref{proposition:iwasawa decomposition of the integral}
to write $\Psi(W_{\varphi_{\zeta}},f_s,s)$ for $s$ in a right half-plane, as a sum of integrals of the form \eqref{int:iwasawa decomposition
of the integral l<=n split}. Here $W\in\Whittaker{\pi}{\psi_{\gamma}^{-1}}$ is
replaced by $W_{\varphi_{\zeta}}\in\Whittaker{\cinduced{\overline{P_k}}{G_{\smallidx}}{(\sigma\otimes\pi')\alpha^{-\zeta+\half}}}{\psi_{\gamma}^{-1}}$. The number of
negative coefficients of $X$ is bounded using $W_{\varphi_{\zeta}}$, this bound will be independent
of $\zeta$ (alternatively use the Whittaker function in
$\Whittaker{\tau}{\psi}$ for this, here $\smallidx\leq\bigidx$). Because $\varphi_{\zeta}$ is a standard section (in
$\zeta$), for any $g\in G_{\smallidx}$ we have
$W_{\varphi_{\zeta}}(g)\in \C[q^{-\zeta},q^{\zeta}]$, whence $a_m\in\C[q^{-\zeta},q^{\zeta}]$.

This holds for any $\zeta$ with $\Re(\zeta)>>0$ and
also for any $\zeta$ varying in a compact set. Also recall that $W_{\varphi_0}=W$. As
in Claim~\ref{claim:putting Y=1}, substituting $0$ for $\zeta$ in \eqref{eq:first var before substituting zero 0}
yields
\begin{align*}
&P_{\tau}(X)L(\sigma\times\tau,s)^{-1}\gcd(\pi'\times\tau,s)^{-1}L(\sigma^*\times\tau,s)^{-1}\Sigma(W,f_s,s)\\
&=L(\sigma^*\times\tau^*,1-s)^{-1}L(\sigma\times\tau^*,1-s)^{-1}\Sigma(W,f_{1-s}^*,1-s).
\end{align*}
As in Lemma~\ref{lemma:n_1<l<=n_2 basic identity} it follows that both sides are polynomials,
\begin{align*}
&\Psi(W,f_s,s)\in
L(\sigma\times\tau,s)\gcd(\pi'\times\tau,s)L(\sigma^*\times\tau,s)P_{\tau}(q^{-s})^{-1}\C[q^{-s},q^s],\\
&\Psi(W,\nintertwiningfull{\tau}{s}f_s,1-s)\\\notag&\qquad\in
L(\sigma^*\times\tau^*,1-s)\gcd(\pi'\times\tau^*,1-s)L(\sigma\times\tau^*,1-s)P_{\tau}(q^{-s})^{-1}\C[q^{-s},q^s].
\end{align*}

These relations resemble \eqref{eq:second var gcd of holomorphic
sections} and \eqref{eq:second var gcd of holomorphic sections 2}, and hold for any irreducible representation $\tau$,
$W\in\Whittaker{\pi}{\psi_{\gamma}^{-1}}$ and $f_s\in\xi(\tau,hol,s)$.
Thus we can replace $\tau$ with $\tau^*$ and obtain
for any $f_{1-s}'\in\xi(\tau^*,hol,1-s)$,
\begin{align*}
&\Psi(W,\nintertwiningfull{\tau^*}{1-s}f_{1-s}',s)\\\notag&\qquad\in
L(\sigma^*\times\tau,s)\gcd(\pi'\times\tau,s)L(\sigma\times\tau,s)P_{\tau^*}(q^{s-1})^{-1}\C[q^{-s},q^s].
\end{align*}
Hence
\begin{equation*}
\gcd(\pi\times\tau,s)\in
L(\sigma\times\tau,s)\gcd(\pi'\times\tau,s)L(\sigma^*\times\tau,s)Q(q^{-s},q^s)^{-1}\C[q^{-s},q^s].\qedhere
\end{equation*}
\end{proof} 

\subsection{The case $k=\smallidx>\bigidx$}\label{subsection:sub mult first var proof body k=l>n}
Let $\pi=\cinduced{P_{\smallidx}}{G_{\smallidx}}{\sigma}$ (or
$\cinduced{\rconj{\kappa}P_{\smallidx}}{G_{\smallidx}}{\sigma}$) and let
$\tau=\cinduced{P_{\bigidx_1,\ldots,\bigidx_a}}{\GL{\bigidx}}{\tau_1\otimes\ldots\otimes\tau_a}$. Assume that $\tau$ is irreducible.
We show the slightly stronger relation
\begin{align*}
\gcd(\pi\times\tau,s)\in
L(\sigma\times\tau,s)L(\sigma^*\times\tau,s)M_{\tau}(s)\C[q^{-s},q^s]
\end{align*}
(stronger because $M_{\tau}(s)\in
M_{\tau_1\otimes\ldots\otimes\tau_a}(s)\C[q^{-s},q^s]$). This is an
immediate corollary of the following lemma, analogous to
Lemma~\ref{lemma:sub multiplicativity first var k<l<n}.
\begin{lemma}\label{lemma:sub multiplicativity first var k=l>n}
Assume $k=\smallidx>\bigidx$. Let $P_{\tau},P_{\tau^*}\in\C[X]$ be normalized by
$P_{\tau}(0)=P_{\tau^*}(0)=1$ and of minimal degree such that the operators
$P_{\tau}(q^{-s})\nintertwiningfull{\tau}{s}$ and\\
$P_{\tau^*}(q^{s-1})\nintertwiningfull{\tau^*}{1-s}$ are holomorphic and let
$Q\in\C[q^{-s},q^s]$ be a least common multiple of $P_{\tau}(q^{-s})$ and $P_{\tau^*}(q^{s-1})$. Then
\begin{align*}
\gcd(\pi\times\tau,s)\in
L(\sigma\times\tau,s)L(\sigma^*\times\tau,s)Q(q^{-s},q^s)^{-1}\C[q^{-s},q^s].
\end{align*}
\end{lemma}
The proof of the lemma follows the same line of arguments of
Lemma~\ref{lemma:sub multiplicativity first var k<l<n}, utilizing
the integral manipulations of Section~\ref{subsection:1st var
k=l>n+1}. The functional equation replacing that of $\pi'\times\tau$ is
Shahidi's functional equation \eqref{eq:Shahidi local coefficient
def}. 
In the analogue of Claim~\ref{claim:putting Y=1} one uses
\eqref{int:iwasawa decomposition of the integral l>n}. In the proof
of Proposition~\ref{proposition:iwasawa decomposition of the
integral}, \eqref{int:iwasawa decomposition of the integral l>n} was
established by noting that the support of the function $r\mapsto
W(ar)$ ($r\in R^{\smallidx,\bigidx}$, $a\in\GL{\bigidx}<L_{\bigidx}$) is contained in a compact set independent of $a$. When
replacing $W$ with $W_{\varphi_{\zeta}}$, this set can also be taken
to be independent of $\zeta$, because $\varphi_{\zeta}$ is a standard section. The proof of the lemma is skipped.

\subsection{The case $k=\smallidx\leq\bigidx$}\label{subsection:sub mult first var proof body k=l}
In Section~\ref{section:1st variable} the multiplicativity of the
$\gamma$-factor when $k=\smallidx\leq\bigidx$ was reduced to the cases $k<\smallidx$ and $k=\smallidx>\bigidx$, hence in this case we do not have integral manipulations to work with.
\begin{remark}
The arguments of Soudry \cite{Soudry3} regarding the
multiplicativity of the $\gamma$-factor for the \archimedean\
integrals of $SO_{2\bigidx+1}\times \GL{\smallidx}$ readily adapt to the
$p$-adic case. They imply a proof, using integral manipulations, that
the $SO_{2\smallidx}\times\GL{\bigidx}$ $\gamma$-factor
$\gamma(\pi\times\tau,\psi,s)$ where $\pi$ is induced from
$P_{\smallidx}$ and $\smallidx\leq\bigidx$, is multiplicative (see
Chapter~\ref{chapter:archimedean results}). It may be possible to use these
manipulations, but more work will be needed, because they include a Fourier transform which needs to
be translated into an operation on Laurent series.
\end{remark}
We prove the following slightly stronger result. Let
$\pi=\cinduced{P_{\smallidx}}{G_{\smallidx}}{\sigma}$ where $\sigma$ is
a (generic) quotient of a representation
$\theta=\cinduced{P_{\smallidx_1,\ldots,\smallidx_m}}{\GL{\smallidx}}{\sigma_1\otimes\ldots\otimes\sigma_m}$
of Langlands' type, i.e. $\sigma_i=\absdet{}^{u_i}\sigma_i'$,
$\sigma_i'$ is tempered, $u_i\in\R$ and $u_1>\ldots>u_m$. Since
$\pi$ is a generic quotient of
$\cinduced{P_{\smallidx}}{G_{\smallidx}}{\theta}$,
$\Whittaker{\pi}{\psi_{\gamma}^{-1}}=\Whittaker{\cinduced{P_{\smallidx}}{G_{\smallidx}}{\theta}}{\psi_{\gamma}^{-1}}$.
The dependence of the integral on $\pi$ is only through its
Whittaker model, hence for the purpose of the g.c.d. we may replace
$\sigma$ with $\theta$, i.e., assume $\pi=\cinduced{P_{\smallidx}}{G_{\smallidx}}{\theta}$. Set
$\mu=\cinduced{P_{\smallidx_1,\ldots,\smallidx_{m-1}}}{\GL{\smallidx-\smallidx_m}}{\sigma_1\otimes\ldots\otimes\sigma_{m-1}}$.

Let
$\tau=\cinduced{P_{\bigidx_1,\ldots,\bigidx_a}}{\GL{\bigidx}}{\tau_1\otimes\ldots\otimes\tau_a}$
be an irreducible representation of Langlands' type and put
$\tau'=\cinduced{P_{\bigidx_2,\ldots,\bigidx_a}}{\GL{\bigidx-\bigidx_1}}{\tau_2\otimes\ldots\otimes\tau_a}$.
By definition the representations $\mu$, $\tau_1$ and $\tau'$ are of Langlands' type and the representations
$\tau_1$ and $\tau'$ are irreducible.
We show
\begin{align}\label{eq:statement of stronger result for k=l<=n}
\gcd(\pi\times\tau,s)\in
L(\theta\times\tau,s)L(\theta^*\times\tau,s)M_{\tau_1\otimes\ldots\otimes\tau_a}(s)\C[q^{-s},q^s].
\end{align}
Since $\sigma$ and $\theta$ can be realized using the same Whittaker model and
$L(\theta\times\tau,s)$ depends only on the Whittaker models of $\theta$ and $\tau$, we conclude
$L(\sigma\times\tau,s)=L(\theta\times\tau,s)$, $L(\sigma^*\times\tau,s)=L(\theta^*\times\tau,s)$ and
\begin{align*}
\gcd(\pi\times\tau,s)\in
L(\sigma\times\tau,s)L(\sigma^*\times\tau,s)M_{\tau_1\otimes\ldots\otimes\tau_a}(s)\C[q^{-s},q^s].
\end{align*}

Any irreducible $\sigma$ is the quotient of a representation $\theta$ of Langlands' type (actually,
since $\sigma$ is the generic irreducible quotient, $\sigma\isomorphic\theta$ by \cite{Z3}), hence the result we prove
in particular implies the theorem in case $k=\smallidx\leq\bigidx$.

The following lemma is used to prove \eqref{eq:statement of stronger result for k=l<=n}.
\begin{lemma}\label{lemma:sub multiplicativity first var k=l>=n+1}
Let $\sigma$ and $\tau$ be essentially tempered. Then
\begin{align*}
\gcd(\cinduced{P_{\smallidx}}{G_{\smallidx}}{\sigma}\times\tau,s)\in
L(\sigma\times\tau,s)L(\sigma^*\times\tau,s)M_{\tau}(s)\C[q^{-s},q^s].
\end{align*}
\end{lemma}
Before proving the lemma, let us use it to derive \eqref{eq:statement of stronger result for k=l<=n}.
If $m>1$, set $\pi'=\cinduced{P_{\smallidx_m}}{G_{\smallidx_m}}{\sigma_m}$ and then
$\pi=\cinduced{P_{\smallidx-\smallidx_m}}{G_{\smallidx}}{\mu\otimes\pi'}$.
Assume $a=1$. If $m=1$, relation~\eqref{eq:statement of
stronger result for k=l<=n} follows directly from
Lemma~\ref{lemma:sub multiplicativity first var k=l>=n+1}. For $m>1$, by Lemma~\ref{lemma:sub multiplicativity first var k=l>=n+1} applied to $\pi'\times\tau$ and
Claim~\ref{claim:extra P factor in sub multiplicativity is mutual}, for some $P_0\in\C[q^{-s},q^s]$,
\begin{align*}
&\gcd(\pi'\times\tau,s)=L(\sigma_m\times\tau,s)L(\sigma_m^*\times\tau,s)M_{\tau}(s)P_0,\\\notag
&\gcd(\pi'\times\tau^*,1-s)\equalun L(\sigma_m\times\tau^*,1-s)L(\sigma_m^*\times\tau^*,1-s)M_{\tau}(s)P_0.
\end{align*}
Putting this into Corollary~\ref{corollary:first var k<l<n exact sub
mult} yields 
\begin{align*}
\gcd(\pi\times\tau,s)\in
L(\mu\times\tau,s)L(\sigma_m\times\tau,s)L(\sigma_m^*\times\tau,s)L(\mu^*\times\tau,s)M_{\tau}(s)\C[q^{-s},q^s].
\end{align*}
Because $\theta$ and $\tau$ are of Langlands' type,
$L(\theta\times\tau,s)=L(\mu\times\tau,s)L(\sigma_m\times\tau,s)$
(\cite{JPSS} Theorem~9.4) and \eqref{eq:statement of stronger result
for k=l<=n} folows.

Now assume $a>1$.
We use induction on $a$. By the induction hypothesis we find that for some $Q_1,Q'\in \C[q^{-s},q^s]$,
\begin{align*}
&\gcd(\pi\times\tau_1,s)=
L(\theta\times\tau_1,s)L(\theta^*\times\tau_1,s)M_{\tau_1}(s)Q_1,\\\notag
&\gcd(\pi\times\tau',s)=
L(\theta\times\tau',s)L(\theta^*\times\tau',s)M_{\tau_2\otimes\ldots\otimes\tau_a}(s)Q'.
\end{align*}
By virtue of Claim~\ref{claim:extra P factor in sub multiplicativity
is mutual}, 
\begin{align*}
&\gcd(\pi\times\tau_1^*,1-s)\equalun
L(\theta\times\tau_1^*,1-s)L(\theta^*\times\tau_1^*,1-s)M_{\tau_1}(s)Q_1,\\\notag
&\gcd(\pi\times(\tau')^*,1-s)\equalun
L(\theta\times(\tau')^*,1-s)L(\theta^*\times(\tau')^*,1-s)M_{\tau_2\otimes\ldots\otimes\tau_a}(s)Q'.
\end{align*}
Apply Corollary~\ref{corollary:refinement gcd second var bound} to $\pi\times\tau$ with
$\tau=\cinduced{P_{\bigidx_1,\bigidx-\bigidx_1}}{\GL{\bigidx}}{\tau_1\otimes\tau'}$. We can take $A_1,\widetilde{A}_1\in\C[X]$
such that $A_1(q^{-s})$ and $\widetilde{A}_1(q^{s-1})$ divide $Q_1$ (in $\C[q^{-s},q^s]$). Since also
$M_{\tau'}(s)\in M_{\tau_2\otimes\ldots\otimes\tau_a}(s)\C[q^{-s},q^s]$ (see Section~\ref{subsection:the factors M tau s}),
we can select $A_2,\widetilde{A}_2\in\C[X]$ such that $A_2(q^{-s})$ and $\widetilde{A}_2(q^{s-1})$ divide $Q'$. The corollary implies
\begin{align*}
\gcd(&\pi\times\tau,s)\\\notag&\in\gcd(\pi\times\tau_1,s)Q_1^{-1}\gcd(\pi\times\tau',s)(Q')^{-1}
\ell_{\tau_1\otimes(\tau')^*}(s)\ell_{(\tau')^*\otimes\tau_1}(1-s)\C[q^{-s},q^s].
\end{align*}
Now \eqref{eq:statement of stronger result for k=l<=n} follows using
the multiplicativity of $L(\theta\times\tau,s)$ in $\tau$
(\cite{JPSS} Theorem~9.4) and the fact that
\begin{align*}
M_{\tau_1}(s)\ell_{\tau_1\otimes(\tau')^*}(s)\ell_{(\tau')^*\otimes\tau_1}(1-s)M_{\tau_2\otimes\ldots\otimes\tau_a}(s)\in
M_{\tau_1\otimes\ldots\otimes\tau_a}(s)\C[q^{-s},q^s].
\end{align*}
The case of $\pi$ induced from $\rconj{\kappa}P_{\smallidx}$ is similar.
\begin{proof}[Proof of Lemma~\ref{lemma:sub multiplicativity first var k=l>=n+1}] 
Set $\pi=\cinduced{P_{\smallidx}}{G_{\smallidx}}{\sigma}$. Assume first that $\tau$ is tempered. Let $\mathcal{U}$ be the set
of all complex numbers $u$ such that $\sigma_u=\absdet{}^u\sigma$ is
tempered. The set $\mathcal{U}$ is a vertical line on the
plane. Let
$\pi_u=\cinduced{P_{\smallidx}}{G_{\smallidx}}{\sigma_u}$. Ignoring
a discrete subset of $\mathcal{U}$, we can assume that $\pi_u$ is
tempered (we take tempered representations to be irreducible, see
\cite{Mu2} Section~3 and \cite{W} Lemma~III.2.3). 
We may apply Theorem~\ref{theorem:gcd for tempered reps} to
$\pi_u\times\tau$ and obtain
\begin{align*}
\gcd(\pi_u\times\tau,s)\in
L(\pi_u\times\tau,s)M_{\tau}(s)\C[q^{-s},q^s],
\end{align*}
where
$L(\pi_u\times\tau,s)$ is the $L$-function defined by
Shahidi.

Since $\pi_u$ and $\tau$ are tempered, the results
of 
Casselman and Shahidi \cite{CSh} (Section~4) and \eqref{eq:JPSS
relation gamma and friends} imply
\begin{align*}
L(\pi_u\times\tau,s)\in
L(\sigma\times\tau,s+u)L(\sigma^*\times\tau,s-u)\C[q^{-s},q^s].
\end{align*}
Thus
\begin{align*}
\gcd(\pi_u\times\tau,s)\in
L(\sigma\times\tau,s+u)L(\sigma^*\times\tau,s-u)M_{\tau}(s)\C[q^{-s},q^s].
\end{align*}
If we could put $u=0$, the result would follow, but this has to be
justified. Currently, for each $u\in\mathcal{U}$ there is some
$P_u\in\C[q^{-s},q^s]$ such that
\begin{align*}
\gcd(\pi_u\times\tau,s)=L(\sigma\times\tau,s+u)L(\sigma^*\times\tau,s-u)M_{\tau}(s)P_u.
\end{align*}
Write $M_{\tau}(s)=eM_{\tau}(s)'$, $P_u=e_uP_u'$ for
$(M_{\tau}(s)')^{-1},P_u'\in\C[q^{-s}]$ with constant terms equal
to
$1$ 
and $e,e_u\in\C[q^{-s},q^s]^*$. Because each of the other factors in this
equation is an inverse of a polynomial in $\C[q^{-s}]$ and its
constant term is equal to $1$, we obtain $e\cdot e_u=1$ and in
particular $e_u$ is independent of $u$. Since
$\gcd(\pi_u\times\tau,s)$ is an inverse of a polynomial, any zero of
$P_u$ must be canceled by some other factor on the \rhs. Therefore
$P_u'$ is uniquely determined by a finite product of factors
appearing in either $L(\sigma\times\tau,s+u)$,
$L(\sigma^*\times\tau,s-u)$ or $M_{\tau}(s)'$, which take the
form $(1-aq^{-s-u})^{-1}$, $(1-aq^{-s+u})^{-1}$ or
$(1-aq^{-s})^{-1}$ (resp.). Hence we may assume (perhaps passing to
a smaller subset $\mathcal{U}'\subset\mathcal{U}$) that there exists
$P\in\C[q^{\mp u},q^{\mp s}]$ such that for all $u\in\mathcal{U}$,
\begin{align}\label{eq:gcd pi tau k=l>=n+1}
\gcd(\pi_u\times\tau,s)=L(\sigma\times\tau,s+u)L(\sigma^*\times\tau,s-u)M_{\tau}(s)P.
\end{align}
Note that $\mathcal{U}$ (still) contains some infinite sequence
which converges to some point in the plane.
Repeating this for $\pi_u\times\tau^*$ we may assume in addition
that there is some $\widetilde{P}\in\C[q^{\mp u},q^{\mp s}]$
satisfying for all $u\in\mathcal{U}$,
\begin{align}\label{eq:gcd pi tau star k=l>=n+1}
\gcd(\pi_u\times\tau^*,1-s)=L(\sigma\times\tau^*,1-s+u)L(\sigma^*\times\tau^*,1-s-u)M_{\tau}(s)\widetilde{P}.
\end{align}
Fix $u\in\mathcal{U}$. According to Theorem~\ref{theorem:multiplicity first var},
\begin{align*}
\gamma(\pi_u\times\tau,\psi,s)=\omega_{\sigma_u}(-1)^{\bigidx}\omega_{\tau}(-1)^{\smallidx}\omega_{\tau}(2\gamma)^{-1}\gamma(\sigma_u\times\tau,\psi,s)\gamma(\sigma_u^*\times\tau,\psi,s)
\end{align*}
and by \eqref{eq:gamma and epsilon} and
\eqref{eq:JPSS relation gamma and friends}, 
\begin{align}\label{eq:gcd epsilon long k=l>=n+1}
&\epsilon(\pi_u\times\tau,\psi,s)
\frac{\gcd(\pi_u\times\tau^*,1-s)}{\gcd(\pi_u\times\tau,s)}
\\\notag&=\epsilon(\sigma\times\tau,\psi,s+u)\epsilon(\sigma^*\times\tau,\psi,s-u)
\frac{L(\sigma^*\times\tau^*,1-s-u)L(\sigma\times\tau^*,1-s+u)}
{L(\sigma\times\tau,s+u)L(\sigma^*\times\tau,s-u)}.
\end{align}
Here the factor
$\omega_{\sigma_u}(-1)^{\bigidx}\omega_{\tau}(-1)^{\smallidx}\omega_{\tau}(2\gamma)^{-1}$
was omitted, it does not impact the argument because it is
independent of $u$ and $s$
($\omega_{\sigma_u}(-1)=\omega_{\sigma}(-1)$). Combining \eqref{eq:gcd pi tau k=l>=n+1}, \eqref{eq:gcd pi tau star
k=l>=n+1} and \eqref{eq:gcd epsilon long k=l>=n+1} we get 
\begin{align}\label{eq:epsilon U k=l>=n+1}
\epsilon(\pi_u\times\tau,\psi,s)=\epsilon(\sigma\times\tau,\psi,s+u)\epsilon(\sigma^*\times\tau,\psi,s-u)P\widetilde{P}^{-1}.
\end{align}

Equality~\eqref{eq:epsilon def} implies that for
all $W_u\in\Whittaker{\pi_u}{\psi_{\gamma}^{-1}}$ and
$f_s\in\xi(\tau,hol,s)$,
\begin{align*}
&c(\smallidx,\tau,\gamma,s)^{-1}
\epsilon(\pi_u\times\tau,\psi,s)\gcd(\pi_u\times\tau,s)^{-1}\Psi(W_u,f_s,s)\\\notag&=
\gcd(\pi_u\times\tau^*,1-s)^{-1}\Psi(W_u,\nintertwiningfull{\tau}{s}f_s,1-s).
\end{align*}
Plugging \eqref{eq:gcd pi tau k=l>=n+1}, \eqref{eq:gcd pi tau
star k=l>=n+1} and \eqref{eq:epsilon U k=l>=n+1} into this equation
yields
\begin{align}\label{eq:func eq integrals k=l>=n+1}
&c(\smallidx,\tau,\gamma,s)^{-1}\epsilon(\sigma\times\tau,\psi,s+u)\epsilon(\sigma^*\times\tau,\psi,s-u)
\\\notag&M_{\tau}(s)^{-1}L(\sigma\times\tau,s+u)^{-1}L(\sigma^*\times\tau,s-u)^{-1}
\Psi(W_u,f_s,s)\\\notag&=
L(\sigma\times\tau^*,1-s+u)^{-1}L(\sigma^*\times\tau^*,1-s-u)^{-1}
\Psi(W_u,M_{\tau}(s)^{-1}\nintertwiningfull{\tau}{s}f_s,1-s).
\end{align}
Note that
$M_{\tau}(s)^{-1}\nintertwiningfull{\tau}{s}f_s\in\xi(\tau^*,hol,1-s)$.
For an arbitrary $W\in\Whittaker{\pi}{\psi_{\gamma}^{-1}}$ take
$W_u$ such that $W_0=W$, by applying a Whittaker functional to a
suitable section of
$\xi_{P_{\smallidx}}^{G_{\smallidx}}(\sigma,std,u+\half)$ (see
Section~\ref{subsection:Realization of pi for induced
representation}).

Let $C\subset\C$ be a compact subset containing $0$, which
intersects $\mathcal{U}$ in a set $D_1$ containing some infinite sequence that
converges to some point in the plane. There exist constants $s_1,s_2>0$ depending only on $C,\sigma$ and $\tau$ such
that the series $\Sigma(W_u,f_s,s)$ (resp.
$\Sigma(W_u,M_{\tau}(s)^{-1}\nintertwiningfull{\tau}{s}f_s,1-s)$)
represents $\Psi(W_u,f_s,s)$ (resp.
$\Psi(W_u,M_{\tau}(s)^{-1}\nintertwiningfull{\tau}{s}f_s,1-s)$)
for all $u\in C$ and $\Re(s)>s_1$ (resp. $\Re(s)<-s_2$). Let $u\in
D_1$. Then both sides of \eqref{eq:func eq integrals k=l>=n+1} are
polynomials in $q^{\mp s}$ and
as explained in Section~\ref{subsection:Interpretation of functional
equations} this equality may be interpreted in $\Sigma(X)$,
\begin{align}\label{eq:func eq integrals k=l>=n+1 in sigma}
&A(u,X)\Sigma(W_u,f_s,s)=B(u,X)\Sigma(W_u,M_{\tau}(s)^{-1}\nintertwiningfull{\tau}{s}f_s,1-s),
\end{align}
where
\begin{align*}
A(u,s)=&c(\smallidx,\tau,\gamma,s)^{-1}\epsilon(\sigma\times\tau,\psi,s+u)\epsilon(\sigma^*\times\tau,\psi,s-u)\\\notag
&M_{\tau}(s)^{-1}L(\sigma\times\tau,s+u)^{-1}L(\sigma^*\times\tau,s-u)^{-1},
\end{align*}
\begin{align*}
&B(u,s)=L(\sigma\times\tau^*,1-s+u)^{-1}L(\sigma^*\times\tau^*,1-s-u)^{-1}
\end{align*}
and $A(u,X),B(u,X)\in R(X)$ are obtained by substituting $X$ for $q^{-s}$.
As in the proof of Claim~\ref{claim:putting Y=1} both series
$\Sigma(W_u,f_s,s)$ and
$\Sigma(W_u,M_{\tau}(s)^{-1}\nintertwiningfull{\tau}{s}f_s,1-s)$
have analytic coefficients in $u$. Hence \eqref{eq:func eq integrals
k=l>=n+1 in sigma} is an equality of the form
$\sum_{m\in\Integers}a_m(u)X^m=\sum_{m\in\Integers}b_m(u)X^m$ valid
for all $u\in D_1$, with analytic functions $a_m,b_m:C\rightarrow\C$. Thus it
is valid also for $u=0$ and we obtain an equality in $\Sigma(X)$,
\begin{align*}
&A(0,X)\Sigma(W_0,f_s,s)=B(0,X)\Sigma(W_0,M_{\tau}(s)^{-1}\nintertwiningfull{\tau}{s}f_s,1-s).
\end{align*}
Because $\Sigma(W_0,f_s,s)$ represents
$\Psi(W_0,f_s,s)=\Psi(W,f_s,s)$, the \lhs\ is a series with finitely
many negative coefficients. Similarly
$\Sigma(W_0,M_{\tau}(s)^{-1}\nintertwiningfull{\tau}{s}f_s,1-s)$
represents
$\Psi(W,M_{\tau}(s)^{-1}\nintertwiningfull{\tau}{s}f_s,1-s)$, so
the \rhs\ is a series with finitely many positive coefficients.
Whence this is also an equality in $R(X)$ and
\begin{align*}
\Psi(W,f_s,s)\in
L(\sigma\times\tau,s)L(\sigma^*\times\tau,s)M_{\tau}(s)\C[q^{-s},q^s].
\end{align*}
A similar relation holds for
$\Psi(W,\nintertwiningfull{\tau^*}{1-s}f_{1-s}',s)$ with $f_{1-s}'\in\xi(\tau^*,hol,1-s)$
(see the last paragraph in the proof of Lemma~\ref{lemma:sub multiplicativity first var k<l<n}) and the result
follows.

Now if $\tau$ is essentially tempered, let $v\in\C$ be such that $\tau_v=\absdet{}^v\tau$ is tempered. According to the result proved above,
there is some $P_v\in\C[q^{-s},q^s]$ such that
\begin{align*}
\gcd(\pi\times\tau_v,s)=
L(\sigma\times\tau_v,s)L(\sigma^*\times\tau_v,s)M_{\tau_v}(s)P_v(q^{-s},q^{s}).
\end{align*}
Since $\gcd(\pi\times\tau_v,s)=\gcd(\pi\times\tau,s+v)$, $L(\sigma\times\tau_v,s)=L(\sigma\times\tau,s+v)$ and
$M_{\tau_v}(s)=M_{\tau}(s+v)$, the following identity
\begin{align*}
\gcd(\pi\times\tau,s+v)=
L(\sigma\times\tau,s+v)L(\sigma^*\times\tau,s+v)M_{\tau}(s+v)P_v(q^{-s},q^{s})
\end{align*}
holds for all $s$. In particular for $s-v$ we obtain
\begin{align*}
\gcd(\pi\times\tau,s)=
L(\sigma\times\tau,s)L(\sigma^*\times\tau,s)M_{\tau}(s)P_v(q^{-s+v},q^{s-v}),
\end{align*}
hence $\gcd(\pi\times\tau,s)\in
L(\sigma\times\tau,s)L(\sigma^*\times\tau,s)M_{\tau}(s)\C[q^{-s},q^s]$.
\end{proof} 

\begin{flushleft}
\clearpage {\LARGE \textbf{References}}\vspace*{12pt}
\end{flushleft}


\providecommand{\bysame}{\leavevmode\hbox
to3em{\hrulefill}\thinspace}
\providecommand{\MR}{\relax\ifhmode\unskip\space\fi MR }
\providecommand{\MRhref}[2]{%
  \href{http://www.ams.org/mathscinet-getitem?mr=#1}{#2}
} \providecommand{\href}[2]{#2}

\end{document}